\tikzset{dynkdot/.style={circle,draw,scale=.38}}
\definecolor{refkey}{gray}{0.5}
\definecolor{labelkey}{gray}{0.5}
\newcommand{\seq}{\coloneqq}
\numberwithin{equation}{section}
\newcommand{\arxiv}[1]{\href{http://arxiv.org/abs/#1}{\texttt{arXiv:#1}}}
\newtheorem{Thm}{Theorem}[section]
\newtheorem{Cor}[Thm]{Corollary}
\newtheorem{Prop}[Thm]{Proposition}
\newtheorem{Lem}[Thm]{Lemma}
\newtheorem{Conj}[Thm]{Conjecture}
\theoremstyle{definition}
\newtheorem{Def}[Thm]{Definition}
\newtheorem{Rem}[Thm]{Remark}
\newtheorem{Ex}[Thm]{Example}
\newcommand{\ul}{\underline}
\newcommand{\ol}{\overline}
\newcommand{\im}{\imath}
\newcommand{\jm}{\jmath}
\newcommand{\Aut}{\mathop{\mathrm{Aut}}\nolimits}
\newcommand{\Irr}{\mathop{\mathrm{Irr}}\nolimits}
\newcommand{\res}{\mathop{\mathrm{res}}\nolimits}
\newcommand{\Rep}{\mathop{\mathrm{Rep}}}
\newcommand{\id}{\mathrm{id}}
\newcommand{\CL}{\mathrm{CL}}
\newcommand{\evt}{\mathrm{ev}_{t=1}}
\newcommand{\evv}{\mathrm{ev}_{v=1}}
\newcommand{\wt}{\mathrm{wt}}
\newcommand{\zero}{\mathrm{zero}}
\newcommand{\Z}{\mathbb{Z}}
\newcommand{\Q}{\mathbb{Q}}
\newcommand{\C}{\mathbb{C}}
\newcommand{\F}{\mathbb{F}}
\newcommand{\kk}{\Bbbk}
\newcommand{\sg}{\mathsf{g}}
\newcommand{\sn}{\mathsf{n}}
\newcommand{\sP}{\mathsf{P}}
\newcommand{\sQ}{\mathsf{Q}}
\newcommand{\sR}{\mathsf{R}}
\newcommand{\sW}{\mathsf{W}}
\newcommand{\cI}{\mathcal{I}}
\newcommand{\cQ}{\mathcal{Q}}
\newcommand{\cD}{\mathcal{D}}
\newcommand{\cM}{\mathcal{M}}
\newcommand{\cY}{\mathcal{Y}}
\newcommand{\cT}{\mathcal{T}}
\newcommand{\cK}{\mathcal{K}}
\newcommand{\cA}{\mathcal{A}}
\newcommand{\cU}{\mathcal{U}}
\newcommand{\cJ}{\mathcal{J}}
\newcommand{\cL}{\mathcal{L}}
\newcommand{\Cc}{\mathscr{C}}
\newcommand{\Nn}{\mathscr{N}}
\newcommand{\Aa}{\mathscr{A}}
\newcommand{\fg}{\mathfrak{g}}
\newcommand{\fD}{\mathfrak{D}}
\newcommand{\fd}{\textfrak{d}}
\newcommand{\bfi}{\boldsymbol{i}}
\newcommand{\bfa}{\boldsymbol{a}}
\newcommand{\bfc}{\boldsymbol{c}}
\newcommand{\tvee}{\widetilde{\vee}}
\newcommand{\tc}{\widetilde{c}}
\newcommand{\tm}{\widetilde{m}}
\newcommand{\tC}{\widetilde{C}}
\newcommand{\tY}{\widetilde{Y}}
\newcommand{\tG}{\widetilde{G}}
\newcommand{\tD}{\widetilde{D}}
\newcommand{\tB}{\widetilde{B}}
\newcommand{\tF}{\widetilde{F}}
\newcommand{\tA}{\widetilde{A}}
\newcommand{\tcA}{\widetilde{\cA}}
\newcommand{\tpsi}{\widetilde{\psi}}
\newcommand{\hDs}{\widehat{\Delta}^{\sigma}}
\newcommand{\hI}{\widehat{I}}
\newcommand{\bphi}{\bar{\phi}}
\newcommand{\tbfB}{\widetilde{\mathbf{B}}^*}
\newcommand{\bfB}{\mathbf{B}^*}
\title[Isomorphisms among quantum Grothendieck rings]
{Isomorphisms among quantum Grothendieck rings and propagation of positivity}
\date{\today}
\author[R.~Fujita]{Ryo Fujita}
\address[R.~Fujita]{Research Institute for Mathematical Sciences, Kyoto University, Oiwake-Kitashirakawa, Sakyo, Kyoto, 606-8502, Japan \& Institut de Math\'{e}matiques de Jussieu-Paris Rive Gauche, Universit\'{e} de Paris, F-75013, Paris, France}
\email{rfujita@kurims.kyoto-u.ac.jp}
\author[D.~Hernandez]{David Hernandez}
\address[D.~Hernandez]{Universit\'{e} de Paris and Sorbonne Universit\'{e}, CNRS, IMJ-PRG, IUF, F-75006, Paris, France}
\email{david.hernandez@u-paris.fr}
\author[S.-j.~Oh]{Se-jin Oh}
\address[S.-j.~Oh]{Ewha Womans University Seoul, 52 Ewhayeodae-gil, Daehyeon-dong, Seodaemun-gu, Seoul, South Korea}
\email{sejin092@gmail.com}
\author[H.~Oya]{Hironori Oya}
\address[H.~Oya]{Department of Mathematical Sciences, Shibaura Institute of Technology, 307 Fukasaku, Minuma-ku, Saitama-shi, Saitama, 337-8570, Japan}
\email{hoya@shibaura-it.ac.jp}
\subjclass[2020]{17B37, 20G42, 81R50, 17B10, 17B67}
\begin{document} 

\maketitle

\begin{abstract}
Let ($\fg,\sg)$ be a pair of complex finite-dimensional simple Lie algebras whose 
Dynkin diagrams are related by (un)folding, with $\sg$ being of simply-laced type.
We construct a collection of ring isomorphisms 
between the quantum Grothendieck rings 
of monoidal categories $\Cc_{\fg}$ and $\Cc_{\sg}$ of 
finite-dimensional representations over the quantum loop algebras of $\fg$ and  $\sg$ respectively.
As a consequence, we solve long-standing problems :  the positivity of the analogs of Kazhdan-Lusztig polynomials
and the positivity of the structure constants of the quantum Grothendieck rings for any non-simply-laced $\fg$. In addition, comparing our isomorphisms with the categorical relations 
arising from the generalized quantum affine Schur-Weyl dualities, we prove the analog of Kazhdan-Lusztig conjecture (formulated in \cite{Hernandez04}) for simple modules in remarkable monoidal subcategories of $\Cc_{\fg}$ for any non-simply-laced $\fg$, and for 
any simple finite-dimensional modules in $\Cc_{\fg}$ for $\fg$ of type $\mathrm{B}_n$.
In the course of the proof we obtain and combine several new ingredients. In particular we establish a quantum analog of $T$-systems,
and also we generalize the isomorphisms of \cite{HL15, HO19} to all $\fg$ in a unified way, that is isomorphisms between subalgebras of the quantum group of $\sg$ and subalgebras of the quantum Grothendieck ring of $\Cc_\fg$. 
\end{abstract}

\tableofcontents

\section{Introduction}

\subsection{}
For a complex finite-dimensional simple Lie algebra $\fg$,
the associated quantum loop algebra $U_{q}(L\fg)$ 
serves as a natural quantum affinization of $\fg$.
It can be obtained as a subquotient of the quantum affine algebra $U_q(\widehat{\fg})$
corresponding to the level zero representations and hence inherits a structure of Hopf algebra over $\ol{\Q(q)}$. 
In particular, finite-dimensional representations over $U_{q}(L\fg)$
form a rigid monoidal category $\Cc_{\fg}$, whose structure is quite intricate.
For example, in contrast to the classical finite-dimensional representation theory of $\fg$, 
it is neither semisimple as an abelian category nor braided as a monoidal category.
Since the naissance of quantum groups in the middle of the 80s in the context of solvable lattice models in physics,     
the category $\Cc_\fg$ has been intensively studied from various perspectives.

From the representation-theoretic point of view, many fundamental results were obtained in the 90s.
Chari-Pressley~\cite{CP, CP95} gave 
a complete classification of simple representations in $\Cc_\fg$ in terms of the Drinfel'd polynomials. 
After that, motivated by the theory of deformed $\mathcal{W}$-algebras, 
Frenkel-Reshetikhin~\cite{FR98} introduced the notion of $q$-characters of representations in $\Cc_{\fg}$ 
as a natural quantum loop analog of the usual characters for $\fg$.
Their basic properties were studied in a subsequent work by Frenkel-Mukhin~\cite{FM01}.
These results particularly imply that the Grothendieck ring $K(\Cc_\fg)$ is a commutative polynomial ring
generated by the classes of \emph{fundamental modules},
whose $q$-characters are computable by Frenkel-Mukhin's combinatorial algorithm. 
These fundamental modules also have crystal bases \cite{Kas02} (up to a spectral parameter shift). Moreover, every simple module $L(m)$ in $\Cc_\fg$ can be obtained as a 
head of a unique ordered tensor product of fundamental modules, 
called \emph{the standard module} $M(m)$.
Here $m$ belongs to an ordered set $\cM$ of \emph{dominant monomials} which serves as a parameter set for simple modules in $\Cc_{\fg}$
(which is comparable with the set of Drinfel'd polynomials). 

In this paper, we are particularly interested in the next step : the fundamental problem to determine 
the dimensions and $q$-characters of simple modules.
One possible approach is to compute the $q$-characters of simple modules from those of standard modules
with \emph{a Kazhdan-Lusztig type algorithm} explained below.


\subsection{}
When $\fg$ is simply-laced (i.e.,~of type $\mathrm{ADE}$), 
a remarkable Kazhdan-Lusztig type algorithm was established by Nakajima based on the geometry of quiver varieties~\cite{Nakajima01q, Nakajima04}. 
This algorithm computes the Jordan-H\"{o}lder multiplicity 
$P_{m,m'}$ of the simple module $L(m')$ occurring in the standard module $M(m)$.
Since we have
$$
[M(m)] = [L(m)] + \sum_{m' \in \cM\colon m' < m} P_{m,m'} [L(m')] 
$$ 
in the Grothendieck ring $K(\Cc_{\fg})$, this algorithm enables us to compute all the simple $q$-characters in principle.  
It returns the multiplicity $P_{m,m'}$ as the specialization at $t=1$ of a certain positive polynomial $P_{m,m'}(t) \in t\Z_{\ge 0}[t]$,
which can be seen as an analog of Kazhdan-Lusztig polynomial. 
These polynomials $P_{m,m'}(t)$ are constructed from the \emph{quantum Grothendieck ring} $\cK_t(\Cc_{\fg})$ 
which is a $t$-deformation of the Grothendieck ring $K(\Cc_{\fg})$~\cite{Nakajima04, VV03}. 
Here we emphasize that the quiver varieties play an essential role to guarantee the validity of the algorithm $P_{m,m'}(1) = P_{m,m'}$ and the positivity of $P_{m,m'}(t)$.

When $\fg$ is non-simply-laced, the above theory is not applicable
for the absence of a fully developed theory of quiver varieties.  
However, one can still formulate a conjectural Kazhdan-Lusztig type algorithm for general $\fg$ as it is possible to construct the quantum Grothendieck ring $\cK_{t}(\Cc_{\fg})$ in a purely algebraic way. 
This is what the second named author established in \cite{Hthese, Hernandez04}. 
In this algebraic setting, a certain quantum torus $\cY_t$ is obtained from a different construction involving formal power series of elements in the quantum affine algebra which can be seen as vertex operators. Then, mimicking 
\cite{Nakajima04, VV03}, the quantum Grothendieck ring $\cK_{t}(\Cc_{\fg})$ is defined inside $\cY_t$ 
as a $\Z[t^{\pm 1/2}]$-subalgebra whose specialization at $t=1$ is identical to $K(\Cc_\fg)$ via the $q$-character map 
(the purely algebraic proof of the existence of $\cK_{t}(\Cc_{\fg})$ in \cite{Hernandez04} works in non simply-laced types as well). By the same arguments as in \cite{Nakajima04}, $\cK_{t}(\Cc_{\fg})$ has a standard $\Z[t^{\pm 1/2}]$-basis $\{ E_t(m) \}_{m \in \cM}$, whose members are called 
\emph{the $(q,t)$-characters of standard modules}. 
Under the specialization $\cK_t(\Cc_\fg) \to K(\Cc_{\fg})$,
the element $E_t(m)$ corresponds to the class $[M(m)]$ and hence the name. 
As an analog of the Kazhdan-Lusztig basis,
we can construct a canonical $\Z[t^{\pm 1/2}]$-basis $\{ L_t(m) \}_{m \in \cM}$ of $\cK_t(\Cc_\fg)$
characterized by the invariance under a standard bar-involution and the unitriangular property   
$$
E_t(m) = L_t(m) + \sum_{m' \in \cM\colon m' < m} P_{m,m'}(t) L_t(m')
$$ 
for some $P_{m,m'}(t) \in t\Z[t]$. The polynomials $P_{m,m'}(t)$ are nothing but the desired analogs of Kazhdan-Lusztig polynomials.

\subsection{}
With the above construction, the following conjectures naturally arise.

\begin{Conj}[Analog of Kazhdan-Lusztig conjecture, {\cite[Conjecture 7.3]{Hernandez04}}] \label{Conj0:KL}
Under the specialization $\cK_t(\Cc_\fg) \to K(\Cc_{\fg})$, the element $L_t(m)$ corresponds to 
the class $[L(m)]$ for any $m \in \cM$. In particular, we have $P_{m,m'}(1) = P_{m,m'}$ for any $m,m' \in \cM$.   
\end{Conj}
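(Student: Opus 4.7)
The strategy is to reduce the non-simply-laced case to the simply-laced one, where Conjecture~\ref{Conj0:KL} is already known thanks to Nakajima's geometric realization via quiver varieties. Concretely, given a pair $(\fg,\sg)$ related by (un)folding with $\sg$ of simply-laced type, I would aim to construct a ring homomorphism
\[
\Phi \colon \cK_t(\Cc_\fg) \longrightarrow \cK_t(\Cc_\sg)
\]
(possibly after restricting to suitable subcategories on both sides) which intertwines the bar-involutions and sends the standard basis $\{E_t(m)\}$ to standard basis elements on the simply-laced side, via some combinatorially explicit bijection $m \mapsto m^\flat$ on the relevant sets of dominant monomials.

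The first main step is the construction of $\Phi$. Following the blueprint announced in the abstract, the natural route is to identify certain subalgebras of $\cK_t(\Cc_\fg)$ with subalgebras of the quantum group $U_q(\sg)$, thereby generalizing the isomorphisms of \cite{HL15, HO19} in a uniform way, and then to import the known identification of dual canonical basis elements with classes of simple modules on the simply-laced side. The quantum analog of $T$-systems announced in the abstract plays the role of providing the key multiplicative relations that control the images of fundamental modules, and hence, by the known ordered-product factorization of standard modules, the images of all $E_t(m)$.

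Once $\Phi$ is known to send $E_t(m)$ to $E_t(m^\flat)$ and to intertwine the bar-involutions, the unitriangular characterization of the canonical basis automatically yields $\Phi(L_t(m)) = L_t(m^\flat)$ together with the matching of Kazhdan--Lusztig polynomials $P_{m,m'}(t) = P_{m^\flat,(m')^\flat}(t)$. On the simply-laced side, Nakajima's theorem gives $L_t(m^\flat)|_{t=1} = [L(m^\flat)]$. To transfer this identity back to $\Cc_\fg$, one uses the generalized quantum affine Schur--Weyl dualities, which produce a monoidal correspondence between the relevant subcategories of $\Cc_\sg$ and $\Cc_\fg$ sending simple modules to simple modules along $m \mapsto m^\flat$. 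Pulling back then yields the desired equality $L_t(m)|_{t=1} = [L(m)]$, whence also $P_{m,m'}(1) = P_{m,m'}$.

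The main obstacle is the very existence of $\Phi$ and the proof that it respects both the standard bases and the bar-involution. Since there is no geometric model of $\cK_t(\Cc_\fg)$ available in the non-simply-laced setting, the comparison of two $t$-deformed rings---whose underlying quantum tori even have different ranks---must be carried out by purely algebraic and combinatorial methods: quantum $T$-systems, tight control of fundamental modules and their commutation relations, and the matching with subalgebras of $U_q(\sg)$. A secondary difficulty is passing from the ``remarkable monoidal subcategories'' reachable by the Schur--Weyl bridge to the whole of $\Cc_\fg$; as the abstract indicates, this final step is only fully completed for $\fg$ of type $\mathrm{B}_n$.
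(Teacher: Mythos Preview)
Your high-level strategy is the right one and matches the paper's approach: build an isomorphism between the quantum Grothendieck rings, show it matches the simple $(q,t)$-characters, and then compare its specialization at $t=1$ with an independently known ring isomorphism (coming from generalized Schur--Weyl duality, specifically \cite{KKO19} in type $\mathrm{B}$) that is already known to respect simple classes.

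There is, however, a genuine technical gap in your middle step. You assert that the isomorphism $\Phi$ sends $E_t(m)$ to $E_t(m^\flat)$, and then deduce the matching of the $L_t$-bases from the bar-involution and the unitriangular characterization. This is too optimistic: the isomorphism $\Psi$ constructed in the paper does \emph{not} preserve the standard $(q,t)$-characters globally. Indeed, the paper notes explicitly that the specialization of $\Psi$ at $t=1$ ``does not respect the classes of fundamental modules nor of standard modules.'' What is true is the following weaker fact (Theorem~\ref{Thm:Psi}): within each subcategory $\Cc_\cQ$ the HLO isomorphism $\Phi_\cQ$ does match the dual PBW basis with the standard $(q,t)$-characters, but to extend to all of $\cK_t$ one uses the factorization of Proposition~\ref{Prop:facE} along the ``slices'' $\cD^k\Cc_\cQ$, and across slices the standard bases only match up to a unitriangular change with coefficients in $t\Z[t]$. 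Concretely, the paper proves
\[
\Psi\bigl(E_t^{(1)}(m)\bigr) - E_t^{(2)}(\psi(m)) \in \sum_{m'} t\Z[t]\,E_t^{(2)}(m'),
\]
not an equality. This weaker statement, combined with bar-invariance, is still enough to force $\Psi(L_t^{(1)}(m)) = L_t^{(2)}(\psi(m))$ by the uniqueness in Theorem~\ref{Thm:qtch}, but the argument is more delicate than the one you sketch and relies essentially on the $\sQ$-grading by $\cQ$-weights (Proposition~\ref{Prop:Qwt}) to control the cross-terms.

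For the final descent to $t=1$ in type $\mathrm{B}$, the paper does not directly invoke a Schur--Weyl functor on the whole of $\Cc_\Z$; rather it checks that the explicit bijection $\psi$ on dominant monomials (computed already in \cite{HO19}) coincides with the bijection underlying the Kashiwara--Kim--Oh isomorphism of Grothendieck rings. Since both $\Psi|_{t=1}$ and the KKO isomorphism are ring maps determined by their values on fundamental modules, this comparison is immediate once $\psi$ is known on fundamentals; the conjecture for all simple modules then follows.
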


\begin{Conj}[Positivities in the quantum Grothendieck ring] \label{Conj0:p}\hfill
\begin{enumerate}
\renewcommand{\theenumi}{\rm P\arabic{enumi}}
\item \label{p:KL} The polynomial $P_{m,m'}(t)$ has non-negative coefficients, i.e., $P_{m,m'}(t) \in t \Z_{\ge 0}[t]$. 
\item \label{p:sc} The structure constants of $\cK_{t}(\Cc_{\fg})$ with respect to the canonical basis $\{ L_t(m) \}_{m \in \cM}$ have non-negative coefficients.
\item \label{p:ch} In the quantum torus $\cY_t$, the elements $L_t(m)$ have non-negative coefficients. 
\end{enumerate}
\end{Conj}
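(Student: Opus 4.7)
The plan is to reduce all three positivities to the simply-laced case, where they are known from Nakajima's geometric realization of $\cK_t(\Cc_\sg)$ via perverse sheaves on graded quiver varieties (giving P1 and P2 in simply-laced type), together with the fact that standard $(q,t)$-characters and their Kazhdan--Lusztig basis are known there to lie in $\Z_{\ge 0}[t^{\pm 1/2}]\cdot \cY_t$-monomials (giving P3). The concrete vehicle is a $\Z[t^{\pm 1/2}]$-algebra isomorphism $\Phi \colon \cK_t(\Cc_\fg) \to \cK_t(\Cc_\sg)^{\sigma}$, where $\sg$ is a simply-laced Lie algebra whose Dynkin diagram folds onto that of $\fg$ via a diagram automorphism $\sigma$, and the target is a suitable $\sigma$-invariant subring of $\cK_t(\Cc_\sg)$. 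The existence of such an isomorphism is promised by the abstract and generalizes the earlier isomorphisms of \cite{HL15, HO19} to a unified framework valid for all $\fg$.

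The properties to establish for $\Phi$ are compatibility with the three features underlying the conjecture: the bar involution, the dominant-monomial partial order on $\cM$, and the standard basis $\{E_t(m)\}_{m\in\cM}$. Once $\Phi$ intertwines the bar involutions and sends each $E_t(m)$ to a unitriangular $\Z[t^{\pm 1/2}]$-combination of simply-laced standard elements dominated by the image of $m$, the defining characterization of the canonical basis (bar-invariance together with unitriangularity against $\{E_t\}$) forces $\Phi(L_t(m))$ to be a canonical basis element on the simply-laced side. Positivities P1, P2 and P3 in $\cK_t(\Cc_\fg)$ then follow by transporting the known simply-laced positivities along $\Phi$ and $\Phi^{-1}$.

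To construct $\Phi$, I would proceed in two stages. First, establish a quantum $T$-system satisfied by the $(q,t)$-characters of Kirillov--Reshetikhin modules in $\cK_t(\Cc_\fg)$ and match it with the $\sigma$-folded $T$-system in $\cK_t(\Cc_\sg)$. This provides the generators-and-relations input, since at $t=1$ the Grothendieck rings $K(\Cc_\fg)$ and $K(\Cc_\sg)$ are polynomial rings on fundamental classes and are linked through the KR $T$-systems. Second, lift the matching to the $t$-deformed setting via truncations on height slices and a presentation of $\cK_t(\Cc_\fg)$ in terms of a distinguished subalgebra identified with a quantum nilpotent subalgebra of $U_q(\sg)$, generalizing \cite{HL15, HO19}. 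Compatibility with spectral-parameter shifts then propagates the isomorphism from a finite monoidal subcategory to all of $\Cc_\fg$, and yields the announced \emph{collection} of isomorphisms, one per compatible choice of shift.

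The hard part is the matching of standard and canonical bases. Even granting the algebra isomorphism and bar-compatibility, $E_t(m)$ is defined through a delicate combinatorial procedure and its image need not be another $E_t$ on the nose; the error must be controlled against the partial order on $\cM_{\sg}$, and the $\sigma$-folding map from $\cM$ into $\cM_{\sg}$ need not surject onto the $\sigma$-fixed dominant monomials. The new quantum $T$-system is the main lever for the first issue, since it expresses arbitrary standard elements in terms of KR-ones for which the matching is directly visible, while the second issue is handled by using several isomorphisms in the collection to cover all $\sigma$-fixed sectors. Once both are in place the positivity statements P1, P2, P3 fall out uniformly from their simply-laced counterparts.
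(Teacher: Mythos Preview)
Your plan has two genuine problems.

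First, the target of the isomorphism is misconceived. The paper's isomorphism $\Psi$ is between the \emph{full} quantum Grothendieck rings $\cK_t(\Cc_{\Z,\sg})\simeq\cK_t(\Cc_{\Z,\fg})$, not onto a $\sigma$-invariant subring $\cK_t(\Cc_\sg)^\sigma$. The two rings have the same size: both are polynomial rings on infinitely many fundamental classes, and the bijection of parameter sets comes not from $\sigma$-folding of dominant monomials but from the combinatorics of Q-data and twisted Auslander--Reiten quivers. In particular there is no ``$\sigma$-folding map from $\cM$ into $\cM_\sg$'' in the argument, and the difficulties you anticipate about covering $\sigma$-fixed sectors are not the actual obstacles. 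The real work is (i) an HLO-type isomorphism $\cK_{t,\cQ}\simeq\cA_v[N_-]$ for each Q-datum $\cQ$, obtained by matching quantum $T$-systems with determinantal identities, and (ii) a uniform presentation of the localized $\cK_t$ by quantum Serre and quantum Boson relations, which shows that all such $\cK_t$ with the same underlying $\Delta$ are isomorphic. The matching of canonical bases then follows from bar-compatibility together with a factorization of $E_t(m)$ along the duality shifts $\fD^k$, not from any triangularity against a folded order.

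Second, and more seriously, (\ref{p:ch}) does \emph{not} follow from the ring isomorphism, and the paper does not claim it for general $\fg$. The isomorphism $\Psi$ lives on $\cK_t$ and has no extension to the ambient quantum tori: the quantum torus $\cY_t$ for $\fg$ is built from variables $\tY_{i,p}$ indexed by $\hI$, which is intrinsically different from the one for $\sg$, and $\Psi$ does not send commutative monomials to commutative monomials. So knowing that $L_t^{\sg}(m')$ has positive monomial expansion tells you nothing about the monomial expansion of $L_t^{\fg}(\psi^{-1}(m'))$. The paper establishes (\ref{p:KL}) and (\ref{p:sc}) for all $\fg$ via $\Psi$ (these are purely internal to $\cK_t$), but proves (\ref{p:ch}) only for type $\mathrm{B}$, and by a different route: one first verifies the analog of the Kazhdan--Lusztig conjecture $\evt(L_t(m))=\chi_q(L(m))$ using the Kashiwara--Kim--Oh categorical equivalence specific to type $\mathrm{B}$, then deduces $L_t=F_t$ for KR modules, and finally uses a truncation trick together with (\ref{p:sc}) to bootstrap monomial positivity for arbitrary simples. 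Your outline would need to supply a replacement for this last chain of arguments before it could address (\ref{p:ch}).
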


Motivated by Conjecture~\ref{Conj0:KL}, we refer to the element $L_t(m)$ as \emph{the $(q,t)$-character of the simple module} $L(m)$. 
Since they can be computed algorithmically as in the usual Kazhdan-Lusztig theory,
Conjecture~\ref{Conj0:KL} enables us to compute all the simple $q$-characters algorithmically once it is verified. 
{\rm Conjecture~\ref{Conj0:p}~(\ref{p:KL})} was checked in several small examples and expected generally in  \cite{Hthese}.
 As mentioned before,
when $\fg$ is simply-laced, all of these conjectures are known to be true by \cite{Nakajima04, VV03} using the geometry of quiver varieties.
On the other hand,  they are still unsolved for non-simply-laced $\fg$.

In this paper, using the new ingredients that we explain below, we improve this situation with the following results.

\begin{Thm}[= Corollary~\ref{Cor:pos_st} + Corollary~\ref{Cor:PsiAB} + Theorem~\ref{Thm:posqtch}]\hfill \label{Thm0:main}
\begin{enumerate}
\item For general $\fg$, {\rm Conjecture~\ref{Conj0:p}~(\ref{p:KL})} and {\rm (\ref{p:sc})} hold.
\item If $\fg$ is of type $\mathrm{B}$, {\rm Conjecture~\ref{Conj0:KL}} and {\rm Conjecture~\ref{Conj0:p}~(\ref{p:ch})} hold as well.
\end{enumerate}
\end{Thm}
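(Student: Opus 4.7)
The overall plan is to \emph{propagate} positivity from the simply-laced case, where Nakajima's and Varagnolo--Vasserot's geometric arguments apply, to the non-simply-laced case via a ring isomorphism between appropriate quantum Grothendieck rings, and then, in type $\mathrm{B}$, to combine this isomorphism with categorical equivalences coming from the generalized quantum affine Schur--Weyl dualities.

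First, for a non-simply-laced $\fg$ obtained by folding a simply-laced $\sg$ along a diagram automorphism $\sigma$, I would construct a $\Z[t^{\pm 1/2}]$-algebra isomorphism $\Phi$ between a $\sigma$-invariant subalgebra of $\cK_t(\Cc_\sg)$ and $\cK_t(\Cc_\fg)$ (built up on subcategory levels that exhaust the whole ring). The construction would proceed by first generalizing, uniformly in $\fg$, the Hernandez--Leclerc and Hernandez--Oya isomorphisms between natural subalgebras of $\cK_t(\Cc_\fg)$ and subalgebras of the quantum group $U_q(\sg)$; then, since both sides of the prospective $\Phi$ would be identified with the same subalgebra of $U_q(\sg)$, one gets $\Phi$ by composition. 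A prerequisite here is a \emph{quantum $T$-system}: a family of recursive relations in $\cK_t(\Cc_\fg)$ that algebraically characterize $(q,t)$-characters of Kirillov--Reshetikhin-type modules and provide enough rigidity for $\Phi$ to exist and match generators on both sides.

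Next, I would show that $\Phi$ maps the standard basis $\{E_t(m)\}_{m \in \cM^\sg, \ \sigma\text{-invariant}}$ to the standard basis $\{E_t(m)\}_{m \in \cM^\fg}$ and intertwines the bar-involutions, hence sends the canonical basis $\{L_t(m)\}$ on the $\sg$-side to that on the $\fg$-side, and preserves the unitriangular expansions. Consequently, the Kazhdan--Lusztig type polynomials satisfy $P^\fg_{m,m'}(t) = P^\sg_{\Phi(m),\Phi(m')}(t)$ and the structure constants of $\cK_t(\Cc_\fg)$ in the basis $\{L_t(m)\}$ equal those of $\cK_t(\Cc_\sg)$ in its canonical basis. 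Since $\sg$ is of type $\mathrm{ADE}$, the results of \cite{Nakajima04, VV03} give positivity for $\sg$, so $\Phi$ transports this positivity: assertions (\ref{p:KL}) and (\ref{p:sc}) of Conjecture~\ref{Conj0:p} follow in all types, yielding part~(1) of the theorem.

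For part~(2), where $\fg$ is of type $\mathrm{B}$, the plan is to compare $\Phi$ with the categorical relations produced by the generalized quantum affine Schur--Weyl dualities (à la Kang--Kashiwara--Kim), which categorify an identification between appropriate monoidal subcategories of $\Cc_\fg$ and $\Cc_\sg$ for $\sg$ of type $\mathrm{A}_{2n-1}$ or $\mathrm{D}_{n+1}$. Since for $\sg$ simply-laced the Kazhdan--Lusztig conjecture is known (i.e., $L_t(m)$ specializes to $[L(m)]$ at $t=1$), and the Schur--Weyl functor matches simples with simples while $\Phi$ matches $L_t(m)$ with $L_t(m)$, one concludes that $L_t(m)$ specializes to $[L(m)]$ for $\fg$ of type $\mathrm{B}$, which is Conjecture~\ref{Conj0:KL}. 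Positivity (\ref{p:ch}) of the coefficients of $L_t(m)$ in the quantum torus $\cY_t$ is deduced by combining the positivity of ordinary $q$-characters of simples (classical) with the refined structure coming from $\Phi$ and the categorification, lifting the equality $\evt L_t(m) = [L(m)]$ to a positivity statement at the $t$-deformed level.

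The hard part, in my view, is establishing the quantum $T$-system identities inside $\cK_t(\Cc_\fg)$ for non-simply-laced $\fg$, together with proving that $\Phi$ preserves the standard basis: in the absence of quiver varieties one cannot appeal to geometry, so these identities must be wrung out of the intrinsic algebraic presentation of $\cK_t(\Cc_\fg)$ via $q$-characters, formal power series of screening operators, and a careful bookkeeping of the folding combinatorics. A secondary subtlety in the type $\mathrm{B}$ step is to reconcile the combinatorial parametrization of simple modules on the two sides of the Schur--Weyl duality so as to match $L_t(m)$'s pointwise, and not merely as unordered bases.
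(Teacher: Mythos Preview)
Your high-level strategy---propagate positivity from $\sg$ to $\fg$ via a ring isomorphism, then in type $\mathrm{B}$ compare with categorical Schur--Weyl dualities---matches the paper. But two of your intermediate claims are wrong, and the argument as written would not go through.

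First, the isomorphism is \emph{not} between a $\sigma$-invariant subalgebra of $\cK_t(\Cc_\sg)$ and $\cK_t(\Cc_\fg)$; it is between the \emph{full} rings $\cK_t(\Cc_{\Z,\sg}) \simeq \cK_t(\Cc_{\Z,\fg})$. The index sets $\cM^\sg$ and $\cM^\fg$ are both free commutative monoids on countably many generators and are matched by a nontrivial bijection $\psi$ coming from Q-data combinatorics, not by taking $\sigma$-fixed points. Indeed, $\psi$ can send a single variable $Y_{\im,p}$ on the $\sg$-side to a product of two $Y$'s on the $\fg$-side (see the paper's explicit formulas in type $\mathrm{B}$), so there is no ``$\sigma$-invariant'' picture here.

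Second, and more seriously, the isomorphism $\Psi$ does \emph{not} send the standard basis $\{E_t(m)\}$ to the standard basis. The paper explicitly notes that even at $t=1$ the isomorphism does not respect classes of fundamental or standard modules. What is actually proved is the weaker unitriangularity
\[
\Psi\bigl(E_t^{(1)}(m)\bigr) - E_t^{(2)}(\psi(m)) \in \sum_{m'} t\Z[t]\, E_t^{(2)}(m'),
\]
obtained by factorizing $E_t(m)$ along the stratification $\cM = \coprod_k \fD^k \cM_\cQ$ and using the HLO isomorphism on each piece. Combined with $\ol{(\cdot)}$-equivariance of $\Psi$ and the Kazhdan--Lusztig-type uniqueness, this \emph{forces} $\Psi(L_t^{(1)}(m)) = L_t^{(2)}(\psi(m))$. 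Your claimed equality $P^\fg_{m,m'}(t) = P^\sg_{\Phi(m),\Phi(m')}(t)$ is therefore not immediate (and likely false as stated); positivity of the $P_{m,m'}(t)$ instead follows from positivity of structure constants once you know $\Psi$ respects the $L_t$-bases.

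For part~(2)(\ref{p:ch}), your sketch (``lift classical $q$-character positivity via $\Phi$'') is too vague to work. The paper's argument is more delicate: one first shows $L_t(m) = F_t(m)$ for all KR modules in type $\mathrm{B}$ (using the now-established Conjecture~\ref{Conj0:KL} to run an induction along quantum $T$-systems), then expresses an arbitrary $L_t(m)$, after multiplying by a large product $\cL$ of KR $(q,t)$-characters, as a $\Z_{\ge 0}[t^{\pm 1/2}]$-combination of $L_t$'s via the structure-constant positivity; comparing truncations then yields coefficient positivity in $\cY_t$.
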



Our proof of Theorem~\ref{Thm0:main} relies in part on the isomorphisms established in this paper between quantum Grothendieck rings for non-simply-laced quantum loop algebras and their unfolded simply-laced ones. 

Note that in their recent study of the non-symmetric quantized Coulomb branches  \cite{NW19, NakajimaQCB}, Nakajima and Weekes established 
a parametrization of simple 
modules in non-simply-laced types in terms of simple modules in simply-laced types and proved that $q$-characters of simple modules of non-symmetric Kac-Moody algebras are obtained from those of unfolded symmetric ones by imposing a certain modulus condition. 
In particular, they solved the character problem for simple modules by a different approach.
Although they also used a connection between non-symmetric and symmetric quantum affine algebras, it seems to be different from the one we used in this paper.  
      

\subsection{}
In what follows, we outline our proof of the above main results. 
First of all, we notice that the description of the (quantum) Grothendieck ring of the whole category $\Cc_\fg$ can be reduced
to that of a certain rigid monoidal subcategory $\Cc_{\Z, \fg} \subset \Cc_{\fg}$ introduced in \cite{HL10}
(see Section~\ref{ssec:CZ}). 
Roughly speaking, $\Cc_{\Z, \fg}$ is a Serre subcategory of $\Cc_{\fg}$ generated by the simple modules 
whose Drinfel'd polynomials have their roots at some integral powers of $q$ only.
From now on, we focus on this skeleton subcategory $\Cc_{\Z, \fg}$.\footnote{In the main body of this paper, 
we only consider the quantum Grothendieck ring of the subcategory $\Cc_{\Z, \fg} \subset \Cc_{\fg}$ from the beginning.}

Now we assume that $\fg$ is of non-simply-laced type.
Then we choose another simple Lie algebra $\sg$ of simply-laced type whose Dynkin diagram is obtained by \emph{unfolding} the Dynkin diagram of $\fg$.  
In other words, we fix a pair $(\fg, \sg)$ of simple Lie algebras whose Dynkin types are either $(\mathrm{B}_n, \mathrm{A}_{2n-1})$, 
$(\mathrm{C}_n, \mathrm{D}_{n+1})$, $(\mathrm{F}_{4}, \mathrm{E_6})$ or $(\mathrm{G}_2, \mathrm{D}_4)$.
To prove the positivities (\ref{p:KL}) and (\ref{p:sc}) for non-simply-laced $\fg$, we establish the following.
\begin{Thm}[$\doteq$ Theorem~\ref{Thm:Psi}] \label{Thm0:CZisom}
There exists an isomorphism of $\Z[t^{\pm 1/2}]$-algebras
\begin{equation} \label{eq:CZisom}
\cK_{t}(\Cc_{\Z, \sg}) \simeq \cK_{t}(\Cc_{\Z, \fg})
\end{equation}
which extends an isomorphism \eqref{eq:CQisom2} explained below and which induces a bijection between the $(q,t)$-characters of simple modules.
\end{Thm}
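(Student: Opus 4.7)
My plan is to construct the isomorphism~\eqref{eq:CZisom} by first establishing it on a common set of generators, verifying it preserves the defining relations (in particular a quantum analog of $T$-systems), and then propagating the correspondence of $(q,t)$-characters of simple modules from the standard basis to the canonical basis using bar-invariance and unitriangularity.

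First I would identify a family of generators for both quantum Grothendieck rings that is adapted to the unfolding $\sg \twoheadrightarrow \fg$. The natural candidates are the $(q,t)$-characters of Kirillov--Reshetikhin modules indexed by data compatible with the $\sigma$-action on the $\sg$-side Dynkin diagram: a vertex of the $\fg$ side corresponds to a $\sigma$-orbit on the $\sg$ side, with a matching rule on spectral parameters built from appropriate powers of $q$ reflecting the ratio of short and long root lengths. The already available isomorphism~\eqref{eq:CQisom2} on subalgebras, coming from the generalization of~\cite{HL15, HO19} announced in the abstract, pins down the images of a sufficiently rich initial collection of fundamental modules, and I would use this as the seed of the construction.

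Next I would extend this map to all Kirillov--Reshetikhin $(q,t)$-characters by invoking the quantum $T$-system established in the paper. Schematically, for each $\fg$ (resp.\ $\sg$) one has recursive identities of the form
\[
F_t(i,a,k)\cdot F_t(i,a',k) = F_t(i,a,k+1)\cdot F_t(i,a',k-1) + t^{\alpha}\prod_{j \sim i} F_t(j,b_{ij},k),
\]
uniquely determining every Kirillov--Reshetikhin $(q,t)$-character from the fundamentals. The compatibility of the two $T$-systems under the unfolding bijection on indices — together with the subalgebra isomorphism~\eqref{eq:CQisom2} — would then force the extension to be a well-defined ring homomorphism. Exhausting $\cK_t(\Cc_{\Z,\fg})$ by truncated subcategories of bounded ``height'' (each generated by finitely many Kirillov--Reshetikhin modules subject only to quantum $T$-system relations) promotes this to the full ring isomorphism. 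A delicate technical point here is the scaling of $t$-powers under the unfolding, since the quantum torus $\cY_t$ for non-simply-laced $\fg$ has asymmetric $t$-commutation at short versus long nodes; one must verify that the $q$-commutation matrices on the two sides are intertwined up to the standard rescalings already appearing on subalgebras.

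Finally, to obtain the bijection between $(q,t)$-characters of simple modules, I would exploit the two intrinsic characterizations of $L_t(m)$: bar-invariance and unitriangular expansion on the basis $\{E_t(m)\}$. The ring isomorphism already constructed should send standard $(q,t)$-characters to standard $(q,t)$-characters (these are explicit normalized ordered products of fundamentals, so this is a direct check on generators) and intertwine the two bar-involutions (determined by their action on generators and by the $q$-commutation data). Then an induction on the dominance order of dominant monomials, combined with the uniqueness of the bar-invariant lift of an element with prescribed leading term, forces each $L_t(m)$ on the $\sg$-side to be sent to $L_t(m')$ on the $\fg$-side under the induced bijection $\cM_\sg \leftrightarrow \cM_\fg$ on dominant monomials. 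The main obstacle I anticipate is the middle step: ensuring that the two sides admit genuinely parallel presentations by Kirillov--Reshetikhin generators and quantum $T$-system relations with no ``hidden'' extra relations, so that a candidate map defined on generators actually descends to an algebra morphism rather than merely to a collection of locally compatible assignments.
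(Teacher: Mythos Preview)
Your proposal has two genuine gaps, and both concern the passage from the subalgebra isomorphism~\eqref{eq:CQisom2} to the full ring.

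First, the extension mechanism you propose is not the one the paper uses, and it is not clear it can be made to work. Quantum $T$-systems are relations \emph{among KR modules}, and in the paper they are used only to establish the HLO isomorphism on the finite subcategory $\Cc_\cQ$ (by matching them with determinantal identities in $\cA_v[N_-]$). They say nothing about how fundamental modules in different ``slices'' $\cD^k\Cc_\cQ$ and $\cD^l\Cc_\cQ$ (with $k\neq l$) interact. The paper's route is to prove that the localized quantum Grothendieck ring $\cK_{\Q(t^{1/2})}$ is generated by $\{x^\cQ_{\im,k}=L_t(Y_{i,p})\}$ subject to (R1) quantum Serre relations within each slice and (R2)--(R3) \emph{quantum Boson relations} between slices. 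The Serre relations come from~\eqref{eq:CQisom2}; the Boson relations come from a completely different source, namely denominator formulas for normalized $R$-matrices and the resulting $\fd$-invariant computations (Section~\ref{sec:Rmat}, Lemmas~\ref{Lem:tBoson} and~\ref{Lem:comm3}). Since both $\cK_t^{(1)}$ and $\cK_t^{(2)}$ then satisfy the \emph{same} abstract presentation $\tcA_{\Q(t^{1/2})}(\Delta)$, the isomorphism is immediate. Your ``hidden extra relations'' worry is exactly the right instinct, and it is resolved by these Boson relations, not by $T$-systems.

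Second, your claim that $\Psi$ sends standard $(q,t)$-characters to standard $(q,t)$-characters is false, and the paper says so explicitly in the introduction. The point is that $E_t(m)$ is an ordered product of fundamentals ordered by the spectral parameter $p$, but $\Psi$ moves fundamentals according to the Q-coordinate bijections $\bphi_{\cQ^{(s)}}$, which scramble this ordering. What the paper proves instead (Theorem~\ref{Thm:Psi}) is the weaker unitriangularity
\[
\Psi\bigl(E_t^{(1)}(m)\bigr) - E_t^{(2)}(\psi(m)) \in \sum_{m'} t\Z[t]\,E_t^{(2)}(m'),
\]
and even this requires the nontrivial factorization of $E_t(m)$ into pieces from each slice (Proposition~\ref{Prop:facE}), which in turn rests on the commutation relations of Lemma~\ref{Lem:comm2}. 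Once this unitriangularity and bar-compatibility are in hand, your final induction argument is correct in spirit and is indeed how the paper concludes.
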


Once we establish an isomorphism (\ref{eq:CZisom}), 
it propagates the known positivity in $\cK_{t}(\Cc_{\Z, \sg})$ to the desired positivity in $\cK_{t}(\Cc_{\Z, \fg})$.
We note that such an isomorphism is not unique. 
In fact, we construct a collection of such isomorphisms (\ref{eq:CZisom}) labeled by certain combinatorial objects called \emph{Q-data}, 
which we explain later in \ref{Intro:CQ} (they lead to the same positivity results).   
The isomorphisms (\ref{eq:CQisom2}) concern subcategories and already depend on the choice of Q-data (see Theorem \ref{Thm0:HLO} below). 
Also we note that our isomorphisms are not deduced from the results in \cite{NW19, NakajimaQCB}.

The specialization at $t=1$ of the isomorphism (\ref{eq:CZisom}) yields an isomorphism between the usual Grothendieck rings
$$
K(\Cc_{\Z, \sg}) \simeq K(\Cc_{\Z, \fg}).
$$
We note that it is still non-trivial. For example, it does not respect the classes of fundamental modules nor of standard modules.
We conjecture that it induces a bijection between the classes of simple modules.
In this paper, we prove this conjecture when $\fg$ is of type $\mathrm{B}$ with a specific choice of Q-data.
Then Conjecture~\ref{Conj0:KL} for type $\mathrm{B}$ is obtained as a corollary. See~\ref{Intro:B}.
  
\subsection{}
Our main step to construct an isomorphism (\ref{eq:CZisom}) is to compare a presentation of the \emph{localized} quantum Grothendieck ring\footnote{
In the main body of the paper, it is denoted by $\cK_{\Q(t^{1/2})}$.}
$$
\cK_{t}(\Cc_{\Z, \fg})_{loc} \seq \cK_{t}(\Cc_{\Z, \fg}) \otimes_{\Z[t^{\pm 1/2}]} \Q(t^{1/2})
$$
of $\Cc_{\Z, \fg}$ 
with that of $\Cc_{\Z, \sg}$.

A presentation of $\cK_{t}(\Cc_{\Z, \sg})_{loc}$ for simply-laced $\sg$
was previously obtained by B.~Leclerc and the second named author in \cite{HL15},
which revealed that $\cK_{t}(\Cc_{\Z, \sg})_{loc}$ is isomorphic to the derived Hall algebra of the category $\Rep Q$ of representations of a Dynkin quiver $Q$ of the same type as $\sg$.  
Under this isomorphism, the cohomological degree shift $[1]$ in the derived category $D^b(\Rep Q)$ corresponds to 
the left dual functor $\cD$ in the rigid monoidal category $\Cc_{\Z, \sg}$. 
If we restrict it to the subalgebras corresponding to the heart $\Rep Q$ of the standard $t$-structure in $D^b(\Rep Q)$, 
we get an isomorphism 
\begin{equation} \label{eq:CQisom}
\cK_t(\Cc_Q) \simeq \cA_{t}[N_-],
\end{equation}
where $\cA_{t}[N_-]$ is the quantum unipotent coordinate algebra associated with $\sg$, and
$\Cc_{Q}$ is a certain monoidal subcategory of $ \Cc_{\Z, \sg}$ defined by using
the Auslander-Reiten (AR) quiver $\Gamma_Q$ of $\Rep Q$.
Moreover, the isomorphism (\ref{eq:CQisom}) sends the basis of $\cK_t(\Cc_Q)$ formed by the simple $(q,t)$-characters 
to the dual canonical basis of $\cA_{t}[N_-]$. 

Roughly speaking, the proof in~\cite{HL15} 
consists of the following two steps: 
\begin{itemize}
\item[(i)] establish the isomorphism (\ref{eq:CQisom}),
\item[(ii)] investigate the relations among the subalgebras 
$\cK_{t}(\cD^{k}\Cc_{Q}) \subset \cK_{t}(\Cc_{\Z, \sg})$ for $k \in \Z$.
\end{itemize}
Note that the subcategory $\cD^{k}\Cc_{Q} \subset \Cc_{\Z, \sg}$ corresponds to 
the shifted heart $(\Rep Q)[k]$ in $D^b(\Rep Q)$
and in particular we have $\cK_t(\cD^k\Cc_Q) \simeq \cK_t(\Cc_Q)$ as $\Z[t^{\pm 1/2}]$-algebras. 
As a result, we find two kinds of relations: \emph{the quantum Serre relations} from (i) and \emph{the quantum Boson relations} from (ii),
which yields the desired presentation of $\cK_{t}(\Cc_{\Z, \sg})_{loc}$.   

In this paper, we generalize the above story in a unified way including non-simply-laced cases.
As a result, we find that the localized Grothendieck rings $\cK_{t}(\Cc_{\Z, \fg})_{loc}$ and $\cK_{t}(\Cc_{\Z, \sg})_{loc}$
share the same presentation, 
which is finally enhanced to the desired isomorphism (\ref{eq:CZisom}). 
Our proof also consists of two steps analogous to (i) and (ii) above.  

\subsection{} \label{Intro:CQ}
To generalize the step (i), we need to extend the definition of the subcategory $\Cc_Q$ to the general case.  
Such a generalization was considered by the third named author and his collaborators in~\cite{KO17, OS19b, OhS19}
using the \emph{twisted AR quiver} $\Gamma_{\cQ}$ associated with a \emph{Q-datum} $\cQ$ for $\fg$,
instead of the AR quiver $\Gamma_{Q}$ associated with a Dynkin quiver $Q$ in the last paragraph. 
The twisted AR quiver is a quiver encoding some combinatorial information 
of a certain commutation class in the Weyl group of $\sg$ (not $\fg$) 
called a \emph{twisted adapted class}. It was 
originally introduced by U.~R.~Suh and the third named author in \cite{OS19b}.
The notion of a Q-datum is a combinatorial generalization of a Dynkin quiver (with a height function) 
introduced by the first and the third named authors in \cite{FO21}
to give a unified description of these twisted AR quivers.  
With these notions, we can define the monoidal subcategory 
$\Cc_{\cQ} \subset \Cc_{\Z, \fg}$ associated with each Q-datum $\cQ$ for $\fg$  in a suitable way.

As a generalization of the isomorphism (\ref{eq:CQisom}), 
we prove the following.
\begin{Thm}[$\doteq$ Corollary~\ref{Cor:HLO}] \label{Thm0:HLO}
For each Q-datum $\cQ$ for $\fg$, there exists an isomorphism of $\Z[t^{\pm 1/2}]$-algebras
\begin{equation} \label{eq:CQisom2}
\cK_t(\Cc_\cQ) \simeq \cA_{t}[N_-],
\end{equation}
which sends the basis of $\cK_t(\Cc_\cQ)$ formed by the simple $(q,t)$-characters 
to the dual canonical basis of $\cA_{t}[N_-]$, and the basis formed by the standard $(q,t)$-characters to the normalized dual PBW-type basis associated with a certain reduced word corresponding to $\cQ$.  
\end{Thm}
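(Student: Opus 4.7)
The plan is to build the isomorphism along the two steps already outlined for the simply-laced case in \cite{HL15}, with the twisted AR quiver $\Gamma_\cQ$ playing the role of the AR quiver $\Gamma_Q$. Starting from the Q-datum $\cQ$ for $\fg$, I would first pass to a reduced word $\bfi_\cQ$ for the longest element $w_0$ in the Weyl group of the simply-laced $\sg$ by reading a compatible linearization of the sinks of $\Gamma_\cQ$; the invariance of the resulting PBW basis within the \emph{twisted adapted class} is precisely what \cite{OS19b,FO21} guarantees, so the choice inside this class will not matter. The vertices of $\Gamma_\cQ$ simultaneously parametrize the positive roots of $\sg$ (hence the PBW root vectors $F(\alpha)$ in $\cA_t[N_-]$) and a family of fundamental modules in $\Cc_\cQ$; define a candidate $\Q(t^{1/2})$-linear map $\Phi_\cQ$ by matching these two labelings, sending the dual PBW vector attached to a simple root $\alpha_i$ of $\sg$ to the $(q,t)$-character of the corresponding fundamental module.

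Next, I would verify that $\Phi_\cQ$ is an algebra homomorphism by checking the defining relations of $\cA_t[N_-]$, namely the quantum Serre relations among the generators, directly inside $\cK_t(\Cc_\cQ)$. This is the main obstacle: while in the simply-laced case these relations follow from explicit computations of two- and three-term products of fundamental $(q,t)$-characters, in our twisted setting the shapes of the modules appearing on both sides of a Serre relation involve short vs.\ long root asymmetries and cannot be read off from the untwisted case. Here I expect to invoke the quantum $T$-system announced in the abstract, which gives recursive identities among $(q,t)$-characters of Kirillov-Reshetikhin type modules and, specialized appropriately, should encode exactly the required (possibly $t$-shifted) Serre-type identities in $\cK_t(\Cc_\cQ)$. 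The auxiliary combinatorial input is that the commutation class of $\bfi_\cQ$ controls which pairs of fundamental modules $q$-commute, giving the diagonal quantum torus relations for free.

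Once $\Phi_\cQ$ is a homomorphism, bijectivity follows from a basis comparison. On the $\cA_t[N_-]$ side, iterating the PBW generators along $\bfi_\cQ$ produces the normalized dual PBW basis $\{F^*(\bfc)\}_\bfc$. On the $\cK_t(\Cc_\cQ)$ side, the same ordering of the fundamental $(q,t)$-characters produces, by definition of the standard module $M(m)$ as an ordered tensor product and by the Frenkel-Reshetikhin multiplicativity of $q$-characters, the standard $(q,t)$-characters $\{E_t(m)\}_{m \in \cM_\cQ}$ indexed by the same combinatorial set of dominant monomials supported on $\Gamma_\cQ$. Thus $\Phi_\cQ$ sends a $\Z[t^{\pm 1/2}]$-basis to a $\Z[t^{\pm 1/2}]$-basis and is an isomorphism that descends to a $\Z[t^{\pm 1/2}]$-form.

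Finally, to match canonical bases, I would use a bar-involution argument. Both $\cA_t[N_-]$ and $\cK_t(\Cc_\cQ)$ carry standard bar-involutions, and both the dual canonical basis and the simple $(q,t)$-characters $\{L_t(m)\}$ are characterized by bar-invariance together with unitriangularity against their respective PBW/standard bases with coefficients in $t\Z[t]$. Since $\Phi_\cQ$ matches the standard bases, intertwines the bar-involutions (which needs to be verified on generators, where it is immediate from the self-duality of fundamental modules at their distinguished spectral parameters), and is unitriangular for the natural partial order on $\cM_\cQ$ transported from the order on $\mathrm{PBW}$ monomials, the uniqueness of such a characterized basis forces $\Phi_\cQ$ to send simple $(q,t)$-characters to dual canonical basis elements, completing the proof.
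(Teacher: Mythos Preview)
Your final bar-involution step is correct and matches the paper, but the central step—establishing that $\Phi_\cQ$ is an algebra homomorphism by directly verifying the quantum Serre relations inside $\cK_{t,\cQ}$—has a genuine gap. The quantum $T$-systems are three-term recursions among $(q,t)$-characters of Kirillov--Reshetikhin modules of varying widths (Theorem~\ref{Thm:qTsys}); there is no specialization of these that yields the cubic Serre relations among the fundamental $(q,t)$-characters $L_t^\cQ(\alpha_\im)$. In fact, in the paper the Serre relations in $\cK_{t,\cQ}$ appear only as a \emph{consequence} of the isomorphism (see~\eqref{eq:Serre}), never as an input to its construction.

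The paper's route is structurally different and avoids this obstacle. First, one proves an isomorphism of ambient quantum \emph{tori} $\Phi_\cQ^T : \cT_{v,[\cQ]} \to \cY_{t,\cQ}$ sending the initial cluster variable $X_\alpha$ to the commutative monomial $\ul{m^{(\im)}[p,\xi_\im]}$ (Theorem~\ref{Thm:isomT}); the nontrivial content is the identity $\kappa(\im,p;\jm,s)=\Lambda_{\alpha\beta}$, extracted from the inverse-quantum-Cartan formulas via Proposition~\ref{Prop:NnKR}. Second, one exploits the quantum cluster algebra structure of $\cA_v[N_-]$: the quantum determinantal identities (Proposition~\ref{Prop:di}) are specific exchange relations, and under $\Phi_\cQ^T$ they match the quantum $T$-systems term by term, via an induction over the totally ordered set $S_\cQ$ in the proof of Theorem~\ref{Thm:HLO}. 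This comparison is carried out after truncation $(\cdot)_{\le\xi}$ (Section~\ref{ssec:tr}), which you do not mention but which is essential since $\cK_{t,\cQ}$ only embeds into $\cY_{t,\cQ}$ after truncating. The induction yields $\Phi_\cQ^T(\tD^{\cQ}(\alpha,\alpha^-)) = F_t(Y_{i,p})_{\le\xi}$, whence the PBW/standard correspondence (with the $a(\bfc)=0$ normalization argument) and then the canonical/simple correspondence follow exactly as in your last paragraph.
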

It is notable that the quantum unipotent coordinate algebra $\cA_{t}[N_-]$ here
is associated with the simply-laced $\sg$ (not $\fg$).   
When $\fg$ is of type $\mathrm{B}$, our isomorphism (\ref{eq:CQisom2}) coincides with the isomorphism 
established in the previous work \cite{HO19} of the second and the fourth named authors. 
Hence we refer to the isomorphism (\ref{eq:CQisom2}) as \emph{the HLO isomorphism} in this paper. 
Our proof of Theorem~\ref{Thm0:HLO} generalizes the arguments in~\cite{HO19} with new ingredients, in particular the combinatorial theory of Q-data.
The essence of the proof is a comparison 
between the determinantal identities satisfied by the normalized quantum unipotent minors in $\cA_{t}[N_-]$ 
(which can be understood as a part of its quantum cluster algebra structure)
and \emph{the quantum $T$-systems} in $\cK_t(\Cc_{\cQ})$,
where the latter is a newly obtained result in this paper except for types $\mathrm{ABDE}$ as explained in the next paragraph.

\subsection{}
Recall that the $T$-systems are certain functional relations appearing  in solvable lattice models (see~\cite{KNS11} for instance).
They were proved to be satisfied by the $q$-characters of \emph{Kirillov-Reshetikhin modules} (KR modules for short) by Nakajima~\cite{Nakajima03II}
for simply-laced $\fg$ and by the second named author~\cite{Hernandez06} for general $\fg$.
For types $\mathrm{ADE}$ and for type $\mathrm{B}$, 
their natural $t$-analogs, which we call the quantum $T$-systems, were established in \cite{HL15} and \cite{HO19} respectively,
as the relations satisfied in the quantum Grothendieck ring $\cK_t(\Cc_{\fg})$.
In this paper, we prove the quantum $T$-systems  for all $\fg$ in a unified way.
Strictly speaking, 
we establish them as a system of relations satisfied by another kind of elements $F_t(m)$ of $\cK_t(\Cc_{\fg})$
which contain $m$ corresponding to KR modules as their unique dominant monomials (see Section~\ref{ssec:qTsys}).   
Although it is not clear in our formulation (as well as that of \cite{HO19} for type $\mathrm{B}$) 
that the quantum $T$-systems are satisfied by the simple $(q,t)$-characters corresponding to KR modules,
we conjecture that it is true (indeed Conjectures~\ref{Conj0:KL} and \ref{Conj0:p} imply $L_t(m) = F_t(m)$ for any $m$ corresponding to KR modules).

\subsection{}
The HLO isomorphism (\ref{eq:CQisom2}) itself has important corollaries.
Since it translates 
the known positivities of the dual canonical basis of $\cA_t[N_-]$ to the desired positivities in $\cK_t(\Cc_{\cQ})$, 
we find that (\ref{p:KL}) and (\ref{p:sc}) in Conjecture~\ref{Conj0:p} hold when we restrict ourselves to the category $\Cc_{\cQ}$
for some Q-datum $\cQ$.  
Moreover, we can check that the specialization at $t=1$ of the HLO isomorphism
coincides with the isomorphism $K(\Cc_\cQ) \simeq \cA_{t=1}[N_-]$
induced from \emph{the generalized quantum affine Schur-Weyl duality functors} 
studied in \cite{KKK15, KKKOIV, KO17, OhS19, Fujita20, Naoi21}. 
It is known that the latter isomorphism sends the basis of $K(\Cc_\cQ)$ 
formed by the classes of simple modules to the specialization of the dual canonical basis of $\cA_{t=1}[N_-]$.
Thus it implies the following.

\begin{Thm} [$\doteq$ Corollary \ref{Cor:SW}]
The analog of Kazhdan-Lusztig conjecture, i.e., Conjecture~\ref{Conj0:KL} holds for the category $\Cc_{\cQ}$ for any $\cQ$.  
\end{Thm}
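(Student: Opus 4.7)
The plan is to establish $\evt(L_t(m)) = [L(m)]$ in $K(\Cc_\cQ)$ for every $m$ by exhibiting two ring isomorphisms $K(\Cc_\cQ) \xrightarrow{\sim} \cA_{t=1}[N_-]$ and identifying them. The first is the specialization at $t = 1$ of the HLO isomorphism $\cK_t(\Cc_\cQ) \simeq \cA_t[N_-]$ from Theorem~\ref{Thm0:HLO}, under which $\evt(L_t(m))$ is sent to the $t = 1$ specialization of the dual canonical basis element associated with $m$. The second is the isomorphism $\Phi_{\mathrm{SW}}\colon K(\Cc_\cQ) \xrightarrow{\sim} \cA_{t=1}[N_-]$ induced from the generalized quantum affine Schur-Weyl duality functors of \cite{KKK15, KKKOIV, KO17, OhS19, Fujita20, Naoi21}, which is known by categorification results to send the class $[L(m)]$ to the very same $t=1$ specialization of the dual canonical basis element. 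Thus, once the two maps are shown to agree, matching preimages yields $\evt(L_t(m)) = [L(m)]$, which is precisely Conjecture~\ref{Conj0:KL} restricted to $\Cc_\cQ$.

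To verify that the two isomorphisms agree, I would compare them on a polynomial generating set of $K(\Cc_\cQ)$: the classes of the fundamental modules contained in $\Cc_\cQ$, which are labeled by the vertices of the twisted Auslander-Reiten quiver $\Gamma_\cQ$. On the HLO side, Theorem~\ref{Thm0:HLO} matches the standard $(q,t)$-character of each such fundamental module with the corresponding normalized dual PBW root vector for the reduced word of the Weyl group of $\sg$ encoded by $\cQ$; specializing at $t = 1$ yields the classical dual PBW root vector. On the Schur-Weyl side, the same dual PBW vectors are the known images of these fundamental modules under the duality functors, as described in the cited references. Since $K(\Cc_\cQ)$ is a polynomial ring in the classes of its fundamental modules, coincidence on these generators forces coincidence of the two ring homomorphisms.

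The main obstacle is aligning the two combinatorial parametrizations of the fundamental modules: one coming from the Q-datum $\cQ$ and its associated reduced word of the Weyl group of $\sg$, the other from the type-by-type construction of the Schur-Weyl functors via quiver Hecke algebras. The unified framework of Q-data developed in \cite{FO21} together with the description of $\Cc_\cQ$ from \cite{KO17, OS19b, OhS19} supplies the common labeling of simples by positive roots of $\sg$ that is needed to match these parametrizations. Once this dictionary is in place, the verification on generators reduces to a direct consistency check, and Conjecture~\ref{Conj0:KL} for $\Cc_\cQ$ follows for any Q-datum $\cQ$.
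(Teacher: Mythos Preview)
Your proposal is correct and follows essentially the same route as the paper: the paper (Corollary~\ref{Cor:SW}) specializes the HLO isomorphism $\Phi_\cQ$ at $v=t=1$, observes via Theorem~\ref{Thm:qtfund}~(\ref{qtfund:evt}) that it agrees with the Schur--Weyl isomorphism $[F_\cQ]$ of Theorem~\ref{Thm:SW} on the fundamental-module generators (both send $[L^\cQ(\alpha)]$ to $\evv(\tF(\alpha,\cQ))$), and concludes $\evt(L_t(m)) = \chi_q(L(m))$ for all $m \in \cM_\cQ$ from the bijections on canonical bases.
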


\subsection{}
Next we consider the analog of step (ii) for general $\fg$. 
To prove that the desired quantum Boson relations are satisfied in $\cK_t(\Cc_{\Z, \fg})$, 
we have to check several commutation relations between the $(q,t)$-characters of fundamental modules.
For this purpose, 
we need to know the singularities of \emph{the normalized $R$-matrices} between the fundamental modules.
Fortunately, the denominators of these $R$-matrices are already known due to many works~\cite{AK97, DO94, Fujita19, KKK15, Oh15R, OhS19}. 
Moreover, their relation to the structure constants of the quantum torus $\cY_t$ was observed by the first and the third named authors in~\cite{FO21}.  
Combining these ingredients with the positivities of $\cK_t(\Cc_\cQ)$ (discussed in the last paragraph),
we can obtain the conclusion.
  
\subsection{} \label{Intro:B}
Finally, we specialize the situation to the case when $(\fg, \sg)$ is of type $(\mathrm{B}_n, \mathrm{A}_{2n-1})$. 
In this case, we can check that the specialization at $t=1$ of
our isomorphism (\ref{eq:CZisom}) 
with a specific choice of Q-data
coincides with 
another isomorphism $K(\Cc_{\Z, \fg}) \simeq K(\Cc_{\Z, \sg})$
obtained by M.~Kashiwara, M.~Kim and the third named author in \cite{KKO19}
through a categorical relation arising from another example of the generalized quantum affine Schur-Weyl duality functors. 
The computation we need in the proof has already been done in the previous work~\cite[Section 12]{HO19} by the second and the fourth named authors.
Since the latter isomorphism $K(\Cc_{\Z, \fg}) \simeq K(\Cc_{\Z, \sg})$ respects the classes of simple modules, 
we immediately obtain the proof of Conjecture~\ref{Conj0:KL} for this case.

\subsection{} As mentioned above, in opposition to the case of simply-laced quantum affine algebras,
as far as the authors know, no geometric construction of (co)standard modules involving
non-simply-laced analog of quiver varieties is known or fully developed.
Note however that, in addition to the recent results of Nakajima-Weekes mentioned above, geometric $q$-characters formulas of certain modules, including KR modules, have been obtained in \cite{HL16} for
non-simply-laced types. Also an approach to certain analogs of non simply-laced quiver varieties have been developed in \cite{GLS17}. 
This raises the question of a geometric interpretation of the positivity results we establish in the present paper, in terms of quiver varieties associated to non-simply-laced quantum affine algebras.

\subsection{} The authors plan to come back in the near future to the following related 
questions. The approach in \cite{HO19} was based on the study of quantum 
cluster algebra structures. 
We will study the compatibility between the isomorphisms of quantum Grothendieck rings that we establish and 
the various involved quantum cluster algebras structures (see \cite{Qin17, Bit19}).
We expect this will also provide new interesting information on related 
$q$-characters and their $t$-analogs, in the spirit of \cite{HL16, Bit19}. 
We will also investigate the relation between on one hand the specialization of our isomorphisms to classical Grothendieck ring isomorphisms, and on the other hand the various 
isomorphisms obtained via the framework of the generalized quantum affine Schur-Weyl duality \cite{KKK18}. Eventually, we expect our approach and results to hold for integrable representations of general quantum affinizations of Kac-Moody algebras and 
for representations in the category $\mathcal{O}$ of a Borel subalgebra of a quantum affine algebra \cite{HJ12}.

\subsection*{Organization} 
This paper is organized as follows.
In Section~\ref{sec:qloop}, we recall the definition of quantum loop algebras and 
some important properties of their finite-dimensional representations.
In Section~\ref{sec:Grot}, we review the definition and some properties of 
the quantum Grothendieck ring of the category $\Cc_{\Z, \fg}$. 
Section~\ref{sec:Qdata} is a recollection on the combinatorial theory related to Q-data and their twisted AR quivers.
In Section~\ref{sec:C_Q}, we see several applications of the theory of Q-data to the representation theory of the quantum loop algebras.
In particular, the definition of the monoidal subcategory $\Cc_{\cQ}$ is recalled.
In Section~\ref{sec:qTsys}, after reviewing the classical $T$-systems in term of Q-data, 
we establish the quantum $T$-systems for all $\fg$.  
Section~\ref{sec:coord} is a brief review of the quantum unipotent coordinate algebra $\cA_{t}[N_-]$
and its quantum cluster algebra structure. 
In Section~\ref{sec:HLO}, we prove the HLO isomorphism (\ref{eq:CQisom2}) for each Q-datum
and obtain its corollaries.     
In Section~\ref{sec:Rmat}, we deduce several commutation relations in the quantum Grothendieck ring
which play an important role in the proof of our main theorem.
In Section~\ref{sec:isom}, we prove the desired presentation of the localized Grothendieck ring 
and then construct a collection of isomorphisms among the quantum Grothendieck rings.
In particular, we obtain an isomorphism (\ref{eq:CZisom}) and hence the desired positivities (\ref{p:KL}) and (\ref{p:sc}) for all $\fg$.    
In the last Section~\ref{sec:typeB}, we focus on the case when $\fg$ is of type $\mathrm{B}$ to prove 
Conjecture~\ref{Conj0:KL} and also the remaining positivity (\ref{p:ch}).   

\subsection*{Acknowledgments}
The authors are grateful to Hiraku Nakajima for helpful discussions, comments and correspondence. The authors would like also to thank Bernard Leclerc and Alex Weekes for their comments. The authors are also thankful to the anonymous referee whose suggestions improve our present paper. 
A part of this work was done when the first named author enjoyed short-term postdoctoral positions in Kyoto University and in Kavli IPMU during the spring-summer semester in 2020. 
He thanks both institutions and colleagues for their supports in the complicated situation.   

R.~F.~was supported in part by JSPS Grant-in-Aid for Scientific Research (A) JP17H01086, (B) JP19H01782 and by JSPS Overseas Research Fellowships.  
D.~H.~was supported by the European Research Council
under the European Union’s Framework Programme H2020 with ERC Grant Agreement
number 647353 Qaffine.
S.-j.\ Oh was supported by
the Ministry of Education of the Republic of Korea and the National Research Foundation of Korea (NRF-2019R1A2C4069647). 
H. O. is supported by Grant-in-Aid for Young Scientists No.19K14515.

\subsection*{Convention} Throughout this paper, we use the following general convention.
\begin{itemize}
\item For a statement $\mathtt{P}$, we set $\delta(\mathtt{P})$ to be $1$ or $0$
according that $\mathtt{P}$ is true or not.
As a special case, we use the notation $\delta_{i,j} \seq \delta(i=j)$ (Kronecker's delta).
\item For a totally ordered set $J=\{\cdots < j_{-1} < j_{0} < j_{1} < j_{2} < \cdots \}$, write
$$
\prod_{j \in J}^{\to} A_{j} \seq \cdots A_{j_{-1}}A_{j_0}A_{j_{1}}A_{j_{2}} \cdots, \quad
\bigotimes_{j \in J}^{\to} A_{j} \seq \cdots A_{j_{-1}}\otimes A_{j_0}\otimes A_{j_{1}}\otimes A_{j_{2}}\cdots.
$$
\item For an abelian category $\Cc$, we denote its Grothendieck group by $K(\Cc)$.
The class of an object $X \in \Cc$ is denoted by $[X] \in K(\Cc)$. 
Write the subset of $K(\Cc)$ formed by 
the classes of simple objects as $\Irr \Cc$. 
If moreover the category $\Cc$ is monoidal with a bi-exact tensor product $\otimes$, 
$K(\Cc)$ is endowed with a ring structure by
$[X] \cdot [Y] = [X \otimes Y]$, which we call the Grothendieck ring of $\Cc$. 

\item Let $\kk$ be a field. For a polynomial $f(z) \in \kk[z]$,
we denote its degree by $\deg(f)$. For a rational function $f(z) \in \kk(z)$, we denote by 
$[f(z)]^{\pm}$ its formal Laurent expansion at $z^{\pm 1} = 0$,
which are elements in $\kk(\!( z )\!)$ and $\kk(\!( z^{-1} )\!)$, respectively.  
\end{itemize}

\section{Quantum loop algebras and their finite-dimensional representations}
\label{sec:qloop}

In this section, we recall the definition of quantum loop algebras and important properties of their finite-dimensional representations.

\subsection{Notation}

Let $\fg$ be a complex finite-dimensional simple Lie algebra of rank $n$.
As is well-known, it is classified by the Dynkin diagrams of types $\mathrm{A}_{n}, \mathrm{B}_{n}, \cdots, \mathrm{G}_2$.
Denote by $r$ \emph{the lacing number of $\fg$}, that is
$$
r \seq \begin{cases}
1 & \text{if $\fg$ is of type $\mathrm{A}_{n},\mathrm{D}_{n}$, or $\mathrm{E}_{6,7,8}$}, \\
2 & \text{if $\fg$ is of type $\mathrm{B}_{n}, \mathrm{C}_{n}$, or $\mathrm{F}_{4}$}, \\
3 & \text{if $\fg$ is of type $\mathrm{G}_{2}$}.
\end{cases}
$$
We say that $\fg$ is simply-laced (resp.~non-simply-laced) if $r=1$ (resp.~$r>1$). 
Let $I = \{ 1,2,\ldots,n\}$ be the set of Dynkin indices and  
$C=(c_{ij})_{i,j \in I}$ the Cartan matrix of $\fg$.
We write $i \sim j$ for $i,j \in I$ if $c_{ij} < 0$.
Let $D = \mathrm{diag}(d_i \mid i \in I)$ denote the unique diagonal matrix 
such that $d_{i} \in \{1,r\}$ for each $i \in I$ 
and $DC = (d_i c_{ij})_{i,j \in I}$ is symmetric.  

\subsection{Quantum loop algebras} 
\label{ssec:qloop}

In what follows, for an invertible element $x \neq \pm 1$ in an integral domain and an integer $k$, we write the standard quantum number
\begin{equation} \label{eq:qinteger}
[k]_{x} \seq \frac{x^{k}-x^{-k}}{x-x^{-1}}.
\end{equation}
For integers $k \ge l \ge 0$, we set
$$
[k]_{x}! \seq \prod_{u=1}^{k} [u]_{x}, \quad
\left[ \begin{matrix} k \\ l \end{matrix} \right]_{x} \seq \frac{[k]_{x}!}{[k-l]_{x}! [l]_{x}!}.
$$

Let $q$ be an indeterminate (quantum parameter) and $\kk$
the algebraic closure of the rational function field $\Q(q)$.
For each $i \in I$, we define $q_{i} \seq q^{d_{i}} \in \kk$.

\begin{Def}
\emph{The quantum loop algebra} $U_{q}(L\fg)$
is the $\kk$-algebra given by the set of generators
$$\{ k_{i}^{\pm 1} \mid i \in I\} \cup \{ x_{i,k}^{\pm} \mid i \in I, k \in \Z\} \cup \{ h_{i, l} \mid i \in I, l \in \Z \setminus\{0\}\}$$
satisfying the following relations:
\begin{itemize}
\item $k_{i} k_{i}^{-1} = k_{i}^{-1} k_{i} = 1, k_{i} k_{j} = k_{j} k_{i}$ for $i,j \in I$,
\item $k_{i} x_{j, k}^{\pm} k_{i}^{-1} = q_{i}^{\pm c_{ij}} x_{j,k}^{\pm}$ for $i,j \in I$ and $k \in \Z$,
\item $[k_{i}, h_{j,l}] = [h_{i,l}, h_{j,m}] = 0$ for $i,j \in I$ and $l,m \in \Z \setminus\{0\}$,
\item $[x_{i}^{+}(z), x_{j}^{-}(w)]
= \displaystyle \frac{\delta_{ij}}{q_{i} - q_{i}^{-1}}
\left( \updelta(z/w) \phi_{i}^{+}(w) - \updelta(w/z) \phi_{i}^{-}(z) \right)$ for $i, j \in I$,
\item $(q^{\pm c_{ij}}_{i}z-w) x_{i}^{\pm}(z) x_{j}^{\pm}(w) = (z-q^{\pm c_{ij}}_{i}w) x_{j}^{\pm}(w) x_{i}^{\pm}(z)$ for $i,j \in I$,
\item $(q^{\pm c_{ij}}_{i}z-w) \phi_{i}^{\varepsilon}(z) x_{j}^{\pm}(w) = (z-q^{\pm c_{ij}}_{i}w) x_{j}^{\pm}(w) \phi_{i}^{\varepsilon}(z)$
for $i,j \in I$ and $\varepsilon \in \{ +, -\}$,
\item $\displaystyle \sum_{g \in \mathfrak{S}_{b}} \sum_{k=0}^{b}
(-1)^{k} \left[\begin{matrix} b \\ k\end{matrix} \right]_{q_i}
x_{i}^{\pm}(z_{g(1)}) \cdots x_{i}^{\pm}(z_{g(k)}) x_{j}^{\pm}(w) x_{i}^{\pm}(z_{g(k+1)}) \cdots x_{i}^{\pm}(z_{g(b)}) = 0
$, where $b \seq 1-c_{ij}$, for $i, j \in I$ with $i \sim j$, 
\end{itemize}
where $\updelta(z), x_{i}^{\pm}(z), \phi_{i}^{\pm}(z)$ are the formal power series defined by
$$
\updelta(z) \seq \sum_{l=-\infty}^{\infty}z^{l},
\quad
x_{i}^{\pm}(z) \seq \sum_{l=-\infty}^{\infty}x_{i,k}^{\pm}z^{k},
\quad
\phi_{i}^{\pm}(z) \seq k_{i}^{\pm 1}
\exp \left( \pm (q_{i}-q_{i}^{-1})
\sum_{l=1}^{\infty}h_{i, \pm l} z^{\pm l} \right).
$$
\end{Def}

The quantum loop algebra $U_{q}(L\fg)$ is a sub-quotient of the corresponding 
(untwisted) quantum affine algebra and hence has a Hopf algebra structure,
which is defined via another realization of $U_{q}(L\fg)$ 
(called the Drinfeld-Jimbo presentation).

\subsection{Finite-dimensional representations} 
\label{ssec:fdrep}
A finite-dimensional $U_{q}(L\fg)$-module is said to be of type $\mathbf{1}$
if the element $k_{i}$ acts as a diagonal $\kk$-linear operator whose eigenvalues 
belong to the set $\{ q^{k} \mid k \in \Z\}$ for each $i \in I$. It is well-known that 
the study of finite-dimensional representations of $U_q(L\fg)$ reduces essentially to the study of the category $\Cc$ of type $\mathbf{1}$ finite-dimensional $U_{q}(L\fg)$-modules.
This is a $\kk$-linear rigid monoidal tensor category. 
As a monoidal category, the category $\Cc$ is not braided. 
Indeed, $V \otimes W$ is not isomorphic to $W \otimes V$ for general $V, W \in \Cc$.
We say that $V$ and $W$ \emph{commute} if $V \otimes W \cong W \otimes V$
as $U_{q}(L \fg)$-modules. 

Let $V \in \Cc$. Since the elements $\{ k_{i}^{\pm 1}, h_{i, l} \mid i \in I, l \in \Z \setminus\{0\}\}$
mutually commute, we have a decomposition 
$$
V = \bigoplus_{\gamma \in (\kk [\![ z ]\!] \times \kk[\![ z^{-1} ]\!])^{I}} V_{\gamma} 
$$  
where, for each $\gamma = (\gamma_{i}^{+}(z), \gamma_{i}^{-}(z))_{i \in I}$, we define
$V_{\gamma}$ as the subspace of $V$ on which each coefficient of the series 
$\phi_{i}^{\pm}(z) - \gamma_{i}^{\pm}(z)$ acts nilpotently for any $i \in I$.
If $V_{\gamma} \neq 0$, it is called \emph{an $\ell$-weight space} and $\gamma$ is called 
the corresponding \emph{$\ell$-weight}.

For a simple module $L$ in  $\Cc$, there uniquely exists an $\ell$-weight space
$L_{\gamma_0}$ such that $x^{+}_{i,k} L_{\gamma_0}=0$ for all
$i \in I$ and $k \in \Z$. 
The isomorphism class of a simple $U_{q}(L\fg)$-module in $\Cc$ is determined by such $\gamma_0$
(called \emph{the $\ell$-highest weight} of $L$).
Moreover we have the following.

\begin{Thm}[{\cite[Theorem 3.3]{CP95}, \cite[Theorem 12.2.6]{CP}}] \label{Thm:CP}
If $\gamma= (\gamma_{i}^{+}(z), \gamma_{i}^{-}(z))_{i \in I}$ is an $\ell$-highest weight 
of a simple $U_{q}(L\fg)$-module in $\Cc$, for each $i\in I$ there exists a unique polynomial $P_{i}(z) \in \kk[z]$ with $P_{i}(0) = 1$ for $i \in I$ such that
\begin{equation} \label{eq:ellhwt}
\gamma_{i}^{\pm}(z) = q_{i}^{\deg(P_i)} \left[\frac{P_{i}(zq_{i}^{-1})}{P_{i}(zq_{i})}\right]^{\pm}.
\end{equation}
Conversely, for any $(P_{i}(z))_{i \in I} \in (1+z\kk[z])^{I}$, we have a simple $U_{q}(L\fg)$-module 
in $\Cc$ whose $\ell$-highest weight $\gamma$ is given by {\rm (\ref{eq:ellhwt})}. 
\end{Thm}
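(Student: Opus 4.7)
The plan is to prove the two directions of the classification separately, in both cases reducing to the rank-one case by restricting to $\mathfrak{sl}_2$-type subalgebras.

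For the necessity direction, let $L$ be a simple module in $\Cc$ with $\ell$-highest weight $\gamma$ and $\ell$-highest weight vector $v_0$. For each fixed $i \in I$, the generators $x_{i,k}^{\pm}$, $h_{i,l}$, $k_i^{\pm 1}$ span (essentially) a copy $U_i$ of $U_{q_i}(L\mathfrak{sl}_2)$ inside $U_q(L\fg)$, and the cyclic submodule $U_i \cdot v_0$ is finite-dimensional and cyclic of highest $\ell$-weight $(\gamma_i^+(z),\gamma_i^-(z))$. It therefore suffices to establish the rank-one statement. First, because the $h_{i,l}$ commute and act on a finite-dimensional space, the generating functions $\gamma_i^{\pm}(z)$ must be expansions of rational functions at $z^{\pm 1}=0$ respectively. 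Second, by combining the defining commutator $[x_i^+(z), x_i^-(w)]$ with the fact that $x_{i,k}^{\pm}$ intertwines $\ell$-weight spaces with shifted parameters, one forces the numerator and denominator of this rational function to differ only by the shifts $z \mapsto z q_i^{\pm 1}$; this is exactly the form $q_i^{\deg P_i} P_i(zq_i^{-1})/P_i(zq_i)$. The normalization $P_i(0)=1$ pins down $P_i$ uniquely since a rational function is determined by its expansions at $0$ and at $\infty$.

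For the sufficiency direction, I would produce a simple module with prescribed Drinfeld polynomials by assembling it from fundamental pieces. Given $a \in \kk^\times$ and $i \in I$, the fundamental module $V_i(a)$, corresponding to $P_i(z) = 1 - az$ and $P_j(z) = 1$ for $j \neq i$, can be realized as a suitable sub-quotient of an integrable highest-weight module of $U_q(\widehat{\fg})$ restricted to $U_q(L\fg)$, or via evaluation representations where available. Writing each given $P_i(z) = \prod_k (1 - a_{i,k} z)$, form the tensor product $\bigotimes_{i,k} V_i(a_{i,k})$ using the Hopf structure transported from the Drinfeld-Jimbo presentation; the tensor of the individual $\ell$-highest weight vectors is cyclic of $\ell$-weight given by the product of the pieces, and its generated submodule admits a unique simple head whose $\ell$-highest weight is the prescribed $\gamma$.

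The main obstacle is the compatibility between the coproduct and the Drinfeld loop generators: the coproduct is simple in the Drinfeld-Jimbo presentation but becomes an infinite series in the loop presentation, so one must verify that the Drinfeld polynomials of a tensor product of fundamental modules really multiply. The standard way around this is to use a triangular decomposition argument, showing that modulo terms strictly lower in a suitable $\ell$-weight order, the operators $\phi_i^{\pm}(z)$ act on the tensor product of highest-weight vectors as the product of their actions on each factor. Combined with finite-dimensionality, this is enough to identify the $\ell$-highest weight of the simple head of the cyclic submodule with the required $\gamma$, completing the existence part.
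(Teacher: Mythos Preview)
The paper does not give its own proof of this statement: Theorem~\ref{Thm:CP} is simply quoted from the literature with references to \cite[Theorem 3.3]{CP95} and \cite[Theorem 12.2.6]{CP}, and no argument is supplied. So there is nothing in the paper to compare against.

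That said, your sketch follows the standard Chari--Pressley strategy. The necessity direction via restriction to the $U_{q_i}(L\mathfrak{sl}_2)$-subalgebra generated by $x_{i,k}^{\pm}, h_{i,l}, k_i^{\pm 1}$ is exactly how the original proof proceeds, and the rank-one analysis you outline is correct in spirit. For sufficiency, the construction of the desired simple module as the head of an ordered tensor product of fundamental modules is also the approach taken in the references (and indeed underlies the standard modules $M(m)$ used later in the present paper). The genuine technical point you flag---that the Drinfeld polynomials multiply under tensor product despite the coproduct being complicated in loop generators---is handled in \cite{CP95} via the triangularity argument you describe; your identification of this as the main obstacle is accurate. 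One small caveat: the existence of the fundamental modules $V_i(a)$ themselves is not entirely trivial outside of type $\mathrm{A}$ (where evaluation morphisms are available), and in the original references this is established by a separate construction rather than assumed.
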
 

This $I$-tuple of polynomials $(P_{i}(z))_{i \in I}$ corresponding to a simple module $L$ 
is called \emph{the Drinfeld polynomials of $L$}.
A non-zero vector in the $\ell$-highest weight space $L_{\gamma}$ 
is called \emph{an $\ell$-highest weight vector of $L$}. Since $\dim_\kk L_\gamma=1$,
it is unique up to a scalar.  

\subsection{$q$-characters} 
\label{ssec:qch}
Frenkel and Reshetikhin~\cite[Proposition 1]{FR98} proved that 
any $\ell$-weight $\gamma$ of a $U_{q}(L\fg)$-module in $\Cc$ 
is of the form
$$
\gamma_{i}^{\pm}(z) = q_{i}^{\deg(Q_i) - \deg(R_i)} \left[ \frac{Q_{i}(zq_{i}^{-1})R_{i}(zq_{i})}{Q_{i}(zq_{i})R_{i}(zq_{i}^{-1})} \right]^{\pm}
$$ 
for some $(Q_{i}(z))_{i \in I}, (R_{i}(z))_{i \in I} \in (1+z\kk[z])^{I}$.
For this $\gamma$, define a monomial $m_{\gamma}$
in the Laurent polynomial ring
$\Z[Y_{i,a}^{\pm 1} \mid i \in I, a \in \kk^\times]$ by
$$
m_{\gamma} \seq \prod_{i \in I, a \in \kk^\times} Y_{i,a}^{q_{i,a}-r_{i,a}},
$$ 
where 
$
Q_{i}(z) = \prod_{a \in \kk^{\times}} (1-az)^{q_{i,a}},
R_{i}(z) = \prod_{a \in \kk^{\times}}(1-az)^{r_{i,a}}.$

\begin{Thm}[{\cite[Theorem 3]{FR98}}] \label{Thm:FR}
The assignment 
$$
[V] \mapsto \sum_{\gamma} \dim_{\kk}(V_{\gamma}) m_{\gamma}
$$
defines an injective algebra homomorphism $\chi_{q} \colon K(\Cc) \to \Z[Y_{i,a}^{\pm 1} \mid i \in I, a \in \kk^\times ]$.
\end{Thm}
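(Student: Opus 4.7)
The plan is to prove the three claims separately: (i) $\chi_q$ is a well-defined $\Z$-linear map into $\Z[Y_{i,a}^{\pm 1}\mid i\in I,\, a\in\kk^\times]$, (ii) it is a ring homomorphism, and (iii) it is injective.

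For (i), the subtle point is that \emph{a priori} the expression $m_\gamma$ is defined only when every $\ell$-weight $\gamma$ of $V$ factors as a ratio of polynomials $Q_i(z), R_i(z)$ evaluated at $zq_i^{\pm 1}$, as in the formula stated just before the theorem. I would establish this factorization by induction on the composition length of $V$. For simple modules, it is a consequence of Theorem~\ref{Thm:CP} together with an explicit analysis of the action of the lowering operators $x_{i,k}^{-}$: each application modifies the eigenvalue-series of $\phi_i^\pm(z)$ on an $\ell$-weight vector in a controlled way, which at the level of monomials corresponds to multiplication by a fixed Laurent monomial $A_{i,a}^{-1}$ for some $a\in\kk^\times$. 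Hence every $\ell$-weight inherits the polynomial-ratio form from the $\ell$-highest weight. For arbitrary $V\in\Cc$ the claim then follows from a composition series argument, using that the $\ell$-weight decomposition is compatible with short exact sequences. Additivity of $\chi_q$ as a function of $[V]$ is immediate from the same compatibility.

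For (ii), the heart of the argument is to show
\[
(V\otimes W)_\gamma \;=\; \bigoplus_{\gamma'\gamma''=\gamma}\, V_{\gamma'}\otimes W_{\gamma''},
\]
where $\gamma'\gamma''$ denotes the componentwise product of the power series. The multiplicativity $m_{\gamma'\gamma''}=m_{\gamma'}\,m_{\gamma''}$ then yields the desired identity $\chi_q([V\otimes W])=\chi_q([V])\chi_q([W])$. I would obtain the displayed decomposition by exhibiting a filtration of $V\otimes W$ (refining the usual weight filtration) on which the series $\phi_i^\pm(z)$ act upper-triangularly with diagonal part $\phi_i^\pm(z)\otimes\phi_i^\pm(z)$. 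Concretely, one decomposes the coproduct of $\phi_i^\pm(z)$ into a diagonal piece and a correction that lies in a tensor product of strictly positive and strictly negative parts of $U_q(L\fg)$; the correction preserves the filtration and hence acts as zero on the associated graded, so that on each $\ell$-weight space the eigenvalue-series simply multiply. I expect this to be the main obstacle, because the Drinfeld coproduct is not given by a simple closed formula on the generators $\phi_i^\pm(z)$, and a careful translation through the Drinfeld--Jimbo presentation (or, alternatively, an appeal to the triangular factorization of the universal $R$-matrix) is required to make the upper-triangularity rigorous.

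For (iii), I would use the partial order on the monomials in $\Z[Y_{i,a}^{\pm 1}\mid i\in I,\,a\in\kk^\times]$ generated by the relations $m\leq m\cdot A_{i,a}$, combined with the classification of simples in Theorem~\ref{Thm:CP}. The analysis in step~(i) shows that for a simple module $L$ with Drinfeld polynomials $(P_i(z))_{i\in I}$, where $P_i(z)=\prod_a(1-az)^{p_{i,a}}$, the $q$-character $\chi_q([L])$ has a unique maximal monomial
\[
m_L \;=\; \prod_{i\in I}\prod_{a\in\kk^\times} Y_{i,a}^{p_{i,a}},
\]
coming from the one-dimensional $\ell$-highest weight space. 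Since Theorem~\ref{Thm:CP} gives a bijection between isomorphism classes of simples in $\Cc$ and tuples $(P_i(z))_{i\in I}\in(1+z\kk[z])^I$, distinct simples have distinct maximal monomials $m_L$. Consequently $\{\chi_q([L])\}_{L\in\Irr\Cc}$ is $\Z$-linearly independent in $\Z[Y_{i,a}^{\pm 1}]$, and since $\Irr\Cc$ is a $\Z$-basis of $K(\Cc)$, the map $\chi_q$ is injective.
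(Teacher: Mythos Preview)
The paper does not supply its own proof of this theorem: it is stated with a citation to \cite[Theorem~3]{FR98}, and the preceding rational-form description of all $\ell$-weights is likewise quoted from \cite[Proposition~1]{FR98}. There is therefore no argument in the paper to compare your proposal against.

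That said, your outline is a faithful sketch of the original Frenkel--Reshetikhin proof. A few remarks on where the real work lies. In part~(i), the reduction to simple modules is fine, but the claim that applying $x_{i,k}^{-}$ to an $\ell$-weight vector shifts the eigenvalue series by a single $A_{i,a}^{-1}$ is not literally true vector by vector; what one shows is that the subspace $x_{i,k}^{-}V_\gamma$ lies in a sum of $\ell$-weight spaces whose monomials are of the form $m_\gamma A_{i,a}^{-1}$, and this is obtained by a reduction to the $U_{q_i}(L\mathfrak{sl}_2)$-case (where the $q$-characters are known explicitly). In part~(ii), your upper-triangularity strategy is exactly right; in \cite{FR98} it is carried out by showing that, modulo the left ideal generated by the $x_{j,k}^{+}\otimes 1$, the coproduct of $h_{i,l}$ is $h_{i,l}\otimes 1 + 1\otimes h_{i,l}$, which is the precise form of the ``diagonal plus strictly upper-triangular'' decomposition you describe. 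Part~(iii) is correct as written and matches the standard argument; the paper records the key input (uniqueness of the maximal monomial) separately as Theorem~\ref{Thm:FM}.
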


This implies that the Grothendieck ring $K(\mathcal{C})$ is commutative, although the category $\mathcal{C}$ is not braided. The map $\chi_{q}$ is called \emph{the $q$-character homomorphism.}
A monomial which is a product of only positive powers of $Y_{i,a}$'s is said to be \emph{dominant}.
Write the simple $U_{q}(L\fg)$-module in $\Cc$
with $\ell$-highest weight $\gamma$ as $L(m_{\gamma})$. 
Then, by Theorem~\ref{Thm:CP}, $m_{\gamma}$
is dominant, and the set of dominant monomials are in bijection with $\Irr \Cc$
via $m \mapsto [L(m)]$.

Following \cite{FR98}, for $i \in I, a \in \kk^\times$, we set
$$
A_{i,a} \seq Y_{i,aq_{i}^{-1}} Y_{i,aq_{i}} 
\left( \prod_{j \colon c_{ji}=-1} Y_{j,a}^{-1}\right)
\left( \prod_{j \colon c_{ji}=-2} Y_{j,aq^{-1}}^{-1} Y_{j, aq}^{-1} \right)
\left( \prod_{j \colon c_{ji}=-3} Y_{j,aq^{-2}}^{-1} Y_{j,a}^{-1} Y_{j,aq^{2}}^{-1} \right).
$$
As these monomials are algebraically independent, one can define the Nakajima partial ordering $\le$ on the set of monomials in $\Z[Y_{i,a}^{\pm 1} \mid i \in I, a \in \kk^\times]$
as follows:
$$
\text{$m \le m^{\prime}$ if and only if
$m^{-1}m^{\prime}$ is a product of elements of $\{A_{i,a} \mid i \in I, a \in \kk^\times \}$.}
$$

\begin{Thm}[{\cite{FR98, FM01}}] \label{Thm:FM}
Let $m$ be a dominant monomial. 
Then all the monomials occurring in $\chi_{q}(L(m)) - m$
are strictly less than $m$ with respect to the ordering $\le$. 
\end{Thm}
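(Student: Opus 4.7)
The plan is to show that every $\ell$-weight of $L(m)$ other than the highest one is obtained from the monomial $m$ by multiplication by factors of $A_{i,a}^{-1}$ for various $(i,a) \in I \times \kk^\times$. Since $L(m)$ is cyclic over $U_q(L\fg)$ with generator the $\ell$-highest weight vector $v_m$, every vector in $L(m)$ is a linear combination of successive applications of lowering operators $x_{j,k}^-$ to $v_m$. Hence it suffices to track how a single application of $x_j^-(w)$ transforms an $\ell$-weight.

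For the case $j \neq i$, I would use the Drinfel'd relation
\[
(q_{i}^{-c_{ij}} z - w)\, \phi_{i}^{\varepsilon}(z)\, x_{j}^{-}(w) = (z - q_{i}^{-c_{ij}} w)\, x_{j}^{-}(w)\, \phi_{i}^{\varepsilon}(z),
\]
which implies that if $v$ is an $\ell$-weight vector with $\phi_i^\pm$-eigenvalue series $\gamma_i^\pm(z)$, then a localization of $x_j^-(w) v$ at $w = a \in \kk^\times$ is a generalized $\phi_i^\pm$-eigenvector whose eigenvalue series is obtained from $\gamma_i^\pm(z)$ by multiplication by the formal expansion of $(z - q_{i}^{-c_{ij}} a) / (q_{i}^{-c_{ij}} z - a)$. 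Converting $\ell$-weights to monomials via $\gamma \mapsto m_\gamma$ and using the symmetry $q_i^{c_{ij}} = q_j^{c_{ji}}$ (which comes from $DC$ being symmetric), this shift matches precisely the $Y_{i,\bullet}$-contribution of $A_{j,a}^{-1}$.

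The case $j = i$ is more subtle, since now $[x_i^+(z), x_i^-(w)]$ involves $\phi_i^\pm$ itself and the shift cannot be read off directly from commutation relations. Here the key step is to restrict $L(m)$ to the subalgebra $U_i \subset U_q(L\fg)$ generated by $\{ k_i^{\pm 1}, x_{i,k}^\pm, h_{i,l} \}$, which is (a copy of) $U_{q_i}(L\mathfrak{sl}_2)$ since the defining relations involving only the index $i$ are precisely those of $U_{q_i}(L\mathfrak{sl}_2)$. The $U_i$-restriction of $L(m)$ decomposes into finite-dimensional $U_{q_i}(L\mathfrak{sl}_2)$-modules, and the $\mathfrak{sl}_2$-case of the theorem is classical: simple modules of $U_{q_i}(L\mathfrak{sl}_2)$ are tensor products of $q$-segment evaluation modules, whose $q$-characters can be computed explicitly and shown to have the desired property, each downward step corresponding to a single $A_a^{-1}$. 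Transferring this back, the $Y_{i,\bullet}$-shift induced by a single application of $x_i^-(w)|_{w = a}$ matches $A_{i,a}^{-1}$ in its $i$-component.

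The main obstacle will be to synchronize the two arguments so that a single application of $x_i^-(w)|_{w = a}$ corresponds to exactly one factor of $A_{i,a}^{-1}$ (rather than a product of several), with matching normalization of the spectral parameter $a$ in both the $\mathfrak{sl}_2$-reduction and the $j \neq i$ commutation analysis; in particular, the identification requires a careful extraction of residues and an analysis of the dimension of the generalized eigenspace. Once this is verified, induction on the number of lowering operators applied to $v_m$ concludes the argument: the resulting monomial takes the form $m \prod_k A_{i_k, a_k}^{-1}$, which by definition is $\le m$ in the Nakajima ordering and is strictly less than $m$ whenever the product is nontrivial.
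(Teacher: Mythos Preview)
The paper does not prove this theorem; it is stated as a known result with a citation to \cite{FR98, FM01}. Your sketch follows essentially the strategy of the original proof in Frenkel--Mukhin: cyclicity from the $\ell$-highest weight vector, the commutation relation between $\phi_i^\varepsilon(z)$ and $x_j^-(w)$ for $i \neq j$, and restriction to $U_{q_i}(L\mathfrak{sl}_2)$ for the diagonal shift. So you are on the right track.

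One point deserves tightening. The phrase ``a localization of $x_j^-(w) v$ at $w = a$'' is not well-defined: $x_j^-(w) v = \sum_k (x_{j,k}^- v)\, w^k$ is a formal Laurent series with coefficients in $V$, and cannot simply be evaluated at a scalar. What the original argument does is show that each coefficient $x_{j,k}^- v$ lies in the finite direct sum $\bigoplus_a V_{\gamma \cdot A_{j,a}^{-1}}$. The commutation relation, rewritten as a linear recursion among the vectors $x_{j,k}^- v$ with coefficients in $\phi_i^\pm$, combines with finite-dimensionality of $V$ (so only finitely many $\ell$-weights occur) to force each component into the correct generalized eigenspace. The synchronization you worry about is resolved exactly here: the $\mathfrak{sl}_2$-restriction pins down the finite set of spectral parameters $a$ that can occur, and the $i \neq j$ commutation relations then confirm that for those same $a$ the $\phi_i^\pm$-shift matches the $Y_{i,\bullet}$-part of $A_{j,a}^{-1}$. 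With that correction your outline is a faithful account of the cited proof.
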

\subsection{Twists and duality}
\label{ssec:duality}

For each $a \in \kk^\times$, we consider the $\kk$-algebra automorphism 
$\tau_{a}$ of the quantum loop algebra $U_q(L\fg)$ given by 
$$
\tau_a(k_{i}) = k_i, \
\tau_a(x^{\pm}_{i, k}) = a^k x^{\pm}_{i,k}, \
\tau_a(h_{i,l}) = a^l h_{i,l}, 
$$
for $i \in I, k \in \Z, l \in \Z \setminus \{ 0 \}$. 
This automorphism $\tau_a$ is a $q$-analog of the loop rotation $z \mapsto az$ for
the loop algebra $L\fg = \fg \otimes_\C \C[z^{\pm}]$.
For a finite-dimensional representation $V$ in $\Cc$, 
the twisted representation $\tau_a^*V$ by $\tau_a$ again belongs to $\Cc$.
Thus the assignment $V \mapsto \tau_a^*V$ defines a monoidal auto-equivalence $T_{a}$ of $\Cc$. 
The functors $\{ T_{a} \}_{a \in \kk^\times}$ define an action of the group $\kk^\times$
on the monoidal category $\Cc$ as they satisfy $T_a \circ T_b \simeq T_{ab}$ and $T_1 = \id_\Cc$.
If we denote by $y \mapsto y_a$
the $\Z$-algebra automorphism of $\Z[Y_{i, b}^{\pm 1} \mid i  \in I, b \in \kk^\times]$ given by $Y_{i,b} \mapsto Y_{i, ab}$,
we have
$T_a \left( L(m) \right) \cong L(m_a)$ for any simple module $L(m) \in \Cc$. 

Recall that the monoidal category $\Cc$ is rigid, i.e.~every object $V \in \Cc$ has its left dual $\cD(V)$ and right dual $\cD^{-1}(V)$ in $\Cc$.\footnote{By convention, we choose the coproduct of $U_q(L\fg)$ opposite to that of 
\cite{AK97, Kas02}, that is why left and right duals are interchanged in comparison to these papers.}
Since $\cD \circ \cD^{-1} \simeq \cD^{-1} \circ \cD \simeq \id_{\Cc}$, 
we can naturally extend $\cD^{\pm 1}$ to a collection of functors $\{\cD^{k}\}_{k \in \Z}$
so that we have $\cD^{k} \circ \cD^{l} \simeq \cD^{k+l}$ for $k,l \in \Z$. 
Then we have $\cD^2 \simeq T_{q^{2rh^{\vee}}}$ as endo-functors on $\Cc$,
where $h^\vee$ is the dual Coxeter number of $\fg$. 
See Table~\ref{table:cl} in Section~\ref{sec:Qdata} below for its explicit value.

Now we consider the $\Z$-algebra automorphism $\fD$  of $\Z[Y_{i,a}^{\pm 1} \mid i \in I, a \in \kk^\times]$ given by
\begin{equation} \label{eq:deffD}
\fD(Y_{i,a}) \seq Y_{i^{*}, aq^{rh^{\vee}}} \quad \text{for each $i \in I, a \in \kk^{\times}$},
\end{equation}
where $i \mapsto i^*$ denotes the involution of the set $I$ induced from the longest element of the Weyl group of $\fg$.
See Section~\ref{ssec:unf} for its precise definition when $\fg$ is simply-laced, 
while it is trivial when $\fg$ is non-simply-laced.  

\begin{Thm}[cf.~{\cite[Proposition 5.1(b)]{CP96a}, \cite[Appendix A]{AK97}, \cite[Corollary 6.10]{FM01}}] \label{Thm:fD}
For any simple object $L(m) \in \Cc$ and $k \in \Z$, there exists an isomorphism
$$
\cD^k (L(m)) \cong L(\fD^k(m))
$$
of $U_{q}(L\fg)$-modules. In particular, we have
$$
\chi_q(\cD (L(m))) = \fD(\chi_q(L(m))) = \chi_{q}(L(\fD(m))).
$$
\end{Thm}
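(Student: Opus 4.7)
The plan is to establish the module isomorphism $\cD^k(L(m)) \cong L(\fD^k(m))$ first, and then derive the $q$-character identities as immediate consequences by applying $\chi_q$.

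First I would reduce to the case $k=1$. Once $\cD(L(m)) \cong L(\fD(m))$ is known, one iterates: the relation $\cD^2 \simeq T_{q^{2rh^{\vee}}}$ together with $T_a(L(m)) \cong L(m_a)$ yields $\cD^2(L(m)) \cong L(m_{q^{2rh^{\vee}}})$, and the identity $\fD^2(Y_{i,a}) = Y_{i,aq^{2rh^{\vee}}}$ (which follows from $(i^*)^* = i$) shows this equals $L(\fD^2(m))$. Combining $\cD$ and $\cD^2$ then gives the statement for every $k \in \Z$ by induction.

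For the case $k=1$: since $\cD$ is an auto-equivalence, $\cD(L(m))$ is simple and hence equal to $L(m^{\dagger})$ for a unique dominant monomial $m^{\dagger}$, and the task is to identify $m^{\dagger} = \fD(m)$. I would reduce to fundamental modules. Every simple $L(m) \in \Cc$ arises as the head of an ordered tensor product $M(m) = V_{i_1}(a_1) \otimes \cdots \otimes V_{i_N}(a_N)$ of fundamental modules $V_i(a) = L(Y_{i,a})$. Rigidity of $\Cc$ gives the contravariance relation $\cD(V \otimes W) \cong \cD(W) \otimes \cD(V)$, so $\cD(M(m))$ is an ordered tensor product of the duals in reverse order. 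Combining this with the Frenkel--Mukhin triangularity of Theorem~\ref{Thm:FM} (which forces the $\ell$-highest weight of the head of a tensor product of fundamental modules to be the product of their $\ell$-highest weights), the $\ell$-highest weight of the simple head $\cD(L(m))$ is read off directly once the formula $\cD(V_i(a)) \cong V_{i^*}(aq^{rh^{\vee}})$ is established for every fundamental module.

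The core computation is therefore the fundamental-module case $\cD(L(Y_{i,a})) \cong L(Y_{i^*, aq^{rh^{\vee}}})$. This is the explicit identification of the Drinfeld polynomials of the left dual of a fundamental module, carried out via the antipode of $U_q(L\fg)$ on the Drinfeld generators; it is the classical content of \cite{CP96a, AK97, FM01}, which I would invoke. Once the module isomorphism $\cD^k(L(m)) \cong L(\fD^k(m))$ is in hand, the $q$-character identities follow: applying $\chi_q$ and using injectivity (Theorem~\ref{Thm:FR}), we get $\chi_q(\cD L(m)) = \chi_q(L(\fD(m)))$, and the middle equality $\chi_q(L(\fD(m))) = \fD(\chi_q(L(m)))$ follows because $\fD$ is the unique ring automorphism of $\Z[Y_{j,b}^{\pm 1}]$ sending each fundamental $\ell$-highest monomial $Y_{i,a}$ to $Y_{i^*, aq^{rh^{\vee}}}$, which is compatible with the product structure of $K(\Cc)$ on which $\chi_q$ is a ring homomorphism.

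The main obstacle is the fundamental-module calculation: explicitly identifying the left dual of $L(Y_{i,a})$ in terms of its Drinfeld polynomial requires a delicate antipode computation in the Drinfeld presentation, where no closed form for the antipode is available on all generators. Fortunately this is precisely the content of the cited references, so after invoking them the rest of the argument is the reduction via rigidity and the Frenkel--Mukhin triangularity described above.
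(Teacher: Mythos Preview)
The paper does not give a proof of this theorem; it is cited from \cite{CP96a, AK97, FM01} without argument. Your outline is a reasonable reconstruction, but there is one genuine slip and one small gap.

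The slip: you assert that $\cD(L(m))$ is the simple \emph{head} of $\cD(M(m))$. Left-dualization is contravariant on morphisms, so the surjection $M(m) \twoheadrightarrow L(m)$ becomes an injection $\cD(L(m)) \hookrightarrow \cD(M(m))$, placing $\cD(L(m))$ in the \emph{socle}. Moreover, $\cD(M(m))$ is the tensor product of the dual fundamentals in the reversed order, which is the \emph{opposite} of the standard ordering defining $M(\fD(m))$; your claim that ``the $\ell$-highest weight of the head of a tensor product of fundamental modules is the product of their $\ell$-highest weights'' is false for the wrong ordering. The fix is that the opposite-order tensor product has simple \emph{socle} $L(\fD(m))$ (the statement dual to the simple-head property of standard modules), so $\cD(L(m)) \cong L(\fD(m))$ follows once head is replaced by socle throughout. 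Alternatively, you can bypass head/socle entirely by working in $K(\Cc)$: both $\chi_q \circ [\cD]$ and $\fD \circ \chi_q$ are ring homomorphisms from the commutative polynomial ring $K(\Cc)$ agreeing on the fundamental generators, hence everywhere; the module isomorphism for general $L(m)$ then follows by reading off the unique maximal dominant monomial.

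The small gap: your justification of the middle equality $\fD(\chi_q(L(m))) = \chi_q(L(\fD(m)))$ is not purely formal. One needs that $\fD$ preserves the image of $\chi_q$, which is not automatic from its action on the top monomials $Y_{i,a}$. The missing observation is $\fD(A_{i,a}) = A_{i^*, a q^{rh^\vee}}$ (using that $i \mapsto i^*$ is a Dynkin diagram automorphism of $\fg$, hence preserves the Cartan data), so $\fD$ intertwines the Frenkel--Mukhin screening characterization of $\chi_q(K(\Cc))$ and in particular sends $\chi_q(L(Y_{i,a}))$ to $\chi_q(L(Y_{i^*, aq^{rh^\vee}}))$ as full $q$-characters.
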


Finally, we recall the involution $\omega$ of the quantum loop algebra $U_q(L\fg)$ as in \cite{Chari95}: 
it is defined by $$\omega(x_{i,k}^\pm) = -x_{i,-k}^\mp, \ \omega(h_{i,l}) =- h_{i,-l}, \ \omega(k_i) = k_i^{-1}$$
for $i \in I, k \in \Z, l \in \Z \setminus \{ 0 \}$. 
Then, for a representation $V \in \Cc$, the $q$-character of the twisted representation 
$\omega^*V$ by $\omega$ is obtained from $\chi_q(V)$ by replacing each variable $Y_{i,a}$ by 
$\omega(Y_{i,a}) \seq Y_{i,a^{-1}}^{-1}$. In particular, the assignment $\chi_{q}(V) \mapsto \chi_{q}(\omega^*V) = \omega(\chi_{q}(V))$
defines a ring automorphism of $\chi_{q}(K(\Cc))$. 
Moreover, if $V$ is a simple module $L(m)$, 
then we have 
\begin{equation} \label{eq:omg}
\omega^*L(m) \cong L(\omega^{*}m),
\end{equation}
 where $\omega^{*}m$ is the dominant monomial obtained from 
$m$ by replacing each variables $Y_{i,a}$ with $Y_{i^{*},a^{-1}q^{-rh^\vee}}$ (see \cite{Hernandez07}). 

\subsection{The category $\Cc_{\Z}$}
\label{ssec:CZ}

In this subsection, 
 we introduce a certain monoidal subcategory $\Cc_{\Z}$ of $\Cc$, which is of our main interest in this paper. 
For $i,j \in I$, write 
$$
d_{ij}  \seq \min (d_i, d_j).
$$
A function $\epsilon \colon I \to \{0,1\}$ is  called  \emph{a parity function for $\fg$} if it satisfies the condition
\begin{equation} \label{cond:parity}
\epsilon_{i} \equiv \epsilon_{j} + d_{ij} \pmod{2} \quad \text{for any $i,j \in I$ with $i \sim j$}.
\end{equation}
In what follows, we choose and fix a parity function $\epsilon$ for $\fg$. 
Then we define
\begin{equation} \label{eq:hI}
\hI \seq \{ (i,p) \in I \times \Z \mid p - \epsilon_{i} \in 2\Z \}.
\end{equation} 
Let $\cY$ be the subalgebra of $\Z[Y_{i,a}^{\pm 1} \mid (i, a) \in I \times \kk^\times]$
consisting of Laurent polynomials in the variables $Y_{i,q^{p}}$
for $(i,p) \in \hI$. 
Note that we have $A_{i,q^{p+d_{i}}} \in \cY$ 
if and only if $(i,p) \in \hI$.
We denote by $\cM$ the set of all the dominant monomials in $\cY$. Following \cite[Section 3.7]{HL10}, we consider the following category.

\begin{Def}
We define the category $\Cc_{\Z}$ as the Serre subcategory of 
$\Cc$ which satisfies $\Irr \Cc_{\Z} = \{ [L(m)] \mid m \in \cM \}$.
\end{Def}

\begin{Prop}
The category $\Cc_{\Z}$ is closed under taking tensor products and applying the duality functors $\cD^{\pm1}$.
Moreover, the $q$-character homomorphism restricts to the injective algebra homomorphism 
$\chi_{q} \colon K(\Cc_{\Z}) \to \cY$. 
\end{Prop}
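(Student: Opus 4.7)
The assertion decomposes into four claims: (i) for every $m \in \cM$, $\chi_q(L(m)) \in \cY$; (ii) $\Cc_{\Z}$ is closed under tensor products; (iii) $\Cc_{\Z}$ is closed under $\cD^{\pm 1}$; and (iv) the restricted $q$-character map is injective. Claim (iv) is immediate from Theorem~\ref{Thm:FR}, and by the Serre property of $\Cc_\Z$, each of (i)--(iii) reduces to a statement about simple objects $L(m)$ with $m \in \cM$.

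For (i), the plan is to argue inductively along the Nakajima partial order. By Theorem~\ref{Thm:FM}, every monomial in $\chi_q(L(m))$ has the form $m \prod A_{i, b}^{-1}$; more precisely, the Frenkel--Mukhin recursion tells us that one passes from a monomial $m''$ to $m'' A_{i,b}^{-1}$ only when $m''$ contains some $Y_{i, b q_i^{\pm 1}}$ with positive exponent. Since $m \in \cM \subset \cY$, a direct check using~(\ref{cond:parity}) and~(\ref{eq:hI}) shows that whenever $m'' \in \cY$ contains $Y_{i, b q_i^{\pm 1}}$, the element $A_{i, b}$ itself belongs to $\cY$; hence by induction every monomial appearing in $\chi_q(L(m))$ lies in $\cY$.

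Granted (i), claim (ii) follows easily: for $m, m' \in \cM$ the product $\chi_q(L(m))\chi_q(L(m')) \in \cY$ because $\cY$ is a subalgebra, and for any composition factor $L(m'')$ of $L(m) \otimes L(m')$ the dominant monomial $m''$ must occur in this product with positive coefficient (apply Theorem~\ref{Thm:FM} factor-wise together with additivity of $\chi_q$), so $m'' \in \cY \cap \cM = \cM$. For (iii), Theorem~\ref{Thm:fD} gives $\cD(L(m)) \cong L(\fD(m))$, so it suffices to verify that the automorphism $\fD$ of~(\ref{eq:deffD}) preserves $\cM$, equivalently that $\fD(\hI) = \hI$; this amounts to the parity congruence $\epsilon_{i^*} \equiv \epsilon_i + rh^{\vee} \pmod{2}$.

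The main obstacle is precisely this parity verification and its counterpart used in (i). I expect to settle it by a short case analysis: when $\fg$ is non-simply-laced one has $i^* = i$ and $rh^{\vee}$ is even in each of the types $\mathrm{B}_n, \mathrm{C}_n, \mathrm{F}_4, \mathrm{G}_2$ (see Table~\ref{table:cl}), so the congruence is automatic; when $\fg$ is simply-laced, one iterates the relation~(\ref{cond:parity}) along a path in the connected Dynkin diagram joining $i$ to $i^*$ and compares the path length with the value of $h^{\vee}$ modulo~$2$. Once these parity compatibilities are confirmed, the four claims fit together mechanically to yield the proposition.
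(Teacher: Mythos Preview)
The paper states this proposition without proof, treating it as a known fact inherited from \cite{HL10} where $\Cc_\Z$ was introduced; there is no argument in the paper to compare yours against.

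Your decomposition into (i)--(iv) is correct and the parity verifications you sketch for (i) and (iii) go through exactly as described; in particular the congruence $\epsilon_{i^*}\equiv\epsilon_i+rh^\vee\pmod 2$ holds in every type by the case analysis you indicate. The one point that deserves tightening is the justification of (i). You appeal to ``the Frenkel--Mukhin recursion'' for an arbitrary simple module $L(m)$, but the Frenkel--Mukhin \emph{algorithm} is only guaranteed to compute $\chi_q(L(m))$ for special classes of modules (fundamental, minuscule, KR), not in general. What you actually need is a structural statement valid for any $q$-character: that every monomial in $\chi_q(L(m))$ is reached from $m$ by a chain of $A_{i,b}^{-1}$-multiplications, each intermediate monomial lying in $\chi_q(L(m))$ and each $b$ satisfying your compatibility condition. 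Such connectedness results are in \cite{FM01}, but the citation should be to those structural lemmas rather than to the algorithm. A cleaner route that sidesteps the issue entirely: since $L(m)$ is a subquotient of the standard module $M(m)$, every monomial of $\chi_q(L(m))$ already occurs in $\chi_q(M(m))=\prod_{(i,p)\in\hI}\chi_q(L(Y_{i,p}))^{u_{i,p}(m)}$, so it suffices to show $\chi_q(L(Y_{i,p}))\in\cY$ for $(i,p)\in\hI$; for fundamental modules the FM algorithm \emph{does} apply and your inductive parity argument then works without caveat.
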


The subcategory $\Cc_{\Z}$ can be regarded as a skeleton of the entire rigid monoidal category $\Cc$.
Indeed, the description of simple modules in $\Cc$ and hence that of the Grothendieck $K(\Cc)$ are
essentially reduced to the description of the simple modules in $\Cc_{\Z}$ and that of the Grothendieck ring $K(\Cc_\Z)$ 
thanks to the following fact.

\begin{Prop}
Any simple representation $V$ in $\Cc$ can be factorized into a commutative tensor product of the form
$$
V \cong T_{a_1} (V_1)\otimes T_{a_2}(V_2) \otimes \cdots \otimes T_{a_d}(V_d)
$$ 
with $V_k \in \Cc_\Z$ and $a_k \in \kk^\times$ for $1 \le k \le d$
such that $a_k/a_l \not \in q^{2\Z}$ for all $k \neq l$.
\end{Prop}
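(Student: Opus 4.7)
The plan is to reduce everything to the classification of simple modules by dominant monomials (Theorem~\ref{Thm:CP}) and then use a coset decomposition of the spectral parameters. Write $V \cong L(m)$ with $m = \prod_{(i,a)} Y_{i,a}^{u_{i,a}}$ a dominant monomial. Consider the quotient group $\kk^\times / q^{2\Z}$. For each factor $Y_{i,a}$ occurring in $m$, assign to it the class $[a q^{-\epsilon_i}] \in \kk^\times / q^{2\Z}$. Let $[c_1], \ldots, [c_d]$ be the distinct classes appearing, and choose representatives $a_k \in c_k$. Grouping the factors gives a factorization $m = m_1 \cdots m_d$ where each $m_k$ involves only variables $Y_{i, a_k q^{p}}$ with $p-\epsilon_i \in 2\Z$. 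Then $\tilde{m}_k \seq T_{a_k^{-1}}(m_k)$ is a dominant monomial lying in $\cY$, so $V_k \seq L(\tilde{m}_k)$ belongs to $\Cc_\Z$ and $T_{a_k}(V_k) \cong L(m_k)$. By construction $a_k/a_\ell \notin q^{2\Z}$ for $k \ne \ell$.

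The substance of the argument is then to prove
\[
L(m) \;\cong\; T_{a_1}(V_1) \otimes T_{a_2}(V_2) \otimes \cdots \otimes T_{a_d}(V_d),
\]
together with the pairwise commutativity of the tensor factors. I would first observe the following \emph{coset-preservation} property: for every $(i,a)$, the Frenkel–Reshetikhin monomial $A_{i,a}$ is supported only at variables $Y_{j,b}$ with $b q^{-\epsilon_j}$ in the same class of $\kk^\times / q^{2\Z}$ as $a q^{-\epsilon_i}$. This is exactly the content of the parity condition \eqref{cond:parity}: each shift by $q^{\pm d_i}$ or $q^{\pm d_{ij}}$ occurring in the definition of $A_{i,a}$ corresponds to a difference of parity values, so the coset is preserved. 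By Theorem~\ref{Thm:FM}, every monomial occurring in $\chi_q(L(m))$ is obtained from $m$ by multiplying by monomials in the $A_{i,a}^{\pm 1}$, hence $\chi_q(L(m))$ splits as a product of terms, one for each coset, supported exactly on the variables coming from the corresponding $m_k$.

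Now for the factorization itself, I would argue as follows. The tensor product $W \seq T_{a_1}(V_1) \otimes \cdots \otimes T_{a_d}(V_d)$ admits an $\ell$-highest weight vector given by the tensor of the $\ell$-highest weight vectors of the factors, with $\ell$-highest weight $m$; thus $L(m)$ appears as a subquotient. To upgrade this to an isomorphism it suffices to show $\chi_q(W) = \chi_q(L(m))$. The left-hand side is $\prod_k T_{a_k}(\chi_q(V_k))$ by Theorem~\ref{Thm:FR}, and the coset-preservation property reduces the claim to checking equality coset-by-coset; on each coset it is tautological, since by construction $T_{a_k}(\chi_q(V_k)) = \chi_q(L(m_k))$ is exactly the coset-$[c_k]$ part of $\chi_q(L(m))$ (using that $L(m)$ itself decomposes multiplicatively over cosets at the level of $q$-characters, as both sides are characterized by the same Drinfel'd polynomials after restriction to each coset). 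The pairwise commutativity $T_{a_k}(V_k) \otimes T_{a_\ell}(V_\ell) \cong T_{a_\ell}(V_\ell) \otimes T_{a_k}(V_k)$ follows from the same argument applied to the two-factor tensor products, since both are simple with the same $\ell$-highest weight.

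I expect the main obstacle to be the rigorous verification of the tensor product decomposition from equality of $q$-characters; equivalently, the simplicity of $W$. The cleanest path is to prove simultaneously (by induction on $d$) that the partial tensor products are cyclic and cocyclic, so that by general principles (since each $L(m_k)$ has a one-dimensional $\ell$-highest weight space and the $\ell$-weights arising in different factors occupy disjoint cosets) they must be simple. An alternative is to invoke the theory of normalized $R$-matrices: the denominators of $R_{T_{a_k}(V_k), T_{a_\ell}(V_\ell)}$ are polynomials in $a_k/a_\ell$ whose roots lie in $q^{2\Z}$, so the condition $a_k/a_\ell \notin q^{2\Z}$ forces both the $R$-matrix and its inverse to be well-defined, yielding both simplicity and commutativity at once.
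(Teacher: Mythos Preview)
The paper does not actually supply a proof of this proposition; it is stated without proof as a known background fact (the standard references are \cite{CP}, \cite{FM01}, and the $R$-matrix theory of \cite{AK97,Kas02}). So there is nothing to compare against directly, and I will just assess your argument.

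Your coset decomposition of the dominant monomial and the observation that each $A_{i,a}$ preserves the coset $[a q^{-\epsilon_i}] \in \kk^\times / q^{2\Z}$ are correct and are exactly the combinatorial input one needs. The problematic step is the claim that ``$\chi_q(L(m))$ splits as a product of terms, one for each coset''. Coset-preservation of the $A_{i,a}^{-1}$ only tells you that every monomial appearing in $\chi_q(L(m))$ is a product of $Y$-variables from the various cosets; it does \emph{not} give a multiplicative factorization of the polynomial $\chi_q(L(m))$ itself. Your subsequent justification (``both sides are characterized by the same Drinfel'd polynomials after restriction to each coset'') is circular: that is precisely the simplicity of $W = \bigotimes_k T_{a_k}(V_k)$, which is the statement to be proved.

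You correctly flag this gap yourself. Of the two fixes you propose, the $R$-matrix route is the clean and standard one, and it works. The point is that for simple $V,W \in \Cc_\Z$ the denominator $d_{V,W}(z)$ has all its zeros in $q^{2\Z}$: reduce to fundamental modules (the denominator for general simple modules divides a product of fundamental ones), and for fundamental modules the parity of the exponents in the zeros of $d_{i,j}(z)$ matches the parity of $\epsilon_i + \epsilon_j$, so after the shift by $q^{p-s}$ with $(i,p),(j,s)\in\hI$ all zeros land in $q^{2\Z}$. Hence if $a_k/a_\ell \notin q^{2\Z}$, both $R_{T_{a_k}(V_k), T_{a_\ell}(V_\ell)}(z)$ and $R_{T_{a_\ell}(V_\ell), T_{a_k}(V_k)}(z)$ are regular at $z=1$, giving simultaneously that the tensor product is simple (isomorphic to $L(m_k m_\ell)$) and that the two orderings are isomorphic. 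Iterating over $k$ yields the full factorization. Your cyclic/cocyclic alternative can also be made to work but is more laborious.
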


In this paper, we always work in the subcategory $\Cc_{\Z} \subset \Cc$.
From now on, we put
$Y_{i,p} \seq Y_{i,q^{p}}$ and $A_{i,p} \seq A_{i,q^{p}}$ for $(i,p) \in I \times \Z$ 
for the sake of simplicity.
For example, the algebra automorphism $\fD \colon \cY \to \cY$ defined in (\ref{eq:deffD})
is given by $\fD(Y_{i,p}) = Y_{i^*, p+rh^\vee }$ in this notation.
For a monomial $m$ in $\cY = \Z[Y_{i,p}^{\pm 1} \mid (i,p) \in \hI \ ]$, we write
\begin{equation} \label{eq:um}
m = \prod_{(i,p) \in \hI}Y_{i,p}^{u_{i,p}(m)}
\end{equation}
with $u_{i,p}(m) \in \Z$. 

The Grothendieck ring $K(\Cc)$ is a polynomial ring in the classes of the \emph{fundamental modules} $L(Y_{i,a})$, see \cite{FR98}. This implies the following.

\begin{Prop} \label{Prop:KCZ}
The $\Z$-algebra $K(\Cc_{\Z})$ is isomorphic to the polynomial ring 
with $\hI$-many variables
$\Z[x_{i,p} \mid (i,p) \in \hI \ ]$ via $[L(Y_{i,p})] \mapsto x_{i,p}$.
\end{Prop}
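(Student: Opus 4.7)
My plan is to verify that the ring homomorphism
$\phi \colon \Z[x_{i,p} \mid (i,p) \in \hI\,] \to K(\Cc_{\Z})$ determined by $x_{i,p} \mapsto [L(Y_{i,p})]$ is both injective and surjective, by deducing each statement essentially from results already recalled in the excerpt.

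For injectivity, I will invoke the result from \cite{FR98} cited just before the proposition: the entire Grothendieck ring $K(\Cc)$ is a polynomial ring in the fundamental classes $\{[L(Y_{i,a})]\}_{(i,a) \in I \times \kk^{\times}}$, so a fortiori the subfamily indexed by $\hI$ is algebraically independent in $K(\Cc)$. Since $\Cc_{\Z}$ is Serre in $\Cc$ and $\Irr \Cc_{\Z} \subset \Irr \Cc$, the inclusion of bases yields an injective $\Z$-linear map $K(\Cc_{\Z}) \hookrightarrow K(\Cc)$, so the algebraic independence passes to $K(\Cc_{\Z})$ and $\phi$ is injective.

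For surjectivity it suffices, since $\{[L(m)]\}_{m \in \cM}$ is a $\Z$-basis of $K(\Cc_{\Z})$, to show every $[L(m)]$ with $m \in \cM$ lies in $\mathrm{Im}(\phi)$. Fix $m \in \cM$ and write $m = \prod_{(i,p) \in \hI} Y_{i,p}^{u_{i,p}(m)}$ with $u_{i,p}(m) \ge 0$. The standard module $M(m)$ is by construction an ordered tensor product of the fundamental modules $L(Y_{i,p})$ occurring with multiplicities $u_{i,p}(m)$, so
\[
[M(m)] = \prod_{(i,p) \in \hI} [L(Y_{i,p})]^{u_{i,p}(m)} \in \mathrm{Im}(\phi).
\]
Combined with the unitriangular expansion $[M(m)] = [L(m)] + \sum_{m' \in \cM\colon m' < m} P_{m,m'}\,[L(m')]$ recalled in the introduction, I deduce $[L(m)] \in \mathrm{Im}(\phi)$ by induction on $m$ along the Nakajima order $\le$ restricted to $\cM$.

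The only point requiring more than a line is the well-foundedness of this induction, i.e.\ the finiteness of $\{m' \in \cM \mid m' \le m\}$; this I expect to handle by a standard weight argument, noting that $m' \le m$ means $m' = m \prod A_{j,a_j}^{-n_j}$ with $n_j \in \Z_{\ge 0}$, that each $A_{j,a_j}$ carries $\fg$-weight $\alpha_j$, and that dominance of both $m$ and $m'$ confines the total drop $\sum n_j \alpha_j$ to a finite polytope. This well-foundedness is the only mildly delicate step of the argument; the rest is a direct consequence of the polynomial-ring structure on $K(\Cc)$ and the standard-versus-simple triangularity.
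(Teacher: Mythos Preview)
Your argument is correct and essentially coincides with what the paper has in mind: the paper does not give a proof at all beyond the sentence ``The Grothendieck ring $K(\Cc)$ is a polynomial ring in the classes of the fundamental modules $L(Y_{i,a})$, see \cite{FR98}. This implies the following.'' In other words, the paper treats the proposition as an immediate restriction of the Frenkel--Reshetikhin polynomial-ring result to $\Cc_{\Z}$, and your injectivity/surjectivity arguments are precisely the standard way to justify that restriction. The well-foundedness of the Nakajima order on $\cM$ that you flag is also recorded later in the paper (Section~\ref{ssec:KtCZ}, with a reference to \cite[Section~3.4]{Hernandez04}), so you may simply cite it rather than reprove it.
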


The $U_{q}(L\fg)$-modules of the form $L(Y_{i,p})$ are exactly the fundamental modules which are in $\Cc_\Z$.

\section{Quantum Grothendieck rings}
\label{sec:Grot}

In this section, we recall the definition and the first properties of the quantum Grothendieck ring 
of the monoidal category $\Cc_{\Z}$. We also discuss some additional symmetry properties of quantum Grothendieck rings with respect to the dualities $\mathcal{D}$ and $\omega$. We keep the notations introduced in the previous section.

\subsection{Inverse of quantum Cartan matrix}
\label{ssec:tc}

Let $z$ be an indeterminate.
The quantum Cartan matrix of $\fg$
is the $\Z[z^{\pm 1}]$-valued $I\times I$-matrix
$C(z) = (C_{ij}(z))_{i,j \in I}$ whose $(i,j)$-entry is given by
$$
C_{ij}(z) \seq \begin{cases}
z^{d_{i}} + z^{-d_{i}} & \text{if $i=j$}, \\
[c_{ij}]_{z} & \text{if $i \neq j$},
\end{cases}
$$
where we used the notation (\ref{eq:qinteger}).
As the Cartan matrix $C = C(1)$ is invertible, so is $C(z)$. Denote its inverse by $\tC(z) = (\tC_{ij}(z))_{i,j \in I}$.
Note that the entries of $\tC(z)$ are rational functions in $z$. 
Then we define a collection of integers 
$\{ \tc_{ij}(u) \mid i,j \in I, u \in \Z \}$ as the coefficients of the Laurent expansions
$$
\sum_{u \in \Z} \tc_{ij}(u) z^{u} \seq \left[ \tC_{ij}(z) \right]^{+}.
$$

The $2h^\vee$-periodicity of these integers for simply-laced types was established in \cite{HL15}. 
Here we list several important properties obtained in \cite{FO21} for general types.

\begin{Lem}[{\cite[Section~4]{FO21}}] \label{Lem:tc}
The integers $\{ \tc_{ij}(u) \mid i,j \in I, u \in \Z\}$ satisfy the following properties$\colon$
\begin{enumerate}
\item \label{tc:d} $\tc_{ij}(u) = 0$ if $u < d_{i}$, and $\tc_{ij}(d_{i}) = \delta_{i,j}$,
\item $\tc_{ij}(u+d_{i}) - \tc_{ij}(u-d_{i}) = \tc_{ji}(u+d_{j}) - \tc_{ji}(u-d_{j})$ for all $u \in \Z$,
\item \label{tc:tp} for any $u \in \Z$, 
$$
\tc_{ij}(u) = \begin{cases}
\tc_{ji}(u) & \text{if $d_{i}=d_{j}$}, \\
\tc_{ji}(u+1) + \tc_{ji}(u-1) & \text{if $(d_{i}, d_{j}) = (1,2)$}, \\
\tc_{ji}(u+2) + \tc_{ji}(u) + \tc_{ji}(u-2) & \text{if $(d_{i}, d_{j}) = (1,3)$},
\end{cases}
$$
\item \label{tc:pe} $\tc_{ij}(u + rh^{\vee}) = -\tc_{ij^{*}}(u)$ for $u \ge 0$,
\item \label{tc:sym} $\tc_{ij}(rh^{\vee}-u) = \tc_{ij^{*}}(u)$ for all $0 \le u \le rh^{\vee}$,
\item \label{tc:po} $\tc_{ij}(u) \ge 0$ for all $0 \le u \le rh^{\vee}$.
\end{enumerate}
\end{Lem}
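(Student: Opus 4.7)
The plan is to derive all six properties from the matrix identity $C(z)\tC(z) = I = \tC(z) C(z)$ combined with the classical symmetry $d_i c_{ij} = d_j c_{ji}$. Reading off the $(i,j)$-entry of the first identity,
\begin{equation*}
(z^{d_i} + z^{-d_i})\tC_{ij}(z) \,+ \sum_{k \sim i,\, k \neq i} [c_{ik}]_z\,\tC_{kj}(z) \,=\, \delta_{ij},
\end{equation*}
and extracting the coefficient of $z^u$ yields a linear recurrence that uniquely determines the sequence $\{\tc_{ij}(u)\}_{u\in\Z}$ from initial data. Property (1) supplies that initial data: writing $\tC_{ij}(z) = \mathrm{cof}_{ji}(C(z))/\det C(z)$ via Cramer's rule and inspecting valuations at $z = 0$ of the numerator and denominator shows that $[\tC_{ij}(z)]^+$ starts at $z^{d_i}$ with leading coefficient $\delta_{ij}$. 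With (1) in hand, each remaining claim reduces to checking that a postulated symmetry is compatible with the recurrence.

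For (2) and (3) it is natural to introduce the matrix $M(z) \seq D(z) C(z)$, where $D(z) = \mathrm{diag}([d_i]_z)_{i \in I}$. A direct computation shows that the classical relation $d_i c_{ij} = d_j c_{ji}$ quantizes to the polynomial identity $[d_i]_z [c_{ij}]_z = [d_j]_z [c_{ji}]_z$, so that $M(z) = M(z)^T$; consequently $M(z)^{-1} = \tC(z) D(z)^{-1}$ is symmetric as well, yielding
\begin{equation*}
[d_i]_z \tC_{ij}(z) \,=\, [d_j]_z \tC_{ji}(z).
\end{equation*}
When $d_i \mid d_j$, the ratio $[d_j]_z / [d_i]_z$ equals $[d_j/d_i]_z$, and reading off coefficients gives (3) in all three listed cases; multiplying the same identity by $(z - z^{-1})$ converts it to $(z^{d_i} - z^{-d_i})\tC_{ij}(z) = (z^{d_j} - z^{-d_j})\tC_{ji}(z)$, whose coefficient of $z^u$ is exactly (2).

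For (4) and (5), the essential input is the spectral structure of $C(z)$: its determinant factors into cyclotomic-type factors whose roots are governed by the exponents of $\fg$ and by $rh^\vee$, while the Dynkin diagram involution $i \mapsto i^*$ implements the action of $-w_0$. A partial fractions decomposition of $\tC_{ij}(z) = \mathrm{cof}_{ji}(C(z))/\det C(z)$ with respect to these factors produces a functional equation of the form
\begin{equation*}
\tC_{ij}(z) \,+\, z^{rh^\vee}\,\tC_{ij^*}(z) \,\in\, \kk[z^{\pm 1}],
\end{equation*}
with right-hand side a Laurent polynomial of positive degree strictly less than $rh^\vee$; extracting the coefficient of $z^{u+rh^\vee}$ for $u \ge 0$ then yields (4). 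Property (5) follows by combining (4) with the additional symmetry $\tC_{ij}(z) = \tC_{ij}(z^{-1})$ inherited from $C(z) = C(z^{-1})$, which forces the antisymmetry of $\tc_{ij}(u)$ around $u = rh^\vee$ within the fundamental window and pins down the values there.

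The main obstacle is the positivity (6), which escapes any purely formal manipulation of $C(z)\tC(z) = I$. The strategy I would adopt is to realize $\tc_{ij}(u)$ combinatorially in terms of the twisted Auslander--Reiten quiver $\Gamma_\cQ$ attached to a Q-datum $\cQ$ for $\fg$, in the spirit of Section~\ref{sec:Qdata}: fixing $\cQ$ with a source of color $i$, the integer $\tc_{ij}(u)$ should count the $j$-colored vertices of $\Gamma_\cQ$ lying at a height determined by $u$, so that positivity in the window $0 \le u \le rh^\vee$ reduces to counting a finite set of genuine objects. An alternative route, available when the Q-data framework is to be proved as a consequence rather than assumed, goes through the unfolding $(\fg, \sg)$: one expresses $\tc^\fg_{ij}(u)$ as a nonnegative integer combination of the values $\tc^\sg_{\tilde i,\tilde j}(u)$ for the simply-laced $\sg$, where positivity is classical from the Auslander--Reiten quiver of a Dynkin quiver.
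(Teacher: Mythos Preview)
The paper does not contain a proof of this lemma; it is quoted from \cite[Section~4]{FO21} without argument, so there is no in-paper proof to compare your proposal against.

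On the proposal itself: your treatment of (1)--(3) is correct and clean. The symmetry of $D(z)C(z)$ does give $[d_i]_z\,\tC_{ij}(z) = [d_j]_z\,\tC_{ji}(z)$ (since $M(z)^{-1}=\tC(z)D(z)^{-1}$ is symmetric), and coefficient extraction yields (2) and (3) exactly as you describe.

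Parts (4) and (5) are where the real content hides, and your sketch there has genuine gaps. The functional equation you posit, $\tC_{ij}(z) + z^{rh^\vee}\tC_{ij^*}(z) \in \kk[z^{\pm1}]$, is the right target, but obtaining it requires a concrete analysis of the pole structure of $\det C(z)$ and of how the involution $j\mapsto j^*$ interacts with the residues; this is not a formal consequence of $C(z)\tC(z)=I$ and is precisely what \cite{FO21} supplies through the Q-data / generalized Coxeter element machinery (the identity recorded here as Theorem~\ref{Thm:tc}). Your derivation of (5) from (4) together with $\tC_{ij}(z)=\tC_{ij}(z^{-1})$ is also too quick: that rational-function symmetry relates the expansion at $z=0$ to the expansion at $z=\infty$, not two coefficients of the same formal expansion $[\tC_{ij}(z)]^+$, so an additional periodicity argument is needed. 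For (6) you correctly recognize that positivity requires an external combinatorial input; both strategies you name---interpretation via twisted AR quivers, or reduction to the simply-laced case by unfolding---are in fact the ones employed in \cite{FO21}.
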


\subsection{The quantum torus $\cY_{t}$} 
\label{ssec:Yt}
Let $t$ be an indeterminate. 
We consider the Laurent polynomial ring $\Z[t^{\pm 1/2}]$,
introducing a square root $t^{1/2}$ of $t$.

\begin{Def}[\cite{Hernandez04}] \label{Def:Yt}
We define the quantum torus $\cY_{t}$ associated with $\fg$ 
as the $\Z[t^{\pm1/2}]$-algebra presented by 
the set of generators $\{\tY_{i, p}^{\pm 1} \mid (i,p) \in \hI \ \}$ and the following relations:
\begin{itemize}
\item 
$\tY_{i,p}\tY_{i,p}^{-1}= \tY_{i,p}^{-1} \tY_{i,p}=1$ for each $(i,p) \in \hI$,
\item 
$\tY_{i, p}\tY_{j, s} = t^{\Nn(i,p;j,s)}\tY_{j,s}\tY_{i,p}$ for each $(i,p), (j,s) \in \hI$,
\end{itemize}
where $\Nn\colon (I \times \Z)^{2}  \to \Z$ is defined by
\begin{equation} \label{eq:defNn}
\Nn(i,p;j,s) \seq \tc_{ij}(p-s-d_{i})- \tc_{ij}(p-s+d_{i}) - \tc_{ij}(s-p-d_{i})+ \tc_{ij}(s-p+d_{i}). 
\end{equation}
\end{Def}

\begin{Rem}\label{remqt} (i)
This is a remark on the convention.
Our parameter $t$ here is same as $t$ in~\cite{HL15}, which coincides with $t^{-1}$ in~\cite{HO19}. 

(ii) The definition of the quantum torus in \cite{Hernandez04} is based on the analysis of vertex operators occurring in the theory of Frenkel-Reshetikhin deformed $\mathcal{W}$-algebras which are related to transfer-matrices of corresponding quantum integrable systems. The construction of $t$-deformed Grothendieck rings for simply-laced quantum loop algebras in \cite{VV03} and \cite{Nakajima04} is based on a slightly different quantum torus obtained from convolution products on Nakajima quiver varieties. \end{Rem}

\begin{Rem} \label{Rem:Nn}
By Lemma~\ref{Lem:tc}, we have
\begin{equation}
\label{eq:Nnskew}
\Nn(i, p; j, s) = \Nn(j,p;i,s) = - \Nn(i,s;j,p) = -\Nn(j,s;i,p)
\end{equation}
for any $(i,p), (j,s) \in I \times \Z$, and
\begin{equation}
\label{eq:Nnhalf}
\Nn(i,p;j,s) = \tc_{ij}(p-s-d_{i})- \tc_{ij}(p-s+d_{i}) \qquad \text{if $p-s\ge \delta_{i,j}$}.
\end{equation}
Moreover, we have 
\begin{equation}
\label{eq:tYcomm}
\tY_{i,p} \tY_{j,p} = \tY_{j,p} \tY_{i,p}
\end{equation} 
for $(i,p), (j,p) \in \hI$ as $\Nn(i,p;j,p) = 0$.
\end{Rem}

There exists a $\Z$-algebra homomorphism $\evt \colon \cY_{t} \to \cY$
given by
$$
t^{1/2} \mapsto 1, \qquad \tY_{i,p} \mapsto Y_{i,p}.
$$
This map is called {\em the specialization at} $t=1$.

An element $\tm \in \cY_{t}$
is called a {\em monomial} if it is a product of the generators
$\tY_{i,p}$ for $(i,p) \in \hI$ and $t^{\pm 1/2}$. 
For a monomial $\tm \in \cY_{t}$, we set $u_{i,p}(\tm) \seq u_{i,p}(\evt(\tm))$
(recall the notation~(\ref{eq:um})).
A monomial $\tm$ in $\cY_{t}$ is said to be {\em dominant} if $\evt(\tm)$
is dominant, i.e., $u_{i,p}(\tm) \ge 0$ for all $(i,p) \in \hI$.
Moreover, for monomials $\tm, \tm^{\prime}$ in $\cY_{t}$, set
$$
\text{$\tm \le \tm^{\prime}$ if and only if $\evt(\tm) \le \evt(\tm^{\prime})$}.
$$
Following~\cite[Section~6.3]{Hernandez04}, we define the $\Z$-algebra anti-involution $\ol{(\cdot)}$ on $\cY_{t}$ by
$$
t^{1/2} \mapsto t^{-1/2}, \qquad \tY_{i,p} \mapsto t\tY_{i,p}.
$$
It is easy to show that, for any monomial $\tm$ in $\cY_{t}$,
there uniquely exists $a \in \Z$ such that $t^{a/2}\tm$ is $\ol{(\cdot)}$-invariant, 
and this element is denoted by $\ul{\tm}$.
Note that $\ul{t^{k/2}\tm} = \ul{\tm}$ for any $k \in \Z$.
Hence $\ul{\tm}$ depends only on $\evt(\tm)$.
Therefore, for every monomial $m$ in $\cY$, the element $\ul{m}$
is well-defined as an element of $\cY_{t}$.
The elements of this form are called {\em commutative monomials}.
For example, $\ul{Y_{i,p}} = t^{1/2}\tY_{i,p}$.
Note that, for every monomial $m$ in $\cY$, 
we have $\ul{(m^{-1})} = (\ul{m})^{-1} (=: \ul{m}^{-1})$.
The commutative monomials forms a free basis 
of the $\Z[t^{\pm 1/2}]$-module $\cY_{t}$.
For any two monomials $m,m^\prime$ in $\cY$, we have
$$\ul{m\cdot m^\prime} = t^{-\Nn(m, m^\prime)/2}\ul{m} \cdot \ul{m^\prime} = t^{\Nn(m,m^\prime)/2} \ul{m^\prime} \cdot \ul{m}$$
where $\Nn(m,m^\prime) \in \Z$ is defined as
\begin{equation} \label{eq:Nnmm}
\Nn(m,m^\prime) \seq \sum_{(i,p), (j,s) \in \hI} u_{i,p}(m) u_{j,s}(m^\prime) \Nn(i,p;j,s).
\end{equation}

\begin{Rem}
In~\cite{Hernandez04}, the quantum torus $\cY_{t}$ is a $\Z[t^{\pm 1}]$-algebra.
However, to guarantee the existence of commutative monomials, we need the square root of $t^{\pm 1}$.
This is the reason why we add $t^{\pm 1/2}$.  
\end{Rem} 

For $(i,p) \in \hI$, we set
$$
\tA_{i,p + d_{i}} \seq \ul{A_{i, p+d_{i}}} \quad \in \cY_{t}.
$$

\subsection{Quantum Grothendieck ring of $\Cc_{\Z}$} 
\label{ssec:KtCZ}

For each $i \in I$, denote by $\cK_{i,t}$ the $\Z[t^{\pm 1/2}]$-subalgebra
of the quantum torus $\cY_{t}$ generated by 
$$
\{ \tY_{i,p} (1+t^{-1}\tA_{i, p+ d_{i}}^{-1}) \mid p - \epsilon_{i} \in 2 \Z \} 
\cup \{ \tY_{j,s}^{\pm 1} \mid (j,s) \in \hI, j \neq i \}.
$$
Following \cite{Nakajima04, VV03, Hernandez04}, {\em the quantum Grothendieck ring of} $\Cc_{\Z}$ is defined as
$$
\cK_{t} \equiv \cK_{t}(\Cc_{\Z}) \seq \bigcap_{i \in I} \cK_{i,t}. 
$$
By construction, the quantum Grothendieck is stable by the  $\ol{(\cdot)}$-involution.

\begin{Thm}[{\cite[Theorem 5.11]{Hernandez04}}] \label{Thm:Ft}
For every dominant monomial $\tm$ in $\cY_{t}$, 
there uniquely exists an element $F_{t}(\tm)$ of $\cK_{t}$ such that $\tm$ is
the unique dominant monomial occurring in $F_{t}(\tm)$. The monomials $\tm^{\prime}$ occurring in $F_{t}(\tm) - \tm$
satisfy $\tm^{\prime} < \tm$.
In particular, the set $\{ F_{t}(\ul{m}) \mid m \in \cM \}$
forms a $\Z[t^{\pm 1/2}]$-basis of $\cK_{t}$.  
\end{Thm}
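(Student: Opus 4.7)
The plan is to deduce existence, uniqueness, and the basis property from a single structural property of the pieces $\cK_{i,t}$, namely that the maximal monomial (in the Nakajima order $\le$) of any nonzero element of $\cK_{i,t}$ is $i$-dominant, meaning all its exponents $u_{i,p}$ are $\ge 0$. This is the $t$-deformation of the classical characterization of $\cK_i$ as the kernel of the $i$-th screening operator. To prove it I would first set up a $\Z[t^{\pm 1/2}]$-linear quantum screening map $S_i$ on $\cY_t$ that is a derivation, vanishes on the algebra generators $E_{i,p} \seq \tY_{i,p}(1 + t^{-1}\tA_{i,p+d_i}^{-1})$ and on $\tY_{j,s}^{\pm 1}$ for $j \neq i$, and hence annihilates $\cK_{i,t}$; then verify by direct monomial inspection that any monomial with some $u_{i,\cdot} < 0$ is sent by $S_i$ to a nonzero element whose leading part cannot be cancelled by strictly smaller monomials. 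Equivalently, one can argue via a PBW-type normal form on $\cK_{i,t}$ in the generators $E_{i,p}$ and $\tY_{j,s}^{\pm 1}$, showing that the maximal monomial of any such product lies in the $i$-dominant region, where the crucial point is that the correction $t^{-1}\tY_{i,p}\tA_{i,p+d_i}^{-1}$ in $E_{i,p}$ is strictly smaller than $\tY_{i,p}$ (it differs by weight $-\alpha_i$).

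Granted this structural lemma, the uniqueness of $F_t(\tm)$ is immediate: if $F, F' \in \cK_t$ both have $\tm$ as their unique dominant monomial, then $F - F' \in \cK_t = \bigcap_i \cK_{i,t}$ has no dominant monomial, so if $F - F' \neq 0$ its maximal monomial would be $i$-dominant for every $i$ and hence dominant, a contradiction. For existence I would run a quantum Frenkel-Mukhin-style iteration starting from $F^{(0)} \seq \tm$: at each stage, if $F^{(k)} \notin \cK_{i,t}$ for some $i$, locate the maximal monomial $\tm'$ of $F^{(k)}$ that has some negative $u_{i,\cdot}$-exponent, and subtract a suitable $\Z[t^{\pm 1/2}]$-combination of products of $E_{i,p}$'s and $\tY_{j,s}^{\pm 1}$'s ($j \neq i$) whose maximal monomial matches the $i$-non-dominant part of $\tm'$ (using the explicit generating set of $\cK_{i,t}$ from the structural step). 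The correction cancels the obstruction at $\tm'$ and introduces only monomials strictly $< \tm'$. The basis statement then follows by a descending induction on the maximal dominant monomial: any nonzero $x \in \cK_t$ contains a maximal dominant monomial $\ul{m}$ with coefficient $c \in \Z[t^{\pm 1/2}]$, and $x - c \cdot F_t(\ul{m}) \in \cK_t$ has strictly fewer dominant monomials; the distinctness of the leading monomials $\ul{m}$ gives $\Z[t^{\pm 1/2}]$-linear independence.

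The main obstacle is the termination of the Frenkel-Mukhin iteration: one must ensure it produces a finitely-supported element of $\cY_t$ rather than a genuine formal infinite sum. The essential finiteness input is that for any fixed pair $\tm_{\min} \le \tm$, only finitely many monomials $\tm''$ satisfy $\tm_{\min} \le \tm'' \le \tm$, since $\tm_{\min}^{-1}\tm$ admits only finitely many factorizations as a product of $\tA_{i,p+d_i}$'s. Thus the task reduces to producing a priori a lower bound $\tm_{\min}$ for the support of $F_t(\tm)$, controlling how far the spectral parameters can drift from those of $\tm$ during the iteration. This in turn rests on a careful tracking of the $t^{1/2}$-powers (dictated by the form $\Nn$ in equation~\eqref{eq:defNn}) appearing in the corrections, and amounts to verifying that the $i$-screening corrections never force us to introduce $Y$-factors at spectral parameters outside a predictable finite range determined by the support of $\tm$ and the structure of the inverse Cartan coefficients $\tc_{ij}(u)$ collected in Lemma~\ref{Lem:tc}.
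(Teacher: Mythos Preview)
The paper does not supply its own proof of this theorem; it is quoted from \cite[Theorem~5.11]{Hernandez04}, and the only hint given here is the remark immediately following the statement that $F_t(\ul{m})$ ``is constructed by an algorithm which is a $t$-deformation of the Frenkel-Mukhin algorithm \cite{FM01}.'' Your overall architecture---the structural lemma that any maximal monomial of a nonzero element of $\cK_{i,t}$ is $i$-dominant, the uniqueness deduction from it, an iterative existence construction, and the basis argument by peeling off leading dominant monomials---is indeed the shape of the argument in the cited reference, and your uniqueness and basis paragraphs are correct.

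There is, however, a genuine gap in your existence step. You propose to locate the maximal monomial $\tm'$ of $F^{(k)}$ with some $u_{i,\cdot}(\tm')<0$, and then \emph{subtract} an element of $\cK_{i,t}$ ``whose maximal monomial matches the $i$-non-dominant part of $\tm'$.'' But by your own structural lemma, every nonzero element of $\cK_{i,t}$ has only $i$-\emph{dominant} maximal monomials, so no element of $\cK_{i,t}$ can have $\tm'$ as its leading term, and you cannot cancel $\tm'$ in this way. The actual algorithm runs in the opposite direction: one processes monomials from the top down, and for each $i$-dominant monomial $\tm''$ already present with a determined coefficient one \emph{adds} the unique element of $\cK_{i,t}$ having $\tm''$ as its sole $i$-dominant monomial (obtained, up to normalization, by replacing each factor $\tY_{i,p}$ in $\tm''$ by $E_{i,p}$). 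The non-$i$-dominant monomials below then receive coefficients forced by these expansions; what must be checked is that the coefficients so imposed by the various $i\in I$ are mutually consistent. Your termination paragraph points in the right direction but is not substantiated: the sentence that it ``reduces to verifying that the $i$-screening corrections never force us to introduce $Y$-factors outside a predictable finite range'' is exactly the nontrivial statement to be proved. The input one actually uses is that each correction multiplies by some $\tA_{j,s+d_j}^{-1}$ with $(j,s)$ constrained to a fixed finite window determined by the support of $\tm$, whence the finiteness $\#\{m'\in\cM : m'<m\}<\infty$ recorded in the paper right after~\eqref{eq:EtFt} (and proved in \cite[Section~3.4]{Hernandez04}) gives termination.
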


Note that $F_{t}(\ul{m})$ is $\ol{(\cdot)}$-invariant for any $m \in \cM$. It is constructed by an algorithm which is a $t$-deformation of the Frenkel-Mukhin algorithm \cite{FM01}.

For a dominant monomial $\tm$ in $\cY_{t}$, we set
$$
E_{t}(\tm) \seq \tm \left( \prod_{p \in \Z}^{\to}
\left( \prod_{i \in I: (i,p) \in \hI} \tY_{i,p}^{u_{i,p}(\tm)} \right)
\right)^{-1} 
\prod_{p \in \Z}^{\to}
\left( \prod_{i \in I: (i,p) \in \hI} F_{t}(\tY_{i,p})^{u_{i,p}(\tm)} \right),
$$
here $\prod_{i} \tY_{i,p}^{u_{i,p}(\tm)}$ is well-defined by (\ref{eq:tYcomm}),
and $\prod_{i} F_{t}(\tY_{i,p})^{u_{i,p}(\tm)}$ is well-defined by~\cite[Lemma 5.12]{Hernandez04}.

Note that $E_{t}(\tm)$ contains $\tm$ as its maximal monomial.
In particular, by Theorem~\ref{Thm:Ft},
\begin{equation} \label{eq:EtFt}
E_{t}(\ul{m}) = F_{t}(\ul{m}) + \sum_{m^{\prime} < m} C_{m,m^{\prime}}(t) F_{t}(\ul{m^{\prime}})
\end{equation} 
with $C_{m, m^{\prime}}(t) \in \Z[t^{\pm 1/2}]$ (in fact, $\Z[t^{\pm 1}]$).
Note that the set $\{E_{t}(\ul{m}) \mid m \in \cM\}$ also forms a $\Z[t^{\pm 1/2}]$-basis of $\cK_{t}$
since 
$
\# \{ m^{\prime} \in \cM \mid m^{\prime} < m \} < \infty
$
for any $m \in \cM$ (cf.~\cite[Section~3.4]{Hernandez04}).
For a dominant monomial $\tm$ in $\cY_{t}$ with $\evt(\tm) = m$, we have
$$
\evt(E_{t}(\tm)) = \chi_{q}(M(m)), \qquad \evt(\cK_{t}) = \chi_{q}(K(\Cc_{\Z}))
$$ 
\cite[Theorem 6.2]{Hernandez04}, here we set
$$
M(m) \seq \bigotimes^{\to}_{p \in \Z}\left( 
\bigotimes_{i \in I: (i,p) \in \hI} L(Y_{i,p})^{\otimes u_{i,p}(m)}
\right), 
$$
called a \emph{standard module}.
Note that, for any fixed $p \in \Z$, the isomorphism class
of the tensor product $\bigotimes_{i \in I: (i,p) \in \hI} L(Y_{i,p})^{\otimes u_{i,p}(m)}$
does not depend on the ordering of the factors, and it is in fact simple~\cite[Proposition 6.15]{FM01}.
Moreover, the module $M(m)$ has a simple head isomorphic to $L(m)$.\footnote{By convention, we choose the coproduct of $U_q(L\fg)$ opposite to that of 
\cite{AK97, Kas02} so that the module $M(m)$ is cyclic from the tensor product of $\ell$-highest weight vectors.}
The element $E_{t}(\ul{m})$ is called \emph{the $(q,t)$-character of the standard module} $M(m)$.

We consider another kind of elements $L_{t}(\ul{m})$ in $\cK_{t}$ which is conjecturally a $t$-quantum version of the 
$q$-character of simple modules.

\begin{Thm}[{\cite[Theorem 8.1]{Nakajima04}}, {\cite[Theorem 6.9]{Hernandez04}}]
\label{Thm:qtch}
For a dominant monomial $m \in \cM$, there exists a unique element $L_{t}(\ul{m})$ in $\cK_{t}$
such that 
\begin{itemize}
\item[(S1)] $\ol{L_{t}(\ul{m})} = L_{t}(\ul{m})$, and
\item[(S2)] $L_{t}(\ul{m}) = E_{t}(\ul{m}) + \sum_{m^{\prime} \in \cM} Q_{m, m^{\prime}}(t) E_{t}(\ul{m^{\prime}})$
with $Q_{m,m^{\prime}}(t) \in t\Z[t]$.
\end{itemize}
Moreover, we have $Q_{m, m^{\prime}}(t) = 0$ unless $m^{\prime} < m$. 
In particular, the set $\{ L_{t}(m) \mid m \in \cM\}$ forms a $\Z[t^{\pm 1/2}]$-basis
of $\cK_{t}$.
\end{Thm}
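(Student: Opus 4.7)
The plan is to run a Kazhdan-Lusztig style construction inside $\cK_t$, using the bar-involution $\ol{(\cdot)}$ together with the $E_t$-basis given by Theorem~\ref{Thm:Ft} and its unitriangular relation \eqref{eq:EtFt} to the bar-invariant basis $\{F_t(\ul{m})\}_{m \in \cM}$.

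The first step is to compute the action of $\ol{(\cdot)}$ on the $E_t$-basis. Since every $F_t(\ul{m})$ is bar-invariant, applying $\ol{(\cdot)}$ to \eqref{eq:EtFt} and then re-expanding in the $E_t$-basis via the inverse of \eqref{eq:EtFt} produces
\begin{equation*}
\ol{E_t(\ul{m})} = E_t(\ul{m}) + \sum_{m^\prime < m} R_{m,m^\prime}(t)\, E_t(\ul{m^\prime}),
\end{equation*}
with $R_{m,m^\prime}(t) \in \Z[t^{\pm 1}]$; the absence of half-integer powers of $t$ here relies on the improved integrality $C_{m,m^\prime}(t) \in \Z[t^{\pm 1}]$ noted after \eqref{eq:EtFt}. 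Next, I would construct $L_t(\ul{m})$ by induction along the finite downward set $\{m^\prime \in \cM \mid m^\prime \le m \}$ (finite by Section~\ref{ssec:KtCZ}). For $\le$-minimal $m$, set $L_t(\ul{m}) \seq E_t(\ul{m}) = F_t(\ul{m})$, which is automatically bar-invariant. In the inductive step, seek
\begin{equation*}
L_t(\ul{m}) = E_t(\ul{m}) + \sum_{m^\prime < m} Q_{m,m^\prime}(t)\, E_t(\ul{m^\prime}), \qquad Q_{m,m^\prime}(t) \in t\Z[t],
\end{equation*}
and impose $\ol{L_t(\ul{m})} = L_t(\ul{m})$. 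Processing the coefficients top-down in $m^\prime$ and using bar-invariance of the previously constructed $L_t(\ul{m^{\prime\prime}})$ for $m^\prime < m^{\prime\prime} < m$, this reduces to solving equations $Q_{m,m^\prime}(t) - \ol{Q_{m,m^\prime}(t)} = X_{m,m^\prime}(t)$ where $X_{m,m^\prime}(t) \in \Z[t^{\pm 1}]$ satisfies $\ol{X_{m,m^\prime}(t)} = -X_{m,m^\prime}(t)$ (the latter forced by $\ol{(\cdot)}$ being an involution). Such an $X$ lies in $t\Z[t] \oplus t^{-1}\Z[t^{-1}]$ and admits a unique decomposition $X = Q - \ol{Q}$ with $Q \in t\Z[t]$, yielding the required $Q_{m,m^\prime}(t)$.

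For uniqueness, if $L^\prime_t(\ul{m})$ is another solution, then $D \seq L_t(\ul{m}) - L^\prime_t(\ul{m})$ is bar-invariant and expands as $\sum_{m^\prime < m} d_{m^\prime}(t) E_t(\ul{m^\prime})$ with $d_{m^\prime}(t) \in t\Z[t]$. Taking $m^*$ maximal among $\{m^\prime < m \mid d_{m^\prime} \neq 0\}$, matching the coefficient of $E_t(\ul{m^*})$ in $\ol{D} = D$ forces $\ol{d_{m^*}(t)} = d_{m^*}(t)$; but $d_{m^*} \in t\Z[t]$ together with bar-invariance implies $d_{m^*} = 0$, a contradiction. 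The basis statement is then immediate from the unitriangular relation between $\{L_t(\ul{m})\}$ and $\{E_t(\ul{m})\}$. The main subtle point I anticipate is securing the refined integrality $Q_{m,m^\prime}(t) \in t\Z[t]$ throughout the induction, rather than merely $t^{1/2}\Z[t^{1/2}]$ as would follow from a naive application of the Kazhdan-Lusztig lemma; this hinges on carefully propagating the refinement $C_{m,m^\prime}(t) \in \Z[t^{\pm 1}]$ through the construction.
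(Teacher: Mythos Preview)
The paper does not give its own proof of this theorem; it is quoted directly from \cite[Theorem~8.1]{Nakajima04} and \cite[Theorem~6.9]{Hernandez04}, and the only trace of the mechanism in the paper is Remark~\ref{Rem:triE}, which records the unitriangularity of $\ol{(\cdot)}$ on the $E_t$-basis as a byproduct. Your argument is precisely the standard Kazhdan--Lusztig construction used in those references: bar-invariance of the $F_t(\ul{m})$ together with the unitriangular relation \eqref{eq:EtFt} (and the refinement $C_{m,m'}(t)\in\Z[t^{\pm1}]$) yields unitriangularity of $\ol{(\cdot)}$ on the $E_t$-basis with coefficients in $\Z[t^{\pm1}]$, after which the usual descending induction over the finite set $\{m'\le m\}$ produces a unique bar-invariant $L_t(\ul{m})$ with $Q_{m,m'}(t)\in t\Z[t]$. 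This is correct and matches the approach of the cited sources.
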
 

The element $L_{t}(\ul{m})$ is called \emph{the $(q,t)$-character of the simple module $L(m)$}.
For example, if $m = Y_{i,p}$, we have 
$$E_t(\ul{Y_{i,p}}) = F_t(\ul{Y_{i,p}}) = L_t(\ul{Y_{i,p}}).$$
But these elements differ for a general $m$.

In what follows, for a dominant monomial $m \in \cM$, we will write 
$$
F_{t}(m) \seq F_{t}(\ul{m}), \qquad 
E_{t}(m) \seq E_{t}(\ul{m}), \qquad
L_{t}(m) \seq L_{t}(\ul{m}) 
$$
for simplicity.

\begin{Conj}[{\cite[Conjecture 7.3]{Hernandez04}}] \label{Conj:KL}
For all $m \in \cM$, we have
$$
\evt(L_{t}(m)) = \chi_{q}(L(m)).
$$
\end{Conj}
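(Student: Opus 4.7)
The plan is to leverage the simply-laced case of Conjecture~\ref{Conj:KL} (proved by Nakajima~\cite{Nakajima04} and Varagnolo-Vasserot~\cite{VV03} via quiver varieties) by constructing, for each non-simply-laced $\fg$ with simply-laced unfolding $\sg$, a $\Z[t^{\pm 1/2}]$-algebra isomorphism $\Phi\colon \cK_t(\Cc_{\Z, \sg}) \to \cK_t(\Cc_{\Z, \fg})$ that sends the basis $\{L_t(m)\}$ of simple $(q,t)$-characters on the $\sg$-side bijectively to the corresponding basis on the $\fg$-side. Once such a $\Phi$ is in hand, and once $\evt \circ \Phi$ is identified with an independently constructed isomorphism $K(\Cc_{\Z, \sg}) \simeq K(\Cc_{\Z, \fg})$ that is already known to match classes of simple modules, the identity $\evt(L_t(m)) = \chi_q(L(m))$ on the $\fg$-side will follow at once from the corresponding statement on the $\sg$-side.

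The construction of $\Phi$ will mirror the Hernandez-Leclerc program from the simply-laced case and proceed in two stages. First, for each Q-datum $\cQ$ for $\fg$, I would establish an HLO-type isomorphism $\cK_t(\Cc_\cQ) \simeq \cA_t[N_-]$ by matching newly-obtained quantum $T$-systems in $\cK_t(\Cc_\cQ)$ with the determinantal identities satisfied by the normalized quantum unipotent minors arising from the quantum cluster structure of $\cA_t[N_-]$. As a byproduct, Conjecture~\ref{Conj:KL} restricted to each $\Cc_\cQ$ will already follow: under the HLO isomorphism simple $(q,t)$-characters correspond to the dual canonical basis of $\cA_t[N_-]$, and its specialization at $t=1$ agrees with the classical Grothendieck-ring isomorphism produced by the generalized quantum affine Schur-Weyl duality, which preserves simple classes. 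Second, I would analyze the family of subalgebras $\cK_t(\cD^k \Cc_\cQ) \subset \cK_t(\Cc_{\Z, \fg})$ and extract quantum Serre and quantum Boson relations among them; the Boson relations require the denominator formulas for normalized $R$-matrices between fundamental modules (cf.~\cite{AK97, Fujita19, OhS19}) together with their relation to the commutation exponents $\Nn(i,p;j,s)$ of $\cY_t$. The outcome will be a common presentation of the localized rings, from which $\Phi$ is produced by matching generators.

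To upgrade the conjecture from $\Cc_\cQ$ to all of $\Cc_{\Z, \fg}$ in type $\mathrm{B}_n$, I would specialize $\Phi$ at $t=1$ for a carefully chosen Q-datum and compare it with the isomorphism $K(\Cc_{\Z, \mathrm{B}_n}) \simeq K(\Cc_{\Z, \mathrm{A}_{2n-1}})$ constructed by Kashiwara, Kim and the third-named author in \cite{KKO19} from another categorical Schur-Weyl equivalence. Since that isomorphism is known to send simple classes to simple classes, matching it with $\evt \circ \Phi$ on the fundamental modules, which can be carried out using the explicit computations of \cite[Section~12]{HO19}, will complete the transfer from type $\mathrm{A}_{2n-1}$ to type $\mathrm{B}_n$ and establish Conjecture~\ref{Conj:KL} in that case.

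The main obstacle, I expect, is proving that $\Phi$ sends simple $(q,t)$-characters to simple $(q,t)$-characters rather than merely standard ones. Bar-invariance is transported easily, but the standard basis $\{E_t(m)\}$ is not preserved by $\Phi$ (indeed, even fundamental modules are not preserved), so the unitriangular characterization in Theorem~\ref{Thm:qtch} cannot be applied naively on the $\fg$-side. The plan is to bypass this by combining the HLO identification of simple $(q,t)$-characters with the dual canonical basis of $\cA_t[N_-]$ and the positivity statements that $\Phi$ propagates from $\cK_t(\Cc_{\Z, \sg})$, so that each $L_t(m)$ can be detected intrinsically on the $\fg$-side as a bar-invariant positive element with prescribed leading dominant monomial $\ul{m}$. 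For general non-simply-laced $\fg$ outside type $\mathrm{B}$, however, I do not see how to lift the conclusion from each $\Cc_\cQ$ to the whole $\Cc_{\Z, \fg}$ by the methods above, so the full conjecture seems to require either a non-simply-laced analog of quiver varieties or a further categorical Schur-Weyl input not yet available.
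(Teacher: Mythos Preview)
Your overall strategy matches the paper's almost exactly: HLO isomorphisms $\cK_{t,\cQ}\simeq\cA_t[N_-]$ per Q-datum built by matching quantum $T$-systems against determinantal identities; Conjecture~\ref{Conj:KL} for $\Cc_\cQ$ via the generalized Schur--Weyl duality; a presentation of the localized ring from quantum Serre and Boson relations (the latter fed by denominator formulas for $R$-matrices); the resulting isomorphism $\Psi$; and, in type $\mathrm{B}$, comparison of $\evt\circ\Psi$ with the Kashiwara--Kim--Oh isomorphism of \cite{KKO19} using the explicit computation in \cite[Section~12]{HO19}. You also correctly identify that the full conjecture outside type $\mathrm{B}$ remains open by these methods.

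The one point where your proposal diverges is the mechanism for showing that $\Psi$ sends simple $(q,t)$-characters to simple $(q,t)$-characters. Your plan to detect $L_t(m)$ on the $\fg$-side ``as a bar-invariant positive element with prescribed leading dominant monomial'' using positivity propagated by $\Psi$ is circular: positivity of the structure constants in the $\{L_t(m)\}$-basis on the $\fg$-side is \emph{deduced from} the fact that $\Psi$ preserves this basis (Corollary~\ref{Cor:pos_st}), so it cannot serve as input to prove that same fact. The paper instead proves a factorization of standard $(q,t)$-characters (Proposition~\ref{Prop:facE}): writing $m=\prod_k \fD^k(m_k)$ with $m_k\in\cM_\cQ$, one has $E_t(m)=t^{\nu(m,\cQ)}\prod_k^{\to}\fD_t^k(E_t(m_k))$, using commutation relations among fundamental $(q,t)$-characters lying in distinct $\fD^k\cM_\cQ$ (Lemma~\ref{Lem:comm2}). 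Since the HLO isomorphism restricted to each piece $\cK_{t,\cQ}$ takes $E_t(m_k)$ to $E_t(\psi_0(m_k))$ up to a $t\Z[t]$-combination of other $E_t$'s (via the dual PBW/dual canonical transition), and since $\Psi$ commutes with $\fD_t$, one obtains $\Psi(E_t^{(1)}(m))-E_t^{(2)}(\psi(m))\in\sum_{m'} t\Z[t]\,E_t^{(2)}(m')$. Combined with the transported bar-invariance, the characterization in Theorem~\ref{Thm:qtch} then forces $\Psi(L_t^{(1)}(m))=L_t^{(2)}(\psi(m))$. This factorization step is the missing ingredient in your outline.
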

In fact, a fundamental theorem of Nakajima~\cite[Theorem 8.1]{Nakajima04} states that this holds true when $\fg$ is simply-laced. The proof used the geometry of quiver varieties (whose theory is fully developed only for simply-laced types at the moment). This was the main motivation for this conjecture.

Thanks to the unitriangular property (S2), we can write
\begin{equation} \label{eq:KL}
E_{t}(m) = L_{t}(m) + \sum_{m^{\prime} \in \cM \colon m' < m } P_{m, m^{\prime}}(t) L_{t}(m^{\prime})
\end{equation}
with some $P_{m,m'}(t) \in t\Z[t]$ for each $m \in \cM$.
The polynomials $P_{m,m'}(t)$ are analogs of Kazhdan-Lusztig polynomials for finite-dimensional representations of quantum loop algebras. 
It is also expected in \cite{Hernandez04} that these polynomials have positive coefficients. By the results of Nakajima \cite{Nakajima04}, it was already known for simply-laced types.

As for the fundamental modules, we have a nice result for general $\fg$ as follows.
Here we say that a monomial $\tm \in \cY_{t}$ is \emph{anti-dominant} if $\tm^{-1}$ is dominant. 

\begin{Thm}[\cite{FM01, Hernandez05}] \label{Thm:qtfund}
For each $(i,p) \in \hI$, the followings hold.
\begin{enumerate}
\item Every monomial in $L_{t}(Y_{i,p})$ has a positive coefficient.
\item \label{qtfund:evt} We have $\evt(L_{t}(Y_{i,p})) = \chi_{q}(L(Y_{i,p}))$.
\item \label{qtfund:monomials} $\tY_{j,s}$ appearing in $L_{t}(Y_{i,p})$ satisfies $p \le s \le p+rh^\vee$. 
Moreover, when $s=p$ $($resp.~$s=p+rh^\vee)$, the monomials containing $\tY_{j,s}$ is
$\ul{Y_{i,p}}$ $($resp.~$\ul{Y_{i^*,p+rh^\vee}^{-1}})$,
which is the unique dominant $($resp.~anti-dominant$)$ monomial in $L_{t}(Y_{i,p})$.
\end{enumerate}
\end{Thm}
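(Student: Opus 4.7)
The plan is to exploit the fact that the fundamental module $L(Y_{i,p})$ coincides with the standard module $M(Y_{i,p})$, so that in the quantum Grothendieck ring the three elements coincide:
\[
L_{t}(Y_{i,p}) = E_{t}(Y_{i,p}) = F_{t}(Y_{i,p}).
\]
Indeed, $E_t(\tY_{i,p}) = F_t(\tY_{i,p})$ is immediate from the definition of $E_t$, and since the unitriangular relation~\eqref{eq:KL} involves only dominant monomials strictly less than $Y_{i,p}$ (and $Y_{i,p}$ itself is minimal with respect to $\le$ among its dominant lifts appearing here), the KL-type coefficients all vanish, giving $L_t(Y_{i,p}) = E_t(Y_{i,p})$. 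Thus it suffices to analyze the single element $F_t(Y_{i,p})$, which is computed by the $t$-deformed Frenkel-Mukhin algorithm of \cite{Hernandez04}.

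First I would prove (2). The $t$-deformation of the Frenkel-Mukhin algorithm from \cite[Section~5]{Hernandez04} computes $F_t(Y_{i,p})$ by starting from $\ul{Y_{i,p}}$ and iteratively adjoining monomials of the form $\ul{Y_{i,p}}\,\tA^{-1}_{j_1,s_1}\cdots \tA^{-1}_{j_k,s_k}$ with explicit coefficients in $\Z[t^{\pm 1/2}]$; upon specializing $t\to 1$ via $\evt$ this reproduces exactly the classical Frenkel-Mukhin algorithm, whose output is known by \cite{FM01} to equal $\chi_q(L(Y_{i,p}))$ because fundamental modules satisfy the hypotheses of the FM algorithm (i.e.~the algorithm does not get stuck). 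For (1), I would prove by induction on the length $k$ of $\tA$-factors that every coefficient produced by the $t$-FM algorithm lies in $\Z_{\ge 0}[t^{\pm 1/2}]$, as was established in \cite[Section~5]{Hernandez05} in greater generality: the normalization~\eqref{eq:Nnhalf} shows that each elementary step multiplies by a single power $t^{a/2}$ with $a \in \Z_{\ge 0}$, and no cancellations can occur since no monomial appears twice (this is the essential content of the FM-thinness property for fundamental representations).

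For (3), the inequality $p \le s \le p + rh^\vee$ for any variable $\tY_{j,s}$ appearing in $F_t(Y_{i,p})$ follows from the shape of $\tA_{j,s+d_j}^{-1}$ together with Lemma~\ref{Lem:tc}\,(\ref{tc:pe})-(\ref{tc:po}), by induction along the FM algorithm tracking the minimal and maximal $s$-index reached: the upper bound is controlled by the fact that $\tc_{ij}(u)$ vanishes for $u > rh^\vee$ up to the symmetry in (\ref{tc:pe}), which forces the algorithm to terminate within the strip $[p,p+rh^\vee]$. The uniqueness of the dominant monomial $\ul{Y_{i,p}}$ at $s=p$ is built into the definition of $F_t$. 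For the unique anti-dominant monomial at $s = p + rh^\vee$, I would apply Theorem~\ref{Thm:fD}: since $\cD L(Y_{i,p}) \cong L(Y_{i^*,p+rh^\vee})$, the substitution $\tY_{j,s} \mapsto \tY_{j^*,s+rh^\vee}$ (which is the $t$-lift of the automorphism $\fD$) sends the \emph{highest} monomial of $L(Y_{i^*, p-rh^\vee})$ to the \emph{lowest} monomial of $L(Y_{i,p})$; combined with the involution $\omega$ of Section~\ref{ssec:duality} and the uniqueness of the highest $\ell$-weight vector, this forces the unique anti-dominant monomial of $F_t(Y_{i,p})$ to be $\ul{Y_{i^*,p+rh^\vee}^{-1}}$.

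The main obstacle is the positivity assertion (1), since the $t$-FM algorithm could, a priori, produce coefficients involving negative powers of $t$; overcoming this requires the key observation that for fundamental modules the algorithm remains ``thin'' in a suitable sense, so that each monomial is reached along a unique path and its $t$-coefficient is a single positive monomial $t^{a/2}$ governed by (\ref{eq:Nnhalf}). Once this positivity and the range control of (3) are secured, assertion (2) follows automatically by comparison with the classical FM output.
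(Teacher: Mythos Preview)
The paper does not supply its own proof of this statement; it is quoted as a known result from \cite{FM01, Hernandez05}. Your overall strategy---identifying $L_t(Y_{i,p}) = E_t(Y_{i,p}) = F_t(Y_{i,p})$ (which the paper also records just before Conjecture~\ref{Conj:KL}) and then analyzing the output of the $t$-deformed Frenkel--Mukhin algorithm---is the correct framework and matches what the cited references actually do.

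However, your argument for (1) contains a genuine gap. You claim that positivity follows because ``no monomial appears twice,'' attributing this to an ``FM-thinness property for fundamental representations.'' This is false: fundamental modules are \emph{not} thin in general. Already in type $\mathrm{D}_4$ the fundamental module at the branching node has $\ell$-weight multiplicities larger than one, and the same occurs in types $\mathrm{E}$ and in non-simply-laced types. Consequently a given monomial may be reached along several distinct paths in the algorithm, and its coefficient in $F_t(Y_{i,p})$ is a genuine polynomial in $t^{\pm 1/2}$, not a single power. The positivity established in \cite{Hernandez05} is more delicate: one shows that for a \emph{special} highest monomial the recursive rule of the $t$-FM algorithm only ever \emph{adds} non-negative contributions when producing new monomials, so that each coefficient is built up as a $\Z_{\ge 0}$-combination of powers of $t^{\pm 1/2}$. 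You would need to replace the incorrect thinness claim with this additive-structure argument.

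Your sketch for (3) is also imprecise. The bound $p \le s \le p + rh^\vee$ and the identification of the unique anti-dominant monomial are proved in \cite{FM01} by a direct inductive analysis of the monomials produced by the algorithm together with the explicit determination of the lowest $\ell$-weight (this is \cite[Corollary~6.9]{FM01}), not via the periodicity of $\tc_{ij}$ in Lemma~\ref{Lem:tc}. The duality idea you invoke is morally right, but the clean statement is simply that the lowest $\ell$-weight of $L(Y_{i,p})$ equals $Y_{i^*,p+rh^\vee}^{-1}$; the $t$-analog then follows because, by (2), $F_t(Y_{i,p})$ has the same underlying set of monomials as $\chi_q(L(Y_{i,p}))$.
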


\begin{Rem} \label{Rem:triE}
The unitriangular property of the $(q,t)$-characters of standard modules with respect to $\ol{(\cdot)}$
is included in Theorem~\ref{Thm:qtch}. That is, we have
$$
E_{t}(m) - \ol{E_{t}(m)} \in \sum_{m^{\prime} < m} \Z[t^{\pm 1}] E_{t}(m^{\prime}).
$$  
\end{Rem}
\subsection{The automorphisms $\fD_t$ and $\omega_t$}
\label{ssec:fDt}
Recall the automorphism $\fD$ of $\cY$ defined in Section~\ref{ssec:duality} (see (\ref{eq:deffD})).
As its $t$-analog,
we consider the $\Z[t^{\pm 1/2}]$-algebra automorphism $\fD_t$ of $\cY_{t}$ given by
\begin{equation} \label{eq:deffDt}
\fD_t(\tY_{i,p}) \seq \tY_{i^{*}, p+ rh^{\vee}} \quad \text{for $(i,p) \in \hI$}.
\end{equation}
Clearly, we have $\evt \circ \fD_t = \fD \circ \evt$. 
Note also that $\fD_t \circ \ol{(\cdot)} = \ol{(\cdot)} \circ \fD_t$ holds, and hence 
we have $\fD_t(\ul{m}) = \ul{\fD(m)}$ for any $m \in \cM$.
In particular, the set of the commutative monomials is stable under $\fD_t$.  
As a $t$-analog of  Theorem~\ref{Thm:fD}, 
we have the following.

\begin{Lem} \label{Lem:fD}
The automorphism $\fD_t$ of $\cY_{t}$ restricts to an automorphism on $\cK_{t}$,
for which we will use the same symbol. 
Moreover, we have
\begin{equation} \label{eq:fDt}
\fD_{t}(L_{t}(m)) = L_{t}(\fD(m))
\end{equation}
for any $m \in \cM$.
\end{Lem}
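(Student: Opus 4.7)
The plan is to verify both claims by systematically exploiting the uniqueness characterizations of $F_t$, $E_t$, and $L_t$.

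First, I would show that $\fD_t$ preserves $\cK_t$ by examining its action on the defining generators of each subalgebra $\cK_{i,t}$. Since the involution $i \mapsto i^*$ satisfies $d_{i^*} = d_i$ (trivially in the non-simply-laced case, and by inspection in the simply-laced case), and since $\fD_t(\ul{m}) = \ul{\fD(m)}$ for any commutative monomial, the generator
\[
\tY_{i,p}\bigl(1 + t^{-1}\tA_{i,p+d_i}^{-1}\bigr) \mapsto \tY_{i^*,p+rh^\vee}\bigl(1 + t^{-1}\tA_{i^*,p+rh^\vee+d_{i^*}}^{-1}\bigr),
\]
which is a generator of $\cK_{i^*, t}$. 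Similarly, $\tY_{j,s}^{\pm 1}$ with $j \neq i$ is sent to $\tY_{j^*, s+rh^\vee}^{\pm 1}$ with $j^* \neq i^*$. Therefore $\fD_t(\cK_{i,t}) = \cK_{i^*, t}$, and since $*$ permutes $I$, we obtain $\fD_t(\cK_t) = \bigcap_{i} \cK_{i^*, t} = \cK_t$.

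Next, I would show $\fD_t(F_t(Y_{i,p})) = F_t(Y_{i^*, p+rh^\vee})$. The element $\fD_t(F_t(Y_{i,p}))$ lies in $\cK_t$ by the previous step. Its unique dominant monomial is $\fD_t(\ul{Y_{i,p}}) = \ul{Y_{i^*, p+rh^\vee}}$, and the remaining monomials are strictly less with respect to the Nakajima ordering $\le$: indeed, $\fD$ permutes the variables $A_{i,a}$ (as $\fD(A_{i,p+d_i}) = A_{i^*, p+d_i+rh^\vee}$), hence preserves the partial order on monomials. The uniqueness statement in Theorem~\ref{Thm:Ft} then forces the equality. Substituting this into the defining product formula for $E_t$, and noting that the total ordering of $p \in \Z$ is preserved under the shift $p \mapsto p + rh^\vee$, yields $\fD_t(E_t(m)) = E_t(\fD(m))$ for every $m \in \cM$.

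Finally, I would verify that $\fD_t(L_t(m))$ satisfies the two characterizing properties of $L_t(\fD(m))$ from Theorem~\ref{Thm:qtch}. For (S1), the commutation $\fD_t \circ \ol{(\cdot)} = \ol{(\cdot)} \circ \fD_t$ combined with $\ol{L_t(m)} = L_t(m)$ gives $\ol{\fD_t(L_t(m))} = \fD_t(L_t(m))$. For (S2), applying $\fD_t$ to the expansion of $L_t(m)$ and using the previous step gives
\[
\fD_t(L_t(m)) = E_t(\fD(m)) + \sum_{m' < m} Q_{m,m'}(t)\, E_t(\fD(m'));
\]
reindexing via the order-preserving bijection $m' \mapsto \fD(m')$ on $\cM$ puts this in the required unitriangular form with coefficients in $t\Z[t]$. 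The uniqueness part of Theorem~\ref{Thm:qtch} then concludes that $\fD_t(L_t(m)) = L_t(\fD(m))$.

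The main obstacle is the verification in the first step that $\fD_t$ is compatible with the generating set of $\cK_{i,t}$, which ultimately rests on the identity $d_{i^*} = d_i$ and on the fact that $\fD_t(\ul{m}) = \ul{\fD(m)}$; once this is established, the subsequent steps are formal consequences of the uniqueness statements in Theorems~\ref{Thm:Ft} and \ref{Thm:qtch}, together with the order-preservation property of $\fD$.
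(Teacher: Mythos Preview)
Your proof is correct and follows essentially the same approach as the paper: the paper also observes that $\fD_t(\tA_{i,p+d_i}) = \tA_{i^*, p+d_i+rh^\vee}$ to conclude $\fD_t$ preserves $\cK_t$, then uses the uniqueness characterization of $F_t(m)$ (hence of $E_t(m)$) together with the commutation $\fD_t \circ \ol{(\cdot)} = \ol{(\cdot)} \circ \fD_t$ and Theorem~\ref{Thm:qtch} to conclude. Your version is slightly more explicit in isolating the facts $d_{i^*}=d_i$ and the order-preservation of $\fD$, but the argument is the same.
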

\begin{proof}

Obviously we have $\fD_{t}(\tA_{i,p+d_{i}}) = \tA_{i^{*}, p +d_{i}+rh^{\vee}}$ for each $(i,p) \in \hI$.
Thus the automorphism $\fD_{t}$ respects the subring $\cK_{t} \subset \cY_{t}$. 
Since $\fD_t( F_{t}(m))$ is an element in $\cK_t$ which contains the unique dominant monomial 
$\fD_t(\ul{m}) =  \ul{\fD(m)}$, we have
$\fD_t( F_{t}(m)) = F_{t}(\fD(m))$, 
and hence $\fD_{t}(E_{t}(m) )= E_{t}(\fD(m))$ for all $m \in \cM$.
Taking the fact $\fD_t \circ \ol{(\cdot)} = \ol{(\cdot)} \circ \fD_t$ into account, 
we obtain $\fD_t(L_{t}(m)) = L_{t}(\fD(m))$ for each $m \in \cM$ from
the characterization in Theorem~\ref{Thm:qtch}.
\end{proof}

Next we define the
$\Z$-algebra involution $\omega_{t}$ on the quantum torus $\cY_{t}$ given by 
$$
\omega_t(t^{\pm 1/2}) \seq t^{\mp 1/2}, \quad \omega_t(\tY_{i,p}) \seq \tY_{i, -p}^{-1}
$$    
for $(i,p) \in \hI$.
This is a $t$-analog of the involution $\omega$ considered in Section~\ref{ssec:duality}.
Note that it is well-defined because $\Nn(i,-p;j,-s) = - \Nn(i,p;j,s)$ holds by the definition~(\ref{eq:defNn}).
Now we prove the following assertion as a $t$-analog of (\ref{eq:omg}). 
\begin{Lem} \label{Lem:omgt}
The involution $\omega_t$ on $\cY_{t}$ restricts to an involution on $\cK_{t}$,
for which we will use the same symbol. 
Moreover, we have
\begin{equation} \label{eq:omgt}
\omega_{t}(L_{t}(m)) = L_{t}(\omega^*m)
\end{equation}
for any $m \in \cM$. Here $\omega^*m$ is the dominant monomial defined as in {\rm Section~\ref{ssec:duality}}.
\end{Lem}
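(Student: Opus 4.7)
The plan is to follow the pattern of the proof of Lemma~\ref{Lem:fD}, with adaptations for the fact that $\omega_t$ exchanges dominant and antidominant monomials and sends $t^{1/2}$ to $t^{-1/2}$. First, I would verify $\omega_t \circ \ol{(\cdot)} = \ol{(\cdot)} \circ \omega_t$ by a direct computation on generators (both sides send $\tY_{i,p}$ to $t^{-1}\tY_{i,-p}^{-1}$), using the anti-multiplicativity of $\ol{(\cdot)}$ together with the identity $\Nn(i,-p;j,-s) = -\Nn(i,p;j,s)$. Consequently, $\omega_t(\ul m) = \ul{\omega m}$ for every monomial $m$ in $\cY$. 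Combined with the classical identity $\omega(A_{i,p+d_i}) = A_{i,-p-d_i}^{-1}$ (verified directly from the definition of $A$), this yields the key identity $\omega_t(\tA_{i,p+d_i}) = \tA_{i,-p-d_i}^{-1}$. Using the definition $\cK_t = \bigcap_{i \in I} \cK_{i,t}$, I would then verify $\omega_t(\cK_{i,t}) = \cK_{i,t}$ by computing on generators: the nontrivial case $\omega_t(\tY_{i,p}(1 + t^{-1}\tA_{i,p+d_i}^{-1})) = \tY_{i,-p}^{-1}(1 + t\tA_{i,-p-d_i})$ can be rewritten in terms of the generator $\tY_{i,-p-2d_i}(1 + t^{-1}\tA_{i,-p-d_i}^{-1})$ of $\cK_{i,t}$ at the shifted index.

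Second, for each fundamental module $L(Y_{i,p})$, I would show $\omega_t(F_t(Y_{i,p})) = F_t(\omega^* Y_{i,p})$ via the uniqueness in Theorem~\ref{Thm:Ft}. The key observation is that $\omega$ reverses the Nakajima partial order---since $\omega(A_{i,a}) = A_{i,a^{-1}}^{-1}$---and hence exchanges dominance with antidominance. By Theorem~\ref{Thm:qtfund}(\ref{qtfund:monomials}), $F_t(Y_{i,p}) = L_t(Y_{i,p})$ contains $\ul{Y_{i^*, p+rh^\vee}^{-1}}$ as its unique antidominant monomial, so $\omega_t(F_t(Y_{i,p})) \in \cK_t$ has $\omega_t(\ul{Y_{i^*, p+rh^\vee}^{-1}}) = \ul{\omega^* Y_{i,p}}$ as its unique dominant monomial; the uniqueness in Theorem~\ref{Thm:Ft} then forces $\omega_t(F_t(Y_{i,p})) = F_t(\omega^* Y_{i,p}) = L_t(\omega^* Y_{i,p})$.

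Finally, for general $m \in \cM$, I would verify the conditions of Theorem~\ref{Thm:qtch} for $\omega_t(L_t(m))$. Condition (S1), namely $\ol{\omega_t(L_t(m))} = \omega_t(L_t(m))$, follows from the first paragraph. For condition (S2), I would compute $\omega_t(E_t(m))$ using its multiplicative formula from Section~\ref{ssec:KtCZ}: each factor $F_t(\tY_{i,p})$ transforms via the previous paragraph, and the reversal of the $p$-ordering in the product is handled by re-expressing the result in terms of $E_t(\omega^*m)$ modulo quantum-torus commutation $t$-powers. The structural identity $\omega^*(A_{i,p+d_i}) = A_{i^*, -p-d_i-rh^\vee}$ shows that $\omega^*$ preserves the Nakajima partial order, so the triangularity in (S2) transfers correctly. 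The main obstacle is reconciling the $t \mapsto t^{-1}$ flip induced by $\omega_t$ with the $t\Z[t]$-coefficient condition of (S2), since the naive application sends $Q_{m,m'}(t) \in t\Z[t]$ to $Q_{m,m'}(t^{-1}) \in t^{-1}\Z[t^{-1}]$. The resolution will require carefully identifying the leading-term data of $\omega_t(E_t(m))$ in the $E_t$-basis and exploiting the interplay between $\omega_t$ and $\ol{(\cdot)}$, so that the uniqueness statement of Theorem~\ref{Thm:qtch} together with the matched leading term forces $\omega_t(L_t(m)) = L_t(\omega^*m)$.
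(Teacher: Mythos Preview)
Your first two steps are essentially the same as the paper's: you verify $\omega_t \circ \ol{(\cdot)} = \ol{(\cdot)} \circ \omega_t$, show each $\cK_{i,t}$ is stable under $\omega_t$, and use the unique anti-dominant monomial in $L_t(Y_{i,p})$ together with Theorem~\ref{Thm:Ft} to get $\omega_t(F_t(Y_{i,p})) = F_t(Y_{i^*,-p-rh^\vee})$. These parts are fine.

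The gap is in your final step. You correctly identify the obstacle: $\omega_t$ inverts $t$, so applying it to the expansion in (S2) sends the coefficients $Q_{m,m'}(t)\in t\Z[t]$ to $Q_{m,m'}(t^{-1})\in t^{-1}\Z[t^{-1}]$, and you only get $\omega_t(E_t(m)) = t^{a}E_t(\omega^*m)$ up to a power of $t$. You then say the resolution ``will require'' exploiting the interplay with $\ol{(\cdot)}$, but you do not actually carry it out; as written, there is no argument showing (S2) holds for $\omega_t(L_t(m))$.

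The paper resolves this cleanly by working not with $\omega_t$ but with the composite $\bar\omega_t \seq \omega_t\circ\ol{(\cdot)}$. This is a $\Z[t^{\pm 1/2}]$-\emph{linear} anti-automorphism (the two $t\mapsto t^{-1}$ flips cancel) that still commutes with $\ol{(\cdot)}$. Because $\bar\omega_t$ is anti-multiplicative, the reversal of the product order matches exactly the reversal of the $p$-ordering under $\omega^*$; combined with $\Nn(i^*,-p-rh^\vee;j^*,-s-rh^\vee)=\Nn(j,s;i,p)$, one gets $\bar\omega_t(E_t(m)) = E_t(\omega^*m)$ \emph{on the nose}, with no stray power of $t$. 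Then the characterization in Theorem~\ref{Thm:qtch} immediately gives $\bar\omega_t(L_t(m)) = L_t(\omega^*m)$, and since $L_t(m)$ is bar-invariant one concludes $\omega_t(L_t(m)) = \bar\omega_t(L_t(m)) = L_t(\omega^*m)$. This single trick --- passing to $\bar\omega_t$ --- is the missing ingredient in your proposal.
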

\begin{proof}
Note that we have $\omega_t  \circ \ol{(\cdot)} = \ol{(\cdot)} \circ \omega_t$
and hence $\omega_t$ stabilizes the set of commutative monomials. 
For each $i \in I$, the subalgebra $\cK_{t,i} \subset \cY_t$ is stable under $\omega_t$ because we have 
$$
\omega_t(\tY_{i,p}(1+t^{-1}\tA_{i, p+d_i}^{-1})) = 
\tY_{i, -p}^{-1}(1+t\tA_{i, -p-d_i}) = M \cdot \tY_{i, -p-2d_i}(1+t^{-1}\tA^{-1}_{i, -p-d_i}),
$$      
where $M = t\tA_{i, -p-d_i} \tY_{i,-p}^{-1} \tY_{i, -p-2d_i}^{-1}$ is a product of $\tY_{j,s}$ with $j \sim i$ and a power of $t$.
Therefore, the quantum Grothendieck ring $\cK_{t}$ is also stable under $\omega_t$.

To prove the equality~(\ref{eq:omgt}), let us consider the \emph{anti}-algebra automorphism
$\bar{\omega}_t \seq \omega_t \circ \ol{(\cdot )} = \ol{(\cdot )} \circ \omega_t$ of $\cY_t$.
Note that it is $\Z[t^{\pm 1/2}]$-linear and satisfies $\bar{\omega}_t  \circ \ol{(\cdot)} = \ol{(\cdot)} \circ \bar{\omega}_t$.
Therefore, from Theorem~\ref{Thm:Ft} and Theorem \ref{Thm:qtfund}~(\ref{qtfund:monomials}), 
it follows that
$$\omega_t(F_{t}(Y_{i,p})) = \bar{\omega}_t(F_{t}(Y_{i,p})) = F_{t}(Y_{i^*, -p-rh^\vee})$$ 
for each $(i,p) \in \hI$. 
Since $\Nn(i^*, -p - rh^\vee;j^*,-s-rh^\vee) = \Nn(j,s;i,p)$ holds by Lemma~\ref{Lem:tc}, 
we obtain
$
\bar{\omega}_t(E_t(m)) = E_{t}(\omega^*m) 
$
for any $m \in \cM$. 
Then, by Theorem~\ref{Thm:qtch}, 
we obtain 
$\bar{\omega}_t(L_t(m)) = L_t(\omega^*m)$ and hence the equality~(\ref{eq:omgt}) for any $m \in \cM$.
\end{proof}

\begin{Rem}
In contrast to \eqref{eq:omgt}, the equality 
\begin{equation} \label{eq:omgtFt} 
\omega_{t}(F_{t}(m)) = F_{t}(\omega^*m)
\end{equation}
does not hold in general. 
In fact, the classical analog of \eqref{eq:omgtFt} already fails. Consider $F(m) = \text{ev}_{t = 1}(F_t(m))$, the element in the image of the $q$-character homomorphism with $m$ as a unique dominant monomial.
For $\mathfrak{g} = sl_2$, we have for $p\in\mathbb{Z}$ :
$$F(Y_p) = Y_p + Y_{p+2}^{-1} = \chi_q(L(Y_p))$$
and
$$F(Y_pY_{p+2})
= Y_pY_{p+2} + Y_p Y_{p+4}^{-1} + Y_{p+2}^{-1}Y_{p+4}^{-1} = \chi_q(L(Y_pY_{p+2})).$$
This implies
$$F(Y_0Y_2^2) = F(Y_0Y_2)F(Y_2)
= Y_0Y_2^2 + 2 Y_0Y_2Y_4^{-1} + Y_4^{-1} + Y_0Y_4^{-2}+ Y_2^{-1}Y_4^{-2}.$$
This element has a unique dominant monomial (in fact, it is equal to $\chi_q(L(Y_0Y_2^2))$).
But its image by $\omega$ has 2 dominant monomials, $Y_{-4}^2Y_{-2}$ and $Y_{-4}$.
So, it is not equal to
$$F(Y_{-4}^2Y_{-2}) = F(Y_{-4}Y_{-2})F(Y_{-4})-F(Y_{-4})$$
which has a monomial $Y_{-2}^{-1}$ with coefficient $-1$.

Later in the proof of Proposition~\ref{Prop:qTsys}, we will see that \eqref{eq:omgtFt} holds when $m$  corresponds to a Kirillov-Reshetikhin module.
\end{Rem}


\section{Combinatorics of Q-data}
\label{sec:Qdata}

In this section, we introduce some combinatorial gadgets arising from what we call \emph{Q-data} following \cite{FO21}.
Our exposition is slightly different from the original one in \cite{FO21}: we start with the simple Lie algebra $\fg$
and then introduce its unfolding $(\Delta, \sigma)$ as follows.


\subsection{Unfoldings}
\label{ssec:unf}

For each complex finite-dimensional simple Lie algebra $\fg$,
we associate a unique pair $(\Delta, \sigma)$, which we call \emph{the unfolding of $\fg$}, consisting of 
a simply-laced Dynkin diagram $\Delta$ and a graph automorphism $\sigma$ of $\Delta$
as given in the Table~\ref{table:cl},
where $\id \colon \Delta_0 \to \Delta_0$ is the identity map and
the automorphisms $\vee$ and $\tvee$ are given by the blue arrows in Figure~\ref{Fig:unf} below.  

\begin{table}[h]
\centering
 { \arraycolsep=1.6pt\def\arraystretch{1.5}
\begin{tabular}{|c|c|c|c|c|c|}
\hline
$r$ & $\fg$ & $\Delta$ (or $\sg$) & $\sigma$ & $h^{\vee}$ & $\ell_0 $ \\
\hline
\hline
& $\mathrm{A}_{n}$ &$\mathrm{A}_{n}$ & $\id$ & $n+1$ & $n(n+1)/2$ \\
 $1$ & $\mathrm{D}_{n}$ & $\mathrm{D}_{n}$ & $\id$ &$2n-2$ & $n(n-1)$ \\
  & $\mathrm{E}_{6,7,8}$ & $\mathrm{E}_{6,7,8}$ & $\id$ & $12, 18, 30$& $36, 63, 120$ \\
\hline
& $\mathrm{B}_{n}$ & $\mathrm{A}_{2n-1}$ & $\vee$ & $2n-1$ & $n(2n-1)$\\
$2$ & $\mathrm{C}_{n}$ & $\mathrm{D}_{n+1}$ & $\vee$ &  $n+1$ & $n(n+1)$ \\
 & $\mathrm{F}_{4}$ & $\mathrm{E}_{6}$ & $\vee$ & $9$ & $36$ \\
\hline
$3$ & $\mathrm{G}_{2}$ & $\mathrm{D}_{4}$ & $\widetilde{\vee}$ &  $4$ & $12$ \\
\hline
\end{tabular}
  }\\[1.5ex]
\caption{Unfoldings and associated numerical data} \label{table:cl} 
\end{table}

\begin{figure}[ht]
\begin{center}
\begin{tikzpicture}[xscale=1.25,yscale=.7]
\node (A2n1) at (-0.2,4.5) {$(\mathrm{A}_{2n-1}, \vee)$};
\node[dynkdot,label={below:\footnotesize$n+1$}] (A6) at (4,4) {};
\node[dynkdot,label={below:\footnotesize$n+2$}] (A7) at (3,4) {};
\node[dynkdot,label={below:\footnotesize$2n-2$}] (A8) at (2,4) {};
\node[dynkdot,label={below:\footnotesize$2n-1$}] (A9) at (1,4) {};
\node[dynkdot,label={above:\footnotesize$n-1$}] (A4) at (4,5) {};
\node[dynkdot,label={above:\footnotesize$n-2$}] (A3) at (3,5) {};
\node (Au) at (2.5, 5) {$\cdots$};
\node (Al) at (2.5, 4) {$\cdots$};
\node[dynkdot,label={above:\footnotesize$2$}] (A2) at (2,5) {};
\node[dynkdot,label={above:\footnotesize$1$}] (A1) at (1,5) {};
\node[dynkdot,label={above:\footnotesize$n$}] (A5) at (5,4.5) {};
\path[-]
 (A1) edge (A2)
 (A3) edge (A4)
 (A4) edge (A5)
 (A5) edge (A6)
 (A6) edge (A7)
 (A8) edge (A9);
\path[-] (A2) edge (Au) (Au) edge (A3) (A7) edge (Al) (Al) edge (A8);
\path[<->,thick,blue] (A1) edge (A9) (A2) edge (A8) (A3) edge (A7) (A4) edge (A6);
\path[->, thick, blue] (A5) edge [loop below] (A5);
\def\Foffset{6.5}
\node (Bn) at (-0.2,\Foffset) {$\mathrm{B}_n$};
\foreach \x in {1,2}
{\node[dynkdot,label={above:\footnotesize$\x$}] (B\x) at (\x,\Foffset) {};}
\node[dynkdot,label={above:\footnotesize$n-2$}] (B3) at (3,\Foffset) {};
\node[dynkdot,label={above:\footnotesize$n-1$}] (B4) at (4,\Foffset) {};
\node[dynkdot,label={above:\footnotesize$n$}] (B5) at (5,\Foffset) {};
\node (Bm) at (2.5,\Foffset) {$\cdots$};
\path[-] (B1) edge (B2) (B2) edge (Bm) (Bm) edge (B3) (B3) edge (B4);
\draw[-] (B4.30) -- (B5.150);
\draw[-] (B4.330) -- (B5.210);
\draw[-] (4.55,\Foffset) -- (4.45,\Foffset+.2);
\draw[-] (4.55,\Foffset) -- (4.45,\Foffset-.2);
\draw[-,dotted] (A1) -- (B1);
\draw[-,dotted] (A2) -- (B2);
\draw[-,dotted] (A3) -- (B3);
\draw[-,dotted] (A4) -- (B4);
\draw[-,dotted] (A5) -- (B5);
\draw[|->] (Bn) -- (A2n1);
\node (Dn1) at (-0.2,0) {$(\mathrm{D}_{n+1}, \vee)$};
\node[dynkdot,label={above:\footnotesize$1$}] (D1) at (1,0){};
\node[dynkdot,label={above:\footnotesize$2$}] (D2) at (2,0) {};
\node (Dm) at (2.5,0) {$\cdots$};
\node[dynkdot,label={above:\footnotesize$n-2$}] (D3) at (3,0) {};
\node[dynkdot,label={above:\footnotesize$n-1$}] (D4) at (4,0) {};
\node[dynkdot,label={above:\footnotesize$n$}] (D6) at (5,.5) {};
\node[dynkdot,label={below:\footnotesize$n+1$}] (D5) at (5,-.5) {};
\path[-] (D1) edge (D2)
  (D2) edge (Dm)
  (Dm) edge (D3)
  (D3) edge (D4)
  (D4) edge (D5)
  (D4) edge (D6);
\path[<->,thick,blue] (D6) edge (D5);
\path[->,thick,blue] (D1) edge [loop below] (D1)
(D2) edge [loop below] (D2)
(D3) edge [loop below] (D3)
(D4) edge [loop below] (D4);
\def\Coffset{1.8}
\node (Cn) at (-0.2,\Coffset) {$\mathrm{C}_n$};
\foreach \x in {1,2}
{\node[dynkdot,label={above:\footnotesize$\x$}] (C\x) at (\x,\Coffset) {};}
\node (Cm) at (2.5, \Coffset) {$\cdots$};
\node[dynkdot,label={above:\footnotesize$n-2$}] (C3) at (3,\Coffset) {};
\node[dynkdot,label={above:\footnotesize$n-1$}] (C4) at (4,\Coffset) {};
\node[dynkdot,label={above:\footnotesize$n$}] (C5) at (5,\Coffset) {};
\draw[-] (C1) -- (C2);
\draw[-] (C2) -- (Cm);
\draw[-] (Cm) -- (C3);
\draw[-] (C3) -- (C4);
\draw[-] (C4.30) -- (C5.150);
\draw[-] (C4.330) -- (C5.210);
\draw[-] (4.55,\Coffset+.2) -- (4.45,\Coffset) -- (4.55,\Coffset-.2);
\draw[-,dotted] (C1) -- (D1);
\draw[-,dotted] (C2) -- (D2);
\draw[-,dotted] (C3) -- (D3);
\draw[-,dotted] (C4) -- (D4);
\draw[-,dotted] (C5) -- (D6);
\draw[|->] (Cn) -- (Dn1);
\node (E6desc) at (6.8,4.5) {$(\mathrm{E}_6, \vee)$};
\node[dynkdot,label={above:\footnotesize$2$}] (E2) at (10.8,4.5) {};
\node[dynkdot,label={above:\footnotesize$4$}] (E4) at (9.8,4.5) {};
\node[dynkdot,label={above:\footnotesize$5$}] (E5) at (8.8,5) {};
\node[dynkdot,label={above:\footnotesize$6$}] (E6) at (7.8,5) {};
\node[dynkdot,label={below:\footnotesize$3$}] (E3) at (8.8,4) {};
\node[dynkdot,label={below:\footnotesize$1$}] (E1) at (7.8,4) {};
\path[-]
 (E2) edge (E4)
 (E4) edge (E5)
 (E4) edge (E3)
 (E5) edge (E6)
 (E3) edge (E1);
\path[<->,thick,blue] (E3) edge (E5) (E1) edge (E6);
\path[->, thick, blue] (E4) edge[loop below] (E4); 
\path[->, thick, blue] (E2) edge[loop below] (E2); 
\def\Foffset{6.5}
\node (F4desc) at (6.8,\Foffset) {$\mathrm{F}_4$};
\foreach \x in {1,2,3,4}
{\node[dynkdot,label={above:\footnotesize$\x$}] (F\x) at (\x+6.8,\Foffset) {};}
\draw[-] (F1.east) -- (F2.west);
\draw[-] (F3) -- (F4);
\draw[-] (F2.30) -- (F3.150);
\draw[-] (F2.330) -- (F3.210);
\draw[-] (9.35,\Foffset) -- (9.25,\Foffset+.2);
\draw[-] (9.35,\Foffset) -- (9.25,\Foffset-.2);
\draw[|->] (F4desc) -- (E6desc);
\path[-, dotted] (F1) edge (E6)
(F2) edge (E5) (F3) edge (E4) (F4) edge (E2);

\node (D4desc) at (6.8,0) {$(\mathrm{D}_{4}, \tvee)$};
\node[dynkdot,label={above:\footnotesize$1$}] (D1) at (7.8,.6){};
\node[dynkdot,label={above:\footnotesize$2$}] (D2) at (8.8,0) {};
\node[dynkdot,label={left:\footnotesize$3$}] (D3) at (7.8,0) {};
\node[dynkdot,label={below:\footnotesize$4$}] (D4) at (7.8,-.6) {};
\draw[-] (D1) -- (D2);
\draw[-] (D3) -- (D2);
\draw[-] (D4) -- (D2);
\path[->,blue,thick]
(D1) edge [bend left=0] (D3)
(D3) edge [bend left=0](D4)
(D4) edge[bend left=90] (D1);
\path[->, thick, blue] (D2) edge[loop below] (D2); 
\def\Goffset{1.8}
\node (G2desc) at (6.8,\Goffset) {$\mathrm{G}_2$};
\node[dynkdot,label={above:\footnotesize$1$}] (G1) at (7.8,\Goffset){};
\node[dynkdot,label={above:\footnotesize$2$}] (G2) at (8.8,\Goffset) {};
\draw[-] (G1) -- (G2);
\draw[-] (G1.40) -- (G2.140);
\draw[-] (G1.320) -- (G2.220);
\draw[-] (8.25,\Goffset+.2) -- (8.35,\Goffset) -- (8.25,\Goffset-.2);
\draw[|->] (G2desc) -- (D4desc);
\path[-, dotted] (D1) edge (G1) (D2) edge (G2);
\end{tikzpicture}
\end{center}
\caption{Unfoldings for non-simply-laced $\fg$} \label{Fig:unf}
\end{figure}
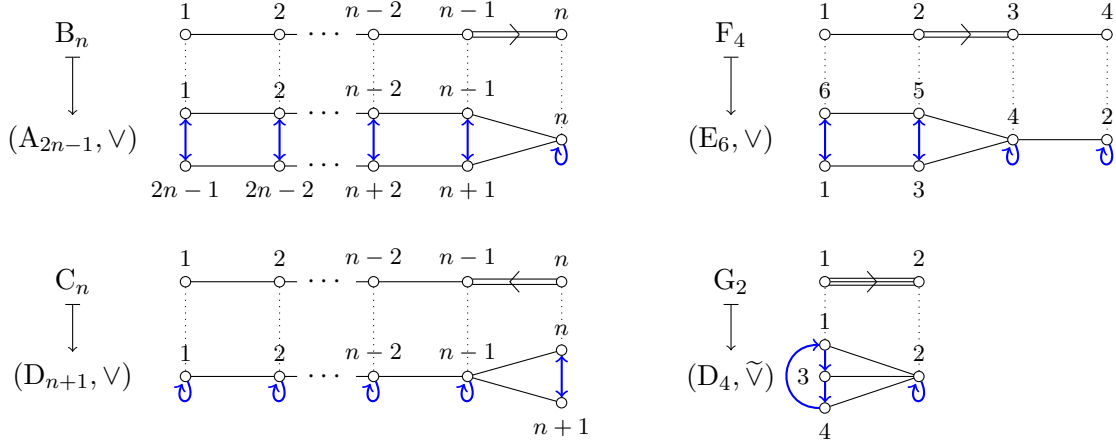

Let $\Delta_{0}$ denote the set of vertices of the graph $\Delta$. 
Under the above assignment $\fg \mapsto (\Delta, \sigma)$, we identify the lacing number $r$ of $\fg$ with the order of the automorphism $\sigma$, 
and also identify the set $I$ of Dynkin indices of $\fg$ with the set $\Delta_{0}/\langle \sigma \rangle$ of $\sigma$-orbits in $\Delta_0$ as 
suggested by the dotted lines in Figure~\ref{Fig:unf}.   
Then the positive integer $d_i \in \{ 1,r \}$ coincides with the cardinality of the $\sigma$-orbit corresponding to $i \in I$.     

In what follows, we use the symbols $\im, \jm, \ldots$ to denote the elements of $\Delta_0$.
We write $\im \sim \jm$ if they are adjacent in the graph $\Delta$.
We denote the natural quotient map $\Delta_0 \to I = \Delta_{0}/\langle \sigma \rangle$
by $\im \mapsto \bar{\im}$. 
By definition, for $\im \in \Delta_0$ and $i \in I$, we have $i = \bar{\im}$ if and only if $\im \in i$.
Note that we have $i \sim j$ for $i,j \in I$ (i.e.~$c_{ij} < 0$)
if and only if there exist $\im \in i$ and $\jm \in j$ satisfying $\im \sim \jm$.

\subsection{Notation for simply-laced root systems}
\label{ssec:notr}

Let $(\Delta, \sigma)$ be the unfolding of $\fg$.
Hereafter, we always denote by $\sg$ the simply-laced Lie algebra 
associated with the Dynkin diagram $\Delta$.
In the following paragraphs, we prepare the notation around the root system of $\sg$ (not $\fg$).
   
Let $\sP=\bigoplus_{\im \in \Delta_{0}} \Z \varpi_{\im}$ denote
the weight lattice of $\sg$,
where $\{\varpi_{\im}\}_{\im \in \Delta_{0}}$ are the fundamental weights.
For each $\im \in \Delta_0$, let $\alpha_{\im} \seq 2 \varpi_{\im} - \sum_{\jm \sim \im} \varpi_{\jm} \in \sP$
denote the corresponding simple root.
We set
$\sQ \seq \bigoplus_{\im \in \Delta_{0}} \Z \alpha_{\im}$ and
$\sQ^{+} \seq \sum_{\im \in \Delta_{0}} \Z_{\ge 0} \alpha_{\im}$.
Let $( \cdot , \cdot ) \colon \sP \times \sP \to \Q$
denote the symmetric bilinear form defined by
$(\varpi_{\im}, \alpha_{\jm}) = \delta_{\im, \jm}$ for $\im, \jm \in \Delta_{0}$.
The Weyl group $\sW$ of $\sg$ is defined as the subgroup of $\Aut(\sP)$
generated by the simple reflections $\{ s_{\im} \}_{\im \in \Delta_{0}}$, which are given by
$s_{\im}(\lambda) = \lambda - (\lambda, \alpha_{\im})\alpha_{\im}$ for $\lambda \in \sP$.
The set of roots is defined by $\sR \seq \sW\cdot\{\alpha_{\im}\}_{\im \in \Delta_{0}}$.
We have the decomposition $\sR = \sR^{+} \sqcup (-\sR^{+})$, where
$\sR^{+} \seq \sR \cap \sQ^{+}$ is the set of positive roots.
In the sequel, we often identify $\sigma$ with the automorphism $\sigma \in \Aut(\sP)$
given by $\sigma \cdot \varpi_{\im} = \varpi_{\sigma(\im)}$ for $\im \in \Delta_0$.
Then the pairing $(\cdot, \cdot)$ on $\sP$ is invariant under 
the action of the extended group $\sW \rtimes \langle \sigma \rangle \subset \Aut(\sP)$. 

For an element $w \in \sW$,
a sequence $\bfi = (\im_{1}, \ldots, \im_{l})$ of elements of $\Delta_{0}$
is called a reduced word for $w$ if
it satisfies $w=s_{\im_{1}}\cdots s_{\im_{l}}$ and
$l$ is the smallest among all the sequences with this property.
The length of a reduced word of $w$ is called the length of $w$, denoted  by $\ell(w) \in \Z_{\ge 0}$.
It is well-known that there exists a unique element $w_{0} \in \sW$ with the largest length
$\ell(w_{0}) = \ell_0 \seq \#\sR^{+}$.
From Table~\ref{table:cl}, we can easily check the equality
$n r h^{\vee} = 2 \ell_{0}$
holds for each $\fg$.
Here we recall that $n$ and $h^\vee$ denote the rank and the dual Coxeter number of $\fg$ (not $\sg$) respectively.  

We define the involution $\im \mapsto \im^{*}$
on $\Delta_{0}$ by the relation $w_{0}(\alpha_{\im}) = -\alpha_{\im^{*}}$.
Under the assumption $\sigma \neq \id$,
we have $* = \id$ if $h^{\vee}$ is even, and $* = \sigma$ if $h^{\vee}$ is odd.
In particular, the involution $*$ on $\Delta_{0}$ induces an involution
on the set $I$, which is trivial for non-simply-laced $\fg$.
This latter involution on $I$ coincides with the involution $i \mapsto i^*$ that appeared
in the definition of $\fD$ (see (\ref{eq:deffD})). 
In other words, we have $(\bar{\im})^* = \ol{\im^*}$ for $\im \in \Delta_0$.


\subsection{Q-data and generalized Coxeter elements}
\label{ssec:Qdata}

Let $\fg$ be a finite-dimensional complex simple Lie algebra of rank $n$ and
$(\Delta, \sigma)$ the unfolding  of $\fg$.

\begin{Def} \label{def:Qdata}
\emph{A height function} on $(\Delta, \sigma)$ is a 
function $\xi \colon \Delta_{0} \to \Z$ satisfying the following conditions (here we put $\xi_{\im} \seq \xi(\im)$):
\begin{enumerate}
\item Let $\im, \jm \in \Delta_{0}$ with $\im \sim \jm$ and $d_{\bar{\im}}=d_{\bar{\jm}}$.
Then we have $|\xi_{\im} - \xi_{\jm}| = d_{\bar{\im}}=d_{\bar{\jm}}$.
\item Let $i, j \in I$ with $i \sim j$ and $d_{i} = 1 < d_{j}=r$.
Then there is a unique $\jm \in j$ such that $|\xi_{\im} - \xi_{\jm}| = 1$ and
$\xi_{\sigma^{k}(\jm)} = \xi_{\jm} - 2k$ for any $1 \le k < r$, where $i=\{\im\}$. 
\end{enumerate} 
Such a triple $\cQ = (\Delta, \sigma, \xi)$ is called \emph{a Q-datum} for $\fg$.  
\end{Def}

\begin{Rem} \label{Rem:quiver}
When $\fg$ is simply-laced, $\sigma$ is the identity and
a height function $\xi$ on $(\Delta, \id)$ is simply a function $\xi \colon \Delta_0 \to \Z$
satisfying $|\xi_{\im} - \xi_{\jm}|=1$ for each adjacent pair $(\im, \jm)$, 
namely a height function on a Dynkin quiver. 
In this sense, the notion of Q-datum is a generalization of 
the notion of a Dynkin quiver with a height function. 
\end{Rem}

\begin{Def} \label{Def:adapt}
Let $\cQ = (\Delta, \sigma, \xi)$ be a Q-datum for $\fg$. 
A vertex $\im \in \Delta_{0}$ is called \emph{a source of $\cQ$} if we have $\xi_{\im} > \xi_{\jm}$
for any $\jm \in \Delta_{0}$ with $\jm \sim \im$.
When $\im$ is a source of $\cQ$, we define a new Q-datum $s_{\im}\cQ = (\Delta, \sigma, s_{\im}\xi)$
by setting 
$$
(s_{\im} \xi)_{\jm} \seq \xi_{\jm} - 2 d_{\bar{\im}}\delta_{\im,\jm} \quad \text{for $\jm \in \Delta_{0}$}. 
$$
We say that a sequence $\bfi = (\im_1, \ldots, \im_l)$ of elements of $\Delta_0$ 
is \emph{adapted to $\cQ$} if $\im_k$ is a source of 
the Q-datum $s_{\im_{k-1}} \cdots s_{\im_2} s_{\im_1} \cQ$ 
for all $k = 1, 2, \ldots, l$.  
\end{Def}

Given a Q-datum $(\Delta, \sigma, \xi)$,
for each $i \in I$, denote by $i^{\circ}$ the unique vertex in the $\sigma$-orbit $i$
satisfying $\xi_{i^{\circ}} = \max \{ \xi_{\im} \mid \im \in i\}$.
We consider the following condition on $\cQ$:
\begin{equation} \label{cond:Qtau}
\text{for each $i \in I$ and $1 \le k < d_{i}$, we have 
$\xi_{\sigma^{k}(i^{\circ})} = \xi_{i^{\circ}} - 2k$}.
\end{equation}

\begin{Prop}[{\cite[Section~3.6]{FO21}}] \label{Prop:Cox}
There is a unique collection $\{ \tau_{\cQ}\}_{\cQ} \subset \sW \rtimes \langle \sigma \rangle$ 
labelled by Q-data for $\fg$ and characterized by the following conditions$\colon$
\begin{enumerate}
\item If $\cQ$ satisfies {\rm (\ref{cond:Qtau})}, we define
$\tau_{\cQ} \seq s_{i_{1}^{\circ}} \cdots s_{i_{n}^{\circ}}\sigma$, where 
$(i_{1}, \ldots, i_{n})$ is any total ordering of $I$ satisfying $\xi_{i_{1}^{\circ}} \ge \cdots \ge \xi_{i_{n}^{\circ}}$.
$($In this case, the sequence $(i^{\circ}_{1}, \ldots, i^{\circ}_{n})$ is a reduced word adapted to $\cQ$.$)$
\item If $\im \in \Delta_{0}$ is a source of $\cQ$, we have
$\tau_{s_{\im}\cQ} = s_{\im} \tau_{\cQ} s_{\im}$.
\end{enumerate}  
\end{Prop}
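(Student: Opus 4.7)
My plan is to construct $\tau_\cQ$ first for Q-data satisfying $(\ref{cond:Qtau})$ using the explicit formula in $(1)$, then extend to arbitrary Q-data via the recursion in $(2)$; both existence and uniqueness reduce to showing this extension is independent of the chosen path of source reflections.

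To verify $(1)$ is well-defined for $\cQ$ satisfying $(\ref{cond:Qtau})$, I would show that for distinct $i, j \in I$ with $\xi_{i^\circ} = \xi_{j^\circ}$, the reflections $s_{i^\circ}$ and $s_{j^\circ}$ commute, i.e., $i^\circ \not\sim j^\circ$ in $\Delta$. Assuming adjacency, Definition~\ref{def:Qdata} forces $|\xi_{i^\circ} - \xi_{j^\circ}|$ to equal either $d_{\bar{i^\circ}}$ (equal-degree case) or $1$ (the latter because the unique $\jm \in \bar{j^\circ}$ adjacent to $i^\circ$, singled out by $\xi_{\sigma^k(\jm)} = \xi_\jm - 2k$ for $1 \le k < r$, must be the height-maximum $j^\circ$), both contradicting the equality. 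A parallel argument shows that $(i_1^\circ, \ldots, i_n^\circ)$ is adapted to $\cQ$: after reflecting at a height-maximum $i_k^\circ$, the vertex $i_{k+1}^\circ$ retains strictly larger height than all of its neighbors. Compatibility of $(1)$ with $(2)$ at a source $\im = i_1^\circ$ then follows from
$$
s_\im \tau_\cQ s_\im = s_{i_2^\circ} \cdots s_{i_n^\circ}\, \sigma\, s_\im = s_{i_2^\circ} \cdots s_{i_n^\circ}\, s_{\sigma^{-1}(\im)}\, \sigma,
$$
which I would match against the value obtained by applying $(1)$ to $s_\im \cQ$ after a global height normalization, if $s_\im \cQ$ still satisfies $(\ref{cond:Qtau})$.

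Next, I would establish that every Q-datum $\cQ$ can be carried by a finite sequence of source reflections to a Q-datum $\cQ_0$ satisfying $(\ref{cond:Qtau})$, using a monotonicity argument on a defect quantity such as $\sum_{i \in I : d_i > 1}\sum_{1 \le k < d_i} \bigl(\xi_{\sigma^k(i^\circ)} - \xi_{i^\circ} + 2k\bigr)^2$; appropriately chosen source reflections within the relevant orbits strictly decrease this quantity. Given such a sequence $(\im_1, \ldots, \im_l)$, one defines $\tau_\cQ \seq w^{-1} \tau_{\cQ_0} w$ with $w = s_{\im_l} \cdots s_{\im_1}$.

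The main obstacle is path-independence: two adapted sequences from $\cQ$ to possibly different standard Q-data must yield the same element of $\sW \rtimes \langle \sigma \rangle$. My plan is to reduce the comparison to two kinds of elementary moves on adapted sequences, namely swaps $(\im, \jm) \leftrightarrow (\jm, \im)$ of consecutive simultaneous sources with $\im \not\sim \jm$ (harmless, since $s_\im$ and $s_\jm$ commute), and ``full-cycle'' moves that reflect through every vertex of $\Delta_0$ in a complete adapted order and return $\cQ$ to itself up to a global height shift $\xi \mapsto \xi - 2$; the latter correspond in $\sW \rtimes \langle \sigma \rangle$ precisely to conjugation by $\tau_\cQ$ itself and so preserve the value. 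Proving that any two adapted sequences between fixed endpoints are linked by a finite composition of such moves is essentially a Matsumoto-type statement for adapted sequences on Q-data; this is the technical heart of the argument and will rely on the combinatorial structure underlying Definition~\ref{def:Qdata} and Definition~\ref{Def:adapt}.
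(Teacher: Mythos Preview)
The paper does not give its own proof of this proposition: it is stated with the citation \cite[Section~3.6]{FO21} and is immediately followed by a remark and a definition, with no proof environment. So there is nothing in the present paper to compare your proposal against; the argument lives entirely in the cited reference.

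That said, your outline is a reasonable skeleton but remains a sketch at the crucial points. Three places would need real work before it could stand as a proof. First, your compatibility check of $(1)$ with $(2)$ is conditional on ``if $s_\im\cQ$ still satisfies~(\ref{cond:Qtau})'', which it typically will not (reflecting at the top vertex of an orbit generally breaks the staircase condition within that orbit); you need to handle the generic case, not just the special one. Second, the defect quantity you propose is not obviously decreased by source reflections: a source reflection changes only $\xi_\im$, and if $\im$ is not the current $i^\circ$ of its orbit, the effect on your sum can go either way; you would need to exhibit, at every non-standard $\cQ$, a specific source whose reflection strictly decreases the defect, and this requires a careful case analysis using Definition~\ref{def:Qdata}. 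Third, the ``Matsumoto-type statement for adapted sequences on Q-data'' that you flag as the technical heart is exactly that---the heart---and you give no argument for it; the full-cycle moves you describe shift $\xi$ by $-2$ globally rather than returning $\cQ$ to itself, so even the formulation of the move set needs adjustment before one can attempt a connectedness proof. If you intend to flesh this out, consulting \cite{FO21} directly for their treatment of the generalized Coxeter element would be the efficient route.
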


\begin{Rem}
When $\fg$ is simply-laced, 
a Q-datum $\cQ$ for $\fg$ can be seen as a Dynkin quiver $Q$ with a height function
(see~Remark~\ref{Rem:quiver}) and the condition (\ref{cond:Qtau}) is trivially satisfied in this case.
Then the element $\tau_{\cQ}$ is nothing but the unique Coxeter element adapted to the quiver $Q$
(in the usual sense).
\end{Rem}

\begin{Def} 
We refer to the unique element $\tau_{\cQ}$ in Proposition~\ref{Prop:Cox} as \emph{the generalized Coxeter element}
associated with $\cQ$.
\end{Def}

\begin{Prop}[{\cite[Proposition 3.34]{FO21}}] \label{Prop:Coxord}
Let $\cQ$ be a Q-datum for $\fg$. Then the order of $\tau_{\cQ}$ is $rh^{\vee}$.
If $\fg$ is not simply-laced, we have $\tau_{\cQ}^{rh^{\vee}/2} = -1$.
\end{Prop}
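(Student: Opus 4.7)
The plan is to reduce to a canonical Q-datum via the conjugation moves, then identify $\tau_\cQ$ as a (twisted) Coxeter element whose order is controlled by the representation theory of $\sW \rtimes \langle \sigma \rangle$ on $\sP$.

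First, I would exploit Proposition~\ref{Prop:Cox}(2): the move $\cQ \mapsto s_\im \cQ$ conjugates $\tau_\cQ$ by $s_\im$ in $\sW \rtimes \langle \sigma \rangle$. In particular, both the order of $\tau_\cQ$ and the validity of $\tau_\cQ^{rh^\vee/2} = -1$ on $\sP$ depend only on the $\sW$-conjugacy class of $\tau_\cQ$. Combined with the obvious invariance under constant shifts of the height function, this reduces the proof to a single Q-datum for each type; I would use the fact (presumably already in the paper) that every Q-datum can be transported by source-reflections to one satisfying condition (\ref{cond:Qtau}), so that Proposition~\ref{Prop:Cox}(1) yields the explicit presentation $\tau_\cQ = s_{i_1^\circ} \cdots s_{i_n^\circ} \sigma$ where the indices are ordered by decreasing $\xi$-value.

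Second, I would compute the order of this element. When $\fg$ is simply-laced, $\sigma = \id$ and $\tau_\cQ$ is a standard Coxeter element adapted to the Dynkin quiver encoded by $\xi$, which has order $h^\vee$; since $r = 1$ in this case, this is the claim. When $\fg$ is non-simply-laced, $\tau_\cQ = c\,\sigma$ with $c \in \sW$ and I would argue that $\tau_\cQ$ is a $\sigma$-regular element in the sense of Springer's theory, whose order on $\sP \otimes \C$ is dictated by its eigenvalues being primitive $rh^\vee$-th roots of unity together with their compatible powers. A more hands-on route is to verify directly that $\tau_\cQ$ permutes the root system of $\sg$ freely: using the explicit form of $\tau_\cQ$ together with the structure of reduced words adapted to $\cQ$ (as developed in the paragraphs preceding this proposition), one shows that the $\langle \tau_\cQ \rangle$-orbits on $\sR$ all have the same cardinality; since $|\sR| = 2\ell_0 = n \cdot r h^\vee$ by the numerical equality recorded after Table~\ref{table:cl}, there must be exactly $n$ orbits of size $rh^\vee$, pinning down the order of $\tau_\cQ$.

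Third, for the identity $\tau_\cQ^{rh^\vee/2} = -1$ in the non-simply-laced case, I would analyse the action of $\tau_\cQ^{rh^\vee/2}$ on simple roots. The orbit analysis of the previous step already implies $\tau_\cQ^{rh^\vee/2}$ is an involution, so it suffices to show it negates each root of $\sg$. The key input is the explicit description of the involution $*$ on $\Delta_0$: since the induced involution on $I$ is trivial in all non-simply-laced cases, the longest element $w_0 \in \sW$ acts on $\sP$ either as $-\id$ (when $h^\vee$ is even) or as $-\sigma$ (when $h^\vee$ is odd). In both situations I would compare $\tau_\cQ^{rh^\vee/2}$ with $w_0$ (modulo a suitable power of $\sigma$) by matching their actions on the orbits constructed above and concluding equality.

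The main obstacle is step two in the non-simply-laced setting, specifically establishing that the orbits of $\tau_\cQ$ on $\sR$ all have the expected size $rh^\vee$. The types $\mathrm{B}_n$, $\mathrm{C}_n$, $\mathrm{F}_4$ involve an order-two diagram automorphism and should yield to a uniform treatment, but type $\mathrm{G}_2$, whose unfolding is the triality automorphism of $\mathrm{D}_4$, requires more delicate bookkeeping. I would expect either a case-by-case check or a clean appeal to the regular-element framework to be necessary at this point.
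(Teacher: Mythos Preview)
The paper does not prove this statement at all: Proposition~\ref{Prop:Coxord} is simply quoted from \cite[Proposition 3.34]{FO21} with no argument given here. There is therefore nothing in the present paper to compare your proposal against; the proof lives entirely in the cited reference.

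That said, your outline is in the right spirit but has a real gap at the load-bearing step. The conjugation reduction (step one) is fine, and the numerology $|\sR| = n \cdot rh^\vee$ is correct. However, in step two you assert that ``the $\langle \tau_\cQ \rangle$-orbits on $\sR$ all have the same cardinality'' and defer this to ``the structure of reduced words adapted to $\cQ$ as developed in the paragraphs preceding this proposition.'' Nothing preceding Proposition~\ref{Prop:Coxord} in this paper gives you that orbit uniformity; in fact the paper \emph{uses} Proposition~\ref{Prop:Coxord} downstream (e.g.\ in the proof of Theorem~\ref{Thm:tc} and implicitly in Proposition~\ref{Prop:tAR}) rather than deriving it. The orbit statement is essentially equivalent to what you are trying to prove, so invoking it without an independent argument is circular. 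The standard route---and presumably the one in \cite{FO21}---is either Springer's theory of regular elements applied to $\tau_\cQ$ viewed as a $\sigma$-twisted Coxeter element, or a direct type-by-type verification; your proposal gestures at both without committing to either. If you want a self-contained argument, the cleanest path is probably to compute the characteristic polynomial of $\tau_\cQ$ on $\sP \otimes \C$ for one representative $\cQ$ per type and read off that it is the $rh^\vee$-th cyclotomic polynomial times lower-order cyclotomic factors, which simultaneously gives the order and (in the non-simply-laced case, where $rh^\vee$ is even) the eigenvalue $-1$ with full multiplicity at the half-power.
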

\subsection{Repetition quivers and their $\cQ$-coordinates}
\label{ssec:repet}
Fix a simple Lie algebra $\fg$ and its unfolding $(\Delta, \sigma)$ as above.
Recall the notation $d_{ij} \seq \min(d_{i}, d_{j})$ for $i,j \in I$.

\begin{Def} \label{Def:repQ}
Let $\xi$ be a height function on $(\Delta, \sigma)$.  
\emph{The repetition quiver associated with $(\Delta, \sigma)$} is the quiver $\hDs$ whose
vertex set $\hDs_{0}$ and arrow set $\hDs_{1}$ are given by
\begin{align*}
\hDs_{0} &\seq \{ (\im, p) \in \Delta_{0} \times \Z \mid p-\xi_{\im} \in 2d_{\bar{\im}} \Z \}, \\
\hDs_{1} &\seq \{ (\im,p) \to (\jm, s) \mid (\im, p), (\jm, s) \in \hDs_{0}, \jm \sim \im, s-p = d_{\bar{\im}\bar{\jm}} \}.
\end{align*}
\end{Def}
\begin{Rem}
Although the repetition quiver $\hDs$ depends on the choice of height function $\xi$,
it is not essential because any two repetition quivers defined with different choices of height functions   
are identical to each other after shifting uniformly by an integer. Thus we suppress the dependence on $\xi$ in the notation $\hDs$. 
\end{Rem}

\begin{Ex}
We exhibit some examples of the repetition quivers. Here the labeling of $\Delta_0$ is as in Figure~\ref{Fig:unf}. 
\begin{enumerate}
\item 
When $\fg$ is of type $\mathrm{A}_{5}$, its unfolding is $(\Delta, \id)$ with $\Delta$ of type $\mathrm{A}_5$  and
the repetition quiver $\hDs$ is depicted as:
$$
\raisebox{3mm}{
\scalebox{0.60}{\xymatrix@!C=0.5mm@R=2mm{
(\im\setminus p) & -8 & -7 & -6 &-5&-4 &-3& -2 &-1& 0 & 1& 2 & 3& 4&  5
& 6 & 7 & 8 & 9 & 10 & 11 & 12 & 13 & 14 & 15 & 16  \\
1&\bullet \ar@{->}[dr]&& \bullet \ar@{->}[dr] &&\bullet\ar@{->}[dr]
&&\bullet \ar@{->}[dr] && \bullet \ar@{->}[dr] &&\bullet \ar@{->}[dr] &&  \bullet \ar@{->}[dr] 
&&\bullet \ar@{->}[dr] && \bullet \ar@{->}[dr] &&\bullet \ar@{->}[dr]  && \bullet\ar@{->}[dr] && \bullet \ar@{->}[dr] && \bullet \\
2&&\bullet \ar@{->}[dr]\ar@{->}[ur]&& \bullet \ar@{->}[dr]\ar@{->}[ur] &&\bullet \ar@{->}[dr]\ar@{->}[ur]
&& \bullet \ar@{->}[dr]\ar@{->}[ur]&& \bullet\ar@{->}[dr] \ar@{->}[ur]&& \bullet \ar@{->}[dr]\ar@{->}[ur]&&\bullet \ar@{->}[dr]\ar@{->}[ur]&
&\bullet \ar@{->}[dr]\ar@{->}[ur]&&\bullet\ar@{->}[dr] \ar@{->}[ur]&& \bullet \ar@{->}[dr]\ar@{->}[ur]
&&\bullet \ar@{->}[dr]\ar@{->}[ur]&& \bullet \ar@{->}[dr] \ar@{->}[ur]& \\ 
3&\bullet \ar@{->}[dr] \ar@{->}[ur]&& \bullet \ar@{->}[dr] \ar@{->}[ur] &&\bullet\ar@{->}[dr] \ar@{->}[ur]
&&\bullet \ar@{->}[dr] \ar@{->}[ur] && \bullet \ar@{->}[dr]\ar@{->}[ur] &&\bullet \ar@{->}[dr] \ar@{->}[ur]&&  \bullet \ar@{->}[dr] \ar@{->}[ur]
&&\bullet \ar@{->}[dr] \ar@{->}[ur] && \bullet \ar@{->}[dr] \ar@{->}[ur]&&\bullet \ar@{->}[dr] \ar@{->}[ur] && \bullet\ar@{->}[dr] \ar@{->}[ur]&&
\bullet\ar@{->}[dr] \ar@{->}[ur] && \bullet \\
4&& \bullet \ar@{->}[ur]\ar@{->}[dr]&&\bullet \ar@{->}[ur]\ar@{->}[dr]&&\bullet \ar@{->}[ur]\ar@{->}[dr] &&\bullet \ar@{->}[ur]\ar@{->}[dr]&& \bullet \ar@{->}[ur]\ar@{->}[dr]
&&\bullet \ar@{->}[ur]\ar@{->}[dr]&& \bullet \ar@{->}[ur]\ar@{->}[dr] &&\bullet \ar@{->}[ur]\ar@{->}[dr]&&\bullet \ar@{->}[ur]\ar@{->}[dr]&&
\bullet \ar@{->}[ur]\ar@{->}[dr]&&\bullet \ar@{->}[ur]\ar@{->}[dr]&&\bullet \ar@{->}[ur]\ar@{->}[dr]& \\
5&\bullet  \ar@{->}[ur]&& \bullet  \ar@{->}[ur] &&\bullet \ar@{->}[ur]
&&\bullet  \ar@{->}[ur] && \bullet \ar@{->}[ur] &&\bullet  \ar@{->}[ur]&&  \bullet  \ar@{->}[ur]
&&\bullet  \ar@{->}[ur] && \bullet  \ar@{->}[ur]&&\bullet  \ar@{->}[ur] && \bullet \ar@{->}[ur]&&
\bullet \ar@{->}[ur] && \bullet  }}}
$$
\item 
When $\fg$ is of type $\mathrm{D}_{5}$, its unfolding is $(\Delta, \id)$ with $\Delta$ of type $\mathrm{D}_5$  and
the repetition quiver $\hDs$ is depicted as:
$$
\raisebox{3mm}{
\scalebox{0.60}{\xymatrix@!C=0.5mm@R=2mm{
(\im\setminus p) & -8 & -7 & -6 &-5&-4 &-3& -2 &-1& 0 & 1& 2 & 3& 4&  5
& 6 & 7 & 8 & 9 & 10 & 11 & 12 & 13 & 14 & 15 & 16 \\
1&\bullet \ar@{->}[dr]&& \bullet \ar@{->}[dr] &&\bullet\ar@{->}[dr]
&&\bullet \ar@{->}[dr] && \bullet \ar@{->}[dr] &&\bullet \ar@{->}[dr] &&  \bullet \ar@{->}[dr] 
&&\bullet \ar@{->}[dr] && \bullet \ar@{->}[dr] &&\bullet \ar@{->}[dr]  && \bullet\ar@{->}[dr] &&
\bullet\ar@{->}[dr] && \bullet \\
2&&\bullet \ar@{->}[dr]\ar@{->}[ur]&& \bullet \ar@{->}[dr]\ar@{->}[ur] &&\bullet \ar@{->}[dr]\ar@{->}[ur]
&& \bullet \ar@{->}[dr]\ar@{->}[ur]&& \bullet\ar@{->}[dr] \ar@{->}[ur]&& \bullet \ar@{->}[dr]\ar@{->}[ur]&&\bullet \ar@{->}[dr]\ar@{->}[ur]&
&\bullet \ar@{->}[dr]\ar@{->}[ur]&&\bullet\ar@{->}[dr] \ar@{->}[ur]&& \bullet \ar@{->}[dr]\ar@{->}[ur]
&&\bullet \ar@{->}[dr]\ar@{->}[ur]&& \bullet \ar@{->}[dr] \ar@{->}[ur]& \\ 
3&\bullet \ar@{->}[dr] \ar@{->}[ur]&& \bullet \ar@{->}[dr] \ar@{->}[ur] &&\bullet\ar@{->}[dr] \ar@{->}[ur]
&&\bullet \ar@{->}[dr] \ar@{->}[ur] && \bullet \ar@{->}[dr]\ar@{->}[ur] &&\bullet \ar@{->}[dr] \ar@{->}[ur]&&  \bullet \ar@{->}[dr] \ar@{->}[ur]
&&\bullet \ar@{->}[dr] \ar@{->}[ur] && \bullet \ar@{->}[dr] \ar@{->}[ur]&&\bullet \ar@{->}[dr] \ar@{->}[ur] && \bullet\ar@{->}[dr] \ar@{->}[ur]&&
\bullet\ar@{->}[dr] \ar@{->}[ur] && \bullet \\
4&& \bullet \ar@{->}[ur]&&\bullet \ar@{->}[ur]&&\bullet \ar@{->}[ur] &&\bullet \ar@{->}[ur]&& \bullet \ar@{->}[ur]
&&\bullet \ar@{->}[ur]&& \bullet \ar@{->}[ur] &&\bullet \ar@{->}[ur]&&\bullet \ar@{->}[ur]&&
\bullet \ar@{->}[ur]&&\bullet \ar@{->}[ur]&&\bullet \ar@{->}[ur]& \\
5&& \bullet \ar@{<-}[uul]\ar@{->}[uur]&&\bullet \ar@{<-}[uul]\ar@{->}[uur]&&\bullet \ar@{<-}[uul]\ar@{->}[uur] &&\bullet \ar@{<-}[uul]\ar@{->}[uur]&& \bullet \ar@{<-}[uul]\ar@{->}[uur]
&&\bullet \ar@{<-}[uul]\ar@{->}[uur]&& \bullet \ar@{<-}[uul]\ar@{->}[uur] &&\bullet \ar@{<-}[uul]\ar@{->}[uur]&&\bullet \ar@{<-}[uul]\ar@{->}[uur]&&
\bullet \ar@{<-}[uul]\ar@{->}[uur]&&\bullet \ar@{<-}[uul]\ar@{->}[uur]&&\bullet \ar@{<-}[uul]\ar@{->}[uur]&
}}}
$$
\item
When $\fg$ is of type $\mathrm{B}_{3}$, its unfolding is $(\Delta, \vee)$ with $\Delta$ of type $\mathrm{A}_5$  and
the repetition quiver $\hDs$ is depicted as:
$$
\raisebox{3mm}{
\scalebox{0.60}{\xymatrix@!C=0.5mm@R=0.5mm{
(\im\setminus p) & -8 & -7 & -6 &-5&-4 &-3& -2 &-1& 0 & 1& 2 & 3& 4&  5
& 6 & 7 & 8 & 9 & 10 & 11 & 12 & 13 & 14 & 15 & 16 \\
1&&&\bullet \ar@{->}[ddrr]&&&& \bullet \ar@{->}[ddrr]&&&&  \bullet \ar@{->}[ddrr] &&&& \bullet\ar@{->}[ddrr]
&&&& \bullet \ar@{->}[ddrr]&&&& \bullet \ar@{->}[ddrr] && \\ \\
2&\bullet \ar@{->}[dr] \ar@{->}[uurr]&&&&\bullet\ar@{->}[dr] \ar@{->}[uurr]
&&&& \bullet \ar@{->}[dr]\ar@{->}[uurr] &&&&  \bullet \ar@{->}[dr] \ar@{->}[uurr]
&&&& \bullet \ar@{->}[dr]\ar@{->}[uurr]&&&& \bullet \ar@{->}[dr]\ar@{->}[uurr]&&&& \bullet \\
3&& \bullet \ar@{->}[dr]&& \bullet \ar@{->}[ur] &&\bullet \ar@{->}[dr] && \bullet \ar@{->}[ur] && \bullet \ar@{->}[dr]
&& \bullet \ar@{->}[ur] && \bullet \ar@{->}[dr] &&
\bullet \ar@{->}[ur]&&\bullet \ar@{->}[dr] &&\bullet \ar@{->}[ur]&&\bullet \ar@{->}[dr] &&\bullet \ar@{->}[ur]& \\
4&&&\bullet\ar@{->}[ddrr]\ar@{->}[ur]&&&&\bullet\ar@{->}[ddrr]\ar@{->}[ur] &&&&  \bullet \ar@{->}[ddrr]\ar@{->}[ur] &&&&
\bullet \ar@{->}[ddrr] \ar@{->}[ur]
&&&& \bullet \ar@{->}[ddrr]\ar@{->}[ur]&&&& \bullet\ar@{->}[ddrr]\ar@{->}[ur] && \\ \\
5& \bullet  \ar@{->}[uurr]&&&&\bullet \ar@{->}[uurr] &&&& \bullet \ar@{->}[uurr]
&&&& \bullet   \ar@{->}[uurr] &&&& \bullet \ar@{->}[uurr]
&&&&\bullet \ar@{->}[uurr] &&&& \bullet  }}}
$$
\item 
When $\fg$ is of type $\mathrm{C}_{4}$, its unfolding is $(\Delta, \vee)$ with $\Delta$ of type $\mathrm{D}_5$  and
the repetition quiver $\hDs$ is depicted as:
$$
\raisebox{3mm}{
\scalebox{0.60}{\xymatrix@!C=0.5mm@R=2mm{
(\im\setminus p) & -8 & -7 & -6 &-5&-4 &-3& -2 &-1& 0 & 1& 2 & 3& 4&  5
& 6 & 7 & 8 & 9 & 10 & 11 & 12 & 13 & 14 & 15 & 16 \\
1&\bullet \ar@{->}[dr]&& \bullet \ar@{->}[dr] &&\bullet\ar@{->}[dr]
&&\bullet \ar@{->}[dr] && \bullet \ar@{->}[dr] &&\bullet \ar@{->}[dr] &&  \bullet \ar@{->}[dr] 
&&\bullet \ar@{->}[dr] && \bullet \ar@{->}[dr] &&\bullet \ar@{->}[dr]  && \bullet\ar@{->}[dr] &&
\bullet\ar@{->}[dr] && \bullet \\
2&&\bullet \ar@{->}[dr]\ar@{->}[ur]&& \bullet \ar@{->}[dr]\ar@{->}[ur] &&\bullet \ar@{->}[dr]\ar@{->}[ur]
&& \bullet \ar@{->}[dr]\ar@{->}[ur]&& \bullet\ar@{->}[dr] \ar@{->}[ur]&& \bullet \ar@{->}[dr]\ar@{->}[ur]&&\bullet \ar@{->}[dr]\ar@{->}[ur]&
&\bullet \ar@{->}[dr]\ar@{->}[ur]&&\bullet\ar@{->}[dr] \ar@{->}[ur]&& \bullet \ar@{->}[dr]\ar@{->}[ur]
&&\bullet \ar@{->}[dr]\ar@{->}[ur]&& \bullet \ar@{->}[dr] \ar@{->}[ur]& \\ 
3&\bullet \ar@{->}[dr] \ar@{->}[ur]&& \bullet \ar@{->}[ddr] \ar@{->}[ur] &&\bullet\ar@{->}[dr] \ar@{->}[ur]
&&\bullet \ar@{->}[ddr] \ar@{->}[ur] && \bullet \ar@{->}[dr]\ar@{->}[ur] &&\bullet \ar@{->}[ddr] \ar@{->}[ur]&&  \bullet \ar@{->}[dr] \ar@{->}[ur]
&&\bullet \ar@{->}[ddr] \ar@{->}[ur] && \bullet \ar@{->}[dr] \ar@{->}[ur]&&\bullet \ar@{->}[ddr] \ar@{->}[ur] && \bullet\ar@{->}[dr] \ar@{->}[ur]&&
\bullet\ar@{->}[ddr] \ar@{->}[ur] && \bullet \\
4&& \bullet \ar@{->}[ur]&&&&\bullet \ar@{->}[ur] &&&& \bullet \ar@{->}[ur]
&&&& \bullet \ar@{->}[ur] &&&&\bullet \ar@{->}[ur]&&&&\bullet \ar@{->}[ur]&&& \\
5&&&&\bullet \ar@{->}[uur]&&&&\bullet \ar@{->}[uur]&&&&  \bullet \ar@{->}[uur]&&&&
\bullet \ar@{->}[uur]&&&& \bullet \ar@{->}[uur]&&&& \bullet \ar@{->}[uur]& }}}
$$
\item
When $\fg$ is of type $\mathrm{G}_{2}$, its unfolding is $(\Delta, \tvee)$ with $\Delta$ of type $\mathrm{D}_4$  and
the repetition quiver $\hDs$ is depicted as:
$$
\raisebox{3mm}{
\scalebox{0.60}{\xymatrix@!C=0.5mm@R=2mm{
(\im\setminus p) & -8 & -7 & -6 &-5&-4 &-3& -2 &-1& 0 & 1& 2 & 3& 4&  5
& 6 & 7 & 8 & 9 & 10 & 11 & 12 & 13 & 14 & 15 & 16 \\
1&&&&&&\bullet \ar@{->}[dr]&&&&&&\bullet \ar@{->}[dr]&&& 
&&&\bullet \ar@{->}[dr]&&&&&&\bullet \ar@{->}[dr]&\\
2&\bullet \ar@{->}[ddr]&&\bullet \ar@{->}[dr]&&\bullet\ar@{->}[ur] &&
\bullet \ar@{->}[ddr]&&\bullet \ar@{->}[dr]&&\bullet\ar@{->}[ur] &&
\bullet \ar@{->}[ddr]&&\bullet \ar@{->}[dr]&&\bullet\ar@{->}[ur] &&
\bullet \ar@{->}[ddr]&&\bullet \ar@{->}[dr]&&\bullet\ar@{->}[ur] &&
\bullet \\
3&&&&\bullet \ar@{->}[ur] &&
&&&&\bullet \ar@{->}[ur] &&
&&&&\bullet \ar@{->}[ur] &&
&&&&\bullet \ar@{->}[ur] &&&\\
4&& \bullet \ar@{->}[uur]&&&&&& 
\bullet \ar@{->}[uur]&&&&&& 
\bullet \ar@{->}[uur]&&&&&& 
\bullet \ar@{->}[uur]&&&&& \\
}}}
$$
\end{enumerate}
\end{Ex}

Recall we have fixed a parity function $\epsilon \colon I \to \{0,1\}$ for $\fg$
and defined the set $\hI$ in~(\ref{eq:hI}). 
In what follows, we always assume
that a height function $\xi$ on $(\Delta, \sigma)$ always satisfies 
the condition 
\begin{equation} \label{cond:xiparity}
\xi_{\im} \equiv \epsilon_{\bar{\im}} \pmod{2} \quad \text{for any $\im \in \Delta_{0}$}.
\end{equation}
Note that this assumption is not a loss of generality.
Then we can define the bijection
$$f \colon \hDs_{0} \to \hI \quad \text{given by $(\im, p) \mapsto (\bar{\im}, p)$},$$
which we call \emph{the folding map}.  

Next we  shall attach a bijection  $\phi_{\cQ} \colon \hDs_{0} \to \sR^{+} \times \Z$  
to each Q-datum $\cQ = (\Delta, \sigma, \xi)$ for $\fg$,
which we call \emph{the $\cQ$-coordinate of $\hDs$}.
Using the generalized Coxeter element $\tau_{\cQ}$ associated with $\cQ$,
 we define a positive root $\gamma^{\cQ}_{\im} \in \sR^{+}$ for each $\im \in \Delta_{0}$ by
\begin{equation} \label{eq:gamma}
\gamma_{\im}^{\cQ} \seq (1-\tau_{\cQ}^{d_{\bar{\im}}})\varpi_{\im}.
\end{equation}
Then the bijection $\phi_{\cQ} \colon \hDs_{0} \to \sR^{+} \times \Z$ is defined recursively by
the following rules:
\begin{enumerate}
\item $\phi_{\cQ}(\im, \xi_{\im})=(\gamma^{\cQ}_{\im},0)$ for each $\im \in \Delta_{0}$.
\item If $\phi_\cQ(\im,p)=(\alpha, k)$, we have
$$
\phi_{\cQ}(\im, p\pm2d_{\bar{\im}}) =
\begin{cases}
(\tau_{\cQ}^{\mp d_{\bar{\im}}}\alpha,k) & \text{if $\tau_{\cQ}^{\mp d_{\bar{\im}}}\alpha \in \sR^+$}, \\
(-\tau_{\cQ}^{\mp d_{\bar{\im}}}\alpha, k\pm1) & \text{if $\tau_{\cQ}^{\mp d_{\bar{\im}}}\alpha \in -\sR^{+}$}.
\end{cases}
$$
\end{enumerate}
Composing with the bijection $f^{-1} \colon \hI \to \hDs_0$, we get another bijection
$$\bphi_{\cQ} \seq \phi_{\cQ} \circ (f^{-1}) \colon \hI \to \sR^{+} \times \Z,$$
which we call \emph{the $\cQ$-coordinate of $\hI$}.

\begin{Rem}
When $\fg$ is simply-laced (or equivalently, $\sigma = \id$),
we regard a Q-datum as a Dynkin quiver $Q$ with a height function as in Remark~\ref{Rem:quiver}.
Then the corresponding coordinate map $\phi_{Q}$ can be interpreted 
as an isomorphism between the repetitive quiver $\widehat{\Delta}^{\id}$ and 
the Auslander-Reiten quiver of the bounded derived category of representations of the quiver $Q$
due to Happel~\cite{Happel87}. See~\cite[Section~2]{HL15} or \cite[Section~3]{FO21} for more details. 
\end{Rem}

Finally, we recall a connection between the coordinate map $\bphi_{\cQ}$
and the inverse of quantum Cartan matrices.
Let us consider the map
$$\pi \colon \sR^{+} \times \Z \to \sQ
\quad
\text{ given by $ 
\pi(\alpha, k) \seq (-1)^{k} \alpha$}.
$$

\begin{Lem}[{\cite[(3.18)]{FO21}}] \label{Lem:bphi}
For a Q-datum $\cQ$ and $(i,p) \in \hI$, we have
$$ \pi( \bphi_{\cQ}(i,p)) =
\tau_{\cQ}^{(\xi_{\im}-p)/2}\gamma^{\cQ}_{\im}
\quad \text{for any $\im \in i$.}
$$
\end{Lem}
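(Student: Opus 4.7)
The plan is to split the identity into two stages: first verify the formula for the unique lift $\im^{\star} \in i$ satisfying $(\im^{\star}, p) \in \hDs_{0}$, and then show that the proposed right-hand side $\tau_{\cQ}^{(\xi_{\im} - p)/2} \gamma_{\im}^{\cQ}$ is in fact independent of the choice of $\im \in i$.

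For the first stage, since $\bphi_{\cQ}(i,p) = \phi_{\cQ}(\im^{\star}, p)$ by definition of the folding map $f$, I will prove
$$\pi(\phi_{\cQ}(\im^{\star}, p)) = \tau_{\cQ}^{(\xi_{\im^{\star}} - p)/2} \gamma_{\im^{\star}}^{\cQ}$$
by induction on the non-negative integer $|p - \xi_{\im^{\star}}|/(2 d_{i})$. The base case $p = \xi_{\im^{\star}}$ is immediate from the initial condition $\phi_{\cQ}(\im^{\star}, \xi_{\im^{\star}}) = (\gamma_{\im^{\star}}^{\cQ}, 0)$. For the inductive step, writing $\phi_{\cQ}(\im^{\star}, p) = (\alpha, k)$, the recursive rule from Section~\ref{ssec:repet} yields $\pi(\phi_{\cQ}(\im^{\star}, p \pm 2 d_{i})) = (-1)^{k} \tau_{\cQ}^{\mp d_{i}} \alpha = \tau_{\cQ}^{\mp d_{i}} \pi(\phi_{\cQ}(\im^{\star}, p))$ in both sub-cases, because any sign flip that occurs when $\tau_{\cQ}^{\mp d_{i}} \alpha$ leaves $\sR^{+}$ is exactly cancelled by the simultaneous parity change in the second coordinate.

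The main obstacle is the independence claim
$$\tau_{\cQ}^{(\xi_{\im} - \xi_{\im'})/2} \gamma_{\im}^{\cQ} = \gamma_{\im'}^{\cQ} \qquad \text{for all } \im, \im' \in i, \qquad (\star)$$
which, using $\gamma_{\im}^{\cQ} = (1 - \tau_{\cQ}^{d_{i}}) \varpi_{\im}$ together with the commutativity of powers of $\tau_{\cQ}$, amounts to showing $\tau_{\cQ}^{(\xi_{\im} - \xi_{\im'})/2} \varpi_{\im} - \varpi_{\im'} \in \ker(1 - \tau_{\cQ}^{d_{i}})$. The case $d_{i} = 1$ is trivial, and for $d_{i} \ge 2$ iteration around the $\sigma$-orbit reduces $(\star)$ to the single case $\im' = \sigma(\im)$. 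I will first handle Q-data $\cQ$ satisfying~\eqref{cond:Qtau}: the explicit expression $\tau_{\cQ} = s_{i^{\circ}_{1}} \cdots s_{i^{\circ}_{n}} \sigma$ from Proposition~\ref{Prop:Cox}~(1), combined with the observation that $\sigma^{k+1}(i^{\circ})$ avoids the set $\{i^{\circ}_{j}\}$ of canonical representatives for $0 \le k < d_{i} - 1$, yields directly $\tau_{\cQ} \varpi_{\sigma^{k}(i^{\circ})} = \varpi_{\sigma^{k+1}(i^{\circ})}$, which gives $(\star)$ throughout this range; the wrap-around case $k = d_{i} - 1$ then follows from the identities just established by iterating around the entire $\sigma$-orbit of $i^{\circ}$. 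For a general Q-datum, Proposition~\ref{Prop:Cox}~(2) permits reduction to the previous situation via a finite chain of source-reflections, so it only remains to verify that $(\star)$ is preserved under $\cQ \mapsto s_{\im_{0}} \cQ$; this is a direct (if tedious) calculation based on the transformation rules $\tau_{s_{\im_{0}} \cQ} = s_{\im_{0}} \tau_{\cQ} s_{\im_{0}}$ and $(s_{\im_{0}} \xi)_{\jm} = \xi_{\jm} - 2 d_{\bar{\im_{0}}} \delta_{\im_{0}, \jm}$, and is where I expect the principal bookkeeping to reside.
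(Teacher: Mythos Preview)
The paper does not prove this lemma at all; it is quoted verbatim as equation~(3.18) from \cite{FO21}. So there is no in-paper proof to compare against, and your attempt is supplying an argument where the authors simply invoke the reference.

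Your first stage --- the induction on $|p-\xi_{\im^{\star}}|/(2d_i)$ for the distinguished lift $\im^{\star}$ --- is correct and is exactly how the recursive definition of $\phi_{\cQ}$ unwinds. Your treatment of the independence claim $(\star)$ for Q-data satisfying~\eqref{cond:Qtau} is also correct: the identity $\tau_{\cQ}\varpi_{\sigma^{k}(i^{\circ})}=\varpi_{\sigma^{k+1}(i^{\circ})}$ for $0\le k<d_i-1$ follows precisely as you say.

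There is, however, a genuine gap in the source-reflection step. When the source $\im_0$ lies in the orbit $i$ under consideration, the transformation rules you cite --- $\tau_{s_{\im_0}\cQ}=s_{\im_0}\tau_{\cQ}s_{\im_0}$ and the shift of $\xi$ --- are not by themselves sufficient. Carrying the computation through in this case, one is reduced to showing
\[
(\tau_{\cQ}^{-d_i}-1)\,\gamma_{\im'}^{\cQ}
=\tau_{\cQ}^{(\xi_{\im_0}-\xi_{\im'})/2-d_i}(1-\tau_{\cQ}^{d_i})\,\alpha_{\im_0},
\]
and to close this you need the additional fact that $\gamma_{\im_0}^{\cQ}=\alpha_{\im_0}$ whenever $\im_0$ is a source of $\cQ$ (equivalently $\tau_{\cQ}^{d_i}\varpi_{\im_0}=s_{\im_0}\varpi_{\im_0}$). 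Once that is available the identity collapses immediately via $(\star)$ for $\cQ$, but it is a separate structural statement about generalized Coxeter elements, proved in \cite{FO21}, and does not fall out of the two transformation rules alone. You should either cite it explicitly or give its (short) proof, rather than absorbing it into ``tedious bookkeeping''. A secondary point: the connectivity of all Q-data to one satisfying~\eqref{cond:Qtau} under source-reflections is implicit in the \emph{uniqueness} assertion of Proposition~\ref{Prop:Cox}, but it would be cleaner to say so.
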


\begin{Thm}[cf.~{\cite[Theorem 4.8]{FO21}}] \label{Thm:tc} 
Let $\cQ = (\Delta, \sigma, \xi)$ be a Q-datum for $\fg$.
For $i,j \in I$ and any $u \in \Z$, we have
\begin{equation} \label{eq:tc}
\tc_{ij}(u) - \tc_{ij}(-u) = \begin{cases}
(\varpi_{\im}, \tau_{\cQ}^{(u + \xi_{\jm} - \xi_{\im} - d_{i})/2}\gamma^{\cQ}_{\jm}) 
& \text{if $u + \epsilon_{i} + \epsilon_{j} + d_{i} \in 2\Z$}, \\
0 & \text{otherwise},
\end{cases}
\end{equation}
where $\im, \jm \in \Delta_{0}$ are any vertices such that $\im \in i, \jm \in j$.
\end{Thm}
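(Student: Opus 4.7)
My approach is to follow the strategy of \cite[Theorem 4.8]{FO21}. The proof proceeds by combining a parity analysis with an induction that compares the recursion for the inverse quantum Cartan matrix with the reflection action of the generalized Coxeter element $\tau_\cQ$.

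\textbf{Parity case.} First I would handle the vanishing case. Our standing assumption (\ref{cond:xiparity}) forces $\xi_\im \equiv \epsilon_{\bar\im} \pmod 2$, so the integer $(u+\xi_\jm-\xi_\im-d_i)/2$ is well-defined if and only if $u + \epsilon_i + \epsilon_j + d_i \in 2\Z$. When this parity fails, I would show $\tc_{ij}(u) - \tc_{ij}(-u) = 0$ directly, by rewriting the identity $C(z)\tC(z) = I_n$ as the recursion
\[
(z^{d_i}+z^{-d_i})\tC_{ij}(z) = \delta_{ij} - \sum_{k \colon k \sim i}[c_{ik}]_z\,\tC_{kj}(z),
\]
and combining it with the base case Lemma~\ref{Lem:tc}(\ref{tc:d}). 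An easy induction then shows that $\tc_{ij}(u)\neq 0$ forces $u \equiv d_i + \epsilon_i + \epsilon_j \pmod 2$.

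\textbf{Main case.} For the integer-exponent case, set $f(u) \seq \tc_{ij}(u) - \tc_{ij}(-u)$ and let $g(u)$ denote the right-hand side of (\ref{eq:tc}); both are antisymmetric in $u$ and vanish at $u=0$. I would first verify that $g(u)$ is independent of the chosen representatives $\im \in i$ and $\jm \in j$, which follows from the factorization of $\tau_\cQ$ in Proposition~\ref{Prop:Cox}(1) together with condition~(2) of Definition~\ref{def:Qdata}. Then I would induct on $u$: the recursion above gives a three-term relation for $f(u \pm d_i)$, while by the defining rule of $\phi_\cQ$ combined with Lemma~\ref{Lem:bphi}, shifting the argument by $2d_i$ corresponds to applying $\tau_\cQ^{d_i}$ inside the pairing defining $g$. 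Matching the two sides reduces to the identity
\[
(\varpi_\im, (1 + \tau_\cQ^{d_i})\alpha) = \sum_{\im' \colon \im' \sim \im}(\varpi_{\im'}, \alpha) \qquad (\alpha \in \sR),
\]
which is a reformulation of the source-mutation rule $\tau_{s_\im\cQ} = s_\im\tau_\cQ s_\im$ of Proposition~\ref{Prop:Cox}(2) applied at a source of $\cQ$.

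\textbf{Main obstacle.} The principal difficulty is the non-simply-laced bookkeeping when $d_i \neq d_j$: one must use Lemma~\ref{Lem:tc}(\ref{tc:tp}) to convert between $\tc_{ij}$ and $\tc_{ji}$ on the left, and simultaneously track the length-$r$ $\sigma$-orbits inside the $\cQ$-coordinate on the right. To close the induction, I would exploit the common quasi-periodicity $f(u+rh^\vee) = -f(u)$ coming from Lemma~\ref{Lem:tc}(\ref{tc:pe}) and $g(u+rh^\vee) = -g(u)$ coming from Proposition~\ref{Prop:Coxord} together with $w_0(\alpha_\im) = -\alpha_{\im^*}$, reducing the verification to the finite range $0 \le u \le rh^\vee$. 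Alternatively, in the non-simply-laced case one may deduce the identity from the already-established simply-laced case for $\sg$ by folding, matching $\tau_\cQ$ with the composition of a Coxeter element of $\sg$ adapted to $\cQ$ and the diagram automorphism~$\sigma$.
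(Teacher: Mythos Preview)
Your approach differs substantially from the paper's. The paper does not redo the inductive argument of \cite[Theorem 4.8]{FO21}; it simply observes that for $u \ge 0$ the left-hand side reduces to $\tc_{ij}(u)$ (since $\tc_{ij}(-u)=0$ by Lemma~\ref{Lem:tc}(\ref{tc:d})), so the identity for $u\ge 0$ is literally the cited theorem. The only new content is the extension to $u<0$, which the paper handles by showing that both sides are $2rh^\vee$-periodic in $u$: the right-hand side because $\tau_\cQ^{rh^\vee}=1$ (Proposition~\ref{Prop:Coxord}), and the left-hand side by a short computation with Lemma~\ref{Lem:tc}(\ref{tc:pe}) and~(\ref{tc:sym}). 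Your sketch, by contrast, reproves the whole inductive core of \cite{FO21}, which is correct in spirit but far more work than required here.

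There is also a genuine technical slip in your periodicity step. The claimed quasi-periodicity $f(u+rh^\vee)=-f(u)$ and $g(u+rh^\vee)=-g(u)$ does not hold uniformly. On the left, Lemma~\ref{Lem:tc}(\ref{tc:pe}) gives $\tc_{ij}(u+rh^\vee)=-\tc_{ij^*}(u)$, so the sign-shift relates $f_{ij}$ to $f_{ij^*}$, not to itself; this matters in simply-laced types where $*\neq\id$. On the right, the shift $u\mapsto u+rh^\vee$ corresponds to multiplying by $\tau_\cQ^{rh^\vee/2}$, which need not even be defined (e.g.\ type $\mathrm{A}_n$ with $n$ even, where $h^\vee=n+1$ is odd), and Proposition~\ref{Prop:Coxord} only asserts $\tau_\cQ^{rh^\vee/2}=-1$ in the non-simply-laced case. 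The fix is exactly what the paper does: use the honest $2rh^\vee$-periodicity of both sides rather than an $rh^\vee$ sign-change.
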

\begin{proof}
Note that the RHS of (\ref{eq:tc}) does not depend on the choice of vertices $\im \in i, \jm \in j$
(cf.~\cite[Lemma 4.6]{FO21}). 
When $u \ge 0$, Lemma~\ref{Lem:tc}~(\ref{tc:d}) tells that
the LHS of (\ref{eq:tc}) is just $\tc_{ij}(u)$ and then the assertion has been proved in~\cite[Theorem 4.8]{FO21}.
Letting $c(u)$ denote the LHS of (\ref{eq:tc}), we see that $c(u)$ is $2rh^{\vee}$-periodic 
as a function in $u \in \Z$, namely we have $c(u+2rh^\vee) = c(u)$ for all $u \in \Z$.
Indeed, it is clear unless $-2rh^\vee < u < 0$ from Lemma~\ref{Lem:tc}~(\ref{tc:pe}).
Even if $-2rh^\vee < u < 0$, we have 
\[ c(u+2rh^\vee) = \tc_{ij}(u+2rh^\vee) = - \tc_{ij}(-u) = c(u)\]
thanks to Lemma~\ref{Lem:tc}~\eqref{tc:pe} \& \eqref{tc:sym}.
On the other hand, the RHS of \eqref{eq:tc} is also $2rh^{\vee}$-periodic as $\tau_{\cQ}^{rh^{\vee}} = 1$ by Proposition~\ref{Prop:Coxord}.
Therefore the desired equality \eqref{eq:tc} holds for all $u \in \Z$.
\end{proof}

\subsection{Twisted Auslander-Reiten quivers} 
\label{ssec:tAR}

Keep the setting from the last subsection.

\begin{Def}
Let $\cQ = (\Delta, \sigma, \xi)$ be a Q-datum for $\fg$.
\emph{The twisted Auslander-Reiten $($AR$)$ quiver of $\cQ$} is the full subquiver 
$\Gamma_{\cQ} \subset \hDs$ whose vertex set $(\Gamma_{\cQ})_{0}$ is given by
$$ 
(\Gamma_{\cQ})_{0} \seq \phi_{\cQ}^{-1}(\sR^{+} \times \{ 0 \}).
$$
\end{Def}

\begin{Prop}[{\cite[Section~3.7]{FO21}}] \label{Prop:tAR}
Let $\cQ = (\Delta, \sigma, \xi)$ be a Q-datum.
\begin{enumerate}
\item \label{tAR:conv} We have $(\Gamma_{\cQ})_{0} = \{ (\im, p) \in \hDs_{0} \mid \xi_{\im^{*}} - rh^{\vee} < p \le \xi_{\im} \}$.
\item For $(\im, p) \in \hDs_{0}$ with $\phi_{\cQ}(\im, p) = (\alpha, k)$, we have
$\phi_{\cQ}(\im^{*}, p \pm rh^{\vee}) = (\alpha, k \mp 1)$.
\end{enumerate}
\end{Prop}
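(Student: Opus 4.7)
The plan is to prove (2) first, and then deduce (1) as a corollary via a counting argument.

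For (2), by iterating the defining recursion of $\phi_{\cQ}$ and observing that the substitution $(\im,p) \mapsto (\im^*, p + rh^\vee)$ is compatible with that recursion (since $d_{\bar\im} = d_{\bar{\im^*}}$ because $*$ preserves $I$), it suffices to establish the base case
\[
\phi_{\cQ}(\im^*, \xi_{\im} + rh^\vee) = (\gamma_{\im}^{\cQ}, -1) \qquad \text{for every } \im \in \Delta_0.
\]
To prove this, I will combine two ingredients. First, Lemma~\ref{Lem:bphi}, applied with the representative $\im^* \in \bar{\im^*} = \bar{\im}$, yields
\[
\pi(\bphi_{\cQ}(\bar{\im^*}, \xi_{\im} + rh^\vee)) = \tau_{\cQ}^{(\xi_{\im^*} - \xi_{\im} - rh^\vee)/2}\gamma_{\im^*}^{\cQ},
\]
so it remains to establish the root-space identity
\[
\tau_{\cQ}^{(\xi_{\im^*} - \xi_{\im} - rh^\vee)/2}\gamma_{\im^*}^{\cQ} = -\gamma_{\im}^{\cQ}.
\]
In the non-simply-laced case, Proposition~\ref{Prop:Coxord} gives $\tau_\cQ^{rh^\vee/2} = -1$, reducing the identity to $\tau_{\cQ}^{(\xi_{\im^*} - \xi_{\im})/2}\gamma_{\im^*}^{\cQ} = \gamma_{\im}^{\cQ}$, which can be checked by case analysis depending on whether $* = \id$ or $* = \sigma$ on $\Delta_0$ (i.e., on the parity of $h^\vee$). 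In the simply-laced case, the identity follows from classical facts about Coxeter elements and the compatibility of the adapted reduced expression of $w_0$ with the height function, together with $w_0\alpha_\im = -\alpha_{\im^*}$. Second, we must verify that the $k$-coordinate of $\phi_\cQ(\im^*, \xi_\im + rh^\vee)$ is indeed $-1$ (rather than $+1$); this is settled by counting the parity of sign-flips along the sequence $\tau_\cQ^{d_{\bar{\im^*}} m}\gamma_{\im^*}^\cQ$ for $m = 0, 1, \ldots, rh^\vee/(2d_{\bar\im})$, which traverses half of a $\tau_\cQ^{d_{\bar\im}}$-orbit.

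Given (2), claim (1) follows by a cardinality argument. Using the standard count $n r h^\vee = 2\ell_0$ recorded in Section~\ref{ssec:notr}, the proposed set has cardinality
\[
\sum_{\im \in \Delta_0} \frac{rh^\vee}{2 d_{\bar\im}} = \frac{rh^\vee}{2} \sum_{i \in I} \frac{d_i}{d_i} = \frac{nrh^\vee}{2} = \ell_0 = |\sR^+|,
\]
matching $|(\Gamma_\cQ)_0|$. Since $\phi_\cQ(\im,\xi_\im) = (\gamma_\im^\cQ, 0)$ has $k = 0$ and lies in the proposed interval, while (2) forces $k = -1$ at the partner point $(\im^*, \xi_\im + rh^\vee)$ and $k = +1$ at $(\im^*, \xi_\im - rh^\vee)$, the monotone behavior of $k$ along each vertical line in $\hDs$ shows the proposed set maps into $\sR^+ \times \{0\}$ under $\phi_\cQ$. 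By the bijectivity of $\phi_\cQ$ and the matched cardinalities, the inclusion is an equality.

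The main obstacle will be the root-space identity $\tau_{\cQ}^{(\xi_{\im^*} - \xi_{\im} - rh^\vee)/2}\gamma_{\im^*}^{\cQ} = -\gamma_{\im}^{\cQ}$, particularly in the simply-laced case where no clean $\tau_\cQ^{h^\vee/2} = -1$ formula is available: one must argue directly from $w_0\alpha_\im = -\alpha_{\im^*}$ together with the specific structure of the Coxeter orbits dictated by the adapted height function $\xi$, and track the precise exponent $(\xi_{\im^*} - \xi_\im)/2$ as one ranges over $\im \in \Delta_0$.
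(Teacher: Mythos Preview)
The paper does not prove Proposition~\ref{Prop:tAR}; it is recorded as a result from \cite[Section~3.7]{FO21} without argument, so there is no proof in this paper against which to compare your proposal.

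Regarding the proposal itself: the overall architecture---establish (2) by reducing to a single base case via the recursion defining $\phi_\cQ$, then deduce (1) by a cardinality count---is reasonable, and both the reduction step and the count $\sum_{\im} rh^\vee/(2d_{\bar\im}) = \ell_0$ are correct. There are, however, two genuine gaps in the base case for (2). First, Lemma~\ref{Lem:bphi} only computes $\pi(\phi_\cQ(\im^*, \xi_\im + rh^\vee))$, which fixes the root and the \emph{parity} of the $k$-coordinate; your ``parity of sign-flips'' remark does not show that \emph{exactly one} sign-flip occurs along the relevant segment of the $\tau_\cQ^{d_{\bar\im}}$-orbit, so it does not rule out $k = -3, -5, \ldots$. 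Knowing that only one flip occurs is essentially the statement that the positive part of the orbit has length $rh^\vee/(2d_{\bar\im})$, which is what (1) asserts---so this part of your argument for (2) is circular with respect to (1). Second, in the simply-laced case the root-space identity you isolate is equivalent (since $1-\tau_\cQ$ is invertible on $\sP\otimes\Q$) to
\[
\tau_\cQ^{(\xi_{\im^*}-\xi_\im-h^\vee)/2}\varpi_{\im^*} = -\varpi_\im,
\]
which is a nontrivial structural fact about adapted Coxeter elements; it does not follow from $w_0\alpha_\im = -\alpha_{\im^*}$ alone, and proving it for a general height function $\xi$ requires essentially the same input (e.g., Auslander--Reiten theory or an induction on reflection functors) that one would use to prove the proposition directly.
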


The twisted Auslander-Reiten quiver $\Gamma_{\cQ}$ has a remarkable combinatorial meaning
in the theory of commutation classes in the Weyl group $\sW$, which will be explained later in subsection~\ref{ssec:cc}.

Now we put 
\begin{equation} \label{eq:hIQ}
\hI_{\cQ} \seq f\left((\Gamma_{\cQ})_{0}\right) = \{ (\bar{\im}, p) \in \hI \mid (\im, p) \in (\Gamma_{\cQ})_{0} \}
\end{equation}
The following is an immediate consequence of Proposition~\ref{Prop:tAR}.

\begin{Cor} \label{Cor:fAR} 
Let $\cQ=(\Delta, \sigma, \xi)$ be a Q-datum for $\fg$.
\begin{enumerate}
\item If the condition~{\rm (\ref{cond:Qtau})} is satisfied, 
we have 
$$\hI_{\cQ} = \{ (i, p) \in \hI \mid \xi_{(i^{*})^{\circ}} - rh^{\vee} < p \le \xi_{i^{\circ}} \}.$$
\item \label{fAR:shift}
For $(i, p) \in \hI$ with $\bphi_{\cQ}(i, p) = (\alpha, k)$, we have
$\bphi_{\cQ}(i^{*}, p \pm rh^{\vee}) = (\alpha, k \mp 1)$.
\end{enumerate}
\end{Cor}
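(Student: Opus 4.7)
The plan is to deduce both assertions directly from Proposition~\ref{Prop:tAR} by unwinding the folding map $f \colon \hDs_0 \to \hI$ and the definition $\hI_\cQ = f((\Gamma_\cQ)_0)$. Since $f$ is a bijection, each $(i,p) \in \hI$ has a unique preimage $(\im,p) \in \hDs_0$, namely the $\im \in i$ determined by $p \equiv \xi_\im \pmod{2d_i}$.

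Part~(2) is essentially formal. Applying Proposition~\ref{Prop:tAR}(2) to $f^{-1}(i,p) = (\im,p)$ gives $\phi_\cQ(\im^*, p \pm rh^\vee) = (\alpha, k \mp 1)$. Since the involution $*$ on $\Delta_0$ coincides with either $\id$ or $\sigma$ and hence commutes with $\sigma$, we have $\overline{\im^*} = (\bar\im)^* = i^*$, so that $f(\im^*, p \pm rh^\vee) = (i^*, p \pm rh^\vee)$ and the claim follows. A useful by-product is the congruence $\xi_{\im^*} \equiv \xi_\im + rh^\vee \pmod{2d_i}$, valid for every $\im \in \Delta_0$: it is forced by the fact that $(\im^*, p \pm rh^\vee)$ must lie in $\hDs_0$ for $\phi_\cQ$ to be defined there.

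For Part~(1), condition~(\ref{cond:Qtau}) ensures that the values of $\xi$ on the orbit $i$ are exactly the arithmetic progression $\xi_{i^\circ}, \xi_{i^\circ} - 2, \ldots, \xi_{i^\circ} - 2(d_i-1)$, with maximum $\xi_{i^\circ}$, and similarly for $i^*$. Writing $\xi_\im = \xi_{i^\circ} - 2k$ and $\xi_{\im^*} = \xi_{(i^*)^\circ} - 2k^*$ with $0 \le k, k^* < d_i$, a short arithmetic check using $p \equiv \xi_\im \pmod{2d_i}$ together with $\xi_{i^\circ} - \xi_\im = 2k < 2d_i$ shows that $p \le \xi_\im$ is equivalent to $p \le \xi_{i^\circ}$; analogously, invoking the congruence from Part~(2), the inequality $p > \xi_{\im^*} - rh^\vee$ is equivalent to $p > \xi_{(i^*)^\circ} - rh^\vee$. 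Combining these two equivalences with the characterization of $(\Gamma_\cQ)_0$ in Proposition~\ref{Prop:tAR}(1) yields the stated description of $\hI_\cQ$.

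The only mildly subtle point is having the congruence $\xi_{\im^*} \equiv \xi_\im + rh^\vee \pmod{2d_i}$ in hand before verifying the lower-bound equivalence in Part~(1); this is the reason for dispatching Part~(2) first. Apart from this, everything reduces to elementary inequalities.
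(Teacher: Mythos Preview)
Your proof is correct and follows exactly the approach the paper intends: the paper simply declares the corollary an ``immediate consequence of Proposition~\ref{Prop:tAR}'', and you have supplied the details of transporting both parts of that proposition through the folding bijection $f$. One minor wording point: the claim that $*$ on $\Delta_0$ ``coincides with either $\id$ or $\sigma$'' is stated in the paper only for $\sigma \neq \id$; in the simply-laced case $*$ can be nontrivial (e.g.\ type $\mathrm{A}_n$), but then $f$ is the identity and there is nothing to check. More directly, the paper already records $(\bar\im)^* = \overline{\im^*}$ at the end of Section~\ref{ssec:notr}, so you may simply cite that.
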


The following observation will be useful in the sequel. 

\begin{Lem} \label{Lem:si-so}
Given an integer $b \in \Z$, there is a Q-datum $\cQ$ for $\fg$ 
such that
$$
\hI_{\cQ} = \{ (i,p) \in \hI \mid b-rh^{\vee}< p \le b \}.
$$ 
\end{Lem}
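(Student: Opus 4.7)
The plan is to construct the required Q-datum $\cQ$ explicitly. Given $b \in \Z$, for each $i \in I$ let $\bar b_i$ denote the largest integer $p \le b$ with $p \equiv \epsilon_i \pmod 2$; explicitly, $\bar b_i = b$ when $b \equiv \epsilon_i \pmod 2$ and $\bar b_i = b - 1$ otherwise. I will exhibit a Q-datum $\cQ = (\Delta, \sigma, \xi)$ satisfying condition~(\ref{cond:Qtau}) with $\xi_{i^\circ} = \bar b_i$ for every $i \in I$. Once this is achieved, Corollary~\ref{Cor:fAR}(1) gives $\hI_\cQ = \{(i,p) \in \hI \mid \xi_{(i^*)^\circ} - rh^\vee < p \le \xi_{i^\circ}\}$, and a short parity check will identify this with the target set.

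To construct $\xi$, I choose orbit representatives $i^\circ \in i \subset \Delta_0$ for those $i$ with $d_i > 1$ and then set $\xi_{\sigma^k(i^\circ)} \seq \bar b_i - 2k$ for $0 \le k < d_i$, which by construction satisfies condition~(\ref{cond:Qtau}). The choice of representatives is made so as to fulfill the two height function conditions of Definition~\ref{def:Qdata}. For condition (1) applied to an adjacent pair $\im \sim \jm$ in $\Delta$ with $d_{\bar\im} = d_{\bar\jm} = 1$, both vertices are $\sigma$-fixed, and the parity relation $\epsilon_{\bar\im} \not\equiv \epsilon_{\bar\jm} \pmod 2$ (forced by $d_{\bar\im\bar\jm} = 1$) automatically gives $|\xi_\im - \xi_\jm| = 1$. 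For condition (1) with $d_{\bar\im} = d_{\bar\jm} = r > 1$, one arranges $|\xi_\im - \xi_\jm| = r$ by picking the $i^\circ$'s in a zig-zag pattern, alternating between the representatives within each orbit as one traverses the Dynkin diagram of $\fg$; the existence of such a pattern is checked directly for each of $\mathrm{B}_n, \mathrm{C}_n, \mathrm{F}_4$, and $\mathrm{G}_2$. Condition (2) at each mixed-$d$ adjacency is a local verification: the required uniqueness of the distinguished $\jm$ follows because only $\jm = j^\circ$ is compatible with the full downward staircase $\xi_{\sigma^k(\jm)} = \xi_\jm - 2k$ once the $\sigma$-cyclic wraparound is taken into account.

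With $\cQ$ in hand, I identify $\hI_\cQ$ with the target set via a parity analysis. The upper bound $p \le \xi_{i^\circ} = \bar b_i$ for $p \equiv \epsilon_i \pmod 2$ is plainly equivalent to $p \le b$. For the lower bound, a short case analysis on the parities of $b$, $\epsilon_i$, $\epsilon_{i^*}$, and $rh^\vee$ shows that $\xi_{(i^*)^\circ} - rh^\vee < p$ and $b - rh^\vee < p$ carve out the same subset of $p \equiv \epsilon_i \pmod 2$. The essential observation here is that the involution $i \mapsto i^*$ preserves the parity function $\epsilon$ whenever $rh^\vee$ is even (which holds in all types except $\mathrm{A}_n$ with $n$ even, as one reads off from Table~\ref{table:cl}); in the exceptional case, the odd $rh^\vee$ is precisely compensated by the parity-flipping nature of $*$.

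The principal obstacle is verifying that the zig-zag choice of orbit representatives yields a valid height function in every non-simply-laced type. This is the main combinatorial step; I expect to handle it either by induction along a spanning path of the Dynkin diagram of $\fg$, or by a direct type-by-type inspection (the simply-laced cases being trivial since every orbit is a singleton and the parity argument alone suffices).
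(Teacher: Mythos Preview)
Your proposal is correct and follows essentially the same approach as the paper: both reduce via Corollary~\ref{Cor:fAR}(1) to constructing a height function satisfying condition~(\ref{cond:Qtau}) with $\xi_{i^\circ} \in \{b, b-1\}$, and then verify this by explicit case-by-case construction. Your ``zig-zag'' description of the orbit representatives is exactly what the paper's explicit formulas encode, and your parity analysis for the identification of $\hI_\cQ$ with the target interval makes explicit a step the paper leaves to the reader.
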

\begin{proof}
In view of Corollary~\ref{Cor:fAR}, it suffices to show that there is 
a Q-datum $\cQ=(\Delta, \sigma, \xi)$ for $\fg$ satisfying the condition~(\ref{cond:Qtau})
and that $\xi_{i^{\circ}} \in \{ b, b-1 \}$ for all $i \in I$. 
We can choose $\xi$ explicitly case-by-case as follows. Here 
$\epsilon$ denotes a fixed parity function for $\fg$ and
we will use the labeling in Figure~\ref{Fig:unf} for 
non-simply-laced $\fg$.
\begin{itemize}
\item 
For $\fg$ simply-laced: for each $i \in \Delta_{0} = I$,
we set 
$$\xi_{i} \seq \begin{cases}
b-\epsilon_{i} & \text{if $b \in 2\Z$,} \\
b-1+\epsilon_{i} & \text{if $b \in 2\Z + 1$}.
\end{cases}
$$
This $\xi$ corresponds to a sink-source orientation of $\Delta$. 
\item For $\fg$ of type $\mathrm{B}_{n}$: if $b \in 2\Z$, we set
$$
\xi_{\im} \seq \begin{cases}
b -\epsilon_{\bar{\im}} & \text{if $\im - n \in (2\Z_{\le 0} -1) \sqcup (2\Z_{\ge 0})$}, \\
b-2 - \epsilon_{\bar{\im}} & \text{if $\im-n \in (2\Z_{< 0}) \sqcup (2\Z_{\ge 0}+1)$}.
\end{cases}
$$
If $b \in 2\Z +1$, we set
$$
\xi_{\im} \seq \begin{cases}
b -1+\epsilon_{\bar{\im}} & \text{if $\im - n \in (2\Z_{\le 0} -1) \sqcup (2\Z_{\ge 0})$}, \\
b-3 +\epsilon_{\bar{\im}} & \text{if $\im-n \in (2\Z_{< 0}) \sqcup (2\Z_{\ge 0}+1)$}.
\end{cases}
$$
\item For $\fg$ of type $\mathrm{C}_{n}$: if $b \in 2\Z$, we set
$$
\xi_{\im} \seq \begin{cases}
b -\epsilon_{\bar{\im}}& \text{if $1 \le \im \le n$}, \\
b-2 -\epsilon_{\bar{\im}} & \text{if $\im=n+1$}. 
\end{cases}
$$
If $b \in 2\Z +1$, we set
$$
\xi_{\im} \seq \begin{cases}
b -1+\epsilon_{\bar{\im}}& \text{if $1 \le \im \le n$}, \\
b-3 +\epsilon_{\bar{\im}} & \text{if $\im=n+1$}. 
\end{cases}
$$
\item For $\fg$ of type $\mathrm{F}_{4}$: we set 
$$(\xi_{1}, \ldots, \xi_{6}) \seq
\begin{cases}
 (b, b, b-2, b-1, b, b-2) & \text{if $b \equiv \epsilon_{1} \pmod 2$,} \\
(b-1, b-1, b-3, b, b-1, b-3) & \text{if $b \not \equiv \epsilon_{1} \pmod 2$.}
\end{cases}
$$ 
\item For $\fg$ of type $\mathrm{G}_{2}$: we set
\[(\xi_{1}, \ldots, \xi_{4}) \seq \begin{cases}
(b, b-1, b-2, b-4) & \text{if $b \equiv \epsilon_{1} \pmod 2$,} \\
(b-1, b, b-3, b-5) & \text{if $b \not \equiv \epsilon_{1} \pmod 2$}.
\end{cases} \qedhere
\]
\end{itemize}
\end{proof}

\begin{Ex}\label{Ex:tAR}
We exhibit explicit examples of the twisted Auslander-Reiten quivers in types $\mathrm{A}_5$ and $\mathrm{B}_3$ associated with some Q-data satisfying the condition in Lemma~\ref{Lem:si-so}.
Here the labeling of $\Delta_0 = \{1,2,3,4,5 \}$ is as in Figure~\ref{Fig:unf}.
Via the map $\bphi_{\cQ}$, the vertices of $\Gamma_{\cQ}$ are labeled by positive roots of type $\mathrm{A}_5$, for which we use the abbreviation $[k,l] \seq \alpha_k + \alpha_{k+1} + \cdots + \alpha_l$ and $[k] \seq \alpha_k$.     
\begin{enumerate}
\item \label{Ex:tAR:A} 
Let $\cQ=(\Delta, \id, \xi)$ be the Q-datum of type $\mathrm{A}_5$ given by $(\xi_1, \ldots, \xi_5) = (0,-1,0,-1,0)$. 
Then the quiver $\Gamma_{\cQ}$ is depicted as:
$$
\raisebox{3mm}{
\scalebox{0.60}{\xymatrix@!C=6mm@R=2mm{
(\im\setminus p)  &-5&-4 &-3& -2 &-1& 0   \\
1&&[4,5] \ar@{->}[dr] && [2,3] \ar@{->}[dr] && [1] \\
2& [4] \ar@{->}[dr]\ar@{->}[ur] && [2,5] \ar@{->}[dr]\ar@{->}[ur] && [1,3] \ar@{->}[dr]\ar@{->}[ur]& \\ 
3&& [2,4] \ar@{->}[dr] \ar@{->}[ur] && [1,5] \ar@{->}[dr] \ar@{->}[ur] && [3] \\
4& [2] \ar@{->}[ur]\ar@{->}[dr]&& [1,4] \ar@{->}[ur]\ar@{->}[dr] && [3,5] \ar@{->}[ur]\ar@{->}[dr]& \\
5 && [1,2] \ar@{->}[ur] && [3,4] \ar@{->}[ur] && [5] }}}
$$
\item \label{Ex:tAR:B}
Let $\cQ=(\Delta, \vee, \xi)$ be the Q-datum of type $\mathrm{B}_3$ given by $(\xi_1, \ldots, \xi_5) = (-2,0,-1,-2,0)$. 
Then the quiver $\Gamma_{\cQ}$ is depicted as:
$$
\raisebox{3mm}{
\scalebox{0.60}{\xymatrix@!C=4mm@R=0.5mm{
(\im\setminus p)  &-9&-8 &-7& -6 &-5& -4 & -3& -2 & -1& 0\\
1&&&& [3,5] \ar@{->}[ddrr]&&&&  [1,2] \ar@{->}[ddrr] &&\\ \\
2&&[3,4]\ar@{->}[dr] \ar@{->}[uurr]&&&& [1,5] \ar@{->}[dr]\ar@{->}[uurr] &&&&  [2]  \\
3& [3] \ar@{->}[ur] &&[4] \ar@{->}[dr] && [1,3] \ar@{->}[ur] && [4,5] \ar@{->}[dr] && [2,3] \ar@{->}[ur] & \\
4&&&&[1,4]\ar@{->}[ddrr]\ar@{->}[ur] &&&&  [2,5] \ar@{->}[ddrr]\ar@{->}[ur] && \\ \\
5&&[1] \ar@{->}[uurr] &&&& [2,4] \ar@{->}[uurr]&&&& [5] }}}
$$
\end{enumerate}
\end{Ex}

\section{Monoidal subcategories associated with Q-data}
\label{sec:C_Q}

In this section, we see some applications of the theory of Q-data to 
the representation theory of quantum loop algebras.
Although most of the topics have already appeared in \cite{FO21},
we shall give some detailed explanations for completeness.  
In Section~\ref{ssec:Qwt}, we briefly review the notion of $\cQ$-weights.
In Section~\ref{ssec:CX}, we define a subcategory $\Cc_{X} \subset \Cc_{\Z}$
supported (in a sense) on an arbitrary subset $X \subset \hI$ and give a sufficient condition for it to become a monoidal subcategory. 
We apply this result to the case $X = \hI_{\cQ}$ to define the subcategory $\Cc_{\cQ}$ in Section~\ref{ssec:CQ}. 
It generalizes the categories $\Cc_{Q}$ introduced in \cite{HL15} for simply-laced $\fg$ associated with a Dynkin quiver $Q$.
In Section~\ref{ssec:tr}, we introduce the truncation of $q$-characters and that of $(q,t)$-characters
with respect to a general height function $\xi$.   


\subsection{$\cQ$-weights}
\label{ssec:Qwt}

First, we recall the notion of $\cQ$-weights from \cite[Section~5]{FO21}.

\begin{Def} \label{Def:Qwt}
Let $\cQ$ be a Q-datum for $\fg$. For a monomial 
$m$ in $\cY$, we define its \emph{$\cQ$-weight} $\wt_{\cQ}(m)$ 
as the element of $\sQ$ given by
$$
\wt_{\cQ}(m) \seq \sum_{(i,p) \in \hI} u_{i,p}(m) \pi(\bphi_{\cQ}(i,p)).
$$
For a monomial $\tm \in \cY_{t}$, we set $\wt_{\cQ}(\tm) \seq \wt_{\cQ}(\evt(\tm))$.
\end{Def}

By the $\cQ$-weight, the quantum torus $\cY_{t}$ is equipped with a $\sQ$-grading. 

\begin{Prop}[cf.~{\cite[Section 5]{FO21}}] \label{Prop:Qwt}
For any $(i,p) \in \hI$, we have
$\wt_{\cQ}(A_{i,p+d_{i}}) = 0.$
In particular, all the elements 
$F_{t}(m), E_{t}(m)$ and $L_{t}(m)$ for $m \in \cM$ are homogeneous with
respect to the $\cQ$-weight so that
$$
\wt_{\cQ}(F_{t}(m)) = \wt_{\cQ}(E_{t}(m)) = \wt_{\cQ}(L_{t}(m)) = \wt_{\cQ}(m).
$$ 
Hence the quantum Grothendieck ring $\cK_{t}$ inherits the $\sQ$-grading.
\end{Prop}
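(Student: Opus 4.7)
The argument splits into three components: the vanishing of $\wt_\cQ$ on the elementary $A$-monomials (the crux), the $\cQ$-homogeneity of the three families $F_t(m), E_t(m), L_t(m)$, and the inheritance of the $\sQ$-grading on $\cK_t$.

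My first step is to establish $\wt_\cQ(A_{i,p+d_i})=0$ for every $(i,p)\in\hI$. Expanding
\[
A_{i,p+d_i} \;=\; Y_{i,p}\,Y_{i,p+2d_i}\,\prod_{j\sim i}\prod_{k=0}^{|c_{ji}|-1} Y_{j,\,p+d_i-|c_{ji}|+1+2k}^{-1}
\]
and applying $\wt_\cQ$ termwise, Lemma~\ref{Lem:bphi} rewrites each $\pi(\bphi_\cQ(j,s))$ in the form $\tau_\cQ^{(\xi_\jm-s)/2}(1-\tau_\cQ^{d_j})\varpi_\jm$ for a uniquely determined $\jm\in j$. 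After collecting the terms around the vertex $\im\in i$ determined by $p\equiv \xi_\im\pmod{2d_i}$ and factoring out a common shift $\tau_\cQ^{(\xi_\im-p-2d_i)/2}$, the vanishing reduces to a single identity in $\Z[\langle\tau_\cQ\rangle]\otimes\sP$ expressing $(1+\tau_\cQ^{d_i})(1-\tau_\cQ^{d_i})\varpi_\im$ as a weighted sum of $(1-\tau_\cQ^{d_j})\varpi_\jm$ over $\jm\sim\im$ with shift exponents prescribed by $\xi$ and the Cartan data. In the simply-laced case this specializes to $(1-\tau_\cQ^2)\varpi_\im = \sum_{\jm\colon \im\to\jm}(1-\tau_\cQ)\varpi_\jm + \tau_\cQ\sum_{\jm\colon \jm\to\im}(1-\tau_\cQ)\varpi_\jm$.

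The hard part is verifying this generalized Coxeter identity uniformly across all Q-data. My plan is to exploit the source-reflection recursion $\tau_{s_\im\cQ}=s_\im\tau_\cQ s_\im$ from Proposition~\ref{Prop:Cox}(2) together with the companion transformation $\xi\mapsto s_\im\xi$: both sides of the identity transform compatibly under this operation, so it suffices to check the identity at a single reference Q-datum in each $s_\im$-orbit. Restricting to Q-data satisfying \eqref{cond:Qtau}, for which $\tau_\cQ$ admits the explicit factorization $s_{i_1^\circ}\cdots s_{i_n^\circ}\sigma$ from Proposition~\ref{Prop:Cox}(1), the identity can be checked by a finite case analysis using $s_\jm\varpi_\im = \varpi_\im-\delta_{\im,\jm}\alpha_\im$ and the orbit structure of $\sigma$. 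A more conceptual alternative is to pair the desired identity with each $\varpi_\jm$ and translate the resulting scalar equations into the symmetry and periodicity relations for $\tc_{ij}$ recorded in Lemma~\ref{Lem:tc}, via Theorem~\ref{Thm:tc}.

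Once $\wt_\cQ(A_{i,p+d_i})=0$ is in hand, the remaining assertions follow routinely. By Theorem~\ref{Thm:Ft}, every monomial $\tm'$ appearing in $F_t(m)$ other than $m$ satisfies $\tm'<m$, i.e.~$m\cdot\evt(\tm')^{-1}$ is a product of $A$-variables, whence $\wt_\cQ(\tm')=\wt_\cQ(m)$. For $E_t(m)$, the product expression via fundamental $F_t(Y_{i,p})$'s yields $\wt_\cQ(E_t(m)) = \sum_{(i,p)} u_{i,p}(m)\,\pi(\bphi_\cQ(i,p)) = \wt_\cQ(m)$, since the prefactor has trivial $\cQ$-weight and each $F_t(Y_{i,p})$ is already homogeneous of weight $\pi(\bphi_\cQ(i,p))$ by the previous point. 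Inverting the unitriangular relation \eqref{eq:KL} expresses $L_t(m)$ as a $\Z[t^{\pm1/2}]$-combination of $E_t(m')$'s with $m'\le m$, all of common $\cQ$-weight $\wt_\cQ(m)$. Since $\{F_t(m)\}_{m\in\cM}$ is a $\Z[t^{\pm1/2}]$-basis of $\cK_t$ consisting of $\sQ$-homogeneous elements, $\cK_t$ inherits the $\sQ$-grading from $\cY_t$.
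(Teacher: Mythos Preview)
The paper does not give its own proof of this proposition; it simply cites \cite[Section~5]{FO21}. So the comparison is between your sketch and the argument in \cite{FO21}.

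Your treatment of the consequences is correct and complete: once $\wt_\cQ(A_{i,p+d_i})=0$ is known, the homogeneity of $F_t(m)$ follows from Theorem~\ref{Thm:Ft}, that of $E_t(m)$ from its product definition, and that of $L_t(m)$ from the unitriangular expansion in $E_t(m')$'s. Nothing to add there.

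For the key identity $\wt_\cQ(A_{i,p+d_i})=0$ itself, your write-up stops at the level of a plan. Of the two strategies you propose, the second one (pairing against each $\varpi_\ell$ and invoking Theorem~\ref{Thm:tc}) is both the cleaner route and essentially what \cite{FO21} does, and it executes in a few lines once you actually carry it out. Concretely: for $(j,s)\in\hI$ set $u=\xi_\ell+d_l-s$; then Theorem~\ref{Thm:tc} gives $(\varpi_\ell,\pi(\bphi_\cQ(j,s)))=\tc_{lj}(u)-\tc_{lj}(-u)$ (the parity condition is automatically satisfied since $s\equiv\epsilon_j\pmod 2$). Summing over the monomial expansion of $A_{i,p+d_i}$ and reading the relation $\sum_j \tC_{lj}(z)C_{ji}(z)=\delta_{li}$ coefficient-wise yields
\[
\tc_{li}(a+d_i)+\tc_{li}(a-d_i)-\sum_{j\sim i}\sum_{m=0}^{|c_{ji}|-1}\tc_{lj}(a-|c_{ji}|+1+2m)=\delta_{li}\delta_{a,0}
\]
for $a=\xi_\ell+d_l-p-d_i$; the same identity at $-a$ cancels it, giving $(\varpi_\ell,\wt_\cQ(A_{i,p+d_i}))=0$ for all $\ell$. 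This is the missing computation.

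Your first strategy (reduce via the source-reflection recursion $\tau_{s_\im\cQ}=s_\im\tau_\cQ s_\im$ to a reference Q-datum) is less direct: the claim that ``both sides of the identity transform compatibly'' under $\cQ\mapsto s_\im\cQ$ is not obvious, since $\wt_\cQ$ itself changes with $\cQ$ and you would need to show that vanishing for one Q-datum implies vanishing for its reflections. This can be made to work, but it is more labor than the $\tc_{ij}$ argument above.
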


The next fact is a consequence of Corollary~\ref{Cor:fAR}~(\ref{fAR:shift}). 
\begin{Lem} \label{Lem:wtD}
Let $\cQ$ be a Q-datum for $\fg$.
For any $k \in \Z$ and a homogeneous element $y \in \cY$ $($resp.~$\cY_{t})$ with respect to $\wt_\cQ$, we have
$$
\wt_\cQ( \fD^k(y)) = (-1)^k \wt_{\cQ}(y), \quad (\text{resp.~$\wt_\cQ( \fD_t^k(y)) = (-1)^k \wt_{\cQ}(y)$}). 
$$
\end{Lem}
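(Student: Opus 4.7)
The plan is to reduce the statement to the case of a single generator $Y_{i,p}$ (resp.\ $\tY_{i,p}$) and then invoke Corollary~\ref{Cor:fAR}~(\ref{fAR:shift}) directly.

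First, observe that since $\wt_\cQ$ is defined by Definition~\ref{Def:Qwt} as a $\Z$-linear combination of the multiplicities $u_{i,p}$, it is additive on products of monomials: $\wt_\cQ(m\cdot m') = \wt_\cQ(m) + \wt_\cQ(m')$ (and similarly in $\cY_t$, where $\wt_\cQ$ factors through $\evt$ and thus ignores powers of $t^{1/2}$). Since $\fD$ is an algebra automorphism of $\cY$ (and $\fD_t$ is an algebra automorphism of $\cY_t$), it suffices to check the identity on the generators $Y_{i,p}$ (resp.\ $\tY_{i,p}$) and in the case $k=1$, since the case of general $k$ follows by iteration: $\wt_\cQ(\fD^k(y)) = -\wt_\cQ(\fD^{k-1}(y)) = \cdots = (-1)^k \wt_\cQ(y)$.

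For the base case, let $(i,p) \in \hI$ and write $\bphi_\cQ(i,p) = (\alpha, l)$ with $\alpha \in \sR^+$ and $l \in \Z$. Then
\[
\pi(\bphi_\cQ(i,p)) = (-1)^l \alpha,
\]
so by Definition~\ref{Def:Qwt} we have $\wt_\cQ(Y_{i,p}) = (-1)^l \alpha$. On the other hand, by the very definition of $\fD$, $\fD(Y_{i,p}) = Y_{i^*, p+rh^\vee}$; and by Corollary~\ref{Cor:fAR}~(\ref{fAR:shift}), $\bphi_\cQ(i^*, p+rh^\vee) = (\alpha, l-1)$, so that
\[
\wt_\cQ(\fD(Y_{i,p})) = \pi(\bphi_\cQ(i^*, p+rh^\vee)) = (-1)^{l-1}\alpha = -\wt_\cQ(Y_{i,p}).
\]
The same calculation applies verbatim to $\fD_t(\tY_{i,p}) = \tY_{i^*, p+rh^\vee}$ since $\wt_\cQ$ in $\cY_t$ is defined through $\evt$.

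The only mild subtlety is to make sure $\wt_\cQ$ is well-defined on the Laurent extension (i.e., on $Y_{i,p}^{-1}$ and $\tY_{i,p}^{-1}$) and behaves additively under multiplication of any monomials (including negative powers); but this follows at once from $u_{i,p}(m \cdot m') = u_{i,p}(m) + u_{i,p}(m')$ and the fact that $\wt_\cQ(t^{\pm 1/2}) = 0$. No serious obstacle is expected: the lemma is essentially a restatement of Corollary~\ref{Cor:fAR}~(\ref{fAR:shift}) combined with the sign convention built into the map $\pi$.
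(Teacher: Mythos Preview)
Your proof is correct and takes essentially the same approach as the paper, which simply states that the lemma is a consequence of Corollary~\ref{Cor:fAR}~(\ref{fAR:shift}). You have spelled out exactly how that corollary, together with the definition of $\pi$ and the additivity of $\wt_\cQ$, yields the claim.
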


The following assertion relates the $\cQ$-weights with the integers $\Nn(i,p;j,s)$ (see (\ref{eq:defNn})) in the  defining  relation of the quantum torus $\cY_t$.
It is an immediate consequence of Lemma~\ref{Lem:bphi} and Theorem~\ref{Thm:tc}. 

\begin{Prop}[{\cite[Proposition 5.21]{FO21}}] \label{Prop:Nnwt}
Let $\cQ$ be a Q-datum for $\fg$.
For any two distinct elements $(i,p), (j,s) \in \hI$ with $p \le s$, we have
$$
\Nn(i,p;j,s)= (\wt_{\cQ}(Y_{i,p}), \wt_{\cQ}(Y_{j,s})).
$$
\end{Prop}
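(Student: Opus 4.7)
The plan is to reduce both sides to expressions in the integers $\tc_{ij}(u)$ and match them. First, by Lemma~\ref{Lem:bphi}, the weights can be written as $\wt_\cQ(Y_{i,p}) = \tau_\cQ^{(\xi_\im - p)/2}\gamma_\im^\cQ$ and $\wt_\cQ(Y_{j,s}) = \tau_\cQ^{(\xi_\jm - s)/2}\gamma_\jm^\cQ$ for any choices $\im \in i$, $\jm \in j$. Since the bilinear form is invariant under $\sW \rtimes \langle\sigma\rangle$ and $\tau_\cQ$ lies in this extended group, the pairing on the RHS reduces to
$$(\wt_\cQ(Y_{i,p}),\wt_\cQ(Y_{j,s})) = (\gamma_\im^\cQ, \tau_\cQ^{a}\gamma_\jm^\cQ), \qquad a \seq \tfrac{\xi_\jm - \xi_\im + p - s}{2},$$
which is a well-defined integer thanks to the parity condition \eqref{cond:xiparity} and the definition of $\hI$.

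Next, I would expand $\gamma_\im^\cQ = (1 - \tau_\cQ^{d_i})\varpi_\im$ to obtain
$$(\gamma_\im^\cQ, \tau_\cQ^{a}\gamma_\jm^\cQ) = (\varpi_\im, \tau_\cQ^{a}\gamma_\jm^\cQ) - (\varpi_\im, \tau_\cQ^{a - d_i}\gamma_\jm^\cQ),$$
using the $\tau_\cQ$-invariance again to move the $\tau_\cQ^{d_i}$ across the pairing. Now apply Theorem~\ref{Thm:tc} to each piece: taking $u = p - s + d_i$ in the first and $u = p - s - d_i$ in the second, the shift exponents $(u + \xi_\jm - \xi_\im - d_i)/2$ become exactly $a$ and $a - d_i$ respectively, and the parity condition $u + \epsilon_i + \epsilon_j + d_i \in 2\Z$ is automatic since $p \equiv \epsilon_i$ and $s \equiv \epsilon_j \pmod 2$. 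This yields
$$(\gamma_\im^\cQ, \tau_\cQ^{a}\gamma_\jm^\cQ) = \bigl[\tc_{ij}(p-s+d_i) - \tc_{ij}(s-p-d_i)\bigr] - \bigl[\tc_{ij}(p-s-d_i) - \tc_{ij}(s-p+d_i)\bigr].$$

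Finally, I would compare this with the definition \eqref{eq:defNn} of $\Nn(i,p;j,s)$. A direct inspection shows that the two expressions differ by $2\bigl[\tc_{ij}(p-s+d_i) - \tc_{ij}(p-s-d_i)\bigr]$, so it remains to verify that both of these quantities vanish. Under the hypothesis $p \le s$, we have $p - s - d_i \le -d_i < d_i$, and $p - s + d_i \le d_i$ with equality only when $p = s$. By Lemma~\ref{Lem:tc}~\eqref{tc:d}, $\tc_{ij}(u) = 0$ for $u < d_i$ and $\tc_{ij}(d_i) = \delta_{ij}$; the case of equality $p = s$ forces $i \ne j$ by the distinctness assumption, so $\tc_{ij}(d_i) = \delta_{ij} = 0$ as well. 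Hence both vanishings hold and the identity follows.

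The computation is essentially bookkeeping; the only delicate point is the boundary case $p = s$, which is why the distinctness hypothesis $(i,p) \ne (j,s)$ is needed precisely to eliminate the Kronecker contribution at $u = d_i$. Everything else is guided by the $\tau_\cQ$-invariance of the pairing and Theorem~\ref{Thm:tc}, which packages the symmetry of the inverse quantum Cartan matrix in exactly the form required.
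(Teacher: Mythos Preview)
Your proof is correct and follows precisely the route the paper indicates: it cites the result as ``an immediate consequence of Lemma~\ref{Lem:bphi} and Theorem~\ref{Thm:tc}'', and your argument simply unpacks that consequence, using Lemma~\ref{Lem:tc}~\eqref{tc:d} to handle the boundary vanishing. The treatment of the edge case $p=s$ via the distinctness hypothesis is exactly right.
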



\subsection{Subcategories supported on twisted-convex sets}
\label{ssec:CX}

Let $X \subset \hI$ be a subset. Consider the $\Z$-subalgebra $\cY_X \subset \cY$ 
(resp.~$\Z[t^{\pm 2/1}]$-subalgebra $\cY_{t, X} \subset \cY_t$) 
generated by the variables $Y_{i,p}^{\pm 1}$ (resp.~$\tY_{i,p}^{\pm 1}$) with $(i,p) \in X$, and
denote by $\cM_X$ the set of dominant monomials in $\cY_X$.  
Then we define the category $\Cc_{X}$  as the Serre subcategory of $\Cc_{\Z}$
satisfying $\Irr \Cc_{X} = \{ [L(m)] \mid m \in \cM_X \}$ and the $\Z[t^{\pm 1/2}]$-module  $\cK_{t, X}$ as the $\Z[t^{\pm 1/2}]$-submodule of $\cK_t$ given by
$$
\cK_{t, X} \seq \bigoplus_{m \in \cM_X} \Z[t^{\pm 1/2}] L_t(m).
$$ 
In what follows, we shall give a sufficient condition for the subcategory $\Cc_X$ to be monoidal (i.e.~closed under taking tensor products)
and at the same time for the $\Z[t^{\pm 1/2}]$-submodule $\cK_{t, X}$ to be a $\Z[t^{\pm 1/2}]$-subalgebra.   
Our discussion here is a straightforward generalization of \cite[Lemmas 3.26 and 7.9]{HO19}.

\begin{Def} \label{Def:conv}
Let $(\Delta, \sigma)$ be the unfolding of $\fg$.
\begin{enumerate}
\item A subset $\widetilde{X} \subset \hDs_0$ is said to be \emph{convex} if it satisfies the following condition:
for any oriented path $(x_1 \to x_2 \to \cdots \to x_l)$ in the repetition quiver $\hDs$,
we have $\{ x_{1}, x_{2}, \ldots , x_{l} \} \subset  \widetilde{X}$ if and only if $\{x_{1}, x_{l}\} \subset \widetilde{X}$.   
\item We say that a subset $X \subset \hI$ is \emph{twisted-convex} if $f^{-1}(X) \subset \hDs_0$ is convex.
\end{enumerate}
\end{Def}

\begin{Lem} \label{Lem:ideal}
If $X \subset \hI$ is twisted-convex, the set $\cM_X$ is an ideal 
of the partially ordered set $(\cM, \le)$, i.e.,~it is closed under taking  smaller elements in $\cM$ with respect to the 
Nakajima ordering $\le$.
\end{Lem}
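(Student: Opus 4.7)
The plan is to show $m' \in \cM_X$ directly, exploiting the convexity of $f^{-1}(X) \subset \hDs_0$ together with the explicit formula for the monomials $A_{i, p + d_i}$. By the definition of the Nakajima ordering, we can write $m^{-1} m' = \prod_{(i, p) \in \hI} A_{i, p + d_i}^{-\nu_{i, p}}$ for some non-negative integers $\nu_{i, p}$, and we set $\Xi \seq \{(i, p) \in \hI \mid \nu_{i, p} > 0\}$. The goal is then to show that $u_{j, s}(m') > 0$ implies $(j, s) \in X$ for every $(j, s) \in \hI$.

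The first step would be to establish a combinatorial key lemma: for every $(i, p) \in \hI$, each $(j, s) \in \hI$ appearing with strictly positive exponent in the expansion of $A_{i, p+d_i}^{-1}$ has its folding preimage $f^{-1}(j, s) \in \hDs_0$ lying on an oriented path in $\hDs$ from $f^{-1}(i, p)$ to $f^{-1}(i, p + 2 d_i)$. This can be verified by a direct case analysis: the positive factors of $A_{i, p+d_i}^{-1}$ are the $Y_{j, s}$ with $j \sim i$ and $s$ shifted from $p + d_i$ according to the value $c_{j, i} \in \{-1, -2, -3\}$, and in each case one exhibits an explicit oriented path of length $2 d_i$ in $\hDs$ whose intermediate vertices include $f^{-1}(j, s)$; the matching is forced by the arrow condition $s - p = d_{\bar{\im}\bar{\jm}}$. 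Granted this lemma, twisted-convexity of $X$ immediately implies that if both $(i, p)$ and $(i, p + 2d_i)$ belong to $X$, then every positive factor of $A_{i, p+d_i}^{-1}$ also belongs to $X$.

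The second step would be to establish the endpoint property: for every $(i_0, p_0) \in \Xi$, both $(i_0, p_0)$ and $(i_0, p_0 + 2d_{i_0})$ lie in $X$. Pick $(i_0, p_0) \in \Xi$ with $p_0$ minimal. The parity condition $\epsilon_i \equiv \epsilon_j + d_{ij} \pmod 2$ on the fixed parity function plays a crucial role here: whenever $c_{i_0, i'} \in \{-2, -3\}$ (forcing $d_{i_0} \ne d_{i'}$), the pair $(i_0, p_0)$ and $(i', p_0)$ have opposite $\epsilon$-parities, so only one of them can lie in $\hI$; this rules out any ``same level'' positive contribution at $(i_0, p_0)$ coming from lacing-mismatched factors $A_{i', p_0 + d_{i'}}^{-\nu_{i', p_0}}$. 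Together with the minimality of $p_0$ excluding the potential negative contribution from $A_{i_0, p_0 - d_{i_0}}^{-\nu_{i_0, p_0 - 2d_{i_0}}}$, the dominance $u_{i_0, p_0}(m') \ge 0$ forces $u_{i_0, p_0}(m) \ge \nu_{i_0, p_0} > 0$, giving $(i_0, p_0) \in X$. A symmetric maximality argument applied to the $\Xi$-element with $p + 2d_i$ maximal yields the upper endpoint, and the full endpoint property then follows by iterating these extremal arguments with induction on $|\Xi|$.

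Finally, combining the two steps: for any $(j_0, s_0) \in \hI$ with $u_{j_0, s_0}(m') > 0$, either $(j_0, s_0) \in \mathrm{supp}(m) \subset X$, or some $(i_0, p_0) \in \Xi$ contributes a positive factor of $A_{i_0, p_0+d_{i_0}}^{-1}$ at $(j_0, s_0)$; in the latter case, the endpoint property and the combinatorial lemma combined with twisted-convexity yield $(j_0, s_0) \in X$. The hard part will be the iteration in the second step: handling ``vertical chains'' $(i_0, p_0), (i_0, p_0 + 2 d_{i_0}), \ldots \in \Xi$ at the same $i_0$ is delicate, since removing a single $A^{-\nu}$-factor from the product does not in general preserve dominance, and one must therefore carefully track the dominance constraints at each intermediate level rather than reducing to a smaller case by a single step.
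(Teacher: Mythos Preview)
Your overall strategy --- Step~1 (the positive factors of $A_{i,p+d_i}^{-1}$ lie on an oriented path in $\hDs$ between $f^{-1}(i,p)$ and $f^{-1}(i,p+2d_i)$) and the final combination --- is sound. The gap is exactly where you flag it: the iteration in Step~2.

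Your extremal argument correctly gives $(i_0,p_0)\in X$ for the $\Xi$-element with minimal $p_0$ (and dually for the maximal upper endpoint). But the proposed induction on $|\Xi|$ does not go through. You cannot remove the extremal $A$-factor while preserving dominance, and an intermediate $(i_1,p_1)\in\Xi$ may well have $u_{i_1,p_1}(m)=0$: the negative contribution $-\nu_{i_1,p_1}$ can be compensated entirely by positive factors of some $A_{j,p'+d_j}^{-1}$ with $(j,p')\in\Xi$, $p'<p_1$. Knowing $(j,p')\in X$ for those lower elements does not by itself force $(i_1,p_1)\in X$, because you do not yet know the \emph{upper} endpoint $(j,p'+2d_j)\in X$ needed to invoke convexity via your Step~1. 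So the induction stalls.

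The paper replaces this global iteration by a local chain-tracing argument that directly supplies the missing upper endpoint. For each factor $A_{i,p}^{-1}$ of $M$, look at its negative variable $Y_{i,p+d_i}$. Dominance of $mM$ forces either (a)~$u_{i,p+d_i}(m)>0$, so $f^{-1}(i,p+d_i)\in f^{-1}(X)$, or (b)~some other factor $A_{i',p'}^{-1}$ of $M$ carries $Y_{i,p+d_i}$ positively; one checks in each Cartan case that then $p'+d_{i'}>p+d_i$ and that there is an oriented path in $\hDs$ from $f^{-1}(i,p+d_i)$ to $f^{-1}(i',p'+d_{i'})$. Iterating with $A_{i',p'}^{-1}$, the second coordinate strictly increases, so after finitely many steps one lands in case~(a) and obtains an oriented path from $f^{-1}(i,p+d_i)$ to a vertex of $f^{-1}(X)$. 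The symmetric argument on $Y_{i,p-d_i}$ gives a path from $f^{-1}(X)$ to $f^{-1}(i,p-d_i)$. Concatenating and invoking convexity yields $(i,p\pm d_i)\in X$ for every factor --- exactly your endpoint property, with no induction on $|\Xi|$. This chain-tracing is the missing idea in your Step~2.

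A minor remark: your parity discussion about ``same-level'' contributions from lacing-mismatched neighbours is unnecessary. A direct check shows that every positive contribution to $Y_{i_0,p_0}$ from an $A_{i',s}^{-1}$ with $i'\sim i_0$ comes from a $\Xi$-element whose second coordinate is strictly less than $p_0$ in all cases $c_{i_0,i'}\in\{-1,-2,-3\}$; minimality alone suffices for the base step.
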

\begin{proof}
Suppose $m \in \cM_{X}$ and $mM \in \cM$ with $M$ being a monomial in the variables $A^{-1}_{i,p}$'s. 
Let $A^{-1}_{i,p}$ be a factor of $M$ and let $(\im, p \pm d_i) \seq f^{-1}(i, p \pm d_i)$.
Since the product $mM$ is dominant, 
we have either (a) $Y_{i,p+d_i}$ is a factor of $m$, or (b) there is a factor $A_{i',p'}^{-1}$ of $M$ which contains $Y_{i,p+d_i}$ as a factor. 
In the case (a), we have $(\im, p+d_i) \in f^{-1}(X)$ by the assumption $m \in \cM_{X}$.
In the case (b), we find that there is an oriented path in the repetition quiver $\hDs$ from $(\im,p+d_i)$ to $(\im',p'+d_{i'}) \seq f^{-1}(i',p'+d_{i'})$. Then, we repeat the same argument replacing $A^{-1}_{i,p}$ with $A^{-1}_{i',p'}$, to find either $(\im',p'+d_{i'}) \in f^{-1}(X)$, or there is yet another factor $A_{i'',p''}^{-1}$ of $M$ with an oriented path in $\hDs$ from $(\im',p'+d_{i'})$ to $f^{-1}(i'',p''+d_{i''})$.
Repeating it finitely many times, we finally find an oriented path (possibly of length zero) in $\hDs$ from $(\im,p+d_i)$ to a vertex in $f^{-1}(X)$.
In the similar way, we find an oriented path (possibly of length zero) from a vertex in $f^{-1}(X)$ to $(\im, p-d_i)$. 
As a result, we obtain an oriented path in $\hDs$ whose end points belong to $f^{-1}(X)$ and which factors through both $(\im,p+d_i)$ and $(\im, p-d_i)$. 
Then, by the convexity of $f^{-1}(X)$, we have $A^{-1}_{i,p} \in \cY_{X}$.
Therefore, $M \in \cY_{X}$ and hence $mM \in \cY_{X} \cap \cM = \cM_{X}$ hold.
\end{proof}

\begin{Prop} \label{Prop:CX}
Assume that $X \subset \hI$ is twisted-convex.  Then
\begin{enumerate}
\item \label{CX:monoidal} the category $\Cc_X$ is a monoidal subcategory of $\Cc_\Z$, and
\item the $\Z[t^{\pm 1/2}]$-module $\cK_{t, X}$ is a $\Z[t^{\pm 1/2}]$-subalgebra of $\cK_{t}$ $($hence we call it the quantum Grothendieck ring of $\Cc_X)$. 
\end{enumerate} 
Moreover, under the same assumption, we have 
$$
\cK_{t, X} = \bigoplus_{m \in \cM_X} \Z[t^{\pm 1/2}] E_t(m) = \bigoplus_{m \in \cM_X} \Z[t^{\pm 1/2}] F_t(m)
$$
and hence $\evt(\cK_{t, X}) = \chi_q(K(\Cc_{X}))$.
\end{Prop}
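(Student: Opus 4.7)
The plan is to deduce every assertion from Lemma~\ref{Lem:ideal} together with the unitriangular relations among the three bases $\{F_t(m)\}$, $\{E_t(m)\}$, $\{L_t(m)\}$ of $\cK_t$.

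For part \emph{(1)}, let $V = L(m_1)$ and $W = L(m_2)$ belong to $\Cc_X$, so $m_1, m_2 \in \cM_X$. Every simple constituent of $V \otimes W$ is of the form $L(m'')$ for a dominant monomial $m''$ appearing in $\chi_q(V)\chi_q(W)$. By Theorem~\ref{Thm:FM} such $m''$ satisfies $m'' \le m_1 m_2$, and since $m_1 m_2 \in \cM_X$, Lemma~\ref{Lem:ideal} forces $m'' \in \cM_X$, hence $L(m'') \in \Cc_X$.

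For the basis equality in part \emph{(2)}, Theorem~\ref{Thm:qtch} yields the expansion $L_t(m) = E_t(m) + \sum_{m' < m} Q_{m,m'}(t)\, E_t(m')$. Inverting this relation, and using Theorem~\ref{Thm:Ft} together with the computation $E_t(m) = \ul{m} + (\text{terms strictly below } \ul m)$ that follows directly from the defining formula of $E_t(m)$ and the identity $F_t(\tY_{i,p}) = \tY_{i,p} + \text{(lower)}$, one produces an analogous expansion $E_t(m) = F_t(m) + \sum_{m' < m} d_{m,m'}(t)\, F_t(m')$. In both expansions Lemma~\ref{Lem:ideal} ensures that each index $m'$ with nonzero coefficient lies in $\cM_X$ whenever $m \in \cM_X$, so the three $\Z[t^{\pm 1/2}]$-spans
\begin{equation*}
\bigoplus_{m \in \cM_X}\Z[t^{\pm 1/2}]\, L_t(m), \quad
\bigoplus_{m \in \cM_X}\Z[t^{\pm 1/2}]\, E_t(m), \quad
\bigoplus_{m \in \cM_X}\Z[t^{\pm 1/2}]\, F_t(m)
\end{equation*}
all coincide with $\cK_{t,X}$.

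For the subalgebra property, fix $m, m' \in \cM_X$ and write $E_t(m)\, E_t(m') = \sum_{m'' \in \cM} c_{m,m'}^{m''}(t)\, E_t(m'')$ in the $E_t$-basis of $\cK_t$. Any $\cY_t$-monomial $\ul{\mu_1}\,\ul{\mu_2}$ occurring in this product satisfies $\mu_1 \le m$ and $\mu_2 \le m'$ by construction of $E_t$, hence $\mu_1 \mu_2 \le mm'$. If $m''$ is maximal among indices with $c_{m,m'}^{m''}(t) \neq 0$, then $\ul{m''}$ must appear as a dominant monomial on the left-hand side, so $m'' \le mm' \in \cM_X$ and therefore $m'' \in \cM_X$ by Lemma~\ref{Lem:ideal}. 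Subtracting $c_{m,m'}^{m''}(t)\, E_t(m'')$ and iterating downward in the Nakajima order keeps one inside $\cM_X$, so $c_{m,m'}^{m''}(t) = 0$ whenever $m'' \notin \cM_X$. Finally, applying $\evt$ to $\cK_{t,X} = \bigoplus_{m \in \cM_X}\Z[t^{\pm 1/2}]\, E_t(m)$ and using $\evt(E_t(m)) = \chi_q(M(m))$, together with the fact that $M(m) \in \Cc_X$ for $m \in \cM_X$ (by the argument of \emph{(1)}) and that $\{[M(m)]\}_{m \in \cM_X}$ is a $\Z$-basis of $K(\Cc_X)$ (via the unitriangular base change with $\{[L(m)]\}_{m \in \cM_X}$), yields $\evt(\cK_{t,X}) = \chi_q(K(\Cc_X))$. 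The main technical subtlety I anticipate is the careful bookkeeping of the powers of $t^{1/2}$ arising from commutations of $\ul m$ and $\ul{m'}$ in $\cY_t$; these appear only as scalar factors in $\Z[t^{\pm 1/2}]$ and therefore affect coefficients but not which indices $m''$ occur, so the combinatorial core of the argument truly is Lemma~\ref{Lem:ideal}.
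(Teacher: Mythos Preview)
Your proof is correct and follows essentially the same approach as the paper: both arguments reduce everything to Lemma~\ref{Lem:ideal} combined with the unitriangular transitions among the $F_t$, $E_t$, and $L_t$ bases (Theorem~\ref{Thm:Ft}, \eqref{eq:EtFt}, and (S2) in Theorem~\ref{Thm:qtch}). The only minor redundancy is that you rederive the expansion $E_t(m) = F_t(m) + \sum_{m' < m} d_{m,m'}(t) F_t(m')$, which the paper already records as \eqref{eq:EtFt}.
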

\begin{proof}
Let $m_1, m_2 \in \cM_X$. Suppose that a simple module $L(m)$ with $m \in \cM$ appears as a composition factor of the tensor product module $L(m_1)\otimes L(m_2)$.
Then the dominant monomial $m$ occurs in $\chi_q(L(m_1)) \chi_q(L(m_2))$. It implies that $m \le m_1 m_2$ by Theorem~\ref{Thm:FM}.
From the twisted convexity of $X$, we have  $m \in \cM_X$ by Lemma~\ref{Lem:ideal}. This proves that $L(m_1) \otimes L(m_2) \in \Cc_{X}$
and hence the assertion (\ref{CX:monoidal}). The other assertions can be proved in a similar way
using Lemma~\ref{Lem:ideal} together with Theorem~\ref{Thm:Ft}, the property (\ref{eq:EtFt}) and (S2) in Theorem~\ref{Thm:qtch}.
\end{proof}


\subsection{The category $\Cc_{\cQ}$}
\label{ssec:CQ}

Let $\cQ = (\Delta, \sigma, \xi)$ be a Q-datum for $\fg$.
Recall the finite subset $\hI_{\cQ} = f((\Gamma_\cQ)_0) \subset \hI$ defined in Section~\ref{ssec:tAR}.
Since the set $(\Gamma_\cQ)_0$ is convex by Proposition~\ref{Prop:tAR}~(\ref{tAR:conv}),
the set $\hI_{\cQ}$ is twisted-convex in the sense of Definition~\ref{Def:conv}.  
Let us apply the construction in the previous subsection to the case $X=\hI_\cQ$.
We use the notation $(\cY_\cQ, \cY_{t, \cQ}, \cM_\cQ, \Cc_\cQ, \cK_{t, \cQ})$ for simplicity
to denote $(\cY_X, \cY_{t, X}, \cM_X, \Cc_X, \cK_{t, X})$ in this case.  
Thanks to Proposition~\ref{Prop:CX}, 
$\Cc_{\cQ} \subset \Cc_{\Z}$ is a monoidal subcategory and 
$\cK_{t, \cQ} \subset \cK_t$ is a $\Z[t^{\pm 1/2}]$-subalgebra.
For clarity, we shall state the definition of the category $\Cc_\cQ$ in a direct way.
\begin{Def}
We define the category $\Cc_{\cQ}$ as the Serre subcategory of the category $\Cc_{\Z}$
satisfying $\Irr \Cc_{\cQ} = \{ [L(m)] \mid m \in \cM_\cQ \}$, where 
$\cM_\cQ$ is the set of dominant monomials in the variables $Y_{i,p}$ with $(i,p) \in \hI_\cQ$.
\end{Def}

\begin{Ex}
If we consider the Q-datum $\cQ$ of type $\mathrm{A}_5$ in Example~\ref{Ex:tAR}~\eqref{Ex:tAR:A},
the corresponding category $\Cc_{\cQ}$ is monoidally generated by the fundamental modules associated with the variables
$\{ Y_{i,-2k} \mid i \in\{ 1,3,5\}, 0\le k \le 2\}\cup\{Y_{i,-2k-1} \mid i \in \{ 2,4\}, 0\le k \le 2\}$.
Similarly, if we consider the Q-datum $\cQ$ of type $\mathrm{B}_3$ in Example~\ref{Ex:tAR}~\eqref{Ex:tAR:B},
the corresponding category $\Cc_{\cQ}$ is monoidally generated by the fundamental modules associated with the variables
$\{ Y_{i,-2k} \mid i \in \{1,2\}, 0\le k \le 4 \} \cup \{Y_{3,-2k-1} \mid 0\le k \le 4 \}$.
\end{Ex}

For a positive root $\alpha \in \sR^{+}$, we set
$$
L^{\cQ}(\alpha) \seq L(Y_{i,p}), \quad L^\cQ_t(\alpha) \seq L_t(Y_{i,p}) \quad \text{where $(i,p) = \bphi_{\cQ}^{-1}(\alpha, 0)$}. 
$$
Note that we have
$\wt_{\cQ}(L_t^\cQ(\alpha)) = \alpha$ by Proposition~\ref{Prop:Qwt} and
$\evt(L^\cQ_t(\alpha)) = \chi_q(L^\cQ(\alpha))$ by Theorem~\ref{Thm:qtfund}~(\ref{qtfund:evt}).
The set $\{ L^{\cQ}(\alpha) \mid \alpha \in \sR^+\}$ gives
a complete and irredundant collection of fundamental modules in $\Cc_{\cQ}$ up to isomorphisms.

\begin{Lem} \label{Lem:CQ}
The followings hold.
\begin{enumerate}
\item \label{CQ:K} The $\Z$-algebra $K(\Cc_{\cQ})$ is isomorphic to the polynomial ring 
$\Z[x_{\alpha} \mid \alpha \in \sR^{+}]$
in $\ell_{0}$-many variables
via $[L^{\cQ}(\alpha)] \mapsto x_{\alpha}$.
\item \label{CQ:Kt}
The set $\{ L^{\cQ}_{t}(\alpha)\mid \alpha \in \sR^{+} \}$ generates the $\Z[t^{\pm 1/2}]$-algebra $\cK_{t, \cQ}$.
\item \label{CQ:fund}
For each $\alpha \in \sR^{+}$ and $k \in \Z$, we have
$$
\cD^{k}(L^{\cQ}(\alpha)) \cong L(Y_{i,p}), \quad \text{where $(i,p) = \bphi_{\cQ}^{-1}(\alpha, k)$}.
$$
Thus, the set $\{ \cD^{k}(L^{\cQ}(\alpha)) \mid (\alpha, k) \in \sR^{+} \times \Z\}$
gives a complete and irredundant collection of fundamental modules in the category $\Cc_{\Z}$ up to isomorphisms.
\end{enumerate}
\end{Lem}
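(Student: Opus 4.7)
The three parts all reduce to combining the bijection $\bphi_\cQ \colon \hI \to \sR^+ \times \Z$ from Section~\ref{ssec:repet} with structural results already obtained for $\Cc_\Z$ and its quantum Grothendieck ring. The key inputs I plan to invoke are Proposition~\ref{Prop:KCZ} for the polynomial presentation of $K(\Cc_\Z)$, Proposition~\ref{Prop:CX} applied to $X = \hI_\cQ$ (which gives both the monoidality of $\Cc_\cQ$ and the $E_t$-basis of $\cK_{t,\cQ}$), Theorem~\ref{Thm:fD} for the compatibility of the duality functor with the monomial transformation $\fD$, and Corollary~\ref{Cor:fAR}~(\ref{fAR:shift}) for the interaction of $\bphi_\cQ$ with $(i,p) \mapsto (i^*, p \pm rh^\vee)$.

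For part (1), the restriction of $\bphi_\cQ$ to $\hI_\cQ$ is a bijection onto $\sR^+ \times \{0\}$, so $\alpha \mapsto [L^\cQ(\alpha)]$ gives $\ell_0$ classes of fundamental modules in $\Cc_\cQ$. I would observe that every $m \in \cM_\cQ$ is a product of $Y_{i,p}$ with $(i,p) \in \hI_\cQ$, so the class of the standard module $M(m)$ equals $\prod [L(Y_{i,p})]^{u_{i,p}(m)}$, a monomial in the $[L^\cQ(\alpha)]$. Combining this with the unitriangular expansion $[M(m)] = [L(m)] + \sum_{m' < m} (\cdots)[L(m')]$ coming from the specialization at $t=1$ of \eqref{eq:EtFt} and Lemma~\ref{Lem:ideal} (which confines the summands to $\cM_\cQ$) expresses every $[L(m)]$ with $m \in \cM_\cQ$ as a polynomial in the $[L^\cQ(\alpha)]$. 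Algebraic independence of the generators is inherited by restriction from Proposition~\ref{Prop:KCZ}, giving the polynomial ring identification.

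For part (2), I would repeat the argument at the $t$-level. Proposition~\ref{Prop:CX} yields the decomposition $\cK_{t,\cQ} = \bigoplus_{m \in \cM_\cQ} \Z[t^{\pm 1/2}] E_t(m)$. The explicit formula for $E_t(m)$ given just before Theorem~\ref{Thm:qtch} expresses it, up to a power of $t^{1/2}$, as an ordered product of $F_t(Y_{i,p}) = L_t(Y_{i,p}) = L_t^\cQ(\alpha)$ with $(i,p) \in \hI_\cQ$. Hence each $E_t(m)$ lies in the $\Z[t^{\pm 1/2}]$-subalgebra generated by $\{L_t^\cQ(\alpha)\}_{\alpha \in \sR^+}$; the reverse inclusion being obvious, this yields the claim.

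For part (3), set $(i_0, p_0) \seq \bphi_\cQ^{-1}(\alpha, 0)$. Theorem~\ref{Thm:fD} together with the defining formula $\fD(Y_{i,p}) = Y_{i^*, p + rh^\vee}$ immediately gives
\[
\cD^k(L^\cQ(\alpha)) \cong L(\fD^k(Y_{i_0,p_0})) = L(Y_{i_0^{*^k}, p_0 + k\, rh^\vee}).
\]
An iterated application of Corollary~\ref{Cor:fAR}~(\ref{fAR:shift}) then identifies $(i_0^{*^k}, p_0 + k\, rh^\vee)$ with $\bphi_\cQ^{-1}(\alpha, k)$, which produces the desired isomorphism. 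The last assertion, that the family $\{\cD^k(L^\cQ(\alpha))\}_{(\alpha,k) \in \sR^+ \times \Z}$ exhausts the fundamental modules of $\Cc_\Z$ without repetition, follows at once from the bijectivity of $\bphi_\cQ \colon \hI \to \sR^+ \times \Z$ together with the fact that the fundamentals of $\Cc_\Z$ are in natural bijection with $\hI$. I do not anticipate a serious obstacle anywhere; the only point demanding genuine care is the sign and parity bookkeeping through Corollary~\ref{Cor:fAR} in part (3), but this is purely a matter of tracking conventions.
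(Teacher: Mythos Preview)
Your proposal is correct and follows essentially the same approach as the paper: the paper's proof is a terse one-liner citing Proposition~\ref{Prop:KCZ} for part~(\ref{CQ:K}), Proposition~\ref{Prop:CX} for part~(\ref{CQ:Kt}), and Theorem~\ref{Thm:fD} together with Corollary~\ref{Cor:fAR}~(\ref{fAR:shift}) for part~(\ref{CQ:fund}), and your argument is precisely the natural fleshing-out of these citations.
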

\begin{proof}
(\ref{CQ:K}) and (\ref{CQ:Kt}) are immediate from Proposition~\ref{Prop:KCZ} and Proposition~\ref{Prop:CX} respectively.
(\ref{CQ:fund}) follows from Theorem~\ref{Thm:fD} and Corollary~\ref{Cor:fAR}~(\ref{fAR:shift}).
\end{proof}
\subsection{Truncation}
\label{ssec:tr}

Let $\cQ = (\Delta, \sigma, \xi)$ be a Q-datum for $\fg$.
In this subsection, we apply the construction given in Section~\ref{ssec:CX} above 
to the case $X = \hI_{\le \xi} \subset \hI$, where
$$
\hI_{\le \xi} \seq f\left(\{ (\im, p) \in \hDs_{0} \mid p \le \xi_{\im}  \}\right).
$$
We use the notation $(\cY_{\le \xi}, \cY_{t, \le \xi}, \cM_{\le \xi}, \Cc_{\le \xi}, \cK_{t, \le \xi})$
to denote $(\cY_X, \cY_{t, X}, \cM_X, \Cc_X, \cK_{t, X})$ in this case.  
Since the set $\hI_{\le \xi}$ is obviously twisted-convex, 
$\Cc_{\le \xi} \subset \Cc_{\Z}$ is a monoidal subcategory and 
$\cK_{t, \le \xi} \subset \cK_t$ is a $\Z[t^{\pm 1/2}]$-subalgebra by Proposition~\ref{Prop:CX}.

For a (non-commutative) Laurent polynomial $y \in \cY_{t}$, we denote by $y_{\le \xi}$
the element of $\cY_{t, \le \xi}$ obtained from $y$ by discarding all the monomials containing  
the factors $\tY_{\bar{\im},p}^{\pm 1}$ with $(i,p) \in \hI \setminus \hI_{\le \xi}$.
This assignment $y \mapsto y_{\le \xi}$ defines 
a $\Z[t^{\pm 1/2}]$-linear map $(\cdot)_{\le \xi} \colon \cY_{t} \to \cY_{t, \le \xi}$,
which is not an algebra homomorphism. 
By an abuse of notation, we denote the similar $\Z$-linear homomorphism defined for $\cY$ 
by the same symbol $(\cdot)_{\le \xi} \colon \cY \to \cY_{\le \xi}$. Then we have
$\evt \circ (\cdot)_{\le \xi} = (\cdot)_{\le \xi} \circ \evt$.
\begin{Prop} \label{Prop:tr}
The above linear maps $(\cdot)_{\le \xi}$ restrict to the injective homomorphisms of algebras
$$
\text{
$\cK_{t, \le \xi} \hookrightarrow \cY_{t, \le \xi}$ \quad and \quad
$\chi_{q}\left(K(\Cc_{\le \xi})\right) \hookrightarrow \cY_{\le \xi}$.} 
$$ 
\end{Prop}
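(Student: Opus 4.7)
The plan is to prove both assertions for the quantum case $\cK_{t,\le \xi} \hookrightarrow \cY_{t,\le \xi}$; the classical statement then follows by applying $\evt$, which commutes with the truncation $(\cdot)_{\le \xi}$. By Proposition~\ref{Prop:CX}, $\cK_{t,\le \xi}$ has the $\Z[t^{\pm 1/2}]$-basis $\{F_t(m) \mid m \in \cM_{\le \xi}\}$. Injectivity is then immediate: for $m \in \cM_{\le \xi}$ the commutative monomial $\ul{m}$ lies in $\cY_{t,\le \xi}$ and is the unique dominant monomial of $F_t(m)$ by Theorem~\ref{Thm:Ft}, so the coefficient of $\ul{m}$ in $\bigl(\sum_{m'} c_{m'} F_t(m')\bigr)_{\le \xi}$ equals $c_m$, and vanishing of the truncation forces all $c_m = 0$.

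For the algebra homomorphism property, one must show $(y_1 y_2)_{\le \xi} = (y_1)_{\le \xi}(y_2)_{\le \xi}$ for all $y_1, y_2 \in \cK_{t,\le \xi}$. Decomposing each $y_i$ into a ``good'' part $(y_i)_{\le \xi}$ and a ``bad'' part (the sum of monomials carrying at least one factor $\tY_{j,s}^{\pm 1}$ with $(j,s) \notin \hI_{\le \xi}$), the additivity of the exponents $u_{j,s}$ immediately shows that the product of a good monomial with a bad one is always bad. Hence multiplicativity reduces to the claim that the product $\tm_1 \tm_2$ of any two bad monomials appearing respectively in $F_t(m_1)$ and $F_t(m_2)$ with $m_1, m_2 \in \cM_{\le \xi}$ remains bad.

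The heart of the argument is the following key lemma: for $m \in \cM_{\le \xi}$ and any monomial $\tm^\prime$ appearing in $F_t(m)$, if
$$p^*(\tm^\prime) \seq \max\{p \mid (i,p) \notin \hI_{\le \xi},\ u_{i,p}(\tm^\prime) \neq 0 \text{ for some } i\}$$
is finite, then $u_{i,p^*}(\tm^\prime) < 0$ for every $i$ with $(i,p^*) \notin \hI_{\le \xi}$ and $u_{i,p^*}(\tm^\prime) \neq 0$. Granting this, a case analysis on whether the top bad heights $p^*(\tm_1)$ and $p^*(\tm_2)$ coincide shows that $\tm_1 \tm_2$ still carries a strictly negative exponent at its own top bad position, hence remains bad. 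To prove the lemma I would write $\tm^\prime = \ul{m} \cdot \prod_k \ul{A_{j_k, s_k + d_{j_k}}^{-1}}$ up to a power of $t^{1/2}$ and examine contributions to $u_{i,p^*}(\tm^\prime)$ from each $A^{-1}$-factor. The topmost factor of every $A_{j, s+d_j}^{-1}$ is $\ul{Y_{j, s+2d_j}^{-1}}$, a \emph{negative} power, so any $A^{-1}$-factor contributing a positive $Y_{i,p^*}^{+1}$ must have its own topmost factor at a strictly higher bad height $q > p^*$; by maximality of $p^*$, this contribution at height $q$ would have to be canceled by yet higher $A^{-1}$-factors, and iterating would produce an infinite ascending chain, contradicting the finiteness of the defining product. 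Thus only $A^{-1}$-factors contributing $-1$ to $u_{i,p^*}$ can occur, and the claim follows.

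The main obstacle will be carrying out this top-height analysis uniformly across all non-simply-laced types, where the explicit form of $A_{j,s+d_j}^{-1}$ depends on $d_{ij} = \min(d_i, d_j)$ for $j \sim i$ and the heights of its factors reflect the combinatorics of the repetition quiver $\hDs$ together with the parity conditions~(\ref{cond:parity}) and~(\ref{cond:xiparity}); the argument generalizes \cite[Lemmas~3.26 and 7.9]{HO19} to the present setting.
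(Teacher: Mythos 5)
Your proof is correct and is, in substance, the argument the paper itself only gestures at: the paper's proof of this proposition is a two-line pointer to \cite[Section 6.2]{HL10}, \cite[Proposition 3.10]{HL16} and \cite[Lemma 6.17]{FM01}, and what you write out is the detailed version of exactly that argument (and of \cite[Lemmas 3.26, 7.9]{HO19}, which you cite). Two points to tighten. First, the step you defer as the ``main obstacle'' does close, and it is the only place where the height-function axioms enter: if $A_{j,a}^{-1}$ contributes the positive factor $Y_{i,p^*}^{+1}$, then $a+d_j>p^*$ in each of the cases $c_{ij}\in\{-1,-2,-3\}$, and the position $(j,a+d_j)$ carrying the unique topmost (negative) variable of $A_{j,a}^{-1}$ again lies outside $\hI_{\le\xi}$, by the adjacency conditions in Definition~\ref{def:Qdata} combined with the congruences defining $\hDs_0$; your maximality argument (take the maximal bad height receiving any contribution at all from the $A^{-1}$-factors and observe it can receive only negative ones) then yields the key lemma. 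Second, injectivity of the classical truncation does not formally follow ``by applying $\evt$'' --- injectivity of a map on $\cK_{t,\le\xi}$ implies nothing about its restriction to the image $\evt(\cK_{t,\le\xi})$ --- but your coefficient-extraction argument applies verbatim to the basis $\{\evt(F_t(m))\}_{m\in\cM_{\le\xi}}$ of $\chi_q(K(\Cc_{\le\xi}))$, each member of which has $m$ as its unique dominant monomial.
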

\begin{proof}
A proof can be similar to the discussions in \cite[Section~6.2]{HL10} and \cite[Proposition 3.10]{HL16},
which treated special cases.   
In principle, the assertions follow from Theorem~\ref{Thm:Ft} and \cite[Lemma 6.17]{FM01} respectively, with the help of Lemma~\ref{Lem:ideal}.
\end{proof}
We refer to the elements $\chi_q(V)_{\le \xi}$ with $V \in \Cc_{\le \xi}$ as \emph{the truncated $q$-characters},
and refer to the elements $L_t(m)_{\le \xi}$ and $E_t(m)_{\le \xi}$ with $m \in \cM_{\le \xi}$ as \emph{the truncated $(q,t)$-characters}.

\begin{Cor} \label{Cor:tr}
The above linear maps $(\cdot)_{\le \xi}$ restrict to the injective homomorphisms of algebras
$$
\text{
$\cK_{t, \cQ} \hookrightarrow \cY_{t, \cQ}$ \quad and \quad
$\chi_{q}\left(K(\Cc_{\cQ})\right) \hookrightarrow \cY_{\cQ}$.} 
$$ 
\end{Cor}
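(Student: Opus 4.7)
The plan is to deduce this corollary from Proposition~\ref{Prop:tr} by exploiting the inclusion $\hI_\cQ \subset \hI_{\le \xi}$, which follows at once from the description of $(\Gamma_\cQ)_0$ in Proposition~\ref{Prop:tAR}~(\ref{tAR:conv}). This inclusion yields chains of subalgebras $\cY_\cQ \subset \cY_{\le \xi}$, $\cY_{t,\cQ} \subset \cY_{t,\le \xi}$, $\chi_q(K(\Cc_\cQ)) \subset \chi_q(K(\Cc_{\le \xi}))$, and $\cK_{t,\cQ} \subset \cK_{t,\le \xi}$. Applying Proposition~\ref{Prop:tr} and restricting the resulting injective algebra homomorphisms $\cK_{t,\le \xi} \hookrightarrow \cY_{t,\le \xi}$ and $\chi_q(K(\Cc_{\le \xi})) \hookrightarrow \cY_{\le \xi}$ along the two latter inclusions, I obtain injective algebra homomorphisms from $\cK_{t,\cQ}$ and $\chi_q(K(\Cc_\cQ))$ into $\cY_{t,\le \xi}$ and $\cY_{\le \xi}$, respectively.

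The remaining task is to show that these maps in fact land in the smaller algebras $\cY_{t,\cQ}$ and $\cY_\cQ$. Here I will invoke Lemma~\ref{Lem:CQ}, according to which $\cK_{t,\cQ}$ and $\chi_q(K(\Cc_\cQ))$ are generated over their coefficient rings by the fundamental classes $\{L_t(Y_{i,p})\}_{(i,p) \in \hI_\cQ}$ and $\{\chi_q(L(Y_{i,p}))\}_{(i,p) \in \hI_\cQ}$, respectively. Since the maps under consideration are already algebra homomorphisms and $\cY_{t,\cQ}$, $\cY_\cQ$ are closed under multiplication, it will suffice to verify, for each $(i,p) \in \hI_\cQ$, that every variable $\tY_{j,s}^{\pm 1}$ occurring in the truncated $(q,t)$-character $L_t(Y_{i,p})_{\le \xi}$ (and hence in its $t=1$ specialization $\chi_q(L(Y_{i,p}))_{\le \xi}$) in fact satisfies $(j,s) \in \hI_\cQ$.

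This final verification is the main obstacle. By Theorem~\ref{Thm:qtfund}~(\ref{qtfund:monomials}), the variables appearing in $L_t(Y_{i,p})$ satisfy $p \le s \le p + rh^\vee$; the truncation imposes the additional constraint $(j,s) \in \hI_{\le \xi}$, and the lower bound $s \ge p > \xi_{\im^*} - rh^\vee$ coming from $(i,p) \in \hI_\cQ$ handles most of the forbidden region $\hI_{\le \xi} \setminus \hI_\cQ$. The delicate case is when $\xi_{\jm^*}$ exceeds $\xi_{\im^*}$, where the bare inequality $s \ge p$ no longer rules out $s \le \xi_{\jm^*} - rh^\vee$. Overcoming this requires the sharper structural fact that in $\chi_q(L(Y_{i,p}))$ the parameters $s$ at which a given node $j$ can occur are tightly constrained by the twisted AR-quiver $\Gamma_\cQ$ through the action of the generalized Coxeter element $\tau_\cQ$; this is where the bulk of the work lies, and it can be organized either via a Frenkel-Mukhin-style inductive analysis of the monomials or by unwinding the $\cQ$-weight grading established in Section~\ref{ssec:Qwt} together with the precise shape of $\bphi_\cQ$.
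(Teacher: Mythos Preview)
Your overall strategy matches the paper's: reduce to Proposition~\ref{Prop:tr}, then use Lemma~\ref{Lem:CQ} to reduce the question of landing inside $\cY_{t,\cQ}$ (resp.\ $\cY_\cQ$) to a statement about fundamental $(q,t)$-characters (resp.\ $q$-characters). Injectivity and multiplicativity are indeed inherited for free.

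Where you leave a gap is exactly the ``delicate case'' you flag. The paper resolves it cleanly, and the key observation you are missing is this: define a second height function $\xi'$ on $(\Delta,\sigma)$ by $\xi'_{\im} \seq \xi_{\im^*} - rh^\vee$. Then by Proposition~\ref{Prop:tAR}~(\ref{tAR:conv}) one has $\hI_{\cQ} = \hI_{\le \xi} \setminus \hI_{\le \xi'}$, so the task becomes showing that no variable $Y_{j,s}$ with $(j,s) \in \hI_{\le \xi'}$ occurs in $\chi_q(L(Y_{i,p}))$ for $(i,p) \in \hI_\cQ$. Now the complement $\hI \setminus \hI_{\le \xi'}$ is itself twisted-convex, and the Frenkel--Mukhin algorithm (which computes fundamental $q$-characters, cf.\ \cite[Theorem~5.9]{FM01}) only produces monomials obtained from $Y_{i,p}$ by successively multiplying by $A_{j,s}^{-1}$'s; the same convexity mechanism as in Lemma~\ref{Lem:ideal} then forces all resulting variables to stay inside $\hI \setminus \hI_{\le \xi'}$. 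The $(q,t)$-case follows from the $q$-case via Theorem~\ref{Thm:qtfund}. This replaces your case analysis on $\xi_{\jm^*}$ versus $\xi_{\im^*}$ by a single uniform convexity argument.

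By contrast, your second proposed route through the $\cQ$-weight grading does not obviously work: Proposition~\ref{Prop:Qwt} says precisely that $\wt_\cQ(A_{j,s+d_j}) = 0$, so every monomial in $L_t(Y_{i,p})$ has the same $\cQ$-weight, and the grading alone cannot detect whether a given variable lies in $\hI_\cQ$ or in $\hI_{\le \xi'}$.
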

\begin{proof}
We have to prove $(\cK_{t, \cQ})_{\le \xi} \subset \cY_{t, \cQ}$
and $\chi_{q}(K(\Cc_{\cQ}))_{\le \xi} \subset \cY_{\cQ}$.
In view of Lemma~\ref{Lem:CQ} (\ref{CQ:K}) (resp.~(\ref{CQ:Kt})), 
it is enough to show that the $q$-characters (resp.~the $(q, t)$-characters) of fundamental modules 
do not involve the variables $Y_{i,p}$ (resp.~$\tY_{i,p}$) with $(i,p) \in \hI_{\le \xi^\prime}$,
where $\xi^\prime$ is another height function on $(\Delta, \sigma)$ 
given by $\xi^\prime_\im \seq \xi_{\im^*} - rh^\vee$ for $\im \in \Delta_0$.
For $q$-characters, we can check this property from the twisted-convexity of the set $\hI \setminus \hI_{\le \xi^\prime}$ and 
the fact that the $q$-characters of fundamental modules 
are computed by Frenkel-Mukhin's algorithm, see \cite[Theorem 5.9]{FM01}. 
For $(q,t)$-characters, it follows from the case of $q$-characters via Theorem~\ref{Thm:qtfund}. 
\end{proof}

\begin{Rem} \label{Rem:trS}
By the injectivity of $(\cdot)_{\le \xi}$, its compatibility with $\ol{(\cdot)}$
and Theorem~\ref{Thm:qtch},
the truncation of a $(q,t)$-character $L_{t}(m)_{\le \xi}$ for each $m \in \cM_\cQ$ can be characterized in the image of truncation
$(\cK_{t, \cQ})_{\le \xi} \subset \cY_t$ 
by the following properties:
\begin{itemize}
\item[(tr-S1)] $\ol{L_{t}(m)_{\le \xi}} = L_{t}(m)_{\le \xi}$, and
\item[(tr-S2)] $L_{t}(m)_{\le \xi} - E_{t}(m)_{\le \xi} \in \sum_{m^{\prime} \in \cM_{\cQ}} t \Z[t] E_{t}(m^{\prime})_{\le \xi}$.
\end{itemize}
\end{Rem}

When the height function $\xi$ satisfies the condition in Lemma~\ref{Lem:si-so} for a given integer $b \in \Z$, 
we obtain
$$
\hI_{\le \xi} = \hI_{\le b} \seq \hI \cap (I \times \Z_{\le b}).
$$
Thus we will denote the corresponding truncation by $(\cdot)_{\le b}$ instead of $(\cdot)_{\le \xi}$ for this case.
This notation is compatible with the notation in \cite{HL10} where truncated $q$-characters for certain categories $\mathcal{C}_\ell$ of simply-laced quantum loop algebras are introduced.


\section{Quantum $T$-systems}
\label{sec:qTsys}

In this section, we review the classical $T$-system identities satisfied by the $q$-characters of Kirillov-Reshetikhin modules 
and we establish a quantum analog of the $T$-system identities for all simple Lie algebra $\fg$
in a unified way. As far as the authors know, these relations are new for non simply-laced types (except in type $\mathrm{B}$). 
We also prove some commutation relations among the Kirillov-Reshetikhin modules involved in the $T$-systems and some $t$-commutation relations among related $(q,t)$-characters, which will be useful for us in the following.

\subsection{Kirillov-Reshetikhin modules}
\label{ssec:KR}

For each $(i,p) \in \hI$ and $k \in \Z_{\ge 0}$, set
$$
W^{(i)}_{k, p} \seq L(m^{(i)}_{k, p}), \qquad m^{(i)}_{k,p} \seq \prod_{s=1}^{k}Y_{i, p + 2(s-1)d_{i}}. 
$$
These simple modules are called \emph{Kirillov-Reshetikhin modules} (or \emph{KR modules} for short).
Note that $W^{(i)}_{0,p}$ is the trivial $U_{q}(L\fg)$-module
and $W^{(i)}_{1,p} = L(Y_{i,p})$ is a fundamental module.

\begin{Thm}[cf.~{\cite[Theorem 3.2]{Nakajima03II}, \cite[Theorem 4.1, Lemma 4.4]{Hernandez06}}] \label{Thm:KR}
For $(i,p) \in \hI$ and $k \in \Z_{\ge 0}$, the $q$-character of the corresponding KR module $W_{k,p}^{(i)}$ is of the form
\begin{equation}\label{eq:form}
\chi_{q}(W_{k,p}^{(i)}) = m_{k,p}^{(i)}(1 + A_{i,p + (2k - 1)d_{i}}^{-1} \chi)
\end{equation}
where $\chi$ is a polynomial in the variables $A_{j,s+d_{j}}^{-1}$, $(j,s) \in \hI$.
\end{Thm}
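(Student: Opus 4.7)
The plan is to verify the formula \eqref{eq:form} by analyzing the possible ``first descents'' from the highest $\ell$-weight monomial $m_{k,p}^{(i)}$. By Theorem~\ref{Thm:FM}, every monomial of $\chi_{q}(W_{k,p}^{(i)}) - m_{k,p}^{(i)}$ has the form $m_{k,p}^{(i)} \cdot \prod_{\nu} A_{j_\nu, s_\nu + d_{j_\nu}}^{-1}$ with at least one factor; hence the theorem amounts to showing that $A_{i, p + (2k-1)d_i}^{-1}$ divides every such product. This in turn reduces to identifying the unique $(j, s)$ for which $m_{k,p}^{(i)} A_{j, s+d_j}^{-1}$ is actually an $\ell$-weight of $W_{k,p}^{(i)}$.

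The core step is to invoke the fact that the Frenkel--Mukhin algorithm computes $\chi_{q}(W_{k,p}^{(i)})$ correctly for all $\fg$ and all KR modules; this was established by the second named author in~\cite{Hernandez06} via $T$-systems (and in simply-laced types via quiver varieties in~\cite{Nakajima03II}). Since the algorithm proceeds by iteratively restricting to the $U_q(\widehat{\mathfrak{sl}}_2)_j$-subalgebras and reading off the $\mathfrak{sl}_2$-$\ell$-weight structure at each dominant monomial, the analysis of the first descents from $m_{k,p}^{(i)}$ reduces to such an $\mathfrak{sl}_2$-analysis at the highest $\ell$-weight. The key observation is that $m_{k,p}^{(i)}$ contains no $Y_{j, \cdot}$ factor for $j \ne i$, so the $U_q(\widehat{\mathfrak{sl}}_2)_j$-substructure at this monomial is trivial for every $j \ne i$, forbidding any $A_{j, s+d_j}^{-1}$-factor with $j \ne i$ as a first descent. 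For $j = i$, the corresponding $\mathfrak{sl}_2$-Drinfel'd polynomial is a length-$k$ string of consecutive spectral parameters, so the $\ell$-weights produced at this step are exactly
\[
m_{k,p}^{(i)} \cdot \prod_{\nu=1}^{l} A_{i, p + (2(k-\nu)+1)d_i}^{-1} \qquad (0 \le l \le k),
\]
from the standard description of $q$-characters of $\mathfrak{sl}_2$-KR modules; all of them except the highest one ($l=0$) are divisible by $A_{i, p+(2k-1)d_i}^{-1}$.

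Combining these two observations, any monomial appearing in $\chi_{q}(W_{k,p}^{(i)}) - m_{k,p}^{(i)}$ is reached in the Frenkel--Mukhin algorithm through a path whose very first step multiplies by $A_{i, p+(2k-1)d_i}^{-1}$, and consequently each such monomial is divisible by $m_{k,p}^{(i)} A_{i, p+(2k-1)d_i}^{-1}$. This yields the claimed factorization \eqref{eq:form} with $\chi$ a polynomial in the variables $A_{j, s+d_j}^{-1}$. The main obstacle is to justify that the Frenkel--Mukhin algorithm outputs the true $q$-character of a KR module in all (including non-simply-laced) types, since this property is known to fail for general simple modules; but exactly this input is provided by the $T$-system-based argument of~\cite{Hernandez06}.
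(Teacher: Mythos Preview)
The paper does not supply its own proof of this statement; it is quoted directly as a known result from \cite{Nakajima03II} and \cite{Hernandez06}. Your argument is a correct reconstruction of the reasoning behind \cite[Lemma~4.4]{Hernandez06}: once one knows that KR modules are special (so that the Frenkel--Mukhin algorithm returns the true $q$-character), the only nontrivial $\mathfrak{sl}_2$-expansion of the highest monomial $m_{k,p}^{(i)}$ is in direction $i$, producing precisely the string of monomials you list, each already carrying the factor $A_{i,p+(2k-1)d_i}^{-1}$; since all further steps of the algorithm only append additional $A^{-1}$-factors, the factorization \eqref{eq:form} follows. The one substantive input---which you correctly isolate---is the applicability of the Frenkel--Mukhin algorithm to KR modules, equivalently their specialness, and this is indeed the content of \cite[Theorem~4.1]{Hernandez06}, established there by an induction through the $T$-systems rather than by the algorithm itself.
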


In this paper, it will be useful to introduce yet another notation for KR modules. 
For two integers $a,b \in \Z$, we set 
\begin{align*} 
[a,b] &\seq \{ x \in \Z \mid a \le x \le b \}, & (a,b] &\seq \{ x \in \Z \mid a < x \le b \}, \\
[a,b) &\seq \{ x \in \Z \mid a \le x < b \}, & (a,b) &\seq \{ x \in \Z \mid a < x < b \}.
\end{align*}
We refer to subsets of these forms as \emph{intervals}. 
Let $(\Delta, \sigma)$ be the unfolding of $\fg$ and 
$\hDs$ its repetitive quiver (see Section~\ref{sec:Qdata}).
For an interval $[a,b] \subset \Z$ and a vertex $\im \in \Delta_{0}$, we define
a dominant monomial $m^{(\im)}[a,b] \in \cM$ by
$$
m^{(\im)}[a,b] \seq \prod_{(\im, p) \in \hDs_{0}, p \in [a,b]}Y_{\bar{\im}, p}. 
$$  
Note that the corresponding simple module $W^{(\im)}[a,b] \seq L(m^{(\im)}[a,b])$ 
is a KR module. 
Similarly, we define the dominant monomials $m^{(\im)}(a,b], m^{(\im)}[a,b), m^{(\im)}(a,b)$
and denote the corresponding KR modules by 
$W^{(\im)}(a,b], W^{(\im)}[a,b), W^{(\im)}(a,b)$ respectively.

In this notation, Theorem~\ref{Thm:KR} implies the following.
\begin{Lem} \label{Lem:KR}
For any $\im \in \Delta_0$ and integers $a \le b$, we have
 \begin{align*} 
\chi_q\left(W^{(\im)}[a,b]\right)_ {\le b}&= m^{(\im)}[a,b], & \chi_q\left(W^{(\im)}(a,b]\right)_ {\le b}&= m^{(\im)}(a,b], \allowdisplaybreaks\\
\chi_q\left(W^{(\im)}[a,b)\right)_ {\le b-1}&= m^{(\im)}[a,b), & \chi_q\left(W^{(\im)}(a,b)\right)_ {\le b-1}&= m^{(\im)}(a,b),
\end{align*}
where $(\cdot)_{\le b}$ denotes the truncation defined in {\rm Section~\ref{ssec:tr}}. 
\end{Lem}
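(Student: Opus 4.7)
The plan is to treat the four identities uniformly by combining the explicit form of $q$-characters of KR modules from Theorem~\ref{Thm:KR} with a combinatorial control on products of the $A^{-1}$-variables. I will spell out the argument for the first identity; the remaining three require only a straightforward adjustment of endpoints and truncation level. Writing $i = \bar{\im}$, let $p_{\min}$ and $p_{\max}$ denote the smallest and largest $p \in [a,b]$ with $(\im, p) \in \hDs_{0}$, and let $k$ be their cardinality (the case $k=0$ being trivial). Then $m^{(\im)}[a,b] = m^{(i)}_{k, p_{\min}}$ with $p_{\max} = p_{\min} + 2(k-1)d_{i} \le b$, and by the maximality of $p_{\max}$ within the interval we have $p_{\max} + 2d_{i} > b$.

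By Theorem~\ref{Thm:KR}, $\chi_{q}(W^{(\im)}[a,b]) = m^{(\im)}[a,b]\bigl(1 + A^{-1}_{i, p_{\max}+d_{i}}\chi\bigr)$ for some polynomial $\chi$ in the variables $A^{-1}_{j, s+d_{j}}$. The leading monomial is supported on indices $\le p_{\max} \le b$ and so is preserved by the truncation $(\cdot)_{\le b}$. For any other monomial $m'$, the quotient $M \seq m'/m^{(\im)}[a,b]$ is a product $\prod_{r} A^{-1}_{j_{r}, s_{r}+d_{j_{r}}}$ containing $A^{-1}_{i, p_{\max}+d_{i}}$ as one of its factors; so it suffices to show that any such $M$ retains a nonzero $Y_{k,l}^{\pm 1}$ at some $l > b$. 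Setting $\mathrm{Max}(M) \seq \max_{r}(s_{r} + 2d_{j_{r}})$, we have $\mathrm{Max}(M) \ge p_{\max} + 2d_{i} > b$, and the key claim is that the negative contribution $Y^{-1}_{j_{r^{*}}, \mathrm{Max}(M)}$ from a factor attaining the maximum cannot be cancelled inside $M$. Indeed, since $A^{-1}_{j, \cdot}$ has no positive $Y_{j}$-contribution, any cancelling positive $Y_{j_{r^{*}}, \mathrm{Max}(M)}$ must come from some $A^{-1}_{k, l + d_{k}}$ with $k \sim j_{r^{*}}$; inspecting the explicit form of $A_{k, l+d_{k}}$ recalled in Section~\ref{ssec:qch} in each lacing pattern, one checks that such a contribution is invariably accompanied by the negative factor $Y^{-1}_{k, l + 2d_{k}}$ with $l + 2d_{k} > \mathrm{Max}(M)$, contradicting the definition of $\mathrm{Max}(M)$. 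Hence $Y^{-1}_{j_{r^{*}}, \mathrm{Max}(M)}$ persists in $M$, and therefore in $m' = m^{(\im)}[a,b] \cdot M$, so $m'$ is killed by truncation.

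The remaining three identities follow from exactly the same argument applied to the appropriate sub-interval, with $p_{\max}$ replaced by the largest admissible $p$ there; the key inequality $p_{\max} + 2d_{i} > b$ (respectively $> b-1$ when the upper bound is strict) continues to hold by maximality. The main obstacle is the case-by-case verification at the end of the preceding paragraph, which must separately treat the simply-laced case and the pairs $(d_{k}, d_{j_{r^{*}}}) \in \{(r,r), (1,r), (r,1)\}$ with $r \in \{2,3\}$; in each case, however, the verification reduces to a one-line comparison from the explicit formulas for $A_{k, l+d_{k}}$ in Section~\ref{ssec:qch}.
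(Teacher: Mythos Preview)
Your proof is correct and takes the same route the paper has in mind: the paper simply asserts that the lemma follows from Theorem~\ref{Thm:KR} without spelling out the combinatorial step, and what you have written is precisely that step made explicit. Your key observation---that in any product $M$ of $A^{-1}$-variables the factor achieving $\mathrm{Max}(M)$ contributes a $Y^{-1}$ at index $\mathrm{Max}(M)$ which cannot be cancelled, because any would-be cancelling factor $A^{-1}_{k,l+d_k}$ would itself force $l+2d_k>\mathrm{Max}(M)$ and hence violate the maximality---is correct in each lacing case, and the passage from $\mathrm{Max}(M)\ge p_{\max}+2d_i>b$ (resp.\ $>b-1$) to the truncation statement is immediate.

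One stylistic remark: the contradiction you derive could be phrased more directly by noting that every positive $Y_j$-factor inside any $A^{-1}_{k,l+d_k}$ sits at an index strictly below $l+2d_k$, hence strictly below $\mathrm{Max}(M)$; this avoids the appearance of circularity in asserting that the hypothetical cancelling factor ``is accompanied by'' a $Y^{-1}$ above $\mathrm{Max}(M)$ (which a reader might momentarily worry could itself be cancelled). But your version, read as deriving $l+2d_k>\mathrm{Max}(M)$ and then invoking $l+2d_k\le\mathrm{Max}(M)$ from membership in $M$, is logically sound.
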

\subsection{Classical $T$-systems}
\label{ssec:clTsys}

The $q$-characters of KR modules satisfy a system of identities called \emph{the $T$-systems}. 
\begin{Thm}[{\cite[Theorem 1.1]{Nakajima03II}}, {\cite[Theorem 3.4]{Hernandez06}}] \label{Thm:Tsys}
For each $(\im, p), (\im, s) \in \hDs_{0}$ with
$p<s$, we have the following equality in $K(\Cc_{\Z})$:
\begin{equation} \label{eq:Tsys}
\left[W^{(\im)}[p,s) \right] \left[W^{(\im)}(p,s]\right] = \left[W^{(\im)}[p,s]\right] \left[W^{(\im)}(p,s)\right] + \prod_{\jm \in \Delta_0; \jm \sim \im} \left[W^{(\jm)}(p,s)\right],
\end{equation}
where the two terms on right hand side are simple tensor products of Kirillov-Reshetikhin modules.
\end{Thm}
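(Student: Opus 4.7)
The plan is to prove the $T$-system identity by lifting it to $q$-characters and using the injectivity of $\chi_q \colon K(\Cc_\Z) \to \cY$ from Theorem~\ref{Thm:FR}. Concretely, I would show that the product $\chi_q(W^{(\im)}[p,s)) \cdot \chi_q(W^{(\im)}(p,s])$ equals $\chi_q(W^{(\im)}[p,s]) \cdot \chi_q(W^{(\im)}(p,s)) + \prod_{\jm \sim \im} \chi_q(W^{(\jm)}(p,s))$, and that each of the two summands on the right-hand side corresponds to a \emph{simple} module, so that the class identity in $K(\Cc_\Z)$ follows.

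The first main step is to enumerate the dominant monomials of the left-hand product. Using the precise form of $\chi_q(W^{(\im)}_{k,p})$ provided by Theorem~\ref{Thm:KR}, namely $m^{(\im)}_{k,p}(1 + A^{-1}_{\im,\cdots} \chi)$ with all lower monomials divisible by the single ``tip'' factor $A^{-1}$, one sees that any dominant monomial in the product of two such $q$-characters arises either from the product of the two top monomials or from combining the top of one with exactly the tip of the other. A direct computation using the definition of $A_{i,a}$ in Section~\ref{ssec:qch} and the evident identities $m^{(\im)}[p,s)m^{(\im)}(p,s] = m^{(\im)}[p,s]\cdot m^{(\im)}(p,s)$ shows that only two dominant monomials occur: the top one, and $\prod_{\jm\sim\im} m^{(\jm)}(p,s)$.

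The second main step is to show that the two modules on the right-hand side are both simple: $W^{(\im)}[p,s] \otimes W^{(\im)}(p,s) \cong L(m^{(\im)}[p,s]\cdot m^{(\im)}(p,s))$ and $\bigotimes_{\jm \sim \im} W^{(\jm)}(p,s) \cong L(\prod_{\jm \sim \im} m^{(\jm)}(p,s))$, the tensor product in the latter case being independent of the order since the factors pairwise commute (the spectral parameters lie in different $\sigma$-orbits). Granting these two cyclicity/simplicity facts, the identity at the level of $q$-characters is forced by the two-dominant-monomial analysis above together with Theorem~\ref{Thm:FM}: subtracting the top simple constituent yields an element of $\chi_q(K(\Cc_\Z))$ whose unique dominant monomial is $\prod_{\jm\sim\im} m^{(\jm)}(p,s)$, and hence must equal the $q$-character of the corresponding simple module.

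The main obstacle is the simplicity/cyclicity of the two tensor products in the second step. In simply-laced types this is where Nakajima uses the geometry of graded quiver varieties, while for general $\fg$ one has to argue algebraically, by an induction on $k = s-p$ combined with a careful Frenkel--Mukhin style analysis: assuming the identity for smaller intervals, one shows that any proper submodule of the tensor product would have to contribute an extra dominant monomial different from the top one, contradicting the computation of dominant monomials in the product of $q$-characters. I would expect to invoke, as in \cite{Hernandez06}, the positivity and minimality properties of $\ell$-weight diagrams provided by the Frenkel--Mukhin algorithm to rule out such spurious dominant monomials.
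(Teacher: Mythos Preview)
The paper does not prove this theorem; it is quoted as a known result from \cite{Nakajima03II} (simply-laced) and \cite{Hernandez06} (general). So there is no ``paper's own proof'' to compare against directly, but the proof of the quantum analogue (Theorem~\ref{Thm:qTsys}) in the paper does spell out the relevant structure of the argument, and your sketch departs from it at a crucial point.

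Your claim that the product $\chi_q(W^{(\im)}[p,s))\cdot\chi_q(W^{(\im)}(p,s])$ has only two dominant monomials is false in general. With $i=\bar\im$ and $k=(s-p)/2d_i$, this product has exactly $k+1$ dominant monomials $M, M_1,\ldots,M_k$, not two; this is precisely the content of \cite[Lemma~5.5]{Hernandez06}, and it is recalled in the proof of Theorem~\ref{Thm:qTsys} in the paper. The form~(\ref{eq:form}) only says that every lower monomial of a single KR $q$-character is divisible by one fixed $A^{-1}$; it does \emph{not} imply that in a product of two such characters the only dominant cross-terms come from ``top $\times$ tip''. Already for $\mathfrak{sl}_2$ with $k=2$ one finds three dominant monomials in the left-hand product. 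Consequently your ``subtract the top simple constituent and the remainder has a unique dominant monomial'' argument does not go through: after subtracting one simple you would still have $k$ dominant monomials to account for, not one.

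The actual mechanism (as in \cite{Hernandez06} and mirrored in the proof of Theorem~\ref{Thm:qTsys}) is that the first $k$ dominant monomials $M, M_1,\ldots,M_{k-1}$ coincide, with multiplicities, with the dominant monomials of the product $\chi_q(W^{(\im)}[p,s])\cdot\chi_q(W^{(\im)}(p,s))$, while the last one $M_k$ equals $\prod_{\jm\sim\im} m^{(\jm)}(p,s)$. Since an element of $\chi_q(K(\Cc_\Z))$ is determined by the multiplicities of its dominant monomials, this matching (together with the fact that $\prod_{\jm\sim\im}\chi_q(W^{(\jm)}(p,s))$ has $M_k$ as its unique dominant monomial) yields the identity. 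Your overall strategy---reduce to $q$-characters and compare dominant monomials---is the right one, but the combinatorial core is this full $(k+1)$-term matching, and it cannot be shortcut to a two-monomial argument.
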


 The second term in the right hand side, there is a change of notations in comparison 
to \cite{Nakajima03II, Hernandez06} as the indices for the product are not picked from $I$ but from $\Delta_0$. This allows to have a uniform formulation of 
the $T$-systems, including non simply-laced types.

Let us consider the following two sets of dominant monomials:
\begin{align}
\label{eq:cM+}
\cM^{+}(\im; p, s) &\seq \left\{ m^{(\im)}[p,s], m^{(\im)}(p,s), m^{(\im)}(p,s] \right\}
\sqcup \left\{ m^{(\jm)}(p,s) \;\middle|\; \jm \sim \im \right\}, \allowdisplaybreaks\\
\cM^{-}(\im; p, s) &\seq \left(\cM^{+}(\im; p, s) \setminus\left\{ m^{(\im)}(p,s] \right\}\right) \sqcup \left\{ m^{(\im)}[p,s) \right\}.
\end{align}
Note that the KR modules corresponding to the dominant monomials in $\cM^{-}(\im; p, s) \cup \cM^{+}(\im; p, s)$ 
are exactly the ones involved in the $T$-system identity~(\ref{eq:Tsys}).

\begin{Prop} \label{Prop:clTsys}
Let $(\im, p), (\im, s) \in \hDs_{0}$ with $p<s$ as in {\rm Theorem~\ref{Thm:Tsys}}.
For $\varepsilon \in \{ +, - \}$ and any $m,m^\prime \in \cM^{\varepsilon}(\im; p, s)$, 
the tensor product $L(m) \otimes L(m^\prime)$ is simple. 
In particular, the set $\{ L(m) \mid m \in \cM^{\varepsilon}(\im; p, s)\}$  forms a mutually commuting family of KR modules.  
\end{Prop}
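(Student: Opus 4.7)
I will use the following classical criterion: for two simple modules $L(m), L(m') \in \Cc_{\Z}$, the tensor product $L(m) \otimes L(m')$ is simple (and then necessarily isomorphic to $L(mm')$) if and only if $mm'$ is the unique dominant monomial occurring in the product $\chi_q(L(m)) \cdot \chi_q(L(m'))$. One direction uses the cyclicity of $L(m) \otimes L(m')$ from the tensor of the $\ell$-highest weight vectors, whose $\ell$-weight is $mm'$; the other uses the injectivity of $\chi_q$ (Theorem~\ref{Thm:FR}). Once simplicity is established for a pair, commutativity is automatic since $L(m')\otimes L(m)$ is also cyclic with $\ell$-highest weight $m'm=mm'$, so both orderings must be isomorphic to $L(mm')$.

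To check uniqueness of the dominant monomial for each pair $(m,m') \in \cM^{\varepsilon}(\im;p,s)^{2}$, I will exploit Theorem~\ref{Thm:KR}: every non-highest monomial of $\chi_q(W_{k,p}^{(i)})$ has the form $m_{k,p}^{(i)} A_{i,p+(2k-1)d_i}^{-1} \cdot M$ with $M$ a product of $A^{-1}_{j,s+d_j}$. In particular the ``rightmost'' factor $A^{-1}_{i,p+(2k-1)d_i}$ always appears, and by the explicit formula for $A_{i,a}$ from Section~\ref{ssec:qch} it introduces the variable $Y^{-1}_{i,p+2kd_i}$ at node $i$ at a time \emph{strictly later} than the rightmost time $p+2(k-1)d_i$ of $m_{k,p}^{(i)}$. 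For any resulting monomial of $\chi_q(L(m))\chi_q(L(m'))$ to be dominant, such $Y^{-1}$-factors must be cancelled by positive $Y$-factors in $mm'$.

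The case analysis splits into: (i) pairs at the same node $\im$ with nested intervals, namely $m^{(\im)}[p,s]$ versus $m^{(\im)}(p,s]$, $m^{(\im)}[p,s]$ versus $m^{(\im)}(p,s)$, and $m^{(\im)}(p,s]$ versus $m^{(\im)}(p,s)$; (ii) mixed pairs $m^{(\im)}*$ versus $m^{(\jm)}(p,s)$ for $\jm \sim \im$; (iii) pairs $m^{(\jm)}(p,s)$ versus $m^{(\jm')}(p,s)$ with distinct $\jm, \jm' \sim \im$. For each pair I compare the $Y^{-1}$-support produced by any non-trivial $A^{-1}$-factor against the $Y$-support of $mm'$: since all monomials in $\cM^{\varepsilon}(\im;p,s)$ are supported in $\{Y_{\bar\im,*}\} \cup \{Y_{\bar\jm,*} \mid \jm \sim \im\}$ with time coordinates in $[p,s]$, while any non-trivial $A^{-1}$-product produces $Y^{-1}$-variables at times strictly outside $[p,s]$ on the relevant node, the cancellation is impossible, so $mm'$ is the only dominant monomial.

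\textbf{Main obstacle.} The delicate point is case (iii): two KR modules at nodes $\jm, \jm'$ adjacent to $\im$ in the unfolded simply-laced diagram $\Delta$. Here $A^{-1}$-factors from either side can produce $Y^{-1}$-variables at the mediating node $\im$, and one has to verify that these cannot be cancelled by any $Y_{\bar\im,*}$-factor (there are none in $m^{(\jm)}(p,s) \cdot m^{(\jm')}(p,s)$) nor by internal cross-cancellations; fortunately $\Delta$ is a tree, so $\jm, \jm'$ are not themselves adjacent and the bookkeeping eventually closes. A conceptually cleaner alternative would be an induction on the length $s-p$, using the $T$-system identity of Theorem~\ref{Thm:Tsys} to reduce longer intervals to shorter ones, though one must take care to avoid circularity since the $T$-system only governs two specific products in the present family.
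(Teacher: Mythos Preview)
Your plan has a genuine gap: the ``unique dominant monomial'' criterion is only sufficient, not necessary, and its hypothesis actually \emph{fails} for several of the pairs you must treat. The problematic pairs are those at the same node $\im$ where one interval is closed at $s$ and the other is not. Take $m = m^{(\im)}[p,s]$ and $m' = m^{(\im)}(p,s)$ from your case~(i). The first non-highest monomial of $\chi_q(L(m'))$ is $m' A^{-1}_{\bar\im, s-d_{\bar\im}}$, which carries the factor $Y^{-1}_{\bar\im, s}$. This is indeed at a time strictly later than the top time $s-2d_{\bar\im}$ of $m'$, but it is \emph{not} outside $[p,s]$; and $m = m^{(\im)}[p,s]$ contains $Y_{\bar\im, s}$, so the product $m \cdot (m' A^{-1}_{\bar\im, s-d_{\bar\im}})$ is a second dominant monomial. (Already for $\fg = \mathfrak{sl}_2$ with $(p,s)=(0,4)$ one finds both $Y_0 Y_2^2 Y_4$ and $Y_0 Y_2$ as dominant monomials in $\chi_q(L(Y_0 Y_2 Y_4))\chi_q(L(Y_2))$.) The same obstruction occurs for the pair $\{m^{(\im)}(p,s], m^{(\im)}(p,s)\}$ in $\cM^+$, and for $\{m^{(\im)}[p,s], m^{(\im)}[p,s)\}$ in $\cM^-$. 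Your argument conflates ``later than the top time of that particular KR monomial'' with ``outside $[p,s]$''; these disagree exactly when the interval is open at $s$.

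The paper's proof packages the cases where your criterion \emph{does} apply via the truncation map $(\cdot)_{\le b}$ of Section~\ref{ssec:tr}: when both $\chi_q(L(m))_{\le b}$ and $\chi_q(L(m'))_{\le b}$ reduce to single monomials (Lemma~\ref{Lem:KR}), injectivity of truncation (Proposition~\ref{Prop:tr}) forces $L(m)\otimes L(m') \cong L(mm')$ directly. For the pair $\{m^{(\im)}(p,s], m^{(\im)}(p,s)\}$ no common truncation level works, but the paper applies the involution $\omega$ from Section~\ref{ssec:duality}, which sends it to $\{W^{(\im^*)}[-s-rh^\vee,-p-rh^\vee),\, W^{(\im^*)}(-s-rh^\vee,-p-rh^\vee)\}$, a configuration where truncation at the new right endpoint succeeds. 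For the remaining pair $\{m^{(\im)}[p,s], m^{(\im)}(p,s)\}$, neither truncation nor $\omega$ helps (the configuration is $\omega$-symmetric), and the paper cites \cite[Theorem~6.1~(2)]{Hernandez06}, where simplicity is established by a finer analysis of the $q$-characters. Your proposal would need separate arguments for these cases; the induction-on-length alternative you mention does not obviously supply them either, since the $T$-system relation~(\ref{eq:Tsys}) involves precisely the product $[W^{(\im)}[p,s)][W^{(\im)}(p,s]]$, which is \emph{not} among the pairs in $\cM^\varepsilon(\im;p,s)$.
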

\begin{proof}
We only consider the case $\varepsilon = +$, as a proof of the other case $\varepsilon =-$ is similar.
Our basic tool is the truncation defined in Section~\ref{ssec:tr}. 
We divide the situation into the following four cases: 
\begin{align} 
\{ m,m^\prime\} &\subset \left\{ m^{(\im)}[p,s], m^{(\im)}(p,s] \right\}\sqcup \left\{ m^{(\jm)}(p,s) \;\middle|\; \jm \sim \im \right\}, \label{Tsys:case1} \allowdisplaybreaks\\
\{ m,m^\prime\} &\subset \left\{ m^{(\im)}(p,s)\right\} \sqcup \left\{ m^{(\jm)}(p,s) \;\middle|\; \jm \sim \im \right\}, \label{Tsys:case2} \allowdisplaybreaks\\
\{ m,m^\prime\}  &= \left\{m^{(\im)}(p,s), m^{(\im)}(p,s] \right\}, \label{Tsys:case3} \allowdisplaybreaks\\
\{ m,m^\prime\}  &= \left\{m^{(\im)}[p,s], m^{(\im)}(p,s) \right\}. \label{Tsys:case4} 
\end{align}

For the case~(\ref{Tsys:case1}), 
noting $m^{(\jm)}(p,s) = m^{(\jm)}(p,s]$ for $\jm \sim \im$,
we have $\chi_{q}(L(m))_{\le s} = m$ and $\chi_{q}(L(m^\prime))_{\le s} = m^\prime$ by Lemma~\ref{Lem:KR}.
Therefore we obtain
$$\chi_{q}(L(m) \otimes L(m^\prime))_{\le s} = \chi_{q}(L(m))_{\le s} \cdot \chi_{q}(L(m^\prime))_{\le s} = mm^\prime.$$
By Proposition~\ref{Prop:tr}, this shows that $L(m) \otimes L(m^\prime) \cong L(mm^\prime)$ as desired.
For the case~(\ref{Tsys:case2}), we apply the same argument with the truncation $(\cdot)_{\le s}$ replaced by $(\cdot)_{\le s-1}$.  

To deal with the case~(\ref{Tsys:case3}), we recall the involution $\omega$ defined in the last paragraph of Section~\ref{ssec:duality}.
In our case, it yields
$$
\omega^* W^{(\im)}(p,s) \cong W^{(\im^*)}(-s-rh^\vee, -p-rh^\vee), \quad 
\omega^* W^{(\im)}(p,s] \cong W^{(\im^*)}[-s-rh^\vee, -p-rh^\vee).
$$
Thus we can apply the same argument as in the case~(\ref{Tsys:case2}) to obtain the conclusion. 

The last case~(\ref{Tsys:case4}) was treated in \cite[Theorem 6.1 (2)]{Hernandez06}. 
\end{proof}
\subsection{Quantum $T$-systems} 
\label{ssec:qTsys}
For a general dominant monomial $m \in \cM$, 
the element $F_t({m})$ is given by an algorithm \cite[Section 5.7.1]{Hernandez04}.
As a result, we obtain the following $t$-analog of Theorem~\ref{Thm:KR}.

\begin{Thm} \label{Thm:Fform}
For $(i,p) \in \hI$ and $k \in \Z_{\ge 0}$, the element $F_{t}({m_{k,p}^{(i)}})$ is of the form
\begin{equation}\label{eq:Fform}
F_{t}(m_{k,p}^{(i)}) = \ul{m_{k,p}^{(i)}}(1 + \tA_{i,p + (2k - 1)d_{i}}^{-1} \chi)
\end{equation}
where $\chi$ is a $($non-commutative$)$ polynomial in the $\tA_{j,s+d_{j}}^{-1}$, $(j,s) \in \hI$.
\end{Thm}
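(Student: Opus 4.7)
The plan is to lift the classical factorization in Theorem~\ref{Thm:KR} from $\chi_q(W_{k,p}^{(i)})$ to $F_t(m_{k,p}^{(i)}) \in \cY_t$ by analysing the $t$-deformed Frenkel--Mukhin algorithm producing $F_t(m_{k,p}^{(i)})$.

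First I would recall that, by construction, the element $F_t(m_{k,p}^{(i)})$ is produced by the explicit algorithm of \cite[Section~5.7.1]{Hernandez04}, which is the $t$-analog of the Frenkel--Mukhin algorithm. This algorithm outputs a finite $\Z[t^{\pm 1/2}]$-linear combination of commutative monomials of the form $\ul{m_{k,p}^{(i)}\cdot M}$, where each $M$ runs over a certain set of monomials in the variables $A_{j,s+d_j}^{-1}$ with $(j,s)\in \hI$. Moreover the set of $M$'s that occur coincides, via $\evt$, with the set of (non-unit) $A^{-1}$-monomials appearing in $\chi_q(W_{k,p}^{(i)})$, because the $t$-deformed algorithm has the same step-by-step structure as the classical one, and the classical version is known to terminate with $\chi_q(W_{k,p}^{(i)})$ for Kirillov--Reshetikhin modules (see \cite{FM01, Nakajima03II, Hernandez06}).

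Applying Theorem~\ref{Thm:KR} then shows that every non-trivial monomial $M$ occurring in this support is divisible by $A_{i,p+(2k-1)d_i}^{-1}$. Next I would translate this divisibility into an identity in $\cY_t$: writing such an $M$ as $A_{i,p+(2k-1)d_i}^{-1}\cdot A_{j_1,s_1+d_{j_1}}^{-1}\cdots A_{j_l,s_l+d_{j_l}}^{-1}$ and applying the commutative-monomial convention from Section~\ref{ssec:Yt} together with \eqref{eq:Nnmm}, one gets
\[
\ul{m_{k,p}^{(i)}\cdot M}
\;=\; t^{a/2}\,\ul{m_{k,p}^{(i)}}\cdot \tA_{i,p+(2k-1)d_i}^{-1}\cdot \tA_{j_1,s_1+d_{j_1}}^{-1}\cdots \tA_{j_l,s_l+d_{j_l}}^{-1}
\]
for a suitable integer $a$ depending on $M$. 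Collecting these identities and factoring $\ul{m_{k,p}^{(i)}}\cdot \tA_{i,p+(2k-1)d_i}^{-1}$ out of the non-leading part of $F_t(m_{k,p}^{(i)})$ then produces the claimed factorization, with $\chi$ the resulting non-commutative polynomial in the $\tA_{j,s+d_j}^{-1}$.

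The main obstacle is the first step: verifying that the monomial support (modulo $t$-scalars) of $F_t(m_{k,p}^{(i)})$ matches that of $\chi_q(W_{k,p}^{(i)})$. For a general simple $L(m)$ this would be delicate and intimately tied to Conjecture~\ref{Conj:KL}, but for Kirillov--Reshetikhin modules it follows from the known correctness and termination of the Frenkel--Mukhin algorithm on $W_{k,p}^{(i)}$ combined with the fact that the $t$-deformation only modifies scalar coefficients, not the underlying commutative monomials, at each step of the algorithm.
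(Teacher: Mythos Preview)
Your proposal is correct and follows essentially the same approach as the paper: the paper simply states the theorem as a direct consequence of the algorithm in \cite[Section~5.7.1]{Hernandez04} (the $t$-deformed Frenkel--Mukhin algorithm), without further elaboration, and you have filled in the details of how the factorization \eqref{eq:Fform} is extracted from that algorithm together with the classical Theorem~\ref{Thm:KR}. Your observation that the $t$-deformation only affects the scalar coefficients at each step, not the underlying commutative monomial support, is exactly the point needed to transport the divisibility by $A_{i,p+(2k-1)d_i}^{-1}$ from the classical setting to $\cY_t$.
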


Note that $F_{t}(m_{k,p}^{(i)})$ is $\ol{(\cdot)}$-invariant 
as a unique dominant monomials occurs, and this monomial
is $\ol{(\cdot)}$-invariant.
Although it is known that at $t = 1$, the $q$-character of 
$L(m_{k,p}^{(i)})$ has a unique dominant monomial, it is not clear a priori that 
$L_{t}(m_{k,p}^{(i)}) = F_{t}(m_{k,p}^{(i)})$. But we conjecture that they are equal.

\begin{Conj} \label{Conj:L=F}
For any $(i,p) \in \hI$ and $k \in \Z_{\ge 0}$, 
we have $$L_{t}(m_{k,p}^{(i)}) = F_{t}(m_{k,p}^{(i)}).$$
\end{Conj}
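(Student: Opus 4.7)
The plan is to reduce Conjecture~\ref{Conj:L=F} to a uniqueness statement on dominant monomials. Both $F_t(m_{k,p}^{(i)})$ and $L_t(m_{k,p}^{(i)})$ are bar-invariant elements of $\cK_t$ containing $\ul{m_{k,p}^{(i)}}$ as a dominant monomial: for $F_t$ this is by construction (Theorem~\ref{Thm:Ft}), while for $L_t$ it follows from property (S2) of Theorem~\ref{Thm:qtch} combined with the fact that $\ul{m_{k,p}^{(i)}}$ is the maximal monomial of $E_t(m_{k,p}^{(i)})$ with coefficient one. By the uniqueness part of Theorem~\ref{Thm:Ft}, the desired equality therefore reduces to showing that $\ul{m_{k,p}^{(i)}}$ is the \emph{unique} dominant monomial occurring in $L_t(m_{k,p}^{(i)})$.

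Since it is classically known (\cite{Nakajima03II, Hernandez06}) that $\chi_q(L(m_{k,p}^{(i)}))$ has $m_{k,p}^{(i)}$ as its unique dominant monomial, the problem further reduces to Conjecture~\ref{Conj:KL} restricted to Kirillov-Reshetikhin modules, namely to the equality $\evt(L_t(m_{k,p}^{(i)})) = \chi_q(L(m_{k,p}^{(i)}))$. For simply-laced $\fg$ this is Nakajima's theorem~\cite[Theorem 8.1]{Nakajima04}. For non-simply-laced $\fg$, the natural route I would pursue is to transport this simply-laced result through the forthcoming isomorphism $\cK_t(\Cc_{\Z, \sg}) \simeq \cK_t(\Cc_{\Z, \fg})$ of (\ref{eq:CZisom}), which matches simple $(q,t)$-characters bijectively. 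The task is then to track how the induced bijection on dominant monomials places KR classes on the $\fg$-side, and to verify that each KR class on the $\fg$-side arises as the image of some simple class on the $\sg$-side whose simple $(q,t)$-character (known via Nakajima) has a unique dominant monomial after specialization.

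A complementary and more intrinsic inductive strategy is available through the quantum $T$-systems of the present section. The base case $k=1$ is immediate: the fundamental $q$-character $\chi_q(L(Y_{i,p}))$ has $Y_{i,p}$ as unique dominant monomial, and both $L_t(Y_{i,p})$ and $F_t(Y_{i,p})$ then coincide with the unique bar-invariant element of $\cK_t$ having $\ul{Y_{i,p}}$ as unique dominant monomial. For the inductive step one would combine the quantum $T$-system satisfied by the $F_t$'s with Proposition~\ref{Prop:clTsys}, which produces commuting families of KR modules whose tensor products are simple. The ingredient needed to close the induction is a multiplicativity lemma: for simple $V, W \in \Cc_\Z$ with $V$ and $W$ commuting and $V \otimes W$ simple, one has $L_t([V]) \cdot L_t([W]) = t^{N} L_t([V \otimes W])$ for some $N \in \Z$.

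The main obstacle, shared by both routes, is precisely this quantum multiplicativity of $L_t$ on commuting simple tensor products---the natural $t$-analog of Proposition~\ref{Prop:clTsys}. Without it, neither the transfer through (\ref{eq:CZisom}) nor the inductive argument via $T$-systems closes cleanly, because in both cases one must promote a classical commutation/factorization statement to the level of the canonical basis of $\cK_t$. Establishing this multiplicativity---or equivalently Conjecture~\ref{Conj:KL} for the simple modules appearing in the relevant $T$-system families---is in my view the decisive step toward a complete proof of Conjecture~\ref{Conj:L=F}.
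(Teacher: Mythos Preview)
This statement is a \emph{conjecture} in the paper, not a theorem; there is no complete proof to compare against. The paper establishes it only in special cases: for any $\fg$ when the KR module lies in some $\Cc_\cQ$ (equivalently $2kd_i \le rh^\vee$, Section~\ref{ssec:cor}), and in full for $\fg$ of type $\mathrm{B}_n$ (Theorem~\ref{Thm:L=F}). For types $\mathrm{C}, \mathrm{F}, \mathrm{G}$ and large $k$ it remains open.

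Your diagnosis of the obstacle is correct, and your second strategy---induction on $k$ via the quantum $T$-systems---is exactly what the paper carries out in type $\mathrm{B}$. The multiplicativity lemma you isolate is Lemma~\ref{Lem:commB}, proved by combining the positivity of structure constants (Corollary~\ref{Cor:pos_st}) with the affirmative answer to Conjecture~\ref{Conj:KL} in type $\mathrm{B}$ (Corollary~\ref{Cor:PsiAB}). The latter is obtained by matching the specialization of $\Psi$ with the Kashiwara--Kim--Oh isomorphism from generalized Schur--Weyl duality, a comparison currently available only in type $\mathrm{B}$; this is precisely why the induction does not close in the remaining non-simply-laced types.

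Your first route is less promising than it looks. The isomorphism $\Psi$ is not a map of quantum tori, so it does not preserve the count of dominant monomials; knowing that $L_t^{(1)}(\psi^{-1}(m_{k,p}^{(i)}))$ has a unique dominant monomial says nothing directly about $L_t^{(2)}(m_{k,p}^{(i)})$. Moreover, the induced bijection $\psi$ does not send KR monomials to KR monomials in general (see the explicit formula in Theorem~\ref{Thm:PsiAB}, where a single $Y_{\im,-\im-2k}$ can map to a product of two $Y_{n,\cdot}$'s), so there is no reason to expect the preimage of a $\fg$-side KR monomial to correspond to a simple module with unique dominant monomial on the $\sg$-side.
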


It is known for $\mathrm{ADE}$ types by the results of Nakajima. In general it is clear for $k = 1$ by definition of $L_t(m_{1,p}^{(i)})$. 
Also one can check it for $k=2$ by a direct computation. 
We will establish this result in several other situation below.

In this subsection, we establish \emph{the quantum $T$-systems} in $\cK_t$ satisfied by the $F_{t}(m_{k,p}^{(i)})$.
For an interval $[a, b] \subset \Z$ and $\im \in \Delta_{0}$, we set 
$F_{t}^{(\im)}[a,b] \seq F_{t}(m^{(\im)}[a,b])$. We define $F_{t}^{(\im)}(a,b), F_{t}^{(\im)}(a,b]$ and $F^{(\im)}_{t}[a,b)$
in the similar way. With this notation, the next assertion follows immediately from Theorem~\ref{Thm:Fform}.

\begin{Lem} \label{Lem:qKR}
For any $\im \in \Delta_0$ and integers $a \le b$, we have
 \begin{align*} 
F_{t}^{(\im)}[a,b]_{\le b}&= \ul{m^{(\im)}[a,b]}, & F_{t}^{(\im)}(a,b]_{\le b}&= \ul{m^{(\im)}(a,b]}, \\
F_{t}^{(\im)}[a,b)_{\le b-1}&= \ul{m^{(\im)}[a,b)}, & F_{t}^{(\im)}(a,b)_{\le b-1}&=\ul{m^{(\im)}(a,b)}.
\end{align*}
\end{Lem}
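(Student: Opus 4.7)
I prove only the first identity, $F_t^{(\im)}[a,b]_{\le b} = \ul{m^{(\im)}[a,b]}$; the other three are strictly analogous, with only the interval type and the truncation bound changing. Applying Theorem~\ref{Thm:Fform} to the KR monomial $m^{(\im)}[a,b] = m^{(\bar{\im})}_{k,a}$ (where $k$ is chosen so that $a+2(k-1)d_{\bar{\im}} = b$), one has
\[
F_t^{(\im)}[a,b] = \ul{m^{(\im)}[a,b]} + \ul{m^{(\im)}[a,b]} \cdot \tA^{-1}_{\bar{\im}, b+d_{\bar{\im}}} \cdot \chi,
\]
with $\chi$ a polynomial in the $\tA^{-1}_{j, s+d_j}$. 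The leading commutative monomial has $\tY$-support inside $\hI_{\le b}$ and is therefore fixed by $(\cdot)_{\le b}$, so the task reduces to showing that every commutative $\tY$-monomial in the second summand contains a factor $\tY^{\pm}_{i, p}$ with $p>b$ and is hence killed by the truncation.

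Each term in the second summand is a product in $\cY_t$ of commutative monomials, and any such product equals, up to a well-defined power of $t^{1/2}$, the commutative monomial of the corresponding product of the underlying Laurent monomials in $\cY$. Consequently the $\tY$-Laurent support of each term coincides exactly with the $Y$-support of its classical image, so survival under $(\cdot)_{\le b}$ is a purely classical matter. Since $\evt$ commutes with truncation and $\evt(F_t^{(\im)}[a,b]) \in \chi_q(K(\Cc_\Z))$ has $m^{(\im)}[a,b]$ as its unique dominant monomial, the classical uniqueness underlying Theorem~\ref{Thm:Ft} identifies it with $\chi_q(W^{(\im)}[a,b])$. Lemma~\ref{Lem:KR} now gives $\chi_q(W^{(\im)}[a,b])_{\le b} = m^{(\im)}[a,b]$, and lifting back to $\cY_t$ using the support-compatibility above yields $F_t^{(\im)}[a,b]_{\le b} = \ul{m^{(\im)}[a,b]}$.

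The only point that requires care is the support-compatibility between commutative monomial products in $\cY_t$ and their classical counterparts in $\cY$, which is precisely what prevents a hypothetical $t$-monomial with coefficient vanishing at $t=1$ from sneaking into the truncation with no classical witness. This compatibility is essentially the defining property of the $\ul{(\cdot)}$-normalization and is immediate from the structure of the quantum torus; I do not foresee any genuine obstacle beyond correctly recording this identification. Once the first identity is settled, the remaining three are obtained by applying the same argument to the monomials $m^{(\im)}(a,b]$, $m^{(\im)}[a,b)$, $m^{(\im)}(a,b)$ (again a KR monomial, with its own distinguished index at the upper edge of the interval), invoking the corresponding cases of Lemma~\ref{Lem:KR} and the appropriate truncation bound.
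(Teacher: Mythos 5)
Your overall strategy matches the paper's (the paper derives this lemma directly from Theorem~\ref{Thm:Fform}, just as you begin by doing), but the way you execute the key step leaves a genuine gap. After reducing to showing that every monomial of $\ul{m^{(\im)}[a,b]}\,\tA^{-1}_{\bar{\im},b+d_{\bar{\im}}}\chi$ is killed by $(\cdot)_{\le b}$, you try to import this from the classical statement $\chi_q(W^{(\im)}[a,b])_{\le b}=m^{(\im)}[a,b]$ via $\evt$. That transfer only controls the commutative monomials $\ul{m'}$ whose coefficients in $F_t^{(\im)}[a,b]$ survive specialization at $t=1$. A commutative monomial could in principle occur with a nonzero, bar-invariant coefficient such as $(t^{1/2}-t^{-1/2})^2$ that vanishes at $t=1$; such a monomial has no ``classical witness'' in $\chi_q(W^{(\im)}[a,b])$, so Lemma~\ref{Lem:KR} says nothing about its support. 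Your stated resolution --- that ``support-compatibility'' of the $\ul{(\cdot)}$-normalization prevents this --- is a non sequitur: the fact that $\ul{m'}$ and $m'$ have the same Laurent support is irrelevant to whether the coefficient of $\ul{m'}$ survives specialization. Nothing available at this point in the paper (positivity of the coefficients of $F_t$ of a general KR monomial is not known here; it is tied to Conjecture~\ref{Conj:L=F}) rules out such vanishing.

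The fix is to transfer the \emph{proof} rather than the \emph{statement}: argue directly that any Laurent monomial of the form $m^{(\im)}[a,b]\,A^{-1}_{\bar{\im},b+d_{\bar{\im}}}M$, with $M$ an arbitrary product of variables $A^{-1}_{j,s+d_j}$, necessarily involves some $Y^{\pm1}_{j,s}$ with $s>b$. This is a purely combinatorial claim about the shape of the monomials supplied by Theorem~\ref{Thm:Fform}, independent of $t$ and of which monomials actually occur: the factor $A^{-1}_{\bar{\im},b+d_{\bar{\im}}}$ contributes $Y^{-1}_{\bar{\im},b+2d_{\bar{\im}}}$, and cancelling it forces another factor $A^{-1}_{j,s'}$ with $j\sim\bar{\im}$ and $s'>b$, which in turn contributes a variable with spectral parameter $>b$; iterating as in the proof of Lemma~\ref{Lem:ideal} (finiteness of $M$) leaves an uncancelled variable beyond the bound. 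This is also the argument implicitly underlying the classical Lemma~\ref{Lem:KR}, so once you phrase it this way the classical and quantum cases are handled simultaneously and your detour through $\evt$ becomes unnecessary. The adaptations for the other three interval types are then routine, as you say.
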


\begin{Thm}[Quantum $T$-systems] \label{Thm:qTsys}
For $(\im, p), (\im, s) \in \hDs_{0}$ with $p < s$, 
there holds the following equality in $\cK_t$:
\begin{equation} \label{eq:qTsys}
F_{t}^{(\im)}[p,s) F_{t}^{(\im)}(p,s] = t^{x} F_{t}^{(\im)}[p,s] F_{t}^{(\im)}(p,s) + t^{y} F_{t}(M(\im; p, s))
\end{equation}
for some $x, y \in \frac{1}{2}\Z$, where 
the dominant monomial $M(\im; p, s) \in \cM$ is given by
$$
M(\im; p,s) \seq \prod_{\jm \sim \im} m^{(\jm)}(p,s).
$$
\end{Thm}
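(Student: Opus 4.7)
The strategy is to identify both sides of \eqref{eq:qTsys} inside $\cK_{t}$ by determining their dominant monomials and then to invoke the uniqueness part of Theorem~\ref{Thm:Ft}. Concretely, I would define
$$T \seq F_{t}^{(\im)}[p,s)\, F_{t}^{(\im)}(p,s] - t^{x} F_{t}^{(\im)}[p,s]\, F_{t}^{(\im)}(p,s) - t^{y} F_{t}(M(\im; p,s)),$$
which lies in $\cK_{t}$ for any $x,y \in \tfrac{1}{2}\Z$ since $\cK_{t} \subset \cY_{t}$ is a subring. The aim is to choose $x$ and $y$ so that $T$ contains no dominant monomial, forcing $T = 0$ by Theorem~\ref{Thm:Ft}.

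The first step is to analyze the two products on the right. By Proposition~\ref{Prop:clTsys} the tensor product $W^{(\im)}[p,s] \otimes W^{(\im)}(p,s)$ is simple, and since the $q$-character of any KR module has a unique dominant monomial one has $F(m_{k,p}^{(i)}) = \chi_{q}(L(m_{k,p}^{(i)}))$ for every KR monomial. Combining these yields the classical identity $F(m^{(\im)}[p,s])\, F(m^{(\im)}(p,s)) = F(m^{(\im)}[p,s] \cdot m^{(\im)}(p,s))$, and via Theorem~\ref{Thm:Fform} this lifts to $\cK_{t}$: the product $F_{t}^{(\im)}[p,s]\, F_{t}^{(\im)}(p,s)$ has a single dominant monomial, equal to $\ul{m^{(\im)}[p,s] \cdot m^{(\im)}(p,s)}$ up to an explicit power of $t^{1/2}$ computable from \eqref{eq:Nnmm}. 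Similarly, using that the family $\{L(m^{(\jm)}(p,s)) : \jm \sim \im\}$ is mutually commuting (also from Proposition~\ref{Prop:clTsys}), the element $F_{t}(M(\im;p,s))$ has $\ul{M(\im;p,s)}$ as its unique dominant monomial by Theorem~\ref{Thm:Ft}.

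For the left-hand side, the monomial identity $m^{(\im)}[p,s) \cdot m^{(\im)}(p,s] = m^{(\im)}[p,s] \cdot m^{(\im)}(p,s)$ in $\cY$ shows that the dominant monomial arising from pairing the unique dominants of the two factors agrees, up to a power of $t^{1/2}$, with the dominant monomial of $F_{t}^{(\im)}[p,s]\, F_{t}^{(\im)}(p,s)$ identified above. The classical $T$-system (Theorem~\ref{Thm:Tsys}) applied to $F(m^{(\im)}[p,s))\, F(m^{(\im)}(p,s])$ then guarantees that the only other dominant monomial appearing at $t=1$ is $M(\im;p,s)$, with classical coefficient $1$. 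One determines $x$ and $y$ as the precise $t^{1/2}$-shifts coming from \eqref{eq:Nnmm}, so that the classical contributions of these two dominants in $T$ cancel.

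The main obstacle is ruling out \emph{spurious} dominant monomials at the quantum level---that is, commutative monomials whose coefficient in $\Z[t^{\pm 1/2}]$ is nonzero yet vanishes at $t=1$---and confirming that the coefficients of the two genuine dominants in $F_{t}^{(\im)}[p,s)\, F_{t}^{(\im)}(p,s]$ are single powers of $t^{1/2}$ rather than longer Laurent polynomials. My approach would exploit the truncation machinery of Section~\ref{ssec:tr}: select a Q-datum $\cQ$ whose height function $\xi$ dominates $s$ at $\im$ and at every neighbor, so that by Proposition~\ref{Prop:tr} the truncation $(\cdot)_{\le\xi}$ is injective on the relevant subring of $\cK_{t}$. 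Inside the truncated ring, Theorem~\ref{Thm:Fform} provides an explicit form for each factor, and an induction along the Nakajima order pins down the $t^{1/2}$-coefficient of every dominant monomial produced by the product. This identifies the only possibilities as $\ul{m^{(\im)}[p,s] \cdot m^{(\im)}(p,s)}$ and $\ul{M(\im;p,s)}$, each with a single $t^{1/2}$-power, and \eqref{eq:qTsys} follows by the uniqueness of $T = 0$.
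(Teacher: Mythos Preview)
Your argument contains a genuine error at its core: the claim that $F_{t}^{(\im)}[p,s]\,F_{t}^{(\im)}(p,s)$ has a \emph{single} dominant monomial is false in general. From the simplicity of $W^{(\im)}[p,s]\otimes W^{(\im)}(p,s)$ you correctly deduce that $\chi_q$ of this tensor product equals $\chi_q(L(m^{(\im)}[p,s]\cdot m^{(\im)}(p,s)))$, but the monomial $m^{(\im)}[p,s]\cdot m^{(\im)}(p,s)$ is \emph{not} a KR monomial, so there is no reason for this simple $q$-character to coincide with $F(m^{(\im)}[p,s]\cdot m^{(\im)}(p,s))$. Already for $\fg=\mathfrak{sl}_2$ with $s=p+4$ one finds two dominant monomials in $\chi_q(W_{3,p})\,\chi_q(W_{1,p+2})$, namely $Y_{p}Y_{p+2}^{2}Y_{p+4}$ and $Y_{p}Y_{p+2}$; the second comes from multiplying the top monomial of one factor with a non-dominant monomial of the other. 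In general, writing $k=(s-p)/(2d_{\bar{\im}})$, both products carry many dominant monomials: the left-hand product has exactly $k+1$ of them and the product $F_{t}^{(\im)}[p,s]\,F_{t}^{(\im)}(p,s)$ has exactly $k$, the latter being precisely the first $k$ of the former. Your truncation argument cannot rescue this, since these extra dominant monomials are genuinely present at $t=1$, not spurious quantum artifacts.

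The paper's proof confronts this head-on. Using the explicit algorithmic description of $F_t(m_{k,p}^{(i)})$ (a refinement of Theorem~\ref{Thm:Fform} analogous to \cite[Lemma~5.5]{Hernandez06}), one lists all $k+1$ dominant monomials of the left-hand product and shows that the first $k$ coincide with those of $F_{t}^{(\im)}[p,s]\,F_{t}^{(\im)}(p,s)$ while the last is $M(\im;p,s)$. Since an element of $\cK_t$ is determined by the multiplicities of its dominant monomials (Theorem~\ref{Thm:Ft}), it then suffices to match these multiplicities; each is a single power of $t$, and the matching of exponents is carried out as in \cite[Section~9]{HO19}. The essential missing ingredient in your approach is this complete enumeration of dominant monomials and the verification that they agree one by one, not merely the top one and $M(\im;p,s)$.
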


Note that the quantum $T$-system is already known for $\mathrm{ADE}$ types \cite[Propsition 5.6]{HL15} 
and for type $\mathrm{B}$ \cite[Theorem 9.6]{HO19}.

\begin{proof}
Let $i\seq \bar{\im}$ and $k\seq(s-p)/d_{i} \in \Z_{\ge 1}$. Note that
\begin{align*}
F_{t}^{(\im)}[p,s)&=F_{t}(m_{k,p}^{(i)}), & F_{t}^{(\im)}(p,s]&= F_{t}(m_{k,p+2d_{i}}^{(i)}), \\
F_{t}^{(\im)}[p,s]&=F_{t}(m_{k-1,p+2d_{i}}^{(i)}), & F_{t}^{(\im)}(p,s)&= F_{t}(m_{k+1,p}^{(i)}).
\end{align*}
Using the algorithm \cite[Section 5.7.1]{Hernandez04}, 
we get a more precise result than formula (\ref{eq:Fform}), that is
the analog of the statement of \cite[Lemma 5.5]{Hernandez06}. 
Then the same proof as in \cite{Hernandez06} gives the complete list of dominant monomials
occurring in the terms of the quantum $T$-system: 
$F_{t}(m_{k,p}^{(i)}) F_{t}(m_{k,p+2d_{i}}^{(i)})$ has exactly $k + 1$ dominant monomials 
$M, M_1, \ldots, M_k$ where $M, M_1,\ldots, M_{k-1}$ are the dominant monomials occurring in 
$F_{t}(m_{k-1,p+2d_{i}}^{(i)}) F_{t}(m_{k+1,p}^{(i)})$ and $M_k = M(\im; p,s)$. 
But an element in $\cK_t$ is characterized by the multiplicities of its dominant monomials \cite[Theorem 5.11]{Hernandez06}, 
so it suffices to check that the multiplicities on the monomials match. 
Here these multiplicities are just certain powers of $t$, 
so we can conclude as in \cite[Section 9]{HO19}. 
\end{proof}

\begin{Rem}
The exponents $x,y \in \frac{1}{2} \Z$ in \eqref{eq:qTsys} can be computed from the $t$-commutation relations in the quantum torus 
$\cY_{t}$ as in \cite[Proposition 5.6]{HL15}. In fact, we have 
$$
x +1 = y = 
\frac{\tc_{ii}(2(s-p)+d_{i}) + \tc_{ii}(2(s-p)-d_{i})}{2}.
$$ 
However we do not use this fact in this paper.
\end{Rem}

The following is the $t$-analog of Proposition~\ref{Prop:clTsys}

\begin{Prop} \label{Prop:qTsys}
Let $(\im, p), (\im, s) \in \hDs_{0}$ with $p<s$ as in {\rm Theorem~\ref{Thm:Tsys}}.
For $\varepsilon \in \{ +, - \}$ and any $m,m^\prime \in \cM^{\varepsilon}(\im; p, s)$, 
we have 
$$F_{t}(m)  F_{t}(m^\prime) = t^{\Nn(m,m^\prime)}F_{t}(m^\prime) F_{t}(m^\prime)$$
in the quantum Grothendieck ring $\cK_{t}$,
where $\Nn(m,m^\prime) \in \Z$ is as in~{\rm(\ref{eq:Nnmm})}. 
In other words, the elements in $\{ F_{t}(m) \mid m \in \cM^{\varepsilon}(\im; p, s)\}$  mutually commute 
up to powers of $t$.  
\end{Prop}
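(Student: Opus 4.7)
The plan is to mirror the case-by-case structure of the proof of Proposition~\ref{Prop:clTsys} and to reduce the claimed $t$-commutation to an identity in the quantum torus via the truncation machinery of Section~\ref{ssec:tr}. In each of the four configurations for $\{m, m'\}$ appearing there, all the monomials involve only variables $Y_{i, p'}$ with $p' \le s$, so by Lemma~\ref{Lem:si-so} we may choose a Q-datum whose height function $\xi$ satisfies $\hI_{\le \xi} = \hI_{\le s}$ (or $\hI_{\le s-1}$ in case~(\ref{Tsys:case2})). Then $m, m' \in \cM_{\le \xi}$, hence $F_t(m), F_t(m') \in \cK_{t, \le \xi}$ and both products $F_t(m)F_t(m')$ and $F_t(m')F_t(m)$ lie in the $\Z[t^{\pm 1/2}]$-subalgebra $\cK_{t, \le \xi}$. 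The injectivity of the truncation $(\cdot)_{\le \xi} \colon \cK_{t, \le \xi} \hookrightarrow \cY_{t, \le \xi}$ furnished by Proposition~\ref{Prop:tr} then reduces the statement to the identity
$$
(F_t(m) F_t(m'))_{\le \xi} = t^{\Nn(m, m')} (F_t(m') F_t(m))_{\le \xi}
$$
inside the quantum torus.

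For cases~(\ref{Tsys:case1}) and~(\ref{Tsys:case2}), the algorithmic form of $F_t(m)$ furnished by Theorem~\ref{Thm:Fform} together with Lemma~\ref{Lem:qKR} will imply that only the leading commutative monomial $\ul{m}$ of each factor survives the truncation: every non-leading correction $\ul{m}\tA^{-1}\chi$ introduces some factor $\tY_{\cdot, p'}^{\pm 1}$ with $p' > \xi$ that cannot be cancelled against any monomial appearing in $F_t(m')$. Hence both truncated products reduce to single commutative monomials $\ul{m}\ul{m'}$ and $\ul{m'}\ul{m}$, and the identity follows at once from the quantum-torus relation $\ul{m}\ul{m'} = t^{\Nn(m, m')}\ul{m'}\ul{m}$.

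Cases~(\ref{Tsys:case3}) and~(\ref{Tsys:case4}) are more delicate because compatible cancellations between the $\tA^{-1}$-corrections of $F_t(m)$ and $F_t(m')$ allow additional monomials to survive the truncation. For case~(\ref{Tsys:case3}) I would apply the involutive automorphism $\omega_t$ of Lemma~\ref{Lem:omgt} to transport the pair to $\{m^{(\im^*)}(-s - rh^\vee, -p - rh^\vee),\, m^{(\im^*)}[-s - rh^\vee, -p - rh^\vee)\}$, whose two members have tops strictly below their new upper bound and therefore fall into the truncation regime treated by the method of case~(\ref{Tsys:case2}). This reduction relies on the KR-specific identity $\omega_t(F_t(m)) = F_t(\omega^* m)$ flagged in the remark following Lemma~\ref{Lem:omgt}, which I would establish in tandem by comparing $\omega_t$ applied to the algorithmic formula of Theorem~\ref{Thm:Fform} with the corresponding formula for $\omega^* m$; combined with the anti-invariance $\Nn(\omega^* m, \omega^* m') = -\Nn(m, m')$, which follows from the symmetry $\tc_{i^* j^*}(u) = \tc_{ij}(u)$ of the inverse quantum Cartan matrix, the sign flip $\omega_t(t) = t^{-1}$ transfers the commutation back with the correct exponent. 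Case~(\ref{Tsys:case4}) is essentially invariant under $\omega_t$ and must therefore be handled by direct inspection: using Theorem~\ref{Thm:Fform} one lists the finite family of monomial pairs $(M, M')$ surviving the truncation and verifies for each that $\ul{M}\ul{M'} = t^{\Nn(m, m')}\ul{M'}\ul{M}$ in $\cY_{t, \le \xi}$, the key input being Proposition~\ref{Prop:Qwt}, which ensures that every such $M$ (respectively $M'$) shares the common $\cQ$-weight $\wt_\cQ(m)$ (respectively $\wt_\cQ(m')$). The main obstacle of the proof is thus the KR-specific identity $\omega_t(F_t(m)) = F_t(\omega^* m)$ unlocking case~(\ref{Tsys:case3}), together with the combinatorial bookkeeping required for case~(\ref{Tsys:case4}).
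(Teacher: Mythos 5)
Your overall architecture (truncation for cases~(\ref{Tsys:case1})--(\ref{Tsys:case2}), the involution $\omega_t$ for case~(\ref{Tsys:case3}), direct analysis for case~(\ref{Tsys:case4})) is the same as the paper's, and the first two cases are handled correctly. The remaining two cases, however, each contain a genuine gap. For case~(\ref{Tsys:case3}) everything hinges on the identity $\omega_t(F_t(m)) = F_t(\omega^* m)$ for KR monomials, and your plan to obtain it ``by comparing $\omega_t$ applied to the algorithmic formula of Theorem~\ref{Thm:Fform}'' does not go through: that theorem describes $F_t(m)$ as $\ul{m}$ plus $\tA^{-1}$-corrections hanging below the dominant monomial, whereas $\omega_t$ turns the picture upside down, so $\omega_t(F_t(m))$ is controlled from its \emph{anti-dominant} end, about which Theorem~\ref{Thm:Fform} says nothing. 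You would need to know a priori that $F_t(m)$ has a unique anti-dominant monomial whose coefficient is a power of $t$ --- exactly the kind of statement that fails for general $m$, as the remark after Lemma~\ref{Lem:omgt} shows. The paper instead proves \eqref{eq:omgtFt} for KR monomials by induction on $s-p$, applying the anti-automorphism $\omega_t \circ \ol{(\cdot)}$ to the quantum $T$-system \eqref{eq:qTsys} and comparing with the $T$-system for the reflected interval; this induction \emph{uses} the commutativity of case~(\ref{Tsys:case4}) as an input, so the logical order of your cases would in any event have to be reversed.

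For case~(\ref{Tsys:case4}) the proposed key input is incorrect: knowing that every monomial $M$ of $F_t(m)$ satisfies $\wt_{\cQ}(M) = \wt_{\cQ}(m)$ does not pin down the exponent in $\ul{M}\,\ul{M'} = t^{?}\,\ul{M'}\,\ul{M}$, because $\Nn(\cdot\,;\cdot)$ is skew-symmetric while $(\cdot,\cdot)$ is symmetric. Proposition~\ref{Prop:Nnwt} identifies $\Nn(i,p;j,s)$ with $(\wt_{\cQ}(Y_{i,p}),\wt_{\cQ}(Y_{j,s}))$ only up to a sign depending on the order of the spectral parameters, and in case~(\ref{Tsys:case4}) the spectral supports of the two factors interleave, so different surviving pairs $(M,M')$ genuinely $t$-commute with different exponents; moreover Theorem~\ref{Thm:Fform} gives no explicit list of such pairs to inspect. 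The paper's actual argument is an $\mathfrak{sl}_2$-reduction: since an element of $\cK_t$ is determined by the multiplicities of its \emph{dominant} monomials, it suffices to compare those multiplicities in the two products, and these agree with the corresponding $\mathfrak{sl}_2$ computation because the $t$-commutation relations among $\tY_{i,\cdot}$ and $\tA_{i,\cdot}^{-1}$ with a fixed index $i$ are those of $\mathfrak{sl}_2$. Some argument of this kind is needed; as written, your case~(\ref{Tsys:case4}) does not close.
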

\begin{proof}
We prove the assertions similarly as in the proof of Proposition~\ref{Prop:clTsys} by using the truncation.
Again we consider only the case $\varepsilon = +$ and divide the situation into four cases (\ref{Tsys:case1})--(\ref{Tsys:case4}).
For the first two cases (\ref{Tsys:case1}) and (\ref{Tsys:case2}), we simply apply Lemma~\ref{Lem:qKR} to obtain the conclusion.

To deal with the third case (\ref{Tsys:case3}), 
we recall the $\Z$-algebra involution $\omega_{t}$ of $\cK_t$ from Section~\ref{ssec:fDt}.
Temporarily, let us assume the following equality
\begin{equation} \label{eq:omgtKR}
\omega_t\left(F_t^{(\im)}[p,s]\right) = F_t^{(\im^*)}[-s-rh^\vee,-p-rh^\vee]
\end{equation}
holds for all $(\im,p), (\im,s) \in \hDs_0$ with $p \le s$.
Then, as special cases, we obtain 
$$
\omega_t\left(F_t^{(\im)}(p,s)\right) = F_t^{(\im^*)}(-s-rh^\vee,-p-rh^\vee), \quad 
\omega_t\left(F_t^{(\im)}[p,s]\right) = F_t^{(\im^*)}[-s-rh^\vee,-p-rh^\vee].
$$
Combining with the same argument as in the case~(\ref{Tsys:case2}), we get the conclusion.
We postpone the proof of \eqref{eq:omgtKR} to the last paragraph. 

For the remaining case~(\ref{Tsys:case4}), we apply an $\mathfrak{sl}_2$-reduction argument
as in \cite[Remark 9.10]{HO19}:
 the multiplicities of commutative dominant monomials in the polynomials $F_t^{(\im)}[p,s]F_t^{(\im)}(p,s)$ and $F_t^{(\im)}(p,s)F_t^{(\im)}[p,s]$
are the same as those for the corresponding polynomial for the $\mathfrak{sl}_2$-case up to overall power of $t^{1/2}$. 
This follows from the fact that, 
in the quantum torus, the $t$-commutation relations between $A_{i,s}^{-1}$, $A_{i,s'}^{-1}$ 
and between $Y_{i,s}$,$A_{i,s'}^{-1}$ (with the same index $i \in I$) are the same as in the $\mathfrak{sl}_2$-case 
($q$ being replaced by $q_i$). Consequently, as the corresponding polynomial in the $\mathfrak{sl}_2$-case commute up to a power of $t$, we obtain the result. 

Finally, we prove the above equality \eqref{eq:omgtKR} by induction on $s-p \ge 0$.
For the case $s=p$, it follows immediately from the fact $F_t^{(\im)}[p,p] = F_t(Y_{\bar{\im}, p}) = L_t(Y_{\bar{\im},p})$ 
and Lemma~\ref{Lem:omgt}. For the induction step, we apply the \emph{anti}-automophism $\bar{\omega}_t \seq \omega_t \circ \ol{(\cdot)}$ 
to the quantum $T$-system identity \eqref{eq:qTsys}. Using the induction hypothesis and the $t$-commutativity
between $F_{t}^{(\im)}[p,s]$ and $F_{t}^{(\im)}(p,s)$ proved in the last paragraph, we have
$$
F_{t}^{(\im^*)}[s',p') F_{t}^{(\im^*)}(s',p'] = t^{x'} \bar{\omega}_t\left(F_{t}^{(\im)}[p,s]\right) F_{t}^{(\im^*)}(s',p') + t^{y} F_{t}(M(\im^*; s', p'))
$$
for some $x' \in \frac{1}{2} \Z$, where $s' \seq -s-rh^\vee, p' \seq -p-rh^\vee$.
Comparing it with the quantum $T$-system identity~\eqref{eq:qTsys} for $(\im^*, s')$ and $(\im^*, p')$, we find
$\bar{\omega}_t\left(F_{t}^{(\im)}[p,s]\right) = t^{x''}F_{t}^{(\im^*)}[s',p']$ for some $x'' \in \frac{1}{2}\Z$.
The bar-invariance forces $x'' = 0$, which completes the proof.
\end{proof}

\begin{Rem} \label{Rem:qTsys}
The above argument implies that
an ordered product 
of $\{F_{t}^{(\jm)}(p,s)\}_{\jm \sim \im}$ 
with respect to any total ordering of the set $\{ \jm \in \Delta_{0} \mid \jm \sim \im \}$
is equal to $F_{t}(M(\im; p,s))$ up to a power of $t$.
Therefore, the quantum $T$-system (\ref{eq:qTsys}) can be expressed as
$$
F_{t}^{(\im)}[p,s) F_{t}^{(\im)}(p,s]=
t^{a}F_{t}^{(\im)}[p,s] F_{t}^{(\im)}(p,s) + 
t^{b} \prod_{\jm \sim \im}^{\to} F_{t}^{(\jm)}(p,s)
$$
for some $a, b \in \frac{1}{2}\Z$.
\end{Rem}

\section{Quantized coordinate algebras}
\label{sec:coord}

Let $\sg$ be a complex simple Lie algebra of type $\mathrm{ADE}$
and $\Delta$ denote its Dynkin diagram. 
In this section, we collect some preliminaries 
on the quantum coordinate algebra of the maximal unipotent subgroup $N_-$ associated with $\sg$ 
and its quantum cluster algebra structure.
We will freely use the notation in Section~\ref{ssec:notr}. 

\subsection{Quantized coordinate algebra $\cA_{v}[N_{-}]$} 
\label{ssec:coord}

Let $v$ be an indeterminate 
with a formal square root $v^{1/2}$.
Let $\cU_{v}(\sn_{-})$ be the $\Q(v^{1/2})$-algebra presented by the set of generators 
$\{ f_{\im} \mid \im \in \Delta_{0} \}$ satisfying
\emph{the quantum Serre relations}:
\begin{equation} \label{eq:qSerref}
\begin{cases}
f_{\im} f_{\jm} - f_{\jm} f_{\im} = 0 & \text{if $\im \not \sim \jm$}, \\
f_{\im}^{2}f_{\jm} - (v+v^{-1}) f_{\im} f_{\jm} f_{\im} + f_{\jm} f_{\im}^{2}
= 0 & \text{if $\im \sim \jm$}.
\end{cases}
\end{equation}
This is regarded as a quantized enveloping algebra 
of the nilpotent Lie subalgebra $\sn_{-}$ of $\sg$
corresponding to the negative part. 
For each $\im \in \Delta_{0}$, 
we write $\wt(f_{\im}) = - \alpha_{\im}$, which endows the algebra $\cU_{v}(\sn_-)$ with a natural $(-\sQ^+)$-grading.  

The quantized coordinate algebra $\cA_{v}[N_-]$ is a $\Z[v^{\pm 1/2}]$-subalgebra of $\cU_{v}(\sn_-)$ 
which is dual to the canonical integral form of $\cU_{v}(\sn_-)$ generated by the divided powers 
$\{ f_{\im}^{m}/[m]_{v}! \mid \im \in \Delta_{0}, m \in \Z_{\ge 0} \}$ with respect to a suitable pairing.
See Appendix~\ref{ssec:cA} for its precise definition.
Here we recall that it satisfies the following properties:
$$
\Q(v^{\pm 1/2}) \otimes_{\Z[v^{\pm 1/2}]} \cA_{v}[N_{-}] \simeq \cU_{v}(\sn_{-}), \qquad
\C \otimes_{\Z[v^{\pm 1/2}]} \cA_{v}[N_{-}] \simeq \C[N_{-}],
$$
where $\C$ is regarded as a $\Z[v^{\pm 1/2}]$-algebra
via $v^{1/2} \mapsto 1$,
and $\C[N_{-}]$ is the coordinate algebra of the complex unipotent algebraic group $N_{-} \seq \exp (\sn_{-})$.
Note that $\C[N_{-}]$ is commutative.
We have an obvious algebra homomorphism 
$$
\evv \colon \cA_{v}[N_{-}] \to \C \otimes_{\Z[v^{\pm 1/2}]} \cA_{v}[N_{-}] \simeq \C[N_{-}]
$$  
called \emph{the specialization at $v=1$}.

For each $\lambda \in \sP^{+} \seq \sum_{\im \in \Delta_{0}} \Z_{\ge 0} \varpi_{\im}$
and Weyl group elements $w, w^\prime \in \sW$, 
we can define a specific homogeneous element $\tD_{w\lambda, w^{\prime} \lambda}$ of $\cA_{v}[N_{-}]$ 
called \emph{a normalized quantum unipotent minor}, as in \cite[Section~6]{Kimura12}, \cite[Section~5]{GLS13}.
Its precise definition is recalled in Appendix~\ref{ssec:minor}. 
From the construction, we have $\wt( \tD_{w\lambda, w^{\prime} \lambda}) = w \lambda - w^{\prime}\lambda$ when $\tD_{w\lambda, w^{\prime} \lambda}\neq 0$,
and 
$$
\tD_{w\lambda, w^{\prime}\lambda}
= \begin{cases}
1 & \text{if $w\lambda= w^{\prime}\lambda$}, \\
0 & \text{if $w\lambda- w^{\prime}\lambda \not \in - \sQ^{+}$}.
\end{cases}
$$ 

\subsection{Commutation classes} 
\label{ssec:cc}

Before describing the quantum cluster algebra structure of the quantized coordinate algebra $\cA_{v}[N_-]$,
we have to recall the generalities on the commutation classes
in the Weyl group $\sW$.

Two sequences $\bfi$ and $\bfi^{\prime}$ of elements of $\Delta_{0}$
are said to be \emph{commutation equivalent}
if $\bfi^{\prime}$ is obtained from $\bfi$ by applying a sequence of operations
which transform some adjacent components $(\im, \jm)$ such that $\im \not \sim \jm$ into $(\jm,\im)$.
This defines an equivalence relation.

For $w \in \sW$,
let $\cI(w) \subset \Delta_0^{\ell(w)}$ denote the set of all reduced words for $w$.
We refer to the equivalent class containing $\bfi$ as \emph{the commutation class of $\bfi$}
and
denote it by $[\bfi]$.
Note that the set $\cI(w)$ is divided into a disjoint union of
commutation classes.
In this paper, we only consider the case $w = w_{0}$.

For a reduced word  $\bfi = (\im_{1}, \ldots, \im_{\ell_{0}}) \in \cI(w_0)$,
we have
$\sR^{+}=\{  \beta^{\bfi}_{k} \mid 1 \le k \le \ell_{0} \}$, where
$\beta^{\bfi}_{k} \seq s_{\im_{1}}\cdots s_{\im_{k-1}}(\alpha_{\im_{k}})$.
Thus the reduced word $\bfi$ defines a total order $<_{\bfi}$ on $\sR^{+}$,
namely we write $\beta^{\bfi}_k <_{\bfi} \beta^{\bfi}_l$ if $k < l$.
Note that the ordering $<_{\bfi}$ is \emph{convex} in the sense that if
$\alpha,\beta,\alpha+\beta \in \sR^+$,
we have either $\alpha <_{\bfi} \alpha+\beta <_{\bfi} \beta$
or $\beta <_{\bfi} \alpha+\beta <_{\bfi} \alpha$.
For a commutation class $[\bfi]$ in $\cI(w_{0})$,
we define the convex partial ordering $\preceq_{[\bfi]}$ on $\sR^{+}$ so that
$$
\text{$\alpha \preceq_{[\bfi]} \beta$ if and only if $\alpha <_{\bfi^{\prime}} \beta$ for all $\bfi^{\prime} \in [\bfi]$}.
$$

For a commutation class $[\bfi]$ in $\cI(w_{0})$ and a positive root $\alpha \in \sR^{+}$,
we define \emph{the $[\bfi]$-residue of $\alpha$}, denoted by $\res^{[\bfi]}(\alpha)$,
to be $\im_k \in \Delta_{0}$ if we have $\alpha = \beta^{\bfi}_{k}$ with
$\bfi = (\im_{1}, \ldots, \im_{\ell_{0}})$.
Note that this is well-defined, i.e.~if
$\alpha = \beta^{\bfi^{\prime}}_{l}$
for another $\bfi^{\prime} = (\im_{1}^{\prime}, \ldots, \im_{\ell_{0}}^{\prime}) \in [\bfi]$, then we have $\im_{l}^{\prime} = \im_{k}$.

In~\cite{OS19a}, \emph{the combinatorial Auslander-Reiten quiver}
$\Upsilon_{\bfi}$ was introduced for each reduced word $\bfi = (\im_{1}, \ldots, \im_{\ell_{0}}) \in \cI(w_{0})$.
By definition, it is a quiver whose vertex set is $\sR^{+}$
and we have an arrow in $\Upsilon_{\bfi}$ from $\beta^{\bfi}_k$ to $\beta^{\bfi}_j$
if and only if $1\leq j < k \leq \ell_0$, $\im_{j} \sim \im_{k}$
and there is no index $j'$ between $j$ and $k$ such that $\im_{j'} \in \{  \im_j, \im_k\}$.
The quiver $\Upsilon_{\bfi}$ satisfies the following nice properties.
We say that a total ordering $\sR^{+} = \{\beta_{1}, \beta_{2}, \ldots, \beta_{\ell_0} \}$
is \emph{a compatible reading of $\Upsilon_{\bfi}$} if we have $k<l$
whenever there is an arrow $\beta_{l} \to \beta_{k}$ in $\Upsilon_{\bfi}$.

\begin{Thm}[\cite{OS19a}]
\label{Thm:OS19a}
For a commutation class $[\bfi]$ in $\cI(w_{0})$, we have the followings$\colon$
\begin{enumerate}
\item
If $\bfi^{\prime} \in [\bfi]$, then $\Upsilon_{\bfi} = \Upsilon_{\bfi^{\prime}}$. Hence $\Upsilon_{[\bfi]}$ is well-defined.
\item \label{OS19a:Hasse}
For $\alpha, \beta \in \sR^{+}$, we have
$\alpha \preceq_{[\bfi]} \beta$ if and only if there exists an oriented  path from $\beta$ to $\alpha$ in $\Upsilon_{[\bfi]}$.
In other words, the quiver $\Upsilon_{[\bfi]}$ is identical to the Hasse quiver of the partial ordering $\preceq_{[\bfi]}$.
\item A sequence $\bfi^{\prime} = (\im^{\prime}_{1}, \ldots, \im_{\ell_0}^{\prime}) \in \Delta_{0}^{\ell_0}$
belongs to the commutation class $[\bfi]$ if and only if there is a compatible reading
$\sR^{+} = \{\beta_{1}, \ldots, \beta_{\ell_0} \}$ of $\Upsilon_{[\bfi]}$
such that $\im^{\prime}_{k} = \res^{[\bfi]}(\beta_{k})$ for all $1 \le k \le \ell_0$.
\end{enumerate}
\end{Thm}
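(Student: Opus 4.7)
The plan is to prove the three assertions in sequence, building (3) upon (1) and (2).

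For assertion (1), I will reduce to the case that $\bfi'$ is obtained from $\bfi$ by a single elementary commutation, namely swapping a pair of consecutive entries $(\im_{j},\im_{j+1})$ with $\im_{j}\not\sim \im_{j+1}$, since such moves generate the equivalence relation. Because $s_{\im_j}$ and $s_{\im_{j+1}}$ commute, the positive roots $\beta^{\bfi}_{k}$ are unchanged for $k\notin\{j,j+1\}$, and the two roots at positions $j, j+1$ are interchanged. I then verify from the definition of $\Upsilon_\bfi$ that the arrow set is unchanged: no arrow connects positions $j$ and $j+1$ because $\im_j\not\sim\im_{j+1}$, and the ``no intermediate occurrence'' condition for arrows between a position in $\{j,j+1\}$ and some third position $k$ is preserved because an intermediate occurrence of $\im_j$ can only block arrows with Dynkin label $\im_j$, and symmetrically for $\im_{j+1}$. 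This check is routine but must be done on a case-by-case basis according to whether $k<j$ or $k>j+1$.

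For assertion (2), I first observe that $\preceq_{[\bfi]}$ is a partial order on $\sR^+$ by convexity of each $<_{\bfi'}$. One direction is straightforward: if there is an arrow $\beta\to\alpha$ in $\Upsilon_{[\bfi]}$, then by construction any reduced word $\bfi'\in[\bfi]$ must place $\alpha$ before $\beta$ (otherwise, to get $\beta$ before $\alpha$ one would need to commute past an adjacent-residue occurrence, which is forbidden). Composing arrows along a path then gives $\alpha\preceq_{[\bfi]}\beta$ in general.

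The converse direction of (2) is the crux. Assume $\alpha\prec_{[\bfi]}\beta$, write $\alpha=\beta^{\bfi}_l$, $\beta=\beta^{\bfi}_k$ in a fixed representative $\bfi=(\im_1,\ldots,\im_{\ell_0})$, and proceed by induction on $k-l$. If $\im_l\sim\im_k$ and no index $l<l'<k$ has $\im_{l'}\in\{\im_l,\im_k\}$, then there is a direct arrow $\beta\to\alpha$ in $\Upsilon_{\bfi}=\Upsilon_{[\bfi]}$. Otherwise, I will locate an intermediate index $l<m<k$ with $\im_m\in\{\im_l,\im_k\}$ (or with $\im_m$ adjacent to one of them, blocking a direct commutation) and consider $\gamma=\beta^{\bfi}_m$. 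The non-trivial step is to show that the choice of $m$ can be made so that both $\alpha\preceq_{[\bfi]}\gamma$ and $\gamma\preceq_{[\bfi]}\beta$ hold strictly, so that the inductive hypothesis on the strictly smaller intervals $[l,m]$ and $[m,k]$ yields the desired path. For this, I would argue by contradiction: if no such $m$ exists, then starting from $\bfi$ I can produce, via a sequence of commutation moves, a reduced word $\bfi''\in[\bfi]$ in which $\beta$ precedes $\alpha$, contradicting $\alpha\prec_{[\bfi]}\beta$.

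For assertion (3), I combine (1) and (2). Given $\bfi'=(\im'_1,\ldots,\im'_{\ell_0})\in[\bfi]$, the sequence $(\beta^{\bfi'}_1,\ldots,\beta^{\bfi'}_{\ell_0})$ is a linear extension of $\preceq_{[\bfi]}$ by part (2), hence a compatible reading of $\Upsilon_{[\bfi]}$, and manifestly $\res^{[\bfi]}(\beta^{\bfi'}_k)=\im'_k$. Conversely, given a compatible reading $(\beta_1,\ldots,\beta_{\ell_0})$, I verify that the resulting sequence of residues is a reduced word for $w_0$ lying in $[\bfi]$ by connecting it to the fixed $\bfi$ through a sequence of elementary transpositions of incomparable pairs in the compatible reading, each of which corresponds to an elementary commutation move on words. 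The main obstacle in the whole program is the converse direction of (2) outlined above: showing that every pair $\alpha\prec_{[\bfi]}\beta$ is witnessed by an explicit oriented path in $\Upsilon_{[\bfi]}$, equivalently that $\Upsilon_{[\bfi]}$ is the Hasse diagram of $\preceq_{[\bfi]}$. Without this, the linear extensions of $\preceq_{[\bfi]}$ realized by compatible readings of $\Upsilon_{[\bfi]}$ might fail to exhaust $[\bfi]$, and the proof of (3) breaks down.
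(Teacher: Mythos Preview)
The paper does not prove this theorem: it is quoted verbatim from \cite{OS19a} and no argument is supplied in the text. There is therefore no ``paper's own proof'' to compare your proposal against; the authors simply import the result as a black box for use in Sections~\ref{ssec:cc}--\ref{ssec:cl} and in the proof of Theorem~\ref{Thm:HLO}.

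That said, your sketch follows the standard line of argument for results of this type, and parts (1) and the forward direction of (2) are routine and correctly outlined. The genuine content, as you correctly identify, lies in the converse of (2). Your inductive scheme on $k-l$ is reasonable, but the step ``locate an intermediate index $m$ with $\alpha\preceq_{[\bfi]}\gamma\preceq_{[\bfi]}\beta$'' is not yet justified: having $l<m<k$ in one fixed representative $\bfi$ does not by itself force $\gamma=\beta^{\bfi}_m$ to lie between $\alpha$ and $\beta$ in the partial order $\preceq_{[\bfi]}$, which is the intersection over \emph{all} representatives. Your proposed contradiction argument (produce a commutation-equivalent word reversing $\alpha$ and $\beta$) is the right idea, but it needs to be made precise; in practice one argues that if no intermediate index has residue in $\{\im_l,\im_k\}$ or adjacent to both, then one can slide $\im_k$ leftward past all intervening letters by commutation moves until it sits next to $\im_l$, and then either they commute (contradiction) or there is a direct arrow. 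For (3), your reduction to adjacent transpositions of incomparable pairs is correct, but you should explicitly note why two positions adjacent in a compatible reading and incomparable in $\preceq_{[\bfi]}$ must have non-adjacent residues (hence the swap is a legal commutation move): this follows because an arrow of $\Upsilon_{[\bfi]}$ would otherwise connect them, forcing comparability by the forward direction of (2).
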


For a reduced word $\bfi = (\im_{1}, \im_{2}, \ldots, \im_{\ell_0}) \in \cI(w_{0})$,
we have $\bfi^{\prime} = (\im_{2}, \ldots, \im_{\ell_0}, \im^{*}_{1}) \in \cI(w_0)$ and
$[\bfi] \neq [\bfi^{\prime}]$.
This operation is referred to as \emph{a combinatorial reflection functor} and we write $\bfi^{\prime} = r_{\im_{1}} \bfi$.
We have the induced operation on commutation classes (i.e.,~$r_{\im_1} [\bfi] \seq [r_{\im_{1}} \bfi]$ is well-defined).
The relations $[\bfi] \overset{r}{\sim} [r_{\im}\bfi]$ for
various $\im \in \Delta_{0}$
generate an equivalence relation, called \emph{the reflection equivalent relation} $\overset{r}{\sim}$,
on the set of commutation classes in $\cI(w_{0})$.
For a given reduced word $\bfi \in \cI(w_0)$, the family of commutation classes
$[\![\bfi]\!] \seq \{ [\bfi^{\prime}] \mid [\bfi^{\prime}] \overset{r}{\sim} [\bfi] \}$
is called \emph{an $r$-cluster point}.

\begin{Thm}[\cite{OS19b, FO21}] \label{Thm:OS19b}
For each Q-datum $\cQ$ for $\fg$, there exists a unique commutation class $[\cQ]$ in the set $\cI(w_0)$
of reduced words for $w_0$
such that 
\begin{enumerate}
\item $[\cQ]$ consists of all the reduced words adapted to $\cQ$ in the sense of Definition~\ref{Def:adapt},
\item there is an isomorphism $\Upsilon_{[\cQ]} \to \Gamma_{\cQ}$ of quivers
whose underlying bijection $\sR^{+} \to (\Gamma_{\cQ})_{0}$ between the vertex sets 
coincides with the bijection $\phi_{\cQ}^{-1}(\cdot , 0)$,
\item for each $\alpha \in \sR^{+}$, we have $\res^{[\cQ]}(\alpha) = \im$ 
if $\phi_{\cQ}^{-1}(\alpha, 0) = (\im, p)$.
\end{enumerate}
Moreover, the set 
$ \{ [\cQ] \mid \text{\rm $\cQ$ is a Q-datum for $\fg$}\}$ forms a single $r$-cluster point.  
\end{Thm}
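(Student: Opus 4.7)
The plan is to build the commutation class $[\cQ]$ by induction on the source-reflection structure of Q-data, and then transfer known combinatorial facts about the quiver $\Upsilon_{[\cQ]}$ to $\Gamma_\cQ$ via the coordinate map $\phi_\cQ$. The main technical input is Proposition~\ref{Prop:Cox}, which already provides a reduced word adapted to $\cQ$ whenever $\cQ$ satisfies \eqref{cond:Qtau}.

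First, I would construct an explicit adapted reduced word. Assuming \eqref{cond:Qtau}, Proposition~\ref{Prop:Cox} gives the ``Coxeter slice'' $(i_1^\circ, \ldots, i_n^\circ)$ in any order compatible with $\xi_{i_1^\circ} \ge \cdots \ge \xi_{i_n^\circ}$. Concatenating such slices for $\cQ$, $\tau_\cQ \cQ$, $\tau_\cQ^2 \cQ$, \ldots and using $\mathrm{ord}(\tau_\cQ) = rh^\vee$ together with the identity $nrh^\vee = 2\ell_0$ (Section~\ref{ssec:notr}), one obtains a length-$\ell_0$ sequence in $\cI(w_0)$ adapted to $\cQ$. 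For a general $\cQ$, I would apply successive source reflections $s_{\im}\cQ$ (Definition~\ref{Def:adapt}) to reduce to the case \eqref{cond:Qtau}; the corresponding sequence $(\im, \ldots)$ prepended to the reduced word for the reduced Q-datum is then adapted to $\cQ$.

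Next I would prove that all reduced words adapted to $\cQ$ form a single commutation class. The key observation is that at each stage the set of sources of the current Q-datum consists of pairwise non-adjacent vertices of $\Delta$, so the order in which simultaneous sources are removed only alters the reduced word by swapping commuting adjacent letters. An inductive argument on the length then shows that any two adapted reduced words are commutation-equivalent, yielding a well-defined class $[\cQ]$. The converse (that every word in $[\cQ]$ is adapted to $\cQ$) is immediate since adaptedness is preserved under commutation moves of non-adjacent letters.

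For the quiver isomorphism $\Upsilon_{[\cQ]} \simeq \Gamma_\cQ$ together with the residue identity, I would use Theorem~\ref{Thm:OS19a}\eqref{OS19a:Hasse}, which says $\Upsilon_{[\cQ]}$ is the Hasse quiver of the convex order $\preceq_{[\cQ]}$. The map $\alpha \mapsto \phi_\cQ^{-1}(\alpha,0)$ is a bijection $\sR^+ \to (\Gamma_\cQ)_0$ by Proposition~\ref{Prop:tAR}. From the recursive definition of $\phi_\cQ$ and \eqref{eq:gamma}, the ``first'' positive root in any adapted word reads as $\res^{[\cQ]}(\alpha) = \im$ exactly when $\phi_\cQ^{-1}(\alpha,0) = (\im,p)$, yielding the residue statement. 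The arrow matching then follows by comparing the edges of $\Gamma_\cQ$ (from Definition~\ref{Def:repQ}) with those of $\Upsilon_{[\cQ]}$: an arrow from $(\jm,s)$ to $(\im,p)$ in $\Gamma_\cQ$ corresponds, via $\phi_\cQ$, to the covering relation of $\preceq_{[\cQ]}$ created by the minimal separation $s-p = d_{\bar\im\bar\jm}$ with $\im\sim\jm$, which is precisely the Hasse edge in $\Upsilon_{[\cQ]}$.

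Finally, for the $r$-cluster point statement I would verify that the combinatorial reflection functor $r_{\im}$ on commutation classes corresponds, under $\cQ \mapsto [\cQ]$, to the source mutation $\cQ \mapsto s_{\im}\cQ$: if $\im$ is a source of $\cQ$, any reduced word in $[\cQ]$ can be written $(\im, \im_2, \ldots, \im_{\ell_0})$, and removing $\im$ and appending $\im^*$ yields a reduced word adapted to $s_{\im}\cQ$, showing $r_{\im}[\cQ] = [s_{\im}\cQ]$. Transitivity of source mutations on the set of Q-data (a consequence of $\mathrm{ord}(\tau_\cQ)=rh^\vee$, Proposition~\ref{Prop:Coxord}) then delivers the single-$r$-cluster-point claim. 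The main obstacle will be a clean inductive bookkeeping of source mutations versus commutation moves when $\cQ$ does not satisfy \eqref{cond:Qtau}, since several sources may coexist and their reflections must be shown to commute both on the Q-datum side and on the reduced-word side in compatible ways.
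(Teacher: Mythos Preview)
The paper does not give its own proof of this theorem: it is stated as a citation of results established in \cite{OS19b, FO21}, with no argument supplied in the present paper. There is therefore nothing to compare your proposal against.

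That said, your outline is broadly in the spirit of how these results are actually proved in the cited references. A few remarks on soundness. Your observation that the set of sources of a Q-datum consists of pairwise non-adjacent vertices is correct (if $\im \sim \jm$ were both sources, the height inequalities $\xi_\im > \xi_\jm$ and $\xi_\jm > \xi_\im$ would collide), and this does give the commutation-equivalence of adapted words locally. However, the concatenation-of-slices construction needs more care in the non-simply-laced case: the length-$n$ word $(i_1^\circ,\ldots,i_n^\circ)$ from Proposition~\ref{Prop:Cox} corresponds to one application of $\tau_\cQ$, but $\tau_\cQ$ has order $rh^\vee$ while $\ell_0 = nrh^\vee/2$, so iterating $h^\vee$ times gives length $nh^\vee \neq \ell_0$ when $r>1$; the correct slicing uses the fact that after each source reflection the new Q-datum again satisfies \eqref{cond:Qtau} only up to a $\sigma$-twist, and one must track how the $i^\circ$'s move under $\sigma$ (this is handled carefully in \cite{FO21}). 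Likewise, your claim that transitivity of source mutations on all Q-data follows from $\mathrm{ord}(\tau_\cQ)=rh^\vee$ alone is too quick; one also needs that source reflections at distinct (non-adjacent) sources commute on Q-data and that iterating them sweeps out all height functions, which in \cite{FO21} is proved by a direct combinatorial argument on $\hDs$. None of these are fatal gaps, but your sketch would require the detailed case analysis carried out in the cited papers to become a proof.
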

\subsection{Quantum cluster algebra structure}
\label{ssec:cl}
In this subsection, we briefly describe the quantum cluster algebra structure on the quantum coordinate algebra $\cA_{v}[N_-]$
based on \cite{GLS13}. Here we do not recall the general definition of a quantum cluster algebra,
which is reviewed in Appendix~\ref{sec:QCA}. 

We fix a reduced word $\bfi = (\im_{1}, \ldots, \im_{\ell_0}) \in \cI(w_{0})$ and 
use the following notation:
\begin{align*}
w_{\le k} &\seq s_{\im_1} s_{\im_2} \cdots s_{\im_k}, \quad w_{\le 0} \seq 1, \\
k^{+} &\seq \min(\{k+1 \le j \le \ell_0 \mid \im_j = \im_k \} \cup \{ \ell_0 + 1 \}), \\
k^{-} &\seq \max(\{0\} \cup \{1 \le j \le k-1 \mid \im_{j} = \im_k \}), \\
k^{-}(\im) &\seq \max(\{0\} \cup \{1 \le j \le k-1 \mid \im_j = \im \})  
\end{align*}
for $k = 1,2, \ldots, \ell_0$ and $\im \in \Delta_{0}$ with $\im \sim \im_{k}$.

\begin{Prop}[{\cite[Proposition 10.1, Lemma 11.3]{GLS13}}] \label{Prop:cp}
For a fixed reduced word $\bfi = (\im_{1}, \ldots, \im_{\ell_0}) \in \cI(w_{0})$,
we set
$$
J\seq \{ 1, 2, \ldots, \ell_{0} \},  \qquad
J_{f} \seq \{ j \in J \mid j^{+} = \ell_0 +1\}, \qquad
J_{e} \seq J \setminus J_{f}.  
$$
Define the $\Z$-valued $J\times J_{e}$-matrix $\tB^{\bfi} = (b_{st})_{s \in J, t \in J_{e}}$ as
$$
b_{st} = \begin{cases}
1 & \text{if either {\rm (i)} $t=s^{+}$, or  {\rm (ii)} $t<s<t^{+}<s^{+}$ and $\im_{s} \sim \im_{t}$}, \\
-1 & \text{if either {\rm (i)} $s=t^{+}$, or {\rm (ii)} $s<t<s^{+}<t^{+}$ and $\im_{s} \sim \im_{t}$}, \\
0 & \text{otherwise}.
\end{cases}
$$
Define the $\Z$-valued $J \times J$-skew-symmetric matrix $\Lambda^{\bfi} = (\Lambda_{st})_{s,t \in J}$ as
$$
\Lambda_{st} = (\varpi_{\im_s} - w_{\le s}\varpi_{\im_s}, \varpi_{\im_t} + w_{\le t}\varpi_{\im_t}) \quad \text{for $s<t$}.
$$ 
Then $(\Lambda^{\bfi}, \tB^{\bfi})$ forms a compatible pair, which satisfies
$$
\sum_{k \in J} b_{ks}\Lambda_{kt} = 2 \delta_{s, t}
$$
for all $s \in J_{e}$ and $t \in J$.
\end{Prop}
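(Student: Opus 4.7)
The plan is to follow the approach of Geiss-Leclerc-Schr\"{o}er in~\cite{GLS13}, since the proposition is essentially a combination of their Proposition~10.1 and Lemma~11.3. The first step is to unpack the notion of compatible pair from Appendix~\ref{sec:QCA}: by definition, a pair $(\Lambda, \tB)$ with $\Lambda$ a skew-symmetric $J \times J$ matrix and $\tB$ a $J \times J_e$ matrix is compatible if $\tB^{T} \Lambda$ equals $D \cdot [\, \id_{J_e} \mid 0 \,]$ for some diagonal matrix $D$ with positive integer entries. Under this rewriting, the identity displayed in the proposition is exactly the equation $(\tB^{\bfi})^{T} \Lambda^{\bfi} = 2 \cdot [\, \id_{J_e} \mid 0 \,]$, so that the second statement implies the first with $D = 2 \cdot \id$. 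Hence the task reduces to verifying this single bilinear identity.

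For the verification, I would fix $s \in J_e$ and $t \in J$ and isolate the indices $k \in J$ contributing a non-zero value of $b_{ks}$. By the explicit formula, these are exactly $k \in \{ s^{-}, s^{+} \}$ together with indices $k$ satisfying $\im_k \sim \im_s$ and appearing in one of the two ``crossing'' configurations $t < s < t^{+} < s^{+}$ or $s < t < s^{+} < t^{+}$. Substituting the formula for $\Lambda_{kt}$ and collecting terms, the sum becomes a finite linear combination of pairings of the form $(\mu_k, \varpi_{\im_t} + w_{\le t} \varpi_{\im_t})$, where the weights $\mu_k = \varpi_{\im_k} - w_{\le k} \varpi_{\im_k}$ belong to $\sQ^{+}$.

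The main step is then to recognize the telescoping structure hidden in these terms. Using $w_{\le k} = w_{\le k^{-}} s_{\im_k}$ together with the rule $s_{\im_s}(\varpi_{\jm}) = \varpi_{\jm} - \delta_{\im_s, \jm} \alpha_{\im_s}$, the contributions coming from $k = s^{-}$ and $k = s^{+}$ combine with the crossing contributions so that most of the weights $\mu_k$ cancel pairwise, leaving behind a unique surviving term $(\alpha_{\im_s}, \varpi_{\im_t} + w_{\le t}\varpi_{\im_t})$. A direct inspection of how many times the reflection $s_{\im_s}$ occurs in the reduced expression $w_{\le t} = s_{\im_1} \cdots s_{\im_t}$ then yields $2 \delta_{s, t}$, using $(\alpha_{\im_s}, \varpi_{\im_s}) = 1$ and the fact that $\varpi_{\im_t} + w_{\le t} \varpi_{\im_t}$ is orthogonal to $\alpha_{\im_s}$ precisely when $s \neq t$.

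The main technical obstacle will be the combinatorial case analysis forced by the skew-symmetry convention (the formula for $\Lambda_{st}$ is only stated for $s < t$, and the case $s > t$ must be extracted using $\Lambda_{st} = -\Lambda_{ts}$) together with the three-way comparisons between $s$, $s^{+}$, and $t$. A cleaner alternative I would attempt in parallel is to argue by induction on $\ell_0 = \ell(w_0)$: removing the first letter $\im_1$ of $\bfi$ yields a shorter reduced word for $s_{\im_1} w_0$ with analogous matrices, which plays the role of the inductive hypothesis, and the base case $\ell_0 = 1$ is immediate. This induction is in the spirit of~\cite{GLS13} and sidesteps the heaviest bookkeeping.
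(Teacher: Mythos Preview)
The paper does not give its own proof of this proposition: it is stated with the citation \cite[Proposition 10.1, Lemma 11.3]{GLS13} and no proof follows. So there is nothing in the paper to compare your attempt against; the authors simply import the result.

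As for your sketch itself, the overall strategy---reduce to the single identity $\sum_{k} b_{ks}\Lambda_{kt}=2\delta_{s,t}$ and verify it by listing the $k$ with $b_{ks}\neq 0$ and exploiting a telescoping among the weights $\mu_k=\varpi_{\im_k}-w_{\le k}\varpi_{\im_k}$---is indeed what \cite{GLS13} does. However, your description of the outcome of the telescoping is not correct. Because $\Lambda$ is only given by the displayed formula for $k<t$ and one must use $\Lambda_{kt}=-\Lambda_{tk}$ for $k>t$, the sum splits into two pieces of different shape, and they do not collapse to the single pairing $(\alpha_{\im_s},\varpi_{\im_t}+w_{\le t}\varpi_{\im_t})$. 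A quick sanity check already rules this out: in type $\mathrm{A}_2$ with $\bfi=(1,2,1)$ and $s=t=1$ one has $(\alpha_1,\varpi_1+s_1\varpi_1)=(\alpha_1,2\varpi_1-\alpha_1)=2-2=0$, not $2$. The actual cancellation in \cite{GLS13} produces instead combinations of the roots $\beta_s=w_{\le s-1}\alpha_{\im_s}$ and $\beta_{s^+}$, and the final evaluation uses $(\beta_s,\varpi_{\im_t}+w_{\le t}\varpi_{\im_t})$ together with a convexity argument for reduced words. Your inductive alternative is plausible but note that deleting $\im_1$ gives a reduced word for $s_{\im_1}w_0$, not $w_0$, so the inductive hypothesis is not literally the same statement and must be formulated for arbitrary $w\in\sW$, as in \cite{GLS13}.
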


Recall the notation $\beta^{\bfi}_{k} = w_{\le k-1} \alpha_{\im_{k}}$ for $1 \le k \le \ell_0$
from Section~\ref{ssec:cc}. For each $\alpha \in \sR^{+}$, there exists a unique $k$ such that
$\alpha = \beta^{\bfi}_{k}$. Then we define
\begin{align*}
\alpha^{+} &\seq \begin{cases}
\beta^{\bfi}_{k^{+}} & \text{if $k^{+} \le \ell_{0}$}, \\
0 & \text{if $k^{+} = \ell_{0} + 1$},
\end{cases}
&
\alpha^{-} &\seq \begin{cases}
\beta^{\bfi}_{k^{-}} & \text{if $k^{-} > 0$}, \\
0 & \text{if $k^{-} = 0$},
\end{cases}
\\
\alpha^{-}(\im) &\seq \begin{cases}
\beta^{\bfi}_{k^{-}(\im)} & \text{if $k^{-}(\im) > 0$}, \\
0 & \text{if $k^{-}(\im) = 0$},
\end{cases}
&
\lambda_{\alpha}
&\seq w_{\le k}\varpi_{\im_k}, 
\end{align*}
where $\im \in \Delta_{0}$ is an element such that $\im \sim \im_k$.
Since $s_{\im} \varpi_{\jm} = \varpi_{\jm} - \delta_{\im,\jm}\alpha_{\im}$, we have
\begin{equation} \label{eq:lamalph}
\lambda_{\alpha^{-}} = \lambda_{\alpha} + \alpha
\end{equation}
for each $\alpha \in \sR^{+}$. 
Here we understand $\lambda_{\alpha^{-}} = \varpi_{\im_{k}}$
if $\alpha^{-} = 0$ (i.e.~if $k^{-} = 0$).

Recall that $\im_k = \res^{[\bfi]}(\alpha)$ depends only on the commutation class $[\bfi]$.
It is easy to see that the assignments 
$\alpha \mapsto \alpha^{+}, \alpha^{-}, \alpha^{-}(\im), \lambda_{\alpha}$
also depend only on the commutation class $[\bfi]$. 
Thus Proposition~\ref{Prop:cp} can be rephrased as follows.

\begin{Prop} \label{Prop:cp2}
For a fixed commutation class $[\bfi]$ in $\cI(w_{0})$,
we set
$$
J\seq \sR^{+},  \qquad
J_{f} \seq \{ \alpha \in \sR^{+} \mid \alpha^{+} = 0 \}, \qquad
J_{e} \seq J \setminus J_{f}.  
$$
Define the $\Z$-valued $J\times J_{e}$-matrix $\tB^{[\bfi]} = (b_{\alpha \beta})_{\alpha \in J, \beta \in J_{e}}$ as
$$
b_{\alpha \beta} = \begin{cases}
1 & \text{if either {\rm (i)} $\beta=\alpha^{+}$, or {\rm (ii)} $\beta \prec_{[\bfi]} \alpha \prec_{[\bfi]} \beta ^{+}\prec_{[\bfi]} \alpha^{+}$ and $\im\sim\jm$}, \\
-1 & \text{if either {\rm (i)} $\alpha=\beta^{+}$, or {\rm (ii)} $\alpha \prec_{[\bfi]} \beta \prec_{[\bfi]} \alpha^{+} \prec_{[\bfi]} \beta^{+}$ and $\im\sim\jm$}, \\
0 & \text{otherwise}.
\end{cases}
$$
Here $\im = \res^{[\bfi]}(\alpha), \jm = \res^{[\bfi]}(\beta)$.
Define the $\Z$-valued $J \times J$-skew-symmetric matrix $\Lambda^{[\bfi]} = (\Lambda_{\alpha \beta})_{\alpha, \beta \in J}$ as
$$
\Lambda_{\alpha \beta} = 
(\varpi_{\im} - \lambda_{\alpha}, \varpi_{\jm} + \lambda_{\beta}) \qquad \text{for $\beta \not \preceq_{[\bfi]} \alpha$},
$$
where $\im = \res^{[\bfi]}(\alpha), \jm = \res^{[\bfi]}(\beta)$.
Then $(\Lambda^{[\bfi]}, \tB^{[\bfi]})$ forms a compatible pair, which satisfies
$$
\sum_{\gamma \in J} b_{\gamma \alpha}\Lambda_{\gamma \beta} = 2 \delta_{\alpha, \beta}
$$
for all $\alpha \in J_{e}$ and $\beta \in J$. 
\end{Prop}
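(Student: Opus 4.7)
The strategy is to reduce Proposition~\ref{Prop:cp2} to Proposition~\ref{Prop:cp} by transporting the compatible pair $(\Lambda^{\bfi}, \tB^{\bfi})$ along the bijection $\{1, \ldots, \ell_0\} \leftrightarrow \sR^{+}$, $k \mapsto \beta^{\bfi}_{k}$, for a fixed reduced word $\bfi \in [\bfi]$. Under this identification, the indices $k^{+}, k^{-}, k^{-}(\im)$ correspond to the roots $\alpha^{+}, \alpha^{-}, \alpha^{-}(\im)$ defined in Section~\ref{ssec:cl}, and the splitting $J = J_{e} \sqcup J_{f}$ matches on both sides. Once one verifies that the transported matrices are independent of the representative $\bfi \in [\bfi]$ and coincide with the formulas in Proposition~\ref{Prop:cp2}, the compatibility and the normalization $\sum_{\gamma} b_{\gamma \alpha}\Lambda_{\gamma \beta} = 2\delta_{\alpha, \beta}$ are inherited from Proposition~\ref{Prop:cp}.

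For $\tB^{[\bfi]}$, the main observation is that any two positive roots $\alpha, \beta$ whose $[\bfi]$-residues are adjacent in $\Delta$ are automatically comparable in $\preceq_{[\bfi]}$, since adjacent-residue generators cannot be swapped by commutation moves; consequently the total order $<_{\bfi}$ restricts to $\preceq_{[\bfi]}$ on such pairs (a consequence of Theorem~\ref{Thm:OS19a}~(\ref{OS19a:Hasse})). This turns the inequalities among indices $s, t, s^{+}, t^{+}$ in Proposition~\ref{Prop:cp} into the corresponding inequalities among roots in Proposition~\ref{Prop:cp2} whenever $\im_{s} \sim \im_{t}$, yielding the agreement of the $b$-entries and their class-invariance.

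For $\Lambda^{[\bfi]}$, the weight $\lambda_{\alpha} = w_{\le k}\varpi_{\im_{k}}$ admits the manifestly class-intrinsic expression $\lambda_{\alpha} = \varpi_{\im} - \sum_{\gamma \preceq_{[\bfi]} \alpha,\ \res^{[\bfi]}(\gamma) = \im} \gamma$, so the right-hand side of the defining formula depends only on $[\bfi]$. The subtle point is well-definedness when $\alpha$ and $\beta$ are incomparable in $\preceq_{[\bfi]}$: the condition $\beta \not\preceq \alpha$ then also holds with the roles of $\alpha$ and $\beta$ exchanged, and the two resulting expressions must be negatives of one another to be compatible with skew-symmetry. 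I would reduce this to the identity
\begin{equation*}
(\lambda_{\alpha}, \lambda_{\beta}) = (\varpi_{\im}, \varpi_{\jm})
\end{equation*}
for incomparable $\alpha, \beta$, which by the previous paragraph necessarily have non-adjacent residues. This identity is verified by induction along commutation moves: for a single transposition of adjacent positions $k, k+1$ in some $\bfi$ (with $\im_{k} \not\sim \im_{k+1}$), one uses $s_{\im_{k}}\varpi_{\im_{k+1}} = \varpi_{\im_{k+1}}$ and $s_{\im_{k}}\alpha_{\im_{k+1}} = \alpha_{\im_{k+1}}$ together with $\sW$-invariance of $(\cdot,\cdot)$ to reduce to the elementary vanishings $(\varpi_{\im_{k}}, \alpha_{\im_{k+1}}) = 0 = (\alpha_{\im_{k}}, \alpha_{\im_{k+1}})$.

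The main technical step is this final consistency check for $\Lambda^{[\bfi]}$; the rest is a direct combinatorial transport from Proposition~\ref{Prop:cp} via the bijection $k \leftrightarrow \beta^{\bfi}_{k}$. After it, the compatible-pair property and the normalization formula in Proposition~\ref{Prop:cp2} hold essentially by construction.
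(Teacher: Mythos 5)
Your proposal is correct and follows the same route the paper takes: the paper proves Proposition~\ref{Prop:cp2} simply by remarking, just before its statement, that $\alpha^{+}, \alpha^{-}, \alpha^{-}(\im)$ and $\lambda_{\alpha}$ depend only on the commutation class, so that Proposition~\ref{Prop:cp} transports along $k \mapsto \beta^{\bfi}_{k}$. You supply the details the paper leaves implicit — in particular the class-intrinsic formula $\lambda_{\alpha} = \varpi_{\im} - \sum_{\gamma \preceq_{[\bfi]} \alpha,\, \res^{[\bfi]}(\gamma)=\im}\gamma$ and the consistency check $(\lambda_{\alpha},\lambda_{\beta}) = (\varpi_{\im},\varpi_{\jm})$ for incomparable $\alpha,\beta$ (which is immediate once one brings them to adjacent positions in some representative) — and these details are all correct.
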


Let us fix a commutation class $[\bfi]$ in $\cI(w_{0})$. 
Associated with the $\Z$-valued $\sR^{+} \times \sR^{+}$-skew-symmetric matrix 
$\Lambda^{[\bfi]}$ in Proposition~\ref{Prop:cp2}, we define
the quantum torus $\cT_{v, [\bfi]}$  (denoted by $ \cT(\Lambda^{[\bfi]})$ in Appendix~\ref{ssec:Qseed}) as the $\Z[v^{\pm 1/2}]$-algebra
presented  by the set of generators $\{ X_{\alpha}^{\pm 1} \mid \alpha \in \sR^{+}\}$
and the following relations:
\begin{itemize}
\item $X_{\alpha} X_{\alpha}^{-1} = X_{\alpha}^{-1} X_{\alpha} = 1$ for $\alpha \in \sR^{+}$, 
\item $X_{\alpha} X_{\beta} = v^{\Lambda_{\alpha \beta}} X_{\beta} X_{\alpha}$ for $\alpha, \beta \in \sR^{+}$.
\end{itemize}
Let $\cA(\Lambda^{[\bfi]}, \tB^{[\bfi]})$ denote the quantum cluster algebra
associated with the compatible pair $(\Lambda^{[\bfi]}, \tB^{[\bfi]})$ 
in Proposition~\ref{Prop:cp2}. This is the $\Z[v^{\pm 1/2}]$-subalgebra 
of the quantum torus $\cT_{v, [\bfi]}$ generated by the union of the elements
called \emph{the quantum cluster variables}, which are obtained from 
all possible sequences of \emph{mutations} (see Appendix~\ref{ssec:mut}).
 
For $\alpha, \beta \in \sR^{+}$ with $\res^{[\bfi]}(\alpha) = \res^{[\bfi]}(\beta) = \im$, we write
$$
\tD^{[\bfi]}(\alpha, \beta) \seq \tD_{\lambda_{\alpha}, \lambda_{\beta}}, \qquad
\tD^{[\bfi]}(\alpha, 0) \seq \tD_{\lambda_{\alpha}, \varpi_{\im}}. 
$$
Note that $\tD^{[\bfi]}(\alpha, \beta) = 0$ unless $\beta \preceq_{[\bfi]} \alpha$.
We set $\tD^{[\bfi]}(0,0) \seq 1$ by convention.

\begin{Thm}[{\cite[Theorem 12.3]{GLS13}, \cite[Corollary 11.2.8]{KKKO18}}] \label{Thm:cl} 
For a commutation class $[\bfi]$ in $\cI(w_{0})$, there exists a $\Z[v^{\pm 1/2}]$-algebra isomorphism
$$
\CL \colon \cA(\Lambda^{[\bfi]}, \tB^{[\bfi]}) \xrightarrow{\sim} \cA_{v}[N_{-}] 
$$
given by 
$$
X_{\alpha} \mapsto \tD^{[\bfi]}(\alpha, 0)
$$
for all $\alpha \in \sR^{+}$. 
Moreover, the normalized quantum unipotent minors $\tD^{[\bfi]}(\alpha, \beta)$ and 
$\tD^{[\bfi]}(\alpha, 0)$ correspond to some quantum cluster variables.  
\end{Thm}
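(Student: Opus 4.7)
The plan is to establish Theorem~\ref{Thm:cl} by realizing the initial quantum seed in $\cA_{v}[N_{-}]$ explicitly via normalized quantum unipotent minors, and then matching mutations to determinantal (quantum $T$-system) identities among them. First, I would verify that the assignment $X_{\alpha} \mapsto \tD^{[\bfi]}(\alpha,0)$ extends to a $\Z[v^{\pm 1/2}]$-algebra homomorphism $\tilde{\CL}\colon \cT_{v,[\bfi]} \to \mathrm{Frac}(\cA_{v}[N_{-}])$. This requires showing that the $\tD^{[\bfi]}(\alpha,0)$ quasi-commute with parameters matching $\Lambda^{[\bfi]}$, i.e.
\[ \tD^{[\bfi]}(\alpha,0)\,\tD^{[\bfi]}(\beta,0) = v^{\Lambda_{\alpha\beta}}\,\tD^{[\bfi]}(\beta,0)\,\tD^{[\bfi]}(\alpha,0).\]
Such quasi-commutation relations follow from the Leclerc--Geiss--Schröer computations: write each $\tD^{[\bfi]}(\alpha,0)$ in terms of the bilinear pairing used to define $\cA_{v}[N_{-}]$ (see Appendix~\ref{ssec:minor}) and use the known $q$-commutation of dual PBW elements, whose exponents are precisely the entries of $\Lambda^{[\bfi]}$.

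Second, for each mutable index $\alpha \in J_{e}$ I would establish the quantum determinantal identity
\[ \tD^{[\bfi]}(\alpha,0)\,\tD^{[\bfi]}(\alpha^{+},0) = v^{a}\prod_{\beta} \tD^{[\bfi]}(\beta,0)^{[b_{\beta\alpha}]_{+}} + v^{a'}\prod_{\beta} \tD^{[\bfi]}(\beta,0)^{[-b_{\beta\alpha}]_{+}}\]
for suitable half-integers $a,a'$, where the $b_{\beta\alpha}$ are the entries of $\tB^{[\bfi]}$. Such identities are the $T$-system-like relations for quantum unipotent minors; they follow from quantum Plücker relations applied to triples of adjacent indices in $\bfi$ together with the combinatorial description of $\tB^{[\bfi]}$ via $\preceq_{[\bfi]}$. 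Equivalently, one uses the fact that swapping two adjacent non-commuting letters in $\bfi$ produces a rank-two subword, and a direct $\mathrm{SL}_{3}$-type computation in $\cA_{v}[N_{-}]$ yields the exchange relation. Combined with the quasi-commutation above, this precisely matches the mutation rule at $\alpha$, so $\tilde{\CL}$ sends a mutated cluster variable to $\tD^{[\bfi]}(\alpha^{-},0)$; iterating this in a suitable sequence of mutations, one obtains every $\tD^{[\bfi]}(\alpha,\beta)$ (this identifies all the reachable cluster variables stated in the theorem).

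Third, restricting $\tilde{\CL}$ to the quantum cluster subalgebra gives a well-defined homomorphism $\CL \colon \cA(\Lambda^{[\bfi]}, \tB^{[\bfi]}) \to \cA_{v}[N_{-}]$. For surjectivity, I would invoke the fact that $\cA_{v}[N_{-}]$ is generated, as a $\Z[v^{\pm 1/2}]$-algebra, by the minors $\tD_{\varpi_{\im},s_{\im}\varpi_{\im}}$ for $\im \in \Delta_{0}$, and these appear as cluster variables obtained from a finite sequence of mutations from the initial seed (they are the "frozen-at-the-boundary" variables of the rightmost occurrence of each letter $\im$ in $\bfi$). For injectivity, I would specialize at $v=1$: on the classical side, the cluster algebra $\cA(\tB^{[\bfi]})$ is known to be an integral domain isomorphic to $\C[N_{-}]$ (Geiss--Leclerc--Schröer), and both sides of $\CL$ are free $\Z[v^{\pm 1/2}]$-modules of the same graded rank by weight considerations using $\wt(\tD^{[\bfi]}(\alpha,0)) = \lambda_{\alpha}-\varpi_{\res^{[\bfi]}(\alpha)}$.

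The main obstacle is the verification of the exchange relations in step two:
while these identities are well-known for the \emph{unnormalized} quantum minors in type $\mathrm{A}$ and follow from quantum Plücker identities, transferring them to the normalized minors $\tD^{[\bfi]}(\alpha,\beta)$ in arbitrary simply-laced type requires careful bookkeeping of the powers of $v$ arising from the normalization factors (encoded by the pairing with the bilinear form $(\cdot,\cdot)$). This is exactly the content of \cite[Theorem 12.3]{GLS13} and its refinement \cite[Corollary 11.2.8]{KKKO18}, which I would invoke. Once the exchange relations are in place, the remaining arguments (quasi-commutation, surjectivity, injectivity) are comparatively routine.
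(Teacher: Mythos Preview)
The paper does not prove this theorem: it is stated with attribution to \cite[Theorem 12.3]{GLS13} and \cite[Corollary 11.2.8]{KKKO18} and used as a black box (immediately after the statement the paper simply writes ``In what follows, we regard $\cA_{v}[N_{-}]$ as a subalgebra of $\cT_{v, [\bfi]}$ through $\CL^{-1}$''). So there is no ``paper's own proof'' to compare your proposal against.

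Your sketch is a reasonable high-level outline of the Gei{\ss}--Leclerc--Schr\"{o}er argument, and you correctly identify that the crux is the quantum determinantal exchange relations, ultimately deferring to the cited references for that step. A few points are imprecise, however. Your displayed exchange relation is not quite the right shape: the actual identities (recorded in the paper as Proposition~\ref{Prop:di}) relate $\tD^{[\bfi]}(\alpha,\beta)\,\tD^{[\bfi]}(\alpha^{-},\beta^{-})$ to $\tD^{[\bfi]}(\alpha,\beta^{-})\,\tD^{[\bfi]}(\alpha^{-},\beta)$ and a product over neighbours, rather than an identity among the $\tD^{[\bfi]}(\alpha,0)$ alone. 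Your surjectivity claim is also off: the elements $\tD_{\varpi_{\im}, s_{\im}\varpi_{\im}}$ are the dual Chevalley generators $\tF(\alpha_{\im},[\bfi])$, and they are \emph{not} frozen variables---they are obtained from the initial cluster by a specific sequence of mutations (this is the content of \cite[Section 13]{GLS13}, cf.\ the proof of Theorem~\ref{Thm:HLO} in the paper). Finally, the injectivity argument via specialization and graded-rank comparison is not how either reference proceeds; in \cite{KKKO18} the integrality over $\Z[v^{\pm 1/2}]$ and the isomorphism are obtained via the monoidal categorification by quiver Hecke algebras.
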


In what follows, we regard $\cA_{v}[N_{-}]$ as a subalgebra of $\cT_{v, [\bfi]}$ 
through $\CL^{-1}$.

The following system of equalities are regarded as 
a quantum analog of determinantal identities, which are also obtained as
specific exchange relations of quantum clusters
when we consider the quantum cluster algebra structure given in Theorem~\ref{Thm:cl}.

\begin{Prop}[{\cite[Proposition 5.5]{GLS13}}] \label{Prop:di}
Let $[\bfi]$ be a commutation class in $\cI(w_{0})$.
For any $\alpha, \beta \in \sR^{+}$ with
$\res^{[\bfi]}(\alpha) = \res^{[\bfi]}(\beta) = \im$ and $\beta \prec_{[\bfi]} \alpha$, 
there exist $a, b \in \frac{1}{2}\Z$ such that 
\begin{equation} \label{eq:di} 
\tD^{[\bfi]}(\alpha, \beta) \tD^{[\bfi]}(\alpha^{-}, \beta^{-})
= v^{a} \tD^{[\bfi]}(\alpha, \beta^{-})\tD^{[\bfi]}(\alpha^{-}, \beta)
+ v^{b} \prod^{\to}_{\jm \sim \im} \tD^{[\bfi]}(\alpha^{-}(\jm), \beta^{-}(\jm))
\end{equation}
in $\cA_{v}[N_{-}]$. Here the factors in the second term of the RHS mutually commute
up to powers of $v^{1/2}$ and thus
we can take any total ordering on the set 
$\{ \jm \in \Delta_{0} \mid \jm \sim \im\}$. 
\end{Prop}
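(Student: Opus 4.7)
The plan is to deduce this identity from the quantum cluster algebra structure on $\cA_{v}[N_{-}]$ provided by Theorem~\ref{Thm:cl}. The classical version at $v=1$ is a generalized Plücker / Desnanot--Jacobi type relation in $\C[N_{-}]$ that corresponds to an exchange relation between adjacent seeds in the cluster algebra structure on $\C[N_{-}]$; so the task is to lift this to a quantum identity with the correct powers of $v^{1/2}$ and the correct ordering in the last product. A useful preliminary observation is that the commutativity up to $v^{1/2}$-powers in the last product follows immediately from Proposition~\ref{Prop:cp2}: the vertices $\alpha^{-}(\jm)$ and $\beta^{-}(\jm)$ for varying $\jm \sim \im$ carry residues that are pairwise non-adjacent in $\Delta$, and hence the corresponding minors $v^{1/2}$-commute in $\cT_{v, [\bfi]}$.

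First, I would identify a specific seed in the mutation class of the initial seed from Proposition~\ref{Prop:cp2} in which $\tD^{[\bfi]}(\alpha^{-}, \beta)$ appears as a mutable cluster variable while $\tD^{[\bfi]}(\alpha, \beta)$, $\tD^{[\bfi]}(\alpha^{-}, \beta^{-})$, $\tD^{[\bfi]}(\alpha, \beta^{-})$, and the minors $\{\tD^{[\bfi]}(\alpha^{-}(\jm), \beta^{-}(\jm))\}_{\jm \sim \im}$ appear simultaneously as cluster or frozen variables, and where the exchange matrix at the mutable vertex labelled by $\alpha^{-}$ has arrows precisely to the vertices labelled by $\alpha$, $\beta$, and the $\alpha^{-}(\jm)$ with the correct signs. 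Such a seed may be reached by a sequence of mutations corresponding, via the combinatorial reflection functors of Theorem~\ref{Thm:OS19a}, to a reduced word $\bfi' \in [\bfi]$ for which the residue sequence places $\im$ at convenient positions. I would verify this combinatorially by tracking how mutations affect the quiver defined by $\tB^{[\bfi]}$, and by using the convexity property of $\prec_{[\bfi]}$ together with Theorem~\ref{Thm:OS19a}~(\ref{OS19a:Hasse}) to identify the relevant arrows in $\Upsilon_{[\bfi]}$.

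Once such a seed is identified, the quantum cluster exchange relation at the vertex $\alpha^{-}$ produces an identity whose shape matches the RHS of \eqref{eq:di}: one monomial involves the variables at the tails of arrows, giving $\tD^{[\bfi]}(\alpha, \beta^{-})\tD^{[\bfi]}(\alpha^{-}, \beta)$, while the other involves those at the heads, giving the product over $\jm \sim \im$. The exponents $a$ and $b$ are then forced by the quasi-commutation matrix $\Lambda^{[\bfi]}$ and can be computed explicitly using the second identity in Proposition~\ref{Prop:cp2}.

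The main obstacle is the combinatorial bookkeeping of the previous paragraph: one must exhibit an explicit mutation sequence from the initial seed attached to $\bfi$ producing a seed in which the five families of minors appearing in \eqref{eq:di} are simultaneously cluster/frozen variables at adjacent positions. An alternative, more direct route bypasses clusters entirely: one could verify the classical identity in $\C[N_{-}]$, then use bar-invariance of normalized minors together with the $v^{1/2}$-quasi-commutation relations to deduce that both sides of \eqref{eq:di} are bar-invariant, and finally invoke the uniqueness of a bar-invariant preimage (for instance, via the leading term in the PBW-type basis attached to $[\bfi]$) to fix the scalars $v^{a}$ and $v^{b}$. This alternative avoids the combinatorics of mutations but requires establishing the bar-invariance of each term, which itself ultimately traces back to the compatibility properties in Proposition~\ref{Prop:cp2}.
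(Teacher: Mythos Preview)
The paper does not prove this proposition; it is quoted verbatim from \cite[Proposition 5.5]{GLS13}, and later (in the proof of Theorem~\ref{Thm:HLO}) the paper simply invokes \cite[Section~13]{GLS13} to record that \eqref{eq:di} is a specific quantum exchange relation. So your overall strategy---realize \eqref{eq:di} as a mutation in the quantum cluster structure of Theorem~\ref{Thm:cl}---is exactly the route taken in the source.

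That said, your identification of the mutation is wrong. In an exchange relation the two variables multiplied on the left-hand side are the \emph{old} and \emph{new} cluster variables at the same vertex; they never sit in the same cluster. Here the pair is $\tD^{[\bfi]}(\alpha^{-},\beta^{-})$ (the variable being mutated) and $\tD^{[\bfi]}(\alpha,\beta)$ (the result of the mutation). The four families on the right---$\tD^{[\bfi]}(\alpha,\beta^{-})$, $\tD^{[\bfi]}(\alpha^{-},\beta)$, and $\{\tD^{[\bfi]}(\alpha^{-}(\jm),\beta^{-}(\jm))\}_{\jm\sim\im}$---are the neighbours of that vertex and lie together in one cluster (this is what the paper cites from \cite[Section~13]{GLS13} and uses explicitly in the proof of Theorem~\ref{Thm:HLO}). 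Your proposal instead declares $\tD^{[\bfi]}(\alpha^{-},\beta)$ to be the mutable variable and places both $\tD^{[\bfi]}(\alpha,\beta)$ and $\tD^{[\bfi]}(\alpha^{-},\beta^{-})$ in the same seed, which is impossible.

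A second, smaller issue: your argument for the $v^{1/2}$-commutativity of the factors $\tD^{[\bfi]}(\alpha^{-}(\jm),\beta^{-}(\jm))$ appeals to Proposition~\ref{Prop:cp2} and the non-adjacency of the residues~$\jm$. But these minors are not initial cluster variables, so their quasi-commutation cannot be read off directly from $\Lambda^{[\bfi]}$. The correct reason is that they all belong to a common quantum cluster, and any two variables in the same quantum cluster quasi-commute by construction. Once the roles in the mutation are fixed as above, both the shape of the identity and the commutativity statement follow immediately from the general theory, and the exponents $a,b$ are determined by bar-invariance exactly as you suggest in your alternative route.
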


\subsection{Dual canonical basis} 
\label{ssec:dcb}

We fix a commutation class $[\bfi]$ in $\cI(w_{0})$ as before.
For $\alpha \in \sR^{+}$, we set
$$
\tF(\alpha, [\bfi]) \seq \tD^{[\bfi]}(\alpha, \alpha^{-}).
$$
Note that $\deg \tF(\alpha, [\bfi]) = -\alpha$.
This is an analog of the dual function of a root vector
corresponding to $-\alpha$.
For $\bfc = (c_{\alpha})_{\alpha \in \sR^{+}} \in (\Z_{\ge 0})^{\sR^{+}}$, we set
$$\nu(\bfc) \seq 
-\frac{1}{2}\sum_{\alpha, \beta \in \sR^{+}} c_{\alpha}c_{\beta}(\alpha, \beta) +
\sum_{\alpha \in \sR^{+}} c_{\alpha}^{2}
\quad \in \Z
$$
and define
$$
\tF(\bfc, [\bfi]) \seq v^{\nu(\bfc)/2} \prod^{\to}_{\alpha\in\sR^{+}}\tF(\alpha, [\bfi])^{c_{\alpha}}.
$$
Here we take any total ordering on $\sR^{+} = \{\beta_{1}, \ldots, \beta_{\ell_0}\}$
such that we have $k > l$ whenever $\beta_{k} \prec_{[\bfi]} \beta_{l}$ 
(i.e.~the opposite of $<_{\bfi^{\prime}}$ for some $\bfi^{\prime} \in [\bfi]$) for the ordered product.
Since $\tF(\alpha, [\bfi])$ and $\tF(\beta, [\bfi])$
mutually commute if $\alpha$ and $\beta$ are not comparable with respect to $\prec_{[\bfi]}$,
the element $\tF(\bfc, [\bfi])$ is well-defined. 

\begin{Prop}[see, for instance, {\cite[Chapter 40, 41]{LusztigIntro}}] \label{Prop:pbw}
The set $\{ \tF(\bfc, [\bfi]) \mid \bfc \in (\Z_{\ge 0})^{\sR^{+}} \}$ forms a $\Z[v^{\pm 1/2}]$-basis of $\cA_{v}[N_{-}]$. 
\end{Prop}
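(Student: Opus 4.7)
The plan is to reduce Proposition~\ref{Prop:pbw} to Lusztig's classical theorem that the dual PBW basis of $\cU_v(\sn_-)$ is an integral basis of $\cA_v[N_-]$. First I would fix a representative $\bfi = (\im_1, \ldots, \im_{\ell_0}) \in [\bfi]$ and consider the PBW root vectors $F(\beta^{\bfi}_k) \seq T_{\im_1} \cdots T_{\im_{k-1}}(f_{\im_k}) \in \cU_v(\sn_-)$ defined through Lusztig's braid group action. By Lusztig's theorem, suitable ordered divided-power monomials in the $F(\beta^{\bfi}_k)$ form a $\Q(v)$-basis of $\cU_v(\sn_-)$, and the dual basis $\{F^*(\bfc, \bfi)\}_{\bfc \in \Z_{\ge 0}^{\sR^+}}$ with respect to the nondegenerate symmetric bilinear form on $\cU_v(\sn_-)$ constitutes a $\Z[v^{\pm 1/2}]$-basis of the integral form $\cA_v[N_-]$.

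Next I would identify each generator $\tF(\alpha, [\bfi]) = \tD^{[\bfi]}(\alpha, \alpha^{-})$ with the normalized dual root vector $F^*(\alpha, \bfi)$. This identification is essentially the content of the description of quantum unipotent minors as $v$-deformed duals of PBW root vectors, which is available via \cite{GLS13} together with \cite[Chapter 40, 41]{LusztigIntro}; in particular, the determinantal identity \eqref{eq:di} matches the Levendorskii-Soibelman type straightening relation satisfied by the dual PBW root vectors at the level of consecutive roots in the convex order. Since $\tF(\alpha, [\bfi])$ and $\tF(\beta, [\bfi])$ corresponding to $\preceq_{[\bfi]}$-incomparable pairs commute up to a scalar in $v^{\frac{1}{2}\Z}$, the ordered product defining $\tF(\bfc, [\bfi])$ in the statement therefore coincides with $F^*(\bfc, \bfi)$ up to an explicit power of $v^{1/2}$, and the role of the prefactor $v^{\nu(\bfc)/2}$ is precisely to absorb this scalar so that $\tF(\bfc, [\bfi]) = F^*(\bfc, \bfi)$ in $\cA_v[N_-]$.

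The main obstacle will be the bookkeeping of the half-integer exponent $\nu(\bfc)/2$: one must check that the quadratic form $\nu(\bfc)$ exactly compensates for the scalar produced by (i) reordering the factors $\tF(\alpha, [\bfi])^{c_\alpha}$ according to the $q$-commutation governed by the matrix $\Lambda^{[\bfi]}$ of Proposition~\ref{Prop:cp2}, and (ii) passing between ordinary powers $\tF(\alpha, [\bfi])^{c_\alpha}$ and their divided-power counterparts, a step that introduces $v$-factorials. The first contribution is controlled by the identity $\Lambda_{\alpha\beta} = (\varpi_{\im} - \lambda_\alpha, \varpi_{\jm} + \lambda_\beta)$, which together with \eqref{eq:lamalph} reduces the reordering scalar to an expression involving $(\alpha,\beta)$; the second contribution produces the diagonal correction $\sum c_\alpha^2$ using $(\alpha,\alpha)=2$ for simply-laced $\sg$. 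Once the matching of these two contributions with the formula for $\nu(\bfc)$ is verified, the basis property of $\{\tF(\bfc, [\bfi])\}$ over $\Z[v^{\pm 1/2}]$ follows immediately from Lusztig's corresponding statement for the dual PBW basis, completing the proof.
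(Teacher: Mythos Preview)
The paper does not actually give a proof of this proposition: it is stated with the attribution ``see, for instance, \cite[Chapter 40, 41]{LusztigIntro}'' and no argument is supplied. So there is no paper's proof to compare against; the result is being cited as a standard fact from Lusztig's theory.

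Your proposal is therefore not competing with a proof in the paper but rather unpacking what that citation means, and the outline you give is essentially the correct one. The identification of $\tF(\alpha,[\bfi]) = \tD^{[\bfi]}(\alpha,\alpha^-)$ with the (normalized) dual PBW root vector is indeed the key step; it is established for instance in \cite[Section~6]{Kimura12} (which the paper also cites in Theorem~\ref{Thm:ndcb}), and once it is in hand the basis statement is Lusztig's. One small correction: the determinantal identity \eqref{eq:di} is not what proves this identification --- that identity concerns quantum minors generally, whereas the equality between $\tD^{[\bfi]}(\alpha,\alpha^-)$ and the dual PBW vector is obtained directly via the braid group action and the explicit form of the unipotent minors. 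So your invocation of \eqref{eq:di} at that point is misplaced, though it does not affect the validity of the overall plan. The bookkeeping you flag for $\nu(\bfc)$ is exactly the remaining content, and your description of how the two contributions arise is accurate.
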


We refer to the basis $\{ \tF(\bfc, [\bfi]) \mid \bfc \in (\Z_{\ge 0})^{\sR^{+}} \}$
as \emph{a normalized dual Poincar\'e-Birkhoff-Witt type basis}
(henceforth, \emph{a normalized dual PBW-type basis}).
In particular, $\cA_{v}[N_-]$ is generated by the set
$\{ \tF(\alpha, [\bfi]) \mid \alpha \in \sR^{+} \} = \{ \tD^{[\bfi]}(\alpha, \alpha^{-}) \mid \alpha \in \sR^{+}\}$.

There exists a natural $\Z$-algebra anti-involution $\ol{(\cdot)}$ on the quantum torus $\cT_{v, [\bfi]}$
given by 
$$
v^{\pm 1/2} \mapsto v^{\mp 1/2}, \qquad
X_{\alpha}^{\pm 1} \mapsto X_{\alpha}^{\pm 1}
$$
for $\alpha \in \sR^{+}$. 
This involution $\ol{(\cdot)}$ preserves the quantum cluster algebra 
$\cA(\Lambda^{[\bfi]}, \tB^{[\bfi]}) \simeq \cA_{v}[N_-]$
and fixes each quantum cluster variable.

Lusztig~\cite{Lus90, Lusztig91, LusztigIntro} and Kashiwara~\cite{Kas91} have constructed a specific 
$\Z[v^{\pm 1/2}]$-basis $\bfB$ of $\cA_{v}[N_{-}]$,
which is called \emph{the dual canonical basis}.
Moreover, we can consider the normalized dual canonical basis $\tbfB$.
We do not recall its precise definition here. 
Instead, we refer the following characterization of $\tbfB$ 
by using the normalized dual PBW-type basis.

\begin{Thm}[{\cite[Theorem 4.29]{Kimura12}}] \label{Thm:ndcb}
For each commutation class $[\bfi]$ in $\cI(w_{0})$,
we have
$$
\tbfB = \{ \tG(\bfc, [\bfi]) \mid \bfc \in (\Z_{\ge 0})^{\sR^{+}} \},
$$
where, for each $\bfc \in (\Z_{\ge 0})^{\sR^{+}}$,  $\tG(\bfc, [\bfi])$ 
is the unique element of $\cA_{v}[N_{-}]$ 
characterized by the following conditions$\colon$
\begin{itemize}
\item[(NDCB1)] $\ol{\tG(\bfc, [\bfi])} = \tG(\bfc, [\bfi])$, and
\item[(NDCB2)] $\tG(\bfc, [\bfi]) - \tF(\bfc, [\bfi]) \in \sum_{\bfc^{\prime} <_{[\bfi]} \bfc}
v \Z[v] \tF(\bfc^{\prime}, [\bfi])$.
\end{itemize}
Here the condition $\bfc^{\prime} = (c_{\alpha})_{\alpha \in \sR^{+}} <_{[\bfi]} \bfc=(c_{\alpha})_{\alpha \in \sR^{+}}$
means that there exists $\beta \in \sR^{+}$ such that $c_{\beta}^{\prime} < c_{\beta}$ and
$c_{\alpha}^{\prime} = c_{\alpha}$ for all $\beta \not \preceq_{[\bfi]} \alpha$.
\end{Thm}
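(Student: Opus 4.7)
The plan is to deduce this characterization from the classical one due to Lusztig and Kashiwara, combined with the identification of $\tF(\alpha,[\bfi])$ with an appropriately normalized dual root vector. At its core, this is Lusztig's triangular characterization of the dual canonical basis with respect to a PBW-type basis, transported through the normalization procedure.

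First I would fix a reduced word $\bfi = (\im_1,\ldots,\im_{\ell_0}) \in [\bfi]$ and verify that the element $\tF(\bfc,[\bfi])$ is well-defined, i.e.\ independent of the choice of total ordering of $\sR^+$ compatible with the opposite of $\prec_{[\bfi]}$. This amounts to checking that whenever $\alpha,\beta \in \sR^+$ are incomparable with respect to $\prec_{[\bfi]}$, the elements $\tF(\alpha,[\bfi]) = \tD^{[\bfi]}(\alpha,\alpha^-)$ and $\tF(\beta,[\bfi])$ $v$-commute. By Theorem~\ref{Thm:cl}, each $\tD^{[\bfi]}(\alpha,\alpha^-)$ corresponds to a specific quantum cluster variable of the same quantum cluster, so they $v$-commute inside the quantum torus $\cT_{v,[\bfi]}$; the explicit exponent is computed from $\Lambda^{[\bfi]}$ via Proposition~\ref{Prop:cp2}. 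One must then check that different choices of compatible ordering change the ordered product only by an overall power of $v$ that is cancelled by $v^{\nu(\bfc)/2}$.

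Second, I would identify $\tF(\bfc,[\bfi])$ with the standard normalized dual PBW vector associated to $\bfi$. Using iterated quantum determinantal relations (Proposition~\ref{Prop:di}) and an inductive argument on the convex ordering $<_\bfi$, one shows that $\tF(\beta_k^\bfi,[\bfi])$ coincides with Kimura's normalized dual root vector associated to $\bfi$ and $\beta_k^\bfi$; the ordered product then produces the full normalized dual PBW basis element. This step reduces the theorem to comparing two characterizations of the same basis.

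Third, I would invoke Lusztig's characterization theorem: the (non-normalized) dual canonical basis is the unique $\Z[v^{\pm 1/2}]$-basis of $\cA_v[N_-]$ whose members are bar-invariant and expand unitriangularly in the dual PBW basis with off-diagonal coefficients in $v\Z[v]$. The normalization factors in both $\tF(\bfc,[\bfi])$ and $\tG(\bfc,[\bfi])$ are chosen precisely to preserve this triangularity after passing to normalized objects, and the bar-invariance of $\tG(\bfc,[\bfi])$ transfers from that of the quantum unipotent minors (which are $\ol{(\cdot)}$-fixed by construction). Uniqueness of an element satisfying (NDCB1) and (NDCB2) is a standard argument: the difference of two candidates is bar-invariant and lies in $\sum_{\bfc' <_{[\bfi]} \bfc} v\Z[v] \tF(\bfc',[\bfi])$, and an inductive descent along $<_{[\bfi]}$ combined with bar-invariance forces it to vanish.

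The main obstacle will be the careful bookkeeping of normalizations in step two. The transition between the ordered product of quantum unipotent minors $\tD^{[\bfi]}(\alpha,\alpha^-)$ and Lusztig's dual root vectors involves repeatedly applying the determinantal identity~\eqref{eq:di}; each application introduces half-integer powers of $v$ coming from $\Lambda^{[\bfi]}$, and matching these accumulated factors precisely to $v^{\nu(\bfc)/2}$ requires a nontrivial computation of $\sum c_\alpha c_\beta (\alpha,\beta)$ in terms of the $\Lambda$-pairings. Once this bookkeeping is settled, verifying that the normalization preserves both bar-invariance and the $v\Z[v]$-triangularity is routine, and the uniqueness argument completes the proof.
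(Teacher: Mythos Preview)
The paper does not prove this theorem; it is quoted as a black box from \cite[Theorem~4.29]{Kimura12} and used as input for the comparison with the $(q,t)$-character side. So there is nothing in the paper to compare your proposal against.

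Your sketch is a plausible outline of how the result is proved in the literature (Lusztig's unitriangularity characterization of the canonical basis, transported to the dual side with Kimura's normalization). One point to be careful about: in your first step you justify the $v$-commutation of $\tF(\alpha,[\bfi])$ and $\tF(\beta,[\bfi])$ for incomparable $\alpha,\beta$ by saying they are cluster variables of the \emph{same} cluster. That is not how it works here---the $\tD^{[\bfi]}(\alpha,\alpha^-)$ are cluster variables, but not all of a single seed. The commutation you need is a consequence of the Levendorskii--Soibelman relations for PBW bases (or, equivalently, the fact that dual root vectors associated to incomparable roots in the convex order quasi-commute), which is prior to and independent of the cluster structure. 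The paper already takes this commutation for granted when it asserts that $\tF(\bfc,[\bfi])$ is well-defined just above Proposition~\ref{Prop:pbw}. Your step~2, identifying $\tF(\alpha,[\bfi])$ with Kimura's normalized dual root vector, is not something one proves via the determinantal identities~\eqref{eq:di}; rather, it is essentially a definition in Kimura's framework (the dual root vectors are defined via Lusztig's braid group action, and the unipotent minor description is a reformulation). With those adjustments your outline matches the standard argument.
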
  

\begin{Rem} \label{Rem:tripbw}
The unitriangular property of the normalized dual PBW-type basis with respect to $\ol{(\cdot)}$
is also included in Theorem~\ref{Thm:ndcb}. That is, we have
$$
\tF(\bfc, [\bfi]) - \ol{\tF(\bfc, [\bfi])} \in \sum_{\bfc^{\prime} <_{[\bfi]} \bfc} \Z[v^{\pm 1}] \tF(\bfc^{\prime}, [\bfi]).
$$  
\end{Rem}

\section{The HLO isomorphisms}
\label{sec:HLO}

Throughout this section, we fix a Q-datum $\cQ = (\Delta, \sigma, \xi)$ for $\fg$.

The aim of this section is to establish an isomorphism $\Phi_{\cQ}$ for each Q-datum $\cQ$ between the quantum Grothendieck ring $\cK_{t, \cQ}$ of the category $\Cc_{\cQ}$
and the quantized coordinate algebra $\cA_v[N_-]$ associated with the simply-laced Lie algebra $\sg$ unfolding $\fg$. 
This is a generalization of the results due to Hernandez-Leclerc~\cite{HL15} and Hernandez-Oya~\cite{HO19}, and hence we call $\Phi_\cQ$ the HLO isomorphism. 

In Section~\ref{ssec:tori}, we prove an isomorphism $\Phi_{\cQ}^T$ between the quantum tori $\cY_{t, \cQ}$ and $\cT_{v, \cQ}$ 
from properties of the inverse of the quantum Cartan matrix.   
Then in Section~\ref{ssec:HLO}, we verify that $\Phi_{\cQ}^T$ induces the desired isomorphism $\Phi_{\cQ}$ by comparing
of the quantum $T$-system identities in $\cK_{t, \cQ}$ (Theorem~\ref{Thm:qTsys}) and 
the quantum determinantal identities in Proposition~\ref{Prop:di}.
In Section~\ref{ssec:cor}, as corollaries, we obtain the affirmative answer to the analog of Kazhdan-Lusztig conjecture in \cite{Hernandez04} for simple modules in the subcategory $\Cc_{\cQ}$ (see Conjecture \ref{Conj:KL}) and various positivities for its quantum Grothendieck ring $\cK_{t, \cQ}$.  
As far as the authors know, these are the first results in this direction beyond types $\mathrm{ABDE}$. Moreover, our approach is uniform for all non simply-laced types.

Henceforth, we often write $\cQ$ instead of $[\cQ]$ to simplify the notation.
For example, we apply it to write
$\cT_{v, \cQ}, \tD^{\cQ}(\alpha, \beta)$ and $\tF(\alpha, \cQ)$ 
instead of $\cT_{v, [\cQ]}, \tD^{[\cQ]}(\alpha, \beta)$ and $\tF(\alpha, [\cQ])$ respectively.

\subsection{An isomorphism between the quantum tori}
\label{ssec:tori}

Recall the notation for KR modules from Section~\ref{ssec:KR}.
Note that, for each $(\im,p) \in (\Gamma_{\cQ})_{0}$, the dominant monomial 
$$
m^{(\im)}[p, \xi_{\im}] = \prod_{k=0}^{(\xi_{\im} - p)/2d_{\bar{\im}}} Y_{\bar{\im}, p+2kd_{\bar{\im}}} 
$$
belongs to $\cM_{\cQ}$.

The goal of this subsection is to prove 
the following theorem, which is a generalization of \cite[Theorem 6.3]{HO19}.
\begin{Thm} \label{Thm:isomT}
For each Q-datum $\cQ$ for $\fg$,
there is the algebra isomorphism 
$$\Phi_{\cQ}^{T} \colon \cT_{v, \cQ} \to \cY_{t, \cQ}$$
given by
$$
v^{\pm 1/2} \mapsto t^{\pm 1/2}, \qquad 
X_\alpha \mapsto \ul{m^{(\im)}[p, \xi_{\im}]}  
$$  
for all $\alpha \in \sR^{+}$, where $(\im,p) = \phi_{\cQ}^{-1}(\alpha, 0)$.
Moreover, we have $\Phi_{\cQ}^{T} \circ \ol{(\cdot)} = \ol{(\cdot)} \circ \Phi_{\cQ}^{T}$.
\end{Thm}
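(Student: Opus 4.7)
The strategy has three stages: verifying that $\Phi_\cQ^T$ is well-defined, showing surjectivity, and showing injectivity (bar-compatibility will come for free). Since $\cT_{v,\cQ}$ is presented by generators $X_\alpha^{\pm 1}$ ($\alpha\in\sR^+$) subject to $X_\alpha X_\beta = v^{\Lambda_{\alpha\beta}} X_\beta X_\alpha$ and the target of each $X_\alpha$ is the commutative monomial $\ul{m^{(\im)}[p,\xi_\im]}$, which is invertible in $\cY_{t,\cQ}$, the only substantive well-definedness point is the numerical identity
\[
\Nn\bigl(m^{(\im)}[p,\xi_\im],\,m^{(\jm)}[s,\xi_\jm]\bigr) = \Lambda_{\alpha\beta},
\]
where $(\im,p)=\phi_\cQ^{-1}(\alpha,0)$ and $(\jm,s)=\phi_\cQ^{-1}(\beta,0)$. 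The plan is to expand the left-hand side bilinearly in the factors $\tY_{\bar\im,p+2kd_{\bar\im}}$, $\tY_{\bar\jm,s+2ld_{\bar\jm}}$, and then translate each term into the $\sg$-invariant pairing on $\sP$ using Theorem~\ref{Thm:tc} together with Proposition~\ref{Prop:Nnwt}. Lemma~\ref{Lem:bphi} identifies the partial sums with iterates of $\tau_\cQ$ applied to $\gamma^\cQ_\im,\gamma^\cQ_\jm$, so the double sum telescopes. A parallel computation shows $\wt_\cQ(m^{(\im)}[p,\xi_\im]) = \varpi_\im-\lambda_\alpha$, and the telescoped pairing matches the value $(\varpi_\im-\lambda_\alpha,\,\varpi_\jm+\lambda_\beta)$ from Proposition~\ref{Prop:cp2} in the range $\beta\not\preceq_\cQ\alpha$; the complementary range follows by skew-symmetry of both sides.

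For surjectivity, one proceeds by descending induction on $p$ along each $\sigma$-orbit column of $(\Gamma_\cQ)_0$. At the top, if $p = \xi_\im$ then $\ul{Y_{\bar\im,p}} = \ul{m^{(\im)}[\xi_\im,\xi_\im]}$ is the image of $X_{\gamma^\cQ_\im}$. Otherwise $(\im,p+2d_{\bar\im})\in(\Gamma_\cQ)_0$ by convexity (Proposition~\ref{Prop:tAR}), and the factorization
\[
\ul{m^{(\im)}[p,\xi_\im]} \;=\; t^{a}\,\ul{Y_{\bar\im,p}}\cdot\ul{m^{(\im)}[p+2d_{\bar\im},\xi_\im]}
\]
for some $a\in\tfrac{1}{2}\Z$ expresses $\ul{Y_{\bar\im,p}}$ as a ratio of two images of $\Phi_\cQ^T$. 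Since $\cY_{t,\cQ}$ is generated by the $\ul{Y_{i,p}}^{\pm 1}$ with $(i,p)\in\hI_\cQ$, this yields surjectivity. For injectivity, both $\cT_{v,\cQ}$ and $\cY_{t,\cQ}$ are quantum tori of the same rank $\ell_0=|\sR^+|$ (via the bijection $\bphi_\cQ\colon\hI_\cQ\xrightarrow{\sim}\sR^+\times\{0\}$), and the images $\Phi_\cQ^T(X_\alpha)$ carry pairwise distinct $\cQ$-weights $\varpi_{\bar\im}-\lambda_\alpha$, forcing linear independence of monomials in the images and hence injectivity. The bar-compatibility is immediate from the fact that commutative monomials in $\cY_{t,\cQ}$ are $\ol{(\cdot)}$-invariant by definition and $v^{\pm 1/2}$, $t^{\pm 1/2}$ transform identically under their respective bar-involutions.

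The main obstacle is the combinatorial identity $\Nn(M_\alpha,M_\beta)=\Lambda_{\alpha\beta}$ in the first stage. The subtlety is that the defining formula for $\Lambda_{\alpha\beta}$ in Proposition~\ref{Prop:cp2} is asymmetric (given only under the condition $\beta\not\preceq_\cQ\alpha$ and then extended by skew-symmetry), whereas a naive expansion of $\Nn(M_\alpha,M_\beta)$ via Proposition~\ref{Prop:Nnwt} produces pairings of the symmetric form $(\wt_\cQ(M_\alpha),\wt_\cQ(M_\beta))$. Reconciling these requires a careful bookkeeping of signs depending on the relative positions of $p+2kd_{\bar\im}$ and $s+2ld_{\bar\jm}$; the clean way to organize this is via Theorem~\ref{Thm:tc}, which identifies $\tc_{ij}(u)-\tc_{ij}(-u)$ with a pairing $(\varpi_\im,\tau_\cQ^{m}\gamma^\cQ_\jm)$ on the $\tau_\cQ$-orbits, so that the sum collapses to the desired closed form. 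This generalizes the type-$\mathrm{B}$ computation of \cite[Theorem 6.3]{HO19} to arbitrary Q-data by systematically replacing the type-specific data with the uniform combinatorics of $(\Delta,\sigma)$ and the generalized Coxeter element $\tau_\cQ$.
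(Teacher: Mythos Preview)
Your overall strategy matches the paper's: both reduce to the numerical identity $\Nn(m^{(\im)}[p,\xi_\im],\,m^{(\jm)}[s,\xi_\jm]) = \Lambda_{\alpha\beta}$ and prove it by expanding $\Nn$ in terms of the $\tc_{ij}$, converting to pairings on $\sP$ via Theorem~\ref{Thm:tc}, and telescoping (the paper isolates this computation as Proposition~\ref{Prop:NnKR}). The paper streamlines the isomorphism step by observing that $\{\ul{m^{(\im)}[p,\xi_\im]}^{\pm 1}\}$ already gives an alternative presentation of the quantum torus $\cY_{t,\cQ}$---the change of generators from the $\ul{Y_{i,p}}$ is upper-unitriangular along each $\im$-column---so once the commutation exponents agree the map is automatically an isomorphism; this replaces your separate surjectivity and injectivity arguments.

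Two gaps to flag. First, your injectivity argument is not valid as stated: the generators $\Phi_\cQ^T(X_\alpha)$ having pairwise distinct $\cQ$-weights does not force distinct monomials $X^\bfa$ to map to distinct commutative monomials, since products of generators with distinct weights can have coinciding total weight. What you actually need is that the exponent-lattice map $\Z^{\sR^+}\to\Z^{\hI_\cQ}$, $e_\alpha\mapsto$ (exponent vector of $m^{(\im)}[p,\xi_\im]$), is bijective; this follows from the upper-unitriangular structure already implicit in your surjectivity induction, so the repair is easy, but the reason you give is wrong. Second, and more substantively, the telescoping in the main identity does not close without the vanishing $\tc_{ij}(\xi_\jm-\xi_\im-d_i-2ld_{ij})=0$ for all $l\ge 0$ (Lemma~\ref{Lem:tcxi} in the paper, proved from Theorem~\ref{Thm:tc} and the positivity in Lemma~\ref{Lem:tc}(\ref{tc:po})). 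This vanishing is precisely what kills the boundary terms in the double sum and resolves the sign discrepancy you correctly identified between the symmetric-looking pairing $(\wt_\cQ M_\alpha,\wt_\cQ M_\beta)$ and the asymmetric form $\Lambda_{\alpha\beta}=(\varpi_\im-\lambda_\alpha,\varpi_\jm+\lambda_\beta)$; Theorem~\ref{Thm:tc} by itself is not enough.
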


The last assertion $\Phi_{\cQ}^{T} \circ \ol{(\cdot)} = \ol{(\cdot)} \circ \Phi_{\cQ}^{T}$ is obvious from the definition of $\Phi_{\cQ}^{T}$.
To prove the existence of $\Phi_{\cQ}^{T}$, 
we notice that the quantum torus $\cY_{t, \cQ}$ defined in Section~\ref{ssec:CQ} has another presentation given by the set of generators
$\{ \ul{m^{(\im)}[p, \xi_{\im}]}^{\pm 1} \mid (\im,p) \in (\Gamma_{\cQ})_{0}\}$ and the following two relations:
\begin{itemize}
\item $\ul{m^{(\im)}[p, \xi_{\im}]} \cdot \ul{m^{(\im)}[p, \xi_{\im}]}^{-1} 
= \ul{m^{(\im)}[p, \xi_{\im}]}^{-1}\cdot \ul{m^{(\im)}[p, \xi_{\im}]} = 1$ for $(\im,p) \in (\Gamma_{\cQ})_{0}$, 
\item $\ul{m^{(\im)}[p, \xi_{\im}]} \cdot \ul{m^{(\jm)}[s, \xi_{\jm}]} 
= t^{\kappa(\im,p; \jm,s)} \ul{m^{(\jm)}[s, \xi_{\jm}]} \cdot \ul{m^{(\im)}[p, \xi_{\im}]}$ for $(\im,p), (\jm,s) \in (\Gamma_{\cQ})_{0}$,
\end{itemize} 
where 
$$
\kappa(\im,p;\jm,s) \seq \Nn\left(m^{(\im)}[p, \xi_{\im}], m^{(\jm)}[s, \xi_{\jm}]\right) =  
\sum_{k=0}^{(\xi_{\im}-p)/2d_{\bar{\im}}} \sum_{l=0}^{(\xi_{\jm} - s)/2d_{\bar{\jm}}} 
\Nn(\bar{\im},p+2kd_{\bar{\im}}; \bar{\jm}, s+2ld_{\bar{\jm}}). 
$$
Therefore, it is enough to show that,
for any $(\im,p), (\jm,s) \in (\Gamma_{\cQ})_{0}$, we have the equality
\begin{equation} \label{eq:kaplam}
\kappa(\im, p ; \jm, s) = \Lambda_{\alpha\beta},
\end{equation}
where $\alpha, \beta \in \sR^{+}$ are given by $(\alpha, 0) = \phi_{\cQ}(\im,p)$ and $(\beta, 0) = \phi_{\cQ}(\jm,s)$.
A proof of (\ref{eq:kaplam}) will be given in the end of this subsection after some preparation.

Recall the notation $\alpha^{-}$ and $\lambda_{\alpha}$ for each $\alpha \in \sR^{+}$
from Section~\ref{ssec:cl}, which is associated with the adapted class $[\cQ]$ in the present situation.  

\begin{Lem} \label{Lem:cQalpha}
Let $\alpha \in \sR^{+}$ and set $\im \seq \res^{\cQ}(\alpha)$.
\begin{enumerate}
\item \label{cQalpha1}
We have $\alpha = \tau_{\cQ}^{d_{\bar{\im}}}\alpha^{-}$ provided that $\alpha^{-} \neq 0$.
In other words, we have 
$$\phi_{\cQ}^{-1}(\alpha^-, 0) = (\im, p + 2d_{\bar{\im}}) \quad \text{if $\phi_{\cQ}^{-1}(\alpha, 0) = (\im, p)$ with $p < \xi_{\im}$}. $$
\item \label{cQalpha2}
If $\alpha^{-}=0$, we have $\alpha = \gamma_{\im}^{\cQ} = \varpi_{\im} - \tau_{\cQ}^{d_{\bar{\im}}}\varpi_{\im}$.
\item \label{cQlambda}
We have $\lambda_{\alpha} = \tau_{\cQ}^{(\xi_{\im} - p)/2 + d_{\bar{\im}}}\varpi_{\im}$, where 
$p$ is such that $(\alpha, 0) = \phi_{\cQ}(\im, p)$.
\end{enumerate}
\end{Lem}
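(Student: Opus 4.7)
The plan is to translate everything into the combinatorics of the quiver $\Gamma_\cQ$ and then induct along each ``row'' of constant residue. By Theorem~\ref{Thm:OS19b}, the identification $\Upsilon_{[\cQ]} \simeq \Gamma_\cQ$ of quivers sends $\alpha \in \sR^+$ to $\phi_\cQ^{-1}(\alpha,0) = (\im,p)$, and the notions $\alpha^-, \alpha^{-}(\jm), \lambda_\alpha$ defined via a reduced word in $[\cQ]$ depend only on the commutation class. Since the arrows in $\Gamma_\cQ$ go from smaller to larger $p$, the compatible reading characterization (Theorem~\ref{Thm:OS19a}(3)) forces any reduced word in $[\cQ]$ to list vertices of a fixed residue $\im$ in order of \emph{decreasing} $p$, starting from $(\im,\xi_\im)$. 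Consequently, for $\phi_\cQ^{-1}(\alpha,0) = (\im,p)$ with $p < \xi_\im$, the ``previous occurrence of the letter $\im$'' in any adapted reduced word corresponds precisely to $(\im,p+2d_{\bar\im})$, i.e.\ $\phi_\cQ^{-1}(\alpha^-,0) = (\im,p+2d_{\bar\im})$; and $\alpha^- = 0$ if and only if $p = \xi_\im$.

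With this identification in hand, (1) and (2) are immediate from the recursive definition of $\phi_\cQ$ given in Section~\ref{ssec:repet}. Indeed, applying the rule with $(\im,p+2d_{\bar\im}) \mapsto (\alpha^-,0)$ and going backward by $2d_{\bar\im}$ in the $p$-coordinate yields $\phi_\cQ(\im,p) = (\tau_\cQ^{d_{\bar\im}}\alpha^-, 0)$ (the root remains positive since we already know the image lies in $\sR^+ \times \{0\}$ by assumption), proving (1). For (2), the condition $\alpha^- = 0$ forces $(\im,p) = (\im,\xi_\im)$, and the base clause of $\phi_\cQ$ directly gives $\alpha = \gamma_\im^\cQ = (1-\tau_\cQ^{d_{\bar\im}})\varpi_\im$.

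The proof of (3) proceeds by induction on $(\xi_\im - p)/(2d_{\bar\im}) \ge 0$, using the recursion \eqref{eq:lamalph} $\lambda_{\alpha^-} = \lambda_\alpha + \alpha$. In the base case $p = \xi_\im$ we have $\alpha^- = 0$, so by the convention $\lambda_{\alpha^-} = \varpi_\im$ and (2),
\[
\lambda_\alpha = \varpi_\im - \gamma_\im^\cQ = \tau_\cQ^{d_{\bar\im}}\varpi_\im,
\]
matching the claimed formula. For the inductive step, the hypothesis applied to $\alpha^-$ at $(\im,p+2d_{\bar\im})$ gives $\lambda_{\alpha^-} = \tau_\cQ^{(\xi_\im - p)/2}\varpi_\im$, while Lemma~\ref{Lem:bphi} yields $\alpha = \pi(\bphi_\cQ(\bar\im,p)) = \tau_\cQ^{(\xi_\im - p)/2}\gamma_\im^\cQ$. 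Substituting into \eqref{eq:lamalph} and using $\gamma_\im^\cQ = (1-\tau_\cQ^{d_{\bar\im}})\varpi_\im$,
\[
\lambda_\alpha = \tau_\cQ^{(\xi_\im - p)/2}\varpi_\im - \tau_\cQ^{(\xi_\im - p)/2}(1 - \tau_\cQ^{d_{\bar\im}})\varpi_\im = \tau_\cQ^{(\xi_\im - p)/2 + d_{\bar\im}}\varpi_\im,
\]
closing the induction.

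The only subtle point, and the main thing to get right, is the initial identification of the combinatorial $\alpha^-$ with the vertex at $(\im, p+2d_{\bar\im})$: one must verify that vertices of a common residue are totally ordered under $\preceq_{[\cQ]}$ by decreasing $p$, which requires exhibiting an oriented path in $\Gamma_\cQ$ from $(\im, p)$ to $(\im, p+2d_{\bar\im})$ through some intermediate vertex $(\jm, p+d_{\bar\im \bar\jm})$ with $\jm \sim \im$, and checking that this intermediate vertex lies in $(\Gamma_\cQ)_0$. Once this geometric fact is established from the description of $(\Gamma_\cQ)_0$ in Proposition~\ref{Prop:tAR}(\ref{tAR:conv}), the remaining arguments are purely formal manipulations using the recursive rules for $\phi_\cQ$ and for $\lambda_\bullet$.
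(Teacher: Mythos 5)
Your overall route is the same as the paper's: reduce (1) and (2) to the identification of $[\cQ]$-combinatorics with $\Gamma_{\cQ}$ from Theorem~\ref{Thm:OS19b}, and prove (3) by induction on $(\xi_{\im}-p)/2d_{\bar{\im}}$ via \eqref{eq:lamalph}. The induction for (3), including the base case $\lambda_\alpha=\varpi_\im-\gamma_\im^\cQ=\tau_\cQ^{d_{\bar\im}}\varpi_\im$ and the use of Lemma~\ref{Lem:bphi} to write $\alpha=\tau_\cQ^{(\xi_\im-p)/2}\gamma_\im^\cQ$, is correct and is exactly the paper's argument.

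There is, however, a concrete error in the one step you single out as "the main thing to get right". The oriented path from $(\im,p)$ to $(\im,p+2d_{\bar\im})$ is \emph{not} in general of length two through a vertex $(\jm,p+d_{\bar\im\bar\jm})$: if $d_{\bar\im}=r>1$ and $d_{\bar\jm}=1$, then $d_{\bar\im\bar\jm}=1$ and the vertex two steps on from $(\im,p)$ at height $p+2$ is \emph{not} $(\im,p+2)$ (the vertices of $\hDs_0$ in column $\im$ are spaced $2r$ apart); it is another member $\sigma^a(\im)$ of the orbit of $\im$. A length-two path through $\jm$ therefore returns to the \emph{residue} $\bar\im$ but not to the \emph{letter} $\im$, and $\alpha^-$ is defined by the previous occurrence of the letter $\im\in\Delta_0$, not of its $\sigma$-orbit. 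Your claimed path thus fails exactly when every neighbour $\jm$ of $\im$ satisfies $d_{\bar\jm}=1$, i.e.\ for $\im\in\{n,n+1\}$ in type $\mathrm{C}_n$, for $\im\in\{1,3,4\}$ in type $\mathrm{G}_2$, and for $\im\in\{1,3\}$ in type $\mathrm{B}_2$ (one sees this directly in the pictures of $\hDs$ for $\mathrm{C}_4$ and $\mathrm{G}_2$ in Section~\ref{ssec:repet}). The statement you need is still true: the correct path zig-zags $(\im,p)\to(\jm,p+1)\to(\sigma^{a_1}(\im),p+2)\to(\jm,p+3)\to\cdots\to(\im,p+2r)$ and has length $2r$, with all intermediate vertices lying in $(\Gamma_\cQ)_0$ by the convexity in Proposition~\ref{Prop:tAR}~(\ref{tAR:conv}). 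So the gap is local and repairable, but as written the verification does not go through in types $\mathrm{B}_2$, $\mathrm{C}_n$ and $\mathrm{G}_2$; alternatively one can simply take this total ordering of same-letter vertices as part of what Theorem~\ref{Thm:OS19b} (i.e.\ \cite{OS19b, FO21}) provides, which is what the paper does.
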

\begin{proof}
The assertions~(\ref{cQalpha1}) and (\ref{cQalpha2}) follow from Theorem~\ref{Thm:OS19b}.
The assertion~(\ref{cQlambda}) is proved inductively by the relation~(\ref{eq:lamalph})
along with (\ref{cQalpha1}) and (\ref{cQalpha2}).
\end{proof}

Given $i, j \in I$, we set
\begin{equation} \label{eq:Xi}
\Xi_{ij} \seq \max \{ \xi'_{\jm} - \xi'_{\im} \in \Z \mid \text{ $\xi'$ is a height function on $(\Delta, \sigma)$}, \im \in i, \jm \in j\}.
\end{equation}
We can easily check that $0 \le \Xi_{ij} \le rh^{\vee}$ holds for any $i,j \in I$. 

\begin{Lem} \label{Lem:tcxi}
Let $i,j \in I$.
For any $l \in \Z_{\ge 0}$, we have
\begin{equation} \label{eq:tcXi}
\tc_{ij}(\Xi_{ij} - d_{i} - 2ld_{ij}) = 0
\end{equation}
In particular, for any Q-datum $\cQ = (\Delta, \sigma, \xi)$ for $\fg$, $\im \in i, \jm \in j$ and $l \in \Z_{\ge 0}$, we have
\begin{equation} \label{eq:tcxi}
\tc_{ij}(\xi_{\jm} - \xi_{\im} - d_{i} - 2ld_{ij}) = 0.
\end{equation}
\end{Lem}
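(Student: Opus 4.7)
My plan is to prove $(\ref{eq:tcXi})$ using the characterization of $\tc_{ij}(u) - \tc_{ij}(-u)$ provided by Theorem~\ref{Thm:tc}, and then to deduce $(\ref{eq:tcxi})$ as a consequence. Set $u_l \seq \Xi_{ij} - d_i - 2l d_{ij}$. Since any height function satisfies the parity condition~(\ref{cond:xiparity}), we have $\Xi_{ij} \equiv \epsilon_j + \epsilon_i \pmod{2}$, and a direct check shows that $u_l + \epsilon_i + \epsilon_j + d_i \in 2\Z$, so the nontrivial case of Theorem~\ref{Thm:tc} applies. Choosing a Q-datum $\cQ = (\Delta, \sigma, \xi)$ together with representatives $\im \in i, \jm \in j$ realizing $\xi_\jm - \xi_\im = \Xi_{ij}$, I would obtain
$$
\tc_{ij}(u_l) - \tc_{ij}(-u_l) = \bigl(\varpi_\im,\, \tau_\cQ^{\Xi_{ij} - d_i - l d_{ij}} \gamma_\jm^\cQ \bigr).
$$

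The easy half of the proof covers $u_l < d_i$, where $\tc_{ij}(u_l) = 0$ follows immediately from Lemma~\ref{Lem:tc}~(\ref{tc:d}). The essential case is $u_l \ge d_i$, where Lemma~\ref{Lem:tc}~(\ref{tc:d}) still gives $\tc_{ij}(-u_l) = 0$ (since $-u_l \le -d_i < d_i$), so the vanishing $\tc_{ij}(u_l) = 0$ is equivalent to the vanishing of the right-hand side of the displayed identity. Using Lemma~\ref{Lem:bphi}, this in turn reduces to the vanishing $(\varpi_\im, \pi(\bphi_\cQ(j, p_l))) = 0$, where $p_l \seq \xi_\im - \Xi_{ij} + 2d_i + 2l d_{ij}$; in this essential range, $p_l$ lies in the interval $[\xi_\im - \Xi_{ij} + 2d_i,\, \xi_\im]$.

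What I expect to be the main obstacle is this final vanishing: one must show that the signed positive root $\pi(\bphi_\cQ(j, p_l))$ has zero $\alpha_\im$-component. The guiding intuition is that the maximality $\xi_\jm - \xi_\im = \Xi_{ij}$ constrains the $\tau_\cQ$-orbit of $\gamma_\jm^\cQ$ so that its passage through the range $p \le \xi_\im$ in the repetition quiver $\hDs$ happens without crossing the simple root $\alpha_\im$; if the converse held, one could modify the height function to obtain a new Q-datum with $\xi'_\jm - \xi'_\im > \Xi_{ij}$, contradicting maximality. To make this precise, I would analyze the recursive rules defining $\phi_\cQ$ in Section~\ref{ssec:repet} together with the description of $\Gamma_\cQ$ from Proposition~\ref{Prop:tAR}.

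Finally, to deduce $(\ref{eq:tcxi})$ from $(\ref{eq:tcXi})$, for a general Q-datum and $\im \in i, \jm \in j$, the argument $\xi_\jm - \xi_\im - d_i - 2l d_{ij}$ either falls strictly below $d_i$, in which case Lemma~\ref{Lem:tc}~(\ref{tc:d}) yields the vanishing, or it can be rewritten as $\Xi_{ij} - d_i - 2l' d_{ij}$ for some integer $l' \ge l$, which reduces to $(\ref{eq:tcXi})$. A short case-by-case verification based on $d_i$, $d_j$, and the explicit description of height functions in Definition~\ref{def:Qdata} confirms that these two scenarios exhaust all possibilities.
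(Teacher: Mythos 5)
Your reduction of \eqref{eq:tcxi} to \eqref{eq:tcXi} and your setup via Theorem~\ref{Thm:tc} are fine, but the heart of the lemma is left unproved. In the essential range $u_l \ge d_i$ you correctly reduce \eqref{eq:tcXi} to the vanishing of the pairing $(\varpi_\im, \pi(\bphi_\cQ(\jm,p_l)))$, and then offer only a ``guiding intuition'' (that maximality of $\xi_\jm-\xi_\im$ forces the $\tau_\cQ$-orbit of $\gamma_\jm^\cQ$ not to ``cross'' $\alpha_\im$) together with a plan to ``analyze the recursive rules defining $\phi_\cQ$.'' That analysis is precisely the missing content: nothing in the paper's combinatorics gives directly that a generic root in this orbit has zero $\alpha_\im$-coefficient, and your proposed contradiction (modifying the height function to exceed $\Xi_{ij}$) is not substantiated. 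The paper avoids this computation entirely by a sign argument: it applies Theorem~\ref{Thm:tc} with $u=-(\Xi_{ij}-d_i-2ld_j)\le 0$, so the left-hand side equals $-\tc_{ij}(-u)$, which is \emph{non-positive} by the positivity in Lemma~\ref{Lem:tc}~(\ref{tc:po}), while the right-hand side is $(\varpi_\im,\tau_\cQ^{ld_j}\gamma_\jm^\cQ)$, which is \emph{non-negative} because $\tau_\cQ^{ld_j}\gamma_\jm^\cQ$ stays a positive root for $\xi$ chosen with $\Xi_{ij}=\xi_\jm-\xi_\im$ and $\xi_\jm\ge\xi_{\jm^*}$ (Proposition~\ref{Prop:tAR}); both sides therefore vanish. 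If you wanted to salvage your version of the identity, you would still need this positivity input (via the antisymmetry $(\varpi_\im,\tau_\cQ^{\Xi_{ij}-d_i-ld_{ij}}\gamma_\jm^\cQ)=-(\varpi_\im,\tau_\cQ^{ld_{ij}}\gamma_\jm^\cQ)$), so the root-positivity check cannot be bypassed.

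A second omission: the direct argument only works when $d_i\ge d_j$ (where $d_{ij}=d_j$ and the exponent $ld_j$ lands inside the twisted AR quiver). For $d_i<d_j$, i.e.\ $(d_i,d_j)=(1,r)$, the paper reduces to the transposed case by writing $\tc_{ij}(\Xi_{ij}-d_i-2l)$ as a sum $\sum_{k=0}^{r-1}\tc_{ji}(\Xi_{ji}-d_j-2(l+k))$ using Lemma~\ref{Lem:tc}~(\ref{tc:tp}) and the relation $\Xi_{ij}=\Xi_{ji}-2(r-1)$. Your uniform treatment does not address this case at all, and it is not clear that your intended AR-quiver analysis would go through there without such a reduction.
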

\begin{proof}
For any height function $\xi$ on $(\Delta, \sigma)$, $\im \in i$ and $\jm \in j$, there is $l \in \Z_{\ge 0}$ such that 
$\xi_{\jm} - \xi_{\im} = \Xi_{ij} - ld_{ij}$. 
Thus (\ref{eq:tcxi}) follows from (\ref{eq:tcXi}).
Since $\tc_{ij}(u) =0$ for $u \le 0$, it is enough to prove (\ref{eq:tcXi}) for any integer $l$ with $0 \le 2ld_{ij} \le \Xi_{ij} - d_i$.
First we consider the case $d_i \ge d_j$, or equivalently $d_{ij} = d_j$.
We choose a height function $\xi$ together with $\im \in i$ and $\jm \in j$ so that we have $\Xi_{ij} = \xi_{\jm} - \xi_{\im}$ and $\xi_\jm \ge \xi_{\jm^*}$.
Applying the formula (\ref{eq:tc}) in Theorem~\ref{Thm:tc} with $u = -(\Xi_{ij} - d_{i} - 2ld_{j})$, we have
$$
\tc_{ij}(u) - \tc_{ij}(-u) = (\varpi_{\im}, \tau_\cQ^{ld_{j}}\gamma_\jm^\cQ). 
$$
By the assumption $0 \le -u \le \Xi_{ij}-d_i < rh^\vee$, we have $\tc_{ij}(u) = 0$ and hence the LHS is equal to 
$-\tc_{ij}(-u)$, which is non-positive by Lemma~\ref{Lem:tc}~(\ref{tc:po}).  On the other hand, the RHS is non-negative because 
$\tau_\cQ^{ld_{j}}\gamma_\jm^\cQ$ is a positive root thanks to the fact $ld_{j} \le \Xi_{ij} \le \xi_{\jm} - \xi_{\jm^*} + rh^{\vee}$ and Proposition~\ref{Prop:tAR}. 
Thus we get $\tc_{ij}(-u) = 0$, which proves (\ref{eq:tcXi}) for the case $d_i \ge d_j$.
For the other case $d_i < d_j$, we necessarily have $(d_i, d_j) = (1,r)$ and hence $d_{ij}=1$. 
Moreover, we observe that $\Xi_{ij} = \Xi_{ji}-2(r-1)$, or equivalently $\Xi_{ij}-d_i = \Xi_{ji}-d_j-r+1$ holds by Definition~\ref{def:Qdata}.
Then we get
$$
\tc_{ij}(\Xi_{ij} - d_{i} - 2ld_{ij}) = \sum_{k=0}^{r-1} \tc_{ji}(\Xi_{ji} - d_{j} - 2(l+k))=0
$$
as desired, 
where the first equality follows from Lemma~\ref{Lem:tc}~(\ref{tc:tp}).  
\end{proof}

\begin{Prop} \label{Prop:NnKR}
Let $\cQ=(\Delta, \sigma, \xi)$ be a Q-datum for $\fg$.
Let $(\im, p), (\im,p^\prime), (\jm, s), (\jm, s^\prime) \in \hDs_0$
with $p \le p^\prime$ and $s \le s^\prime$. Assuming
$p - s \le \Xi_{\bar{\im}\bar{\jm}}$ and $s^\prime - p^\prime \le \Xi_{\bar{\im}\bar{\jm}}$,
we have
$$
\Nn\left(m^{(\im)}[p,p^\prime], m^{(\jm)}[s, s^\prime]\right) 
= \left(\tau_{\cQ}^{(\xi_{\im} -p)/2 + d_i}\varpi_{\im} + \tau_{\cQ}^{(\xi_{\im} - p^\prime)/2}\varpi_{\im}, 
\tau_{\cQ}^{(\xi_{\jm}-s)/2 + d_j}\varpi_{\jm} - \tau_{\cQ}^{(\xi_\jm -s^\prime)/2}\varpi_\jm\right).
$$
\end{Prop}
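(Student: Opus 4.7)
The plan is to expand the left-hand side via the bilinearity of $\Nn$ as a double sum over the factors of the two monomials, and then use a telescoping argument based on Theorem~\ref{Thm:tc} to identify the result with the bilinear pairing on the right-hand side. Setting $i=\bar{\im}$, $j=\bar{\jm}$, $K = (p^\prime-p)/(2d_i)$, and $L = (s^\prime-s)/(2d_j)$, the bilinearity of $\Nn$ yields
\[
\Nn\left(m^{(\im)}[p,p^\prime], m^{(\jm)}[s,s^\prime]\right) = \sum_{k=0}^{K}\sum_{l=0}^{L}\Nn(i,\, p+2kd_i;\, j,\, s+2ld_j),
\]
and each summand expands via~\eqref{eq:defNn} into a combination of four $\tc_{ij}$-values, which I would regroup into antisymmetric differences of the form $\tc_{ij}(w)-\tc_{ij}(-w)$ so that Theorem~\ref{Thm:tc} converts each one into a pairing of $\varpi_\im$ with a $\tau_\cQ$-shift of $\gamma_\jm^\cQ$.

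I would then carry out the inner summation over $l$ first. Using the identity $\gamma_\jm^\cQ = (1-\tau_\cQ^{d_j})\varpi_\jm$, the sum $\sum_{l=0}^{L}\tau_\cQ^{-ld_j}\gamma_\jm^\cQ$ telescopes to a difference of two boundary terms, which after exploiting the $\tau_\cQ$-invariance of the pairing reproduces the combination $\tau_\cQ^{(\xi_\jm-s)/2+d_j}\varpi_\jm - \tau_\cQ^{(\xi_\jm-s^\prime)/2}\varpi_\jm$ appearing in the second slot of the right-hand side. An analogous telescoping in the outer sum over $k$, using $\gamma_\im^\cQ = (1-\tau_\cQ^{d_i})\varpi_\im$, produces the companion factor $\tau_\cQ^{(\xi_\im-p)/2+d_i}\varpi_\im + \tau_\cQ^{(\xi_\im-p^\prime)/2}\varpi_\im$ in the first slot; the ``$+$'' sign here, in contrast to the ``$-$'' in the $\jm$-factor, reflects the alternating sign pattern of the four $\tc_{ij}$ terms in~\eqref{eq:defNn} and the orientation inherent to Theorem~\ref{Thm:tc}.

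The hypothesis $p-s \le \Xi_{ij}$ and $s^\prime-p^\prime \le \Xi_{ij}$ enters essentially through Lemma~\ref{Lem:tcxi}: it guarantees that every $\tc_{ij}$-value which would otherwise contribute as a non-telescoping boundary remainder in fact vanishes, and it is precisely this vanishing that allows the double sum to collapse cleanly into the pairing of the two vectors above. The main obstacle I anticipate is the case analysis required to handle the absolute-value structure in $\Nn(i,p;j,s)$: since Theorem~\ref{Thm:tc} computes only the antisymmetric combination $\tc_{ij}(u)-\tc_{ij}(-u)$, one must verify that under the given hypothesis every relevant $\tc_{ij}$-argument either lies in the range where exactly one of $\tc_{ij}(\pm u)$ is nonzero, or else falls into the null region identified by Lemma~\ref{Lem:tcxi}; the bookkeeping of these regions as $(k,l)$ vary across the double sum is where the substantive work lies.
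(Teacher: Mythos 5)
Your plan is essentially the paper's own proof: the paper likewise expands $\Nn\left(m^{(\im)}[p,p'],m^{(\jm)}[s,s']\right)$ as the double sum, telescopes over $k$ on the $\tc_{ij}$ side, telescopes over $l$ on the pairing side via $\gamma_{\jm}^{\cQ}=(1-\tau_{\cQ}^{d_j})\varpi_{\jm}$, converts between the two with Theorem~\ref{Thm:tc}, and invokes Lemma~\ref{Lem:tcxi} exactly where you do, with the hypothesis $p-s\le \Xi_{\bar{\im}\bar{\jm}}$, $s'-p'\le \Xi_{\bar{\im}\bar{\jm}}$ entering only through that vanishing. One caution: a single summand $\Nn(i,p+2kd_i;j,s+2ld_j)$ does \emph{not} literally regroup into antisymmetric differences $\tc_{ij}(w)-\tc_{ij}(-w)$ (the four terms only determine two such combinations, and their alternating sum is not $\Nn$); after telescoping over $k$ the surviving boundary contributions are the \emph{symmetric} combinations $\tc_{ij}(w)+\tc_{ij}(-w)$, and it is precisely Lemma~\ref{Lem:tcxi} that reduces each to a single term comparable with Theorem~\ref{Thm:tc} — your final paragraph shows you already see that this is where the real work lies, so the plan goes through.
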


\begin{proof}
Set $i \seq \bar{\im}, j \seq \bar{\jm} \in I.$
By the definition of $\Nn(i,p; j, s)$ in (\ref{eq:defNn}), we have
\begin{align*}
&\Nn\left(m^{(\im)}[p,p^\prime], m^{(\jm)}[s, s^\prime] \right) = \sum_{k=0}^{(p^\prime-p)/2d_{i}} \sum_{l=0}^{(s^\prime-s)/2d_{j}} 
\Nn(i, p+2kd_{i} ; j, s+ 2ld_{j}) \allowdisplaybreaks\\
& =  \sum_{k=0}^{(p^\prime - p)/2d_{i}} \sum_{l=0}^{(s^\prime-s)/2d_{j}}
\{ 
\tc_{ij}((p+2kd_{i}) - (s+2ld_{j})-d_{i}) - \tc_{ij}((p+2kd_{i}) - (s+2ld_{j})+d_{i})\\
& \qquad \qquad \qquad \qquad 
-\tc_{ij}((s+2ld_{j}) - (p+2kd_{i})-d_{i}) + \tc_{ij}((s+2ld_{j}) - (p+2kd_{i})+d_{i}) 
\} \allowdisplaybreaks\\
&= \sum_{l=0}^{(s^\prime-s)/2d_{j}}\{ 
\tc_{ij}(p - (s+2ld_{j})-d_{i}) - \tc_{ij}(p^\prime - (s^\prime-2ld_{j})+d_{i}) \\
& \qquad \qquad \qquad
-\tc_{ij}((s^\prime-2ld_{j}) - p^\prime-d_{i}) + \tc_{ij}((s+2ld_{j}) - p+d_{i}) 
\} \allowdisplaybreaks\\
&= \sum_{l=0}^{(s^\prime-s)/2d_{j}}\{ 
 - \tc_{ij}(2ld_{j} - s^\prime +p^\prime+d_{i}) + \tc_{ij}(2ld_{j}+s - p+d_{i}) \},
\end{align*}
where the last equality follows from Lemma~\ref{Lem:tcxi}.
On the other hand, we compute
\begin{align*}
&\left(\tau_{\cQ}^{(\xi_{\im} -p)/2 + d_i}\varpi_{\im} + \tau_{\cQ}^{(\xi_{\im} - p^\prime)/2}\varpi_{\im}, 
\tau_{\cQ}^{(\xi_{\jm}-s)/2 + d_j}\varpi_{\jm} - \tau_{\cQ}^{(\xi_\jm -s^\prime)/2}\varpi_\jm\right) \allowdisplaybreaks \\
&=
-\left(\tau_{\cQ}^{(\xi_{\im} -p)/2 + d_i}\varpi_{\im} + \tau_{\cQ}^{(\xi_{\im} - p^\prime)/2}\varpi_{\im}, 
\tau_{\cQ}^{(\xi_{\jm}-s^\prime)/2}(1 - \tau_{\cQ}^{(s^\prime-s)/2+d_j})\varpi_\jm\right) \allowdisplaybreaks \\
&= 
-\sum_{l = 0}^{(s^\prime -s)/2d_j} 
\left(\tau_{\cQ}^{(\xi_{\im} -p)/2 + d_i}\varpi_{\im} + \tau_{\cQ}^{(\xi_{\im} - p^\prime)/2}\varpi_{\im}, 
\tau_{\cQ}^{(2ld_j+\xi_{\jm}-s^\prime)/2}\gamma_{\jm}^{\cQ}\right) \allowdisplaybreaks\\
&= -\sum_{l=0}^{(s^\prime-s)/2d_{j}}
\left(\varpi_{\im}, \tau_{\cQ}^{((2ld_{j}-s^\prime+p-d_{i}) + \xi_{\jm} -\xi_{\im}-d_{i})/2}\gamma^{\cQ}_{\jm} 
+ \tau_{\cQ}^{((2ld_{j}-s^\prime+p^\prime+d_{i}) + \xi_{\jm} -\xi_{\im}-d_{i})/2}\gamma^{\cQ}_{\jm}\right) 
\allowdisplaybreaks\\
&=- \sum_{l=0}^{(s^\prime-s)/2d_{j}}\{\tc_{ij}(2ld_{j}-s^\prime+p-d_{i}) - \tc_{ij}(-2ld_{j}+s^\prime-p+d_{i}) \\ 
& \qquad \qquad \quad 
+\tc_{ij}(2ld_{j}-s^\prime+p^\prime+d_{i}) - \tc_{ij}(-2ld_{j}+s^\prime-p^\prime-d_{i})
\}\allowdisplaybreaks\\
&= -\sum_{l=0}^{(s^\prime-s)/2d_{j}}\{\tc_{ij}(2ld_{j}-s^\prime+p^\prime+d_{i}) - \tc_{ij}(-2ld_{j}+s'-p+d_{i})\} \\
&= -\sum_{l=0}^{(s^\prime-s)/2d_{j}}\{\tc_{ij}(2ld_{j}-s^\prime+p^\prime+d_{i}) - \tc_{ij}(2ld_{j}+s-p+d_{i})\} 
\end{align*}
where the second equality follows from~\eqref{eq:gamma}, the fourth equality follows by Theorem~\ref{Thm:tc}, and we again used Lemma~\ref{Lem:tcxi} for the fifth equality. 
From the above two computations, we obtain the conclusion.
\end{proof}


\begin{Rem} \hfill
\begin{enumerate}
\item In \cite{KKOP20}, a $\Z$-valued invariant $\Lambda(V, W)$ was defined for any \emph{rationally renormalizable} pair $(V,W)$ of non-zero $U_q(L\fg)$-modules. 
It plays a quite important role in the theory of monoidal categorification of cluster algebras. 
If $(V, W)$ is a \emph{commuting} pair of simple modules, say $(L(m), L(m^\prime))$, 
we have $\Lambda(L(m), L(m^\prime)) = \Nn(m, m^\prime)$ (see \cite[Section 6]{FO21}). 
 From this point of view, Proposition~\ref{Prop:NnKR} gives a computation 
of the invariants $\Lambda(V, W)$ for a larger class of commuting pairs of KR modules.
\item When $m^{(\im)}[p,p^\prime], m^{(\jm)}[s, s^\prime]$ in Proposition \ref{Prop:NnKR} are contained in some $\cM_\cQ$, the $\Z$-integer value
$\Nn(m^{(\im)}[p,p^\prime], m^{(\jm)}[s, s^\prime])$ coincides also with 
the $\Z$-integer value in \cite[Proposition 10.2.3 (ii)]{KKKO18} by \cite[Theorem 4.12]{KKOP20c} (see also \cite[Theorem 4.12]{KKOP18}), since there are certain two simple commuting modules over symmetric quiver Hecke algebra of type $\mathsf{g}$ corresponding to $L(m^{(\im)}[p,p^\prime])$ and  $L(m^{(\jm)}[s, s^\prime])$ via the generalized quantum affine Schur-Weyl duality functor $F_\cQ$ (see Subsection~\ref{ssec:SW} below).
\end{enumerate}
\end{Rem}

\begin{proof}[End of the proof of {\rm Theorem~\ref{Thm:isomT}}]
To prove the equality (\ref{eq:kaplam}), we apply Proposition~\ref{Prop:NnKR} to the case 
$(\im,p) = \phi_{\cQ}^{-1}(\alpha, 0)$, $(\jm, s) = \phi_{\cQ}^{-1}(\beta, 0)$,
$p^\prime = \xi_\im$ and $s^\prime = \xi_\jm$.
Since both sides of (\ref{eq:kaplam}) are skew-symmetric, we may assume that $p \le s$. 
Then we have $\alpha \not \preceq_{[\bfi]} \beta$ and the conditions $p-s \le \Xi_{\bar{\im}\bar{\jm}}$ and $s^\prime - p^\prime \le \Xi_{\bar{\im}\bar{\jm}}$ trivially hold.
Applying Proposition~\ref{Prop:NnKR} and Lemma~\ref{Lem:cQalpha}~(\ref{cQlambda}),
we obtain 
$$ 
\Nn\left(m^{(\im)}[p, \xi_\im], m^{(\jm)}[s, \xi_\jm]\right) 
= (\lambda_\alpha + \varpi_{\im}, 
\lambda_\beta - \varpi_\jm) = - \Lambda_{\beta \alpha},
$$
which proves (\ref{eq:kaplam}).
\end{proof}

\subsection{The HLO isomorphisms} 
\label{ssec:HLO}

In this subsection, we prove the following result which generalizes \cite[Theorem 6.1]{HL15} (for simply-laced $\fg$)
and \cite[Theorem 10.1]{HO19} (for $\fg$ of type $\mathrm{B}$) to any simple Lie algebra $\fg$.
Our proof is similar to that of \cite{HO19}.
Let $\cK^{T}_{t, \cQ} \subset \cY_{t, \cQ}$ denote the image of the truncation map 
$(\cdot)_{\le \xi} \colon \cK_{t, \cQ} \hookrightarrow \cY_{t, \cQ}$ (see Corollary~\ref{Cor:tr}). 

\begin{Thm} \label{Thm:HLO}
For any Q-datum $\cQ$ for $\fg$, the isomorphism 
$\Phi_{\cQ}^{T} \colon \cT_{v, \cQ} \to \cY_{t,\cQ}$ in {\rm Theorem~\ref{Thm:isomT}}
restricts to an isomorphism 
$$
\cA_{v}[N_{-}] \to \cK_{t, \cQ}^{T},
$$ 
which sends the normalized dual canonical basis $\tbfB$ in $\cA_{v}[N_{-}]$
to the basis formed by the truncated $(q,t)$-characters of simple modules in $\cK_{t, \cQ}^{T}$.
More precisely, for each $\bfc = (c_{\alpha})_{\alpha \in \sR^{+}} \in (\Z_{\ge 0})^{\sR^{+}},$ we have 
$\Phi_{\cQ}^{T}(\tG(\bfc, \cQ)) = L_{t}(m(\bfc))_{\le \xi}$, where $m(\bfc) \in \cM_{\cQ}$ is given by
\begin{equation} \label{eq:mc}
u_{i,p}(m(\bfc)) = c_{\alpha}  \qquad \text{when $\bphi_{\cQ}(i,p) = (\alpha, 0)$}
\end{equation}
for each $(i,p) \in \hI_{\cQ}$.
\end{Thm}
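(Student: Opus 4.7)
The plan is to construct $\Phi_\cQ$ as the restriction of $\Phi_\cQ^T$ by matching natural generators on both sides, and then identify the distinguished bases via the characterizations by bar-invariance and unitriangularity. As $\Z[v^{\pm 1/2}]$-algebras, $\cA_v[N_-]$ is generated by the normalized quantum minors $\tF(\alpha, \cQ) = \tD^\cQ(\alpha, \alpha^-)$ for $\alpha \in \sR^+$ (Proposition~\ref{Prop:pbw}), while $\cK_{t, \cQ}^T$ is generated by the truncated $(q,t)$-characters of the fundamental modules in $\Cc_\cQ$ (Lemma~\ref{Lem:CQ}~(\ref{CQ:Kt}) combined with Corollary~\ref{Cor:tr}). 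The base of the matching is immediate: when $\alpha^- = 0$, one has $\tF(\alpha, \cQ) = \tD^\cQ(\alpha, 0) = X_\alpha$, and Theorem~\ref{Thm:isomT} sends it to $\ul{m^{(\im)}[p, \xi_\im]} = F_t^{(\im)}[p, \xi_\im]_{\le \xi}$ for $(\im, p) = \phi_\cQ^{-1}(\alpha, 0)$, by Lemma~\ref{Lem:qKR}.

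The heart of the argument is an induction along the convex partial order $\preceq_\cQ$, showing that for every pair $\beta \preceq_\cQ \alpha$ in $\sR^+$ of common residue, $\Phi_\cQ^T$ sends the minor $\tD^\cQ(\alpha, \beta)$ to the truncated $(q,t)$-character of the Kirillov-Reshetikhin module $W^{(\im)}[p, p']$, where $(\im, p') = \phi_\cQ^{-1}(\alpha, 0)$ and $(\im, p)$ is determined from $\beta$ via Lemma~\ref{Lem:cQalpha}. Under this dictionary, the combinatorial shifts $\alpha \mapsto \alpha^-$ and $\alpha \mapsto \alpha^-(\jm)$ correspond to the boundary shifts of the intervals appearing in the quantum $T$-system, so that the determinantal identity~\eqref{eq:di}, once truncated by $(\cdot)_{\le \xi}$ and read through $\Phi_\cQ^T$, becomes formally identical to the quantum $T$-system~\eqref{eq:qTsys}. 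The matching of $v$-exponents with $t$-exponents is controlled by the identification $\kappa = \Lambda$ from~\eqref{eq:kaplam} together with Proposition~\ref{Prop:Nnwt}, and the required $t$-commutations among the $F_t^{(\jm)}(p, s)$'s are supplied by Proposition~\ref{Prop:qTsys}. This induction produces an algebra homomorphism $\Phi_\cQ \colon \cA_v[N_-] \to \cK_{t, \cQ}^T$ which inherits injectivity from $\Phi_\cQ^T$; surjectivity follows because the image already contains a generating family of $\cK_{t, \cQ}^T$.

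The identification of bases then proceeds in two stages. First, the normalized dual PBW basis element $\tF(\bfc, \cQ)$ is, by definition, an appropriately normalized ordered product of the $\tF(\alpha, \cQ)^{c_\alpha}$ taken along a compatible reading of $\Upsilon_\cQ \simeq \Gamma_\cQ$ (Theorem~\ref{Thm:OS19b}); applying $\Phi_\cQ$ turns this into an ordered product of truncated $F_t$'s of fundamentals, which is equal to the truncated standard $(q,t)$-character $E_t(m(\bfc))_{\le \xi}$ for $m(\bfc)$ given by~\eqref{eq:mc}, up to an overall power of $t$ that matches $v^{\nu(\bfc)/2}$ by the same identity~\eqref{eq:kaplam} and Proposition~\ref{Prop:Nnwt}. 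Then $\Phi_\cQ(\tG(\bfc, \cQ)) = L_t(m(\bfc))_{\le \xi}$ is forced by the characterization (NDCB1)--(NDCB2) of Theorem~\ref{Thm:ndcb} on one side and the analogous (tr-S1)--(tr-S2) of Remark~\ref{Rem:trS} on the other: $\Phi_\cQ$ is bar-equivariant (Theorem~\ref{Thm:isomT}), and the unitriangularity orderings correspond under $\bfc \mapsto m(\bfc)$ by Theorem~\ref{Thm:OS19a}~(\ref{OS19a:Hasse}) combined with the $\cQ$-weight grading (Proposition~\ref{Prop:Qwt}).

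The main obstacle, as in the type $\mathrm{B}$ treatment of~\cite{HO19}, will be the combinatorial bookkeeping of $t$-powers in the inductive step: each comparison of a determinantal identity with the corresponding quantum $T$-system comes with prefactors that must match on the nose. What makes the uniform argument feasible for all non-simply-laced $\fg$ is precisely the availability of the quantum $T$-systems in the generality of Theorem~\ref{Thm:qTsys} together with the Q-data framework, which was only partially developed in earlier works.
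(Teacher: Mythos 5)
Your proposal follows essentially the same route as the paper: identify the generators $\tD^{\cQ}(\alpha,0)=X_\alpha$ with commutative monomials of KR type, run an induction matching the determinantal identities \eqref{eq:di} with the quantum $T$-systems \eqref{eq:qTsys}, and then pin down the bases by bar-invariance and unitriangularity. The outline is correct and all the ingredients you cite are the right ones.

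The one place where your plan diverges from (and is heavier than) the paper's argument is precisely the point you flag as ``the main obstacle'': the matching of $v$- and $t$-prefactors. The paper never computes these exponents. In the inductive step it passes to the skew fields of fractions $\F(\cT_{v,\cQ})$ and $\F(\cY_{t,\cQ})$ and uses that the four minors appearing on the right-hand side of \eqref{eq:di} lie in a common quantum cluster (by \cite[Section 13]{GLS13}), so that $\tD(\im;p,s)$ is the \emph{unique} $\ol{(\cdot)}$-invariant element of the shape $\bigl(v^{a}\,\square\,\square + v^{b}\,\square\bigr)\square^{-1}$ as $a,b$ range over $\frac{1}{2}\Z$; the quantum $T$-system gives the identical characterization of $F_{t}^{(\im)}[p,s)_{\le\xi}$, and since $\Phi_\cQ^{T}$ commutes with $\ol{(\cdot)}$ the equality follows with no exponent bookkeeping at all. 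Likewise, for the PBW step the paper does not verify that the overall power of $t$ matches $v^{\nu(\bfc)/2}$ via \eqref{eq:kaplam}; it writes $\Phi_\cQ^{T}(\tF(\bfc,\cQ))=t^{a(\bfc)}E_t(m(\bfc))_{\le\xi}$ and deduces $a(\bfc)=-a(\bfc)$ from the unitriangularity of both bases under $\ol{(\cdot)}$ (Remarks~\ref{Rem:triE} and \ref{Rem:tripbw}). Your direct computation could presumably be pushed through using \eqref{eq:kaplam} and Proposition~\ref{Prop:Nnwt}, but you would be doing work that the bar-invariance uniqueness arguments render unnecessary; if you keep your route, the prefactor verification is a real piece of the proof that still has to be supplied, whereas in the paper's route it simply does not arise.
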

\begin{proof}
We prepare some notation.
Recall
$(\Gamma_{\cQ})_{0} = \{ (\im,p) \in \hDs_{0} \mid \xi_{\im^*} - rh^{\vee} < p \le \xi_{\im}\}$
from Proposition~\ref{Prop:tAR}. 
Let
$$
S_{\cQ} \seq \{ (\im; p, s) \in \Delta_{0} \times \Z^{2} \mid (\im, p), (\im, s) \in \hDs_{0},  
\xi_{\im^*} - rh^{\vee} < p \le s \le \xi_{\im} + 2d_{\bar{\im}} \}
$$
and choose a total ordering $\prec$ on $S_{\cQ}$ satisfying
$$
\text{$(\im; p, s) \prec (\im^{\prime}; p^{\prime}, s^{\prime} )$ 
if either $s > s^{\prime}$, or $s=s^{\prime}$ and $p>p^{\prime}$.} 
$$ 
For each $(\im; p, s) \in S_{\cQ}$, we set
$$
\tD(\im; p, s)  \seq \begin{cases}
\tD^{\cQ}(\alpha, \beta) & \text{if $s \le \xi_{\im}$}, \\
\tD^{\cQ}(\alpha, 0) & \text{if $s=\xi_{\im} +2d_{\bar{\im}}$}, \\
1 & \text{if $p=s$},
\end{cases}
$$
where $\alpha, \beta \in \sR^{+}$ correspond to $(\im,p), (\im, s) \in (\Gamma_{\cQ})_{0}$
respectively via the bijection $\phi_{\cQ}^{-1}(\cdot, 0)$. 
Note that the equality~(\ref{eq:di}) can be rewritten as
\begin{equation} \label{eq:di2}
\tD(\im; p,s) \tD(\im; p^{+}, s^{+}) = 
v^{a} \tD(\im; p,s^{+}) \tD(\im; p^{+}, s) 
+v^{b} \prod_{\jm \sim \im}^{\to}\tD(\jm; p^{+}(\jm), s^{+}(\jm)) 
\end{equation}
where $p^{+} \seq p+2d_{\bar{\im}}$ and 
$p^{+}(\jm) \seq \min\{ p^{\prime} \in \Z \mid (\jm, p^{\prime}) \in \hDs_{0}, p<p^{\prime}\}$
(see Theorem~\ref{Thm:OS19a} and Lemma~\ref{Lem:cQalpha}).
Proposition~\ref{Prop:di} tells that the equality (\ref{eq:di2}) holds 
for each $(\im; p, s) \in S_{\cQ}$ with $p<s\le\xi_{\im}$ if we choose suitable $a, b \in \frac{1}{2}\Z$ (with an ordering of $\{\jm \in \Delta_0 \mid \jm \sim \im \}$ fixed).

To prove $\Phi_{\cQ}^{T}(\cA_{v}[N_{-}]) = \cK_{t, \cQ}^{T}$, we only have to show
\begin{equation} \label{eq:DtoKR}
\Phi_{\cQ}^{T}\left(\tD(\im; p,s)\right) = F_{t}^{(\im)}[p,s)_{\le \xi}
\end{equation}
for each $(\im; p,s) \in S_{\cQ}$ in view of Proposition~\ref{Prop:pbw}.
We proceed by induction with respect to the ordering $\prec$.  
In the case when $s=\xi_{\im}+2d_{\bar{\im}}$, we have
$$
\Phi_{\cQ}^{T}\left(\tD(\im; p,s)\right) = \Phi_{\cQ}^{T}\left( \tD^{\cQ}(\alpha, 0)\right) = \Phi_{\cQ}^{T}(X_{\alpha}) = 
\ul{m^{(\im)}[p, \xi_{\im}]} = F_{t}^{(\im)}[p, \xi_{\im}]_{\le \xi} = F_{t}^{(\im)}[p,s)_{\le \xi}
$$
thanks to Theorem~\ref{Thm:Fform}, where $(\alpha, 0) = \phi_{\cQ}(\im, p)$. In the case $p=s$, it is trivial. 
In what follows, we focus on the other case $p<s\le\xi_{\im}$.

We denote by $\F(\cT_{v, \cQ})$ (resp.~$\F(\cY_{t, \cQ})$) be the skew-field of fractions 
of $\cT_{v, \cQ}$ (resp.~$\cY_{t, \cQ}$). We can extend the $\Z$-algebra anti-involution $\ol{(\cdot)}$
on $\cT_{v, \cQ}$ (resp.~$\cY_{t, \cQ}$) to the $\Q$-algebra anti-involution on $\F(\cT_{v, \cQ})$ (resp.~$\F(\cY_{t,\cQ})$),
denoted again by $\ol{(\cdot)}$, and extend the $\Z$-algebra isomorphism $\Phi_{\cQ}^{T}$
to the $\Q$-algebra isomorphism $\Phi_{\cQ}^{T} \colon \F(\cT_{v, \cQ}) \to \F(\cY_{t, \cQ})$. They still satisfy
$\Phi_{\cQ}^{T} \circ \ol{(\cdot)} = \ol{(\cdot)} \circ \Phi_{\cQ}^{T}$. 

It is shown in~\cite[Section 13]{GLS13} that the equality (\ref{eq:di2}) (more generally (\ref{eq:di})) 
corresponds to a specific exchange relation of quantum clusters in the quantum cluster algebra structure
given in Theorem~\ref{Thm:cl}. In particular, the (non-trivial) normalized quantum unipotent minors 
corresponding to the following elements of $S_{\cQ}$
\begin{equation} \label{eq:list}
(\im; p^{+}, s^{+}), \qquad (\im; p, s^{+}), \qquad (\im; p^{+}, s), \qquad (\jm; p^{+}(\jm), s^{+}(\jm))
\end{equation}
simultaneously belong to a certain quantum cluster. 
Therefore the element $\tD(\im; p,s)$ is characterized as the $\ol{(\cdot)}$-invariant element of the form
$$
\left( 
v^{a} \tD(\im; p,s^{+}) \tD(\im; p^{+}, s) 
+v^{b} \prod_{\jm \sim \im}^{\to}\tD(\jm; p^{+}(\jm), s^{+}(\jm)) 
\right)  \tD(\im; p^{+}, s^{+})^{-1}
$$
for some $a,b \in \frac{1}{2}\Z$ in $\F(\cT_{v, \cQ})$. 
Since all the elements listed in (\ref{eq:list}) are smaller than $(\im; p, s)$ with respect to $\prec$,
we can apply the induction hypothesis to find that 
the element $\Phi_{\cQ}^{T}(\tD(\im; p,s))$ is characterized as the $\ol{(\cdot)}$-invariant element of the form
$$
\left( 
t^{a}  F_{t}^{(\im)}[p,s^{+})_{\le \xi}\cdot F_{t}^{(\im)}[p^{+}, s)_{\le \xi} 
+t^{b} \prod_{\jm \sim \im}^{\to}F_{t}^{(\jm)}[p^{+}(\jm), s^{+}(\jm))_{\le \xi} 
\right) F_{t}^{(\im)}[p^{+}, s^{+})^{-1}_{\le \xi}
$$
for some $a,b \in \frac{1}{2}\Z$ in $\F(\cY_{t, \cQ})$. 
On the other hand, by Theorem~\ref{Thm:qTsys} (see also Remark~\ref{Rem:qTsys}), 
$F_{t}^{(\im)}[s,p)_{\le \xi}$ is the $\ol{(\cdot)}$-invariant element of the form
$$
\left(t^{a^{\prime}} 
F_{t}^{(\im)}[p,s]_{\le \xi} \cdot F_{t}^{(\im)}(p,s)_{\le \xi} + 
t^{b^{\prime}} \prod_{\jm \sim \im}^{\to} F_{t}^{(\jm)}(p,s)_{\le \xi}
\right) F_{t}^{(\im)}(p,s]^{-1}_{\le \xi} 
$$
for some $a^{\prime}, b^{\prime} \in \frac{1}{2}\Z$ in $\F(\cY_{t, \cQ})$.
Therefore, taking the following equalities into the account
\begin{align*} 
F_{t}^{(\im)}[p^{+}, s^{+}) &= F_{t}^{(\im)}(p,s], & 
F_{t}^{(\im)}[p,s^{+}) &= F_{t}^{(\im)}[p,s], \\ 
F_{t}^{(\im)}[p^{+}, s) &= F_{t}^{(\im)}(p,s), &
F_{t}^{(\jm)}[p^{+}(\jm), s^{+}(\jm)) &= F_{t}^{(\jm)}(p,s),
\end{align*}
where $\jm \sim \im$, we conclude $\Phi_{\cQ}^{T}(\tD(\im; p,s)) = F_{t}^{(\im)}[p,s)_{\le \xi}$ as desired.

Next, we show the bijection between the distinguished bases.
It follows from (NDCB1) in Theorem~\ref{Thm:ndcb} and the identity 
$\Phi_{\cQ}^{T} \circ \ol{(\cdot)} = \ol{(\cdot)} \circ \Phi_{\cQ}^{T}$
that
the element $\Phi_{\cQ}^{T}(\tG(\bfc, \cQ))$ is $\ol{(\cdot)}$-invariant for any $\bfc \in (\Z_{\ge 0})^{\sR^{+}}$.
Hence, by Theorem~\ref{Thm:qtch} (see also Remark~\ref{Rem:trS}) and Theorem~\ref{Thm:ndcb}, it remains to show that
\begin{equation}
\Phi_{\cQ}^{T}\left( \tF_{t}(\bfc, \cQ)\right) = E_{t}(m(\bfc))_{\le \xi}
\end{equation}  
for every $\bfc \in (\Z_{\ge 0})^{\sR^{+}}$.
As a special case of~(\ref{eq:DtoKR}), we have already proved 
$$\Phi_{\cQ}^{T}\left(\tF_{t}(\alpha, \cQ)\right) = \Phi_{\cQ}^{T}\left(\tD^{\cQ}(\alpha, \alpha^{-})\right) = F_{t}(Y_{i,p})_{\le \xi}$$
for each $\alpha \in \sR^{+}$, where $(i,p) = \bphi_{\cQ}^{-1}(\alpha, 0)$. 
Moreover, the ordering of the product in the definition of 
$\tF(\bfc, \cQ)$ matches the one in the definition of $E_{t}(m(\bfc))$
in view of Theorem~\ref{Thm:OS19a}. 
Therefore, we find
$$
\Phi_{\cQ}^{T}\left(\tF(\bfc, \cQ)\right) = t^{a(\bfc)} E_{t}(m(\bfc))_{\le \xi} 
$$  
for some $a(\bfc) \in \frac{1}{2}\Z$.
It suffices to show that $a(\bfc)=0$ for every $\bfc \in (\Z_{\ge 0})^{\sR^{+}}$.
By Remark~\ref{Rem:triE} and Remark~\ref{Rem:tripbw}, we have
\begin{align*}
\ol{E_{t}(m)}  &\in E_{t}(m) + \sum_{m^{\prime} < m} \Z[t^{\pm 1}] E_{t}(m^{\prime}), \\
\ol{\tF(\bfc, \cQ)} & \in \tF(\bfc, \cQ) + \sum_{\bfc^{\prime} <_{\cQ} \bfc} \Z[v^{\pm 1}] \tF(\bfc^{\prime}, \cQ).
\end{align*}
Therefore, we obtain  
\begin{align*}
\ol{\Phi_{\cQ}^{T}\left(\tF(\bfc, \cQ) \right)} &= \Phi_{\cQ}^{T}\left( \ol{\tF(\bfc, \cQ)}\right) 
\in t^{a(\bfc)}E_{t}(m(\bfc))_{\le \xi} + \sum_{m^{\prime} \neq m(\bfc)}\Z[t^{\pm 1/2}]E_{t}(m^{\prime})_{\le \xi}, \\
\ol{\Phi_{\cQ}^{T}\left(\tF(\bfc, \cQ) \right)} &= \ol{t^{a(\bfc)} E_{t}(m(\bfc))_{\le \xi}} 
\in t^{-a(\bfc)}E_{t}(m(\bfc))_{\le \xi} + \sum_{m^{\prime} \neq m(\bfc)}\Z[t^{\pm 1/2}]E_{t}(m^{\prime})_{\le \xi}.
\end{align*}
Hence, $a(\bfc) = -a(\bfc)$, which implies $a(\bfc)=0$.
\end{proof}
  
\begin{Cor}[The HLO isomorphism] \label{Cor:HLO}
For each Q-datum $\cQ$ for $\fg$, 
there exists a $\Z$-algebra isomorphism 
$$\Phi_{\cQ} \colon \cA_{v}[N_-] \to \cK_{t, \cQ}$$ such that
$(\cdot)_{\le \xi} \circ \Phi_{\cQ} = \Phi_{\cQ}^{T}$. This isomorphism induces a bijection between the bases
$\tbfB$ and 
$\{ L_{t}(m) \mid m \in \cM_{\cQ} \}$.   
\end{Cor}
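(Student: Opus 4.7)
The plan is to deduce Corollary~\ref{Cor:HLO} as a formal consequence of Theorem~\ref{Thm:HLO}, whose statement already does essentially all the work: the main task is simply to invert the truncation map.

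First, I would invoke Corollary~\ref{Cor:tr}, which tells us that the truncation map $(\cdot)_{\le \xi} \colon \cK_{t, \cQ} \to \cY_{t, \cQ}$ is an injective $\Z[t^{\pm 1/2}]$-algebra homomorphism. By definition of $\cK_{t,\cQ}^T$ as its image, this induces an algebra isomorphism
$$
(\cdot)_{\le \xi} \colon \cK_{t, \cQ} \xrightarrow{\sim} \cK_{t, \cQ}^{T}.
$$
Combining with the isomorphism $\cA_{v}[N_{-}] \xrightarrow{\sim} \cK_{t, \cQ}^{T}$ provided by Theorem~\ref{Thm:HLO} (with $v \mapsto t$ identification), we then \emph{define}
$$
\Phi_{\cQ} \seq \bigl((\cdot)_{\le \xi}\bigr)^{-1} \circ \Phi_{\cQ}^{T} \colon \cA_{v}[N_{-}] \to \cK_{t, \cQ}.
$$
This is a $\Z$-algebra isomorphism as a composition of two such isomorphisms, and the compatibility $(\cdot)_{\le \xi} \circ \Phi_{\cQ} = \Phi_{\cQ}^{T}$ is immediate from the definition.

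For the statement about distinguished bases, I would combine two facts. On the one hand, by Theorem~\ref{Thm:ndcb}, the normalized dual canonical basis is indexed as $\tbfB = \{\tG(\bfc, \cQ) \mid \bfc \in (\Z_{\ge 0})^{\sR^{+}}\}$; on the other hand, the assignment $\bfc \mapsto m(\bfc)$ defined by \eqref{eq:mc} gives a bijection $(\Z_{\ge 0})^{\sR^{+}} \xrightarrow{\sim} \cM_{\cQ}$, since each dominant monomial in $\cM_{\cQ}$ is uniquely a product of the variables $Y_{i,p}$ with $(i,p) \in \hI_\cQ$, and these are indexed by $\sR^+$ via $\bphi_\cQ$. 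Theorem~\ref{Thm:HLO} asserts $\Phi_{\cQ}^{T}(\tG(\bfc, \cQ)) = L_{t}(m(\bfc))_{\le \xi}$, so by the injectivity of truncation on $\cK_{t, \cQ}$ we conclude
$$
\Phi_{\cQ}(\tG(\bfc, \cQ)) = L_{t}(m(\bfc))
$$
for every $\bfc$, yielding the desired bijection between $\tbfB$ and $\{L_{t}(m) \mid m \in \cM_{\cQ}\}$.

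There is really no obstacle here beyond what has already been overcome in Theorem~\ref{Thm:HLO}; the entire content of the corollary is packaged into (i)~the injectivity of truncation, guaranteed by Corollary~\ref{Cor:tr}, and (ii)~the identification of the truncated $(q,t)$-characters $L_{t}(m(\bfc))_{\le \xi}$ with the images of the dual canonical basis elements, done by the triangularity argument at the end of the proof of Theorem~\ref{Thm:HLO}. The only thing worth flagging is a cosmetic point on coefficient rings: Theorem~\ref{Thm:isomT} already fixes $v^{1/2} \mapsto t^{1/2}$, so $\Phi_{\cQ}$ is a $\Z[t^{\pm 1/2}]$-algebra isomorphism under the identification of $\Z[v^{\pm 1/2}]$ with $\Z[t^{\pm 1/2}]$, stronger than the $\Z$-linearity stated.
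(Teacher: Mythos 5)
Your proof is correct and is exactly the intended argument: the paper states this corollary without proof precisely because it follows formally from Theorem~\ref{Thm:HLO} by inverting the injective truncation map of Corollary~\ref{Cor:tr} and transporting the basis correspondence $\tG(\bfc,\cQ)\mapsto L_t(m(\bfc))_{\le\xi}$ back through it. Your remark on the coefficient ring is also accurate.
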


\subsection{Corollaries of the HLO isomorphism} 
\label{ssec:cor}

Fix a Q-datum $\cQ = (\Delta, \sigma, \xi)$ for $\fg$.
We have the following corollaries of the HLO isomorphism $\Phi_{\cQ}$.
Since the proofs can be completely same as those in~\cite[Section 11.1]{HO19}, we omit them.

\begin{Cor}[Positivity of the analog of Kazhdan-Lusztig polynomials]
Recall the notation in {\rm Theorem~\ref{Thm:HLO}} and {\rm Corollary~\ref{Cor:HLO}}. We have the following$\colon$
\begin{enumerate}
\item $\Phi_{\cQ}\left( \tF(\bfc, \cQ) \right) = E_{t}(m(\bfc))$ for all $\bfc \in (\Z_{\ge 0})^{\sR^{+}}$.
\item For $\bfc \in (\Z_{\ge 0})^{\sR^{+}}$ and $m \in \cM_{\cQ}$, write
$$
\tF(\bfc, \cQ) = \sum_{\bfc^{\prime}} p_{\bfc, \bfc^{\prime}}(v)\tG(\bfc^{\prime}, \cQ), \qquad
E_{t}(m) = \sum_{m^{\prime}} P_{m, m^{\prime}}(t) L_{t}(m^{\prime})
$$
with $p_{\bfc, \bfc^{\prime}}(v) \in \Z[v]$ and $P_{m, m^{\prime}}(t) \in \Z[t]$. Then we have
$$
p_{\bfc, \bfc^{\prime}}(t) = P_{m(\bfc), m(\bfc^{\prime})}(t). 
$$
Moreover, $P_{m(\bfc), m(\bfc^{\prime})}(t) \in \Z_{\ge 0}[t]$.  
\end{enumerate}
\end{Cor}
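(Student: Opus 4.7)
The plan is to deduce everything as a direct consequence of the HLO isomorphism established in Corollary~\ref{Cor:HLO}, together with the classical positivity of transition coefficients between the dual PBW-type basis and the dual canonical basis of $\cA_v[N_-]$.

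First I would prove (1). By Theorem~\ref{Thm:HLO}, the truncated version satisfies $\Phi_{\cQ}^T(\tF(\bfc, \cQ)) = E_t(m(\bfc))_{\le \xi}$, where the identification $v \mapsto t$ on scalars is built into $\Phi_{\cQ}^T$. Since $(\cdot)_{\le \xi} \circ \Phi_{\cQ} = \Phi_{\cQ}^T$ and the truncation map $(\cdot)_{\le \xi} \colon \cK_{t, \cQ} \hookrightarrow \cY_{t, \cQ}$ is injective by Corollary~\ref{Cor:tr}, the equality $\Phi_{\cQ}(\tF(\bfc, \cQ)) = E_t(m(\bfc))$ follows at once in $\cK_{t, \cQ}$.

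Next, for (2), I would apply the isomorphism $\Phi_{\cQ}$ to the defining expansion of $\tF(\bfc, \cQ)$ in the dual canonical basis. Since $\Phi_\cQ$ is a $\Z$-algebra isomorphism identifying $v^{\pm 1/2}$ with $t^{\pm 1/2}$ and, by Corollary~\ref{Cor:HLO}, sending $\tG(\bfc', \cQ) \mapsto L_t(m(\bfc'))$, the image of $\tF(\bfc, \cQ) = \sum_{\bfc'} p_{\bfc, \bfc'}(v) \tG(\bfc', \cQ)$ is
\[
E_t(m(\bfc)) \;=\; \sum_{\bfc'} p_{\bfc, \bfc'}(t)\, L_t(m(\bfc')).
\]
The assignment $\bfc' \mapsto m(\bfc')$ from \eqref{eq:mc} is a bijection onto $\cM_\cQ$, and $E_t(m(\bfc)) \in \cK_{t,\cQ}$ expands uniquely in the $\Z[t^{\pm 1/2}]$-basis $\{L_t(m) \mid m \in \cM_\cQ\}$ of $\cK_{t, \cQ}$; comparing with Theorem~\ref{Thm:qtch} yields $p_{\bfc, \bfc'}(t) = P_{m(\bfc), m(\bfc')}(t)$, with $P_{m(\bfc), m'}(t) = 0$ for $m' \notin \cM_\cQ$ (the latter being automatic since $\cM_\cQ$ is an ideal of $(\cM, \le)$ by Lemma~\ref{Lem:ideal} and $m' < m(\bfc)$ is needed for a non-zero coefficient).

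Finally, the positivity $P_{m(\bfc), m(\bfc')}(t) \in \Z_{\ge 0}[t]$ reduces, via the identification just established, to the positivity $p_{\bfc, \bfc'}(v) \in \Z_{\ge 0}[v]$, which is a classical result of Lusztig/Kashiwara on the transition matrix between the dual PBW-type basis and the dual canonical basis of $\cA_v[N_-]$ (the dual statement of the positivity of the expansion of canonical basis elements in PBW basis). I would simply cite this; there is no obstacle here, as the HLO isomorphism has already done all the work of transporting the structures. The only subtlety worth a line of explanation is that the sum in $E_t(m(\bfc))$ a priori ranges over all of $\cM$, but the fact that $E_t(m(\bfc)) \in \cK_{t, \cQ}$ and that $\cK_{t, \cQ}$ is a free $\Z[t^{\pm 1/2}]$-module on $\{L_t(m)\}_{m \in \cM_\cQ}$ (Proposition~\ref{Prop:CX}) forces the expansion to involve only dominant monomials in $\cM_\cQ$, matching the image side perfectly.
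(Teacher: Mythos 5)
Your argument is correct and is essentially the proof the paper has in mind (the paper omits it, deferring to the analogous Section~11.1 of \cite{HO19}): transport the expansion of $\tF(\bfc,\cQ)$ in the normalized dual canonical basis through $\Phi_{\cQ}$, use the injectivity of $(\cdot)_{\le\xi}$ to upgrade the truncated identity from the proof of Theorem~\ref{Thm:HLO} to $\Phi_{\cQ}(\tF(\bfc,\cQ))=E_t(m(\bfc))$, and match coefficients against the unique expansion of $E_t(m(\bfc))$ in $\{L_t(m)\}_{m\in\cM_\cQ}$. One point needs care in your last step: the positivity $p_{\bfc,\bfc'}(v)\in\Z_{\ge 0}[v]$ is \emph{not} covered by Lusztig's original geometric result (and Kashiwara's crystal-basis construction gives no positivity at all), because Lusztig treats PBW bases attached to reduced words adapted to a quiver orientation, whereas for non-simply-laced $\fg$ the commutation class $[\cQ]$ is a twisted adapted class in $\sW(\sg)$ that is in general adapted to no orientation of $\Delta$. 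The positivity for an arbitrary reduced word in symmetric type is a later theorem obtained via KLR-algebra categorification (Kato; Brundan--Kleshchev--McNamara; McNamara), where the $p_{\bfc,\bfc'}$ appear as graded multiplicities of simples in proper standard modules; that is the result you must cite here.
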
 

\begin{Cor}[Positivity of structure constants]
\label{Cor:P2}
For $m_1, m_2 \in \cM_{\cQ}$, write
$$
L_{t}(m_{1}) L_{t}(m_{2}) = \sum_{m \in \cM_{\cQ}} c^{m}_{m_{1}, m_{2}}(t) L_{t}(m).
$$ 
Then we have 
$
c^{m}_{m_{1}, m_{2}}(t) \in \Z_{\ge 0}[t^{\pm 1/2}].
$
\end{Cor}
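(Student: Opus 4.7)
The plan is to transport the positivity statement from $\cA_v[N_-]$ to $\cK_{t,\cQ}$ via the HLO isomorphism $\Phi_\cQ$ established in Corollary~\ref{Cor:HLO}. Recall that $\Phi_\cQ$ is a $\Z$-algebra isomorphism sending $v^{1/2} \mapsto t^{1/2}$ and inducing a bijection between the normalized dual canonical basis $\tbfB = \{\tG(\bfc, \cQ) \mid \bfc \in (\Z_{\ge 0})^{\sR^+}\}$ of $\cA_v[N_-]$ and the basis $\{L_t(m) \mid m \in \cM_\cQ\}$ of $\cK_{t,\cQ}$ via $\tG(\bfc, \cQ) \mapsto L_t(m(\bfc))$, where $m(\bfc)$ is as in \eqref{eq:mc}.

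First I would expand the product $\tG(\bfc_1, \cQ) \cdot \tG(\bfc_2, \cQ)$ in $\cA_v[N_-]$ in terms of the basis $\tbfB$:
\begin{equation*}
\tG(\bfc_1, \cQ) \cdot \tG(\bfc_2, \cQ) = \sum_{\bfc} d^{\bfc}_{\bfc_1, \bfc_2}(v) \, \tG(\bfc, \cQ),
\end{equation*}
with $d^{\bfc}_{\bfc_1, \bfc_2}(v) \in \Z[v^{\pm 1/2}]$. The key input is the positivity theorem of Lusztig (and Kashiwara), which states that the structure constants of the normalized dual canonical basis of $\cA_v[N_-]$ satisfy $d^{\bfc}_{\bfc_1, \bfc_2}(v) \in \Z_{\ge 0}[v^{\pm 1/2}]$. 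This is a known deep result, originally obtained by geometric means (perverse sheaves on quiver varieties, or via Kang-Kashiwara-Khovanov-Lauda categorification through quiver Hecke algebras) and uses in an essential way that $\sg$ is simply-laced so that these geometric tools apply.

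Then I would apply $\Phi_\cQ$ to both sides of this identity. Since $\Phi_\cQ$ is an algebra homomorphism with $\Phi_\cQ(v^{1/2}) = t^{1/2}$ and $\Phi_\cQ(\tG(\bfc, \cQ)) = L_t(m(\bfc))$, the image of the above expansion yields
\begin{equation*}
L_t(m(\bfc_1)) \cdot L_t(m(\bfc_2)) = \sum_{\bfc} d^{\bfc}_{\bfc_1, \bfc_2}(t) \, L_t(m(\bfc)).
\end{equation*}
Comparing with the expansion defining $c^{m}_{m_1, m_2}(t)$ and using the bijectivity of $\bfc \mapsto m(\bfc)$ between $(\Z_{\ge 0})^{\sR^+}$ and $\cM_\cQ$, we obtain $c^{m(\bfc)}_{m(\bfc_1), m(\bfc_2)}(t) = d^{\bfc}_{\bfc_1, \bfc_2}(t) \in \Z_{\ge 0}[t^{\pm 1/2}]$, as desired. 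The only nontrivial input is the positivity of structure constants for $\tbfB$; there is no significant obstacle on the quantum loop algebra side once the HLO isomorphism is in hand, which is precisely the advantage of the unified approach via Q-data.
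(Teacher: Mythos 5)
Your proof is correct and is essentially the paper's own argument: the paper omits the proof of Corollary~\ref{Cor:P2}, referring to \cite[Section 11.1]{HO19}, where exactly this transport of the known positivity of the structure constants of the normalized dual canonical basis of $\cA_v[N_-]$ through the HLO isomorphism $\Phi_\cQ$ is carried out. (One small attribution point: the geometric input is Lusztig's perverse-sheaf construction on varieties of quiver representations, or equivalently the Khovanov--Lauda--Rouquier/Varagnolo--Vasserot categorification for symmetric types, rather than Nakajima quiver varieties; in either form it requires $\sg$ simply-laced, as you note.)
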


\begin{Cor}[Positivity of the coefficients of the truncated $(q,t)$-characters]
For $m \in \cM_{\cQ}$, write 
$$
L_{t}(m)_{\le \xi} =
\sum_{\text{$m^{\prime}:$ monomial in $\cY_{\le \xi}$}} a[m; m^{\prime}] \ul{m^{\prime}}
$$
with $a[m;m^{\prime}] \in \Z[t^{\pm 1/2}]$. Then we have
$
a[m;m^{\prime}] \in \Z_{\ge 0}[t^{\pm 1/2}].
$
\end{Cor}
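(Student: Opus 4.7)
The approach is to transport the positivity through the HLO isomorphism $\Phi_{\cQ}^{T}\colon \cT_{v,\cQ} \xrightarrow{\sim} \cY_{t,\cQ}$ of Theorem~\ref{Thm:isomT} into a Laurent positivity statement for the dual canonical basis of $\cA_v[N_-]$. First, by Corollary~\ref{Cor:tr}, the element $L_t(m)_{\le \xi}$ already lies in the subalgebra $\cY_{t,\cQ}$, so it suffices to check positivity of its expansion in the commutative monomial basis of $\cY_{t,\cQ}$.

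The key observation is that, since $\Phi_{\cQ}^{T}$ is an isomorphism of quantum tori sending each generator $X_\alpha$ to the commutative monomial $\ul{m^{(\im)}[p,\xi_\im]}$, it maps every quantum torus monomial $X^\bfa \in \cT_{v,\cQ}$ to a single commutative monomial of the form $t^{c(\bfa)/2}\ul{m_\bfa}$ with $c(\bfa) \in \Z$, in such a way that the assignment $\bfa \mapsto m_\bfa$ is a $\Z$-linear bijection of the exponent lattices $\Z^{\sR^+} \to \Z^{\hI_\cQ}$. By Theorem~\ref{Thm:HLO}, writing the Laurent expansion $\tG(\bfc,\cQ) = \sum_\bfa b_\bfa X^\bfa$ in $\cT_{v,\cQ}$ yields
\[
L_t(m(\bfc))_{\le \xi} \;=\; \Phi_{\cQ}^{T}(\tG(\bfc,\cQ)) \;=\; \sum_\bfa b_\bfa\, t^{c(\bfa)/2}\ul{m_\bfa}.
\]
Thus the claim $a[m;m'] \in \Z_{\ge 0}[t^{\pm 1/2}]$ reduces to the assertion that $b_\bfa \in \Z_{\ge 0}[v^{\pm 1/2}]$, i.e., that the dual canonical basis element $\tG(\bfc,\cQ) \in \cA_v[N_-]$ admits a Laurent-positive expansion in the initial quantum cluster $\{X_\alpha\}_{\alpha \in \sR^+}$ of the quantum cluster algebra $\cA(\Lambda^\cQ,\tB^\cQ) \simeq \cA_v[N_-]$.

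To establish this intrinsic positivity in the simply-laced algebra $\cA_v[N_-]$, I would bootstrap from Nakajima's positivity theorem~\cite[Theorem~8.1]{Nakajima04}. Applying the HLO machinery of Theorem~\ref{Thm:HLO} to any Q-datum $\cQ'$ for $\sg$ (i.e., a Dynkin quiver with a height function) and invoking Nakajima's theorem (which establishes~(P3) of Conjecture~\ref{Conj0:p} for simply-laced types) yields, via exactly the same transportation argument, the Laurent positivity of every dual canonical basis element in the initial cluster of $\cT_{v,\cQ'}$. Since all the compatible pairs $(\Lambda^{[\bfi]},\tB^{[\bfi]})$ in Proposition~\ref{Prop:cp2} are seeds of a single quantum cluster algebra structure on $\cA_v[N_-]$ (Theorem~\ref{Thm:cl}), the clusters corresponding to varying (twisted) adapted classes are mutation-equivalent, and the positivity should propagate from the adapted initial cluster of $\cQ'$ to the twisted adapted initial cluster of our $\cQ$ via Laurent positivity of quantum cluster mutations for the ADE-type algebra $\cA_v[N_-]$. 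The main obstacle lies precisely in this propagation step, which requires invoking a nontrivial positivity result on quantum cluster algebras (such as Davison's Laurent positivity theorem, or equivalently Qin's theorem on triangular bases) and verifying carefully that it connects the clusters arising from the two different Q-data.
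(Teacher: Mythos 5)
Your first step is correct and is exactly the reduction the paper intends (the proof is deferred to \cite[Section 11.1]{HO19}): $\Phi_{\cQ}^{T}$ sends each $X_{\alpha}$ to the commutative monomial $\ul{m^{(\im)}[p,\xi_{\im}]}$, the induced map on exponent lattices $\Z^{\sR^{+}}\to\Z^{\hI_{\cQ}}$ is unimodular, and $\Phi_{\cQ}^{T}$ commutes with $\ol{(\cdot)}$, so it carries the basis $\{X^{\bfa}\}_{\bfa\in\Z^{\sR^{+}}}$ of $\cT_{v,\cQ}$ bijectively onto the commutative monomials of $\cY_{t,\cQ}$; by Theorem~\ref{Thm:HLO} the assertion is therefore equivalent to the positivity of the coefficients $b_{\bfa}$ in the Laurent expansion $\tG(\bfc,\cQ)=\sum_{\bfa}b_{\bfa}X^{\bfa}$ with respect to the initial quantum cluster $\{X_{\alpha}\}_{\alpha\in\sR^{+}}$.

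The second step, however, has a genuine gap. Laurent positivity with respect to a seed cannot be propagated along mutations: a monomial $X^{\bfa}$ with a negative exponent is in general not even a Laurent polynomial in a mutated cluster (already $x_{1}^{-1}$ fails in rank one), so the term-by-term transfer of the positivity you obtain at a $\cQ'$-adapted seed from Nakajima's theorem breaks down, and Davison's theorem (positivity of the expansions of the quantum cluster \emph{variables} themselves) does not repair it; the statement you would actually need --- Laurent positivity of every dual canonical basis element with respect to every seed --- is not available to be invoked. The intended argument stays entirely at the initial seed and is much softer: multiply $\tG(\bfc,\cQ)$ on the right by $X^{l(1,\ldots,1)}$ with $l\gg 0$, so that every exponent vector $\bfa+l(1,\ldots,1)$ occurring becomes non-negative. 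Each $X^{\bfa'}$ with $\bfa'\in\Z_{\ge 0}^{\sR^{+}}$ is an initial quantum cluster monomial, hence belongs to $\tbfB$ up to a power of $v^{1/2}$ (Theorem~\ref{Thm:cl} and \cite{KKKO18}), and these elements are pairwise distinct; since the left-hand side is then a product of two dual canonical basis elements and the structure constants of $\tbfB$ are non-negative in symmetric type, uniqueness of the $\tbfB$-expansion forces $b_{\bfa}\in\Z_{\ge 0}[v^{\pm 1/2}]$. Equivalently, on the $\cK_{t,\cQ}$ side this is precisely the argument of Theorem~\ref{Thm:posqtch} run inside $\Cc_{\cQ}$: one uses that $L_{t}^{(\im)}[p,\xi_{\im}]_{\le\xi}=\ul{m^{(\im)}[p,\xi_{\im}]}$, that these KR characters pairwise $t$-commute with simple products, and the positivity of structure constants of Corollary~\ref{Cor:P2}; no positivity input beyond what is already established for $\cA_{v}[N_{-}]$ (respectively $\cK_{t,\cQ}$) is required.
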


The next corollary gives the affirmative answer to Conjecture~\ref{Conj:L=F} in $\Cc_{\cQ}$.   

\begin{Cor}[The $(q,t)$-characters of Kirillov-Reshetikhin modules]
For a Kirillov-Reshetikhin module $W^{(i)}_{k,p}=L(m^{(i)}_{k,p})$ belonging to the subcategory $\Cc_{\cQ}$ 
for some Q-datum $\cQ$ for $\fg$, we have
\begin{equation} \label{eq:KRqtch}
L_{t}(m^{(i)}_{k,p}) = F_{t}(m^{(i)}_{k, p}). 
\end{equation}
In particular, {\rm (\ref{eq:KRqtch})} holds true whenever $2kd_{i} \le rh^{\vee} $. 
\end{Cor}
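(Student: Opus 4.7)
My plan is to identify the $F$-character of each KR module in $\Cc_\cQ$ with a normalized quantum unipotent minor via the HLO isomorphism $\Phi_\cQ$, and then invoke the known fact that such minors lie in the normalized dual canonical basis $\tbfB$. Fixing $W^{(i)}_{k,p} \in \Cc_\cQ$, I would set $s \seq p + 2kd_i$ and let $\im$ be the unique element of $i$ satisfying $(\im, p) \in \hDs_0$; then $m^{(i)}_{k,p} = m^{(\im)}[p, s)$, and the assumption $W^{(i)}_{k,p} \in \Cc_\cQ$ translates into $(\im; p, s) \in S_\cQ$ in the notation of the proof of Theorem~\ref{Thm:HLO}. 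Equation (\ref{eq:DtoKR}) of that proof then gives $\Phi_\cQ^T(\tD(\im; p, s)) = F_t(m^{(i)}_{k,p})_{\le \xi}$. Since $F_t(m^{(i)}_{k,p})$ already lives in $\cK_{t,\cQ}$ by Theorem~\ref{Thm:Ft} together with Proposition~\ref{Prop:CX}, and $(\cdot)_{\le \xi}$ is injective on $\cK_{t,\cQ}$ by Corollary~\ref{Cor:tr}, the relation $(\cdot)_{\le \xi} \circ \Phi_\cQ = \Phi_\cQ^T$ from Corollary~\ref{Cor:HLO} allows me to lift this to the identity $\Phi_\cQ(\tD(\im; p, s)) = F_t(m^{(i)}_{k,p})$ in $\cK_{t,\cQ}$ itself.

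The core input is that every normalized quantum unipotent minor of $\cA_v[N_-]$ is an element of $\tbfB$ --- a classical fact (cf.~Kimura's work, or alternatively obtained from the quantum cluster algebra structure of Theorem~\ref{Thm:cl} combined with the theorem that quantum cluster monomials belong to $\tbfB$). Granting this, one has $\tD(\im; p, s) = \tG(\bfc, \cQ)$ for some $\bfc \in (\Z_{\ge 0})^{\sR^+}$. By Corollary~\ref{Cor:HLO}, $\Phi_\cQ(\tG(\bfc, \cQ)) = L_t(m(\bfc))$, so combined with the previous paragraph we obtain $F_t(m^{(i)}_{k,p}) = L_t(m(\bfc))$ inside $\cK_{t,\cQ}$. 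Since $F_t(m^{(i)}_{k,p})$ has $m^{(i)}_{k,p}$ as its unique dominant monomial by Theorem~\ref{Thm:Ft}, and $L_t(m(\bfc))$ similarly has $m(\bfc)$ as its unique dominant monomial via the unitriangular expansion (\ref{eq:KL}) together with Theorem~\ref{Thm:Ft}, matching these forces $m(\bfc) = m^{(i)}_{k,p}$, which yields the desired identity (\ref{eq:KRqtch}).

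For the \emph{in particular} clause, suppose $2kd_i \le rh^\vee$. Then $2(k-1)d_i \le rh^\vee - 2d_i < rh^\vee$, so one can choose an integer $b$ with $p + 2(k-1)d_i \le b < p + rh^\vee$; Lemma~\ref{Lem:si-so} then supplies a Q-datum $\cQ$ for $\fg$ with $\hI_\cQ = \{(j, q) \in \hI \mid b - rh^\vee < q \le b\}$, and this set contains $(i, p + 2ld_i)$ for all $l = 0, 1, \ldots, k-1$, so that $W^{(i)}_{k,p} \in \Cc_\cQ$ and the first part applies. The only substantive step in the whole argument is the black-box citation that normalized quantum unipotent minors belong to $\tbfB$; once this is granted, everything else is a clean unfolding of the HLO isomorphism together with the uniqueness characterizations of $F_t(m)$ and $L_t(m)$.
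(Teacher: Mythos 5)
Your proposal is correct and follows exactly the argument the paper intends: the paper omits the proof of the main assertion (deferring to the analogous corollaries proved as in \cite[Section 11.1]{HO19}) and only cites Lemma~\ref{Lem:si-so} for the last clause, and that omitted argument is precisely your chain $\Phi_{\cQ}^{T}(\tD(\im;p,s)) = F_t(m^{(i)}_{k,p})_{\le\xi}$ from \eqref{eq:DtoKR}, the fact that the relevant normalized quantum unipotent minors are quantum cluster variables and hence lie in $\tbfB$, and the matching of bases in Corollary~\ref{Cor:HLO}. The only cosmetic point is that $L_t(m(\bfc))$ is not a priori known to have a unique dominant monomial; rather, $\ul{m(\bfc)}$ occurs in it with coefficient $1$ as the maximal term, so equality with $F_t(m^{(i)}_{k,p})$ forces $m(\bfc)=m^{(i)}_{k,p}$, which is what you use anyway.
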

\begin{proof}
Recall Lemma~\ref{Lem:si-so} for the last assertion.
\end{proof}

\section{$R$-matrices and commutation relations}
\label{sec:Rmat}

In this section, we first review some properties of the normalized $R$-matrices $R_{V, W}(z)$ for $V, W \in \Irr \Cc$
and the $\Z$-valued invariants $\fd(V,W)$ measuring the singularities of $R_{V, W}(z)$ introduced in \cite{KKOP20}. 
Then we check that the specialization of the HLO isomorphism $\Phi_{\cQ}$ for an arbitrary $\cQ$ coincides with 
the isomorphism between the Grothendieck rings induced from 
the generalized quantum affine Schur-Weyl duality functor $F_\cQ$ studied in \cite{KKK15, KKKOIV, KO17, OhS19, Fujita20, Naoi21}.    
From these results, we deduce several commutation relations in the quantum Grothendieck ring $\cK_t$, 
which will be important in the sequel.  

\subsection{$R$-matrices and related invariants}
\label{ssec:Rmat}

Let $z$ be an indeterminate.
For any $U_{q}(L\fg)$-module $V \in \Cc$, we can consider its \emph{affinization} $V_{z}$.
This is the $\kk[z^{\pm}]$-module $V_{z} \seq V \otimes_{\kk} \kk[z^{\pm1}]$ endowed with a structure of
left $U_{q}(L\fg)$-module by
$$
k_{i}(v \otimes \varphi) = (k_{i}v) \otimes \varphi, \
x^{\pm}_{i, k} (v \otimes \varphi) = (x^{\pm}_{i,k}v)\otimes z^{k} \varphi, \
h_{i,l}(v \otimes \varphi) = (h_{i,l}v) \otimes z^{l}\varphi, 
$$
for $v \in V$ and $\varphi \in \kk[z^{\pm 1}]$.
For $a \in \kk^{\times}$, we have
$$
T_a(V) \cong V_{z} / (z-a)V_{z}
$$
as $U_q(L\fg)$-modules (recall the definition of $T_a$ from Section~\ref{ssec:duality}).

Let $V, W \in \Cc$ be simple modules. We fix
their $\ell$-highest weight vectors
$v \in V, w \in W$.
It is known that there exists a unique isomorphism of $U_{q}(L\fg) \otimes_{\kk} \kk(z)$-modules
$$
R_{V, W}(z) \colon (V_z \otimes W) \otimes_{\kk[z^{\pm 1}]} \kk(z) \to (W \otimes V_z) \otimes_{\kk[z^{\pm 1}]} \kk(z) 
$$  
satisfying $R_{V, W}(z) (v_z \otimes w) = w \otimes v_z$, where $v_z \seq v \otimes 1 \in V_z$ (see~\cite[Corollary 2.5]{AK97}). 
We call this isomorphism $R_{V,W}(z)$ \emph{the normalized R-matrix} between $V$ and $W$.
Let $d_{V, W}(z) \in \kk[z]$ denote the monic polynomial of the smallest degree
such that the image of $d_{V, W}(z) R_{V, W}(z)$ is contained in 
$W \otimes V_z \subset (W \otimes V_z) \otimes_{\kk[z^{\pm 1}]} \kk(z) $.
This polynomial $d_{V,W}(z)$ is called  \emph{the denominator} of $R_{V,W}(z)$.
Note that $d_{V,W}(z)$ does not depend on the choice of $\ell$-highest weight vectors $v$ and $w$.
Moreover, we have 
\begin{equation} \label{eq:spd}
d_{T_a(V), T_b(W)}(z) = d_{V,W}((a/b)z) 
\end{equation}
for $a, b \in \kk^{\times}$.

For the case of fundamental modules, we simply write
$$d_{i,j}(z) \seq d_{L(Y_{i,0}), L(Y_{j,0})}(z)$$ 
for any $i,j \in I$.
It follows that $d_{i,j}(z) = d_{j,i}(z)$
 by \cite[(A.7)]{AK97} (see also \cite[Lemma 6.4]{FO21}).
The denominators $d_{i,j}(z)$ were computed in \cite{AK97, DO94, Fujita19, KKK15, Oh15R, OhS19}.\footnote{As we specified in the footnote in Section~\ref{ssec:KtCZ}, our coproduct of $U_q(L\fg)$ is opposite to the one in these references.}

\begin{Prop}[{\cite[Section~6.2]{FO21}}] \label{Prop:dfund}
Let $i,j \in I$ with $d_{i} \ge d_{j}$. Unless $d_{i}=d_{j} =1 <r$,  we have
$$
d_{i,j}(z) = d_{j,i}(z) = \prod_{u=0}^{rh^{\vee}}(z-q^{u+d_{i}})^{\tc_{ij}(u)}.
$$
Even if $d_{i}=d_{j} = 1<r$, the denominator $d_{i,j}(z) (= d_{j,i}(z))$ divides the RHS. 
\end{Prop}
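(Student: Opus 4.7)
The plan is to establish the formula by a comparison of two independent computations: the known case-by-case expressions for the denominators $d_{i,j}(z)$ available in the cited literature, and direct evaluations of the integers $\tc_{ij}(u)$ coming from the inverse quantum Cartan matrix.

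First, I would pin down the values of $\tc_{ij}(u)$ in the range $0 \le u \le rh^{\vee}$. The cleanest uniform route is Theorem~\ref{Thm:tc}: since $\tc_{ij}(-u) = 0$ for $u > 0$ by Lemma~\ref{Lem:tc}~(\ref{tc:d}), that theorem expresses $\tc_{ij}(u)$ itself as a single pairing of a fundamental weight against an orbit of the generalized Coxeter element, reducing the computation to concrete root-theoretic data. As a cross-check, the same values can be recovered directly from the identity $C(z)\tC(z) = I$ together with the boundary condition of Lemma~\ref{Lem:tc}~(\ref{tc:d}) and the periodicity (\ref{tc:pe}); this also re-derives the positivity statement of Lemma~\ref{Lem:tc}~(\ref{tc:po}) that makes the product $\prod_u(z - q^{u+d_i})^{\tc_{ij}(u)}$ a genuine polynomial.

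Second, I would assemble the explicit denominator formulas for $d_{i,j}(z)$: the simply-laced case is the classical computation of \cite{AK97}, while in the non-simply-laced setting the pairs $(i,j)$ with $\max(d_i,d_j) = r$ are treated in \cite{DO94, KKK15} (type $\mathrm{B}$), in \cite{OhS19} (type $\mathrm{C}$), and in \cite{Oh15R, Fujita19} (types $\mathrm{F}_4$ and $\mathrm{G}_2$). In each case the tabulated denominator factors as a product of linear polynomials $(z - q^{u+d_i})$ with explicit multiplicities, and I would verify by inspection that these multiplicities coincide with the values $\tc_{ij}(u)$ from the first step. This yields the equality in the generic case.

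The main obstacle is the excluded case $d_i = d_j = 1 < r$, in which both $i$ and $j$ correspond to short roots in a non-simply-laced type. Here $d_{i,j}(z)$ can have strictly smaller degree than $\prod_u(z - q^{u+d_i})^{\tc_{ij}(u)}$, and only divisibility is asserted. To establish this weaker statement, I would again read off $d_{i,j}(z)$ from the references in each of types $\mathrm{B}$, $\mathrm{C}$, $\mathrm{F}_4$, $\mathrm{G}_2$ and check, for every $0 \le u \le rh^{\vee}$, that the multiplicity of $(z - q^{u+d_i})$ in $d_{i,j}(z)$ does not exceed the corresponding $\tc_{ij}(u)$. This is a finite but somewhat tedious verification rather than a conceptual one, and it is the reason the proposition is stated as divisibility rather than equality in this regime.
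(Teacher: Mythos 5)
Your proposal is correct and follows essentially the same route as the cited source \cite[Section~6.2]{FO21} (the paper itself offers no independent proof, only the citation): namely, compute $\tc_{ij}(u)$ for $0 \le u \le rh^{\vee}$ via the root-theoretic formula of Theorem~\ref{Thm:tc} and compare case by case with the tabulated denominators from \cite{AK97, DO94, KKK15, Oh15R, OhS19, Fujita19}, with only divisibility available when $d_i = d_j = 1 < r$. You correctly identify both the role of Lemma~\ref{Lem:tc}~(\ref{tc:po}) in making the right-hand side a polynomial and the reason the short-short case in non-simply-laced types is excluded from the equality.
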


\begin{Def}[cf.~{\cite[Section~3]{KKOP20}}] \label{Def:fd}
For each pair $(V, W)$
of simple modules in $\Cc$, we define the $\Z_{\ge 0}$-valued invariant $\fd(V, W)$ by  
$$
\fd(V, W) \seq \zero_{z=1}(d_{V, W}(z)) + \zero_{z=1}(d_{W,V}(z)),
$$
where $\zero_{z=a}(f(z))$ denotes the order of zeros of $f(z)$ at $z=a$
for $f(z) \in \kk[z]$ and $a \in \kk$. 
By definition, we have $\fd(V, W) = \fd(W,V)$.
\end{Def}

\begin{Rem}
In \cite{KKOP20}, an invariant $\fd(V,W)$ was defined for any \emph{rationally renormalizable} pair $(V, W)$ of non-zero modules of $\Cc$. 
When both $V$ and $W$ are simple, it coincides with the one in Definition~\ref{Def:fd}
thanks to \cite[Proposition 3.16]{KKOP20}.
\end{Rem}

A simple $U_{q}(L\fg)$-module $V \in \Cc$ is said to be \emph{real}
if its tensor square $V \otimes V$ is also simple.
For instance, every KR module is known to be real (see~e.g.~the proof of Proposition~\ref{Prop:clTsys} above).

\begin{Thm}[{\cite[Section 3.2]{KKKO15}}] \label{Thm:fd}
Let $V, W$ be two simple modules in $\Cc$ such that at least one of them is real.
Then the following three conditions are mutually equivalent:
\begin{itemize}
\item $\fd(V, W) = 0$.
\item $V$ and $W$ commute.
\item The tensor product $V \otimes W$ is simple.
\end{itemize}
In particular, we have $\fd(V, V)=0$ whenever $V$ is a real simple module. 
\end{Thm}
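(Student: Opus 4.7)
The plan is to follow the strategy of Kang--Kashiwara--Kim--Oh, whose key technical input is that for a real simple module $V$, the tensor product $V \otimes W$ with any simple $W$ has a simple head and a simple socle, both of which can be detected via the normalized $R$-matrices.

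First I would establish $\fd(V,W) = 0 \Rightarrow V \text{ and } W \text{ commute}$. If $d_{V,W}(1) \neq 0$, then $d_{V,W}(z) R_{V,W}(z)$, which is a priori a morphism $V_z \otimes W \to W \otimes V_z$ of $\kk[z^{\pm1}]$-modules, can be divided by $d_{V,W}(z)$ at $z = 1$ to yield a well-defined specialization $\bar{R}_{V,W} \colon V \otimes W \to W \otimes V$. Similarly $d_{W,V}(1) \neq 0$ gives $\bar{R}_{W,V} \colon W \otimes V \to V \otimes W$. From the very normalization of $R_{V,W}(z)$ (it sends a prescribed $\ell$-highest weight vector to a prescribed one), the composition $R_{W,V}(z) \circ R_{V,W}(z)$ equals the identity of $V_z \otimes W \otimes_{\kk[z^{\pm1}]} \kk(z)$; specializing at $z = 1$ shows that $\bar{R}_{V,W}$ and $\bar{R}_{W,V}$ are mutually inverse, hence $V \otimes W \cong W \otimes V$.

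Next I would treat the equivalence between commutation and simplicity of the tensor product. The direction $V \otimes W \text{ simple} \Rightarrow V \otimes W \cong W \otimes V$ is easy: by commutativity of $K(\Cc)$ (Theorem~\ref{Thm:FR}), the classes $[V \otimes W]$ and $[W \otimes V]$ coincide, so if $V \otimes W$ is simple then so is $W \otimes V$, and they must be isomorphic. For the converse, I would use reality of, say, $V$ to show that $V \otimes W$ has a unique simple head $H$ and a unique simple socle $S$: this is obtained by studying the image and kernel of the suitably renormalized $R$-matrices $V \otimes W \to W \otimes V$ and $W \otimes V \to V \otimes W$ obtained after clearing the minimal powers of $(z-1)$, and invoking the fact that $V \otimes V$ is simple to rule out ``accidental'' simple subquotients. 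If moreover $V \otimes W \cong W \otimes V$, then the composition of these two renormalized morphisms is an endomorphism of $V \otimes W$ with image $H$ and whose kernel is complementary to $S$; one uses this together with the simplicity of $V \otimes V$ to force $H = S$ and then to conclude that the length of $V \otimes W$ is one, i.e., $V \otimes W$ is simple.

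Finally, the implication simple $\Rightarrow \fd(V,W) = 0$ falls out: if $\fd(V,W) > 0$, then at least one of $d_{V,W}(z)$, $d_{W,V}(z)$ vanishes at $z = 1$, and the corresponding renormalization of the $R$-matrix produces a non-trivial proper submodule of $V \otimes W$ (its image or kernel), contradicting simplicity. The last assertion $\fd(V,V) = 0$ for $V$ real is a special case, since by definition $V \otimes V$ is simple. The main obstacle is the commute $\Rightarrow$ simple step: it requires the fine control over head and socle of tensor products afforded by reality, and in particular one must argue that an isomorphism $V \otimes W \cong W \otimes V$ is essentially realized by (a rescaling of) the specialized $R$-matrix, so that the renormalized maps composed yield a scalar rather than a non-trivial idempotent-like endomorphism.
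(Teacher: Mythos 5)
This theorem is not proved in the paper at all: it is quoted from \cite[Section 3.2]{KKKO15}, so your sketch has to be measured against the argument of that reference. Your outline does follow the right strategy, and three of the implications are essentially correct as sketched. For ($\fd(V,W)=0$ $\Rightarrow$ commute), note that the identity $R_{W,V_z}\circ R_{V_z,W}=\mathrm{id}$ over $\kk(z)$ requires the generic simplicity (or at least cyclicity from the $\ell$-highest weight vector) of $V_z\otimes W$, not merely the normalization condition; this is standard but should be said. For (simple $\Rightarrow$ $\fd(V,W)=0$), the reason the renormalized morphism $\lim_{z\to1}(z-1)^{k}R_{V,W}(z)$ with $k>0$ contradicts simplicity is that it annihilates the tensor product of $\ell$-highest weight vectors while being nonzero, so its kernel is a nonzero proper submodule of $V\otimes W$; "its image or kernel" as you wrote is too vague, since the image lives in $W\otimes V$.

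The genuine gap is in (commute $\Rightarrow$ simple), which you yourself flag as ``the main obstacle'' without resolving. Knowing only that $V\otimes W$ has a simple head $H$ and a simple socle $S$ does not exclude length $\ge 2$, and the composition of the two renormalized $R$-matrices could a priori be zero, so no contradiction is reached from your setup. What closes the argument in \cite{KKKO15} is one of two sharper consequences of reality: either (a) $\dim\Hom(V\otimes W,\,W\otimes V)=1$, so that any isomorphism $V\otimes W\to W\otimes V$ is a scalar multiple of the renormalized $R$-matrix, whose image is the simple head of $V\otimes W$ (equivalently the simple socle of $W\otimes V$); surjectivity then forces $V\otimes W$ to coincide with its head, hence to be simple; or (b) the facts that $H=\mathrm{hd}(V\otimes W)\cong\mathrm{soc}(W\otimes V)$ and that $H$ occurs with multiplicity one in the composition series of $V\otimes W$. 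In route (b), an isomorphism $V\otimes W\cong W\otimes V$ gives $H\cong S$, and then a short radical argument finishes: if $S$ lies in the radical, $H$ occurs at least twice, contradicting multiplicity one; if not, $S$ splits off as a direct summand and the head cannot be simple unless the complement vanishes. You must import one of these multiplicity/dimension statements from the renormalized $R$-matrix analysis; without it the implication does not follow from ``simple head and simple socle'' alone.
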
 

\begin{Lem} \label{Lem:fd}
For $(i,p), (j,s) \in \hI$, we have
$$
\fd\left(L(Y_{i,p}), L(Y_{j,s})\right) \le \begin{cases}
\tc_{ij}\left(|p-s|-d_{i}\right) & \text{if $|p-s| \le rh^{\vee}$}, \\
0 & \text{otherwise}.
\end{cases}
$$ 
The equality holds unless $d_{i}=d_{j} =1 <r$.
\end{Lem}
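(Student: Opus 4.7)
The plan is to read everything off Proposition~\ref{Prop:dfund} via the loop-rotation trick. First I would use $L(Y_{i,p}) \cong T_{q^p}(L(Y_{i,0}))$ together with the scaling relation~\eqref{eq:spd} to obtain
\[
d_{L(Y_{i,p}), L(Y_{j,s})}(z) = d_{i,j}(q^{p-s}z), \qquad
d_{L(Y_{j,s}), L(Y_{i,p})}(z) = d_{j,i}(q^{s-p}z).
\]
After reducing (by symmetry $\fd(V,W)=\fd(W,V)$ and $d_{i,j}=d_{j,i}$) to the case $d_i \ge d_j$, Proposition~\ref{Prop:dfund} expresses $d_{i,j}(z)$ as a divisor of $\prod_{u=0}^{rh^\vee}(z-q^{u+d_i})^{\tc_{ij}(u)}$, with equality outside the exceptional case $d_i=d_j=1<r$.

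Since $q$ is transcendental over $\Q$, the factor indexed by $u$ contributes to $\zero_{z=1}(d_{i,j}(q^{p-s}z))$ precisely when $u+d_i=p-s$, giving
\[
\zero_{z=1}(d_{i,j}(q^{p-s}z)) \le \tc_{ij}(p-s-d_i), \qquad
\zero_{z=1}(d_{i,j}(q^{s-p}z)) \le \tc_{ij}(s-p-d_i),
\]
with equality outside the exceptional case. Summing the two contributions, Lemma~\ref{Lem:tc}~(\ref{tc:d}) forces one of the two terms to vanish (since $\tc_{ij}(u)=0$ for $u<d_i$), yielding $\fd(L(Y_{i,p}),L(Y_{j,s})) \le \tc_{ij}(|p-s|-d_i)$. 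The dichotomy at $|p-s|>rh^\vee$ is then easy to handle: if $|p-s|>rh^\vee+d_i$ no factor of the product contributes; if $rh^\vee<|p-s|\le rh^\vee+d_i$, Lemma~\ref{Lem:tc}(\ref{tc:sym}) rewrites $\tc_{ij}(|p-s|-d_i) = \tc_{ij^*}(rh^\vee-|p-s|+d_i)$, which vanishes by~(\ref{tc:d}) since its argument is strictly less than $d_i$.

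The main subtlety lies in the case $d_i<d_j$: the tight bound obtained by applying the previous argument after relabeling is $\tc_{ji}(|p-s|-d_j)$ rather than $\tc_{ij}(|p-s|-d_i)$. That the latter still serves as an upper bound follows from Lemma~\ref{Lem:tc}(\ref{tc:tp}), which expresses $\tc_{ij}(u)$ as a nonnegative combination of nearby values of $\tc_{ji}$ (using the positivity in~(\ref{tc:po}) together with the parity/support constraints from Theorem~\ref{Thm:tc}). This inequality between the two versions of $\tc$ is the only place where the comparison of the inverse quantum Cartan matrix across different $d$-values is crucially used; all other steps are routine consequences of the scaling property of $R$-matrix denominators and the elementary combinatorics of $\tc_{ij}$ recorded in Lemma~\ref{Lem:tc}.
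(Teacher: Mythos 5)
Your proof is correct and is essentially the paper's own (very terse) argument, fleshed out: for $d_i\ge d_j$ the bound --- and the equality outside the exceptional case --- is read off from Proposition~\ref{Prop:dfund} via the scaling relation (\ref{eq:spd}); the two summands in $\fd$ cannot both be nonzero by Lemma~\ref{Lem:tc}~(\ref{tc:d}); and the case $d_i<d_j$ is reduced to the previous one through Lemma~\ref{Lem:tc}~(\ref{tc:tp}) and (\ref{tc:po}). Your treatment of the window $rh^{\vee}<|p-s|\le rh^{\vee}+d_i$ via (\ref{tc:sym}) and (\ref{tc:d}) fills in a detail the paper leaves implicit, and it is exactly right.

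One caveat, which applies equally to the paper's proof: in the case $d_i<d_j$ neither argument yields the asserted \emph{equality}, only the inequality. Indeed, for $r=2$ relation (\ref{tc:tp}) gives $\tc_{ij}(|p-s|-d_i)=\tc_{ji}(|p-s|)+\tc_{ji}(|p-s|-d_j)$, and the extra term $\tc_{ji}(|p-s|)$ need not vanish: in type $\mathrm{B}_2$, taking $i$ to be the short node, $j$ the long node and $|p-s|=3$, one computes $\tc_{ij}(2)=1$ whereas $\fd(L(Y_{i,p}),L(Y_{j,s}))=\tc_{ji}(1)=0$ (the two fundamental modules in question correspond, under Theorem~\ref{Thm:SW}, to the two non-adjacent outer vertices of $\mathrm{A}_3$, so they commute). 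So the final sentence of the lemma should be read as asserting equality only when $d_i\ge d_j$; since the paper only ever invokes the upper bound, nothing downstream is affected, and your write-up --- which claims only the upper bound in the case $d_i<d_j$ --- is in fact the accurate version of the statement.
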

\begin{proof}
When $d_{i} \ge d_{j}$, this is a direct consequence of Proposition~\ref{Prop:dfund}
along with (\ref{eq:spd}).
For the other case $d_{i} < d_{j}$, we use Lemma~\ref{Lem:tc}~(\ref{tc:tp}) \& (\ref{tc:po}). 
\end{proof}

\begin{Prop} \label{Prop:wtfd}
For any $m_{1}, m_{2} \in \cM$, we have
$$
(\wt_{\cQ}(m_{1}), \wt_{\cQ}(m_{2})) = \sum_{k \in \Z} (-1)^{k}\ \fd\left(L(m_{1}), \cD^{k+1}L(m_{2})\right).
$$
\end{Prop}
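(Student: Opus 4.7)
The plan is to reduce the identity to the case of fundamental monomials $m_1 = Y_{i,p}$ and $m_2 = Y_{j,s}$ via a bilinearity argument, and then to verify that case by a direct computation using the symmetries of the inverse quantum Cartan matrix.

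The LHS is manifestly bilinear in $(m_1, m_2)$ with respect to the multiplicative structure on $\cM$, since $\wt_\cQ(mm') = \wt_\cQ(m) + \wt_\cQ(m')$ by Definition~\ref{Def:Qwt}. Setting $\Theta(V,W) \coloneqq \sum_{k \in \Z}(-1)^k \fd(V,\cD^{k+1}W)$, one checks quickly that $\Theta$ is symmetric and satisfies $\Theta(\cD V, W) = -\Theta(V,W)$, matching Lemma~\ref{Lem:wtD}. The key reduction step is to establish the analogous bilinearity $\Theta(L(mm''), L(m')) = \Theta(L(m), L(m')) + \Theta(L(m''), L(m'))$. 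The strategy is to pass to standard modules, on which $\Theta$ is trivially bilinear via the factorization $d_{V_1 \otimes V_2, W}(z) = d_{V_1, W}(z)\, d_{V_2, W}(z)$ applied to $M(m) = \bigotimes L(Y_{i,p})^{\otimes u_{i,p}(m)}$, and then transfer the bilinearity to simple modules via the unitriangular relation $[M(m)] = [L(m)] + \sum_{m' < m} P_{m,m'}[L(m')]$, provided the resulting contributions from the lower Jordan--H\"older factors cancel under the $\cD$-alternation.

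Once bilinearity is in hand, it suffices to verify the fundamental case $(\wt_\cQ(Y_{i,p}), \wt_\cQ(Y_{j,s})) = \Theta(L(Y_{i,p}), L(Y_{j,s}))$. By the symmetry of both sides, we may assume $p \le s$. Theorem~\ref{Thm:fD} gives $\cD^{k+1}L(Y_{j,s}) \cong L(Y_{j^{*(k+1)},\, s+(k+1)rh^\vee})$, and Lemma~\ref{Lem:fd} evaluates each term as $\tc_{i,\, j^{*(k+1)}}(|p-s-(k+1)rh^\vee| - d_i)$, with equality outside the degenerate case $d_i = d_j = 1 < r$ that requires Lemma~\ref{Lem:tc}(\ref{tc:tp}) to reduce a short--short interaction to a short--long one where equality is available. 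A direct range analysis of the condition $|p-s-(k+1)rh^\vee| \le rh^\vee$ shows that only $k = -1, -2$ can contribute nontrivially, producing
\[
\Theta(L(Y_{i,p}), L(Y_{j,s})) = -\tc_{ij}(s-p-d_i) + \tc_{ij^*}(p-s+rh^\vee - d_i).
\]
Applying the symmetry $\tc_{ij^*}(u) = \tc_{ij}(rh^\vee - u)$ of Lemma~\ref{Lem:tc}(\ref{tc:sym}) to the second term rewrites it as $\tc_{ij}(s-p+d_i)$, so the total becomes $\Nn(i,p;j,s)$, which equals $(\wt_\cQ(Y_{i,p}), \wt_\cQ(Y_{j,s}))$ by Proposition~\ref{Prop:Nnwt}.

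The main obstacle will be the bilinearity of $\Theta(L(\cdot), L(\cdot))$ in the first step: $L(m) \otimes L(m')$ is generically non-simple, so $\fd$ does not decompose tautologically along the factorization $mm' = m \cdot m'$, and one must control the corrections from the lower composition factors of standard modules. These corrections are expected to telescope under the $\cD$-shifts, but making this precise seems to require either a delicate induction on the Nakajima order or, more cleanly, an appeal to the bilinear invariants $\Lambda(V,W)$ of \cite{KKOP20} (alluded to in the remark after Proposition~\ref{Prop:NnKR}), which are related to $\fd$ by $\Lambda(V,W) + \Lambda(W,V) = \fd(V,W)$ and enjoy bilinearity with respect to convolution products.
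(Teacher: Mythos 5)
Your reduction strategy is sound in outline, but the step you yourself flag as ``the main obstacle'' is a genuine gap, and it is precisely the content of what the paper cites: the paper's proof is a two-line combination of \cite[Proposition 3.22]{KKOP20} and \cite[Theorem 6.16]{FO21}. The first of these identifies the alternating sum $\sum_k (-1)^k \fd(V,\cD^{k+1}W)$ with an invariant $\Lambda^\infty(V,W)$ built from the $\Lambda$-invariants of \cite{KKOP20}, which \emph{is} additive in each argument with respect to taking heads of tensor products; the second computes $\Lambda^\infty$ as the $\cQ$-weight pairing. Your proposed substitute --- factorizing $d_{M(m),W}(z)$ over the tensor factors of the standard module and then ``transferring'' to $L(m)$ via the unitriangular Jordan--H\"older expansion --- does not work as stated: $\fd$ is defined via orders of zeros of denominators of normalized $R$-matrices and is not additive along composition series, so the corrections from lower factors do not tautologically telescope; making them cancel is exactly what the $\Lambda$/$\Lambda^\infty$ formalism is for. (Also, the relation you quote should read $\fd(V,W)=\tfrac12\bigl(\Lambda(V,W)+\Lambda(W,V)\bigr)$.) So either you import \cite{KKOP20} wholesale --- at which point you have reproduced the paper's proof --- or the bilinearity remains unproven.

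The fundamental-module computation also has unrepaired edges. First, Lemma~\ref{Lem:fd} gives only an inequality when $d_i=d_j=1<r$, and Lemma~\ref{Lem:tc}~(\ref{tc:tp}) is an identity among the $\tc_{ij}(u)$, not among the $\fd$'s, so it cannot by itself upgrade the inequality to the equality you need; one must invoke the exact denominator formulas (or again the $\Lambda$-machinery of \cite{FO21}). Second, the range analysis is wrong at the boundary: for $s-p=0$ the term $k=0$ contributes, and for $s-p=rh^\vee$ the term $k=-3$ contributes (e.g.\ $\Theta(L(Y_{i,p}),L(Y_{i,p}))=2$ comes from $k=0$ and $k=-2$, not from $k=-1,-2$). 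Your final answer still comes out right only because you then apply Lemma~\ref{Lem:tc}~(\ref{tc:sym}) outside its stated range $0\le u\le rh^\vee$, and the two errors cancel. These are fixable, but as written the base case is not correct either.
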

\begin{proof}
This is a combination of \cite[Proposition 3.22]{KKOP20} and \cite[Theorem 6.16]{FO21}.
\end{proof}

\subsection{Generalized quantum affine Schur-Weyl dualities}
\label{ssec:SW}

Let $\cJ = \{ V^{x}\}_{x \in J}$ be a collection 
of real simple modules in $\Cc$ indexed by an arbitrary set $J$.
We consider the graph $\Gamma_\cJ$
whose vertex set is $J$ and such that the number of edges between $x$ and $y$
is equal to $\fd(V^{x}, V^{y})$ for any $x, y \in J$.  
Note that the graph $\Gamma_\cJ$ has no edge loops by Theorem~\ref{Thm:fd}.

Given such a collection $\cJ$, 
Kang-Kashiwara-Kim~\cite{KKK18} constructed a monoidal functor 
$$
F_{\cJ} \colon \Aa(\Gamma_\cJ) \to \Cc
$$ 
called the \emph{generalized quantum affine Schur-Weyl duality functor},
with some good properties. 
Here $\Aa(\Gamma_\cJ)$ 
denotes the category of finite-dimensional modules
over the (completed) symmetric quiver Hecke algebras (defined over $\kk$) 
associated with the graph $\Gamma_\cJ$.    
This is a $\kk$-linear abelian monoidal category
and it gives a categorification of
the coordinate algebra of the maximal (pro)unipotent group 
for the symmetric Kac-Moody algebra associated with $\Gamma_{\cJ}$.
Moreover, the $\Z$-basis formed by the simple objects in $\Aa(\Gamma_\cJ)$ coincides with 
the specialization of the dual canonical basis.

Now we fix a Q-datum $\cQ$ for $\fg$.
As an important example of the above construction,
we consider the following collection of simple $U_q(L\fg)$-modules indexed by $J = \Delta_{0}$:
\begin{equation} \label{eq:JQ}
\cJ_{\cQ} \seq \{ L^{\cQ}(\alpha_{\im}) \}_{\im \in \Delta_{0}},
\end{equation}
where $L^{\cQ}(\alpha)$ for $\alpha \in \sR^{+}$ is the fundamental module defined in Section~\ref{ssec:CQ}.

\begin{Thm}[\cite{KKK15, KKKOIV, KO17, OhS19, Fujita20, Naoi21}] \label{Thm:SW}
Let $\cQ$ be a Q-datum for $\fg$. Then we have
\begin{equation} \label{eq:SW}
\fd(L^{\cQ}(\alpha_{\im}), L^{\cQ}(\alpha_{\jm})) = \delta(\im \sim \jm) 
\end{equation}
for any $\im, \jm \in \Delta_{0}$, and hence $\Gamma_{\cJ_{\cQ}} = \Delta$. 
The corresponding functor $F_{\cJ_\cQ}$ gives an equivalence of monoidal categories 
$
F_{\cQ} \colon \Aa(\Delta) \to \Cc_{\cQ}.
$ 
The induced $\Z$-algebra isomorphism
$$ [F_{\cQ}] \colon \evv(\cA_{v}[N_{-}]) \simeq K(\Aa(\Delta)) \to K(\Cc_{\cQ}),$$
is given by 
$$
\evv\left(\tF(\alpha, \cQ) \right) \mapsto \left[L^{\cQ}(\alpha)\right]
$$
for $\alpha \in \sR^{+}$, which maps the dual canonical basis $\evv(\tbfB)$ specialized at $v=1$ 
to the basis $\Irr \Cc_{\cQ}$ formed by the simple isomorphism classes in $\Cc_\cQ$.
\end{Thm}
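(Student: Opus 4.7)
The plan is to split the theorem into three parts and verify each in turn, leveraging the HLO isomorphism established above.

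First, I would verify the $\fd$-computation \eqref{eq:SW}. Using Lemma~\ref{Lem:bphi} together with the identity $\gamma_{\im}^{\cQ} = (1 - \tau_{\cQ}^{d_{\bar{\im}}})\varpi_{\im}$, one pins down the position $(\bar{\im}, p_{\im}) \in \hI_{\cQ}$ with $\bphi_{\cQ}(\bar{\im}, p_{\im}) = (\alpha_{\im}, 0)$, so that $L^{\cQ}(\alpha_{\im}) = L(Y_{\bar{\im}, p_{\im}})$. By Lemma~\ref{Lem:fd}, the desired equality $\fd(L^{\cQ}(\alpha_{\im}), L^{\cQ}(\alpha_{\jm})) = \delta(\im \sim \jm)$ then reduces to checking the value of $\tc_{\bar{\im}\bar{\jm}}(|p_{\im} - p_{\jm}| - d_{\bar{\im}})$ (and its symmetric partner). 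When $d_{\bar{\im}} = d_{\bar{\jm}}$, this follows directly from Theorem~\ref{Thm:tc} combined with the defining property $|\xi_{\im} - \xi_{\jm}| = d_{\bar{\im}\bar{\jm}}$ for adjacent $\im \sim \jm$ in Definition~\ref{def:Qdata}, and from a length-count using the shape of $\Gamma_{\cQ}$ when $\im \not\sim \jm$. When $d_{\bar{\im}} \ne d_{\bar{\jm}}$, I would reduce to the previous case via the folding relation Lemma~\ref{Lem:tc}~(\ref{tc:tp}) together with the second clause of Definition~\ref{def:Qdata}, which controls how a pair of adjacent vertices of different lengths sits in $\cQ$.

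Once \eqref{eq:SW} is established we have $\Gamma_{\cJ_{\cQ}} = \Delta$, so the general machinery of \cite{KKK18} produces a monoidal functor $F_{\cQ} \colon \Aa(\Delta) \to \Cc$ sending the simple object attached to $\im \in \Delta_{0}$ to $L^{\cQ}(\alpha_{\im})$. Because composition factors of tensor products of these fundamental modules have $q$-characters that are Laurent monomials in the $Y_{\bar{\im}, p_{\im}}$'s (by Theorem~\ref{Thm:FM}, using the convexity argument that underlies Proposition~\ref{Prop:CX}), the essential image of $F_{\cQ}$ lies in the subcategory monoidally generated by $\{L^{\cQ}(\alpha_{\im})\}_{\im \in \Delta_{0}}$. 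By Lemma~\ref{Lem:CQ}~(\ref{CQ:K}), this subcategory is exactly $\Cc_{\cQ}$.

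To upgrade $F_{\cQ}$ to an equivalence and identify $[F_{\cQ}]$, I would compare it with the specialization at $v = t = 1$ of the HLO isomorphism $\Phi_{\cQ}$ of Corollary~\ref{Cor:HLO}. Both $[F_{\cQ}]$ and $\evt \circ \Phi_{\cQ} \circ \evv^{-1}$ are $\Z$-algebra homomorphisms $\evv(\cA_{v}[N_{-}]) \to K(\Cc_{\cQ})$, and both send $\evv(\tF(\alpha_{\im}, \cQ))$ to $[L^{\cQ}(\alpha_{\im})]$: for $\Phi_{\cQ}$ this is the content of Theorem~\ref{Thm:HLO} applied to the generators, and for $F_{\cQ}$ it follows from the defining property of the functor on simples of degree $\alpha_{\im}$. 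Since $\cA_{v}[N_{-}]$ is generated as a $\Z[v^{\pm 1/2}]$-algebra by $\{\tF(\alpha_{\im}, \cQ)\}_{\im \in \Delta_{0}}$ (the dual root vectors), the two maps coincide. As $\Phi_{\cQ}$ maps $\tbfB$ bijectively onto $\{L_{t}(m) \mid m \in \cM_{\cQ}\}$, its specialization maps $\evv(\tbfB)$ bijectively onto $\mathrm{Irr}\,\Cc_{\cQ}$; transporting this across the identification shows that $[F_{\cQ}]$ sends $\evv(\tbfB)$ onto $\mathrm{Irr}\,\Cc_{\cQ}$, which in turn forces $F_{\cQ}$ to send simples to simples bijectively. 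Combined with exactness and the monoidal structure, this yields the desired equivalence of monoidal categories $\Aa(\Delta) \xrightarrow{\sim} \Cc_{\cQ}$.

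The main obstacle is the initial $\fd$-computation in the non-homogeneous case $d_{\bar{\im}} \ne d_{\bar{\jm}}$, where Proposition~\ref{Prop:dfund} only yields an upper bound and Lemma~\ref{Lem:fd} correspondingly fails to be sharp a priori. For a fully uniform proof one likely has to combine the folding identity for $\tc$ with the explicit denominator computations of \cite{AK97, Oh15R, OhS19, Fujita19}, or alternatively invoke the case-by-case Schur--Weyl constructions of \cite{KKK15, KKKOIV, KO17, Fujita20, Naoi21}, where \eqref{eq:SW} is precisely the input needed to run the general framework of \cite{KKK18} and simultaneously a consequence of the resulting equivalence.
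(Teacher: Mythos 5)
This theorem is not proved in the paper at all: it is imported wholesale from the cited references \cite{KKK15, KKKOIV, KO17, OhS19, Fujita20, Naoi21}, where both the exact computation \eqref{eq:SW} and, much more substantially, the fact that $F_{\cJ_\cQ}$ is an equivalence onto $\Cc_\cQ$ sending simples to simples are established by lengthy, largely case-by-case arguments (explicit denominator formulas, geometric or combinatorial realizations of the duality functor). Your proposal tries instead to derive the theorem internally from the HLO isomorphism, and this reverses the paper's logical order in a way that makes the argument circular. The step ``\,as $\Phi_\cQ$ maps $\tbfB$ bijectively onto $\{L_t(m)\}_{m\in\cM_\cQ}$, its specialization maps $\evv(\tbfB)$ bijectively onto $\Irr\Cc_\cQ$\,'' silently uses $\evt(L_t(m)) = [L(m)]$ for all $m \in \cM_\cQ$. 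But that identity is exactly the Kazhdan--Lusztig conjecture for $\Cc_\cQ$ (Conjecture~\ref{Conj:KL}), which at this point in the paper is unknown for non-simply-laced $\fg$ and is obtained only in Corollary~\ref{Cor:SW}~(2) \emph{as a consequence of} Theorem~\ref{Thm:SW} combined with Theorem~\ref{Thm:HLO}. What is known unconditionally is only $\evt(L_t(Y_{i,p})) = \chi_q(L(Y_{i,p}))$ for fundamental modules (Theorem~\ref{Thm:qtfund}), which suffices to identify the two ring homomorphisms on generators but not to transport the basis statement. So your comparison argument proves the \emph{corollary} (that $\Phi_\cQ|_{v=t=1} = [F_\cQ]$, granting Theorem~\ref{Thm:SW}), not the theorem.

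There are two further gaps even setting the circularity aside. First, as you acknowledge, Lemma~\ref{Lem:fd} gives only an upper bound on $\fd$ when $d_{\bar\im}=d_{\bar\jm}=1<r$ (e.g.\ adjacent short-root vertices in type $\mathrm{C}_n$ or $\mathrm{F}_4$), so the exact equality \eqref{eq:SW} cannot be extracted from Proposition~\ref{Prop:dfund} and Theorem~\ref{Thm:tc} alone; one genuinely needs the precise denominators from \cite{Oh15R, OhS19, Fujita19} or an independent argument. Second, even if one knew that $[F_\cQ]$ is a ring isomorphism matching the two simple bases, concluding that $F_\cQ$ is an \emph{equivalence of monoidal categories} (fully faithful and essentially surjective, not merely an isomorphism on Grothendieck rings) requires the exactness, nonvanishing and fullness results of the Schur--Weyl duality papers; this does not follow formally from the $K$-theoretic statement. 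The honest conclusion is the one you reach in your final paragraph: \eqref{eq:SW} and the equivalence are inputs to this paper, not outputs of its machinery.
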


Recall that $\evt(L_{t}(Y_{i,p})) = \chi_{q}(L(Y_{i,p}))$ holds for any $(i,p) \in \hI$ by Theorem~\ref{Thm:qtfund}~(\ref{qtfund:evt}).
Therefore, as a corollary of Theorem~\ref{Thm:HLO} and Theorem~\ref{Thm:SW}, we obtain the following.

\begin{Cor}[cf.~{\cite[Corollary 11.8]{HO19}}] \label{Cor:SW} The followings hold.
\begin{enumerate}
\item Let $\Phi_{\cQ}|_{v=t=1}$ denote the $\Z$-algebra isomorphism
$\evv(\cA_{v}[N_-]) \to \evt(\cK_{t, \cQ}) \simeq K(\Cc_{\cQ})$ induced from $\Phi_{\cQ}$.
Then we have $\Phi_{\cQ}|_{v=t=1} = [F_{\cQ}]$.
\item \label{evOK} $\chi_{q}(L(m)) = \evt(L_{t}(m))$ for all $m \in \cM_{\cQ}$.
\end{enumerate}
\end{Cor}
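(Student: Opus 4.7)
The plan is to derive both assertions by combining the HLO isomorphism from Corollary~\ref{Cor:HLO} with the Grothendieck-level Schur-Weyl categorification of Theorem~\ref{Thm:SW}. The two maps are matched by checking they agree on the PBW-type generators, and the statement about simple classes then follows from the uniqueness of the maximal dominant monomial in $q$-characters of simple modules.

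For (1), I would first observe that both $\Phi_{\cQ}|_{v=t=1}$ and $[F_{\cQ}]$ are $\Z$-algebra homomorphisms out of $\evv(\cA_{v}[N_-])$, and that this algebra is generated by the specializations of the PBW elements $\{\tF(\alpha,\cQ)=\tD^{\cQ}(\alpha,\alpha^-)\mid\alpha\in\sR^+\}$ thanks to Proposition~\ref{Prop:pbw}. Hence it suffices to check agreement on these generators. On one side, the computation in the proof of Theorem~\ref{Thm:HLO}, applied in the special case that $\bfc$ is supported at a single $\alpha\in\sR^+$, yields $\Phi_{\cQ}(\tF(\alpha,\cQ)) = L_{t}(Y_{i,p})$ with $(i,p)=\bphi_{\cQ}^{-1}(\alpha,0)$, using the coincidence $E_{t}(Y_{i,p})=F_{t}(Y_{i,p})=L_{t}(Y_{i,p})$ in the fundamental case. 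Specializing at $t=1$ and invoking Theorem~\ref{Thm:qtfund}~(\ref{qtfund:evt}) gives $\chi_{q}(L(Y_{i,p}))=\chi_{q}(L^{\cQ}(\alpha))$. On the other side, Theorem~\ref{Thm:SW} gives $[F_{\cQ}](\evv(\tF(\alpha,\cQ))) = [L^{\cQ}(\alpha)]$, which is identified with $\chi_{q}(L^{\cQ}(\alpha))$ under the $q$-character embedding $K(\Cc_{\cQ})\hookrightarrow\evt(\cY_{t,\cQ})$. The two values coincide, so the two $\Z$-algebra homomorphisms agree on a generating set and therefore globally.

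For (2), I would evaluate the identity $\Phi_{\cQ}|_{v=t=1}=[F_{\cQ}]$ from (1) on the normalized dual canonical basis. Given $m \in \cM_{\cQ}$, there is a unique tuple $\bfc\in(\Z_{\ge 0})^{\sR^+}$ with $m = m(\bfc)$ as in (\ref{eq:mc}). Corollary~\ref{Cor:HLO} gives $\Phi_{\cQ}(\tG(\bfc,\cQ)) = L_{t}(m)$, so after specialization the left-hand side becomes $\evt(L_{t}(m))$. By Theorem~\ref{Thm:SW}, the Schur-Weyl isomorphism sends $\evv(\tbfB)$ bijectively onto $\Irr\Cc_{\cQ}$, so $[F_{\cQ}](\evv(\tG(\bfc,\cQ))) = [L(m')]$ for some $m'\in\cM_{\cQ}$, and under the $\chi_{q}$-identification this equals $\chi_{q}(L(m'))$. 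Combining with (1), we have $\evt(L_{t}(m)) = \chi_{q}(L(m'))$. To pin down $m'=m$, I would use that $L_{t}(m)$ has $\ul{m}$ as its unique maximal dominant monomial with coefficient one, a consequence of Theorem~\ref{Thm:Ft}, equation (\ref{eq:EtFt}), and property (S2) of Theorem~\ref{Thm:qtch}. Hence $m$ appears as a dominant monomial with positive coefficient in $\evt(L_{t}(m))$, while Theorem~\ref{Thm:FM} guarantees that the unique dominant monomial of $\chi_{q}(L(m'))$ is $m'$ itself. We conclude $m=m'$, which gives (2).

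The technical substance has been packaged into Theorem~\ref{Thm:HLO} (controlling $\Phi_{\cQ}$ on both PBW and dual canonical bases) and into Theorem~\ref{Thm:SW} (which identifies $[F_{\cQ}]$ on PBW generators), so no genuinely new difficulty arises here. The only subtle point is that Theorem~\ref{Thm:SW} merely asserts a bijection $\evv(\tbfB)\leftrightarrow\Irr\Cc_{\cQ}$ without specifying which simple corresponds to $\evv(\tG(\bfc,\cQ))$; extracting the precise pairing $\evv(\tG(\bfc,\cQ))\leftrightarrow[L(m(\bfc))]$ is exactly what the maximal-dominant-monomial argument in step (2) provides.
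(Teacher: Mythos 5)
Your proposal is correct and follows essentially the route the paper intends (the paper omits the proof, noting only that the corollary follows from Theorem~\ref{Thm:HLO}, Theorem~\ref{Thm:SW} and the identity $\evt(L_t(Y_{i,p}))=\chi_q(L(Y_{i,p}))$, which is exactly your generator check plus the basis comparison). One small imprecision: Theorem~\ref{Thm:FM} gives that $m'$ is the unique \emph{maximal} monomial of $\chi_q(L(m'))$ rather than its unique dominant monomial, but your maximality argument ($m\le m'$ and $m'\le m$) is exactly what is needed and goes through.
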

Corollary~\ref{Cor:SW}~(\ref{evOK}) gives the affirmative answer to Conjecture~\ref{Conj:KL} for $\Cc_{\cQ}$.

\subsection{Some commutation relations}
\label{ssec:comm}

In this subsection, we deduce several useful commutation relations in $\cK_t$
from the results we have obtained in the previous subsections. 

\begin{Lem}  \label{Lem:comm1}
Let $\cQ$ be a Q-datum for $\fg$ and $m_{1}, m_{2} \in \cM_{\cQ}$.
Assume that at least one of the simple modules $L(m_{1})$ and $L(m_{2})$ is real.
If $ \fd(L(m_{1}), L(m_{2})) = 0$, 
we have the $t$-commutation relation
$$
L_{t}(m_{1}) L_{t}(m_{2}) = t^{\Nn(m_{1}, m_{2})} L_{t}(m_{2}) L_{t}(m_{1})
$$
in the quantum Grothendieck ring $\cK_{t}(\Cc_\cQ)$.
\end{Lem}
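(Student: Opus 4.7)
The plan is to show that both $L_t(m_1) L_t(m_2)$ and $L_t(m_2) L_t(m_1)$ are proportional to the single canonical basis element $L_t(m_1 m_2)$, and then pin down the two scalars by comparing leading commutative monomials in the quantum torus $\cY_{t}$. Note first that $m_1 m_2 \in \cM_{\cQ}$, since $\cM_{\cQ}$ consists of the dominant monomials in the subring $\cY_{\cQ}$, which is closed under multiplication.

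The first input is the classical identity. The hypothesis $\fd(L(m_1), L(m_2)) = 0$ together with the reality assumption lets us apply Theorem~\ref{Thm:fd} to conclude that $L(m_1) \otimes L(m_2)$ is simple, hence isomorphic to $L(m_1 m_2)$. Therefore $[L(m_1)] \cdot [L(m_2)] = [L(m_1 m_2)] = [L(m_2)] \cdot [L(m_1)]$ in $K(\Cc_{\cQ})$. Now I would expand $L_t(m_1) L_t(m_2) = \sum_{m \in \cM_{\cQ}} c^m(t)\, L_t(m)$ in the canonical basis, invoke Corollary~\ref{Cor:P2} to get $c^m(t) \in \Z_{\ge 0}[t^{\pm 1/2}]$, and specialize at $t=1$ using Corollary~\ref{Cor:SW}~(\ref{evOK}), which identifies $\evt(L_t(m))$ with $\chi_q(L(m))$ throughout $\Cc_{\cQ}$. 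Linear independence of simple classes forces $c^{m_1 m_2}(1) = 1$ and $c^m(1) = 0$ for $m \neq m_1 m_2$. Since each $c^m(t)$ has non-negative coefficients, these coefficients must sum to its value at $t=1$; in particular $c^m(t) \equiv 0$ for $m \neq m_1 m_2$, and $c^{m_1 m_2}(t)$ reduces to a single monomial $t^{a}$ for some $a \in \frac{1}{2}\Z$. The same argument applied to $L_t(m_2) L_t(m_1)$ yields a scalar $t^{b}$.

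To determine $a$ and $b$ I would compare coefficients of the commutative monomial $\ul{m_1 m_2}$ on both sides of $L_t(m_1) L_t(m_2) = t^{a} L_t(m_1 m_2)$. Theorem~\ref{Thm:qtch}~(S2) gives $L_t(m_i) = \ul{m_i} + \sum_{m_i' < m_i}(\cdots)\ul{m_i'}$, and expanding the product in $\cY_t$ using $\ul{m_1'}\cdot\ul{m_2'} = t^{\Nn(m_1',m_2')/2}\ul{m_1' m_2'}$, we need to check that $m_1' m_2' = m_1 m_2$ with $m_i' \le m_i$ only if $m_i' = m_i$. This follows directly from the Nakajima order: if $m_2^{-1} m_2'$ is a product of $A^{-1}_{i,a}$ and $m_2' = m_2 \cdot (m_1^{-1} m_1')^{-1}$, then the second factor would force $m_2' > m_2$, a contradiction. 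Hence the coefficient of $\ul{m_1 m_2}$ in $L_t(m_1) L_t(m_2)$ equals $t^{\Nn(m_1,m_2)/2}$, while on the right it is $t^{a}$, giving $a = \Nn(m_1, m_2)/2$; symmetrically (or applying the skew-symmetry~\eqref{eq:Nnskew}) we find $b = -\Nn(m_1, m_2)/2$. Dividing the two equalities $L_t(m_1) L_t(m_2) = t^{a} L_t(m_1 m_2)$ and $L_t(m_2) L_t(m_1) = t^{b} L_t(m_1 m_2)$ yields the claim.

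The main conceptual obstacle is really the very first step of collapsing the structure-constant sum to a single monomial $t^{a}$. This is not visible from the bare definitions of $\cK_t$ and is available to us precisely because both positivity of structure constants (Corollary~\ref{Cor:P2}) and the agreement $\evt(L_t(m)) = \chi_q(L(m))$ on $\Cc_{\cQ}$ (Corollary~\ref{Cor:SW}) have already been established through the HLO isomorphism $\Phi_{\cQ}$. Once this reduction is in place, the determination of the exponent is a purely formal calculation in the quantum torus.
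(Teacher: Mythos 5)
Your proposal is correct and follows essentially the same route as the paper: simplicity of $L(m_1)\otimes L(m_2)$ via Theorem~\ref{Thm:fd}, positivity of structure constants (Corollary~\ref{Cor:P2}) plus the specialization identity of Corollary~\ref{Cor:SW}~(\ref{evOK}) to collapse the expansion to a single term $t^{a}L_t(m_1m_2)$, and a leading-monomial comparison in $\cY_t$ to fix the exponent. The only difference is that you spell out the leading-term comparison (checking that $\ul{m_1m_2}$ arises only from the product of the two dominant monomials) in more detail than the paper does, which is a harmless refinement.
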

\begin{proof} 
By Corollary~\ref{Cor:P2} we have 
$$
L_{t}(m_{1}) L_{t}(m_{2}) = \sum_{m \in \cM_{\cQ}} c^{m}_{m_{1}, m_{2}}(t) L_{t}(m).
$$ 
in $\cK_{t}$ with $c^{m}_{m_{1}, m_{2}}(t) \in \Z_{\ge 0}[t^{\pm 1/2}]$.
Applying $\evt$ to the both sides yields
$$
\chi_{q}(L(m_{1})) \chi_{q}(L(m_{2})) = \sum_{m \in \cM_{\cQ}} c^{m}_{m_{1}, m_{2}}(1) \chi_{q}(L(m))
$$ 
by Corollary~\ref{Cor:SW}~(\ref{evOK}). 
On the other hand, we have 
$L(m_{1}) \otimes L(m_{2}) \cong L(m_{1}m_{2})$
from our assumption and Theorem~\ref{Thm:fd}. 
Therefore we obtain $c^{m}_{m_{1}, m_{2}}(1) = \delta(m = m_{1}m_{2})$,
which implies $c^{m}_{m_{1}, m_{2}}(t) = \delta(m=m_{1}m_{2}) \cdot t^{a}$
for some $a \in \frac{1}{2}\Z$. 
Similarly, we can prove that $c^{m}_{m_{2}, m_{1}}(t) = \delta(m=m_{1}m_{2}) \cdot t^{b}$
for some $b \in \frac{1}{2}\Z$.
Thus the $(q,t)$-characters $L_{t}(m_{1})$ and $L_{t}(m_{2})$ commute up to
a power of $t^{\pm 1/2}$, i.e.~$L_{t}(m_{1}) L_{t}(m_{2}) = t^{c} L_{t}(m_{2}) L_{t}(m_{1})$
for some $c \in \frac{1}{2} \Z$. Comparing the leading terms, we find $c = \Nn(m_{1}, m_{2})$. 
\end{proof}

\begin{Lem} \label{Lem:tBoson}
Let $(i,p), (j,s) \in \hI$. We have the following relations in $\cK_{t}$:
\begin{enumerate}
\item \label{tBoson:Ftcomm} $L_{t}(Y_{i,p}) L_{t}(Y_{j,s}) = t^{\Nn(i,p;j,s)} L_{t}(Y_{j,s}) L_{t}(Y_{i,p})$ if \ $\fd(L(Y_{i,p}), L(Y_{j,s})) = 0$, 
\item \label{tBoson:Ftdual} $L_{t}(Y_{i,p}) L_{t}(Y_{i^{*}, p \pm rh^{\vee}}) = t^{\mp 1} L_{t}(Y_{i,p} Y_{i^{*}, p\pm rh^{\vee}})  + 1.$
\end{enumerate}
\end{Lem}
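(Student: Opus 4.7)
My plan is to prove (1) by a positivity argument reducing to a leading-coefficient computation, and to prove (2) by computing $E_t$ on the dominant monomial $Y_{i,p} Y_{i^*, p \pm rh^\vee}$ in two ways and then solving for the bar-invariant $L_t$.

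For (1), since $\fd(L(Y_{i,p}), L(Y_{j,s})) = 0$, Theorem~\ref{Thm:fd} gives $L(Y_{i,p}) \otimes L(Y_{j,s}) \cong L(Y_{i,p} Y_{j,s})$, so $\chi_q(L(Y_{i,p})) \chi_q(L(Y_{j,s})) = \chi_q(L(Y_{i,p} Y_{j,s}))$, which admits $Y_{i,p} Y_{j,s}$ as its unique dominant monomial by Theorem~\ref{Thm:FM}. By Theorem~\ref{Thm:qtfund}~(1), both $L_t(Y_{i,p})$ and $L_t(Y_{j,s})$ have non-negative coefficients in $\cY_t$, and the identity $\ul{m}\ul{m'} = t^{\Nn(m,m')/2}\ul{m m'}$ shows that the product $L_t(Y_{i,p}) L_t(Y_{j,s})$ inherits this non-negativity. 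Since any element of $\Z_{\ge 0}[t^{\pm 1/2}]$ specializing to zero at $t = 1$ vanishes identically, the only dominant monomial of the $t$-product is $Y_{i,p} Y_{j,s}$. By Theorem~\ref{Thm:Ft}, therefore, $L_t(Y_{i,p}) L_t(Y_{j,s}) = c(t)\, F_t(Y_{i,p} Y_{j,s})$, and a leading-coefficient computation, using that $(M_1, M_2) = (Y_{i,p}, Y_{j,s})$ is the only factorization of $Y_{i,p} Y_{j,s}$ with $M_1 \le Y_{i,p}$, $M_2 \le Y_{j,s}$, gives $c(t) = t^{\Nn(i,p;j,s)/2}$. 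Running the same argument in the opposite order yields $L_t(Y_{j,s}) L_t(Y_{i,p}) = t^{-\Nn(i,p;j,s)/2} F_t(Y_{i,p} Y_{j,s})$, and the claimed commutation follows.

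For (2), I handle the $+rh^\vee$ case ($-rh^\vee$ is entirely parallel). A direct computation using Lemma~\ref{Lem:tc}~(\ref{tc:d}), (\ref{tc:pe}), (\ref{tc:sym}) gives $\Nn(i,p; i^*, p+rh^\vee) = -2$, so the defining formula of $E_t$ in Section~\ref{ssec:KtCZ} reduces to $E_t(Y_{i,p} Y_{i^*, p+rh^\vee}) = t \cdot L_t(Y_{i,p}) L_t(Y_{i^*, p+rh^\vee})$. The supports of the two $(q,t)$-characters overlap only at $u = p+rh^\vee$, at which the only monomials containing a variable $\tY_{k, p+rh^\vee}$ are the unique anti-dominant $\ul{Y_{i^*, p+rh^\vee}^{-1}}$ of $L_t(Y_{i,p})$ and the unique dominant $\ul{Y_{i^*, p+rh^\vee}}$ of $L_t(Y_{i^*, p+rh^\vee})$, by Theorem~\ref{Thm:qtfund}~(\ref{qtfund:monomials}). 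Enumerating the resulting dominant products gives exactly two: $\ul{Y_{i,p} Y_{i^*, p+rh^\vee}}$ with coefficient $t^{-1}$ and $\ul{1}$ with coefficient $1$. Consequently $L_t(Y_{i,p}) L_t(Y_{i^*, p+rh^\vee}) = t^{-1} F_t(Y_{i,p} Y_{i^*, p+rh^\vee}) + 1$, and therefore $E_t(Y_{i,p} Y_{i^*, p+rh^\vee}) = F_t(Y_{i,p} Y_{i^*, p+rh^\vee}) + t$.

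It remains to upgrade $F_t$ to $L_t$ via the triangular characterization. Writing $L_t(m) = E_t(m) + Q(t) \cdot 1$ for $m = Y_{i,p} Y_{i^*, p+rh^\vee}$ with $Q(t) \in t\Z[t]$ (only the dominant monomial $1$ contributes to the correction, since $E_t(m)$ has only $m$ and $1$ as dominant monomials), the bar-invariance $\ol{L_t(m)} = L_t(m)$ combined with $\ol{E_t(m)} = E_t(m) + (t^{-1} - t)$ (immediate from $E_t(m) = F_t(m) + t$ and the bar-invariance of $F_t(m)$) yields the functional equation $Q(t) - Q(t^{-1}) = t^{-1} - t$, whose unique solution in $t\Z[t]$ is $Q(t) = -t$. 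Hence $L_t(m) = F_t(m)$, and the desired relation in (2) follows. The main subtlety of the argument is the boundary-level analysis used to enumerate the dominant monomials of the product of the two fundamental $(q,t)$-characters, which depends crucially on the sharp statement of Theorem~\ref{Thm:qtfund}~(\ref{qtfund:monomials}).
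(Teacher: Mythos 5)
Your proposal is correct, and the two halves relate to the paper's proof differently. For part (2) (and the case $|p-s|\ge rh^\vee$ of part (1)) you are doing essentially what the paper does: use Theorem~\ref{Thm:qtfund}~(\ref{qtfund:monomials}) to see that the only dominant monomials of the product are $t^{-1}\ul{Y_{i,p}Y_{i^*,p+rh^\vee}}$ and $1$, then identify the result via Theorem~\ref{Thm:Ft}; your extra passage through $E_t$ and the bar-involution to upgrade $F_t(Y_{i,p}Y_{i^*,p\pm rh^\vee})$ to $L_t(Y_{i,p}Y_{i^*,p\pm rh^\vee})$ just makes explicit a step the paper leaves terse. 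For part (1), however, your route is genuinely different in the hard range $|p-s|<rh^\vee$: the paper there invokes Lemma~\ref{Lem:comm1}, whose proof rests on the positivity of structure constants of $\cK_{t,\cQ}$ (Corollary~\ref{Cor:P2}, i.e.\ the HLO isomorphism) and on $\evt(L_t(m))=\chi_q(L(m))$ for $m\in\cM_\cQ$ (Corollary~\ref{Cor:SW}), after placing both variables in a common $\cM_\cQ$ via Lemma~\ref{Lem:si-so}. You instead push the positivity down to the level of monomial coefficients of the fundamental $(q,t)$-characters (Theorem~\ref{Thm:qtfund}~(1)--(2), which come from \cite{FM01, Hernandez05} and need no Q-datum), so that the "nonnegative and vanishing at $t=1$ implies zero" trick kills all extraneous dominant monomials of the product uniformly in $p,s$. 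This buys a self-contained, case-free argument that does not depend on Sections~\ref{sec:HLO}--\ref{ssec:SW}, at the cost of being special to fundamental modules (the paper's Lemma~\ref{Lem:comm1} applies to arbitrary commuting simples in $\Cc_\cQ$, which is needed elsewhere).

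One small point to tighten in part (2): the parenthetical "only the dominant monomial $1$ contributes to the correction, since $E_t(m)$ has only $m$ and $1$ as dominant monomials" does not by itself rule out other $E_t(m')$ with $m'<m$ appearing in the expansion (S2) of Theorem~\ref{Thm:qtch}. The clean way to finish is the reverse direction, which your computation already supplies: you have shown $F_t(m)$ is $\ol{(\cdot)}$-invariant and $F_t(m)-E_t(m)=-t\,E_t(1)$ with $-t\in t\Z[t]$ and $1<m$, so $F_t(m)=L_t(m)$ by the uniqueness in Theorem~\ref{Thm:qtch}; the functional equation for $Q(t)$ then becomes unnecessary.
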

\begin{proof}
We may assume $p \le s$. 
First we consider the case $p+rh^{\vee} \le s$. 
In this case, Theorem~\ref{Thm:qtfund}~(\ref{qtfund:monomials}) implies that
the element
$L_{t}(Y_{i,p}) L_{t}(Y_{j,s}) - \delta((i^{*},p+rh^{\vee}) = (j,s))$
contains no dominant monomials other than $t^{\Nn(i,p; j,s)/2} \ul{Y_{i,p} Y_{j,s}}$.
Hence it is equal to $t^{\Nn(i,p;j,s)/2}L_{t}(Y_{i,p} Y_{j,s})$ by Theorem~\ref{Thm:qtch}.
This shows (\ref{tBoson:Ftcomm}) for $p+rh^{\vee} \le s$ and (\ref{tBoson:Ftdual}) as well
(note that $\Nn(i,p;i^* \pm rh^\vee) = \mp 2$ by Lemma~\ref{Lem:tc}).
For the other case $p\le s < p+rh^{\vee}$,
we choose a suitable Q-datum $\cQ$ for $\fg$ so that both
$Y_{i,p}$ and $Y_{j,s}$ belong to $\cM_{\cQ}$ (recall Lemma~\ref{Lem:si-so}). 
Under the assumption $\fd(L(Y_{i,p}), L(Y_{j,s})) = 0$,
we obtain the relation (\ref{tBoson:Ftcomm}) by applying Lemma~\ref{Lem:comm1}. 
\end{proof}

\begin{Lem} \label{Lem:comm2}
Let $\cQ = (\Delta, \sigma, \xi)$ be a Q-datum for $\fg$. 
Let $(\im, p), (\jm, s) \in \hDs_{0}$ be two elements satisfying the condition $\xi_{\im} \ge p \ge s > \xi_{\jm}$.  
Then we have the following equalities$\colon$ 
\begin{enumerate}
\item \label{comm2:fd} $\fd(L(Y_{i, p}), L(Y_{j, s})) = 0,$
\item \label{comm2:Nn} $(\wt_{\cQ}(Y_{i,p}),  \wt_{\cQ}(Y_{j,s})) = -\Nn(i, p ; j, s) = 0,$
\item \label{comm2:Ft} $F_{t}(Y_{i,p}) F_{t}(Y_{j,s}) = F_{t}(Y_{j,s})F_{t}(Y_{i,p}),$
\end{enumerate}
where $i \seq \bar{\im}, j \seq \bar{\jm}$.
\end{Lem}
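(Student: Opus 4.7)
The strategy is to reduce (\ref{comm2:Ft}) to (\ref{comm2:fd}) and (\ref{comm2:Nn}) via Lemma~\ref{Lem:tBoson}~(\ref{tBoson:Ftcomm}) (noting $F_t(Y_{i,p}) = L_t(Y_{i,p})$ for fundamentals), and then to prove (\ref{comm2:fd}) and (\ref{comm2:Nn}) together through the combinatorics of $\cQ$-weights. The starting point is that the hypothesis places both $(\im, p)$ and $(\jm^*, s - rh^\vee)$ in the twisted AR slice $(\Gamma_\cQ)_0$: the former since $p \le \xi_\im$, and the latter since $\xi_\jm - rh^\vee < s - rh^\vee \le \xi_{\jm^*}$, where the upper bound follows from $s \le p \le \xi_\im \le \xi_{\jm^*} + rh^\vee$ (the final estimate using $\Xi_{\bar{\jm^*}\bar{\im}} \le rh^\vee$). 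Corollary~\ref{Cor:fAR}~(\ref{fAR:shift}) then yields $\wt_\cQ Y_{i,p} = \alpha$ and $\wt_\cQ Y_{j,s} = -\beta$ for some positive roots $\alpha, \beta \in \sR^+$, with the explicit descriptions $\alpha = \tau_\cQ^{ad_i}\gamma_\im^\cQ$ and $-\beta = \tau_\cQ^{-bd_j}\gamma_\jm^\cQ$ (writing $p = \xi_\im - 2ad_i$, $s = \xi_\jm + 2bd_j$, with $a \ge 0$, $b \ge 1$).

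Next, I would reduce both (\ref{comm2:fd}) and (\ref{comm2:Nn}) to the single orthogonality $(\alpha, \beta) = 0$. The case $p = s$ is trivial: $\Nn = 0$ is immediate from (\ref{eq:defNn}) and $L(Y_{i,p})$ is real, so $\fd = 0$. For $p > s$, Proposition~\ref{Prop:Nnwt} applied to $((j,s),(i,p))$ together with skew-symmetry (\ref{eq:Nnskew}) gives $\Nn(i,p;j,s) = (\alpha, \beta)$, while the vanishing $\tc_{ij}(u) = 0$ for $u < d_i$ (Lemma~\ref{Lem:tc}~(\ref{tc:d})) simplifies (\ref{eq:defNn}) to $\Nn(i,p;j,s) = \tc_{ij}(p - s - d_i) - \tc_{ij}(p - s + d_i)$, and Lemma~\ref{Lem:fd} bounds $\fd(L(Y_{i,p}), L(Y_{j,s})) \le \tc_{ij}(p - s - d_i)$. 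Substituting the parametrisation of $p, s$ into Theorem~\ref{Thm:tc} with $u = p - s \mp d_i$ (and noting that $\tc_{ij}(-u) = 0$ since $-u < d_i$) gives $\tc_{ij}(p - s - d_i) = (\varpi_\im, \tau_\cQ^{-(a+1)d_i - bd_j}\gamma_\jm^\cQ)$ and $\tc_{ij}(p - s + d_i) = (\varpi_\im, \tau_\cQ^{-ad_i - bd_j}\gamma_\jm^\cQ)$. Their difference equals $(\varpi_\im, \tau_\cQ^{-ad_i - bd_j}(\tau_\cQ^{-d_i} - 1)\gamma_\jm^\cQ)$, which by $\tau_\cQ$-invariance of the form and the descriptions of $\alpha$ and $-\beta$ is precisely $(\alpha, \beta)$. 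Combined with the non-negativity in Lemma~\ref{Lem:tc}~(\ref{tc:po}), this shows that the joint vanishing $\tc_{ij}(p - s \pm d_i) = 0$ is equivalent to $(\alpha, \beta) = 0$.

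The main obstacle is therefore the orthogonality $(\alpha, \beta) = 0$ itself. I expect this to follow from the geometric observation that the positions of $(\im, p)$ and $(\jm^*, s - rh^\vee)$ in $(\Gamma_\cQ)_0$ are separated by $p - (s - rh^\vee) = p - s + rh^\vee \ge rh^\vee$, i.e., by at least a full Coxeter period of the twisted AR quiver. For simply-laced $\fg$ this admits a clean categorical interpretation: the corresponding indecomposable representations of the Dynkin quiver underlying $\cQ$ are $\Ext$-orthogonal, hence their classes are orthogonal in the symmetric Euler form. For general $\fg$, I plan to prove it by a direct computation combining Lemma~\ref{Lem:tcxi} (applied to $\xi$ or an auxiliary height function chosen to fit the given separation of positions) with the duality symmetries $\tc_{ij}(rh^\vee - u) = \tc_{ij^*}(u)$ and $\tc_{ij}(u + rh^\vee) = -\tc_{ij^*}(u)$ from Lemma~\ref{Lem:tc}~(\ref{tc:sym})--(\ref{tc:pe}), using the bound $p - s - d_i \le (\xi_\im - \xi_\jm) - 2d_j - d_i$ derived from the hypothesis to force both arguments $p - s \pm d_i$ into the vanishing locus of $\tc_{ij}$.
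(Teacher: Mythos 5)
Your final paragraph lands exactly on the paper's argument, which is very short: write $p=\xi_{\im}-2ad_i$, $s=\xi_{\jm}+2bd_j$ with $a\ge 0$, $b\ge 1$; then $p-s\mp d_j=\xi_{\im}-\xi_{\jm}-d_j-2\bigl(ad_i+(b-\tfrac{1\mp1}{2})d_j\bigr)$ lies in the vanishing progression of Lemma~\ref{Lem:tcxi} (applied with the roles of $(\im,i)$ and $(\jm,j)$ interchanged, since $ad_i+bd_j$ and $ad_i+(b-1)d_j$ are non-negative multiples of $d_{ij}$). Combined with $\fd(L(Y_{j,s}),L(Y_{i,p}))\le \tc_{ji}(p-s-d_j)$ from Lemma~\ref{Lem:fd} and $\Nn(i,p;j,s)=\tc_{ji}(p-s-d_j)-\tc_{ji}(p-s+d_j)$ from \eqref{eq:Nnhalf} and \eqref{eq:Nnskew}, this gives (\ref{comm2:fd}) and (\ref{comm2:Nn}) in one stroke, and (\ref{comm2:Ft}) follows via Lemma~\ref{Lem:tBoson}~(\ref{tBoson:Ftcomm}) as you say. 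Note that it is cleaner to work with $\tc_{ji}(p-s\pm d_j)$ than with $\tc_{ij}(p-s\pm d_i)$: the former sits directly in the arithmetic progression of Lemma~\ref{Lem:tcxi}, whereas the latter needs an extra pass through Lemma~\ref{Lem:tc}~(\ref{tc:tp}) when $d_i\ne d_j$. Also beware that Lemma~\ref{Lem:tcxi} gives vanishing only on a congruence class, not on a half-line, so the upper bound $p-s\le\xi_{\im}-\xi_{\jm}-2d_j$ must be supplemented by the parity bookkeeping above.

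The middle of your proposal contains a genuine logical misstep. From $\tc_{ij}(p-s-d_i)-\tc_{ij}(p-s+d_i)=(\alpha,\beta)$ and non-negativity you may conclude that joint vanishing implies $(\alpha,\beta)=0$, but \emph{not} the converse: $(\alpha,\beta)=0$ only forces the two non-negative integers to be equal, not zero. Hence "the main obstacle is the orthogonality $(\alpha,\beta)=0$" misidentifies the target — that orthogonality is exactly statement (\ref{comm2:Nn}) (it is $-\Nn(i,p;j,s)=0$ by Proposition~\ref{Prop:Nnwt}), and it does not by itself yield the single-term vanishing $\tc_{ji}(p-s-d_j)=0$ needed for (\ref{comm2:fd}). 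If you had executed the plan literally (prove $(\alpha,\beta)=0$ by an Euler-form or symmetry argument, then "deduce" the joint vanishing), part (\ref{comm2:fd}) would remain unproven. You are rescued only because your concrete computation proves the joint vanishing directly, after which the orthogonality is a corollary rather than an input. The excursion through $\cQ$-weights, membership of $(\im,p)$ and $(\jm^*,s-rh^\vee)$ in $(\Gamma_{\cQ})_0$, and the explicit roots $\alpha$, $\beta$ is correct but unnecessary for the proof; it is useful only as motivation for why the hypothesis $\xi_{\im}\ge p\ge s>\xi_{\jm}$ should produce commutation.
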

\begin{proof}
Choose $a \in \Z_{\ge 0}$ and $b \in \Z_{>0}$ so that
$p=\xi_{\im} - 2ad_{i}$ and $s=\xi_{\jm} + 2bd_{j}$.
By Lemma~\ref{Lem:fd}, we have
$$
\fd(L(Y_{i, p}), L(Y_{j, s})) = \fd(L(Y_{j,s}), L(Y_{i, p})) 
\le \tc_{ji}(p-s-d_{j}) = \tc_{ji}(\xi_{\im}-\xi_{\jm}-d_{j} -2(ad_{i}+bd_{j})).
$$
The RHS is zero thanks to Lemma~\ref{Lem:tcxi}. This proves (\ref{comm2:fd}).

The first equality in (\ref{comm2:Nn}) is due to Proposition~\ref{Prop:Nnwt}. For the second equality, we have
\begin{align*}
\Nn(i,p;j,s) 
&= \tc_{ji}(p-s-d_{j}) - \tc_{ji}(p-s+d_{j}) \\
&= \tc_{ji}(\xi_{\im}-\xi_{\jm}-2ad_{i}-(2b+1)d_{j}) - \tc_{ji}(\xi_{\im}-\xi_{\jm}-2ad_{i}-(2b-1)d_{j})
\end{align*}
by (\ref{eq:Nnhalf}).
The RHS is zero again by Lemma~\ref{Lem:tcxi}.

The last equality (\ref{comm2:Ft}) is a consequence of 
(\ref{comm2:fd}), (\ref{comm2:Nn}) and Lemma~\ref{Lem:tBoson}~(\ref{tBoson:Ftcomm}).
\end{proof}

\begin{Lem}[see also~\cite{KKOP20c}] \label{Lem:comm3}
Let $\cQ$ be a Q-datum for $\fg$. 
\begin{enumerate} 
\item \label{comm3:k=1} 
For any simple roots $\alpha, \beta \in \sR^{+}$ with $\alpha \neq \beta$,
we have $$\fd\left(L^{\cQ}(\alpha), \cD L^{\cQ}(\beta)\right) = 0.$$
\item \label{comm3:k>1} 
For any positive roots $\alpha, \beta \in \sR^{+}$ and $k \in \Z_{\ge 2}$, we have
$$\fd\left(L^{\cQ}(\alpha), \cD^{k} L^{\cQ}(\beta)\right) = 0.$$
\end{enumerate}
\end{Lem}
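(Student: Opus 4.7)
My plan is to prove part (2) first and then deduce part (1) from Proposition~\ref{Prop:wtfd} combined with Theorem~\ref{Thm:SW}.

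By Lemma~\ref{Lem:CQ}~(\ref{CQ:fund}) and Corollary~\ref{Cor:fAR}~(\ref{fAR:shift}), I can write $L^\cQ(\alpha) = L(Y_{i_1, p_1})$ with $(i_1, p_1) = \bphi_\cQ^{-1}(\alpha, 0) \in \hI_\cQ$ and, iterating Theorem~\ref{Thm:fD}, $\cD^k L^\cQ(\beta) = L(Y_{j^{(k)}, s + krh^\vee})$ where $(j, s) = \bphi_\cQ^{-1}(\beta, 0) \in \hI_\cQ$ and $j^{(k)} = j$ or $j^*$ according to the parity of $k$. Applying Proposition~\ref{Prop:dfund} directly, one computes
\[ \fd\bigl(L(Y_{i_1, p_1}), L(Y_{j^{(k)}, s + krh^\vee})\bigr) \;\le\; \widetilde{c}_{i_1 j^{(k)}}(a_k)\cdot \mathbf{1}_{[0,rh^\vee]}(a_k) \;+\; \widetilde{c}_{j^{(k)} i_1}(b_k) \cdot \mathbf{1}_{[0,rh^\vee]}(b_k), \]
where $a_k = p_1 - s - krh^\vee - d_{i_1}$ and $b_k = s + krh^\vee - p_1 - d_{j}$, with equality in the generic case. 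Both terms are non-negative thanks to Lemma~\ref{Lem:tc}~(\ref{tc:po}).

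For part (2) with $k \ge 2$: the first argument $a_k$ is very negative (since $(\im_1, p_1), (\jm, s) \in (\Gamma_\cQ)_0$ forces $p_1 - s$ to be bounded by a quantity at most comparable with $rh^\vee$, whereas $-krh^\vee \le -2rh^\vee$ dominates), so $a_k < 0$ and Lemma~\ref{Lem:tc}~(\ref{tc:d}) kills the first term. For the second term I split into cases: if $b_k > rh^\vee$ the indicator kills it; otherwise $b_k \in [0, rh^\vee]$, which can occur only in the extremal boundary case $k=2$ with $p_1 - s \ge rh^\vee + d_j$. Here I apply Theorem~\ref{Thm:tc} to rewrite $\widetilde{c}_{j, i_1}(b_k) = (\varpi_\jm, \tau_\cQ^N \gamma_{\im_1}^\cQ)$ for an explicit $N = rh^\vee - d_j + (s - p_1 + \xi_{\im_1} - \xi_\jm)/2$, and then show that the corresponding $\tau_\cQ$-image is a root orthogonal to $\varpi_\jm$. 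The latter follows from the fact that the shifted vertex $(j^{(k)}, s + krh^\vee)$ lies outside the period $\hI_\cQ$ by exactly the right amount, so that via the bijection $\bphi_\cQ$ the relevant pairing is forced to vanish by the interplay between $\gamma^\cQ_{\im} = (1 - \tau_\cQ^{d_{\bar\im}}) \varpi_\im$ and $\tau_\cQ^{rh^\vee} = 1$ (Proposition~\ref{Prop:Coxord}). For $|k| \ge 3$ both terms vanish by routine range estimates. The case $k \le -2$ is handled symmetrically using $\fd(V,W) = \fd(W,V)$.

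For part (1): Fix simple roots $\alpha = \alpha_\im \ne \alpha_\jm = \beta$ and set $f_m \coloneqq \fd(L^\cQ(\alpha), \cD^m L^\cQ(\beta))$. Proposition~\ref{Prop:wtfd} applied to $m_1 = Y_{i_1, p_1}$ and $m_2 = Y_{j, s}$ gives
\[ (\alpha, \beta) = \sum_{l \in \Z} (-1)^l f_{l+1}, \quad\text{equivalently}\quad \sum_{m \in \Z} (-1)^m f_m = -(\alpha_\im, \alpha_\jm) = \delta(\im \sim \jm). \]
Part (2) already established implies $f_m = 0$ for all $m$ with $|m| \ge 2$; note $f_m = f_{-m}$ since $\cD$ preserves $\fd$. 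The $m = 0$ term equals $\delta(\im \sim \jm)$ by Theorem~\ref{Thm:SW}. The identity thus reduces to $\delta(\im \sim \jm) - f_1 - f_{-1} = \delta(\im \sim \jm)$, so $f_1 + f_{-1} = 0$; since $f_{\pm 1} \ge 0$ we conclude $f_1 = f_{-1} = 0$, which is part (1).

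The main obstacle will be the boundary analysis in Step for $k = 2$: showing vanishing of $\widetilde{c}_{j, i_1}(b_k)$ when $b_k$ genuinely lies in $[0, rh^\vee]$. This requires a careful translation, via the $\cQ$-coordinate $\bphi_\cQ$ and Theorem~\ref{Thm:tc}, between the arithmetic of spectral parameters and the action of $\tau_\cQ$ on $\sR^+$. The remaining steps are comparatively routine range estimates for $\tc_{ij}(u)$ based on Lemma~\ref{Lem:tc}.
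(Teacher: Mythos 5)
Your deduction of part (1) from part (2) via Proposition~\ref{Prop:wtfd} and Theorem~\ref{Thm:SW} is essentially the argument the paper itself uses (the paper runs the same alternating-sum identity in the sub-case $s-rh^{\vee}<p<s$ and disposes of the remaining sub-cases by Lemmas~\ref{Lem:comm2} and~\ref{Lem:fd}). The only quibble there is the claim ``$f_m=f_{-m}$ since $\cD$ preserves $\fd$'': this is neither justified nor needed --- what you actually require is $f_{-m}=0$ for $m\ge 2$, which follows from part (2) applied to the pair $(\beta,\alpha)$ combined with $\fd(V,W)=\fd(W,V)$ and the $\cD$-invariance of $\fd$.

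The genuine gap is exactly where you flag it: the case $k=2$ of part (2) when the spectral gap does not exceed $rh^{\vee}$. The vanishing $\tc_{j i_1}(b_2)=0$ you need there is a nontrivial arithmetic fact, and your sketch (``the relevant pairing is forced to vanish by the interplay between $\gamma^{\cQ}_{\im}$ and $\tau_{\cQ}^{rh^{\vee}}=1$'') restates the goal rather than proving it: $\tau_{\cQ}^{N}\gamma^{\cQ}_{\im_1}$ is just some positive root, and nothing about periodicity alone forces its $\alpha_{\jm}$-coefficient to be zero. The paper never computes this $\tc$-value directly for $\cD^{2}$. Instead, writing $(j,s)=\bphi_{\cQ}^{-1}(\beta,2)$, it applies Proposition~\ref{Prop:wtfd}: all terms $\fd(L^{\cQ}(\alpha),\cD^{l+1}L^{\cQ}(\beta))$ with $l\notin\{0,1\}$ vanish by Lemma~\ref{Lem:fd}; the $l=0$ term $\fd(L^{\cQ}(\alpha),\cD L^{\cQ}(\beta))=\fd(L(Y_{i,p}),L(Y_{j^{*},s-rh^{\vee}}))$ vanishes by Lemma~\ref{Lem:comm2}~(\ref{comm2:fd}), since $\bphi_{\cQ}^{-1}(\beta,1)$ lies strictly above the height function while $(i,p)$ lies at or below it and $p\ge s-rh^{\vee}$ in the remaining case; and $(\alpha,\beta)=-\Nn(i,p;j^{*},s-rh^{\vee})=0$ by Lemma~\ref{Lem:wtD} and Lemma~\ref{Lem:comm2}~(\ref{comm2:Nn}). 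The alternating sum then forces $\fd(L^{\cQ}(\alpha),\cD^{2}L^{\cQ}(\beta))=0$. The hard combinatorial content --- vanishing of $\tc_{ij}$ at the relevant arguments --- is packaged once and for all in Lemma~\ref{Lem:tcxi}, proved by a positivity/sign comparison, not by periodicity. If you insist on the direct route, you must reduce your $\tc_{j i_1}(b_2)$ (after fixing the index/valuation conventions of Proposition~\ref{Prop:dfund}, which attaches the larger of $d_{i},d_{j}$ to the exponent shift) to the form covered by Lemma~\ref{Lem:tcxi} via Lemma~\ref{Lem:tc}~(\ref{tc:pe}),(\ref{tc:sym}); as written, the central vanishing is asserted, not established.
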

\begin{proof} 
Let $\alpha, \beta \in \sR^{+}$ be distinct simple roots
and put $(\im, p) \seq \phi_{\cQ}^{-1}(\alpha, 0)$ and 
$(\jm, s) \seq \phi_{\cQ}^{-1}(\beta, 1)$. 
Then we have 
$L^{\cQ}(\alpha) = L(Y_{\bar{\im}, p})$ and 
$\cD L^{\cQ}(\beta) = L(Y_{\bar{\jm}, s})$ by Lemma~\ref{Lem:CQ}~(\ref{CQ:fund}). 
Note that we have $\xi_{\im} \ge p$ and $s > \xi_{\jm}$ by construction.
If $p \ge s$, Lemma~\ref{Lem:comm2}~(\ref{comm2:fd}) proves the assertion (\ref{comm3:k=1}).
If $p \le s-rh^{\vee}$, we have $|p-s| > rh^{\vee}$ and hence Lemma~\ref{Lem:fd} proves the assertion. 
Thus we only have to consider the case $s-rh^{\vee} < p < s$.
Recall that we have $\cD^{k} L(Y_{\bar{\jm}, s}) \cong L(Y_{(\bar{\jm})^{k*}, s+krh^{\vee}})$ 
for each $k \in \Z$ from Theorem~\ref{Thm:fD}.
Under the assumption $s-rh^{\vee} <p<s$, we have 
$\fd(L^{\cQ}(\alpha), \cD^{k+1} L^{\cQ}(\beta)) = 0$ unless $k \in \{-1, 0 \}$ by Lemma~\ref{Lem:fd}.
Combining this fact with Proposition~\ref{Prop:wtfd}, we find
$$
(\alpha, \beta) = \sum_{k \in \Z} (-1)^{k}\ \fd\left(L^{\cQ}(\alpha), \cD^{k+1} L^{\cQ}(\beta)\right) 
= -\fd\left(L^{\cQ}(\alpha), L^{\cQ}(\beta)\right) + \fd\left(L^{\cQ}(\alpha), \cD L^{\cQ}(\beta)\right).
$$
On the other hand, the equality (\ref{eq:SW}) tells that
$\fd(L^{\cQ}(\alpha), L^{\cQ}(\beta)) = -(\alpha, \beta)$ for two distinct simple roots $\alpha, \beta$. 
Therefore we have $\fd(L^{\cQ}(\alpha), \cD L^{\cQ}(\beta))=0$, which proves (\ref{comm3:k=1}).

Next we shall prove (\ref{comm3:k>1}).
Let $\alpha, \beta \in \sR^{+}$ be two positive roots and $k \in \Z_{\ge 2}$. 
Putting $(i,p) \seq \bphi_{\cQ}^{-1}(\alpha, 0), (j,s) \seq \bphi_{\cQ}^{-1}(\beta, k)$, 
we have
$L^{\cQ}(\alpha) = L(Y_{i, p})$ and 
$\cD^{k} L^{\cQ}(\beta) = L(Y_{j, s})$ by Lemma~\ref{Lem:CQ}~(\ref{CQ:fund}).  
Note that we always have $p<s$.
If $s-p>rh^{\vee}$, we have $\fd(L^{\cQ}(\alpha), \cD^{k}L^{\cQ}(\beta))=0$ by Lemma~\ref{Lem:fd} as desired.
In particular, it covers the case when $k>2$.
Thus we only have to consider the case $k=2$ with $s-rh^{\vee} \le p <s$.
In this case, we have
\begin{itemize}
\item $\fd(L^{\cQ}(\alpha), \cD^{l+1}L^{\cQ}(\beta))=0$ unless $l \in \{0, 1\}$ by Lemma~\ref{Lem:fd},
\item $\fd(L^{\cQ}(\alpha), \cD L^{\cQ}(\beta)) = \fd(L(Y_{i,p}), L(Y_{j^{*}, s-rh^{\vee}})) = 0$ by Lemma~\ref{Lem:comm2}~(\ref{comm2:fd}),
\item $(\alpha, \beta) = -\Nn(i,p; j^{*}, s-rh^{\vee})=0$ by Lemma~\ref{Lem:wtD} and Lemma~\ref{Lem:comm2}~(\ref{comm2:Nn}).
\end{itemize}
Combining these facts with Proposition~\ref{Prop:wtfd}, we have  $\fd(L^{\cQ}(\alpha), \cD^{2}L^{\cQ}(\beta)) = 0$.
This completes the proof of~(\ref{comm3:k>1}). 
\end{proof}

\section{Isomorphisms among quantum Grothendieck rings}
\label{sec:isom}

In this section, we prove the main theorem of this paper. 
In Section~\ref{ssec:facE}, we observe  that, for each Q-datum $\cQ$, every standard $(q,t)$-character $E_t(m)$ can be factorized into a product 
of smaller standard $(q,t)$-characters $E_t(m_k)$ belonging to $\cK_t(\cD^{k}\Cc_{\cQ})$ for $k \in \Z$.
In Section~\ref{ssec:pres}, we give a presentation of the localized quantum Grothendieck ring $\cK_{t} \otimes_{\Z[t^{\pm 1/2}]} \Q(t^{1/2})$. 
This is a generalization of the presentation obtained in \cite[Section 7]{HL15} for simply-laced cases. 
Based on this presentation, we construct in Section~\ref{ssec:Psi} a collection of isomorphisms $\Psi$ among the quantum Grothendieck rings
labeled by Q-data and prove that they respect the bases formed by the simple $(q,t)$-characters.
For a suitable choice of Q-data, the isomorphism $\Psi$ relates the quantum Grothendieck ring of simply-laced $\sg$ 
with that of non-simply-laced $\fg$, which propagates the known positivity for $\sg$ to the desired positivity for $\fg$.
Therefore we establish the positivities (\ref{p:KL}) and (\ref{p:sc}) for non-simply-laced cases. 
This gives an affirmative answer to a long-standing expectation in \cite{Hthese}.

\subsection{A factorization of standard $(q,t)$-character}
\label{ssec:facE}

Let $\cQ = (\Delta, \sigma, \xi)$ be a Q-datum for $\fg$. 
By Corollary~\ref{Cor:fAR}~(\ref{fAR:shift}), 
the commutative monoid $\cM$ is naturally isomorphic to the coproduct of 
the submonoids $\fD^{k}\cM_{\cQ}$ with $k$ running over $\Z$.
 In other words, each dominant monomial $m \in \cM$ 
has a unique factorization 
\begin{equation} \label{eq:Qfac}
m=\prod_{k \in \Z} \fD^{k}(m_{k})
\end{equation}
such that $m_{k} \in \cM_{\cQ}$ for all $k \in \Z$.
Here we have $m_{k} = 1$ for all but finitely many $k$.
Thanks to Lemma~\ref{Lem:comm2}~(\ref{comm2:fd}),
$L(Y_{i,p})$ and $L(Y_{j,s})$ commute 
when $Y_{i,p} \in \fD^{k}\cM_{\cQ}$ and $Y_{j,s} \in \fD^{l} \cM_{\cQ}$ with
$k<l$ and $p>s$.
Therefore, we have
\begin{equation} \label{eq:facM}
M(m) \cong \bigotimes^{\to}_{k \in \Z} \cD^{k}\left(M(m_{k})\right)
\end{equation}
as $U_q(L\fg)$-modules. 

Here we have
an analogous factorization result for its $(q,t)$-character $E_{t}(m)$.

\begin{Prop} \label{Prop:facE}
For each dominant weight $m \in \cM$, there holds 
\begin{equation} \label{eq:facE}
 E_{t}(m) = t^{\nu(m, \cQ)} \prod_{k \in \Z}^{\to} \fD_t^{k} \left(E_{t}(m_{k})\right)
\end{equation}
in the quantum Grothendieck ring $\cK_{t}$, where 
$$
\nu(m, \cQ) \seq  -\frac{1}{2} \sum_{k<l} (-1)^{k+l}(\wt_{\cQ}(m_{k}), \wt_{\cQ}(m_{l})).
$$
\end{Prop}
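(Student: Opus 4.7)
The plan is to unwind the definition of $E_t(m)$, recognise both sides of~\eqref{eq:facE} as rearrangements of the same ordered product of fundamental $(q,t)$-characters $F_t(Y_{i,p}) = L_t(Y_{i,p})$, and then control the $t$-discrepancy between the two orderings. First I would use the unique factorization $m = \prod_k \fD^k(m_k)$ to observe that both sides involve the same multiset of factors $\{F_t(Y_{i,p})^{u_{i,p}(m)}\}$: on the left they are arranged globally by $p$, whereas on the right they are first grouped by the level $k$ (where $\bphi_\cQ(i,p)=(\alpha,k)$ identifies the unique level to which $(i,p)$ belongs) and then ordered by $p$ within each level. Transforming one ordering into the other is therefore a sequence of transpositions of factors at different levels, and the normalization prefactors in $E_t$ on each side account for the discrepancy between the corresponding commutative monomials.

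Next I would verify that every such transposition is a $t$-commutation. For two factors at levels $k,l$ with $|k-l|\ge 2$, Lemma~\ref{Lem:comm3}~(\ref{comm3:k>1}) yields $\fd=0$. For $|k-l|=1$, Lemma~\ref{Lem:comm3}~(\ref{comm3:k=1}) gives $\fd=0$ outside the exceptional case in which both factors correspond to the same positive root; but in that case the two factors sit at $p$-coordinates $p_0$ and $p_0 + rh^\vee$, so their relative order in the $p$-ordering and in the level-ordering coincide, and no transposition between them is required. Invoking Lemma~\ref{Lem:tBoson}~(\ref{tBoson:Ftcomm}), each required transposition of $F_t(Y_{j,s})$ past $F_t(Y_{i,p})$ (with $s<p$) contributes a factor of $t^{\Nn(j,s;i,p)}$, so the total discrepancy between the two orderings is an explicit integer power of $t$.

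Finally I would match the accumulated $t$-exponent with $\nu(m,\cQ)$. Using $\wt_\cQ(Y_{i,p}) = \pi(\bphi_\cQ(i,p)) = (-1)^k\alpha$ for a factor at level $k$ and Proposition~\ref{Prop:Nnwt}, each transposition between a level-$l$ factor and a level-$k$ factor (with $k<l$) contributes an exponent $(-1)^{k+l}(\alpha,\beta)$. Grouping these contributions by pairs of levels $(k,l)$ and combining them with the discrepancy coming from the $\ul{\cdot}$-normalizations in $E_t(\fD^k(m_k))$ versus $E_t(m)$, the total exponent should reduce to $-\tfrac{1}{2}\sum_{k<l}(-1)^{k+l}(\wt_\cQ(m_k),\wt_\cQ(m_l)) = \nu(m,\cQ)$. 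The principal technical obstacle lies precisely in this last step: one must carefully combine the contributions from the inversions with the prefactors coming from the normalizations so that the asymmetric sum over inversions collapses to the symmetric bilinear expression appearing in $\nu(m,\cQ)$. A clean way to carry this out is to induct on the number of non-trivial components $m_k$, reducing to the two-level case where one can directly apply Proposition~\ref{Prop:Nnwt} to the full monomials $\fD^k(m_k)$ and $\fD^l(m_l)$ rather than to their individual fundamental factors, and then bootstrap to the general case via the associativity of the ordered product and Lemma~\ref{Lem:wtD}.
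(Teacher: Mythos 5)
Your overall strategy---recognizing both sides of \eqref{eq:facE} as rearrangements of the same ordered product of fundamental $(q,t)$-characters and tracking the $t$-discrepancy through the reordering and the commutative-monomial normalizations---is the right one, and is essentially the paper's. But the commutation step has a genuine gap. The pairs that actually need to be transposed are those with $Y_{i,p}\in\fD^{k}\cM_{\cQ}$, $Y_{j,s}\in\fD^{l}\cM_{\cQ}$, $k<l$ and $p>s$; these are pairs of \emph{arbitrary} positive roots sitting at adjacent levels. Lemma~\ref{Lem:comm3}~(\ref{comm3:k=1}) only asserts $\fd\left(L^{\cQ}(\alpha),\cD L^{\cQ}(\beta)\right)=0$ for two distinct \emph{simple} roots, not for all distinct positive roots as you read it, and for a general adjacent-level pair $\fd$ is typically nonzero (already $\fd(L(Y_{i,p}),L(Y_{i,p+2d_i}))=1$ in the $\mathfrak{sl}_2$ case). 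So your cited lemmas do not even establish that the inverted pairs $t$-commute, and Lemma~\ref{Lem:tBoson}~(\ref{tBoson:Ftcomm}) cannot be invoked since its hypothesis $\fd=0$ is unverified. The correct tool is Lemma~\ref{Lem:comm2}: every inverted pair satisfies $\xi'_{\im}\ge p\ge s>\xi'_{\jm}$ for the height function $\xi'$ of the shifted Q-datum $\fD^{k}\cQ$, and parts (\ref{comm2:fd})--(\ref{comm2:Ft}) of that lemma give $\fd=0$, $\Nn(i,p;j,s)=(\wt_{\cQ}(Y_{i,p}),\wt_{\cQ}(Y_{j,s}))=0$, and \emph{exact} commutation of the $F_t$'s.

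This also repairs your final bookkeeping, which as written does not close. Since $\Nn$ vanishes on every inverted pair, the transpositions of the $F_t$-factors contribute no power of $t$ at all; the entire prefactor $t^{\nu(m,\cQ)}$ comes from comparing $\ul{m}$ with $\prod_{k}\fD_t^{k}(\ul{m_k})$. For that one needs $\tY_{i,p}\tY_{j,s}=t^{(\wt_{\cQ}(Y_{i,p}),\wt_{\cQ}(Y_{j,s}))}\tY_{j,s}\tY_{i,p}$ for \emph{every} cross-level pair: Proposition~\ref{Prop:Nnwt} gives this when $p\le s$, and the inverted case is again Lemma~\ref{Lem:comm2}~(\ref{comm2:Nn}), where both sides vanish. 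Only then does $\Nn\left(\fD^{k}(m_k),\fD^{l}(m_l)\right)=(-1)^{k+l}(\wt_{\cQ}(m_k),\wt_{\cQ}(m_l))$ follow via Lemma~\ref{Lem:wtD}, and the factor $-\tfrac{1}{2}$ in $\nu(m,\cQ)$ drops out of $\ul{m m'}=t^{-\Nn(m,m')/2}\,\ul{m}\cdot\ul{m'}$. Your proposed reduction of ``applying Proposition~\ref{Prop:Nnwt} to the full monomials $\fD^{k}(m_k)$ and $\fD^{l}(m_l)$'' silently assumes exactly this extension to $p>s$, which is the missing lemma.
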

\begin{proof}
Thanks to Lemma~\ref{Lem:comm2}~(\ref{comm2:Ft}),
we know that $F_{t}(Y_{i,p})$ and $F_{t}(Y_{j,s})$ commute 
when $Y_{i,p} \in \fD^{k}\cM_{\cQ}$ and $Y_{j,s} \in \fD^{l} \cM_{\cQ}$ with
$k<l$ and $p>s$.
It implies that the desired equality (\ref{eq:facE}) holds
up to some power of $t^{\pm 1/2}$.
On the other hand, we have
\begin{equation} \label{eq:YYwt}
\tY_{i,p} \tY_{j,s} = t^{(\wt_{\cQ}(Y_{i,p}), \wt_{\cQ}(Y_{j,s}))} \tY_{j,s} \tY_{i,p}
\end{equation}
whenever $Y_{i,p} \in \fD^{k}\cM_{\cQ}$ and $Y_{j,s} \in \fD^{l} \cM_{\cQ}$ with $k<l$. 
This is a consequence of Proposition~\ref{Prop:Nnwt} and Lemma~\ref{Lem:comm2}~(\ref{comm2:Nn}).
It implies 
$$
\ul{m} = t^{\nu(m, \cQ)} \prod_{k \in \Z}^{\to} \fD^k_t(\ul{m_k}),
$$
which shows that the equality (\ref{eq:facE}) exactly holds.
\end{proof}

\begin{Rem}
If we choose a Q-datum $\cQ$ as in Lemma~\ref{Lem:si-so}, the above factorization results~(\ref{eq:facM}) and (\ref{eq:facE}) 
are obtained immediately from the definitions and Proposition~\ref{Prop:Nnwt}.  
\end{Rem}

\subsection{A presentation of quantum Grothendieck ring}
\label{ssec:pres}

In this subsection, we establish a presentation of the \emph{localized} quantum Grothendieck ring 
$$\cK_{\Q(t^{1/2})} \seq \cK_{t} \otimes_{\Z[t^{\pm 1/2}]} \Q(t^{1/2}).$$
We naturally regard $\cK_{t}$ as a $\Z[t^{\pm 1/2}]$-subalgebra of $\cK_{\Q(t^{1/2})}$. 
The anti-involution $\ol{(\cdot)}$ and the automorphism $\fD$ on $\cK_{t}$ are
naturally extended to $\cK_{\Q(t^{1/2})}$. 

Hereafter, we fix a Q-datum $\cQ$ for $\fg$. 
We put 
$$
x^{\cQ}_{\im, k} \seq L_{t}(Y_{i,p}) \qquad \text{with $(i,p) = \phi_{\cQ}^{-1}(\alpha_{\im}, k)$}
$$
for each $\im \in \Delta_{0}$ and $k \in \Z$. 
Note that we have 
$$x^{\cQ}_{\im,k} = \fD_{t}^{k} \left(L^{\cQ}_{t}(\alpha_\im)\right) = \fD_t^k(x^\cQ_{\im, 0})$$
by Lemma~\ref{Lem:fD} and Lemma~\ref{Lem:CQ}~(\ref{CQ:fund}). 

The HLO isomorphism $\Phi_{\cQ} \colon \cA_{v}[N_{-}] \to \cK_{t, \cQ}$ in Theorem~\ref{Thm:HLO}
induces the isomorphism
$$
\cU_{v}(\sn_-) \cong
\cA_{v}[N_{-}]\otimes_{\Z[v^{\pm 1/2}]}\Q(v^{1/2}) \xrightarrow{\sim}
\cK_{t, \cQ} \otimes_{\Z[t^{\pm 1/2}]} \Q(t^{1/2}) 
$$
which sends the dual Chevalley generator $\tF(\alpha_{\im}, \cQ) = (1-v^{2})f_{\im}$
to the element $x^{\cQ}_{\im, 0}$ for each $\im \in \Delta_{0}$.
Thus, we obtain \emph{the quantum Serre relations} satisfied for each $\im, \jm \in \Delta_{0}$ and $k \in \Z$ (recall~(\ref{eq:qSerref})):
\begin{equation} \label{eq:Serre}
\begin{cases}
x^{\cQ}_{\im, k} x^{\cQ}_{\jm, k} - x^{\cQ}_{\jm,k} x^{\cQ}_{\im, k} = 0 & \text{if $\im \not \sim \jm$}, \\
(x^{\cQ}_{\im, k})^{2}x^{\cQ}_{\jm,k} - (t+t^{-1}) x^{\cQ}_{\im,k} x^{\cQ}_{\jm,k} x^{\cQ}_{\im, k} + x^{\cQ}_{\jm,k} (x^{\cQ}_{\im,k})^{2}
= 0 & \text{if $\im \sim \jm$}.
\end{cases}
\end{equation}
In addition, we can prove \emph{the quantum Boson relations} hold
\begin{align}
\label{eq:tBoson1}
x^{\cQ}_{\im,k} x^{\cQ}_{\jm, k+1} 
&= t^{-(\alpha_{\im}, \alpha_{\jm})} x^{\cQ}_{\jm, k+1} x^{\cQ}_{\im,k} + (1-t^{-2})\delta_{\im, \jm}, \\
\label{eq:tBoson2}
x^{\cQ}_{\im, k} x^{\cQ}_{\jm, l} &= t^{(-1)^{k+l}(\alpha_{\im}, \alpha_{\jm})} x^{\cQ}_{\jm, l} x^{\cQ}_{\im, k}
\end{align}
for every $\im, \jm \in \Delta_{0}$ and $k, l \in \Z$ with $l > k+1$.
They are deduced from Lemma~\ref{Lem:tBoson}, Lemma~\ref{Lem:comm3} and the relation (\ref{eq:YYwt}). 
\begin{Def} \label{Def:Ag}
Let $\Delta$ be the Dynkin diagram of the simple Lie algebra $\sg$ of type $\mathrm{ADE}$ as before.
We consider the $\Q(t^{1/2})$-algebra $\tcA_{\Q(t^{1/2})}(\Delta)$ presented by the set of generators 
$$
\{ y_{\im, k} \mid \im \in \Delta_{0}, k \in \Z \}
$$
satisfying the following relations
\begin{enumerate} 
\renewcommand{\theenumi}{\rm R\arabic{enumi}}
\item \label{R1} for $\im, \jm \in \Delta_{0}$ and $k \in \Z$,
$$
\begin{cases}
y_{\im, k} y_{\jm, k} -y_{\jm, k} y_{\im, k} = 0 & \text{if $\im \not \sim \jm$}, \\
y_{\im,k}^{2} y_{\jm, k} - (t+t^{-1}) y_{\im, k} y_{\jm,k} y_{\im,k} + y_{\jm,k} y_{\im,k}^{2} =0 & \text{if $\im \sim \jm$},
\end{cases}
$$
\item \label{R2} for $\im, \jm \in \Delta_{0}$ and $k \in \Z$,
$$
y_{\im,k} y_{\jm, k+1} = t^{-(\alpha_{\im}. \alpha_{\jm})} y_{\jm, k+1}y_{\im,k} + \delta_{\im,\jm} (1-t^{-2}),
$$
\item \label{R3} for $\im, \jm \in \Delta_{0}$ and $k,l \in \Z$ with $l > k+1$,
$$
y_{\im,k} y_{\jm,l} = t^{(-1)^{k+l}(\alpha_{\im}, \alpha_{\jm})} y_{\jm,l} y_{\im,k}.
$$
\end{enumerate}
\end{Def}

\begin{Thm} \label{Thm:pres}
For each Q-datum $\cQ = (\Delta, \sigma, \xi)$ for $\fg$, there exists an isomorphism
$$
\Psi_{\cQ} \colon \tcA_{\Q(t^{1/2})}(\Delta) \to \cK_{\Q(t^{1/2})} 
$$
of $\Q(t^{1/2})$-algebras given by 
$y_{\im,k} \mapsto x^{\cQ}_{\im, k}$
for $\im \in \Delta_{0}$ and $k \in \Z$.
\end{Thm}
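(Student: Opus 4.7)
The plan is to verify that the assignment $y_{\im, k} \mapsto x^{\cQ}_{\im, k}$ respects the three families of relations (R1)--(R3), and then to establish surjectivity and injectivity in turn by reducing everything to the HLO isomorphism in each ``layer.''

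First, I would check that the relations (R1), (R2), (R3) are satisfied by the $x^{\cQ}_{\im, k}$'s. The quantum Serre relations (R1) within a single layer $k$ follow from the HLO isomorphism $\Phi_{\cQ} \colon \cA_{v}[N_-] \to \cK_{t, \cQ}$ of Corollary~\ref{Cor:HLO}, applied to the fact that $\cU_{v}(\sn_-)$ is presented by quantum Serre relations, together with the fact that $\fD_{t}^{k}$ intertwines the layer at $0$ with the layer at $k$ (Lemma~\ref{Lem:fD}). For (R2) with $\im = \jm$, the identity is a direct rewriting of Lemma~\ref{Lem:tBoson}~(\ref{tBoson:Ftdual}): the two relations for the signs $\pm$ combine (using that $\ul{m}$ depends only on $\evt(\tm)$) to give precisely the boson relation with constant term $1 - t^{-2}$. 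For (R2) with $\im \neq \jm$ and for (R3), one combines Lemma~\ref{Lem:comm3}~(\ref{comm3:k=1})--(\ref{comm3:k>1}), which ensures vanishing of $\fd$, with Lemma~\ref{Lem:tBoson}~(\ref{tBoson:Ftcomm}) to get $t^{\Nn}$-commutativity; the exponent $\Nn$ is then computed via Proposition~\ref{Prop:Nnwt} together with $\wt_{\cQ}(x^{\cQ}_{\im, k}) = (-1)^{k} \alpha_{\im}$ (which follows from Lemma~\ref{Lem:wtD} and the definition of $\cQ$-weight), yielding the desired exponent $(-1)^{k+l}(\alpha_{\im}, \alpha_{\jm})$.

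Surjectivity of $\Psi_{\cQ}$ is then straightforward: by Theorem~\ref{Thm:qtch} together with the unitriangular property (S2), the set $\{E_{t}(m) \mid m \in \cM\}$ is a $\Z[t^{\pm 1/2}]$-basis of $\cK_{t}$. By Proposition~\ref{Prop:facE}, each $E_{t}(m)$ factors (up to a power of $t^{1/2}$) as $\prod^{\to}_{k \in \Z} \fD_{t}^{k}(E_{t}(m_k))$ with $m_k \in \cM_{\cQ}$. The HLO isomorphism, after localization, identifies $\cK_{t, \cQ} \otimes \Q(t^{1/2})$ with $\cU_{v}(\sn_-)$, which is generated by its dual Chevalley generators, so $\cK_{t, \cQ} \otimes \Q(t^{1/2})$ is generated by $\{x^{\cQ}_{\im, 0}\}_{\im \in \Delta_0}$; applying $\fD_{t}^{k}$, the image of $\fD_{t}^{k}(\cK_{t, \cQ}) \otimes \Q(t^{1/2})$ is generated by $\{x^{\cQ}_{\im, k}\}$. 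Hence every $E_{t}(m)$ lies in the subalgebra of $\cK_{\Q(t^{1/2})}$ generated by the $x^{\cQ}_{\im, k}$, which is the image of $\Psi_{\cQ}$.

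For injectivity, the main obstacle, I would argue as follows. Denote by $\tcA^{(k)} \subset \tcA_{\Q(t^{1/2})}(\Delta)$ the subalgebra generated by $\{y_{\im, k}\}_{\im \in \Delta_0}$. Relation (R1) implies that $\tcA^{(k)}$ is a quotient of $\cU_{v}(\sn_-)$; composing this surjection with $\Psi_{\cQ}$ gives back the HLO isomorphism (shifted by $\fD_{t}^{k}$), so in fact $\tcA^{(k)} \simeq \cU_{v}(\sn_-)$ and $\Psi_{\cQ}|_{\tcA^{(k)}}$ is an isomorphism onto $\fD_{t}^{k}(\cK_{t, \cQ}) \otimes \Q(t^{1/2})$. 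The relations (R2) and (R3) allow any monomial in the $y_{\im, k}$'s to be rewritten, modulo lower-degree terms, as a product $\prod^{\to}_{k} P_{k}$ with $P_{k} \in \tcA^{(k)}$ (ordered by $k$), via an $\mathrm{sl}_{2}$-type boson straightening argument: (R3) gives commutation up to scalar for non-adjacent layers, and (R2) reduces adjacent pairs, introducing lower-order corrections from the boundary term $\delta_{\im, \jm}(1 - t^{-2})$. Hence $\tcA_{\Q(t^{1/2})}(\Delta)$ is spanned by the ordered products $\prod^{\to}_{k} \Psi_{\cQ}^{-1}(\fD_{t}^{k}(E_{t}(m_{k})))$ indexed by families $(m_{k})_{k \in \Z} \in \cM_{\cQ}^{\Z}$ with finite support. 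Under $\Psi_{\cQ}$, Proposition~\ref{Prop:facE} sends this spanning set bijectively (up to invertible $t^{1/2}$-scalars) onto the basis $\{E_{t}(m) \mid m \in \cM\}$ of $\cK_{\Q(t^{1/2})}$. A spanning set that maps to a basis must itself be a basis, so $\Psi_{\cQ}$ is bijective. The hardest step here is justifying the boson straightening/PBW ordering in $\tcA_{\Q(t^{1/2})}(\Delta)$ cleanly, since adjacent layers interact nontrivially through the constant term in (R2); I would handle this by induction on total degree, using the grading by $\cQ$-weight (which is preserved by all three relations since $(1-t^{-2})\delta_{\im, \jm}$ is weight-zero) to keep the counting exact.
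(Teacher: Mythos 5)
Your proposal is correct and follows essentially the same route as the paper: the relations (R1)--(R3) are verified exactly as you describe (HLO isomorphism for the Serre relations, Lemma~\ref{Lem:tBoson}, Lemma~\ref{Lem:comm3} and Proposition~\ref{Prop:Nnwt} for the boson relations), and bijectivity is obtained by straightening monomials via (R2)--(R3) into ordered products of layer elements and matching the resulting spanning set against the basis $\{E_t(m)\}_{m\in\cM}$ through Proposition~\ref{Prop:facE}. The paper phrases the conclusion as a single "spanning set maps bijectively onto a basis" step rather than separating surjectivity and injectivity, but this is only a cosmetic difference; your extra care about the straightening step (which the paper asserts in one sentence) is warranted but not a deviation.
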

\begin{proof}
We have already checked the existence of the $\Q(t^{1/2})$-homomorphism 
$\Psi_{\cQ}$
given by the required assignment.
Indeed, the relation (\ref{eq:Serre}) (resp.~(\ref{eq:tBoson1}), (\ref{eq:tBoson2})) in $\cK_{\Q(t^{1/2})}$
corresponds to the defining relation $(\ref{R1})$ (resp.~(\ref{R2}), (\ref{R3})) in $\tcA_{\Q(t^{1/2})}(\Delta)$.  
It remains to show that $\Psi_{\cQ}$ is bijective. 

As an abuse of notation, we denote by $\fD_t$ the automorphism of the $\Q(t^{1/2})$-algebra $\tcA_{\Q(t^{1/2})}(\Delta)$
given by $\fD_t(y_{\im,k}) \seq y_{\im, k+1}$ for $\im \in \Delta_{0}, k \in \Z$.
From the definition, we have $\fD_t \circ \Psi_{\cQ} = \Psi_{\cQ} \circ \fD_t$.

Note that the $\Q(t^{1/2})$-subalgebra $\tcA_{\Q(t^{1/2})}^{(0)}$ of $\tcA_{\Q(t^{1/2})}(\Delta)$  
generated by $\{ y_{\im,0}\}_{\im \in \Delta_{0}}$ is naturally isomorphic to $\cA_{v}[N_{-}]\otimes_{\Z[v^{\pm 1/2}]}\Q(v^{1/2})$,
and the homomorphism $\Psi_{\cQ}$ restricts to (the localized version of) the HLO isomorphism $\Phi_{\cQ}$.
Therefore, for each $m \in \cM_{\cQ}$, there is a unique element $P_{t}(m) \in \tcA_{\Q(t^{1/2})}^{(0)}$ 
such that $\Psi_{\cQ}(P_{t}(m)) = E_{t}(m)$. 

Now let $m \in \cM$ be an arbitrary dominant monomial. Using the notation in (\ref{eq:Qfac}), 
we define an element $P_{t}(m) \in \tcA_{\Q(t^{1/2})}(\Delta)$ by
$$
P_{t}(m) \seq t^{\nu(m,\cQ)} \prod^{\to}_{k \in \Z} \fD_t^{k}\left(P_{t}(m_{k})\right).
$$ 
Under the homomorphism $\Psi_{\cQ}$, the set $\mathcal{P} \seq \{P_{t}(m)\mid m \in \cM\}$ 
bijectively corresponds to the basis $\{ E_{t}(m) \}_{m \in \cM}$ of $\cK_{\Q(t^{1/2})}$
by Proposition~\ref{Prop:facE}.
In particular, the set $\mathcal{P}$ is linearly independent over $\Q(t^{1/2})$. 

On the other hand, using the relations (\ref{R2}) and (\ref{R3}), we can rewrite 
every monomial $M$ in the elements of $\{y_{\im, k} \}_{\im \in \Delta_{0}, k \in \Z}$ 
as a linear combination of the elements of the form $M_{k_1} M_{k_2} \cdots M_{k_s}$ with 
$M_{k_u} \in \fD_t^{k_u} \tcA_{\Q(t^{1/2})}^{(0)}$ and $k_1 < k_2 < \cdots < k_s$.
Therefore, $\mathcal{P}$ forms a spanning set, and hence a basis of the $\Q(t^{1/2})$-vector space $\tcA_{\Q(t^{1/2})}(\Delta)$.
Since $\Psi_{\cQ}$ induces a bijection between the bases $\mathcal{P}$ and $\{ E_{t}(m) \}_{m \in \cM}$,
it is an isomorphism. 
\end{proof}

\subsection{Isomorphisms among quantum Grothendieck rings}
\label{ssec:Psi}

For $s=1,2$, let $\cQ^{(s)} = (\Delta^{(s)}, \sigma^{(s)}, \xi^{(s)})$ be a Q-datum for 
a complex simple Lie algebra $\fg^{(s)}$.
We assume that their underlying Dynkin diagrams are the same, i.e., $\Delta^{(1)} = \Delta^{(2)}$.
In what follows, we use the notation $X^{(s)}$ for a mathematical object $X$ 
to suggest that it is associated with $\fg^{(s)}$.
For example, we apply it to write $\Cc_{\Z}^{(s)}$, $\cK_{t}^{(s)}$, $\cM^{(s)}$ etc.

From the presentation of the quantum Grothendieck ring $\cK_{\Q(t^{1/2})}$ established in Theorem~\ref{Thm:pres},  
we obtain the following.

\begin{Cor}  \label{Cor:Psi}
There exists a unique $\Q(t^{1/2})$-algebra isomorphism
$$
\Psi \equiv \Psi(\cQ^{(2)}, \cQ^{(1)}) \colon \cK_{\Q(t^{1/2})}^{(1)} \to \cK_{\Q(t^{1/2})}^{(2)}
$$
satisfying the following properties$\colon$
\begin{enumerate}
\item The restriction of $\Psi$ to 
$\cK_{t, \cQ^{(1)}} \subset \cK_{\Q(t^{1/2})}^{(1)}$ coincides with
the isomorphism $$\Phi_{\cQ^{(2)}} \circ (\Phi_{\cQ^{(1)}})^{-1} \colon \cK_{t, \cQ^{(1)}} \to \cK_{t, \cQ^{(2)}}.$$ 
\item We have $\Psi \circ \ol{(\cdot)} = \ol{(\cdot)} \circ \Psi$ and $\Psi \circ \fD_t = \fD_t \circ \Psi$. 
\end{enumerate}
In particular, it satisfies
\begin{equation} \label{eq:Psix}
\Psi\left(x^{\cQ^{(1)}}_{\im, k}\right) = x^{\cQ^{(2)}}_{\im, k} \quad \text{for any $\im \in \Delta_{0}$ and $k \in \Z$.}
\end{equation}
\end{Cor}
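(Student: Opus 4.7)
The plan is to define the desired isomorphism as
\[
\Psi \seq \Psi_{\cQ^{(2)}} \circ \left(\Psi_{\cQ^{(1)}}\right)^{-1},
\]
where $\Psi_{\cQ^{(s)}} \colon \tcA_{\Q(t^{1/2})}(\Delta) \to \cK_{\Q(t^{1/2})}^{(s)}$ are the presentation isomorphisms furnished by Theorem~\ref{Thm:pres} (applied to both sides; note that they have the same source because $\Delta^{(1)} = \Delta^{(2)} = \Delta$ by assumption). With this definition one has tautologically
\[
\Psi\left(x_{\im,k}^{\cQ^{(1)}}\right) = \Psi_{\cQ^{(2)}}(y_{\im,k}) = x_{\im,k}^{\cQ^{(2)}},
\]
which is \eqref{eq:Psix}.

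Next I would verify the two structural properties in (2). Both follow once we equip the algebra $\tcA_{\Q(t^{1/2})}(\Delta)$ with its own bar anti-involution $\ol{(\cdot)}$ (sending $t^{1/2} \mapsto t^{-1/2}$ and $y_{\im,k} \mapsto y_{\im,k}$) and its own shift automorphism $\fD_t$ (sending $y_{\im,k} \mapsto y_{\im,k+1}$), and check that $\Psi_{\cQ^{(s)}}$ intertwines these with the corresponding operations on $\cK^{(s)}_{\Q(t^{1/2})}$. The compatibility with $\fD_t$ is immediate from $x^{\cQ^{(s)}}_{\im,k+1} = \fD_t(x^{\cQ^{(s)}}_{\im,k})$. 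The compatibility with $\ol{(\cdot)}$ requires (i) checking that the bar defined on $\tcA_{\Q(t^{1/2})}(\Delta)$ preserves the defining relations (\ref{R1})--(\ref{R3}) (a short calculation on (\ref{R2}) using $(t^2-1)t^{-2} = 1-t^{-2}$) and (ii) noting that the image generators $x^{\cQ^{(s)}}_{\im,k} = L_t(Y_{i,p})$ are bar-invariant by Theorem~\ref{Thm:qtch} (S1).

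For property (1), recall from the proof of Theorem~\ref{Thm:pres} that the restriction of $\Psi_{\cQ^{(s)}}$ to the subalgebra $\tcA^{(0)}_{\Q(t^{1/2})}$ generated by $\{y_{\im,0}\}_{\im \in \Delta_0}$ is (the localization of) the HLO isomorphism $\Phi_{\cQ^{(s)}}$ of Corollary~\ref{Cor:HLO}. Consequently, on $\cK_{t,\cQ^{(1)}} = \Phi_{\cQ^{(1)}}(\cA_v[N_-]) \subset \cK^{(1)}_{\Q(t^{1/2})}$ the map $\Psi$ agrees with $\Phi_{\cQ^{(2)}} \circ \Phi_{\cQ^{(1)}}^{-1}$, as required.

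Finally, for uniqueness: any isomorphism $\Psi'$ satisfying (1) and (2) must coincide with $\Psi$ on each of the subalgebras $\fD_t^k(\cK_{t,\cQ^{(1)}})$ for $k \in \Z$ (by (1) applied after $\fD_t^{-k}$, using (2)); the argument at the end of the proof of Theorem~\ref{Thm:pres} shows that these subalgebras together generate $\cK^{(1)}_{\Q(t^{1/2})}$, whence $\Psi' = \Psi$. The main (mild) technical point is the bar-compatibility check on (\ref{R2}), since the affine inhomogeneous term $(1-t^{-2})$ interacts nontrivially with both the $t \mapsto t^{-1}$ substitution and the reversal of products; everything else reduces to bookkeeping once the presentation of Theorem~\ref{Thm:pres} is in hand.
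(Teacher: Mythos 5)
Your proposal is correct and takes exactly the paper's route: the paper's entire proof is the one line "$\Psi \seq \Psi_{\cQ^{(2)}} \circ (\Psi_{\cQ^{(1)}})^{-1}$, which satisfies the required properties." Your additional verifications (bar-compatibility of the relations \ref{R1}--\ref{R3}, the restriction to $\tcA^{(0)}_{\Q(t^{1/2})}$ recovering the HLO isomorphisms, and the uniqueness via the $\fD_t$-translates of $\cK_{t,\cQ^{(1)}}$ generating everything) are precisely the details the paper leaves implicit, and they all check out.
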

\begin{proof}
We define $\Psi \seq \Psi_{\cQ^{(2)}} \circ (\Psi_{\cQ^{(1)}})^{-1}$, which satisfies the required properties.
\end{proof}

In what follows, we prove that  the above isomorphism $\Psi$ induces a bijection between the bases formed by the simple $(q,t)$-characters. 
First, we define the bijection $\psi_{0} \colon \cM_{\cQ^{(1)}} \to \cM_{\cQ^{(2)}}$ 
between the dominant monomials by the property
\begin{equation} \label{eq:psi0}
\Psi\left(L_{t}^{(1)}(m)\right) = L_{t}^{(2)}(\psi_{0}(m)) \qquad \text{for all $m \in \cM_{\cQ^{(1)}}$.}
\end{equation}
We extend this bijection $\psi_0$ 
to the bijection 
$\psi \colon \cM^{(1)} \to \cM^{(2)}$ by setting
$$
\psi(m) \seq \prod_{k \in \Z} \fD^{k}\psi_{0} (m_{k})
$$
for each $m \in \cM^{(1)}$, where we factorized $m$ as in (\ref{eq:Qfac}) with respect to the Q-datum $\cQ^{(1)}$.
One can easily check from the property~(\ref{eq:Psix}) that $\wt_{\cQ^{(1)}} = \wt_{\cQ^{(2)}} \circ \psi$ holds.

\begin{Thm} \label{Thm:Psi}
The isomorphism $\Psi \equiv \Psi(\cQ^{(2)}, \cQ^{(1)})$ in {\rm Corollary~\ref{Cor:Psi}} induces 
a bijection between the simple $(q,t)$-characters. 
In particular, it gives the isomorphism 
$$\Psi \colon \cK_{t}^{(1)} \to \cK_{t}^{(2)}$$
between the quantum Grothendieck ring of $\Cc_{\Z}^{(1)}$ and that of $\Cc_{\Z}^{(2)}$.
\end{Thm}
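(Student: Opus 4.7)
I plan to prove $\Psi(L_t^{(1)}(m)) = L_t^{(2)}(\psi(m))$ for every $m \in \cM^{(1)}$ by strong induction on $m$ in the Nakajima partial order, using the characterization of $L_t(m)$ supplied by Theorem~\ref{Thm:qtch}. First I would verify that $\Psi$ restricts to a $\Z[t^{\pm 1/2}]$-algebra isomorphism $\cK_t^{(1)} \xrightarrow{\sim} \cK_t^{(2)}$: the fundamental simple $(q,t)$-characters $\{L_t^{(s)}(Y_{i,p})\}_{(i,p) \in \hI^{(s)}}$ coincide with the generating system $\{x^{\cQ^{(s)}}_{\im,k}\}$ and generate $\cK_t^{(s)}$ over $\Z[t^{\pm 1/2}]$ (thanks to Theorem~\ref{Thm:Ft} and the product formula defining $E_t$), and $\Psi$ exchanges these generating systems by~\eqref{eq:Psix}. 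Consequently $\Psi$ preserves the $\sQ$-weight grading, which will force $\nu(m,\cQ^{(1)}) = \nu(\psi(m),\cQ^{(2)})$ below. When $m \in \fD^k\cM_{\cQ^{(1)}}$, the desired identity is immediate from Corollary~\ref{Cor:HLO} combined with the compatibility $\Psi \circ \fD_t = \fD_t \circ \Psi$ and Lemma~\ref{Lem:fD}.

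For the inductive step I would take the factorization $m = \prod_k \fD^k m_k$ with $m_k \in \cM_{\cQ^{(1)}}$ and apply Proposition~\ref{Prop:facE} to express both $E_t^{(1)}(m)$ and $E_t^{(2)}(\psi(m))$ as ordered products over $k$ with matching overall $t$-exponent. The essential new ingredient is a comparison between $\Psi(E_t^{(1)}(m_k))$ and $E_t^{(2)}(\psi_0(m_k))$. Through $\Phi_{\cQ^{(1)}}^{-1}$ and $\Phi_{\cQ^{(2)}}^{-1}$ they correspond to the normalized dual PBW elements $\tF(\bfc_{m_k},\cQ^{(1)})$ and $\tF(\bfc_{\psi_0(m_k)},\cQ^{(2)})$ of $\cA_v[N_-]$ respectively, which by Theorem~\ref{Thm:ndcb} are both unitriangular with respect to the common (i.e., $\cQ$-independent) dual canonical basis $\tbfB$ with matching leading term $\tG(\bfc_{m_k},\cQ^{(1)}) = \tG(\bfc_{\psi_0(m_k)},\cQ^{(2)})$ by the very definition of $\psi_0$. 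Propagating this through the factorization, together with the orbit-wise order preservation of $\psi_0$ transported via the HLO isomorphism, yields the congruence
\[
\Psi(E_t^{(1)}(m)) \equiv E_t^{(2)}(\psi(m)) \pmod{\textstyle\sum_{m'' < \psi(m)} \Z[t^{\pm 1/2}]\, E_t^{(2)}(m'')}.
\]

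Applying $\Psi$ to the unitriangular expansion $L_t^{(1)}(m) = E_t^{(1)}(m) + \sum_{m'<m} Q_{m,m'}(t)\, E_t^{(1)}(m')$ from Theorem~\ref{Thm:qtch}(S2), and substituting the inductive hypothesis together with the above congruence, gives
\[
\Psi(L_t^{(1)}(m)) = E_t^{(2)}(\psi(m)) + \sum_{m'' < \psi(m)} \tilde Q_{m,m''}(t) \, E_t^{(2)}(m''),
\]
with $\Psi(L_t^{(1)}(m))$ bar-invariant (thanks to $\Psi \circ \ol{(\cdot)} = \ol{(\cdot)} \circ \Psi$). The hard part I anticipate is establishing the refined condition $\tilde Q_{m,m''}(t) \in t\Z[t]$ required by the Kazhdan-Lusztig characterization of $L_t^{(2)}(\psi(m))$ in Theorem~\ref{Thm:qtch}(S2). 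I expect this to follow by combining the bar-invariance just established with the unitriangular bar-behavior of the $\{E_t^{(2)}\}$-basis recorded in Remark~\ref{Rem:triE}; the uniqueness clause of Theorem~\ref{Thm:qtch} will then identify $\Psi(L_t^{(1)}(m))$ with $L_t^{(2)}(\psi(m))$ and close the induction.
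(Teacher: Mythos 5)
Your overall strategy --- comparing $\Psi(E_t^{(1)}(m))$ with $E_t^{(2)}(\psi(m))$ via the factorization of Proposition~\ref{Prop:facE} and the dual canonical basis, then invoking the characterization of Theorem~\ref{Thm:qtch} --- is the right one and is essentially the paper's. But the last step, which you yourself flag as the hard part, is a genuine gap as written. Knowing that $\Psi(L_t^{(1)}(m))$ is $\ol{(\cdot)}$-invariant and equals $E_t^{(2)}(\psi(m))+\sum_{m''}\tilde Q_{m,m''}(t)E_t^{(2)}(m'')$ with $\tilde Q_{m,m''}(t)\in\Z[t^{\pm1/2}]$ does \emph{not} imply $\tilde Q_{m,m''}(t)\in t\Z[t]$: for instance $L_t^{(2)}(\psi(m))+(t+t^{-1})L_t^{(2)}(m'')$ is bar-invariant with the same leading term, so bar-invariance together with Remark~\ref{Rem:triE} cannot single out $L_t^{(2)}(\psi(m))$. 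The $t\Z[t]$ condition is exactly what must be \emph{propagated}, not recovered at the end. The fix is already contained in your middle paragraph: your comparison through $\tbfB$ gives the stronger statement $\Psi(E_t^{(1)}(m_k))-E_t^{(2)}(\psi_0(m_k))\in\sum_{m'}t\Z[t]\,E_t^{(2)}(m')$ for each factor $m_k\in\cM_{\cQ^{(1)}}$ (equivalently, compose the two $t\Z[t]$-unitriangular expansions of (S2) inside $\cK_{t,\cQ^{(1)}}$ and $\cK_{t,\cQ^{(2)}}$, using the definition of $\psi_0$). You must then carry this $t\Z[t]$ bound through the ordered product of Proposition~\ref{Prop:facE}; the point that makes this work is that every $E_t^{(2)}(m')$ occurring in the error term for $m_k$ has the same $\cQ^{(2)}$-weight as $E_t^{(2)}(\psi_0(m_k))$ (Proposition~\ref{Prop:Qwt}), so re-assembling the product via Proposition~\ref{Prop:facE} produces exactly the normalizing power $t^{\nu(\cdot,\cQ^{(2)})}$ and no stray half-integral powers of $t$. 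With that in place Theorem~\ref{Thm:qtch} applies directly and no induction on the Nakajima order is needed.

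Two smaller points. First, your opening claim that the generating system $\{x^{\cQ^{(s)}}_{\im,k}\}$ coincides with \emph{all} fundamental $(q,t)$-characters is false: $x^{\cQ}_{\im,k}=L_t(Y_{i,p})$ only for $(i,p)=\bphi_{\cQ}^{-1}(\alpha_\im,k)$, i.e.\ the positions labelled by \emph{simple} roots, a proper subset of $\hI$. Consequently you cannot deduce $\Psi(\cK_t^{(1)})=\cK_t^{(2)}$ directly from \eqref{eq:Psix}; in the paper this integrality is a \emph{consequence} of the bijection between the simple $(q,t)$-characters (which form $\Z[t^{\pm1/2}]$-bases), not an input. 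Second, the weight preservation you need for $\nu(m,\cQ^{(1)})=\nu(\psi(m),\cQ^{(2)})$ is nevertheless fine: the $x_{\im,k}$ are weight-homogeneous of weight $(-1)^k\alpha_\im$ on both sides and generate the localized rings, so $\Psi$ preserves the $\sQ$-grading without any integrality statement.
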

\begin{proof}
It follows from the property $\Psi \circ \ol{(\cdot)} = \ol{(\cdot)} \circ \Psi$ that the element 
$\Psi(L_t^{(1)}(m))$ is $\ol{(\cdot)}$-invariant for each $m \in \cM^{(1)}$. 
Hence, in view of  Theorem~\ref{Thm:qtch}, it suffices to show 
\begin{equation}\label{eq:goal}
\Psi\left(E_{t}^{(1)}(m)\right) - E_{t}^{(2)}(\psi(m)) \in \sum_{m^{\prime} \in \cM^{(2)}} t\Z[t]E_{t}^{(2)}(m^{\prime})
\end{equation} 
for each $m \in \cM^{(1)}$. By Proposition~\ref{Prop:facE}, we have
\begin{equation} \label{eq:PsiE}
\Psi\left(E_{t}^{(1)}(m)\right) = t^{\nu(\psi(m), \cQ^{(2)})} \prod_{k \in \Z}^{\to} \fD^{k}_t \left(\Psi\left(E_{t}^{(1)}(m_k)\right)\right).
\end{equation}
On the other hand, for any $m^\prime \in \cM_{\cQ^{(1)}}$, the property (S2) in Theorem~\ref{Thm:qtch} implies that
\begin{align*}
E_{t}^{(1)}(m^\prime) &= L_{t}^{(1)}(m') + \sum_{m^{\prime\prime} \in \cM_{\cQ^{(1)}}} a_{m^\prime, m^{\prime\prime}}(t) L_{t}^{(1)}(m^{\prime\prime}), \\ 
L_{t}^{(2)}(\psi_{0}(m^\prime)) &= E_{t}^{(2)}(\psi_{0}(m^\prime)) + \sum_{m^{\prime\prime} \in \cM_{\cQ^{(2)}}} b_{\psi_{0}(m^\prime), m^{\prime\prime}}(t) E_{t}^{(2)}(m^{\prime\prime})
\end{align*}
hold with some $a_{m^\prime, m^{\prime\prime}}(t), b_{\psi_{0}(m^\prime), m^{\prime\prime}}(t) \in t \Z[t]$.
Then, by the property (\ref{eq:psi0}), we have
\begin{equation} \label{eq:PsiEmk}
\Psi\left(E_{t}^{(1)}(m_k)\right) = E_{t}^{(2)}(\psi_{0}(m_k)) + 
\sum_{m^{\prime} \in \cM_{\cQ^{(2)}}} c_{m_k, m^{\prime}}(t) E_{t}^{(2)}(m^{\prime})  
\end{equation}
for each $k \in \Z$, where 
$$c_{m_k, m^{\prime}}(t) \seq 
a_{m_k, \psi_0^{-1}(m')}(t) + b_{\psi_0(m_k), m'}(t)+
\sum_{m^{\prime\prime} \in \cM_{\cQ^{(1)}}} a_{m_k, m^{\prime\prime}}(t)
b_{\psi_{0}(m^{\prime\prime}), m^{\prime}}(t) \in t\Z[t].$$
Note that $\wt_{\cQ^{(2)}}(m^{\prime}) = \wt_{\cQ^{(2)}}(\psi_{0}(m_k))$ holds whenever $E_{t}^{(2)}(m^{\prime})$
contributes non-trivially to the sum in (\ref{eq:PsiEmk}) by Proposition~\ref{Prop:Qwt}.  
Thus, the desired property (\ref{eq:goal}) follows
from (\ref{eq:PsiE}) and (\ref{eq:PsiEmk}) by using Proposition~\ref{Prop:facE} again. 
\end{proof}

As a consequence, we obtain the main result of this paper.

\begin{Cor}  \label{Cor:pos_st}
For any finite-dimensional complex simple Lie algebra $\fg$,
the quantum Grothendieck ring $\cK_{t}$ has 
non-negative structure constants with respect to 
the simple $(q,t)$-characters $\{ L_{t}(m) \}_{m \in \cM}$. 
More precisely, if we write
$$
L_{t}(m_1) L_{t}(m_2) = \sum_{m \in \cM} c^{m}_{m_1, m_2}(t) L_{t}(m)
$$
for $m_1, m_2 \in \cM$, we have 
$$
c^{m}_{m_1, m_2}(t) \in \Z_{\ge 0}[t^{\pm 1/2}].
$$
In particular, the analogs of Kazhdan-Lusztig polynomials are positive, i.e.,
$$P_{m, m^{\prime}}(t) \in t\Z_{\ge 0}[t]$$ 
for any $m,m' \in \cM$ with $m' < m$ $($cf.~$(\ref{eq:KL}))$.
\end{Cor}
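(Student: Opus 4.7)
The approach will be to transport the positivities from the simply-laced case, where they are known via Nakajima's geometric construction using quiver varieties, to arbitrary $\fg$ by means of the isomorphism $\Psi$ of Theorem~\ref{Thm:Psi}. Given the possibly non-simply-laced $\fg = \fg^{(1)}$ with unfolding $(\Delta, \sigma)$, I will choose any Q-datum $\cQ^{(1)}$ for $\fg$ and any Q-datum $\cQ^{(2)}$ for the simply-laced Lie algebra $\sg = \fg^{(2)}$ corresponding to $\Delta$, and apply Theorem~\ref{Thm:Psi} to obtain a ring isomorphism $\Psi \colon \cK_t^{(1)} \to \cK_t^{(2)}$ that intertwines the respective simple $(q,t)$-character bases via a bijection $\psi \colon \cM^{(1)} \to \cM^{(2)}$.

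Using this, the positivity of structure constants is immediate: applying $\Psi$ to the identity
$$L_t^{(1)}(m_1)\, L_t^{(1)}(m_2) = \sum_{m \in \cM^{(1)}} c^{m}_{m_1, m_2}(t)\, L_t^{(1)}(m)$$
yields the corresponding identity in $\cK_t^{(2)}$ with reindexing $m \mapsto \psi(m)$, so $c^{m}_{m_1, m_2}(t)$ equals a structure constant of $\cK_t^{(2)}$, which lies in $\Z_{\ge 0}[t^{\pm 1/2}]$ by the simply-laced case. For the positivity of the Kazhdan-Lusztig polynomials, I will use the explicit factorization of standard $(q,t)$-characters: by the definition in Section~\ref{ssec:KtCZ} together with the equality $F_t(Y_{i,p}) = L_t(Y_{i,p})$ for fundamental modules, one has $E_t(m) = t^{a} \prod^{\to} L_t(Y_{i,p})^{u_{i,p}(m)}$ for some $a \in \tfrac12 \Z$. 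Expanding this ordered product in the simple $(q,t)$-character basis via the positivity of structure constants just established, every coefficient of $E_t(m)$ with respect to $\{L_t(m')\}$ lies in $\Z_{\ge 0}[t^{\pm 1/2}]$. Combining this with the a priori statement $P_{m, m'}(t) \in t\Z[t]$ from Theorem~\ref{Thm:qtch} and the elementary identity $t\Z[t] \cap \Z_{\ge 0}[t^{\pm 1/2}] = t\Z_{\ge 0}[t]$ yields the claim.

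The genuine difficulty lies in Theorem~\ref{Thm:Psi} itself, which rests on the HLO isomorphism (Corollary~\ref{Cor:HLO}), the commutation relations of Section~\ref{sec:Rmat}, and the presentation of $\cK_{\Q(t^{1/2})}$ established in Theorem~\ref{Thm:pres}. Beyond this, the only subtlety in the present corollary is to track the half-integer power $t^{a}$ appearing in the product expression for $E_t(m)$; this is harmless since the normalization $P_{m,m}(t) = 1$ forces the shift to be consistent with the positivity.
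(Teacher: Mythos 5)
Your proposal is correct and follows essentially the same route as the paper: both transfer the structure-constant positivity from the simply-laced case through the isomorphism $\Psi$ of Theorem~\ref{Thm:Psi}, which respects the simple $(q,t)$-character bases. Your explicit deduction of $P_{m,m'}(t)\in t\Z_{\ge 0}[t]$ — expanding $E_t(m)$ as a power of $t$ times an ordered product of fundamental $(q,t)$-characters and intersecting with $t\Z[t]$ — is exactly the standard argument the paper leaves implicit in its one-line "transforms the known positivity" conclusion.
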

\begin{proof}
When $\fg$ is simply-laced, the desired positivity has been established in \cite{Nakajima04} and \cite{VV03}
based on the geometry of quiver varieties (see also~\cite[Section 5.9]{HL15}). 
Now let $\fg$ be of non-simply-laced type and $(\Delta, \sigma)$ denote its unfolding ($\sigma \neq \id$).
We choose two arbitrary Q-data of the forms $\cQ^{(1)} = (\Delta, \id, \xi^{(1)})$ and 
$\cQ^{(2)} = (\Delta, \sigma, \xi^{(2)})$ so that we have  $\fg^{(1)} = \sg$ (simply-laced) and $\fg^{(2)} = \fg$.
Since the corresponding isomorphism $\Psi = \Psi(\cQ^{(2)}, \cQ^{(1)}) \colon \cK_{t}^{(1)} \to \cK_{t}^{(2)}$ respects 
the simple $(q,t)$-characters (Theorem~\ref{Thm:Psi}), it
transforms the known positivity in $\cK_{t}^{(1)}$ to the desired positivity in $\cK_{t}^{(2)}$.  
\end{proof}


\section{Further results in type B}
\label{sec:typeB}

In this section, we focus on the case when $\fg$ is of type $\mathrm{B}_n$ and hence $\sg$ is of type $\mathrm{A}_{2n-1}$.
We show that the specialization at $t=1$ of the isomorphism $\Psi$ for a specific choice of Q-data coincides 
with the Grothendieck ring isomorphism arising from a categorical relation studied in \cite{KKO19}.
As a consequence, we verify the analog of Kazhdan-Lusztig conjecture (Conjecture~\ref{Conj:KL})
and the desired positivity of the coefficients of simple $(q,t)$-characters for any simple $U_q(L\fg)$-modules in $\Cc_\Z$ of type $\mathrm{B}_n$. Hence, we have solved the long-standing problem of determining from their highest $\ell$-weight the character of simple finite-dimensional modules of type $\mathrm{B}$ quantum loop algebras, 
with a Kazhdan-Lusztig algorithm which is uniform relatively to simply-laced types (see the Introduction for more comments and references).

\subsection{A comparison between type A and B}
\label{ssec:AB}

Let $\Delta$ be the Dynkin diagram of type $\mathrm{A}_{2n-1}$ for a fixed $n \in \Z_{\ge 2}$.
We will use the labeling $\Delta_{0} = \{1,2,\ldots, 2n-1 \}$ as in Figure~\ref{Fig:unf}.
In this subsection, as a special case of the previous subsection, 
we consider the following two Q-data $\cQ^{(1)} = (\Delta, \id, \xi^{(1)})$ for $\fg^{(1)}$ of type $\mathrm{A}_{2n-1}$ 
and $\cQ^{(2)} = (\Delta, \vee, \xi^{(2)})$ for $\fg^{(2)}$ of type $\mathrm{B}_n$,
where
\begin{align*}
\xi^{(1)}_{\im} &\seq -\im, \\
\xi^{(2)}_{\im} &\seq \begin{cases}
-2\im & \text{if $1 \le \im < n$,} \\
1-2n & \text{if $\im = n$,} \\
2-2\im & \text{if $n < \im \le 2n-1$}
\end{cases} 
\end{align*}
for $\im \in \Delta_{0} = \{1,2,\ldots, 2n-1 \}$.
Note that we have 
$$
\hI_{\cQ^{(1)}} = (\Gamma_{\cQ^{(1)}})_0 = \{ (\im, -\im - 2k) \in \Delta_{0} \times \Z \mid 0 \le k \le 2n-1-\im \}
$$ 
by Proposition~\ref{Prop:tAR}.

\begin{Ex} When $n=3$, the twisted Auslander-Reiten quivers $\Gamma_{\cQ^{(1)}}$ 
and $\Gamma_{\cQ^{(2)}}$ are depicted respectively as follows:
$$
\raisebox{3mm}{
\scalebox{0.60}{\xymatrix@!C=0.5mm@R=2mm{
(\im\setminus p) & -9 & -8 & -7 &-6&-5 &-4& -3 &-2& -1  \\
1&\bullet  \ar@{->}[dr]&& \bullet  \ar@{->}[dr] &&\bullet \ar@{->}[dr]
&&\bullet  \ar@{->}[dr] && \bullet\\
2&& \bullet \ar@{->}[dr]\ar@{->}[ur]&&\bullet \ar@{->}[dr]\ar@{->}[ur]&&\bullet \ar@{->}[ur]\ar@{->}[dr] &&\bullet \ar@{->}[ur]&\\
3&&& \bullet \ar@{->}[dr] \ar@{->}[ur] &&\bullet\ar@{->}[dr] \ar@{->}[ur]
&&\bullet \ar@{->}[ur] && \\
4&&&& \bullet \ar@{->}[dr]\ar@{->}[ur] &&\bullet \ar@{->}[ur]
&&& \\ 
5&&&  &&\bullet\ar@{->}[ur]
&&&&
}} \hspace{1cm}
\scalebox{0.60}{\xymatrix@!C=0.5mm@R=0.5mm{
(\im\setminus p) & -14 & -13 & -12 &-11&-10 &-9& -8 &-7& -6 & -5& -4 & -3& -2 \\
1& \bullet  \ar@{->}[ddrr]&&&&\bullet \ar@{->}[ddrr] &&&& \bullet \ar@{->}[ddrr]
&&&& \bullet  \\ \\
2&&&\bullet\ar@{->}[uurr]\ar@{->}[dr]&&&&\bullet\ar@{->}[uurr]\ar@{->}[dr] &&&&  \bullet \ar@{->}[uurr] && \\ 
3&& \bullet \ar@{->}[ur]&& \bullet \ar@{->}[dr] &&\bullet \ar@{->}[ur] && \bullet \ar@{->}[dr] && \bullet \ar@{->}[ur]
&&&\\
4&&&&&\bullet\ar@{->}[ur] \ar@{->}[ddrr]
&&&& \bullet \ar@{->}[ur] &&&& \\ \\
5&&&&&&& \bullet \ar@{->}[uurr]&&&& &&}}}
$$
\end{Ex}

By Theorem~\ref{Thm:Psi}, we have the corresponding isomorphism 
$$
\Psi \equiv \Psi(\cQ^{(2)}, \cQ^{(1)}) \colon 
\cK_{t}^{(1)} \to \cK_{t}^{(2)}
$$
between the quantum Grothendieck ring of type $\mathrm{A}_{2n-1}$ 
and that of type $\mathrm{B}_{n}$. 
By the construction, the restriction of $\Psi$ to the subcategory $\Cc_{\cQ^{(1)}}$ 
is $\Phi_{\cQ^{(2)}} \circ (\Phi_{\cQ^{(1)}})^{-1}$, which has been computed in~\cite{HO19}.
Thus we have the following.

\begin{Thm}[{\cite[Theorem 12.9]{HO19}}] \label{Thm:PsiAB}
For any dominant monomial $m \in \cM^{(1)}$, we have
$$
\Psi\left(L_{t}^{(1)}(m)\right) = L_{t}^{(2)}(\psi(m)), 
$$
where $\psi \colon \cM^{(1)} \to \cM^{(2)}$ is a bijection 
satisfying $\fD \circ \psi = \psi \circ \fD$ and 
$$
\psi(Y_{\im, -\im - 2k}) = \begin{cases}
Y_{\bar{\im},-2\im-4k} & \text{if $\im \le n-1$ and $k \le n-\im-1$}, \\
Y_{n, -6n + 4 \im+1} & \text{if $\im \le n$ and $k = n - \im$}, \\
Y_{n, 2n +1-4\im -4k} Y_{n, -2n+3-4k} & \text{if $2 \le \im \le n$ and $n-\im+1 \le k \le n-1$}, \\
Y_{n, -2n+3-4\im} & \text{if $\im \le n-1$ and $k=n$}, \\
Y_{\bar{\im}, 4-2\im-4k} & \text{if $\im \le n-2$ and $n+1 \le k$}, \\
Y_{\bar{\im}, 2-2\im-4k} & \text{if $\im \ge n+1$}
\end{cases}
$$
for each $(\im, - \im -2k) \in \hI_{\cQ^{(1)}}$. 
\end{Thm}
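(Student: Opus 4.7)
The plan is to break the statement into two pieces and handle them sequentially. The first piece (that $\Psi$ sends simple $(q,t)$-characters to simple $(q,t)$-characters, yielding a bijection $\psi\colon \cM^{(1)} \to \cM^{(2)}$) is immediate from Theorem~\ref{Thm:Psi} applied to the present pair $(\cQ^{(1)},\cQ^{(2)})$. The compatibility $\fD \circ \psi = \psi \circ \fD$ then follows mechanically from the definition $\psi(m) \seq \prod_{k \in \Z} \fD^{k}\psi_{0}(m_{k})$ via the factorization~\eqref{eq:Qfac}: writing $\fD(m) = \prod_{k} \fD^{k+1}(m_k)$ and reindexing shows $\psi(\fD m) = \fD(\psi m)$. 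Alternatively, it can be read off from $\Psi \circ \fD_{t} = \fD_{t} \circ \Psi$ (Corollary~\ref{Cor:Psi}) together with Lemma~\ref{Lem:fD}.

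The main content is therefore the explicit description of $\psi_{0} = \psi|_{\cM_{\cQ^{(1)}}}$ on the fundamental monomials $Y_{\im,-\im-2k} \in \hI_{\cQ^{(1)}}$. Here I would use the fact that, by the construction of $\Psi$ in Corollary~\ref{Cor:Psi}, its restriction to $\cK_{t,\cQ^{(1)}}$ is precisely $\Phi_{\cQ^{(2)}} \circ \Phi_{\cQ^{(1)}}^{-1}$. Since the underlying Dynkin diagram $\Delta$ is the same for $\cQ^{(1)}$ and $\cQ^{(2)}$ (both of type $\mathrm{A}_{2n-1}$), both HLO isomorphisms of Corollary~\ref{Cor:HLO} take values in the \emph{same} quantized coordinate algebra $\cA_{v}[N_{-}]$ of type $\mathrm{A}_{2n-1}$, and each induces a bijection between its simple $(q,t)$-character basis $\{L_t^{(\ell)}(m) \mid m \in \cM_{\cQ^{(\ell)}}\}$ and the intrinsic normalized dual canonical basis $\tbfB$. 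Consequently $\psi_0$ is characterized by the property that $L_{t}^{(1)}(m)$ and $L_{t}^{(2)}(\psi_{0}(m))$ are the images of a common element of $\tbfB$.

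The remaining task is then to identify, for each $(\im,-\im-2k) \in \hI_{\cQ^{(1)}}$, the dual canonical basis element $\Phi_{\cQ^{(1)}}^{-1}(L_t^{(1)}(Y_{\im,-\im-2k}))$ and the corresponding simple module in $\Cc_{\cQ^{(2)}}$ through $\Phi_{\cQ^{(2)}}$. This is precisely the combinatorial computation carried out case-by-case in~\cite[Theorem 12.9]{HO19} in exactly this setup (with these specific Q-data $\cQ^{(1)}$ and $\cQ^{(2)}$), and the six cases in the displayed formula are the ones obtained there. I would simply transport that computation, observing that the HLO isomorphism $\Phi_{\cQ^{(2)}}$ used here agrees with the one constructed in~\cite{HO19} for type $\mathrm{B}$ (as noted after Theorem~\ref{Thm0:HLO}), and likewise $\Phi_{\cQ^{(1)}}$ is the type $\mathrm{ADE}$ version from~\cite{HL15}.

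The main obstacle, as always in such comparisons, is matching the dual canonical basis elements across the two commutation classes $[\cQ^{(1)}]$ and $[\cQ^{(2)}]$ in $\sW(\mathrm{A}_{2n-1})$: the PBW-type expansions of a given $\tG(\bfc,\cQ) \in \tbfB$ differ substantially between the two, and identifying which fundamental module on the type $\mathrm{A}$ side yields the specific (possibly product of two) KR-type fundamental monomial on the type $\mathrm{B}$ side requires a delicate analysis of quantum minors. However, this has been done in~\cite{HO19}, so in the present paper the proof reduces to invoking that result, once one verifies (which is routine from the construction) that our $\Psi$ restricts to the HO19 map on $\cK_{t,\cQ^{(1)}}$.
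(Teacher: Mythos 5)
Your proposal matches the paper's treatment: the paper also reduces the statement to the observation that $\Psi|_{\cK_{t,\cQ^{(1)}}} = \Phi_{\cQ^{(2)}} \circ (\Phi_{\cQ^{(1)}})^{-1}$ (from Corollary~\ref{Cor:Psi}) together with the fact that this composite, and hence the explicit six-case formula for $\psi_0$ on fundamental monomials, was already computed in \cite[Theorem 12.9]{HO19}, with the $\fD$-equivariance coming for free from the definition of $\psi$. The proposal is correct and takes essentially the same route, only spelling out in more detail why the dual canonical basis of $\cA_v[N_-]$ mediates the matching.
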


On the other hand, based on the generalized Schur-Weyl duality,
Kashiwara-Kim-Oh~\cite{KKO19} have obtained the following result.

\begin{Thm}[{\cite[Corollary 3.2.2]{KKO19}}] \label{Thm:KKO}
There is an isomorphism $\tpsi \colon K(\Cc_{\Z}^{(1)}) \to K(\Cc_{\Z}^{(2)})$
such that we have
$$
\tpsi \left(\left[L^{(1)}(m)\right]\right) = \left[L^{(2)}(\psi(m))\right]
$$
for every $m \in \cM^{(1)}$, where $\psi$ is the bijection in {\rm Theorem~\ref{Thm:PsiAB}}.
\end{Thm}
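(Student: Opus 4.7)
The plan is to construct $\tpsi$ via the framework of generalized quantum affine Schur--Weyl duality developed in \cite{KKK18}, following \cite{KKO19}. The key initial observation is that $\cQ^{(1)}$ and $\cQ^{(2)}$ share the \emph{same} underlying simply-laced Dynkin diagram $\Delta$ of type $\mathrm{A}_{2n-1}$, even though they are Q-data for different Lie algebras ($\mathrm{A}_{2n-1}$ and $\mathrm{B}_n$ respectively). Consequently, Theorem~\ref{Thm:SW} produces two monoidal equivalences $F_{\cQ^{(s)}} \colon \Aa(\Delta) \xrightarrow{\sim} \Cc_{\cQ^{(s)}}$ for $s=1,2$ from the same source category $\Aa(\Delta)$, each sending simple isomorphism classes to simple classes and identifying them with the specialization $\evv(\tbfB)$ at $v=1$ of the normalized dual canonical basis in $\cA_v[N_-]$.

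The first step is to produce a $\Z$-algebra isomorphism
$$\tpsi_0 \seq [F_{\cQ^{(2)}}] \circ [F_{\cQ^{(1)}}]^{-1} \colon K(\Cc_{\cQ^{(1)}}) \xrightarrow{\sim} K(\Cc_{\cQ^{(2)}})$$
that matches $[L^{(1)}(m)] \mapsto [L^{(2)}(\psi(m))]$ for every $m \in \cM_{\cQ^{(1)}}$. Since both $[F_{\cQ^{(s)}}]$ restrict to bijections on the common basis $\evv(\tbfB)$, each element $\tG(\bfc, \cQ^{(1)}) \in \tbfB$ admits a unique reparametrization as $\tG(\bfc', \cQ^{(2)})$, and $\tpsi_0$ sends $[L^{(1)}(m(\bfc, \cQ^{(1)}))]$ to $[L^{(2)}(m(\bfc', \cQ^{(2)}))]$. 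The matching therefore reduces to the identity $m(\bfc', \cQ^{(2)}) = \psi(m(\bfc, \cQ^{(1)}))$ on dominant monomials, which can be verified by checking the agreement on fundamental classes $[L^{(1)}(Y_{\im, -\im})]$ for $\im \in \Delta_0$ and using the compatibility with ordered tensor products; this concrete check is made against the explicit formula of Theorem~\ref{Thm:PsiAB}.

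The second step extends $\tpsi_0$ to the full Grothendieck ring $K(\Cc_\Z^{(1)})$. By the decomposition~(\ref{eq:Qfac}) applied to $\cQ^{(1)}$, every $m \in \cM^{(1)}$ factors uniquely as $m = \prod_{k \in \Z} \fD^k(m_k)$ with $m_k \in \cM_{\cQ^{(1)}}$, and the standard module decomposes as the commutative tensor product~(\ref{eq:facM}), whose head is $L^{(1)}(m)$. Using $\fD \circ \psi = \psi \circ \fD$ from Theorem~\ref{Thm:PsiAB}, one sets
$$\tpsi([L^{(1)}(m)]) \seq \prod_{k \in \Z} \left[\cD^k L^{(2)}(\psi(m_k))\right],$$
which is well-defined. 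The main obstacle is to verify that $\tpsi$ so defined is multiplicative on arbitrary tensor products in $\Cc_\Z^{(1)}$, not merely on those respecting the Q-datum factorization. This reduces to establishing the type-$\mathrm{B}$ analog of the $\fd$-vanishing relations of Lemmas~\ref{Lem:comm2} and~\ref{Lem:comm3} for the fundamental modules $L^{(2)}(\psi(Y_{i,p}))$, which can be checked case-by-case using the denominator formula of Proposition~\ref{Prop:dfund} combined with the explicit form of $\psi$. Once multiplicativity is secured, the bijectivity of $\tpsi$ follows at once from that of $\psi$ on $\cM^{(1)}$, completing the construction.
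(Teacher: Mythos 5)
There is a genuine gap in your second step, and it is fatal. You define
$\tpsi\left(\left[L^{(1)}(m)\right]\right) \seq \prod_{k \in \Z} \left[\cD^k L^{(2)}(\psi(m_k))\right]$,
but the right-hand side is the class of the tensor product $\bigotimes^{\to}_{k}\cD^k L^{(2)}(\psi(m_k))$, which is in general \emph{not} simple and therefore not equal to $\left[L^{(2)}(\psi(m))\right]$. The factorization \eqref{eq:facM} is a statement about \emph{standard} modules $M(m)$, not about $L(m)$: the simple module is only the head of that tensor product. Concretely, take $m = Y_{i,p}\cdot \fD(Y_{i,p})$, so $m_0 = m_1 = Y_{i,p}$; then Lemma~\ref{Lem:tBoson}~(\ref{tBoson:Ftdual}) (specialized at $t=1$) gives $\left[L(Y_{i,p})\right]\left[\cD L(Y_{i,p})\right] = \left[L(Y_{i,p}\,\fD(Y_{i,p}))\right] + 1$, so your formula produces an extra summand and does not send the simple class to the simple class. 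The same defect appears on the source side: $L^{(1)}(m)$ does not factor as $\bigotimes_k \cD^k L^{(1)}(m_k)$ either, so the proposed definition neither computes $[L^{(1)}(m)]$ correctly nor lands on $[L^{(2)}(\psi(m))]$.

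Even if you repair the definition by extending $\tpsi_0$ multiplicatively on fundamental classes (which generate $K(\Cc^{(1)}_\Z)$ freely by Proposition~\ref{Prop:KCZ}), the essential claim — that the resulting ring homomorphism carries \emph{every} simple class to the corresponding simple class — is precisely the content of the theorem, and it cannot be extracted from $\fd$-vanishing or denominator computations: those only control when certain tensor products are simple or commute, not the Jordan--H\"older content of arbitrary products of simples. This is why the paper does not prove the statement internally at all: its proof is a citation of \cite[Corollary 3.2.2]{KKO19}, where an isomorphism $\phi_1$ is constructed by a genuinely categorical device (a generalized Schur--Weyl duality relating the full categories $\Cc^{(1)}_\Z$ and $\Cc^{(2)}_\Z$ directly, not factoring through the hearts $\Cc_{\cQ^{(s)}}$), together with the observation that $\phi_1$ commutes with $[\cD]$ and agrees with $\psi$ on $\Cc_{\cQ^{(1)}}$ after a parameter shift. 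Note also that you cannot substitute the specialization $\Psi|_{t=1}$ of Theorem~\ref{Thm:Psi} here: deducing that $\Psi|_{t=1}$ respects simple classes in type $\mathrm{B}$ requires $\evt(L_t^{(2)}(m)) = \chi_q(L^{(2)}(m))$, which is Corollary~\ref{Cor:PsiAB}~(\ref{KLtypeB}) — a consequence of Theorem~\ref{Thm:KKO}, so that route is circular. Your first step (matching $[F_{\cQ^{(2)}}]\circ[F_{\cQ^{(1)}}]^{-1}$ with $\psi$ on the subcategory) is in the right spirit, though the reduction to fundamental classes glosses over the nontrivial reparametrization of the dual canonical basis between the two commutation classes, which is the computation actually carried out in \cite[Section 12]{HO19}.
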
 
\begin{proof}
In~\cite[Section 3.2]{KKO19}, an isomorphism $\phi_{1} \colon K(\Cc_{\Z}^{(1)}) \to K(\Cc_{\Z}^{(2)})$
is defined. It induces an bijection $\Irr \Cc_{\Z}^{(1)} \to \Irr \Cc_{\Z}^{(2)}$ between 
isomorphism classes of simple modules.  
From the explicit computation of $\phi_{1}$ given in \cite[Corollary 3.2.2]{KKO19},
we see that it satisfies $[\cD] \circ \phi_{1} = \phi_{1} \circ [\cD]$
and its restriction to the subcategory $\Cc_{\cQ^{(1)}}$ 
coincides with the homomorphism $[L(Y_{\im, p})] \mapsto [L(\psi(Y_{\im, p}))]$ 
after a suitable parameter shift (see \cite[Remark 12.10]{HO19}).
Therefore the desired isomorphism $\tpsi$ is obtained as a parameter shift of $\phi_1$.  
\end{proof}

Recall that an algebra homomorphism $K(\Cc_{\Z}^{(1)}) \to K(\Cc_{\Z}^{(2)})$ 
is determined by the images of the classes $[L^{(1)}(Y_{i,p})]$ of fundamental modules 
(Proposition~\ref{Prop:KCZ}) and 
we know $\evt(L^{(s)}_{t}(Y_{i,p})) = \chi_{q}(L(Y_{i,p}))$ for 
all $(i,p) \in \hI^{(s)}$ and $s=1,2$.
Therefore, as a corollary of Theorems~\ref{Thm:PsiAB} and \ref{Thm:KKO},
we obtain the following. 

\begin{Cor} \label{Cor:PsiAB} The followings hold.
\begin{enumerate}
\item Let $\Psi|_{v=t=1}$ denote the $\Z$-algebra isomorphism
$K(\Cc_{\Z}^{(1)}) \to K(\Cc_{\Z}^{(2)})$ induced from $\Psi \equiv \Psi(\cQ^{(2)}, \cQ^{(1)})$ above.
Then we have $\Psi|_{v=t=1} = \tpsi$.
\item \label{KLtypeB} We have $\chi_{q}(L^{(2)}(m)) = \evt(L_{t}^{(2)}(m))$ for all $m \in \cM^{(2)}$.
\end{enumerate}
\end{Cor}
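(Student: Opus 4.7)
The plan is to follow the short roadmap sketched in the paragraph preceding the corollary, exploiting the fact that an algebra homomorphism out of $K(\Cc_{\Z}^{(1)})$ is determined by its values on the classes of the fundamental modules $\{[L^{(1)}(Y_{\im,p})]\}_{(\im,p) \in \hI^{(1)}}$ by Proposition~\ref{Prop:KCZ}. For assertion~(1), the goal is therefore to show that $\Psi|_{v=t=1}$ and $\tpsi$ agree on every such generator.

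First, I would extend Theorem~\ref{Thm:PsiAB} from $\hI_{\cQ^{(1)}}^{(1)}$ to all of $\hI^{(1)}$. By Lemma~\ref{Lem:CQ}(\ref{CQ:fund}), every fundamental module $L^{(1)}(Y_{\im,p})$ has the form $\cD^{k}(L^{\cQ^{(1)}}(\alpha))$ for a unique $(\alpha,k) \in \sR^{+} \times \Z$, so on the level of simple $(q,t)$-characters Lemma~\ref{Lem:fD} gives $L^{(1)}_{t}(Y_{\im,p}) = \fD_{t}^{k}(L^{(1)}_{t}(Y_{\jm,s}))$ for some $(\jm,s) \in \hI_{\cQ^{(1)}}^{(1)}$. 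Since Corollary~\ref{Cor:Psi} provides $\Psi \circ \fD_{t} = \fD_{t} \circ \Psi$ and Theorem~\ref{Thm:PsiAB} already supplies $\Psi(L^{(1)}_{t}(Y_{\jm,s})) = L^{(2)}_{t}(\psi(Y_{\jm,s}))$ on $\hI_{\cQ^{(1)}}^{(1)}$, combining these with the equivariance $\fD \circ \psi = \psi \circ \fD$ yields
\[
\Psi\bigl(L^{(1)}_{t}(Y_{\im,p})\bigr) = L^{(2)}_{t}(\psi(Y_{\im,p})) \qquad \text{for every }(\im,p) \in \hI^{(1)}.
\]
Applying $\evt$ on both sides and invoking Theorem~\ref{Thm:qtfund}~(\ref{qtfund:evt}), which gives $\evt(L^{(s)}_{t}(Y_{i,p})) = \chi_{q}(L^{(s)}(Y_{i,p}))$ for $s \in \{1,2\}$, I obtain $\Psi|_{v=t=1}([L^{(1)}(Y_{\im,p})]) = [L^{(2)}(\psi(Y_{\im,p}))]$. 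On the other hand, Theorem~\ref{Thm:KKO} says exactly $\tpsi([L^{(1)}(Y_{\im,p})]) = [L^{(2)}(\psi(Y_{\im,p}))]$. Since both $\Psi|_{v=t=1}$ and $\tpsi$ are $\Z$-algebra homomorphisms $K(\Cc_{\Z}^{(1)}) \to K(\Cc_{\Z}^{(2)})$ and coincide on the polynomial generators, they are equal, proving~(1).

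For assertion~(2), I would combine~(1) with Nakajima's theorem for simply-laced types. Fix an arbitrary $m^{(2)} \in \cM^{(2)}$ and write $m^{(2)} = \psi(m^{(1)})$ with $m^{(1)} \in \cM^{(1)}$, which is possible because $\psi$ is a bijection. Since $\fg^{(1)}$ is of type $\mathrm{A}_{2n-1}$, Conjecture~\ref{Conj:KL} is known there (\cite[Theorem 8.1]{Nakajima04}), so $\evt(L_{t}^{(1)}(m^{(1)})) = \chi_{q}(L^{(1)}(m^{(1)}))$. Applying $\evt$ to the identity $\Psi(L^{(1)}_{t}(m^{(1)})) = L^{(2)}_{t}(m^{(2)})$ of Theorem~\ref{Thm:Psi} and using~(1) together with Theorem~\ref{Thm:KKO}, I get
\[
\evt\bigl(L^{(2)}_{t}(m^{(2)})\bigr) = \Psi|_{v=t=1}\bigl(\chi_{q}(L^{(1)}(m^{(1)}))\bigr) = \tpsi\bigl([L^{(1)}(m^{(1)})]\bigr) = \chi_{q}\bigl(L^{(2)}(m^{(2)})\bigr),
\]
where the final equality uses that the $q$-character homomorphism is injective (Theorem~\ref{Thm:FR}) and identifies $[L^{(2)}(m^{(2)})]$ with $\chi_{q}(L^{(2)}(m^{(2)}))$.

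The only real obstacle is the $\fD$-equivariant extension in the first paragraph: one must check that the assignment $(\im,p) \mapsto \psi(Y_{\im,p})$ described in Theorem~\ref{Thm:PsiAB} is compatible with translating by the duality $\fD$, so that the partial identification on $\hI_{\cQ^{(1)}}^{(1)}$ extends consistently to all of $\hI^{(1)}$. This is exactly the content of the property $\fD \circ \psi = \psi \circ \fD$ stated in Theorem~\ref{Thm:PsiAB}, together with the matching $\Psi \circ \fD_{t} = \fD_{t} \circ \Psi$ from Corollary~\ref{Cor:Psi}; once these two intertwining properties are in hand, the proof reduces to the bookkeeping above.
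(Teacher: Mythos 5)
Your proposal is correct and follows essentially the same route as the paper: the corollary is deduced from Theorems~\ref{Thm:PsiAB} and \ref{Thm:KKO} by noting that an algebra homomorphism out of $K(\Cc_{\Z}^{(1)})$ is determined by its values on fundamental classes (Proposition~\ref{Prop:KCZ}) together with $\evt(L^{(s)}_{t}(Y_{i,p})) = \chi_{q}(L(Y_{i,p}))$, and part~(2) then follows from part~(1) via Nakajima's theorem in type $\mathrm{A}$. The only cosmetic difference is that you re-derive the $\fD$-equivariant extension of $\Psi(L^{(1)}_{t}(m)) = L^{(2)}_{t}(\psi(m))$ to all of $\cM^{(1)}$, which is already asserted in the statement of Theorem~\ref{Thm:PsiAB}.
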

Corollary~\ref{Cor:PsiAB}~(\ref{KLtypeB}) gives the affirmative answer to Conjecture~\ref{Conj:KL} for 
every simple module $L(m)$ in $\Cc_\Z$ when $\fg$ is of type $\mathrm{B}_n$.

\subsection{Positivity of $(q,t)$-characters in type B}
In this subsection, we assume that $\fg$ is the simple Lie algebra of type $\mathrm{B}_n$. 
Now we obtain the following result 
by the same argument as Lemma~\ref{Lem:comm1}
using Corollaries~\ref{Cor:pos_st} and \ref{Cor:PsiAB}~(\ref{KLtypeB}).

\begin{Lem}
\label{Lem:commB}
Let $\fg$ be of type $\mathrm{B}_n$ and 
$m_{1}, m_{2} \in \cM$ any dominant monomials.
Assume that at least one of the simple modules $L(m_{1})$ and $L(m_{2})$ is real.
If we have $ \fd(L(m_{1}), L(m_{2})) = 0$, 
there holds the $t$-commutation relation
$$
L_t(m_1 m_2)=t^{-\Nn(m_1, m_2)/2}L_{t}(m_{1}) L_{t}(m_{2}) 
= t^{\Nn(m_{1}, m_{2})/2} L_{t}(m_{2}) L_{t}(m_{1})
$$
in the quantum Grothendieck ring $\cK_{t}$.
\end{Lem}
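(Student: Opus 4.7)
The plan is to mimic the proof of Lemma~\ref{Lem:comm1} with two crucial upgrades: the positivity of the structure constants is now available in all types by Corollary~\ref{Cor:pos_st}, and the analog of Kazhdan-Lusztig conjecture $\evt(L_t(m)) = \chi_q(L(m))$ holds for \emph{all} $m \in \cM$ in type $\mathrm{B}$ by Corollary~\ref{Cor:PsiAB}~(\ref{KLtypeB}). Together, these two ingredients lift the restriction $m_1, m_2 \in \cM_{\cQ}$ appearing in Lemma~\ref{Lem:comm1} and allow the argument to run for arbitrary dominant monomials in $\cM$.

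First, I would expand
\[
L_{t}(m_{1}) L_{t}(m_{2}) = \sum_{m \in \cM} c^{m}_{m_{1}, m_{2}}(t) L_{t}(m)
\]
in $\cK_{t}$; by Corollary~\ref{Cor:pos_st}, each coefficient $c^{m}_{m_{1}, m_{2}}(t)$ lies in $\Z_{\ge 0}[t^{\pm 1/2}]$. Applying the specialization $\evt$ and invoking Corollary~\ref{Cor:PsiAB}~(\ref{KLtypeB}) on both sides transforms this into the equality
\[
\chi_{q}(L(m_{1})) \chi_{q}(L(m_{2})) = \sum_{m \in \cM} c^{m}_{m_{1}, m_{2}}(1) \chi_{q}(L(m)).
\]
On the other hand, the hypothesis that one of $L(m_1), L(m_2)$ is real together with $\fd(L(m_1), L(m_2)) = 0$ gives, via Theorem~\ref{Thm:fd}, the isomorphism $L(m_{1}) \otimes L(m_{2}) \cong L(m_{1} m_{2})$. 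Hence $c^{m}_{m_{1}, m_{2}}(1) = \delta(m = m_{1} m_{2})$; combined with the non-negativity of each $c^{m}_{m_{1}, m_{2}}(t)$ as an integer Laurent polynomial in $t^{\pm 1/2}$, this forces $c^{m}_{m_{1}, m_{2}}(t) = \delta(m = m_{1} m_{2}) \cdot t^{a}$ for a unique $a \in \tfrac{1}{2}\Z$. Running the same argument with the roles of $m_1, m_2$ swapped produces $c^{m}_{m_{2}, m_{1}}(t) = \delta(m = m_{1} m_{2}) \cdot t^{b}$ for some $b \in \tfrac{1}{2}\Z$.

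To pin down the exponents $a$ and $b$, I would compare the maximal (commutative) monomials on both sides. The leading monomial of $L_{t}(m_{1}) L_{t}(m_{2})$ is $\ul{m_{1}} \cdot \ul{m_{2}}$, while the leading monomial of $L_{t}(m_{1} m_{2})$ is $\ul{m_{1} m_{2}} = t^{-\Nn(m_{1}, m_{2})/2} \ul{m_{1}} \cdot \ul{m_{2}}$, using the normalization recalled in Section~\ref{ssec:Yt}. This yields $a = \Nn(m_{1}, m_{2})/2$, and the analogous computation for the reversed product gives $b = -\Nn(m_{1}, m_{2})/2$. Combining, we obtain the desired identities
\[
L_{t}(m_{1} m_{2}) = t^{-\Nn(m_{1}, m_{2})/2} L_{t}(m_{1}) L_{t}(m_{2}) = t^{\Nn(m_{1}, m_{2})/2} L_{t}(m_{2}) L_{t}(m_{1}).
\]

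There is no serious obstacle here beyond the availability of the two input results: the whole point is that, now that Corollaries~\ref{Cor:pos_st} and \ref{Cor:PsiAB}~(\ref{KLtypeB}) are both established for type $\mathrm{B}$, the elementary argument of Lemma~\ref{Lem:comm1} extends verbatim from the subcategory $\Cc_{\cQ}$ to the whole of $\Cc_\Z$. The only mildly delicate step is the bookkeeping of the half-integer power of $t$, which follows from a direct comparison of leading terms in the quantum torus $\cY_t$.
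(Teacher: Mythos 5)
Your proof is correct and is exactly the argument the paper intends: the paper simply states that Lemma~\ref{Lem:commB} follows "by the same argument as Lemma~\ref{Lem:comm1} using Corollaries~\ref{Cor:pos_st} and \ref{Cor:PsiAB}~(\ref{KLtypeB})", which is precisely the upgrade you carry out. Your bookkeeping of the exponents via $\ul{m_1 m_2} = t^{-\Nn(m_1,m_2)/2}\ul{m_1}\cdot\ul{m_2}$ also matches the normalization fixed in Section~\ref{ssec:Yt}.
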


Using this lemma, we obtain the affirmative answer to Conjecture~\ref{Conj:L=F} in type $\mathrm{B}$ in the following way. 
For an interval $[a,b]$ we set
$L_{t}^{(\im)}[a,b] \seq L_{t}(m^{(\im)}[a,b])$, 
and similarly, we define $L_{t}^{(\im)}(a,b], L_{t}^{(\im)}[a,b)$ and $L_{t}^{(\im)}(a,b)$.  

\begin{Thm} \label{Thm:L=F}
Let $\fg$ be of type $\mathrm{B}_n$.
For any KR module $W^{(\im)}[p,s]$ in $\Cc_{\Z}$ with $(\im, p),  (\im, s) \in \hDs_0$ and $p < s$, 
its $(q,t)$-character $L_{t}^{(\im)}[p,s]$ 
coincides with $F_{t}^{(\im)}[p,s].$
\end{Thm}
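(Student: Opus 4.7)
My plan is an induction on the length $s-p$. The base cases $s=p$ (where $L_t(Y_{\bar{\im},p})=F_t(Y_{\bar{\im},p})=E_t$) and $s-p=2d_{\bar{\im}}$ (noted after Conjecture~\ref{Conj:L=F}) are already known. For the inductive step the engine is the quantum $T$-system (Theorem~\ref{Thm:qTsys}) combined with the positivity of structure constants (Corollary~\ref{Cor:pos_st}), the commutation relations of Lemma~\ref{Lem:commB} and Proposition~\ref{Prop:qTsys}, and the fact that $\cK_t\subset\cY_t$ is a domain.

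\textbf{Inductive step.} Assume $L_t^{(\jm)}[p',s']=F_t^{(\jm)}[p',s']$ for every strictly shorter interval. First I need the auxiliary identity $F_t(M(\im;p,s))=L_t(M(\im;p,s))$: by Remark~\ref{Rem:qTsys} the ordered product $\prod^{\to}_{\jm\sim\im}F_t^{(\jm)}(p,s)$ equals $F_t(M(\im;p,s))$ up to a power of $t$, and by the inductive hypothesis together with iterated application of Lemma~\ref{Lem:commB} (the factors pairwise commute) it also equals $L_t(M(\im;p,s))$ up to a power of $t$; bar-invariance of both sides forces the two powers to coincide. Feeding the inductive identifications into the quantum $T$-system then gives
\[
L_t^{(\im)}[p,s)\,L_t^{(\im)}(p,s]=t^{x}F_t^{(\im)}[p,s]\,L_t^{(\im)}(p,s)+t^{y}L_t(M(\im;p,s)).
\]
Independently, expanding the left-hand side via Corollary~\ref{Cor:pos_st} as a $\Z_{\ge 0}[t^{\pm 1/2}]$-linear combination of simple $(q,t)$-characters, specialising at $t=1$, and matching with the classical $T$-system (Theorem~\ref{Thm:Tsys}) shows that exactly two terms survive with value $1$ at $t=1$; since a non-negative Laurent polynomial taking value $1$ at $t=1$ must be a single monomial $t^{a}$, one obtains
\[
L_t^{(\im)}[p,s)\,L_t^{(\im)}(p,s]=t^{a_1}L_t(m^{(\im)}[p,s]\,m^{(\im)}(p,s))+t^{a_2}L_t(M(\im;p,s))
\]
for some $a_1,a_2\in\tfrac12\Z$. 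Using Lemma~\ref{Lem:commB} to rewrite $L_t(m^{(\im)}[p,s]\,m^{(\im)}(p,s))=t^{\Nn/2}L_t^{(\im)}[p,s]\,L_t^{(\im)}(p,s)$ with $\Nn=\Nn(m^{(\im)}[p,s],m^{(\im)}(p,s))$ and comparing the two expressions, one arrives at the key identity
\[
\bigl(F_t^{(\im)}[p,s]-t^{b}L_t^{(\im)}[p,s]\bigr)L_t^{(\im)}(p,s)=t^{-x}(t^{a_2}-t^{y})\,L_t(M(\im;p,s)),
\]
where $b=a_1+\tfrac{\Nn}{2}-x$.

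\textbf{Concluding via bar-invariance and the domain property.} Both $F_t^{(\im)}[p,s]$ and $L_t^{(\im)}[p,s]$ $t$-commute with $L_t^{(\im)}(p,s)=F_t^{(\im)}(p,s)$ by the same scalar $t^{\Nn}$, by Proposition~\ref{Prop:qTsys} and Lemma~\ref{Lem:commB}. Applying the bar-involution to the key identity and using this common $t$-commutation yields a companion identity in which $b$ is replaced by $-b$; subtracting the two and invoking $L_t^{(\im)}[p,s]\,L_t^{(\im)}(p,s)=t^{-\Nn/2}L_t(m^{(\im)}[p,s]\,m^{(\im)}(p,s))$ produces a relation of the form
\[
(t^{-b}-t^{b})\,t^{-\Nn/2}\,L_t(m^{(\im)}[p,s]\,m^{(\im)}(p,s))=D(t)\,L_t(M(\im;p,s))
\]
for an explicit Laurent expression $D(t)$ involving $a_2,y,x,\Nn$. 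Since $L_t(m^{(\im)}[p,s]\,m^{(\im)}(p,s))$ and $L_t(M(\im;p,s))$ are distinct basis elements of $\cK_t$, both coefficients must vanish: one gets $b=0$ and then $a_2=y$ by matching the monomial terms in $D(t)=0$. Feeding these back into the key identity yields $(F_t^{(\im)}[p,s]-L_t^{(\im)}[p,s])\,L_t^{(\im)}(p,s)=0$, and the domain property of $\cK_t$ together with $L_t^{(\im)}(p,s)\neq 0$ closes the argument. The main obstacle is the bookkeeping of $t$-powers so that this final subtraction genuinely separates the two independent basis vectors; the preliminary identification $F_t(M)=L_t(M)$, together with Lemma~\ref{Lem:commB} applied to the commuting pair $(m^{(\im)}[p,s],m^{(\im)}(p,s))$, is what makes this separation clean.
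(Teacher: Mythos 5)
Your proposal is correct and follows essentially the same route as the paper's proof: induction on $s-p$, expansion of $L_t^{(\im)}[p,s)\,L_t^{(\im)}(p,s]$ in the simple basis via positivity of structure constants, specialization at $t=1$ against the classical $T$-system (which implicitly uses the type-$\mathrm{B}$ Kazhdan--Lusztig result, Corollary~\ref{Cor:PsiAB}), and comparison with the quantum $T$-system using bar-invariance to fix the powers of $t$. The only cosmetic difference is that the paper pins down the exponents by uniqueness of the bar-invariant element of the prescribed form in the fraction field, whereas you isolate the identity $\bigl(F_t^{(\im)}[p,s]-t^{b}L_t^{(\im)}[p,s]\bigr)L_t^{(\im)}(p,s)=(\cdots)\,L_t(M(\im;p,s))$ and kill both sides via bar-invariance and linear independence of the $L_t$-basis — equivalent bookkeeping (modulo the sign of $\Nn/2$ you already flag).
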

\begin{proof}
Set $i \seq \bar{\im}$. 
We prove the assertion by induction on $s -p \in 2d_i\Z_{\ge 0}$.
When $s=p$, the KR module $W^{(\im)}[p,p]$ is the fundamental module $L(Y_{i,p})$
and hence $L_{t}(Y_{i,p}) =  F_{t}(Y_{i,p})$ by Theorem~\ref{Thm:qtfund}. 

Now we assume that $s-p >0$ and the equality 
$L_{t}^{(\jm)}[p^{\prime},s^{\prime}] = F_{t}^{(\jm)}[p^{\prime},s^{\prime}]$ holds
for any $\jm \in \Delta_0$ and $p^{\prime},s^{\prime} \in \Z$ with $s^{\prime}-p^{\prime} < s-p$. 
If we  write  
\begin{equation} \label{eq:LL} 
L_{t}^{(\im)}[p,s) L_{t}^{(\im)}(p,s] = 
\sum_{m \in \cM} c_{m}(t) L_{t}(m),
\end{equation}
we have
$c_{m}(t) \in \Z_{\ge 0}[t^{\pm 1/2}]$
by Corollary~\ref{Cor:pos_st}. 
Using Corollary~\ref{Cor:PsiAB}~(\ref{KLtypeB}) and the classical $T$-system identity (\ref{eq:Tsys}),
we specialize (\ref{eq:LL}) at $t=1$ to find 
\begin{equation} \label{eq:cm}
c_m(1) = \begin{cases}
1 & \text{if $m = m^{(\im)}[p,s] m^{(\im)}(p,s)$ or $M(\im; p,s)$}, \\
0 & \text{otherwise},
\end{cases}
\end{equation}
where $M(\im; p,s) = \prod_{\jm \sim \im} m^{(\jm)}(p,s)$ as in Theorem~\ref{Thm:qTsys}.
Note  that $c_m(1) = 0$ implies that $c_m(t) = 0$,
and $c_m(1) = 1$ implies that $c_m(t) = t^{x}$ for some $x \in \frac{1}{2}\Z$.
Recall the set 
$
\cM^+(\im; p, s)
$
of dominant monomials defined in (\ref{eq:cM+}).
Since the members of the family $\{ L(m) \mid m \in \cM^+(\im; p,s) \}$ mutually commute 
(see Proposition~\ref{Prop:clTsys}),
the members of the family $\{ L_t(m) \mid m \in \cM^+(\im; p,s)\}$ mutually $t$-commute 
and we have 
$$
L_t\left(m^{(\im)}[p,s] m^{(\im)}(p,s)\right) = t^{x}L_{t}^{(\im)}[p,s] L_{t}^{(\im)}(p,s),
\qquad 
L_t(M(\im;p,s)) = t^y \prod_{\jm \sim \im}^{\to} L_{t}^{(\jm)}(p,s)
$$
for some $x, y \in \frac{1}{2} \Z$
by Lemma~\ref{Lem:commB}.
Here we fix an arbitrary total ordering of the set $\{ \jm \in \Delta_0 \mid \jm \sim \im \}$ 
to define the ordered product.
Combining with (\ref{eq:cm}), we obtain
\begin{equation} \label{eq:qTsysL}
L_t^{(\im)}[p,s) L_t^{(\im)}(p,s] = t^a L_t^{(\im)}[p,s] L_t^{(\im)}(p,s) 
+ t^b \prod_{\jm \sim \im}^{\to} L_{t}^{(\jm)}(p,s),
\end{equation} 
where the exponents $a, b \in \frac{1}{2} \Z$ are determined uniquely from the fact 
that $L_t^{(\im)}[p,s)$ is the $\ol{(\cdot)}$-invariant element of the form
$$
\left( 
t^a L_t^{(\im)}[p,s] L_t^{(\im)}(p,s) 
+ t^b \prod_{\jm \sim \im}^{\to} L_{t}^{(\jm)}(p,s)
\right)
L_t^{(\im)}(p,s]^{-1}
$$
in the fraction field of the quantum torus $\cY_t$.
In other words, we have obtained a quantum $T$-system identity satisfied by 
the $(q,t)$-characters of KR modules.

On the other hands, we also know the quantum $T$-system identity (\ref{eq:qTsys})
satisfied by the corresponding $F_t(m)$'s (Theorem~\ref{Thm:qTsys}).
The exponents $a, b \in \frac{1}{2}\Z$ appearing there are exactly the same as in (\ref{eq:qTsysL}) 
because the family $\{ F_t(m) \mid m \in \cM^+(\im; s,p)\}$ satisfies 
the same $t$-commutation relations as the family $\{ L_t(m) \mid m \in \cM^+(\im; p,s)\}$ does (see Proposition~\ref{Prop:qTsys}).

Now, applying the induction hypothesis to (\ref{eq:qTsysL}), we get
$$
F_t^{(\im)}[p,s) F_t^{(\im)}(p,s] = t^a L_t^{(\im)}[p,s] F_t^{(\im)}(p,s) 
+ t^b \prod_{\jm \sim \im}^{\to} F_{t}^{(\jm)}(p,s)
$$
with the same $a,b \in \frac{1}{2} \Z$ as above. Comparing this equality with (\ref{eq:qTsys}),
we obtain the desired conclusion $L_t^{(\im)}[s,p] = F_t^{(\im)}[s,p]$.
\end{proof}

\begin{Thm}[Positivity of coefficients of simple $(q,t)$-characters in type B]
\label{Thm:posqtch}
Let $\fg$ be of type $\mathrm{B}_n$. For each dominant monomial $m \in \cM$, write
$$
L_t(m) = \sum_{\text{$m^\prime:$ monomial in $\cY$}} a[m; m^\prime] \ul{m^\prime}
$$
with $a[m;m^\prime] \in \Z[t^{\pm 1/2}]$. Then we have
$$
a[m;m^\prime] \in \Z_{\ge 0}[t^{\pm 1/2}].
$$
\end{Thm}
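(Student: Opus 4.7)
Proof plan. The strategy is to reduce the positivity of $L_t^{(2)}(m)$ to the positivity of the dual canonical basis of $\cA_v[N_-]$ in a dual PBW basis (of type $\mathrm{A}_{2n-1}$, the simply-laced unfolding of $\mathrm{B}_n$), transported through the HLO isomorphism, and then to extend from $m \in \cM_{\cQ^{(2)}}$ to general $m$ via the factorization of Proposition~\ref{Prop:facE}.

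First I would treat the base case $m \in \cM_{\cQ^{(2)}}$. By Corollary~\ref{Cor:HLO}, we may write $L_t^{(2)}(m) = \Phi_{\cQ^{(2)}}(\tG(\bfc, \cQ^{(2)}))$ for some $\bfc \in (\Z_{\ge 0})^{\sR^+}$. Invoking the positivity of the dual canonical basis of $\cA_v[N_-]$ in the dual PBW basis, known for type ADE via Lusztig's perverse-sheaf arguments on quiver varieties, one has
$$\tG(\bfc, \cQ^{(2)}) = \tF(\bfc, \cQ^{(2)}) + \sum_{\bfc' <_{\cQ^{(2)}} \bfc} d_{\bfc, \bfc'}(v)\, \tF(\bfc', \cQ^{(2)}), \qquad d_{\bfc, \bfc'}(v) \in v\Z_{\ge 0}[v].$$
Applying $\Phi_{\cQ^{(2)}}$ and using $\Phi_{\cQ^{(2)}}(\tF(\bfc', \cQ^{(2)})) = E_t^{(2)}(m(\bfc', \cQ^{(2)}))$ from Theorem~\ref{Thm:HLO}, I would obtain the expansion
$$L_t^{(2)}(m) = E_t^{(2)}(m) + \sum_{m' \in \cM_{\cQ^{(2)}},\, m' < m} d_{m, m'}(t)\, E_t^{(2)}(m'), \qquad d_{m, m'}(t) \in t\Z_{\ge 0}[t].$$
Each standard $(q,t)$-character $E_t^{(2)}(m')$ is, by definition, a $t$-power times an ordered product of fundamental $(q,t)$-characters $L_t^{(2)}(Y_{i,p})$; each fundamental has non-negative coefficients by Theorem~\ref{Thm:qtfund}, and non-negativity is preserved under products in the commutative-monomial basis since $\ul{m_1}\cdot\ul{m_2} = t^{\Nn(m_1, m_2)/2}\ul{m_1 m_2}$. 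Hence each $E_t^{(2)}(m')$, and consequently $L_t^{(2)}(m)$, has non-negative coefficients in $\cY_t^{(2)}$.

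For a general $m \in \cM^{(2)}$, I would use the factorization $m = \prod_k \fD^k(m_k)$ with $m_k \in \cM_{\cQ^{(2)}}$ (Proposition~\ref{Prop:facE}). The $\fd$-invariant computations of Section~\ref{sec:Rmat}, extended from fundamentals to simple modules in $\cM_{\cQ^{(2)}}$ using Lemma~\ref{Lem:comm2}, Lemma~\ref{Lem:comm3}, and Lemma~\ref{Lem:commB}, imply that the simple modules $\{\cD^k L^{(2)}(m_k)\}_k$ mutually commute in $\Cc_\Z^{(2)}$, so $L^{(2)}(m) \cong \bigotimes^\to_k \cD^k L^{(2)}(m_k)$. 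At the level of $(q,t)$-characters this yields $L_t^{(2)}(m) = t^c \prod^\to_k \fD_t^k(L_t^{(2)}(m_k))$ for some $c \in \tfrac12\Z$. Combining the base case (each factor has non-negative coefficients) with the $\fD_t$-invariance of non-negativity (since $\fD_t$ merely shifts commutative-monomial indices) and the multiplicativity of non-negativity under commutative-monomial products, the claim follows.

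The main obstacle is the base case, which hinges on the deep geometric positivity of the dual canonical basis in any dual PBW basis for type-ADE $\cA_v[N_-]$. A secondary technical point is extending the mutual commutativity from fundamentals to general simple modules in $\cM_{\cQ^{(2)}}$ in order to carry out the second step; this rests on the type-$\mathrm{B}$-specific results Corollary~\ref{Cor:PsiAB} (which established Conjecture~\ref{Conj:KL} in type $\mathrm{B}$) and Theorem~\ref{Thm:L=F} ($L_t = F_t$ for Kirillov-Reshetikhin modules), which ensure that the algebraic positivity and commutativity statements translate into genuine module-theoretic ones.
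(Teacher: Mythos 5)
Your proposal breaks at both of its main steps, and neither matches the route the paper takes.

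\textbf{The base case goes in the wrong direction.} The positivity known from Lusztig's perverse-sheaf theory (and recorded in the paper as the corollary on Kazhdan--Lusztig polynomials in Section~\ref{ssec:cor}) is that the \emph{dual PBW basis expands positively in the dual canonical basis}: $\tF(\bfc,\cQ)=\sum_{\bfc'}p_{\bfc,\bfc'}(v)\,\tG(\bfc',\cQ)$ with $p_{\bfc,\bfc'}\in\Z_{\ge0}[v]$, equivalently $E_t(m)=\sum_{m'}P_{m,m'}(t)L_t(m')$ with $P_{m,m'}\in t\Z_{\ge0}[t]$. The expansion you need, $\tG(\bfc)=\tF(\bfc)+\sum_{\bfc'}d_{\bfc,\bfc'}(v)\tF(\bfc')$ with $d_{\bfc,\bfc'}\in v\Z_{\ge0}[v]$, is governed by the \emph{inverse} transition matrix and is false in general: already in the $\mathfrak{sl}_2$ picture one has $L_t(Y_0Y_2)=E_t(Y_0Y_2)-t\,E_t(1)$, so $Q_{m,m'}(t)$ in (S2) of Theorem~\ref{Thm:qtch} can be negative. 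Hence you cannot deduce positivity of $L_t(m)$ from positivity of the $E_t(m')$. (Note also that the HLO corollaries only give positivity of the \emph{truncated} characters $L_t(m)_{\le\xi}$ for $m\in\cM_{\cQ}$; passing from the truncation to the full Laurent polynomial is exactly the nontrivial content of Theorem~\ref{Thm:posqtch}.)

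\textbf{The reduction to the base case uses a false factorization.} The isomorphism $M(m)\cong\bigotimes^{\to}_k\cD^k(M(m_k))$ of \eqref{eq:facM} holds for \emph{standard} modules, because the only reorderings needed involve fundamental modules covered by Lemma~\ref{Lem:comm2}. The analogous statement $L(m)\cong\bigotimes^{\to}_k\cD^kL(m_k)$, and its $(q,t)$-character version $L_t(m)=t^c\prod^{\to}_k\fD_t^k(L_t(m_k))$, are false: for $\mathfrak{sl}_2$ and $m=Y_0Y_2$ one gets $m_0=m_1=Y_0$, and $L(Y_0)\otimes\cD L(Y_0)=L(Y_0)\otimes L(Y_2)=M(Y_0Y_2)$ is the four-dimensional non-simple standard module, whereas $L(Y_0Y_2)$ is the three-dimensional KR module (indeed $\fd(L(Y_0),L(Y_2))=1$, so these fundamentals do not commute --- Lemma~\ref{Lem:comm2} requires $p\ge s$, which fails here). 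The paper's actual proof avoids both issues: given $m$, it multiplies $L_t(m)$ on the right by a large product $\cL$ of KR $(q,t)$-characters $L_t^{(\im)}[p,b]$ whose truncations at $b$ are single commutative monomials (this is where Theorem~\ref{Thm:L=F}, $L_t=F_t$ for KR modules in type $\mathrm{B}$, enters); after truncation each monomial $\ul{m'}$ of $L_t(m)$ then pairs with $\cL$ to produce a genuine basis element $L_t(\cdots)$, so each $a[m;m']$ is read off as a structure constant of $L_t(m)\cdot\cL$ in the basis $\{L_t\}$, and these are non-negative by Corollary~\ref{Cor:pos_st}. If you want to salvage your outline, you would need to replace both the inversion of the transition matrix and the tensor factorization of simples by some device of this kind.
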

\begin{proof}
Given $m \in \cM$, we choose integers $c < b$
so that the $(q,t)$-character $L_t(m)$ 
is a (non-commutative) Laurent polynomial
in the variables $Y_{i, p}$ with $i \in I$ and $c \le p \le b$.  
Set $\hDs_0 [c,b] \seq \hDs_0 \cap (\Delta_0 \times [c,b])$.
By Lemma~\ref{Lem:qKR} and Theorem~\ref{Thm:L=F}, we have
$$
L_t^{(\im)}[p,b]_{\le b} = F_t^{(\im)}[p,b]_{\le b} = \ul{m^{(\im)}[p,b]}
$$
for each $(\im,p) \in \hDs_0[c,b]$.
Thus, choosing an arbitrary total ordering of the finite set $\hDs_0[c,b]$, we have
$$
L_t(m)_{\le b} = L_t(m) = \sum_{m^\prime} t^{x(m^\prime)} a[m;m^\prime] 
\prod^{\to}_{(\im,p) \in \hDs_0[c,b]} \left(L_t^{(\im)}[p,b]_{\le b}\right)^{v_{\im,p}(m^\prime)}
$$
for some $x(m^\prime) \in \frac{1}{2} \Z$, where
$v_{\im,p}(m^\prime) \seq u_{\bar{\im},p}(m^\prime) - u_{\bar{\im}, p+2d_{\bar{\im}}}(m^\prime) \in \Z$. 
Let $l \in \Z_{> 0}$ large 
enough so that $v_{\im,p}(m^\prime) +l \ge 0$ holds for all $(\im, p) \in \hDs_0 [a,b]$ and 
for all monomials $m^\prime$ with  
$a[m;m^\prime] \neq 0$, and put 
$
\cL \seq \overset{\to}{\prod}_{(\im,p) \in \hDs_0[c,b]} (L_t^{(\im)}[p,b])^{l}. 
$
Taking the injectivity of $(\cdot)_{\le b}$ into account, we obtain the following equality in $\cK_t$:
$$
L_{t}(m) \cdot \cL =
 \sum_{m^\prime} t^{x^\prime(m^\prime)} a[m;m^\prime] L_t \left(
\prod_{(\im,p) \in \hDs_0[c,b]} m^{(\im)}[p,b]^{v_{i,p}(m^\prime)+l} \right)
$$
for some $x^\prime(m^\prime) \in \frac{1}{2} \Z$. 
By the positivity of structure constants of $\cK_t$ (Corollary~\ref{Cor:pos_st}), 
we obtain $t^{x^\prime(m^\prime)} a[m;m^\prime] \in \Z_{\ge 0}[t^{\pm 1/2}]$ and hence
$a[m;m^\prime] \in \Z_{\ge 0}[t^{\pm 1/2}]$. 
\end{proof}

For general $\fg$, once one proves Conjecture~\ref{Conj:L=F}, the positivity of any simple $(q,t)$-characters (the analog of Theorem~\ref{Thm:posqtch}) follows by the same argument as above.
We plan to pursue this direction in a future work. 

\subsection{Thin representations}
A finite-dimensional $U_q(L\fg)$-module $V$ is said to be \emph{thin} if all $\ell$-weight spaces are of dimension $1$. As a corollary of Corollary \ref{Cor:PsiAB} (\ref{KLtypeB}) and Theorem \ref{Thm:posqtch}, we can conclude that the $(q, t)$-characters of simple thin modules are the canonical lifts of their $q$-characters: 
\begin{Cor} \label{Cor:thin}
Let $\fg$ be of type $\mathrm{B}_n$, and $m \in \cM$ be a dominant monomial such that $L(m)$ is a thin $U_q(L\fg)$-module. Let $P(m)$ be a subset set of $\cY$ consisting of Laurent monomials satisfying
\[
\chi_{q}(L(m))=\sum_{m'\in P(m)}m'. 
\]
Then we have 
\[
L_t(m) = \sum_{m'\in P(m)} \ul{m^\prime}.
\]
\end{Cor}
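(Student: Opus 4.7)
The plan is to combine the three main recent results of the paper to pin down each coefficient $a[m;m']$ individually. Writing
\[
L_t(m)=\sum_{m'} a[m;m']\,\ul{m'},\qquad a[m;m'] \in \Z[t^{\pm 1/2}],
\]
where the sum runs over all Laurent monomials $m'$ in $\cY$, the goal is to show that $a[m;m']=1$ when $m'\in P(m)$ and $a[m;m']=0$ otherwise.

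First, I would invoke Theorem~\ref{Thm:posqtch} to get the positivity $a[m;m'] \in \Z_{\ge 0}[t^{\pm 1/2}]$ for every $m'$. Next, since the commutative monomial $\ul{m'}$ is by definition $\ol{(\cdot)}$-invariant and $L_t(m)$ is $\ol{(\cdot)}$-invariant by (S1) of Theorem~\ref{Thm:qtch}, the coefficient $a[m;m']$ must itself be bar-invariant, so when we write $a[m;m']=\sum_{k \in \Z} c_k \, t^{k/2}$ we have $c_k=c_{-k}\ge 0$ for all $k$. Finally, applying the specialization $\evt$ and using Corollary~\ref{Cor:PsiAB}~(\ref{KLtypeB}) yields
\[
\chi_q(L(m)) = \evt(L_t(m)) = \sum_{m'} a[m;m'](1)\, m',
\]
so $a[m;m'](1) = \sum_{k} c_k$ equals the multiplicity of $m'$ in $\chi_q(L(m))$. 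By the thinness hypothesis, this multiplicity is $1$ if $m'\in P(m)$ and $0$ otherwise.

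The conclusion is then purely elementary: a Laurent polynomial with non-negative integer coefficients which is symmetric under $t^{1/2}\leftrightarrow t^{-1/2}$ and which evaluates to $0$ at $t=1$ must vanish identically, while one that evaluates to $1$ at $t=1$ must equal $1$ (any non-zero $c_k$ with $k\neq 0$ would, by the symmetry $c_k=c_{-k}$, force the sum $\sum_k c_k$ to be at least $2$). Applying this dichotomy to each $a[m;m']$ gives the desired formula $L_t(m)=\sum_{m'\in P(m)} \ul{m'}$.

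There is no real obstacle here, as all heavy lifting has already been carried out: Theorem~\ref{Thm:posqtch} supplies positivity, Corollary~\ref{Cor:PsiAB}~(\ref{KLtypeB}) supplies the analog of the Kazhdan--Lusztig conjecture in type $\mathrm{B}$, and bar-invariance is built into the definition of $L_t(m)$ and of commutative monomials. The only subtlety to be careful about is that, because we work with $t^{\pm 1/2}$ rather than $t^{\pm 1}$, the bar-symmetric/non-negative/specializes-to-$1$ argument uses the parity of $k$ implicitly; but the counting $\sum_k c_k=1$ with $c_k=c_{-k}\ge 0$ is insensitive to this, and immediately forces $c_0=1$ and $c_k=0$ for $k\ne 0$.
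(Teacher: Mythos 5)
Your proposal is correct and uses exactly the same ingredients as the paper's proof: positivity of the coefficients from Theorem~\ref{Thm:posqtch}, the specialization $\evt(L_t(m))=\chi_q(L(m))$ from Corollary~\ref{Cor:PsiAB}, thinness to get multiplicity $\le 1$, and bar-invariance to kill the remaining power of $t^{1/2}$. The only (immaterial) difference is the order: the paper first deduces $a[m;m'](t)=t^{c}$ from positivity plus specialization and then invokes bar-invariance, whereas you establish the symmetry $c_k=c_{-k}$ first and finish with the counting argument.
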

\begin{proof}
By Theorem \ref{Thm:posqtch}, we have 
\[
L_t(m) = \sum_{\text{$m':$ monomial in $\cY$}} a[m; m'] (t) \ul{m'}
\]
for some $a[m; m'](t) \in \Z_{\ge 0}[t^{\pm 1/2}]$. Moreover, by  Corollary \ref{Cor:PsiAB} (\ref{KLtypeB}), we have 
\[
\sum_{\text{$m':$ monomial in $\cY$}} a[m; m^\prime] (1) m' =\sum_{m'\in P(m)}m'
\]
Therefore, 
\[
a[m; m'] (t)=\begin{cases}
t^{c(m; m')},\text{ for some }c(m; m')\in \frac{1}{2} \Z&\text{ if }m'\in P(m),\\
0&\text{otherwise.}
\end{cases}
\]
Moreover, since $L_t(m)$ is $\ol{(\cdot)}$-invariant, we have $c(m; m')=0$ for all $m'\in P(m)$, which completes the proof. 
\end{proof}
The minimal affinizations were defined by Chari \cite{Chari95} as generalizations of type $A$ evaluation representations and of Kirillov-Reshetikhin modules.
By \cite[Theorem 3.10]{Hernandez07}, all minimal affinizations 
are thin when $\fg$ is of type $\mathrm{B}_n$. Therefore Corollary \ref{Cor:thin} is a vast generalization of \cite[Corollary 11.5]{HO19}.
\appendix
\section{Quantum unipotent minors}
\label{sec:minor}

In this section, we briefly review the definitions of the quantum coordinate algebra $\cA_{v}[N_-]$
and the normalized quantum unipotent minors associated with a simply-laced Lie algebra $\sg$.
We will use the notation in Section~\ref{ssec:notr}. 


\subsection{Quantum coordinate algebra $\cA_{v}[N_-]$}
\label{ssec:cA}
Let $v$ be an indeterminate with a formal square root $v^{1/2}$.
The quantized enveloping algebra $\cU_{v}(\sg)$ of $\sg$ (over $\Q(v^{1/2})$)
is the unital associative $\Q(v^{1/2})$-algebra
presented by the set of generators $$\{ e_{\im}, f_{\im}, K_{\im}^{\pm 1} \mid \im \in \Delta_{0}\}$$
and the following relations:
\begin{itemize}
\item $K_{\im} K_{\im}^{-1} = K_{\im}^{-1} K_{\im} = 1$ and $K_{\im} K_{\jm} = K_{\jm} K_{\im}$ for $\im, \jm \in \Delta_{0}$,
\item $K_{\im}e_{\jm} = v^{(\alpha_{\im}, \alpha_{\jm})}e_{\jm}K_{\im}, 
K_{\im}f_{\jm} = v^{-(\alpha_{\im}, \alpha_{\jm})}f_{\jm}K_{\im}$ for $\im, \jm \in \Delta_{0}$,
\item $(v-v^{-1})[e_{\im},f_{\jm}] = \delta_{\im, \jm} (K_{\im} - K_{\im}^{-1})$ for $\im, \jm \in \Delta_{0}$,
\item  
$e_{\im}^{2} e_{\jm} - (v + v^{-1})e_{\im} e_{\jm} e_{\im} - e_{\jm} e_{\im}^{2} = 
f_{\im}^{2} f_{\jm} - (v + v^{-1})f_{\im} f_{\jm} f_{\im} - f_{\jm} f_{\im}^{2} = 0$ 
if $\im \sim \jm$,
\item
$[e_{\im}, e_{\jm}] = [f_{\im}, f_{\jm}] = 0$  
if $\im \not \sim \jm$. \end{itemize}

For each generator, we set
$$
\wt(e_{\im}) = \alpha_{\im}, \quad \wt(f_{\im}) = - \alpha_{\im}, \quad 
\wt(K_{\im}^{\pm 1}) = 0,
$$ 
which endows $\cU_{v}(\sg)$ with a structure of $\sQ$-graded $\Q(v^{1/2})$-algebra.

We denote by $\cU_{v}(\sn_{-})$
the $\Q(v^{1/2})$-subalgebra of $\cU_{v}(\sg)$ generated by $\{ f_{\im} \}_{\im \in \Delta_{0}}$.
This is regarded as a quantized enveloping algebra 
of the nilpotent Lie subalgebra $\sn_{-} \subset \sg$
corresponding to the negative roots. 
As a $\Q(v^{1/2})$-algebra, $\cU_{v}(\sn_{-})$ is presented by the set of generators $\{ f_{\im} \}_{\im \in \Delta_{0}}$
and {the quantum Serre relations} (\ref{eq:qSerref}).

For $\im \in \Delta_{0}$, 
we define the $\Q(v^{1/2})$-linear maps
$e_{\im}^{\prime}$ and
${}_{\im}e^{\prime} \colon \cU_{v}(\sn_{-}) \to \cU_{v}(\sn_{-})$ by
\begin{align*}
e_{\im}^{\prime}(xy) &= e_{\im}^{\prime}(x)y + v^{(\alpha_{\im}, \wt (x))}x e_{\im}^{\prime}(y), 
& e_{\im}^{\prime}(f_{\jm}) &= \delta_{\im, \jm}, \\
{}_{\im}e^{\prime}(xy) &=  v^{(\alpha_{\im}, \wt (y))} {}_{\im}e^{\prime}(x)y +x \, {}_{\im}e^{\prime}(y), 
& {}_{\im}e^{\prime}(f_{\jm}) &= \delta_{\im, \jm} 
\end{align*} 
for homogeneous elements $x, y \in \cU_{v}(\sn_{-})$. There exists a unique symmetric $\Q(v^{1/2})$-bilinear form
$(\cdot, \cdot)_{L}$ on $\cU_{v}(\sn_-)$ such that
$$
(1,1)_{L} = 1, \qquad
(f_{\im}x,y)_{L} =\frac{1}{1-v^{2}}(x, e_{\im}^{\prime}(y))_{L}, \qquad
(xf_{\im}, y)_{L} = \frac{1}{1-v^{2}}(x, {}_{\im}e^{\prime}(y))_{L}.
$$ 
Then $(\cdot, \cdot)_{L}$ is non-degenerate.

Let $\cU_{v}(\sn_-)_{\Z}$ be the $\Z[v^{\pm 1/2}]$-subalgebra of $\cU_{v}(\sn_-)$ generated by the divided powers 
$\{ f_{\im}^{(m)} \seq f_{\im}^{m}/[m]_{v}! \mid \im \in \Delta_{0}, m \in \Z_{\ge 0} \}$.
Then \emph{the quantized coordinate algebra} $\cA_{v}[N_-]$ is defined as 
$$
\cA_{v}[N_-] \seq \{ x \in \cU_{v}(\sn_-) \mid (x, \cU_{v}(\sn_-)_{\Z})_L \subset \Z[v^{\pm 1/2}]\}.
$$

\subsection{Quantum unipotent minors} 
\label{ssec:minor}

Let $V$ be a $\cU_{v}(\sg)$-module. For $\mu \in \sP$, we set
$$
V_{\mu}\seq\{u \in V \mid \text{$K_{\im} u = v^{(\alpha_{\im}, \mu)}u$ for all $\im \in \Delta_{0}$}\}.
$$
This is called \emph{the weight space of $V$ of weight $\mu$}.
A $\cU_{v}(\sg)$-module with weight space decomposition $V=\bigoplus_{\mu \in \sP}V_{\mu}$
is said to be \emph{integrable} if $e_{\im}$ and $f_{\im}$ act locally nilpotently on $V$ for all $\im \in \Delta_{0}$.

For each $\lambda \in \sP^{+} \seq \sum_{\im \in \Delta_{0}} \Z_{\ge 0} \varpi_{\im}$,
we denote by $V(\lambda)$ the (finite-dimensional) irreducible highest weight
$\cU_{v}(\sg)$-module generated by a highest weight vector $u_{\lambda} \in V(\lambda)_{\lambda}$.
The module $V(\lambda)$ is integrable.
There exists a unique $\Q(v^{1/2})$-bilinear form 
$(\cdot, \cdot)^{\varphi}_{\lambda} \colon V(\lambda) \times V(\lambda) \to \Q(v^{1/2})$
such that
$$
(u_{\lambda}, u_{\lambda})^{\varphi}_{\lambda} = 1, \qquad 
(x u_{1}, u_{2})^{\varphi}_{\lambda} = (u_{1}, \varphi(x)  u_{2})^{\varphi}_{\lambda}
$$
for $u_{1}, u_{2} \in V(\lambda)$ and $x \in \cU_{v}(\sg)$,
where $\varphi$ is the $\Q(v^{1/2})$-algebra anti-involution on $\cU_{v}(\sg)$ defined by
$$
\varphi(e_{\im}) = f_{\im}, \qquad
\varphi(f_{\im}) = e_{\im}, \qquad
\varphi(K_{\im}) = K_{\im}
$$    
for $\im \in \Delta_{0}$.
The form $(\cdot, \cdot)^{\varphi}_{\lambda}$ is non-degenerate and symmetric.

For $w \in \sW$, we define the element $u_{w\lambda} \in V(\lambda)_{w \lambda}$ by
$$
u_{w \lambda} = f_{\im_1}^{(m_1)} \cdots f_{\im_{l-1}}^{(m_{l-1})}f_{\im_{l}}^{(m_{l})} \cdot u_{\lambda}, \qquad
$$ 
for $(\im_{1}, \ldots, \im_{l}) \in \cI(w)$, where
$m_{k} \seq (\alpha_{\im_{k}}, s_{\im_{k+1}}\dots s_{\im_{l-1}}s_{\im_{l}} \lambda) \in \Z_{\ge 0}$.
It is known that this element does not depend on the choice of 
$(\im_{1}, \ldots, \im_{l}) \in \cI(w)$ and 
$w \in \sW$ (depends only on $w\lambda$).
See, for example, \cite[Proposition 39.3.7]{LusztigIntro}.
Then $(u_{w\lambda}, u_{w\lambda})^{\varphi}_{\lambda} = 1$. 

For $\lambda \in \sP^{+}$ and $w, w^{\prime} \in \sW$, we define an element 
$D_{w\lambda, w^{\prime}\lambda} \in \cU_{v}(\sn_{-})$ by the following property:
$$
(D_{w\lambda, w^{\prime}\lambda}, x)_{L} = (u_{w\lambda}, x u_{w^{\prime}\lambda})^{\varphi}_{\lambda}
$$
for $x \in \cU_{v}(\sn_{-})$.
By the nondegeneracy of the bilinear form $(\cdot, \cdot)_{L}$,
this element is uniquely determined. 
An element of this form is called a \emph{quantum unipotent minor}.
Moreover, we set
$$
\tD_{w\lambda, w^{\prime}\lambda} \seq v^{-(w\lambda - w^{\prime}\lambda, w\lambda - w^{\prime}\lambda)/4
+ (w\lambda-w^{\prime}\lambda, \rho)/2} D_{w\lambda, w^{\prime}\lambda},
$$  
where 
$\rho \seq \sum_{\im \in \Delta_{0}} \varpi_{\im} \in \sP^{+}$.
This element is called \emph{a normalized quantum unipotent minor}. 

\section{Quantum cluster algebras}
\label{sec:QCA}

In this subsection, we briefly review the definition of a quantum cluster algebra following~\cite{BZ05}. 

\subsection{Quantum seed}
\label{ssec:Qseed}

Let $v$ be an indeterminate with its formal square root $v^{1/2}$.
Let $J$ be a finite set. For a $\Z$-valued $J\times J$-skew-symmetric matrix 
$\Lambda = (\Lambda_{ij})_{i,j \in J}$, we define \emph{the quantum torus} 
$\cT(\Lambda)$ as the $\Z[v^{\pm 1/2}]$-algebra presented by the set of generators 
$\{ X_{j}^{\pm 1} \mid j \in J\}$ and the relations:
\begin{itemize}
\item $X_j X_j^{-1} = X_j^{-1} X_j  =1$ for $j \in J$, 
\item $X_{i} X_{j} = v^{\Lambda_{ij}} X_j X_i$ for $i,j\in J$.
\end{itemize} 
For $\bfa = (a_j)_{j \in J} \in \Z^J$, we write 
$$
X^{\bfa} \seq v^{-\frac{1}{2}\sum_{i < j}a_i a_j \Lambda_{ij}} \prod^{\to}_{j \in J} X_j^{a_j},
$$
where we fixed an arbitrary total ordering $<$ of the set $J$. Note that the resulting element $X^{\bfa} \in \cT(\Lambda)$
is independent from the choice of the total ordering $<$.
Since $\cT(\Lambda)$ is an Ore domain, it is embedded in its skew field of fractions $\F(\cT(\Lambda))$.

Let $J_f \subset J$ be a subset and set $J_e \seq J \setminus J_e$.
Let $\tB = (b_{ij})_{i \in J, j \in J_e}$ be a $\Z$-valued $J \times J_e$-matrix whose 
\emph{principal part} $(b_{ij})_{i,j \in J_e}$ is skew-symmetric. 
Such a matrix $\tB$ is called \emph{an exchange matrix}. 

We say that a pair $(\Lambda, \tB)$ is \emph{compatible} if we have 
$$
\sum_{k \in J} b_{ki}\Lambda_{kj} = d \delta_{i,j} \quad \text{$(i\in J_e, j \in J)$}
$$
for some positive integer $d$.

If $(\Lambda, \tB)$ is a compatible pair, the datum $\Sigma = ((X_j)_{j \in J}, \Lambda, \tB)$
is called \emph{a quantum seed.}
Then the set $\{ X_j \}_{j \in J} \subset \F(\cT(\Lambda))$ is called \emph{the quantum cluster of} $\Sigma$
and each element $X_{j}$ is called \emph{a quantum cluster variable}.

\subsection{Mutations}
\label{ssec:mut}

Given a quantum seed $\Sigma = ((X_j)_{j \in J}, \Lambda, \tB)$
and an element $k \in J_e$, we can associate a new quantum seed 
$\mu_{k} (\Sigma)$ as follows. 

Define the $J \times J$-matrix $E=(e_{ij})_{i,j \in J}$
and the $J_e \times J_e$-matrix $F=(f_{ij})_{i,j \in J_e}$ by 
$$
e_{ij} \seq \begin{cases}
\delta_{i,j} & \text{if $j \neq k$}, \\
-1 & \text{if $i=j=k$}, \\
\max(0, -b_{ik}) & \text{if $i\neq j = k$},
\end{cases} 
\qquad 
f_{ij}\seq \begin{cases}
\delta_{i,j} & \text{if $i \neq k$}, \\
-1 & \text{if $i=j=k$}, \\
\max(0, -b_{ik}) & \text{if $i = k \neq j$}.
\end{cases}
$$
In addition, define $\bfa^\prime = (a^\prime_{j})_{j \in J}$
 and $\bfa^\prime = (a^{\prime\prime}_{j})_{j \in J}$ by
$$
a^\prime_{j} \seq \begin{cases}
-1 & \text{if $i=k$}, \\
\max(0, b_{ik}) & \text{if $i \neq k$},
\end{cases}
\qquad 
a^{\prime\prime}_{j} \seq \begin{cases}
-1 & \text{if $i=k$}, \\
\max(0, -b_{ik}) & \text{if $i \neq k$},
\end{cases}
$$
Then the new datum
$\mu_{k} (\Sigma) = ((X_j^\prime)_{j \in J}, \Lambda^\prime, \tB^\prime)$ is given by 
$$
\Lambda^\prime \seq E^{T} \Lambda E, \quad \tB^\prime \seq E \tB F, 
\qquad   
X^\prime_{j} \seq 
\begin{cases}
X^{\bfa^\prime} + X^{\bfa^{\prime\prime}} & \text{if $j =k$}, \\
X_{j} & \text{if $j \neq k$}.
\end{cases}
$$
One can show that the datum $\mu_{k} (\Sigma)$ actually defines a quantum seed,
which is called \emph{the mutation of $\Sigma$ in direction $k$}.
This operation is involutive, i.e.,~we have $\mu_{k}(\mu_{k}(\Sigma)) = \Sigma$.

\begin{Def} 
Let $(\Lambda, \tB)$ be a compatible pair 
and $\Sigma = ((X_j)_{j \in J}, \Lambda, \tB)$ the associated quantum seed.  
\emph{The quantum cluster algebra} $\cA_{v}(\Lambda, \tB)$
is the $\Z[v^{\pm 1/2}]$-subalgebra of the skew field $\F(\cT(\Lambda))$
generated by all the quantum cluster variables 
in the quantum seeds obtained from $\Sigma$ by any sequence of mutations.
\end{Def}    

\begin{Thm}[The quantum Laurent phenomenon {\cite[Corollary 5.2]{BZ05}}]
The quantum cluster algebra $\cA_{v}(\Lambda, \tB)$ is contained in the quantum torus $\cT(\Lambda)$.
\end{Thm}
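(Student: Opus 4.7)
The plan is to follow the strategy that Berenstein and Zelevinsky used for the classical/quantum Laurent phenomenon: namely, to reduce the global statement to a finite combinatorial check using an inductive "overlap" argument. The problem is that a quantum cluster variable $X''$ produced after a sequence of mutations $\mu_{k_1} \mu_{k_2} \cdots \mu_{k_s}$ is initially only known to lie in $\F(\cT(\Lambda^{(s)}))$, where $\Lambda^{(s)}$ is the mutated skew-symmetric matrix; we must show it actually lies in the initial quantum torus $\cT(\Lambda) \subset \F(\cT(\Lambda))$, i.e.\ is a noncommutative Laurent polynomial in the initial $X_j$'s.

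First I would treat the one-step case: after a single mutation $\mu_k$, the new variable $X'_k = X^{\bfa'} + X^{\bfa''}$ is by definition a sum of two monomials in the initial $X_j$'s, so trivially lies in $\cT(\Lambda)$; moreover the other variables $X'_j = X_j$ for $j \ne k$ are unchanged. Next, I would handle the two-step case $\mu_\ell \mu_k$ with $\ell \ne k$ by direct computation, verifying that the expression for the twice-mutated variable $X''_\ell$ clears denominators and produces an element of $\cT(\Lambda)$. The key algebraic identity to check is the quantum exchange relation, which one can manipulate using the compatibility condition $\sum_{i \in J} b_{ik}\Lambda_{ij} = d \delta_{k,j}$ to cancel the occurrences of $X_k$ in the denominator.

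The main step is then the quantum analog of the "Caterpillar Lemma": given any mutation sequence, one proves by induction on its length that the resulting cluster variables lie in the intersection $\bigcap_{k \in J_e} \cT(\Lambda)[X_k^{-1} \text{ replaced by mutated variable}]$, i.e.\ in the quantum upper cluster algebra. This requires combining three ingredients: (i) involutivity $\mu_k^2 = \id$, (ii) the two-step check above applied at every branching point, and (iii) a careful tracking of how $\Lambda$ transforms under $\mu_k$ (via $\Lambda' = E^T \Lambda E$) to ensure that the quasi-commutation scalars align correctly after each mutation so that the exchange binomials produced at step $s+1$ live in the torus produced at step $s$.

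The main obstacle will be the bookkeeping of the half-integer powers of $v$ that arise from the noncommutative ordered products $X^{\bfa}$: in the classical case one only deals with signs and monomial cancellations, whereas here each rearrangement produces a $v^{\Lambda_{ij}/2}$ factor, and one must verify that all such factors cancel in the exchange identities. This reduces to a finite list of identities among the entries of $\Lambda$ and $\tB$ that follow from the compatibility assumption together with the combinatorics of the mutation formula for $E, F, \bfa', \bfa''$. Once these identities are verified, the induction closes and the Laurent phenomenon follows, giving $\cA_v(\Lambda, \tB) \subset \cT(\Lambda)$ as required.
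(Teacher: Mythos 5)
The paper does not prove this statement at all --- it is quoted verbatim from \cite[Corollary 5.2]{BZ05}, and Appendix~\ref{sec:QCA} is purely a review, so there is no in-paper argument to compare against. Your sketch is essentially the strategy of the cited reference: Berenstein--Zelevinsky deduce the quantum Laurent phenomenon from the mutation-invariance of the quantum upper bound $\mathcal{U}(\Sigma)$ (their Theorem~5.1), whose proof reduces to rank-two computations with the exchange binomials and uses the compatibility condition to control the powers of $v^{1/2}$, exactly as you outline; the one imprecision is that the inductive engine is the upper-bound invariance rather than a literal quantum Caterpillar Lemma, and the real work (which your plan only gestures at) is the explicit verification that $\cT(\Lambda)[X_k'^{\pm1}]\cap\cT(\Lambda)[X_\ell'^{\pm1}]$-type intersections are preserved under mutation.
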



\end{document}